\newtheorem{thm}{Theorem}[section]
\newtheorem{prop}[thm]{Proposition}
\newtheorem{cor}[thm]{Corollary}
\newtheorem{lem}[thm]{Lemma}
\newtheorem{conj}[thm]{Conjecture}
\theoremstyle{definition}
\newtheorem{dfn}[thm]{Definition}
\newtheorem{cons}[thm]{Construction}
\theoremstyle{remark}
\newtheorem{remark}[thm]{Remark}
\newtheorem{exm}[thm]{Example}
\tikzset{
	0c/.style={circle, draw, fill, inner sep=1.5pt},
	1c/.style={->, thick, shorten <=2pt, shorten >=2pt},
	1clong/.style={->, thick},
	edge/.style={thick, shorten <=2pt, shorten >=2pt},
	equal/.style={thick, shorten <=2pt, shorten >=2pt, double},
	1cdash/.style={->, densely dashed, thick, shorten <=2pt, shorten >=2pt},
	1cdot/.style={->, dotted, thick, shorten <=2pt, shorten >=2pt},
	1cinc/.style={right hook->, thick, shorten <=2pt, shorten >=2pt},
	1csurj/.style={->>, thick, shorten <=2pt, shorten >=2pt},
	1cincl/.style={left hook->, thick, shorten <=2pt, shorten >=2pt},
	2c/.style={double, thick, shorten <=6pt, shorten >=8pt, decoration={markings,mark=at position -6pt with {\arrow[scale=1.75]{>}}}, preaction={decorate}},
	3c1/.style={thick, double, double distance=3pt, shorten <=9pt, shorten >=11pt},
    	3c2/.style={thick, shorten <=9pt, shorten >=10pt},
	3c3/.style={shorten <=9pt, shorten >=10pt, decoration={markings,mark=at position -8pt with {\arrow[scale=3]{>}}},preaction={decorate}},
	follow/.style={->, >=stealth, ultra thick, shorten <=3pt, shorten >=3pt, color=gray!70},
	every node/.style={scale=.8},
}
\newcommand\cp[1]{\,{\scriptstyle\#}_{#1}\,}
\newcommand\cpm[1]{*_{#1}}
\newcommand\comp[1]{\triangleright_{#1}}
\newcommand\compos[1]{\llbracket#1\rrbracket}
\newcommand\bord[2]{\partial_{#1}^{#2}}
\newcommand\idd[1]{\varepsilon_{#1}}
\newcommand\idcat[1]{\mathrm{id}_{#1}}
\newcommand\thin[1]{1_{#1}}
\newcommand\opp[1]{{#1}^\mathrm{op}}
\newcommand\coo[1]{{#1}^\mathrm{co}}
\newcommand\oppall[1]{{#1}^\circ}
\newcommand\oppn[2]{D_{#1}({#2})}
\newcommand\invrs[1]{#1^{-1}}
\newcommand\skel[2]{\sigma_{\leq #1}#2}
\newcommand\coskel[2]{\tau_{\leq #1}#2}
\newcommand\pfun{\rightharpoonup}
\newcommand\homset[1]{\mathrm{Hom}_{#1}}
\newcommand\nbd{\nobreakdash-\hspace{0pt}}
\newcommand\sbord[2]{\Delta_{#1}^{#2}}
\newcommand\dmn[1]{\mathrm{dim}(#1)}
\newcommand\clos[1]{\mathrm{cl}#1}
\newcommand\hass[1]{\mathcal{H}#1}
\newcommand\nondeg[2]{\mathcal{A}_{#1}{#2}}
\newcommand\molec[2]{\mathcal{M}o\ell_{#1}{#2}}
\newcommand\shell[1]{\mathcal{S}{#1}}
\newcommand\submol{\sqsubseteq}
\newcommand\subsph{\sqsubseteq_s}
\newcommand\supsph{\sqsupseteq_s}
\newcommand\tensor{\,{\scaleobj{0.75}{\boxtimes}}\,}
\newcommand\join{\,{\star}\,}
\newcommand\face[1]{\mathfrak{F}{#1}}
\newcommand\incl{\hookrightarrow}
\newcommand\incliso{\stackrel{\sim}{\hookrightarrow}}
\newcommand\surj{\twoheadrightarrow}
\newcommand\fnct[1]{\underline{#1}}
\newcommand\homoplax[2]{[#1,#2]_l}
\newcommand\homlax[2]{[#1,#2]_r}
\newcommand\slice[2]{{#1}/{\raisebox{-2pt}{$#2$}}}
\newcommand\equi[2]{\mathcal{E}q_{#1}{#2}}
\newcommand\degg[1]{\mathcal{D}eg{#1}}
\newcommand\invrt[1]{\mathcal{I}nv{#1}}
\newcommand\hatto[1]{{#1_{\omega}}}
\newcommand\nerve[1]{\mathcal{N}#1}
\newcommand\nervealg[1]{\mathcal{N}_\textit{alg}#1}
\newcommand\celto{\Rightarrow}
\newcommand\pin[3]{\pi_{#1}^{#2}(#3)}
\newcommand\compglob[1]{\Phi^{#1}}
\newcommand\extr[2]{\mathcal{E}_{#1}(\Delta^{#2})}
\newcommand\extrtil[2]{\tilde{\mathcal{E}}_{#1}(\Delta^{#2})}
\newcommand\subdiv{\mathrm{Sd}}
\newcommand\exfun{\mathrm{Ex}}
\newcommand\realis[1]{|#1|}
\newcommand\horn[1]{\mathcal{D}iv{(#1)}}
\newcommand\satur[2]{\mathcal{T}^{#1}{(#2)}}
\newcommand\infl[1]{O{(#1)}}
\newcommand\cat[1]{\mathbf{#1}}
\newcommand\psh[2]{\mathbf{PSh}_{#1}(#2)}
\newcommand\globset{\omega\cat{Gph}}
\newcommand\globsetrefl{\omega\cat{Gph}_\textit{refl}}
\newcommand\rpol{\cat{RPol}}
\newcommand\dgmset{\cat{DgmSet}}
\newcommand\rdgmset{\cat{RDgmSet}}
\newcommand\kandgmset{\cat{DgmSet}_\textit{Kan}}
\newcommand\rdgmsetalg{\cat{RDgmSet}_\textit{alg}}
\newcommand\pol{\cat{Pol}}
\newcommand\pos{\cat{Pos}}
\newcommand\ogpos{\cat{ogPos}}
\newcommand\rdcpx{\cat{RDCpx}}
\newcommand\rdcpxin{\cat{RDCpx}_\textit{in}}
\newcommand\atomin{\cat{RAtom}_\textit{in}}
\newcommand\molecin{\cat{RMolec}_\textit{in}}
\newcommand\atom{\cat{RAtom}}
\newcommand\omegacat{\omega\cat{Cat}}
\newcommand\cghaus{\cat{cgHaus}}
\newcommand\sset{\cat{sSet}}
\newcommand\semisset{\cat{ssSet}}
\newcommand\pomegacat{\cat{p}\omega\cat{Cat}}
\newcommand\deltacat{\cat{\Delta}}
\newcommand\deltain{\cat{\Delta}_\textit{in}}
\newcommand\delres[1]{#1_\Delta}
\newcommand\bicatun{\cat{BiCat}_\textit{su}}
\newcommand\bicatst{\cat{BiCat}_\textit{s}}
\newcommand\restr[2]{{
  \left.\kern-\nulldelimiterspace 
  #1 
  \vphantom{|} 
  \right|_{#2}
  }}
\title{\Large\bfseries Representable diagrammatic sets \\ as a model of weak higher categories}
\author{Amar Hadzihasanovic\\[3pt] RIMS, Kyoto University}
\date{September 2019}
\begin{document}
\maketitle

\vspace{-10pt}

\begin{minipage}{0.9\linewidth}
\small Developing an idea of Kapranov and Voevodsky, we introduce a model of weak $\omega$\nbd categories based on directed complexes, combinatorial presentations of pasting diagrams. We propose this as a convenient framework for higher-dimensional rewriting. 

We define diagrammatic sets to be presheaves on a category of directed complexes presenting pasting diagrams with spherical boundaries. Diagrammatic sets have structural face and degeneracy operations, but no structural composition. We define a notion of equivalence cell in a diagrammatic set, and say that a diagrammatic set is representable if all pasting diagrams with spherical boundaries are connected to individual cells --- their weak composites --- by a higher-dimensional equivalence cell. We develop the basic theory of representable diagrammatic sets (RDSs), and prove that equivalence cells satisfy the expected properties in an RDS. We study nerves of strict $\omega$\nbd categories as RDSs and prove that 2-truncated RDSs are equivalent to bicategories. 

Finally, we connect the combinatorics of diagrammatic sets and simplicial sets, and prove a version of the homotopy hypothesis for ``groupoidal'' RDSs: there exists a geometric realisation functor for diagrammatic sets, inducing isomorphisms between combinatorial and classical homotopy groups, which has a homotopical right inverse.
\end{minipage}

\tableofcontents

\section{Introduction}

This article develops the basic theory of representable diagrammatic sets (RDSs), a model of weak $\omega$\nbd categories. RDSs belong in the same family of semi-algebraic models as weak complicial sets \cite{verity2008weak}: they have structural degeneracies, or units, preserved by all morphisms, while the existence of weak composites is stated as a property of the underlying structure. Particular weak composites may then be fixed as structure, which is always weakly preserved, and can be required to be preserved strictly (compare with Kan complexes vs algebraic Kan complexes \cite{nikolaus2011algebraic}). 

The intended application of the model, and its underlying structure --- diagrammatic sets --- is \emph{higher-dimensional rewriting and (universal) algebra}, in particular, explicit presentations and computations with pasting or string diagrams. The development of ``yet another model'' is justified by the absence, in the current landscape, of a model which is both \emph{proved to be general enough}, and has a ``sufficiently strong'' \emph{pasting theorem}. 

We take the validity of the homotopy hypothesis for weak $\omega$\nbd groupoids as a minimal benchmark of generality of a model of weak $\omega$\nbd categories. In particular, we consider the following, minimal version of the homotopy hypothesis, inspired by C.\ Simpson \cite{simpson2009homotopy}: 
\begin{center}
\begin{minipage}{.9\textwidth}
\emph{There exists a realisation functor for weak $\omega$\nbd groupoids ($n$\nbd groupoids) which is essentially surjective on homotopy types (homotopy $n$\nbd types), together with natural isomorphisms from the combinatorial homotopy groups of the weak $\omega$\nbd groupoid ($n$\nbd groupoid) to the homotopy groups of its realisation.}
\end{minipage}
\end{center}
We prove in Section \ref{sec:homotopy} a slightly stronger statement for ``groupoidal'' RDSs, namely, the existence of a homotopical right inverse to the realisation functor. 

Still stronger benchmarks, yet unproven for RDSs, include the promotion of the realisation to a Quillen equivalence, or the verification of the Barwick---Schommer-Pries axioms \cite{barwick2011unicity}. Models for which the homotopy hypothesis is proven, even in this stronger form, include Barwick's $n$\nbd fold complete Segal spaces [\emph{ibid.}], Rezk's $\Theta_n$-spaces \cite{rezk2010cartesian}, and weak complicial sets (announced in \cite{riehl2018complicial}). These were developed as models for the homotopy theory of higher categories, which is ultimately agnostic about them: the tendency is towards \emph{synthetic}, model-independent mathematics, as exemplified by homotopy type theory \cite{hottbook}.

This tendency is orthogonal to the concerns of higher-dimensional rewriting, which deals with \emph{presented} higher categories, almost tautologically not invariant under equivalence. The models we mentioned rely on simplicial or cellular combinatorics, which cover a very limited range of pasting diagrams; to present and reason about $n$\nbd dimensional theories by generators and relations, and interpret them in $n$\nbd categories, the options are currently limited.
\begin{enumerate}
	\item One can restrict themselves to the low-dimensional cases for which a manageable, explicit algebraic theory of weak $n$\nbd categories exists, as with the presentations of monoidal bicategories studied in \cite{schommer2009classification}. 
	
	\item One can be satisfied with presenting \emph{strict} $\omega$\nbd categories, for which a variety of strong pasting theorems exists. This is the route primarily taken in the polygraph approach to rewriting \cite{burroni1993higher,guiraud2016polygraphs}. However the homotopy hypothesis is false already for strict 3\nbd categories \cite[Theorem 4.4.2]{simpson2009homotopy}.
	
	\item One can use a model whose generality is unknown. Most recently, Dorn introduced a semistrict model, associative $n$\nbd categories \cite{dorn2018associative}, as a foundation for the \texttt{homotopy.io} proof assistant \cite{reutter2019high}. Its homotopy hypothesis is proven up to dimension 3.
\end{enumerate}

We propose diagrammatic sets as a framework for higher-dimensional rewriting. These extend the constructible polygraphs of \cite{hadzihasanovic2018combinatorial} with more shapes and with algebraic units and degeneracies, allowing generators with ``nullary'' inputs or outputs. Diagrammatic sets are based on Steiner's combinatorics of \emph{directed complexes}, together with a regularity constraint modelled on S.\ Henry's regular polyplexes \cite{henry2018regular}. Regular directed complexes with a greatest element, called \emph{atoms}, encompass a great variety of diagram shapes: globes, oriented simplices, cubes, and, conjecturally, all positive opetopes \cite{zawadowski2017positive}. 

Like cubes, atoms are closed under lax Gray products, and like simplices they are closed under joins. They support the typical operations of higher-dimensional rewriting, such as surgery and reversal of cells. They have good geometric realisations, homeomorphic to closed balls of the appropriate dimension. Moreover, they are described by quite simple data structures --- finite posets with an edge-labelling of their Hasse diagram --- which means that, in principle, they lend themselves to formal implementation.

Representable diagrammatic sets are a model of weak higher categories in which theories presented as diagrammatic sets can be interpreted. In addition to the proof of C.\ Simpson's form of the homotopy hypothesis, as arguments for adequacy, we present a nerve functor for strict $\omega$\nbd categories, which conditionally to a conjecture on the $\omega$\nbd categories presented by regular directed complexes, becomes a full and faithful functor into the category of RDSs and ``strict morphisms'' (those that preserve a choice of weak composites); and we describe an explicit equivalence between bicategories and ``2\nbd truncated'' RDSs.

Based on our limited experimenting, turning informal diagrammatic proofs into formal constructions of cells in diagrammatic sets is quite straightforward, and only requires some tinkering with degeneracies to regularise diagrams and, if needed, form braidings or interchangers. Thus, we hope that (representable) diagrammatic sets may be an entry point into weak higher categories with a gentler learning curve for those who come from rewriting theory or ``applied category theory'', as opposed to homotopy theory or algebraic geometry.

\subsubsection*{The definition in brief}

In one sentence in the style of \cite[Appendix A.1]{cheng2004higher}, our model of weak $\omega$\nbd category is
\begin{center}
\setlength{\fboxsep}{1em}
\noindent\fbox{\begin{minipage}{.7\textwidth}A diagrammatic set in which every composition horn has an equivalence filler.\end{minipage}}
\end{center}
We will give an intuition of what each term means.

A \textbf{diagrammatic set} is a presheaf on a shape category, $\atom$, whose objects are called \emph{atoms}. The boundaries of an $(n+1)$\nbd atom are $n$\nbd molecules \emph{with spherical boundary}, which are particular pasting diagrams of $n$\nbd atoms: their essential feature is that, forgetting the orientation of cells, they correspond to regular CW decompositions of the topological $n$\nbd ball. Each $n$\nbd atom Yoneda\nbd embeds as a diagrammatic set, and this extends to general $n$\nbd molecules. If $X$ is a diagrammatic set, we call a morphism from an atom to $X$ (equivalently, an element of $X$) a \emph{cell} of $X$, and a morphism from a generic molecule with spherical boundary to $X$ a \emph{spherical diagram} in $X$. 

Like the simplex category, $\atom$ has an orthogonal factorisation system, by which we can separate operations on a diagrammatic set into \emph{faces} and \emph{degeneracies}. Compared to simplicial sets, the combinatorics of degeneracies is very rich: in Section \ref{sec:lowdim}, we will see that in a diagrammatic set we can construct spherical \emph{braiding diagrams} between any pair of $2$\nbd cells with 0-dimensional boundaries, only using the structural degeneracies.

The only thing lacking in a diagrammatic set is the ability to compose diagrams, that is, turn diagrams into individual cells. In this sense, diagrammatic sets without any additional property are already a good context for higher-dimensional rewriting theory, where one wants to separate cells (that is, generators) from diagrams (that is, non-generators). 

A \textbf{composition horn} is, essentially, a spherical $n$\nbd diagram. \emph{Filling} the composition horn is finding a single $n$\nbd cell with the same boundary as the diagram, and an $(n+1)$\nbd cell, a \emph{compositor}, connecting them; the $n$\nbd cell should be seen as a \emph{weak composite} of the diagram. Thus, asking for fillers of composition horns means asking for weak composites of cells forming a spherical diagram.

In dimension 2, among

\begin{equation*}
\begin{tikzpicture}[baseline={([yshift=-.5ex]current bounding box.center)},scale=.8]
\begin{scope}
	\node[0c] (a) at (-1.25,0) {};
	\node[0c] (b) at (.375,-.875) {};
	\node[0c] (c) at (-.375,.875) {};
	\node[0c] (d) at (1.25,0) {};
	\draw[1c, out=-60, in=180] (a) to (b);
	\draw[1c, out=0, in=120] (c) to (d);
	\draw[1c, out=15, in=-105] (b) to (d);
	\draw[1c, out=75, in=-165] (a) to (c);
	\draw[1c] (c) to (b);
	\draw[2c] (-.6,-.5) to (-.6,.6);
	\draw[2c] (.6,-.6) to (.6,.5);
	\node[scale=1.25] at (1.5,-.875) {,};
\end{scope}
\begin{scope}[shift={(4,0)}]
	\node[0c] (i2) at (1,-.125) {};
	\node[0c] (0) at (-1.25,-.25) {};
	\node[0c] (1) at (1.25,.375) {};
	\node[0c] (o1) at (-.875,.75) {};
	\node[0c] (i1) at (.125,-.75) {};
	\node[0c] (m1) at (-.375,0) {};
	\draw[1c] (0) to (o1);
	\draw[1c, out=15, in=135] (o1) to (1);
	\draw[1c] (i2) to (1);
	\draw[1c, out=0, in=-120] (i1) to (i2);
	\draw[1c, out=-45, in=180] (0) to (i1);
	\draw[1c] (o1) to (m1);
	\draw[1c] (m1) to (i1);
	\draw[1c, out=15, in=150] (m1) to (i2);
	\draw[2c] (-.75,-.6) to (-.75,.3);
	\draw[2c] (.3,-.7) to (.3,.1);
	\draw[2c] (.1,0) to (.1,.9);
	\node[scale=1.25] at (1.5,-.875) {,};
\end{scope}
\begin{scope}[shift={(8,0)}]
	\node[0c] (0) at (-1.25,0) {};
	\node[0c] (m) at (0,0) {};
	\node[0c] (1) at (1.25,0) {};
	\draw[1c, out=60,in=120] (0) to (m);
	\draw[1c, out=60,in=120] (m) to (1);
	\draw[1c, out=-60,in=-120] (0) to (m);
	\draw[1c, out=-60,in=-120] (m) to (1);
	\draw[2c] (-.625,-.4) to (-.625,.4);
	\draw[2c] (.625,-.4) to (.625,.4);
	\node[scale=1.25] at (1.5,-.875) {,};
\end{scope}
\end{tikzpicture}
\end{equation*}

\noindent the first two diagrams are regular, hence weakly composable, but the third is not; nevertheless, it can be ``regularised'' by inserting degeneracies appropriately, for example

\begin{equation*}
\begin{tikzpicture}[baseline={([yshift=-.5ex]current bounding box.center)},scale=.8]
\begin{scope}
	\node[0c] (0) at (-1.25,0) {};
	\node[0c] (m) at (0,0) {};
	\node[0c] (1) at (1.25,0) {};
	\node[0c] (b) at (-.25,-1.25) {};
	\draw[1c, out=60,in=120] (0) to (m);
	\draw[1c, out=60,in=120] (m) to (1);
	\draw[1c, out=-60,in=-120] (0) to (m);
	\draw[1c, out=-60,in=-120] (m) to (1);
	\draw[1c, out=-90, in=165] (0) to (b);
	\draw[1c, out=0, in=-90] (b) to (1);
	\draw[1c, out=60, in=-90] (b) to (m);
	\draw[2c] (-.625,-.4) to (-.625,.4);
	\draw[2c] (.625,-.4) to (.625,.4);
	\draw[2c] (-.6,-1.2) to (-.6,-.3);
	\draw[2c] (.5,-1.2) to (.5,-.3);
	\node[scale=1.25] at (1.5,-1.25) {.};
\end{scope}
\end{tikzpicture}
\end{equation*}

\noindent In general, one can always compose after regularisation.

Not every filler exhibits a weak composite: it needs to be an \textbf{equivalence}. An $n$\nbd cell $e$ is an $n$\nbd equivalence if any other $n$\nbd cell ``factors through $e$ up to an $(n+1)$\nbd equivalence'' whenever possible. More precisely, $e$ is an equivalence if, whenever the input or output boundary of another $n$\nbd cell $x$ contains the input or output boundary of $e$, $x$ can be factorised as a weak composite of $e$ with another cell $x'$. 

Notice that the definition of $n$\nbd equivalence calls the definition of $(n+1)$\nbd equivalence: this is a coinductive definition. This is another point in which the richer combinatorics of shapes pays off. Our notion of equivalences is similar to the universal cells of the opetopic model \cite{baez1998higher}, where there is also a ``weak uniqueness'' requirement on the factorisation, forced by the asymmetry of opetopic cells: we do not require any uniqueness, but do require \emph{two-sided} factorisation. Compare the following two ways of characterising isomorphisms in a category:
\begin{enumerate}
	\item $f: x \to y$ is an isomorphism if every $g: x \to z$ factors as $f;h$ for a \emph{unique} $h: y \to z$, \emph{or}, dually, if every $k: z \to y$ factors as $\ell;f$ for a \emph{unique} $\ell: z \to x$;
	\item $f: x \to y$ is an isomorphism if every $g: x \to z$ factors as $f;h$ for some $h: y \to z$ \emph{and} every $k: z \to y$ factors as $\ell;f$ for some $\ell: z \to x$.
\end{enumerate}
The definition of universal cells in an opetopic set generalises the first, and the definition of equivalences in a diagrammatic set generalises the second. 

The uniqueness requirement becomes, logically, a universal quantification over higher equivalences, which does not translate into a valid coinductive definition: this is why the opetopic model only covers weak $n$\nbd categories for finite $n$, whereas RDSs are naturally a model of weak $\omega$\nbd categories.

\subsubsection*{Related work}

This work is strongly related to S.\ Henry's \cite{henry2018regular}. The notion of \emph{regularity} for polygraphs was developed independently by Henry and the author (I initially focussed on a stronger notion, which I later relabelled as \emph{constructibility}), and if Conjecture \ref{conj:freegen} is correct, then diagrammatic sets with face but not degeneracy maps are almost certainly equivalent to Henry's regular polygraphs, and the Kan diagrammatic sets of Section \ref{sec:homotopy} are fibrant in his weak model structure on regular polygraphs.

The last part of the article can be read as a companion and complement to Henry's work. The result that Henry achieves with model-category-theoretic methods, a weak Quillen equivalence \cite{henry2018weak} between spaces and regular polygraphs, is stronger than what we achieve here. However, we rely on purely combinatorial constructions, relating the combinatorics of diagrammatic sets to those of simplicial sets, thus furthering the connection between poset topology and the theory of pasting diagrams, which we started exploring in \cite{hadzihasanovic2018combinatorial}. Moreover, our definition of ``weak $\omega$\nbd groupoid'' is a special case of a definition of weak $\omega$\nbd category: finding such a generalisation  was left as an open problem for fibrant regular polygraphs. We have no doubt that a synthesis of the two approaches can be achieved.

The presence of structural degeneracies in diagrammatic sets goes somewhat contrary to Henry's goal of proving C.\ Simpson's conjecture --- that one can strictify associativity and interchange constraints in higher-categorical composition, without loss of generality --- since any such strictification would have to ignore, at least partially, this structure. On the other hand, degeneracies are necessary in the practice of higher-dimensional algebra, where one deals constantly with ``nullary'' inputs or outputs (think of the unitality equations for monoids, or the equational theory of adjunctions). This is the main application of the theory of polygraphs, and our main aim is to develop a practical framework.

The name \emph{diagrammatic set} is borrowed from Kapranov and Voevodsky \cite{kapranov1991infty}, which, in spite of its main result being incorrect, is a major influence of both this work and \cite{henry2018regular}. Kapranov and Voevodsky's diagrammatic sets were based on Johnson's composable pasting schemes \cite{johnson1989combinatorics}, a bad choice for reasons discussed in \cite[Appendix A.2]{henry2019non}, and are \emph{not} the same as our diagrammatic sets, based on regular directed complexes. However, the essential idea is the same, and the name was never used again, so we do not expect any confusion to arise.

Despite a number of works on the combinatorics of higher-categorical pasting in the late 80s and early 90s \cite{power1991pasting,steiner1993algebra}, \cite{kapranov1991infty} seems to be the only attempt to use them as the basis of a model of higher categories. Even so, diagrammatic sets were only used as a stepping stone towards the erroneous proof of the homotopy hypothesis for strict $\omega$\nbd groupoids, which explains, perhaps, why they were not picked up in any subsequent work.

Finally, in its intent of developing a practical, general framework for \emph{presented} higher-dimensional theories, our work is close to the ``quasistrict'' and ``associative'' $n$\nbd categories of Vicary, Dorn, and others \cite{vicary2016globular,dorn2018associative,reutter2019high}, tied to the development of the \texttt{homotopy.io} proof assistant. The technical foundation, however, is very different, and for now we do not know how the models compare.

\subsubsection*{Structure of the article}

We start in Section \ref{sec:basic} by giving the basic definitions of oriented graded posets, molecule (with spherical boundary), and regular directed complex. We show that the latter are closed under lax Gray products and joins. In Section \ref{sec:maps}, we define maps of regular directed complexes and their category, and look at some limits, colimits, and monoidal structures. Section \ref{sec:constructions} is devoted to various combinatorial constructions that are of interest in rewriting, or will be used in the rest of the paper. In Section \ref{sec:presheaves} we define diagrammatic sets, their morphisms, monoidal structures, and sequences of skeleta and coskeleta. 

Section \ref{sec:equivalences} introduces the notion of equivalence cell in a diagrammatic set, which is then used to define representable diagrammatic sets. Section \ref{sec:closure} is devoted to the proof that equivalence cells in an RDS have the expected properties: they include all degenerate cells, and are closed under higher equivalence, weak composition, and (in the appropriate cases) weak division. In Section \ref{sec:morphrep} we present an alternative characterisation of equivalences as weakly invertible cells in an RDS, and use it to prove that all morphisms of RDSs preserve equivalences. In Section \ref{sec:nerves}, we define a realisation functor of diagrammatic sets as $\omega$\nbd categories, together with its right adjoint, the diagrammatic nerve. We prove that, conditional to a conjecture on regular directed complexes, the diagrammatic nerve is full and faithful into the category of RDSs and morphisms that preserve a choice of compositors. In Section \ref{sec:lowdim}, we define $n$\nbd truncated RDSs, our model of weak $n$\nbd category for finite $n$, and prove that 2\nbd truncated RDSs and morphisms are equivalent to bicategories and morphisms that preserve unitors. 

In Section \ref{sec:simplices}, we look at the embedding of simplicial sets as a full subcategory of diagrammatic sets, and introduce various combinatorial constructions relating oriented simplices to more general molecules. In Section \ref{sec:combinatorial}, we introduce Kan diagrammatic sets, show that they are a special case of RDSs, and that they restrict to Kan complexes. We define the combinatorial homotopy groups of a Kan diagrammatic set in the expected way, and construct explicit isomorphisms with the combinatorial homotopy groups of the induced Kan complex. We deduce that the \emph{simplicial} geometric realisation of a Kan diagrammatic set is also naturally compatible with homotopy groups. Finally, in Section \ref{sec:realisation}, we define another geometric realisation of diagrammatic sets, and show that its right adjoint turns a space into a Kan diagrammatic set, and is a homotopical right inverse to the simplicial geometric realisation. We deduce C.\ Simpson's form of the homotopy hypothesis for Kan diagrammatic sets, together with its $n$\nbd truncated versions.

\subsubsection*{Outlook and open problems}

The contents of this article are meant to showcase the strengths of diagrammatic sets as a framework for higher-dimensional rewriting and algebra, which is why we favoured combinatorial proofs based on explicit manipulation of pasting diagrams. As a consequence, some results may be weaker than what we could achieve with a more abstract approach. Most notably, in the light of \cite{henry2018regular}, it should be possible to promote our form of the homotopy hypothesis to a (weak) Quillen equivalence of (weak) model structures with only a modicum of original work. Still better, as mentioned earlier, would be a verification of the Barwick---Schommer-Pries axioms for RDSs.

Another outstanding problem is Conjecture \ref{conj:freegen}, whose proof would remove the conditional from the results of Section \ref{sec:nerves}, and confirm that strict $\omega$\nbd categories embed into our model in the expected way. An alternative would be to recalibrate the combinatorial substrate as discussed in Remark \ref{rmk:ifconjfalse}. 

There is, then, the question of explicitly comparing our model to other models of weak higher categories, not only at the level of their homotopy theory. This is something on which very little is known in general. On the grounds that atoms encompass so many cell shapes, we think that diagrammatic sets may be a good place for comparing ``geometric'' definitions of higher categories. In particular, we conjecture in Section \ref{sec:combinatorial} that the restriction of Kan diagrammatic sets to Kan complexes generalises to a restriction of RDSs to complicial sets; and since our notions of equivalences and representability are inspired by the opetopic or multitopic model, it is plausible that RDSs also restrict to some version of opetopic higher categories. In Section \ref{sec:lowdim}, we also vaguely outline a translation to algebraic models with underlying $\omega$\nbd graphs. As for models based on iterated enrichment or Segal-type conditions, at the moment we do not know how to compare.

The theory of equivalences and representability in diagrammatic sets is made much simpler by the presence of degeneracies. There remains the problem of defining a model of weak $\omega$\nbd categories without degeneracies, generalising fibrant regular polygraphs, to which the strictification argument of \cite{henry2018regular} would apply. In \cite[Appendix B]{hadzihasanovic2018combinatorial}, we proposed a notion of universal cell and representability for constructible polygraphs, which could be extended to regular polygraphs; as discussed there, there are indications from low dimensions that this could give a good model of weak higher categories, but the coinductive arguments required for proving any property of universal cells seem much more complicated. 

Finally, in a certain sense, all this work stems from the problem of formalising the ``smash product of algebraic theories'' from \cite[Section 2.3]{hadzihasanovic2017algebra}, and kickstarting the programme of compositional universal algebra which we outlined there. We believe that diagrammatic sets answer that problem, and plan to return to it in the near future.

\subsubsection*{Acknowledgements}
This work was supported by a JSPS Postdoctoral Research Fellowship and by JSPS KAKENHI Grant Number 17F17810. Thanks to Pierre-Louis Curien, Yves Guiraud, Simon Henry, Alex Kavvos, Chaitanya Leena-Subramaniam, and Jamie Vicary for helpful discussions on aspects of the article.

\section{Regular directed complexes}

\subsection{Basic definitions} \label{sec:basic}

We begin by recapitulating some definitions from \cite{hadzihasanovic2018combinatorial}, and generalising some others. The following are standard notions of order theory and poset topology.

\begin{dfn}
Let $P$ be a finite poset with order relation $\leq$. For all elements $x, y \in P$, we say that $y$ \emph{covers} $x$ if $x < y$ and, for all $y' \in X$, if $x < y' \leq y$, then $y' = y$. 

The \emph{Hasse diagram} of $P$ is the finite directed graph $\hass{P}$ with $\hass{P}_0 := P$ as set of vertices, and $\hass{P}_1 := \{c_{y,x}: y \to x \,|\, y \text{ covers } x\}$ as set of edges. We can reconstruct the partial order on $P$ from its Hasse diagram, letting $x \leq y$ in $P$ if and only if there is a path from $y$ to $x$ in $\hass{P}$.

Let $P_\bot$ be $P$ extended with a least element $\bot$. We say that $P$ is \emph{graded} if, for all $x \in P$, all paths from $x$ to $\bot$ in $\hass{P}_\bot$ have the same length. If $P$ is graded, for each $x \in P$, let $n+1$ be the length of paths from $x$ to $\bot$; then we define $\dmn{x} := n$, the \emph{dimension} of $x$, and let $P^{(n)} := \{x \in P \,|\, \dmn{x} = n\}$ and $\skel{n}{P} := \{x \in P \,|\, \dmn{x} \leq n\}$. We call the latter the \emph{$n$\nbd skeleton} of $P$.

For all $x, y \in P$ such that $x \leq y$, the \emph{interval} $[x,y]$ from $x$ to $y$ is the subset $\{z \in P \,|\, x \leq z \leq y\}$. If $P$ is graded, all paths from $y$ to $x$ in $\hass{P}$ have length $\dmn{y} - \dmn{x}$; this is the \emph{length} of the interval $[x,y]$.
\end{dfn}

In what follows, we assume the usual ``sign rule'' multiplication on $\{+,-\}$. We will often let variables $\alpha, \beta, \ldots$ range implicitly over $\{+,-\}$.

\begin{dfn}
An \emph{oriented graded poset} is a finite graded poset $P$ together with an edge-labelling $o: \hass{P}_1 \to \{+,-\}$ of the Hasse diagram of $P$ (an \emph{orientation}). 

Given a graded poset $P$ with orientation $o$, we extend $o$ to $P_\bot$, by setting $o(c_{x,\bot}) := +$ for all minimal elements $x$ of $P$. An \emph{oriented thin poset} is an oriented graded poset $P$ with the following property: each interval $[x,y]$ of length 2 in $P_\bot$ is of the form
\begin{equation} \label{eq:oriented_thin}
\begin{tikzpicture}[baseline={([yshift=-.5ex]current bounding box.center)}]
	\node[scale=1.25] (0) at (0,2) {$y$};
	\node[scale=1.25] (1) at (-1,1) {$z_1$};
	\node[scale=1.25] (1b) at (1,1) {$z_2$};
	\node[scale=1.25] (2) at (0,0) {$x$};
	\draw[1c] (0) to node[auto,swap] {$\alpha_1$} (1);
	\draw[1c] (0) to node[auto] {$\alpha_2$} (1b);
	\draw[1c] (1) to node[auto,swap] {$\beta_1$} (2);
	\draw[1c] (1b) to node[auto] {$\beta_2$} (2);
\end{tikzpicture}
\end{equation}
in the labelled Hasse diagram $\hass{P}_\bot$, with $\alpha_1\beta_1 = -\alpha_2\beta_2$.
\end{dfn}

The following concerns special subsets of (oriented, graded) posets.

\begin{dfn}
Let $P$ be a poset, and $U \subseteq P$. The \emph{closure} of $U$ is the subset $\clos{U} := \{x \in P \,|\, \exists y \in U \,x \leq y\}$ of $P$. We say that $U$ is \emph{closed} if $U = \clos{U}$. 

Suppose $P$ is graded, and $U \subseteq P$ is closed; $U$ is also graded with the partial order inherited from $P$. We write $\dmn{U} := \mathrm{max}\{\dmn{x} \,|\, x \in U\}$ if $U$ is inhabited, and $\dmn{\emptyset} = -1$; in particular, $\dmn{\clos{\{x\}}} = \dmn{x}$. We say that $U$ is \emph{pure} if all its maximal elements have dimension $n = \dmn{U}$, equivalently, if $U = \clos{(U^{(n)})}$.

Let $P$ be an oriented graded poset, $U \subseteq P$ a closed subset; $U$ inherits an orientation from $P$ by restriction. For $\alpha \in \{+,-\}$ and $n \in \mathbb{N}$, we define
\begin{align*}
	\sbord{n}{\alpha} U & := \{x \in U \,|\, \dmn{x} = n \text{ and, for all $y \in U$, if $y$ covers $x$, then $o(c_{y,x}) = \alpha$} \}, \\
	\bord{n}{\alpha} U & := \clos{(\sbord{n}{\alpha} U)} \cup \{ x \in U \,|\, \text{for all $y \in U$, if $x \leq y$, then $\dmn{y} \leq n$} \}, \\
	\sbord{n}{} U & := \sbord{n}{+}U \cup \sbord{n}{-}U, \quad \qquad \quad \bord{n}{}U := \bord{n}{+}U \cup \bord{n}{-} U.
\end{align*}
If $U$ is $n$\nbd dimensional, we write $\sbord{}{\alpha}U := \sbord{n}{\alpha}U$ and $\bord{}{\alpha}U := \bord{n}{\alpha}U$. We call $\bord{n}{-}U$ the \emph{input $n$\nbd boundary}, and $\bord{n}{+}U$ the \emph{output $n$\nbd boundary} of $U$. For all $x \in P$, we will use the short-hand notation $\sbord{n}{\alpha}x := \sbord{n}{\alpha}\clos{\{x\}}$ and $\bord{n}{\alpha}x := \bord{n}{\alpha}\clos{\{x\}}$.
\end{dfn}

\begin{remark}
In particular, if $U$ is $n$\nbd dimensional, then $\bord{m}{\alpha}U = U$ for all $m \geq n$. If $U$ is also pure, then $\bord{k}{\alpha}U = \clos{(\sbord{k}{\alpha} U)}$ for all $k < n$.
\end{remark}

\begin{dfn}
Let $U_1, U_2 \subseteq P$ be closed subsets of an oriented graded poset. If $U_1 \cap U_2 = \bord{n}{+}U_1 = \bord{n}{-}U_2$, let
\begin{equation*}
	U_1 \cp{n} U_2 := U_1 \cup U_2;
\end{equation*}
this defines partial $n$\nbd composition operations on the closed subsets of $P$, for all $n$. 

Let $P$ be an oriented graded poset. For each $n \in \mathbb{N}$, we define a family $\molec{n}{P}$ of closed subsets of $P$, the \emph{$n$\nbd molecules} of $P$, together with a partial order $\submol$ on each $\molec{n}{P}$, to be read ``is a submolecule of''.

Let $U \subseteq P$ be closed. Then $U \in \molec{n}{P}$ if and only if $\dmn{U} \leq n$, and, inductively on proper subsets of $U$, either
\begin{itemize}
	\item $U$ has a greatest element, in which case we call it an \emph{atom}, or
	\item there exist $n$\nbd molecules $U_1, U_2$ properly contained in $U$, and $k < n$ such that $U_1 \cap U_2 = \bord{k}{+}U_1 = \bord{k}{-}U_2$, and $U = U_1 \cp{k} U_2$.
\end{itemize}
We define $\submol$ to be the smallest partial order relation on $\molec{n}{P}$ such that $U_1, U_2 \submol U$ for all triples $U, U_1, U_2$ in the latter situation.

We say that $P$ itself is an $n$\nbd molecule if $P \in \molec{n}{P}$.
\end{dfn}

The following is a simple property of molecules in an oriented graded poset.
\begin{lem} \label{lem:molecbasic}
Let $U$ be an $n$\nbd dimensional molecule in an oriented graded poset, $n > 0$. Then:
\begin{itemize}
	\item each $x \in \sbord{}{+}U \cap \sbord{}{-}U$ is not covered by any element,
	\item each $x \in \sbord{}{}U \setminus (\sbord{}{+}U \cap \sbord{}{-}U)$ is covered by a single element, and 
	\item each $x \in U^{(n-1)}\setminus \sbord{}{}U$ is covered by exactly two elements with opposite orientations.
\end{itemize}
\end{lem}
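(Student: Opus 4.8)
The plan is to deduce the three items from a single counting statement and to prove the latter by induction on the recursive structure of $U$ as a molecule. For $x \in U^{(n-1)}$ write $C(x)$ for the set of elements of $U$ covering $x$; since $U$ is graded these are exactly the $y \in U$ with $\dmn{y} = n$ and $x < y$, and by definition $x \in \sbord{}{\alpha}U$ if and only if every member of $C(x)$ carries orientation $\alpha$. The first item is then immediate, for no edge of the Hasse diagram is labelled both $+$ and $-$: if $x \in \sbord{}{+}U \cap \sbord{}{-}U$ then $C(x) = \emptyset$. The remaining two reduce to the claim $(\star)$: \emph{for every $x \in U^{(n-1)}$ one has $|C(x)| \leq 2$, and if $|C(x)| = 2$ then its two elements carry opposite orientations.} Granting $(\star)$: if $x \in \sbord{}{}U$ but $x \notin \sbord{}{+}U \cap \sbord{}{-}U$, say $x \notin \sbord{}{+}U$, then $C(x)$ is nonempty and, since $x \in \sbord{}{-}U$, all of its elements are oriented $-$, so $|C(x)| \neq 2$ by $(\star)$ and hence $|C(x)| = 1$; and if $x \in U^{(n-1)} \setminus \sbord{}{}U$ then $C(x)$ contains an element of each orientation, so $|C(x)| \geq 2$, whence $|C(x)| = 2$ with opposite orientations by $(\star)$.

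To prove $(\star)$ I would induct on the construction of $U$. If $U$ is an atom with greatest element $z$, then $\dmn{z} = n$ and every $x \in U^{(n-1)}$ satisfies $x < z$ with $z$ the unique $n$\nbd dimensional element above it, so $|C(x)| = 1$. If $U = U_1 \cp{k} U_2$ with $U_1, U_2$ proper submolecules and $k < n$ --- we may take both to be $n$\nbd dimensional, since otherwise the codimension-one elements of the lower-dimensional factor are maximal there and the claim reduces to the other factor --- fix $x \in U^{(n-1)}$ and set $C_i(x) := \{y \in U_i \mid \dmn{y} = n,\ x < y\}$. Since $U_1 \cap U_2 = \bord{k}{+}U_1$ has dimension $\leq k < n$ it contains no $n$\nbd dimensional element, so $C(x) = C_1(x) \sqcup C_2(x)$. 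If $x$ lies in only one factor, then by closedness every $n$\nbd dimensional element above $x$ lies in that factor, so $C(x) = C_i(x)$ and $(\star)$ for $U$ follows from $(\star)$ for $U_i$. Otherwise $x \in U_1 \cap U_2 = \bord{k}{+}U_1 = \bord{k}{-}U_2$, and since $\dmn{x} = n-1 \leq k < n$ necessarily $k = n-1$; unfolding the definition of $\bord{n-1}{+}U_1$ then shows that either no $n$\nbd dimensional element of $U_1$ lies above $x$, or else $x \in \sbord{n-1}{+}U_1$, in which case every member of $C_1(x)$ is oriented $+$ and so $|C_1(x)| \neq 2$ by the inductive hypothesis; either way $|C_1(x)| \leq 1$, oriented $+$ when nonempty. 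Symmetrically $|C_2(x)| \leq 1$, oriented $-$ when nonempty, so $|C(x)| \leq 2$, with opposite orientations when it equals $2$.

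The step with real content is the last one, where $x$ sits on the pasting boundary $U_1 \cap U_2$ of two molecules glued along a codimension-one molecule: one must combine the explicit description of $\bord{n-1}{+}U_1$ (and of $\bord{n-1}{-}U_2$) with the inductive form of $(\star)$ to see that $x$ has at most one cover inside each factor, of the sign forced by the pasting, so that the two contributions can only add up to a pair of oppositely oriented covers. The rest --- that covers of a codimension-one element are necessarily top-dimensional, and that $C(x)$ splits as a disjoint union over the factors of a composite --- is routine bookkeeping that I would state but not dwell on.
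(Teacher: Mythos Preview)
Your proof is correct and follows essentially the same induction on the structure of $U$ as the paper's, with the same case split (atom; composite with $x$ in one factor; composite with $x$ on the gluing boundary, forcing $k=n-1$). The only difference is organisational: you package the inductive statement as the single counting claim $(\star)$ and derive all three items from it, whereas the paper carries the three items through the induction directly.
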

\begin{proof}
We proceed by induction on submolecules of $U$. When $U$ is an atom, $\sbord{}{+}U \cap \sbord{}{-}U$ and $x \in U^{(n-1)}\setminus \sbord{}{}U$ are both empty, and obviously any $x \in \sbord{}{}U$ is covered only by the greatest element. 

Suppose $U$ has a proper decomposition $U_1 \cp{k} U_2$. If $k < n-1$, then $U_1 \cap U_2$ contains no $(n-1)$\nbd dimensional elements, so any $(n-1)$\nbd dimensional element of $U$ is covered only by elements of $U_1$, or only by elements of $U_2$, and the statement follows from the inductive hypothesis. If $k = n-1$, the only additional possibility is that an $(n-1)$\nbd dimensional element is covered both by an element of $U_1$ and an element of $U_2$, in which case it belongs to 
\begin{equation*}
	(U_1 \cap U_2)^{(n-1)} = \sbord{n-1}{+}U_1 = \sbord{n-1}{-}U_2,
\end{equation*}
and by the inductive hypothesis it is covered by a single element of $U_1$ with orientation $+$ and by a single element of $U_2$ with orientation $-$. 
\end{proof}

\begin{dfn}
A \emph{directed complex} is an oriented graded poset $P$ such that, for all $x \in P$, with $\dmn{x} = n > 0$, and all $\alpha, \beta$,
\begin{enumerate}
	\item $\bord{}{\alpha}x$ is a molecule, and
	\item $\bord{}{\alpha}(\bord{}{\beta}x) = \bord{n-2}{\alpha}x$.
\end{enumerate}
\end{dfn}

\begin{remark}
Compared to Steiner's definition of directed complex in \cite{steiner1993algebra}, ours has in addition the built-in constraint that $\sbord{}{+}x$ and $\sbord{}{-}x$ are disjoint for all $x$.
\end{remark}

\begin{exm}
For each $n \in \mathbb{N}$, let $O^n$ be the poset with a pair of elements $\fnct{k}^+, \fnct{k}^-$ for each $k < n$ and a greatest element $\fnct{n}$, with the partial order defined by $\fnct{j}^\alpha \leq \fnct{k}^\beta$ if and only if $j \leq k$. This is a graded poset, with $\dmn{\fnct{n}} = n$ and $\dmn{\fnct{k}^\alpha} = k$ for all $k < n$. 

With the orientation $o(c_{y,x}) := \alpha$ if $x = \fnct{k}^\alpha$, and $\alpha \in \{+,-\}$, it becomes a directed complex; in fact, it is the smallest $n$\nbd dimensional directed complex. We call $O^n$ the \emph{$n$\nbd globe}.

There is a category $\cat{O}$ whose objects are the $n$\nbd globes, and for all $n$ and $k < n$ there are exactly two morphisms $\imath^+, \imath^-: O^k \hookrightarrow O^n$, defined by $\imath^\alpha(\fnct{k}) = \fnct{k}^\alpha$, and $\imath^\alpha(\fnct{j}^\beta) = \fnct{j}^\beta$ for all $j < k$. 

The category $\globset$ of presheaves on $\cat{O}$ is the category of $\omega$\nbd graphs \cite[Section 1.4]{leinster2004higher}, also known as globular sets. We represent an $\omega$\nbd graph $X$ as a diagram
\begin{equation*}
\begin{tikzpicture}[baseline={([yshift=-.5ex]current bounding box.center)}]
	\node[scale=1.25] (0) at (0,0) {$X_0$};
	\node[scale=1.25] (1) at (2,0) {$X_1$};
	\node[scale=1.25] (2) at (4,0) {$\ldots$};
	\node[scale=1.25] (3) at (6,0) {$X_n$};
	\node[scale=1.25] (4) at (8,0) {$\ldots$};
	\draw[1c] (1.west |- 0,.15) to node[auto,swap] {$\bord{}{+}$} (0.east |- 0,.15);
	\draw[1c] (1.west |- 0,-.15) to node[auto] {$\bord{}{-}$} (0.east |- 0,-.15);
	\draw[1c] (2.west |- 0,.15) to node[auto,swap] {$\bord{}{+}$} (1.east |- 0,.15);
	\draw[1c] (2.west |- 0,-.15) to node[auto] {$\bord{}{-}$} (1.east |- 0,-.15);
	\draw[1c] (3.west |- 0,.15) to node[auto,swap] {$\bord{}{+}$} (2.east |- 0,.15);
	\draw[1c] (3.west |- 0,-.15) to node[auto] {$\bord{}{-}$} (2.east |- 0,-.15);
	\draw[1c] (4.west |- 0,.15) to node[auto,swap] {$\bord{}{+}$} (3.east |- 0,.15);
	\draw[1c] (4.west |- 0,-.15) to node[auto] {$\bord{}{-}$} (3.east |- 0,-.15);
\end{tikzpicture}
\end{equation*}
of sets and functions, where $X_n := X(O^n)$, and $\bord{}{\alpha}: X_n \to X_{n-1}$ is the image of $\imath^\alpha: O^{n-1} \hookrightarrow O^n$.
\end{exm}

Next, we recall the definition of strict $\omega$\nbd categories.

\begin{dfn}
Let $X$ be an $\omega$\nbd graph, and for all $x \in X_n$ and $k < n$, let
\begin{equation*}
	\bord{k}{\alpha}x = \underbrace{\bord{}{\alpha}(\ldots(\bord{}{\alpha}}_{n-k} x)).
\end{equation*}  
We call the elements $x \in X_n$ the \emph{$n$\nbd cells} of $X$. Given two $n$\nbd cells $x$, $y$ of $X$, and $k < n$, we say that $x$ and $y$ are \emph{$k$\nbd composable}, and write $x \comp{k} y$, if $\bord{k}{+}x = \bord{k}{-}y$.

We write $X_n \comp{k} X_n \subseteq X_n \times X_n$ for the set of pairs of $k$\nbd composable $n$\nbd cells of $X$.
\end{dfn}

\begin{dfn}
A \emph{partial $\omega$\nbd category} is an $\omega$\nbd graph $X$ together with operations
\begin{equation*}
	\idd{}: X_n \to X_{n+1}, \qquad \cp{k}: X_n \comp{k} X_n \pfun X_n,
\end{equation*}
called \emph{unit} and \emph{$k$\nbd composition}, for all $n \in \mathbb{N},$ and $k < n$, where $\idd{}$ is a total function and the $\cp{k}$ are partial functions. For all $k$\nbd cells $x$, and $n > k$, let
\begin{equation*}
	\idd{n}x := \underbrace{(\idd{}\ldots\idd{})}_{n-k} x,
\end{equation*}
an $n$\nbd cell of $X$. The operations are required to satisfy the following conditions:
\begin{enumerate}
	\item for all $n$\nbd cells $x$, and all $k < n$, 
	\begin{align*}
	 	\bord{}{\alpha}(\varepsilon x) & = x, \\
		x \cp{k} \idd{n}(\bord{k}{+}x) & = x = \idd{n}(\bord{k}{-}x) \cp{k} x,
	\end{align*}
	where the two $k$\nbd compositions are always defined;
	\item for all $(n+1)$\nbd cells $x, y$, and all $k < n$, whenever the left-hand side is defined,
	\begin{align*}
		\bord{}{-}(x \cp{n} y) & = \bord{}{-} x, \\
		\bord{}{+}(x \cp{n} y) & = \bord{}{+} y, \\
		\bord{}{\alpha}(x \cp{k} y) & = \bord{}{\alpha} x \cp{k} \bord{}{\alpha} y;
	\end{align*}
	\item for all cells $x, y, x', y'$, and all $n$ and $k < n$, whenever the left-hand side is defined, 
	\begin{align*}
		\idd{}(x \cp{n} y) & = \idd{}x \cp{n} \idd{}y; \\
		(x \cp{n} x') \cp{k} (y \cp{n} y') & = (x \cp{k} y) \cp{n} (x' \cp{k} y');
	\end{align*}
	\item for all cells $x, y, z$, and all $n$, whenever either side is defined,
	\begin{equation*}
		(x \cp{n} y) \cp{n} z = x \cp{n} (y \cp{n} z).
	\end{equation*}
\end{enumerate} 
A partial $\omega$\nbd category is an \emph{$\omega$\nbd category} if the $\cp{k}$ are total functions.

A \emph{functor} of partial $\omega$\nbd categories is a morphism of the underlying $\omega$\nbd graphs that commutes with units and compositions. A functor is an \emph{inclusion} if it is injective on cells of each dimension. Partial $\omega$\nbd categories and functors form a category $\pomegacat$, with a full subcategory $\omegacat$ on $\omega$\nbd categories.

The inclusion of $\omegacat$ into $\pomegacat$ has a left adjoint $(-)^*: \pomegacat \to \omegacat$. Given a partial $\omega$\nbd category $X$, we call $X^*$ the $\omega$\nbd category \emph{generated} by $X$.
\end{dfn}

\begin{prop}\emph{\cite[Proposition 2.9 and 2.13]{steiner1993algebra}}
Let $P$ be a directed complex. 
\begin{enumerate}
	\item The diagram
\begin{equation*}
\begin{tikzpicture}
	\node[scale=1.25] (0) at (0,0) {$\molec{0}{P}$};
	\node[scale=1.25] (1) at (2.5,0) {$\molec{1}{P}$};
	\node[scale=1.25] (2) at (5,0) {$\ldots$};
	\node[scale=1.25] (3) at (7.5,0) {$\molec{n}{P}$};
	\node[scale=1.25] (4) at (10,0) {$\ldots$};
	\draw[1c] (1.west |- 0,.15) to node[auto,swap] {$\bord{}{+}$} (0.east |- 0,.15);
	\draw[1c] (1.west |- 0,-.15) to node[auto] {$\bord{}{-}$} (0.east |- 0,-.15);
	\draw[1c] (2.west |- 0,.15) to node[auto,swap] {$\bord{}{+}$} (1.east |- 0,.15);
	\draw[1c] (2.west |- 0,-.15) to node[auto] {$\bord{}{-}$} (1.east |- 0,-.15);
	\draw[1c] (3.west |- 0,.15) to node[auto,swap] {$\bord{}{+}$} (2.east |- 0,.15);
	\draw[1c] (3.west |- 0,-.15) to node[auto] {$\bord{}{-}$} (2.east |- 0,-.15);
	\draw[1c] (4.west |- 0,.15) to node[auto,swap] {$\bord{}{+}$} (3.east |- 0,.15);
	\draw[1c] (4.west |- 0,-.15) to node[auto] {$\bord{}{-}$} (3.east |- 0,-.15);
\end{tikzpicture}
\end{equation*}
is an $\omega$\nbd graph $\molec{}{P}$. 

\item For any $n$\nbd molecule $U$, let $\idd{}(U) := U$ as an $(n+1)$\nbd molecule, and for any pair $U_1, U_2$ of $n$\nbd molecules, let $U_1 \cp{k} U_2$ be defined if and only if $U_1 \cap U_2 = \bord{k}{+}U_1 = \bord{k}{-}U_2$, and in that case be equal to $U_1 \cup U_2$. With this assignment, $\molec{}{P}$ is a partial $\omega$\nbd category. 

\item The unit $\molec{}{P} \to (\molec{}{P})^*$ is an inclusion of partial $\omega$\nbd categories.
\end{enumerate}
\end{prop}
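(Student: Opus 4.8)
The plan is to prove (1) and (2) together by a single induction on submolecules --- reducing every assertion either to the directed complex axioms, in the atomic case, or to elementary bookkeeping on unions of closed subsets, in the composite case --- and then to obtain (3) by exhibiting a faithful functor of partial $\omega$\nbd categories out of $\molec{}{P}$ into a genuine $\omega$\nbd category. This is essentially the route Steiner takes in \cite{steiner1993algebra}, and I would follow it.

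The combinatorial engine behind (1) and (2) is the following, proved by induction on submolecules of an $n$\nbd dimensional molecule $U$: the subsets $\bord{}{-}U$ and $\bord{}{+}U$ are $(n-1)$\nbd molecules, and if $U = U_1 \cp{k} U_2$ then $\bord{}{\alpha}U = \bord{}{\alpha}U_1 \cp{k} \bord{}{\alpha}U_2$ when $k < n-1$, while $\bord{}{-}U = \bord{}{-}U_1$ and $\bord{}{+}U = \bord{}{+}U_2$ when $k = n-1$. The atomic base case is exactly the first directed complex axiom; in the composite case, Lemma \ref{lem:molecbasic} identifies which $(n-1)$\nbd cells are covered once and which twice, so that the indicated unions are legitimate $k$\nbd composites and the inductive hypothesis applies. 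From this the $\omega$\nbd graph structure of (1) is immediate, and the identity $\bord{}{\alpha}(\bord{}{\beta}U) = \bord{n-2}{\alpha}U$ follows from the same case split, using the second directed complex axiom atomically. For (2), the unit laws are almost formal, since $\idd{}(U) = U$ as a subset and $\bord{k}{+}U$ is pure $k$\nbd dimensional, hence its own $k$\nbd boundary, so that $U \cp{k} \idd{n}(\bord{k}{+}U) = U \cup \bord{k}{+}U = U$; the compositional boundary formulas are instances of the engine above; and the interchange and associativity laws are then trivial at the level of underlying subsets, because both sides of each equation are the union of the same finite collection of molecules --- the only content being that, when one side is defined, the intersections and boundaries governing definedness of the other side match, which the boundary formulas supply.

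For (3), it suffices to produce a genuine $\omega$\nbd category $\mathbf{C}$ and a functor $F : \molec{}{P} \to \mathbf{C}$ of partial $\omega$\nbd categories injective on cells of each dimension: then $F$ factors through the unit $\molec{}{P} \to (\molec{}{P})^*$, and injectivity of $F$ forces the unit to be an inclusion. For $\mathbf{C}$ I would take the $\omega$\nbd category associated to the augmented directed chain complex of $P$ in the sense of \cite{steiner1993algebra}, with $F$ sending an $n$\nbd molecule to the pair of $\pm$\nbd chains it spans in each dimension; checking that $F$ is a functor of partial $\omega$\nbd categories is a direct computation from the boundary formulas of (2). The delicate point, which I expect to be the main obstacle, is the injectivity of $F$: a molecule is the closure of its set of top-dimensional cells, so one must reconstruct that set from the chain data, and this is precisely where the structural constraints on directed complexes --- loop-freeness at the chain level, ruling out cancellations --- are used in an essential way. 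Everything else is induction on submolecules together with set-theoretic bookkeeping.
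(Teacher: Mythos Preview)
The paper does not supply its own proof of this proposition: it is stated with a bare citation to \cite[Propositions 2.9 and 2.13]{steiner1993algebra} and no proof environment follows. So there is nothing in the paper to compare your proposal against directly; what I can do is assess your sketch against what Steiner actually does.

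For parts (1) and (2) your plan is correct and is indeed Steiner's route: the ``combinatorial engine'' you describe --- that $\bord{}{\alpha}U$ is a molecule, with the two-case formula for $\bord{}{\alpha}(U_1 \cp{k} U_2)$ depending on whether $k = n-1$ or $k < n-1$ --- is exactly the content of Steiner's Proposition~2.9, proved by induction on submolecules with the directed complex axioms as base case. The unit, associativity, and interchange laws then reduce, as you say, to the fact that both sides name the same union of closed subsets once the boundary formulas guarantee definedness.

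For part (3) your proposed detour through augmented directed chain complexes is anachronistic and introduces a hypothesis you do not have. Steiner's chain-complex machinery is from his 2004 paper; the 1993 argument for Proposition~2.13 is more elementary and does \emph{not} require loop-freeness. The point is rather that $(\molec{}{P})^*$ can be described concretely: its $n$-cells are precisely the $n$-molecules of $P$, with the total $\cp{k}$ extending the partial one, and one checks directly that the $\omega$-category axioms hold and that the inclusion of $\molec{}{P}$ is the identity on underlying sets. Your worry that ``loop-freeness at the chain level'' is needed to rule out cancellations is misplaced here --- the paper explicitly does not assume loop-freeness (cf.\ the remark after Definition~\ref{dfn:freegen} and Remark~\ref{rmk:ifconjfalse}), and if your argument genuinely required it, it would not prove the proposition as stated. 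So for (3), drop the chain-complex functor and instead verify directly that the free completion adds no new cells and identifies no old ones.
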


\begin{remark}
In particular, the \emph{globularity} property $\bord{j}{\alpha}(\bord{k}{\beta}U) = \bord{j}{\alpha}U$ holds for all $n$\nbd molecules $U$, all $\alpha \in \{+,-\}$, and $j < k < n$.
\end{remark}

\begin{lem} \label{lem:composition_form}
Let $U$ be a molecule in a directed complex $P$. Then $U$ is an atom, or
\begin{equation*}
	U = U_1 \cp{k} \ldots \cp{k} U_m,
\end{equation*}
for some molecules $U_1,\ldots,U_m$, where $m > 1$, each $U_i$ has exactly one element of dimension greater than $k$, and at most one of them has an element of dimension greater than $k + 1$.
\end{lem}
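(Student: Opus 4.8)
The plan is to induct on the submolecules of $U$, equivalently on the number of its elements. If $U$ is an atom the first alternative holds, so assume it is not; then $U$ has at least two maximal elements (if the maximal element $z$ were unique, $U$ would equal $\clos{\{z\}}$), hence at least two elements of dimension $> 0$. Let $k$ be the \emph{largest} integer for which $U$ has at least two elements of dimension greater than $k$. By maximality, the maximal elements of $U$ of dimension $> k$ --- call them $z_1, \ldots, z_m$, with $m > 1$ --- include at most one of dimension $> k+1$, the rest having dimension exactly $k+1$. The target is a factorisation $U = U_1 \cp{k} \ldots \cp{k} U_m$ with $z_i \in U_i$: since every element of $U$ of dimension $> k$ is either some $z_i$ or a face of the (unique, if it exists) $z_i$ of dimension $> k+1$, such a factorisation yields exactly the stated conclusion.

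To build it I would start from a recursive witness $U = V \cp{j} W$ from the definition of molecule, with $V, W \submol U$ strictly smaller and $j < \dmn{U}$, apply the inductive hypothesis to $V$ and to $W$, and then reassemble. The essential ingredient is that $\molec{}{P}$ is a partial $\omega$\nbd category by the preceding proposition \cite{steiner1993algebra}, so its composites are associative and satisfy the unit and interchange laws. Associativity flattens iterated composites at a common level; the unit and interchange laws give, whenever $\bord{\ell}{+}a = \bord{\ell}{-}b$, the identity $a \cp{\ell} b = (a \cp{\ell} \idd{}(\bord{\ell+1}{-}b)) \cp{\ell+1} (\idd{}(\bord{\ell+1}{+}a) \cp{\ell} b)$, in which the unit is trivial on underlying sets; iterating it \emph{raises} a composition level one step at a time, whiskering each factor by a degenerate collar, and one can keep raising precisely until level $k$, beyond which the whiskers would absorb entire factors. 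Performing this uniformly on the factors produced from $V$, from $W$, and from the level\nbd$j$ gluing, and then pruning redundant factors, should yield a $\cp{k}$\nbd composite in which each surviving factor contains exactly one of $z_1, \ldots, z_m$. The role of Lemma \ref{lem:molecbasic}, applied to the part of $U$ in dimensions $> k$, is to rule out ``cyclic'' configurations of the $z_i$ along their $k$\nbd boundaries, which is what lets the factors be linearly ordered by $\cp{k}$ so that the flattening terminates.

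I expect the crux to be exactly this level\nbd raising and flattening: verifying that the interchange rewrites fit together coherently across $V$, $W$ and the connecting composite at once; that the whiskered factors are still molecules, so the inductive hypothesis and the unit laws apply to them; that after pruning precisely one $z_i$ remains in each factor and the number of factors is the asserted $m$; and that one never overshoots level $k$. All of this comes down to careful bookkeeping with input and output $k$\nbd boundaries under $\cp{k}$ and under the globularity identity $\bord{j}{\alpha}(\bord{k}{\beta}U) = \bord{j}{\alpha}U$, and it is here that the directed\nbd complex axioms, together with Lemma \ref{lem:molecbasic}, do the real work.
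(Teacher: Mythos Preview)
The paper gives no independent argument here: it simply observes that Steiner's proof of \cite[Proposition~4.2]{steiner2004omega}, stated for $\omega$\nbd categories with a set of composition-generators, applies verbatim to the partial $\omega$\nbd category $\molec{}{P}$, which is generated by the atoms of $P$. Your sketch is an attempt to reconstruct that argument, and the core mechanism --- use associativity, units and interchange in $\molec{}{P}$ to rewrite a composite expression until its outermost operation is $\cp{k}$ for $k$ as large as possible --- is indeed Steiner's.

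Two points on where your outline diverges from the cited argument. First, Steiner does not fix $k$ at the outset by counting elements of the poset and then aim for a decomposition at that specific level; he starts from an arbitrary composite expression for $U$ (which exists by the recursive definition of molecule) and rewrites it algebraically, letting the top level emerge. Your plan of pinning down $k$ first and then applying the inductive hypothesis separately to $V$ and $W$ forces you to reconcile several levels at once (the target $k$, the levels coming from the inductive decompositions of $V$ and $W$, and the original $j$), which is exactly the bookkeeping you anticipate; working with a single composite expression throughout sidesteps most of it. Second, the appeal to Lemma~\ref{lem:molecbasic} to rule out ``cyclic configurations'' of the $z_i$ is not part of Steiner's proof and is not needed: once one knows $\molec{}{P}$ is a partial $\omega$\nbd category (the preceding proposition), the rewriting is purely in terms of the axioms, and no direct acyclicity argument on the poset is invoked. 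So your outline is in the right spirit, but the ``crux'' you flag largely dissolves if you follow Steiner in manipulating composite expressions rather than the underlying poset directly.
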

\begin{proof}
The proof of \cite[Proposition 4.2]{steiner2004omega}, referring to $\omega$\nbd categories with a set of composition-generators, works also for the partial $\omega$\nbd category $\molec{}{P}$, which is generated by the atoms of $P$.
\end{proof}

\begin{prop} \label{prop:boundary_submol}
Let $U$ be an $n$\nbd dimensional molecule in a directed complex. Then the $n$\nbd dimensional elements of $U$ can be listed as $x_1, \ldots, x_m$ in such a way that $\bord{}{-}x_1 \submol \bord{}{-}U$, $\bord{}{+}x_m \submol \bord{}{+}U$, and, for all $0 < j < m$, if 
\begin{equation} \label{eq:tildeu1k}
	\tilde{U_1} := \bord{}{-}U \cup \bigcup_{i=1}^j \clos\{x_i\}, \quad \quad \tilde{U_2} := \bord{}{+}U \cup \bigcup_{i=j+1}^m \clos\{x_i\},
\end{equation}
then $U = \tilde{U_1} \cp{n-1} \tilde{U_2}$, $\bord{}{+}x_j \submol \bord{}{+}\tilde{U_1}$ and $\bord{}{-}x_{j+1} \submol \bord{}{-}\tilde{U_2}$.
\end{prop}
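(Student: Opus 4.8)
The plan is to argue by induction on the construction of $U$ as a molecule --- equivalently, on submolecules of $U$ --- with base case the atoms. If $U$ is an atom with greatest element $\top$, take $m := 1$ and $x_1 := \top$; then $\bord{}{-}x_1 = \bord{}{-}U$ and $\bord{}{+}x_1 = \bord{}{+}U$, so the conditions on the list reduce to reflexivity of $\submol$, and there is no index $j$ with $0 < j < 1$ to inspect.

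Otherwise $U = V \cp{k} W$ is a proper decomposition with $V, W \submol U$ and $k < n$. Applying the inductive hypothesis to $V$ and to $W$, when both are $n$\nbd dimensional, produces enumerations $x_1, \ldots, x_p$ and $x_{p+1}, \ldots, x_m$ of their $n$\nbd dimensional elements witnessing the proposition for each factor; since $V \cap W$ has dimension $k < n$, the concatenation enumerates the $n$\nbd dimensional elements of $U$. (If a factor has dimension $< n$ it contributes nothing at the top level, and the list comes from the other factor; this is a degenerate case of what follows.) I claim the concatenated list works for $U$, and verify this according as $k = n-1$ or $k < n-1$. When $k = n-1$ one has $\bord{}{-}U = \bord{}{-}V$ and $\bord{}{+}U = \bord{}{+}W$; writing $\tilde{V_1}^{(j)} \cp{n-1} \tilde{V_2}^{(j)}$ and $\tilde{W_1}^{(\ell)} \cp{n-1} \tilde{W_2}^{(\ell)}$ for the layerings given by the inductive hypothesis, a direct computation yields, for the layers $\tilde{U_1}, \tilde{U_2}$ at a split point $j$: $\tilde{U_1} = \tilde{V_1}^{(j)}$, $\tilde{U_2} = \tilde{V_2}^{(j)} \cp{n-1} W$ when $0 < j < p$; $\tilde{U_1} = V$, $\tilde{U_2} = W$ when $j = p$; and $\tilde{U_1} = V \cp{n-1} \tilde{W_1}^{(\ell)}$, $\tilde{U_2} = \tilde{W_2}^{(\ell)}$ when $j = p+\ell$ with $0 < \ell < m-p$. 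In each case $U = \tilde{U_1} \cp{n-1} \tilde{U_2}$ follows from associativity of $\cp{n-1}$, and $\bord{}{+}x_j \submol \bord{}{+}\tilde{U_1}$, $\bord{}{-}x_{j+1} \submol \bord{}{-}\tilde{U_2}$ follow from the corresponding conditions for $V$ or $W$ and the identities $\bord{}{-}(A \cp{n-1} B) = \bord{}{-}A$, $\bord{}{+}(A \cp{n-1} B) = \bord{}{+}B$.

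When $k < n-1$ instead $\bord{}{\alpha}U = \bord{}{\alpha}V \cp{k} \bord{}{\alpha}W$, so $\bord{}{-}x_1 \submol \bord{}{-}V \submol \bord{}{-}U$ and dually $\bord{}{+}x_m \submol \bord{}{+}U$ by minimality of $\submol$. For $0 < j < p$ the layers of $U$ at $j$ come out as $\tilde{U_1} = \tilde{V_1}^{(j)} \cp{k} \bord{}{-}W$ and $\tilde{U_2} = \tilde{V_2}^{(j)} \cp{k} W$; applying the interchange law to $V \cp{k} W = (\tilde{V_1}^{(j)} \cp{n-1} \tilde{V_2}^{(j)}) \cp{k} (\bord{}{-}W \cp{n-1} W)$ rewrites the right-hand side as $(\tilde{V_1}^{(j)} \cp{k} \bord{}{-}W) \cp{n-1} (\tilde{V_2}^{(j)} \cp{k} W) = \tilde{U_1} \cp{n-1} \tilde{U_2}$, and the submolecule conditions follow from $\bord{}{+}\tilde{U_1} = \bord{}{+}\tilde{V_1}^{(j)} \cp{k} \bord{}{-}W$, $\bord{}{-}\tilde{U_2} = \bord{}{-}\tilde{V_2}^{(j)} \cp{k} W$ together with those for $V$. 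The cases $j = p$ and $p < j < m$ are symmetric, using $\bord{}{+}V \cp{k} W$ and $V \cp{k} \tilde{W_1}^{(\ell)}$.

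The main obstacle is the bookkeeping in this last case: one must check that every $\cp{k}$- and $\cp{n-1}$-composition invoked above is genuinely defined, which reduces to globularity identities of the form $\bord{k}{+}\tilde{V_1}^{(j)} = \bord{k}{+}V = \bord{k}{-}W$ and $V \cap W = \bord{k}{+}V$, and that the sets $\tilde{U_1}, \tilde{U_2}$ prescribed by \eqref{eq:tildeu1k} do coincide with those composites of layers of $V$ and $W$; for the latter it is useful to observe that every element of $U$ of dimension $< n$ lies either below some $n$\nbd dimensional element or in $\bord{}{-}U$, so that $\tilde{U_1} \cup \tilde{U_2} = U$ with no recourse to purity. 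Granted that the compositions are defined, everything else is a formal consequence of the partial $\omega$\nbd category structure on $\molec{}{P}$: associativity, interchange, and the globularity of boundaries.
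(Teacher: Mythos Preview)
Your argument is correct, but it takes a genuinely different route from the paper. The paper first invokes Lemma~\ref{lem:composition_form} (Steiner's decomposition lemma) to write $U = U_1 \cp{n-1} \ldots \cp{n-1} U_m$ with each $U_i$ containing a single $n$\nbd dimensional element $x_i$; this immediately gives the layering $\tilde{U_1} = U_1 \cp{n-1} \ldots \cp{n-1} U_j$, $\tilde{U_2} = U_{j+1} \cp{n-1} \ldots \cp{n-1} U_m$, and the decomposition $U = \tilde{U_1} \cp{n-1} \tilde{U_2}$ for free. The remaining condition $\bord{}{\alpha}x_1 \submol \bord{}{\alpha}U_1$ is then handled by a short secondary induction on $U_1$, which by a further application of Lemma~\ref{lem:composition_form} can be assumed to involve only $\cp{k}$ with $k < n-1$.

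You instead run a single structural induction on $U$ directly, case-splitting on whether the outermost $\cp{k}$ has $k = n-1$ or $k < n-1$, and using associativity in the first case and interchange in the second. This is more self-contained---it does not need Steiner's lemma---but the price is the bookkeeping you yourself flag at the end: verifying that all the compositions $\tilde{V_1}^{(j)} \cp{k} \bord{}{-}W$, $\tilde{V_2}^{(j)} \cp{k} W$, etc.\ are defined, and that the sets prescribed by \eqref{eq:tildeu1k} actually match them. The paper's route offloads that bookkeeping onto Lemma~\ref{lem:composition_form} and is consequently shorter. One minor point: your treatment of the case where one factor has dimension $< n$ is a bit breezy; it does go through, but since such a factor need not equal its $k$\nbd boundary, you should note explicitly that $\bord{n-1}{\alpha}W = W$ when $\dmn{W} \leq n-1$, so the interchange argument still applies with $W$ playing the role of its own boundary.
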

\begin{proof}
By Lemma \ref{lem:composition_form}, $U$ has an expression of the form $U_1 \cp{n-1} \ldots \cp{n-1} U_m$ where each of the $U_i$ contains exactly one $n$\nbd dimensional element $x_i$. For each $j$, we have that $\tilde{U_1}$ and $\tilde{U_2}$ as in (\ref{eq:tildeu1k}) are equal to $U_1 \cp{n-1} \ldots \cp{n-1} U_j$ and $U_{j+1} \cp{n-1} \ldots \cp{n-1} U_m$, respectively.

Iterating Lemma \ref{lem:composition_form}, we then find that $U_1$ has an expression as a composite of atoms using only $\cp{k}$ for $k < n-1$. By induction on submolecules of $U_1$: if $U_1$ is the atom $\clos\{x_1\}$, then clearly $\bord{}{\alpha}x_1 = \bord{}{\alpha}U_1$. Otherwise, suppose $U_1 = V_1 \cp{k} V_2$ for some $k < n-1$ and, without loss of generality, $\clos\{x_1\} \submol V_1$. By the inductive hypothesis, $\bord{}{\alpha}x_1 \submol \bord{}{\alpha}V_1$, and $\bord{}{\alpha}U_1 = \bord{}{\alpha}V_1 \cp{k} \bord{n-1}{\alpha}V_2$, hence $\bord{}{\alpha}x_1 \submol \bord{}{\alpha}U_1$. 

This proves the statement for $j = 1$; for $j > 1$, it suffices to apply the first part of the proof to the molecule $U_{j} \cp{n-1} \ldots \cp{n-1} U_m$. 
\end{proof}

As presentations of pasting diagrams, molecules in directed complexes are quite unconstrained. In \cite{hadzihasanovic2018combinatorial}, we considered \emph{constructible} molecules as a better-behaved class. The following constraint, based on \cite{henry2018regular}, is weaker than constructibility.

\begin{dfn}
Let $U$ be an $n$\nbd dimensional molecule in a directed complex. We say that $U$ \emph{has spherical boundary} if $n = 0$, or if $n > 0$ and
\begin{enumerate}
	\item $\bord{}{-}U$ and $\bord{}{+}U$ are $(n-1)$\nbd molecules with spherical boundary, and
	\item $\bord{}{-}U \cap \bord{}{+}U = \bord{}{}(\bord{}{+}U) = \bord{}{}(\bord{}{-}U)$. 
\end{enumerate}
A directed complex is \emph{regular} if, for all $x \in P$, the atom $\clos{\{x\}}$ has spherical boundary. 

If $U$ and $V$ are $n$\nbd molecules, $U$ has spherical boundary, and $U \submol V$, we write $U \subsph V$, and say that $U$ is a \emph{spherical submolecule} of $V$.
\end{dfn}

\begin{exm}
Every constructible $n$\nbd molecule in an oriented thin poset is regular and has spherical boundary by \cite[Theorem 3.14]{hadzihasanovic2018combinatorial}. In particular, every constructible directed complex is a regular directed complex. However, the opposite is not true: see [Remark 7.12, \emph{ibid.}] for a counterexample.
\end{exm}

\begin{lem} \label{lem:kbound_spherical}
Let $U$ be an $n$\nbd molecule with spherical boundary, $k < n$. Then $\bord{k}{+}U$ and $\bord{k}{-}U$ are $k$\nbd molecules with spherical boundary.
\end{lem}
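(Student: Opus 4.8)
The plan is to induct downwards on $k$, starting from $k = n-1$ and using the definition of "has spherical boundary" together with the globularity property $\bord{j}{\alpha}(\bord{k}{\beta}U) = \bord{j}{\alpha}U$ to propagate sphericity to lower boundaries. The base case $k = n-1$ is exactly condition (1) in the definition of "$U$ has spherical boundary": $\bord{}{-}U$ and $\bord{}{+}U$ are $(n-1)$-molecules with spherical boundary, and $\bord{n-1}{\alpha}U = \bord{}{\alpha}U$ since $U$ is $n$-dimensional and pure (being a molecule).

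For the inductive step, suppose $k < n-1$ and we know that $\bord{k+1}{\alpha}U$ is a $(k+1)$-molecule with spherical boundary for each $\alpha$. First I would observe that $\bord{k}{\alpha}U = \bord{k}{\alpha}(\bord{k+1}{\beta}U)$ for either choice of $\beta$: this is globularity applied to $U$ on the left-hand side and the identity $\bord{k}{\alpha}(\bord{k+1}{\beta}U) = \bord{k}{\alpha}U$ for $k < k+1 < n$. Now $V := \bord{k+1}{\beta}U$ is a $(k+1)$-molecule with spherical boundary by the inductive hypothesis, so I can apply the definition of sphericity to $V$ directly: since $k = (k+1) - 1$, condition (1) for $V$ says that $\bord{}{-}V = \bord{k}{-}V = \bord{k}{-}U$ and $\bord{}{+}V = \bord{k}{+}V = \bord{k}{+}U$ are $k$-molecules with spherical boundary. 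That is exactly the claim for $k$, closing the induction.

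The one point that needs care — and is the place I'd expect the only genuine subtlety — is the bookkeeping that relates $\bord{k}{\alpha}U$ to $\bord{}{\alpha}$ of the intermediate boundary $\bord{k+1}{\beta}U$: one must check that $\bord{k+1}{\beta}U$ really is $(k+1)$-dimensional and pure (so that its top boundaries $\bord{}{\alpha}$ coincide with its $\bord{k}{\alpha}$), which follows because boundaries of molecules in a directed complex are molecules of the expected dimension (by the Proposition of Steiner quoted above, together with the Remark that $\bord{k}{\alpha}U$ is $\min(k, \dmn{U})$-dimensional when $U$ is pure). Given that, both the $\alpha = +$ and $\alpha = -$ cases are symmetric, and no separate argument is needed. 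I would also remark that, as a by-product, the induction shows the successive boundaries nest as spherical submolecules, i.e. $\bord{k}{\alpha}U \subsph \bord{k+1}{\alpha}U \subsph \ldots \subsph U$, though that is not strictly required for the statement.
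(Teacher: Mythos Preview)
Your proposal is correct and is exactly what the paper has in mind: the paper's proof is literally the phrase ``An easy induction'', and your downward induction from $k=n-1$ via globularity is that induction spelled out. One small quibble: your closing remark that $\bord{k}{\alpha}U \subsph \bord{k+1}{\alpha}U$ is not quite right as stated, since $\subsph$ is the spherical-\emph{submolecule} relation (built on $\submol$), and a boundary is not in general a submolecule in that sense; but as you say, this is irrelevant to the lemma.
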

\begin{proof}
An easy induction.
\end{proof}

\begin{remark}
In fact, unraveling the induction on $n$ in the definition, we can define an $n$\nbd molecule with spherical boundary to be one in which $\bord{k}{+}U \cap \bord{k}{-}U = \bord{k-1}{}U$ for all $k < n$.
\end{remark}

\begin{lem} \label{lem:basic_reg}
Let $U$ be an $n$\nbd molecule with spherical boundary. Then:
\begin{enumerate}[label=(\alph*)]
	\item $U$ is pure and $n$\nbd dimensional;
	\item $\sbord{}{+}U$ and $\sbord{}{-}U$ are disjoint and inhabited, and each $x \in \sbord{}{}U$ is covered by a single element.
\end{enumerate}
\end{lem}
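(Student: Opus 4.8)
The plan is to prove both parts by induction on $n$, using the recursive structure of "having spherical boundary" together with the basic facts about molecules from Lemma \ref{lem:molecbasic} and the globularity identities recalled after the Steiner proposition. For $n = 0$ everything is vacuous or immediate: a $0$-molecule is a single point, hence pure and $0$-dimensional, $\sbord{0}{+}U = \sbord{0}{-}U = U$ are both inhabited, and by convention I should check the disjointness clause is stated only for $n > 0$ — so the base case reduces to (a), which is trivial.

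For the inductive step, assume $n > 0$ and that the statement holds for all molecules with spherical boundary of dimension $< n$. For part (a): by definition $\bord{}{-}U$ and $\bord{}{+}U$ are $(n-1)$-molecules with spherical boundary, so by the inductive hypothesis they are pure and $(n-1)$-dimensional. I want to conclude $U$ is pure of dimension $n$. Here I would invoke Proposition \ref{prop:boundary_submol}: the $n$-dimensional elements of $U$ can be listed $x_1, \ldots, x_m$, and since $U$ is $n$-dimensional (which I still need to pin down — this follows because an $n$-molecule in a directed complex whose input/output boundaries are genuinely $(n-1)$-dimensional must contain at least one $n$-dimensional element, e.g. by Lemma \ref{lem:composition_form} applied at level $n-1$), we have $m \geq 1$. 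Every element of $U$ of dimension $< n$ lies in the closure of some $x_i$ or in $\bord{}{\pm}U$; since the latter is pure of dimension $n-1$ and each $\clos\{x_i\}$ is pure of dimension $n$, it follows that $U = \clos(U^{(n)})$, i.e. $U$ is pure and $n$-dimensional.

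For part (b): the fact that $\sbord{}{+}U$ and $\sbord{}{-}U$ are disjoint should follow from the "built-in constraint" in the definition of directed complex (recalled in the Remark after the definition of directed complex) together with clause 2 of "has spherical boundary" — more precisely, $x \in \sbord{}{+}U \cap \sbord{}{-}U$ would be an $(n-1)$-dimensional element of $U$ covered by elements with both orientations, but by Lemma \ref{lem:molecbasic} such an element lies in $U^{(n-1)} \setminus \sbord{}{}U$ or, if it is on the boundary, is covered by a single element, giving a contradiction. Inhabitedness of $\sbord{}{+}U$ and $\sbord{}{-}U$ follows from $m \geq 1$: the first element $x_1$ contributes to $\sbord{}{-}U$ via $\bord{}{-}x_1 \submol \bord{}{-}U$ and the spherical-boundary hypothesis, and dually for $x_m$; alternatively, purity of $\bord{}{\pm}U$ of dimension $n-1$ plus $\bord{}{\alpha}U = \clos(\sbord{}{\alpha}U)$ forces $\sbord{}{\alpha}U$ to be inhabited. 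The last clause — each $x \in \sbord{}{}U$ is covered by a single element of $U$ — is exactly the second bullet of Lemma \ref{lem:molecbasic} once we know $\sbord{}{+}U \cap \sbord{}{-}U = \emptyset$, since then $\sbord{}{}U \setminus (\sbord{}{+}U \cap \sbord{}{-}U) = \sbord{}{}U$.

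The main obstacle I anticipate is the bookkeeping in part (a): cleanly establishing that $U$ is genuinely $n$-dimensional (not lower) and that purity of the boundaries propagates to purity of $U$, without circularity in the induction. The cleanest route is probably to extract from Lemma \ref{lem:composition_form} or Proposition \ref{prop:boundary_submol} that $U$ is a $\cp{n-1}$-composite of pieces each containing exactly one $n$-dimensional element, so that $U^{(n)} = \{x_1, \ldots, x_m\}$ with $m \geq 1$, and then note that the closure of these atoms together with $\bord{}{-}U \cup \bord{}{+}U$ exhausts $U$; combined with the inductive purity of the boundaries, this gives purity of $U$. Everything else is a direct application of Lemma \ref{lem:molecbasic} and the directedness constraint. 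Hence the lemma is "an easy induction" modulo carefully citing the right structural results, which is presumably why the authors phrase it briefly.
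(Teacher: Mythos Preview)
Your argument for disjointness in (b) rests on a misreading of the definition. An element $x \in \sbord{}{+}U \cap \sbord{}{-}U$ is \emph{not} ``covered by elements with both orientations'': membership in $\sbord{n-1}{\alpha}U$ says that every cover of $x$ has orientation $\alpha$, so being in both $\sbord{}{+}U$ and $\sbord{}{-}U$ forces $x$ to have no covers at all --- it is a maximal $(n-1)$\nbd dimensional element. This is exactly what the first bullet of Lemma~\ref{lem:molecbasic} says. Consequently the ``built-in constraint'' about $\sbord{}{+}x \cap \sbord{}{-}x = \emptyset$ for atoms $x$ is a red herring here, and your contradiction never fires. Once you see this, you also see the circularity you were worried about is real: your route to purity in (a) (showing every $(n-1)$\nbd dimensional element of $\bord{}{\pm}U$ lies under some $n$\nbd dimensional element) tacitly needs $\sbord{}{+}U \cap \sbord{}{-}U = \emptyset$, while your argument for that disjointness is broken.

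The paper cuts through this by proving purity directly, without Proposition~\ref{prop:boundary_submol} or Lemma~\ref{lem:composition_form}. Suppose $x$ is maximal in $U$ with $\dmn{x} = k < n$; then $x \in \bord{k}{+}U \cap \bord{k}{-}U$. By globularity $\bord{k}{\alpha}U = \bord{}{\alpha}(\bord{k+1}{\beta}U)$, and Lemma~\ref{lem:kbound_spherical} says $\bord{k+1}{\beta}U$ has spherical boundary, so clause~(2) of that definition gives $\bord{k}{+}U \cap \bord{k}{-}U = \bord{}{}(\bord{k}{\alpha}U)$, which has dimension at most $k-1$. This contradicts $\dmn{x} = k$, so $U$ is pure. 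Disjointness in (b) is then immediate: any $x \in \sbord{}{+}U \cap \sbord{}{-}U$ would be maximal of dimension $n-1$, contradicting purity; inhabitedness follows since purity gives $\bord{}{\alpha}U = \clos(\sbord{}{\alpha}U)$ and the boundary is nonempty; the single-cover clause is then the second bullet of Lemma~\ref{lem:molecbasic}.
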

\begin{proof}
If $n = 0$, this is obvious. Let $n > 0$, and suppose $U$ has a maximal element $x$ of dimension $k < n$; then $x \in \bord{k}{+}U \cap \bord{k}{-}U$. By globularity, $\bord{k}{\alpha}U = \bord{}{\alpha}(\bord{k+1}{\beta}U)$, and by Lemma \ref{lem:kbound_spherical} $\bord{k+1}{\beta}U$ has spherical boundary. Thus
\begin{equation*}
	\bord{k}{+}U \cap \bord{k}{-}U = \bord{}{}(\bord{k}{\alpha}U)
\end{equation*}
which is $(k-1)$\nbd dimensional. This contradicts $x \in \bord{k}{+}U \cap \bord{k}{-}U$, so $U$ is pure.

It follows from purity of $U$ that $\sbord{}{+}U$ and $\sbord{}{-}U$ are disjoint, and we conclude by Lemma \ref{lem:molecbasic}.
\end{proof}

\begin{prop} \label{prop:reg_inhabit}
Let $P$ be a regular directed complex. For all $x \in P$, both $\sbord{}{+}x$ and $\sbord{}{-}x$ are inhabited. 
\end{prop}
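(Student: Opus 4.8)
The plan is to recognise that $\clos{\{x\}}$ is itself a molecule with spherical boundary, so that the statement reduces to a special case of Lemma \ref{lem:basic_reg}. Fix $x \in P$ and set $n := \dmn{x}$. The closed set $\clos{\{x\}}$ has $x$ as its greatest element, hence is an atom, hence a molecule, of dimension $n$; and since $P$ is regular this atom has spherical boundary. So Lemma \ref{lem:basic_reg}(b) applies with $U := \clos{\{x\}}$ and yields that $\sbord{}{+}x = \sbord{}{+}\clos{\{x\}}$ and $\sbord{}{-}x = \sbord{}{-}\clos{\{x\}}$ are disjoint and inhabited; it is the inhabitedness that we need.

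It is worth recording a more elementary argument, valid whenever $n > 0$, which incidentally shows that regularity is not actually needed for this statement. Since $x$ is the unique maximal element of $\clos{\{x\}}$, this set is pure and $n$\nbd dimensional, so by our conventions and the remark following the definition of the boundary operators, $\bord{}{\alpha}x = \bord{n-1}{\alpha}\clos{\{x\}} = \clos{(\sbord{n-1}{\alpha}\clos{\{x\}})} = \clos{(\sbord{}{\alpha}x)}$ for each $\alpha \in \{+,-\}$. By clause (1) of the definition of a directed complex, $\bord{}{\alpha}x$ is a molecule; and the empty subset is never a molecule, since it has no greatest element and nothing is properly contained in it. Hence $\bord{}{\alpha}x$ is inhabited, and therefore so is $\sbord{}{\alpha}x$.

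I do not anticipate a real obstacle. The one thing to get right is the identity $\bord{}{\alpha}x = \clos{(\sbord{}{\alpha}x)}$ --- that is, that the ``maximal-dimension'' second clause in the definition of $\bord{n-1}{\alpha}$ contributes nothing when one takes the boundary of a single atom --- after which the content is merely that a non-empty molecule is contained in every boundary face.
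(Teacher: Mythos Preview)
Your first paragraph is exactly the paper's proof: apply Lemma~\ref{lem:basic_reg}(b) to $U = \clos{\{x\}}$, which has spherical boundary by the definition of regularity.

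Your second paragraph is a genuine addition. The paper uses the full force of Lemma~\ref{lem:basic_reg}, which in turn relies on the spherical-boundary hypothesis; you observe that for $n>0$ one can instead argue directly from the axioms of a directed complex: $\bord{}{\alpha}x$ is a molecule by definition, molecules are nonempty (neither branch of the inductive definition is satisfied by $\emptyset$), and for an atom the identity $\bord{}{\alpha}x = \clos{(\sbord{}{\alpha}x)}$ holds because $x$ lies above every element of $\clos{\{x\}}$, so the ``low-dimensional maximal element'' clause in the definition of $\bord{n-1}{\alpha}$ contributes nothing. This is correct and shows that regularity is not needed here in positive dimension. The paper's route is shorter given that Lemma~\ref{lem:basic_reg} is already available, but your alternative isolates more precisely what is being used.
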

\begin{proof}
Follows from Lemma \ref{lem:basic_reg} since $\clos\{x\}$ has spherical boundary for all $x \in P$.
\end{proof}

\begin{prop}
Let $P$ be a regular directed complex. Then $P$ is an oriented thin poset.
\end{prop}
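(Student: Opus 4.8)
\section*{Proof proposal}

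The plan is to verify the defining property of an oriented thin poset directly: for every interval $[x,y]$ of length $2$ in $P_\bot$, show that it has the diamond shape \eqref{eq:oriented_thin} and that the sign condition $\alpha_1\beta_1 = -\alpha_2\beta_2$ holds. Since $P$ is graded, there are two cases, according to whether $x = \bot$ (so that $\dmn{y} = 1$) or $x \in P$ with $\dmn{y} = \dmn{x}+2$; in both the elements strictly between $x$ and $y$ are the $(\dmn{x}+1)$\nbd cells below $y$ lying over $x$, and everything is read off from $\clos\{y\}$ together with Lemmas \ref{lem:molecbasic} and \ref{lem:basic_reg}. Note that by definition of length\nbd$2$ interval there is always at least one element strictly between $x$ and $y$.

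For $x = \bot$ and $\dmn{y} = 1$: by regularity $\clos\{y\}$ is a $1$\nbd molecule with spherical boundary, so by Lemma \ref{lem:basic_reg} it is pure, $\sbord{}{-}y$ and $\sbord{}{+}y$ are disjoint and inhabited, and each $0$\nbd cell below $y$ — necessarily covered by $y$ — lies in exactly one of them; since each $\bord{}{\alpha}y = \clos{(\sbord{}{\alpha}y)}$ is a $0$\nbd molecule, hence a single point, $\sbord{}{-}y$ and $\sbord{}{+}y$ are singletons. So the elements strictly between $\bot$ and $y$ are exactly the input $0$\nbd cell $z_1$ and the output $0$\nbd cell $z_2$, which are distinct, with $o(c_{y,z_1}) = -$, $o(c_{y,z_2}) = +$, and $o(c_{z_i,\bot}) = +$ by the convention on minimal elements; then $\alpha_1\beta_1 = (-)(+) = -(+)(+) = -\alpha_2\beta_2$.

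For $x \in P$ with $\dmn{x} = k$ and $\dmn{y} = k+2$: set $V_\alpha := \bord{}{\alpha}y$. By regularity these are $(k+1)$\nbd molecules with spherical boundary, and by globularity $\bord{}{\alpha}V_\beta = \bord{k}{\alpha}y$ for both $\beta$, so $V_- \cap V_+ = \bord{k}{}y$ is $k$\nbd dimensional and $\sbord{}{}V_\alpha = \sbord{k}{-}y \cup \sbord{k}{+}y$ for both $\alpha$. Since $\clos\{y\}$ is a pure atom, every $(k+1)$\nbd cell of $\clos\{y\}$ is covered only by $y$, hence lies in $\sbord{k+1}{-}y \cup \sbord{k+1}{+}y \subseteq V_- \cup V_+$; thus the elements strictly between $x$ and $y$ are exactly the $(k+1)$\nbd cells $z \geq x$ contained in $V_- \cup V_+$, and in particular $x \in V_- \cup V_+$. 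Two subcases:
\begin{itemize}
	\item If $x \in V_- \cap V_+$, then since $\sbord{k}{-}y \cap \sbord{k}{+}y \subseteq \bord{k-1}{}y$ has dimension $< k$, the $k$\nbd cell $x$ lies in exactly one $\sbord{k}{\gamma}y = \sbord{k}{\gamma}V_\alpha$. By Lemma \ref{lem:basic_reg}(b) applied inside each $V_\alpha$, $x$ is covered there by a unique (necessarily $(k+1)$\nbd dimensional) cell $z^\alpha$ with $o(c_{z^\alpha,x}) = \gamma$; moreover $z^\alpha \in \sbord{k+1}{\alpha}y$, so $o(c_{y,z^\alpha}) = \alpha$. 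As $z^- = z^+$ would place a $(k+1)$\nbd cell into the $k$\nbd dimensional $V_- \cap V_+$, the elements between $x$ and $y$ are exactly $z^-, z^+$, and $\alpha_1\beta_1 = (-)\gamma = -(+)\gamma = -\alpha_2\beta_2$.
	\item If $x$ lies in only one $V_\alpha$, say $V_-$, then any $(k+1)$\nbd cell $z \geq x$ lies in $V_-$, and $x \notin \sbord{}{}V_-$ (as $\sbord{}{}V_- \subseteq V_- \cap V_+$). By Lemma \ref{lem:molecbasic}, $x$ is covered in $V_-$ by exactly two cells $z_1, z_2$ with opposite orientations, say $o(c_{z_1,x}) = +$ and $o(c_{z_2,x}) = -$; both lie in $\sbord{k+1}{-}y$, so $o(c_{y,z_i}) = -$. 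Hence $\alpha_1\beta_1 = (-)(+) = -(-)(-) = -\alpha_2\beta_2$.
\end{itemize}

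The verification is almost entirely bookkeeping; the step needing the most care is tracking which signed boundary $x$ belongs to and extracting the four edge orientations accordingly, and in particular recognising in the last subcase that $x$ is an \emph{interior} face of $V_-$, so that it is the two\nbd cover clause of Lemma \ref{lem:molecbasic} — rather than Lemma \ref{lem:basic_reg} — that applies. The identities $\bord{}{\alpha}V_\beta = \bord{k}{\alpha}y$ and the purity of $\clos\{y\}$ and of the $V_\alpha$ are the structural facts that make the reduction to these two lemmas possible.
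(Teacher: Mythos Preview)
Your proof is correct and follows essentially the same approach as the paper's. The paper organises Case~2 by first fixing an intermediate element $z \in \sbord{}{\alpha}y$ and then splitting on whether $x \in \bord{}{}(\bord{}{\alpha}y)$ or not, while you split on whether $x \in V_- \cap V_+$ or lies in only one $V_\alpha$; since $V_- \cap V_+ = \bord{}{}V_\alpha$ by the spherical boundary condition, these are the same case distinction, and both subcases invoke Lemmas~\ref{lem:basic_reg} and~\ref{lem:molecbasic} in the same way.
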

\begin{proof}
Let $[x,y]$ be an interval of length 2 in $P_\bot$. If $x = \bot$, then $\clos\{y\}$ is a 1-globe, so $y$ covers exactly two 0-dimensional elements with opposite orientations, which cover $\bot$ with the same orientation. Otherwise, suppose $x < z < y$, with $z \in \sbord{}{\alpha}y$. Then $x \in \bord{}{\alpha}y$, which is a regular molecule with spherical boundary, so there are two possibilities.
\begin{itemize}
	\item $x \in \bord{}{}(\bord{}{\alpha}y)$, in which case by Lemma \ref{lem:basic_reg} it is only covered by $z$ in $\bord{}{\alpha}y$. Because $\clos\{y\}$ has spherical boundary, also $x \in \bord{}{}(\bord{}{-\alpha}y)$, and $x$ is covered with the same orientation by a single other element $z' \in \sbord{}{-\alpha}y$.
	\item $x \in \bord{}{\alpha}y \setminus \bord{}{}(\bord{}{\alpha}y)$. Then by Lemma \ref{lem:molecbasic} $x$ is covered by a single other element $z' \in \sbord{}{\alpha}y$ with opposite orientation.
\end{itemize}
In both cases, the interval is of the form (\ref{eq:oriented_thin}).
\end{proof}

\begin{cons}
Let $P$ be an oriented graded poset. The \emph{suspension} $\Sigma P$ of $P$ is the oriented graded poset whose elements are $\{\Sigma x \,|\, x \in P \} + \{\bot^-, \bot^+\}$, with the partial order defined by
\begin{itemize}
	\item $\bot^\alpha < \Sigma x$ for all $x \in P$, and
	\item $\Sigma x \leq \Sigma y$ if and only if $x \leq y$ in $P$,
\end{itemize}
and the orientation $o(c_{\Sigma y,\bot^\alpha}) := \alpha$ if $y \in P^{(0)}$, and $o(c_{\Sigma y,\Sigma x}) := o(c_{y,x})$ for all pairs $x, y$ such that $y$ covers $x$ in $P$. If $\dmn{x} = n$ in $P$, then $\dmn{\Sigma x} = n+1$ in $\Sigma P$.

If $U$ is an $n$\nbd molecule, then $\Sigma U$ is an $(n+1)$\nbd molecule, and if $U$ has spherical boundary, then so does $\Sigma U$. We deduce that if $P$ is a (regular) directed complex then $\Sigma P$ is a (regular) directed complex.
\end{cons}

\begin{cons}
Let $P, Q$ be oriented graded posets. The \emph{lax Gray product} $P \tensor Q$ of $P$ and $Q$ is the product poset $P \times Q$, which is graded, with the following orientation: write $x \tensor y$ for an element $(x,y)$ of $P \times Q$, seen as an element of $P \tensor Q$; then, for all $x'$ covered by $x$ in $P$, and all $y'$ covered by $y$ in $Q$, let
\begin{align*}
	o(c_{x \tensor y, x' \tensor y}) & := o_P(c_{x,x'}), \\
	o(c_{x \tensor y, x \tensor y'}) & := (-)^{\dmn{x}}o_Q(c_{y,y'}),
\end{align*}
where $o_P$ and $o_Q$ are the orientations of $P$ and $Q$, respectively. It is straightforward to prove that this operation is associative and has $1 := O^0$ as a unit up to isomorphism.
\end{cons}

In the following statement, $n \lor m$ denotes the greatest and $n \land m$ the least of the natural numbers $n, m$.

\begin{prop}\emph{\cite[Theorem 7.4]{steiner1993algebra}} \label{prop:molec-compose}
Let $U$ be an $n$\nbd molecule and $V$ an $m$\nbd molecule in a directed complex. Then $U \tensor V$ is an $(n+m)$\nbd molecule, and for all $k \in \mathbb{N}$
\begin{equation*}
	\bord{k}{\alpha}(U \tensor V) = \bigcup_{i=(k-m) \lor 0}^{n \land k} \bord{i}{\alpha}U \tensor \bord{k-i}{(-)^i\alpha}V
\end{equation*}
is a $k$\nbd molecule. 
\end{prop}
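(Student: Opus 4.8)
The statement is (a form of) \cite[Theorem 7.4]{steiner1993algebra}, and I would reprove it by a double induction: an outer induction on the total dimension $n + m$, and, within each value of $n + m$, a nested induction on the submolecule structure of $U$ and of $V$. Two observations are free and worth recording first. The elements of $U \tensor V$ of dimension $n + m$ are exactly the products $x \tensor y$ with $\dmn{x} = n$ in $U$ and $\dmn{y} = m$ in $V$ (since $\dmn{x \tensor y} = \dmn{x} + \dmn{y}$), so $U \tensor V$ is $(n+m)$\nbd dimensional and pure; and I will treat $U \tensor V$ throughout as a closed subset of the oriented graded poset $P \tensor Q$, where $P \supseteq U$ and $Q \supseteq V$ are the ambient directed complexes, reading ``$k$\nbd molecule'' as ``element of $\molec{k}{P \tensor Q}$''.

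The first real task is the boundary formula, which feeds into everything else, so I would establish it simultaneously with the molecule claim. It is a statement about the orientation on $P \tensor Q$: the element $x \tensor y$ of $U \tensor V$ is covered exactly by the elements $x' \tensor y$, with $x'$ covering $x$ and cover-edge labelled $o_P(c_{x',x})$, and $x \tensor y'$, with $y'$ covering $y$ and cover-edge labelled $(-)^{\dmn{x}}o_Q(c_{y',y})$. Substituting this into the definitions of $\sbord{k}{\alpha}$ and $\bord{k}{\alpha}$ and taking closures rewrites $\bord{k}{\alpha}(U \tensor V)$ in terms of the boundaries of $U$ and of $V$; checking that the result equals $\bigcup_i \bord{i}{\alpha}U \tensor \bord{k-i}{(-)^i\alpha}V$ is then pure bookkeeping, in which the globularity identities do most of the work. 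In the atomic base case $U = \clos\{x\}$, $V = \clos\{y\}$ it collapses, via the outer inductive hypothesis applied to the strictly lower-dimensional products $\bord{}{\beta}x \tensor \clos\{y\}$ and $\clos\{x\} \tensor \bord{}{\gamma}y$, to the single identity $\bord{}{\alpha}(\clos\{x\} \tensor \clos\{y\}) = (\bord{}{\alpha}x \tensor \clos\{y\}) \cup (\clos\{x\} \tensor \bord{}{(-)^{\dmn{x}}\alpha}y)$, which just reads off the covers; and when $U$ or $V$ is a composite, the formula for the (already available) smaller pieces propagates to $U \tensor V$ through the standard rules for $\bord{}{\alpha}(W_1 \cp{k}W_2)$.

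The heart of the proof is that $U \tensor V$ is a molecule. If $U$ and $V$ are both atoms, with greatest elements $x$ and $y$, then $x \tensor y$ is the greatest element of $U \tensor V = U \times V$, so $U \tensor V$ is an atom; its boundaries $\bord{}{\alpha}(U \tensor V)$ are, by the formula, unions of products $\bord{i}{\alpha}U \tensor \bord{n+m-1-i}{(-)^i\alpha}V$ of total dimension $n+m-1$, hence molecules by the outer inductive hypothesis, and the same bookkeeping shows that successive terms of the union agree on the boundary along which they are to be glued, so $\bord{}{\alpha}(U \tensor V)$ is a single molecule. If instead $U = U_1 \cp{p} U_2$ is a proper composite (the case of $V$ is symmetric), the naive guess $U \tensor V = (U_1 \tensor V) \cp{p+m}(U_2 \tensor V)$ is \emph{false}: by the boundary formula $\bord{p+m}{+}(U_1 \tensor V)$ is strictly larger than the overlap $(U_1 \cap U_2) \tensor V$, also acquiring the ``whiskered'' products $\bord{i}{+}U_1 \tensor \bord{p+m-i}{\ldots}V$ with $i > p$ --- this is the interchange phenomenon. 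What holds is an interchange-corrected, layered decomposition; in its simplest instance, the pasting of two squares, it reads
\begin{equation*}
(a \cp{0} a') \tensor b = \big((a \tensor b) \cp{0} (a' \tensor \bord{}{+}b)\big) \cp{1} \big((a \tensor \bord{}{-}b) \cp{0}(a' \tensor b)\big),
\end{equation*}
and in general one orders the top-dimensional cells $x_a \tensor y_b$ of $U \tensor V$ by a shuffle of the orderings of the top cells of $U$ and of $V$ given by Proposition \ref{prop:boundary_submol}, and builds $U \tensor V$ up one layer at a time along that order: each layer is $\clos\{x_a \tensor y_b\}$ glued to lower-dimensional pieces built from the boundaries of $U$ and $V$ (molecules by the outer inductive hypothesis) and to the composite of the earlier layers (a molecule by a further induction on the number of layers assembled), with all gluings checked to be valid $\cp{}$\nbd compositions by reading the boundaries off the formula. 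Reassembling, $U \tensor V$ is an iterated composite of molecules, hence a molecule; and then $\bord{k}{\alpha}(U \tensor V)$ is a $k$\nbd molecule, either because boundaries of molecules are molecules, or --- directly --- because the formula exhibits it as a union of molecules $\bord{i}{\alpha}U \tensor \bord{k-i}{(-)^i\alpha}V$ of total dimension $k$ glued along common boundaries.

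The main obstacle is the bookkeeping concentrated in the previous paragraph: pinning down the correct layered decomposition of $U \tensor V$ and verifying, for each of its many gluings, both that the intermediate closed subset is a molecule and that the two boundaries $\bord{}{+}$ and $\bord{}{-}$ to be identified actually coincide. Once the boundary formula is in place every such check is mechanical, but there are a lot of them, and keeping the order of the layers together with the signs $(-)^i$ consistent is where the care is needed.
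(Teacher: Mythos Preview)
The paper does not supply its own proof of this proposition: it is stated with an attribution to \cite[Theorem 7.4]{steiner1993algebra} and no argument is given in the text. So there is no in-paper proof to compare your proposal against.

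That said, your sketch is a reasonable reconstruction of how the cited result is proved. The double induction (outer on total dimension, inner on the submolecule structure of one factor) and the recognition that the naive identity $(U_1 \cp{p} U_2) \tensor V = (U_1 \tensor V) \cp{p+m} (U_2 \tensor V)$ fails and must be replaced by an interchange-corrected layered decomposition are exactly the ingredients of Steiner's argument. One minor comment: in the inductive step it is cleaner, and closer to Steiner, to reduce to a single proper split $U = U_1 \cp{p} U_2$ and express $U \tensor V$ as an iterated composite of $U_1 \tensor V$, $U_2 \tensor V$, and lower-dimensional whiskers, rather than going all the way down to a shuffle of top cells via Proposition~\ref{prop:boundary_submol}; the shuffle picture is correct but multiplies the bookkeeping you already flag as the main obstacle.
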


\begin{prop} \label{prop:grayspher}
Suppose $U$ and $V$ are molecules with spherical boundary. Then $U \tensor V$ has spherical boundary.
\end{prop}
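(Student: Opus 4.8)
The plan is to use the reformulation recorded after Lemma \ref{lem:kbound_spherical}: since $W := U \tensor V$ is an $(n+m)$\nbd molecule by Proposition \ref{prop:molec-compose}, it has spherical boundary if and only if
\[
	\bord{k}{+}W \cap \bord{k}{-}W = \bord{k-1}{}W \quad \text{for all } k < n+m.
\]
If $n = 0$ or $m = 0$ then $U \tensor V$ is isomorphic to $V$ or to $U$, since $O^0$ is the unit of $\tensor$, and there is nothing to prove; so I would assume $n, m \geq 1$.

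Fix $k < n+m$ and expand $\bord{k}{+}W$ and $\bord{k}{-}W$ using the formula of Proposition \ref{prop:molec-compose}. Intersecting the two, and using that the underlying poset of $P \tensor Q$ is the product poset $P \times Q$ — so that $(A \tensor B) \cap (C \tensor D) = (A \cap C) \tensor (B \cap D)$ — I would write $\bord{k}{+}W \cap \bord{k}{-}W$ as a union of terms of the form $(\bord{i}{+}U \cap \bord{j}{-}U) \tensor (\bord{k-i}{(-)^i}V \cap \bord{k-j}{(-)^{j+1}}V)$, with $i, j$ ranging over $\{(k-m)\lor 0, \ldots, n \land k\}$. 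Now globularity tells us that a lower boundary of $U$ sits inside a higher boundary of $U$ of either sign, so $\bord{i}{+}U \cap \bord{j}{-}U$ equals $\bord{i}{+}U$ when $i < j$, equals $\bord{j}{-}U$ when $j < i$, and — because $U$ has spherical boundary — equals $\bord{i-1}{}U$ when $i = j < n$; likewise for the $V$ factor, while at the endpoints of the range one of the two factors becomes the whole of $U$ or of $V$. Carrying this out, every surviving term has dimension at most $k-1$; the terms with $|i - j| = 1$ reassemble, term by term, into $\bord{k-1}{+}W \cup \bord{k-1}{-}W$ (the same formula with $k$ replaced by $k-1$), and all the other terms are absorbed into these by the same containments. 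This is the required identity.

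The main obstacle is precisely the bookkeeping at the two endpoints $i = (k-m)\lor 0$ and $i = n \land k$ of the range, where the general term degenerates: one has to check that they contribute exactly the terms of $\bord{k-1}{}W$ coming from the endpoints of \emph{its} summation range, so that no term is missed and none is spurious. I would do the top case $k = n+m-1$ first as a template — there the formula has only the two terms $\bord{}{\alpha}U \tensor V$ and $U \tensor \bord{}{(-)^n\alpha}V$, and the identity reads
\[
	(\bord{n-2}{}U \tensor V) \cup (\bord{}{-}U \tensor \bord{}{(-)^n}V) \cup (\bord{}{+}U \tensor \bord{}{(-)^{n+1}}V) \cup (U \tensor \bord{m-2}{}V) = \bord{n+m-2}{}W,
\]
which one checks directly from Proposition \ref{prop:molec-compose} and the spherical-boundary hypotheses on $U$ and $V$ — and then observe that the lower values of $k$ are handled in the same way. (Alternatively, once $\bord{}{\alpha}W$ is known to be a molecule, the whole statement can be set up as an induction on $n + m$, the inductive step reducing to this same family of identities.)
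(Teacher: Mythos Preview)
Your proposal is correct and follows essentially the same approach as the paper: expand $\bord{k}{+}W \cap \bord{k}{-}W$ via Proposition~\ref{prop:molec-compose}, use the spherical-boundary hypotheses on $U$ and $V$ to handle the diagonal terms $i=j$, use globularity to reduce the off-diagonal terms to $|i-j|=1$, and recognise the result as $\bord{k-1}{+}W \cup \bord{k-1}{-}W$. The paper carries out this computation uniformly in $k$ rather than treating the top case separately, and absorbs the endpoint terms directly into the off-diagonal union without a separate case analysis, but the argument is the same.
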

\begin{proof}
Let $n = \dmn{U}, m = \dmn{V}$. By Proposition \ref{prop:molec-compose}, for all $k < n+m$, we have 
\begin{align*}
	& \bord{k}{+}(U \tensor V) \cap \bord{k}{-}(U \tensor V) = \left(\bigcup_i \bord{i}{+}U \tensor \bord{k-i}{(-)^i}V\right) \cap \left(\bigcup_j \bord{j}{-}U \tensor \bord{k-j}{-(-)^j}V\right) = \\
	& = \left(\bigcup_{k-m<i<n} \bord{i-1}{}U \tensor \bord{k-i-1}{}V \right) \cup \left(\bigcup_{i\geq n} U \tensor \bord{k-i-1}{}V\right) \cup \left(\bigcup_{i \leq k-m} \bord{i-1}{}U \tensor V\right) \cup \\
	& \quad \quad \cup \left(\bigcup_{i < j} \bord{i}{+}U \tensor \bord{k-j}{-(-)^j}V \right) \cup \left(\bigcup_{j < i} \bord{j}{-}U \tensor \bord{k-i}{(-)^i}V \right)
\end{align*}
using the fact that
\begin{equation*}
	\bord{i}{+}U \cap \bord{i}{-}U = \bord{i-1}{}U, \quad \quad \bord{j}{+}V \cap \bord{j}{-}V = \bord{j-1}{}V
\end{equation*}
when $i < n$ and $j < m$ because $U$ and $V$ have spherical boundary, and that $\bord{i}{\alpha}U \subseteq \bord{j}{\beta}U$ and $\bord{i}{\alpha}V \subseteq \bord{j}{\beta}V$ if $i < j$. For the same reason, we have 
\begin{equation*}
	\bord{i}{+}U \tensor \bord{k-j}{-(-)^j}V \subseteq \bord{i}{+}U \tensor \bord{k-i-1}{(-)^i}V
\end{equation*}
for all $j > i+1$, and similarly $\bord{j}{-}U \tensor \bord{k-i}{(-)^i}V \subseteq \bord{j}{-}U \tensor \bord{k-j-1}{-(-)^j}V$ for all $i > j+1$. Therefore, the last two terms are equal to
\begin{equation*}
	\left(\bigcup_{i} \bord{i}{+}U \tensor \bord{k-i-1}{(-)^i}V \right) \cup \left(\bigcup_j \bord{j}{-}U \tensor \bord{k-j-1}{-(-)^j}V \right),
\end{equation*}
and it is easy to see that the first three terms are included in this union. Therefore,
\begin{align*}
	\bord{k}{+}(U \tensor V) \cap \bord{k}{-}(U \tensor V) & = \left(\bigcup_{i} \bord{i}{+}U \tensor \bord{k-i-1}{(-)^i}V \right) \cup \left(\bigcup_j \bord{j}{-}U \tensor \bord{k-j-1}{-(-)^j}V \right) = \\
	& = \bord{k-1}{+}(U \tensor V) \cup \bord{k-1}{-}(U \tensor V),
\end{align*}
which proves that $U \tensor V$ has spherical boundary.
\end{proof}

\begin{cor}
Let $P$ and $Q$ be regular directed complexes. Then $P \tensor Q$ is a regular directed complex.
\end{cor}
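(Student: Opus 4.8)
The plan is to deduce this directly from Proposition \ref{prop:grayspher}, once we observe that $P \tensor Q$ is a directed complex and that the closure of an element of $P \tensor Q$ is the Gray product of the closures of its components. First I would record that the underlying poset of $P \tensor Q$ is $P \times Q$ with the product order, so that for $x \in P$, $y \in Q$ we have $\clos\{(x,y)\} = \clos\{x\} \times \clos\{y\}$; moreover, since passing to a closed subset leaves the dimension of each element unchanged, the orientation that $P \tensor Q$ induces on this subset agrees with the lax Gray orientation of $\clos\{x\} \tensor \clos\{y\}$. Hence $\clos\{x \tensor y\} = \clos\{x\} \tensor \clos\{y\}$ as oriented graded posets, where on the right $\clos\{x\}$ and $\clos\{y\}$ are the atoms of $P$ and $Q$ with greatest elements $x$ and $y$.

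Next I would check that $P \tensor Q$ is a directed complex. Given $z = x \tensor y$ with $\dmn{z} = n > 0$, the closure $\clos\{z\} = \clos\{x\} \tensor \clos\{y\}$ is a molecule by Proposition \ref{prop:molec-compose} (applied to the atoms $\clos\{x\}$, $\clos\{y\}$, which are in particular molecules), so each $\bord{}{\alpha}z = \bord{n-1}{\alpha}\clos\{z\}$ is a molecule, giving the first condition in the definition of directed complex; the second condition, $\bord{}{\alpha}(\bord{}{\beta}z) = \bord{n-2}{\alpha}z$, is an instance of globularity for the molecule $\clos\{z\}$. (Alternatively, one may simply invoke Steiner's result that the lax Gray product of directed complexes is a directed complex, which underlies Proposition \ref{prop:molec-compose}.)

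It then remains to establish regularity, which is now immediate. For every $z = x \tensor y \in P \tensor Q$, the atom $\clos\{z\} = \clos\{x\} \tensor \clos\{y\}$ is a lax Gray product of two molecules with spherical boundary --- the atoms $\clos\{x\}$ and $\clos\{y\}$ have spherical boundary precisely because $P$ and $Q$ are regular --- so $\clos\{z\}$ has spherical boundary by Proposition \ref{prop:grayspher}. Since this holds for every element $z$, the directed complex $P \tensor Q$ is regular.

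The main obstacle is essentially nil: all the genuine content was packaged into Proposition \ref{prop:grayspher}, and what is left is bookkeeping. The one point that deserves care is the identification $\clos\{x \tensor y\} = \clos\{x\} \tensor \clos\{y\}$ at the level of \emph{oriented} graded posets, rather than merely posets: this relies on the fact that restricting to a closed subset preserves dimensions, and hence preserves the signs $(-)^{\dmn{x}}$ appearing in the lax Gray orientation.
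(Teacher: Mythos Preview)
Your proposal is correct and matches the paper's intent: the corollary is stated without proof, as an immediate consequence of Proposition~\ref{prop:grayspher}, and you have spelled out precisely the bookkeeping needed --- the identification $\clos\{x \tensor y\} = \clos\{x\} \tensor \clos\{y\}$ as oriented graded posets, the fact that $P \tensor Q$ is a directed complex (implicit in Steiner's Theorem~7.4, which underlies Proposition~\ref{prop:molec-compose}), and then the application of Proposition~\ref{prop:grayspher} to each atom.
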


\begin{cons}
Let $P, Q$ be oriented graded posets. The \emph{join} $P \join Q$ of $P$ and $Q$ is the unique oriented graded poset such that $(P \join Q)_\bot$ is isomorphic to $P_\bot \tensor Q_\bot$. We use the following notation for elements of $P \join Q$: 
\begin{itemize}
	\item for all $x \in P^{(n)}$, let $x \in (P \join Q)^{(n)}$ correspond to $x \tensor \bot$ in $P_\bot \tensor Q_\bot$;
	\item for all $y \in Q^{(m)}$, let $y \in (P \join Q)^{(m)}$ correspond to $\bot \tensor y$ in $P_\bot \tensor Q_\bot$;
	\item for all $x \in P^{(n)}$, $y \in Q^{(m)}$, let $x \join y \in (P \join Q)^{(n+m+1)}$ correspond to $x \tensor y$ in $P_\bot \tensor Q_\bot$.
\end{itemize}
The join is clearly associative and has the empty oriented graded poset $\emptyset$ as unit up to isomorphism.
\end{cons}

\begin{prop}\emph{\cite[Theorem 7.8]{steiner1993algebra}}
Let $U$ be an $n$\nbd molecule and $V$ an $m$\nbd molecule. Then $U \join V$ is an $(n+m+1)$\nbd molecule, and for all $k < n+m+1$ and $\alpha \in\{+,-\}$, $\bord{k}{\alpha}(U \join V)$ is a $k$\nbd molecule.
\end{prop}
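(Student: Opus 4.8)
The plan is to reduce to Proposition \ref{prop:molec-compose}, the analogue for lax Gray products, using that by construction $(P \join Q)_\bot \cong P_\bot \tensor Q_\bot$, hence $(U \join V)_\bot \cong U_\bot \tensor V_\bot$; passing to $(-)_\bot$ converts a join into a lax Gray product at the cost of raising all dimensions by one. First I would set up the dictionary between the two sides. Adjoining a bottom element turns $\clos{\{x\}}$ into $\clos{\{x\}}$ as computed in $P_\bot$, still with greatest element $x$, so atoms go to atoms; it turns the boundary $\bord{k}{\alpha}(-)$ in $P$ into $\bord{k+1}{\alpha}(-)$ in $P_\bot$, because the new bottom lies in every boundary of nonnegative dimension; and consequently a decomposition $U = U_1 \cp{k} U_2$ in $P$ goes to $U_\bot = U_{1,\bot} \cp{k+1} U_{2,\bot}$ in $P_\bot$. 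A routine induction on submolecules then shows that $U$ is an $n$\nbd molecule in $P$ precisely when $U_\bot$ is an $(n+1)$\nbd molecule in $P_\bot$, at least in the direction we need.

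Granting this, $U_\bot$ and $V_\bot$ are molecules of dimensions $n+1$ and $m+1$, and Proposition \ref{prop:molec-compose} applies to $U_\bot \tensor V_\bot$: its proof invokes directedness only to control boundaries, and only boundaries of dimension $\geq 0$ are involved, so it survives the sole failure of globularity in $P_\bot$, which is at its bottom cell. Thus $U_\bot \tensor V_\bot$ is an $(n+m+2)$\nbd molecule and each $\bord{k}{\alpha}(U_\bot \tensor V_\bot)$ with $0 \leq k < n+m+2$ is a $k$\nbd molecule. Finally I would ``de\nbd augment'': the unique bottom $\bot$ of $U_\bot \tensor V_\bot$ lies in every inhabited closed subset, in $\bord{k}{\alpha}(-)$ for $k \geq 0$, and in both halves of every $\cp{k}$ with $k \geq 0$, so deleting it commutes with $\clos{(-)}$, with $\bord{k}{\alpha}(-)$, and with $\cp{k}$ for all $k \geq 0$. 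Hence every molecule decomposition of $U_\bot \tensor V_\bot$ restricts to one of $U \join V$ with all indices lowered by one, so $U \join V$ is an $(n+m+1)$\nbd molecule and $\bord{k}{\alpha}(U \join V) = \bord{k+1}{\alpha}(U_\bot \tensor V_\bot) \setminus \{\bot\}$ is a $k$\nbd molecule for $0 \leq k < n+m+1$.

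Equivalently, and avoiding the auxiliary poset entirely, one can run the same argument as a direct induction on the submolecule structure of $U$ and $V$. When both are atoms $\clos{\{x\}}$ and $\clos{\{y\}}$, the element $x \join y$ is the greatest element of $U \join V$, so $U \join V$ is an atom. When $U = U_1 \cp{k} U_2$, one checks $(U_1 \join V) \cap (U_2 \join V) = (U_1 \cap U_2) \join V = \bord{k+\dmn{V}+1}{+}(U_1 \join V) = \bord{k+\dmn{V}+1}{-}(U_2 \join V)$, whence $U \join V = (U_1 \join V) \cp{k+\dmn{V}+1} (U_2 \join V)$ with both factors molecules by induction; similarly when $V = V_1 \cp{j} V_2$, now at composition degree $\dmn{U}+j+1$. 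The boundary claim then follows by a secondary induction on dimension, writing $\bord{}{\alpha}(U \join V)$ as a composite of a piece of the form $\bord{}{\alpha}U \join V$ with a piece of the form $U \join \bord{}{\beta}V$. The step I expect to be the main obstacle is pinning down the sign $\beta$ on the $V$\nbd side and propagating it consistently through all of these decompositions: it is governed by the parity of $\dmn{U}$ --- this is exactly the $(-)^{\dmn{x}}$ factor in the definition of $\tensor$ --- and when $\dmn{U}$ is odd it also reverses the order in which the two factors of a decomposition of $U \join V$ are composed. Everything else is bookkeeping of how dimensions and boundaries transform under $(-)_\bot$ and under joining with a fixed molecule.
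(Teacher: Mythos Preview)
The paper does not give its own proof of this statement: it is cited as \cite[Theorem 7.8]{steiner1993algebra} and left at that. So there is no proof in the paper to compare against directly; what can be compared is your strategy versus the surrounding discussion.

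Your first approach, reducing to Proposition~\ref{prop:molec-compose} via $(U \join V)_\bot \cong U_\bot \tensor V_\bot$, is exactly the approach the paper warns against. Immediately after the proposition, the paper remarks that ``a different definition using suspensions can be useful in proofs about directed complexes, since, unlike $(-)_\bot$, suspensions preserve their class.'' The problem is concrete: in $P_\bot$ with all dimensions shifted up, an element that was $0$\nbd dimensional in $P$ becomes $1$\nbd dimensional and has $\sbord{}{-} = \emptyset$, so $\bord{}{-}$ of it is empty and not a molecule. Hence $P_\bot$ is not a directed complex, and Proposition~\ref{prop:molec-compose} as stated does not apply. Your sentence ``it survives the sole failure of globularity in $P_\bot$'' is asserting, without verification, that Steiner's proof of the Gray\nbd product result goes through in this degenerate augmented setting; that is the gap. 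The paper's workaround for the adjacent spherical\nbd boundary result is to use the injection $j: P \join Q \to \Sigma P \tensor \Sigma Q$ into a product of \emph{suspensions}, which \emph{are} directed complexes, and then pull back along $j$. If you want a clean reduction to the Gray product, that is the route to take.

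Your second approach, a direct structural induction on submolecules of $U$ and $V$, is essentially Steiner's own argument in the cited reference, and is sound in outline. The sign and ordering bookkeeping you flag as the main obstacle is real but routine; it is the same $(-)^{\dmn{x}}$ parity that governs the Gray product, transported through $(-)_\bot$.
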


The definition of the join using $(-)_\bot$ is simple and makes it obvious that the operation is associative and unital. However, a different definition using suspensions can be useful in proofs about directed complexes, since, unlike $(-)_\bot$, suspensions preserve their class.

\begin{cons}
The join and the suspension are related in the following way. There is an injective function from $P_\bot$ to $\Sigma P$, sending $\bot$ to $\bot^+$ and $x \in P$ to $\Sigma x$. Therefore there is injective function of underlying sets from $P_\bot \tensor Q_\bot$ to $\Sigma P \tensor \Sigma Q$, which through the definition of $P \join Q$ determines an injective function $j: P \join Q \to \Sigma P \tensor \Sigma Q$. 

By Proposition \ref{prop:grayspher}, if $U$ and $V$ are molecules with spherical boundary, $\Sigma U \tensor \Sigma V$ has spherical boundary. It is straightforward to see that this implies the same property for $U \join V = \invrs{j}(\Sigma U \tensor \Sigma V)$. We can thus state the following.
\end{cons}

\begin{prop}
Suppose $U$ and $V$ are molecules with spherical boundary. Then $U \join V$ has spherical boundary.
\end{prop}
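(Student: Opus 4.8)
The plan is to deduce the statement from Proposition~\ref{prop:grayspher} by transporting it along the injective function $j\colon P \join Q \to \Sigma P \tensor \Sigma Q$ of the Construction above, under which $U \join V = \invrs{j}(\Sigma U \tensor \Sigma V)$. Since $U$ and $V$ have spherical boundary, so do their suspensions $\Sigma U$ and $\Sigma V$, hence $\Sigma U \tensor \Sigma V$ has spherical boundary by Proposition~\ref{prop:grayspher}; it remains to push this property back along $j$.

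The key preliminary is to pin down the behaviour of $j$, which is the restriction of $f \tensor f$, for $f\colon P_\bot \to \Sigma P$ the function sending $\bot$ to $\bot^+$ and $x$ to $\Sigma x$. Unwinding the definitions, $f$ is an order-embedding that preserves orientations, and that preserves dimension once $x \in P$ is assigned its dimension in $P_\bot$, namely $\dmn{x} + 1$. Since the sign $(-)^{\dmn{-}}$ appearing in the lax Gray product orientation is computed from exactly this dimension, $f \tensor f$, and hence $j$, is an orientation-preserving order-embedding; it shifts dimension by one, $\dmn{j(z)} = \dmn{z} + 1$. From this I would extract the translation dictionary: the $k$\nbd boundaries $\bord{k}{\alpha}(U \join V)$ are carried by $j$ onto $\bord{k+1}{\alpha}(\Sigma U \tensor \Sigma V) \cap j(U \join V)$, and likewise for their iterated boundaries --- the bottom level $k = 0$, where the relevant $1$\nbd boundaries of $\Sigma U \tensor \Sigma V$ collapse onto the added points $\{\bot^\alpha\}$ of the two suspensions, being checked directly.

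Granting the dictionary, the characterisation from the Remark after Lemma~\ref{lem:kbound_spherical} --- that a molecule $W$ has spherical boundary iff $\bord{k}{+}W \cap \bord{k}{-}W = \bord{k-1}{}W$ for all $k < \dmn{W}$ --- transfers identity by identity: the identity at level $k{+}1$ for the molecule $\Sigma U \tensor \Sigma V$, intersected with $j(U \join V)$ and pulled back along the injection $j$, becomes the identity at level $k$ for $U \join V$, for every $k < n + m + 1 = \dmn{U \join V}$. I expect the only genuine difficulty to be this combinatorial dictionary --- checking that $j$ preserves orientations, and that intersecting the lax Gray product boundary formula of Proposition~\ref{prop:molec-compose} with the image of $j$ returns precisely the join boundaries --- rather than any new idea, which is presumably why the Construction calls the implication ``straightforward''. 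A self-contained but lengthier alternative would bypass suspensions altogether: one would first prove an explicit formula for $\bord{k}{\alpha}(U \join V)$ parallel to Proposition~\ref{prop:molec-compose}, and then rerun, nearly verbatim, the intersection computation from the proof of Proposition~\ref{prop:grayspher}.
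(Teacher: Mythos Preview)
Your proposal is correct and follows exactly the paper's approach: reduce to Proposition~\ref{prop:grayspher} via the injection $j\colon U \join V \to \Sigma U \tensor \Sigma V$, using that $j$ shifts dimension by one and intertwines boundaries as $\invrs{j}(\bord{k+1}{\alpha}j(z)) = \bord{k}{\alpha}z$. The paper compresses the entire argument into the word ``straightforward'' in the Construction immediately preceding the Proposition; your elaboration of the dictionary (and the alternative direct computation) is precisely what that word is hiding.
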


\begin{cor}
Let $P$ and $Q$ be regular directed complexes. Then $P \join Q$ is a regular directed complex.
\end{cor}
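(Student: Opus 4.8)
The plan is to verify the two clauses in the definition of a regular directed complex for $P \join Q$: that it is a directed complex, and that $\clos\{x\}$ has spherical boundary for every element $x$. The first clause follows from the Steiner-type boundary formula for joins stated above, in the same way as the corresponding statement for the lax Gray product $P \tensor Q$ in the preceding corollary: for every $x$ with $\dmn{x} = n > 0$, the boundaries $\bord{}{\alpha}x$ are molecules and the globularity identity $\bord{}{\alpha}(\bord{}{\beta}x) = \bord{n-2}{\alpha}x$ holds. The genuinely new input is the preceding Proposition, which enters through the second clause.

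For the second clause, I would distinguish the three kinds of element of $P \join Q$. Writing the join through the isomorphism $(P \join Q)_\bot \cong P_\bot \tensor Q_\bot$, and using that a principal downset of a product poset is the product of the principal downsets of the factors: if $x$ comes from $P$, then $\clos\{x\}$ computed in $P \join Q$ is isomorphic, as an oriented graded poset, to $\clos\{x\}$ computed in $P$; symmetrically if $x$ comes from $Q$; and if $x = p \join q$ with $p \in P$ and $q \in Q$, then $(\clos\{x\})_\bot \cong (\clos\{p\})_\bot \tensor (\clos\{q\})_\bot$, that is, $\clos\{x\} \cong \clos\{p\} \join \clos\{q\}$. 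The orientations match here because the inclusions $(\clos\{p\})_\bot \hookrightarrow P_\bot$ and $(\clos\{q\})_\bot \hookrightarrow Q_\bot$ preserve both dimension and edge-orientation.

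Since $P$ and $Q$ are regular, $\clos\{x\}$ has spherical boundary in the first two cases. In the third case $\clos\{p\}$ and $\clos\{q\}$ are atoms with spherical boundary, hence $\clos\{p\} \join \clos\{q\}$ has spherical boundary by the preceding Proposition; and since having spherical boundary depends only on the oriented-graded-poset structure, it transfers along the isomorphism to $\clos\{x\}$. Thus every principal downset of $P \join Q$ has spherical boundary, so $P \join Q$ is a regular directed complex.

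The step I expect to be the main obstacle is the identification $\clos\{p \join q\} \cong \clos\{p\} \join \clos\{q\}$: it is routine, but requires careful bookkeeping of how closures interact with the $(-)_\bot \tensor (-)_\bot$ presentation of the join --- in particular, tracking the adjoined bottom elements on both sides --- together with a check that the edge-orientations of the two Hasse diagrams agree. Everything else is a direct appeal to the preceding Proposition, to the regularity of $P$ and $Q$, and to the (already-available) closure of directed complexes under join.
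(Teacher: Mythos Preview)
Your proposal is correct and takes essentially the same approach as the paper. The paper states this corollary without an explicit proof, treating it as immediate from the preceding Proposition; your case analysis on the three kinds of elements of $P \join Q$, together with the identification $\clos\{p \join q\} \cong \clos\{p\} \join \clos\{q\}$, is precisely the intended unpacking of that implication.
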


\subsection{Maps of directed complexes} \label{sec:maps}

In \cite{hadzihasanovic2018combinatorial} we focussed exclusively on a category of oriented graded posets and injective maps, because those correspond to cellular maps of polygraphs. Here, we consider an extension encompassing surjective maps, which will give a choice of degeneracies.

\begin{dfn} \label{dfn:inclclps}
Let $P, Q$ be oriented graded posets. A \emph{map} $f: P \to Q$ of oriented graded posets is a function from $P$ to $Q$ satisfying, for all $x \in P$, $n \in \mathbb{N}$, and $\alpha \in \{+,-\}$,
\begin{equation} \label{dfn:mapdef}
	\bord{n}{\alpha}f(x) = f(\bord{n}{\alpha}x).
\end{equation}
An \emph{inclusion} $\imath: P \incl Q$ is an injective map of oriented graded posets. An inclusion is an \emph{isomorphism} if it is also surjective.

We write $\ogpos$ for the category of oriented graded posets and maps. We write $\rdcpx$ for its full subcategory on regular directed complexes, $\atom$ for its full subcategory on atoms, and $\rdcpxin$ and $\atomin$ for their respective subcategories of inclusions.
\end{dfn}

\begin{lem} \label{lem:closedmap}
Let $f: P \to Q$ be a map of oriented graded posets. Then $f$ is a closed, order-preserving, dimension-non-increasing function of the underlying graded posets.
\end{lem}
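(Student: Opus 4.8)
The plan is to distill from the defining equation $\bord{n}{\alpha}f(x) = f(\bord{n}{\alpha}x)$ a single clean statement — that $f$ commutes with closure on principal downsets, $f(\clos{\{x\}}) = \clos{\{f(x)\}}$ — and then read off all three properties from it.

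First I would unpack the defining equation at a high enough level. For $x \in P$ with $\dmn{x} = n$, the closure $\clos{\{x\}}$ is $n$\nbd dimensional, so $\bord{m}{\alpha}\clos{\{x\}} = \clos{\{x\}}$ for every $m \geq n$ by the Remark following the definition of boundaries. Thus the defining equation at level $n$ reads precisely $f(\clos{\{x\}}) = \bord{n}{\alpha}f(x)$, where the left-hand side is the image of a subset of $P$ under $f$ and the right-hand side is the $n$\nbd boundary of $\clos{\{f(x)\}}$ in $Q$.

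Next, the dimension-non-increasing property. I would observe that, for \emph{any} oriented graded poset, any closed subset $V$, and any $n \geq 0$, every element of $\bord{n}{\alpha}V$ has dimension at most $n$: elements of $\clos{(\sbord{n}{\alpha} V)}$ lie below $n$\nbd dimensional elements, and any $x$ in the second clause of the definition of $\bord{n}{\alpha}V$ satisfies $\dmn{x} \leq n$ by instantiating $y := x$. Applying this to $V := \clos{\{f(x)\}}$, and using that $f(x) \in f(\clos{\{x\}}) = \bord{n}{\alpha}f(x)$ with $n := \dmn{x}$, we obtain $\dmn{f(x)} \leq \dmn{x}$. Then, since $\dmn{f(x)} \leq n$, the same Remark gives $\bord{n}{\alpha}f(x) = \clos{\{f(x)\}}$, so the identity of the previous paragraph sharpens to $f(\clos{\{x\}}) = \clos{\{f(x)\}}$.

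Finally, the remaining two properties are formal consequences of this. Order-preservation: if $x \leq y$ then $x \in \clos{\{y\}}$, so $f(x) \in f(\clos{\{y\}}) = \clos{\{f(y)\}}$, i.e.\ $f(x) \leq f(y)$. Closedness: a closed $U \subseteq P$ satisfies $U = \bigcup_{x \in U}\clos{\{x\}}$, hence $f(U) = \bigcup_{x \in U} f(\clos{\{x\}}) = \bigcup_{x \in U}\clos{\{f(x)\}}$ is a union of closed subsets of $Q$, and since closure commutes with unions, $f(U)$ is closed. I do not expect a genuine obstacle here; the only delicate point is the bookkeeping in the first step — keeping track that $\bord{n}{\alpha}$ applied to an element abbreviates $\bord{n}{\alpha}$ of its closure, and that the defining identity is an equality of subsets of $Q$ — after which everything is a short unwinding of definitions.
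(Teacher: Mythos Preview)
Your proof is correct and follows essentially the same route as the paper: both establish the key identity $f(\clos\{x\}) = \clos\{f(x)\}$ from the defining equation and read off all three properties from it. The only difference is in the bookkeeping: the paper picks an $m$ larger than \emph{both} $\dmn{x}$ and $\dmn{f(x)}$, so that $\bord{m}{\alpha}x = \clos\{x\}$ and $\bord{m}{\alpha}f(x) = \clos\{f(x)\}$ hold immediately, giving the identity in one line; dimension-non-increasing is then deduced afterwards from the characterisation of $\dmn{x}$ as the least $n$ with $\bord{n}{\alpha}x = \clos\{x\}$. You instead apply the equation at $n = \dmn{x}$, which forces you to first check by hand that every element of $\bord{n}{\alpha}V$ has dimension at most $n$ before you can conclude $\bord{n}{\alpha}f(x) = \clos\{f(x)\}$. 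Both work; the paper's large-$m$ trick is marginally slicker since it avoids unpacking the definition of $\bord{n}{\alpha}$.
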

\begin{remark}
It follows that there are forgetful functors from $\rdcpx$ and each of its subcategories to $\pos$, the category of posets and order-preserving functions.
\end{remark}
\begin{proof}
Let $x \in P$, and let $m$ be larger than the dimensions of both $x$ and $f(x)$. Then $\clos\{f(x)\} = \bord{m}{\alpha}f(x) = f(\bord{m}{\alpha}x) = f(\clos\{x\})$. This proves that $f$ is both closed and order-preserving, since $y \leq x$ if and only if $y \in \clos\{x\}$.

The dimension of an element $x$ of an oriented graded poset can be characterised as the smallest $n$ such that $\bord{n}{+}x = \bord{n}{-}x = \clos{\{x\}}$. Suppose $x \in P$ is $n$\nbd dimensional; then $\bord{n}{\alpha}f(x) = f(\bord{n}{\alpha}x) = f(\clos{\{x\}}) = \clos{\{f(x)\}}$. It follows that the dimension of $f(x)$ is at most $n$.
\end{proof}

\begin{lem} \label{lem:sameinclusions}
Let $\imath: P \incl Q$ be an inclusion of oriented graded posets. Then $\imath$ is order-reflecting and preserves the covering relation compatibly with the orientations, in the sense that $o_Q(c_{\imath(y),\imath(x)}) = o_P(c_{y,x})$ for all $y, x \in P$ such that $y$ covers $x$.
\end{lem}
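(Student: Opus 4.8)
The plan is to extract the whole statement from two facts recorded in the proof of Lemma \ref{lem:closedmap}: that every map $f$ of oriented graded posets satisfies $f(\clos\{x\}) = \clos\{f(x)\}$, and that $\dmn{x}$ is characterised as the least $n$ with $\bord{n}{+}x = \bord{n}{-}x = \clos\{x\}$. These I would combine with injectivity of $\imath$, used in the two forms $\imath(S) = \imath(T) \Rightarrow S = T$ and $\imath(x) \in \imath(S) \Leftrightarrow x \in S$ for $S, T \subseteq P$.

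First I would dispatch order-reflection: if $\imath(x) \leq \imath(y)$, then $\imath(x) \in \clos\{\imath(y)\} = \imath(\clos\{y\})$, so $x \in \clos\{y\}$ by injectivity, i.e.\ $x \leq y$; together with order-preservation (Lemma \ref{lem:closedmap}) this makes $\imath$ an order-embedding. Next I would show $\imath$ preserves dimension: if $\dmn{x} = n$ but $\dmn{\imath(x)} = m < n$, then $\bord{m}{\alpha}\imath(x) = \clos\{\imath(x)\}$ for both $\alpha$, hence $\imath(\bord{m}{\alpha}x) = \imath(\clos\{x\})$, hence $\bord{m}{\alpha}x = \clos\{x\}$ by injectivity, contradicting the characterisation of $\dmn{x}$. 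Preservation of covers is then formal: if $y$ covers $x$, gradedness gives $x < y$ with $\dmn{y} = \dmn{x}+1$, so $\imath(x) < \imath(y)$ (order-preservation plus injectivity) with $\dmn{\imath(y)} = \dmn{\imath(x)}+1$, and in any graded poset an element one dimension above a strictly smaller one covers it.

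It remains to match the orientations. Writing $n := \dmn{x}$, so $\dmn{y} = n+1$, the key observation is that in $\clos\{y\}$ every $n$\nbd dimensional element is covered exactly by $y$ (any element covering it has dimension $n+1$ and lies below $y$, hence equals $y$), and that no element of $\clos\{y\}$ can contribute to the second clause defining $\bord{n}{\alpha}y$ (all such elements lie below $y$, and $\dmn{y} > n$). Hence $\bord{n}{\alpha}y = \clos{(\sbord{n}{\alpha}y)}$, and an $n$\nbd dimensional $z \leq y$ lies in it precisely when $o_P(c_{y,z}) = \alpha$; in particular $x \in \bord{n}{\alpha}y \Leftrightarrow \alpha = o_P(c_{y,x})$. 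Transporting this along $\imath$ via $\imath(\bord{n}{\alpha}y) = \bord{n}{\alpha}\imath(y)$ and injectivity gives $\imath(x) \in \bord{n}{\alpha}\imath(y) \Leftrightarrow \alpha = o_P(c_{y,x})$, while running the same description in $Q$ — now legitimate because $\imath(y)$ covers $\imath(x)$ and $\dmn{\imath(y)} = n+1$ — gives $\imath(x) \in \bord{n}{\alpha}\imath(y) \Leftrightarrow \alpha = o_Q(c_{\imath(y),\imath(x)})$. Comparing the two yields $o_Q(c_{\imath(y),\imath(x)}) = o_P(c_{y,x})$.

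The only step needing genuine care is this last one: one must check that at the top boundary dimension $n = \dmn{y}-1$ the input and output $n$\nbd boundaries of $y$ really do reduce to the closures of the corresponding $\sbord{n}{\alpha}y$ with the clean membership test, which is exactly where the hypothesis that $y$ covers $x$ (forcing $\dmn{y} = n+1$) is used; everything else is a routine transfer across the injection $\imath$.
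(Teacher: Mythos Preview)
Your proof is correct and follows essentially the same approach as the paper's. The paper compresses the dimension-preservation and covering-preservation steps into the single remark that $\imath$ is ``a closed embedding of graded posets; in particular it preserves the covering relation and dimensions'', and phrases the orientation step as the set-level identity $\imath(\sbord{}{\alpha}x) = \sbord{}{\alpha}\imath(x)$ rather than your pointwise membership test, but the underlying argument is the same: extract $\sbord{}{\alpha}$ from $\bord{}{\alpha}$ at the top boundary dimension and transport it along the injection.
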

\begin{proof}
Let $x, y$ be such that $\imath(x) \leq \imath(y)$. Then $\imath(x) \in \clos\{\imath(y)\} = \imath(\clos\{y\})$, so $\imath(x) = \imath(x')$ for some $x' \leq y$. Since $\imath$ is injective, $x = x'$. It follows that $\imath$ is a closed embedding of graded posets; in particular it preserves the covering relation and dimensions. It follows that $f(\sbord{}{}x) = \sbord{}{}f(x)$ for all $x \in P$, and from $f(\bord{}{\alpha}x) = \bord{}{\alpha}f(x)$ we obtain $f(\sbord{}{\alpha}x) = \sbord{}{\alpha}f(x)$, which is equivalent to $\imath$ being compatible with orientations.
\end{proof}

\begin{remark}
In \cite{hadzihasanovic2018combinatorial} we defined inclusions of oriented graded posets as closed embeddings of posets compatible with the orientations. It follows from Lemma \ref{lem:sameinclusions} that inclusions in the sense of Definition \ref{dfn:inclclps} are also inclusions in the former sense; the converse is straightforward to prove.
\end{remark}

\begin{prop} \label{lem:factor_clpsincl}
Every map $f: P \to Q$ of oriented graded posets factors as a surjective map $P \surj \widehat{P}$ followed by an inclusion $\widehat{P} \incl Q$. This factorisation is unique up to isomorphism.
\end{prop}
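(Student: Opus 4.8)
The plan is to take the obvious candidate for $\widehat{P}$, namely the set-theoretic image of $f$, equipped with the structure it inherits from $Q$, and verify that the resulting factorisation consists of a surjective map followed by an inclusion. Concretely, set $\widehat{P} := f(P) \subseteq Q$ as a subset; by Lemma \ref{lem:closedmap}, $f$ is a closed function, so $f(P) = f(\clos{P})$ is a closed subset of $Q$, hence (being closed in a graded poset) is itself a graded poset with the order, dimension function, and orientation restricted from $Q$. Let $\imath: \widehat{P} \incl Q$ be the inclusion of this subset and $g: P \surj \widehat{P}$ be the corestriction of $f$, so that $f = \imath \circ g$ and $g$ is surjective by construction.

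The first thing to check is that $\imath$ is an inclusion of oriented graded posets in the sense of Definition \ref{dfn:inclclps}, i.e.\ that it satisfies (\ref{dfn:mapdef}). Since $\widehat{P}$ is a closed subset of $Q$ with the restricted structure, this is essentially the statement that boundary operators of a closed subset agree with those computed in the ambient poset; this is a routine unwinding of the definition of $\bord{n}{\alpha}$, using that closure and the covering relation are computed the same way in $\widehat{P}$ and in $Q$ because $\widehat{P}$ is closed. Next I would check that $g: P \to \widehat{P}$ is itself a map of oriented graded posets: for $x \in P$ and any $n, \alpha$, we have $\imath(\bord{n}{\alpha}g(x)) = \bord{n}{\alpha}\imath(g(x)) = \bord{n}{\alpha}f(x) = f(\bord{n}{\alpha}x) = \imath(g(\bord{n}{\alpha}x))$, using the previous point, the factorisation $f = \imath g$, and that $f$ is a map; since $\imath$ is injective, $\bord{n}{\alpha}g(x) = g(\bord{n}{\alpha}x)$, so $g$ is a map.

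For uniqueness up to isomorphism, suppose $f = \imath' \circ g'$ is another factorisation with $g': P \surj P'$ surjective and $\imath': P' \incl Q$ an inclusion. Since $\imath'$ is injective, its image is exactly $f(P) = \widehat{P}$ as a subset of $Q$, and by Lemma \ref{lem:sameinclusions} the inclusion $\imath'$ reflects the order and is compatible with orientations, so the induced bijection $P' \to \widehat{P}$ (sending $p'$ to $\imath'(p')$) is an isomorphism of oriented graded posets identifying $g'$ with $g$ and $\imath'$ with $\imath$. This gives the claimed uniqueness.

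The main obstacle, such as it is, is the bookkeeping in the first step: one must be careful that all the structure on $\widehat{P}$ — in particular the grading and the dimension function — genuinely restricts from $Q$ and is not merely defined intrinsically, and that $\bord{n}{\alpha}$ for elements of the closed subset $\widehat{P}$ coincides with the ambient $\bord{n}{\alpha}$. Once that compatibility is recorded, everything else is a short diagram chase using injectivity of inclusions and the defining equation (\ref{dfn:mapdef}). No genuinely hard argument is expected; this is the standard "(epi, mono)-style" factorisation adapted to the present category, and the work is in confirming that the subset of $Q$ that is the image carries the right structure.
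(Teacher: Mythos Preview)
Your proposal is correct and follows essentially the same approach as the paper: take $\widehat{P} = f(P)$, use Lemma~\ref{lem:closedmap} to see it is a closed subset of $Q$, and then invoke the epi--mono factorisation of the underlying function together with the fact that the orientation on a closed subset is uniquely inherited. If anything, you are more explicit than the paper in verifying that the corestriction $g$ satisfies (\ref{dfn:mapdef}), which the paper simply asserts.
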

\begin{proof}
By Lemma \ref{lem:closedmap}, the image $f(P)$ of $f$ is a closed subset of $Q$. Then $f: P \surj f(P)$ is a surjective map, and the subset inclusion $f(P) \subseteq Q$ is an inclusion of oriented graded posets.
Uniqueness up to isomorphism is a consequence of the uniqueness of the epi-mono factorisation of the underlying function of $f$, together with the fact that a closed embedding into the underlying poset of an oriented graded poset is compatible with a unique orientation on the domain. 
\end{proof}

Next, we specialise to maps of regular directed complexes. The following proves that molecules in a regular directed complex are quite rigid: they have no non-trivial automorphisms.

\begin{lem} \label{lem:molecnoauto}
Let $U$ be a molecule in a regular directed complex, and $\imath: U \incliso U$ an isomorphism. Then $\imath$ is the identity.
\end{lem}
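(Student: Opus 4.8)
The plan is to induct on the submolecule order on $U$. For the base case, suppose $U$ is an atom with greatest element $\top$; then $\imath$ must fix $\top$, and we want to propagate this down through the Hasse diagram. The key tool is Lemma \ref{lem:sameinclusions}: an isomorphism preserves the covering relation together with its orientation, hence it restricts to an isomorphism of the labelled Hasse diagram of $U$. So, arguing by downward induction on dimension: if $\imath$ fixes every element of dimension $> k$, I want to show it fixes every $k$-dimensional element. A $k$-dimensional element $x$ is covered by some set of elements, all of dimension $k+1$, which $\imath$ fixes. By Lemma \ref{lem:basic_reg} (using that the atom has spherical boundary) together with Lemma \ref{lem:molecbasic}, each $k$-dimensional $x$ with $k < n = \dmn{U}$ is covered by at least one element, and in fact the pair $(\sbord{}{-}V, \sbord{}{+}V)$ for $V = \clos\{y\}$ the closure of a covering element of $x$ — or more directly, any $y$ covering $x$ satisfies: $x$ is one of the elements of $\sbord{}{}\clos\{y\}$, which is partitioned into input and output faces, and $\imath$ restricts to an automorphism of $\clos\{y\}$ fixing $y$. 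So it suffices to show that an automorphism of an atom $\clos\{y\}$ fixing the greatest element fixes everything — but this is exactly the statement we are inducting on for the smaller atom $\clos\{y\}$ (which, as a face of a regular directed complex, is again an atom with spherical boundary), so the induction closes, provided we induct on dimension of the molecule rather than on the submolecule order.

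Let me restructure: the cleanest induction is on $n = \dmn{U}$, with an inner induction for the atom case. If $n = 0$, $U$ is a single point and $\imath = \idcat{U}$. For $n > 0$: first handle atoms. Since $\imath$ fixes the greatest element $\top$ of $\clos\{\top\} = U$, and since $\bord{}{\alpha}U$ is a molecule of dimension $n-1$, the constraint $\imath(\bord{}{\alpha}x) = \bord{}{\alpha}\imath(x)$ applied to $x = \top$ shows $\imath$ restricts to an automorphism $\imath_\alpha$ of $\bord{}{\alpha}U$ for each $\alpha \in \{+,-\}$. By the outer inductive hypothesis, $\imath_\alpha = \idcat{}$. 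Now $\skel{n-1}{U} = \bord{}{+}U \cup \bord{}{-}U$ since $U$ is pure (Lemma \ref{lem:basic_reg}), so $\imath$ fixes every element of dimension $< n$; and it fixes $\top$, the unique element of dimension $n$; hence $\imath = \idcat{U}$. For the general molecule case with $\dmn{U} = n$: decompose $U = U_1 \cp{k} \ldots \cp{k} U_m$ as in Lemma \ref{lem:composition_form}, or more simply proceed by induction on the submolecule order — if $U$ is an atom, done by the above; otherwise $U = U_1 \cp{k} U_2$ with $U_1, U_2 \submol U$. The difficulty is that $\imath$ need not a priori preserve this decomposition, since the decomposition is not canonical.

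To fix the last point, I would instead use the rigid data of the boundary. By Lemma \ref{lem:sameinclusions}, $\imath$ is an automorphism of the labelled Hasse diagram of $U$. Use Proposition \ref{prop:boundary_submol} to list the $n$-dimensional elements as $x_1, \ldots, x_m$; but actually the slickest route: an $n$-dimensional element $x$ of $U$ satisfies $\dmn{x} = n = \dmn{U}$, and $\imath$ permutes $U^{(n)}$; for each such $x$, $\clos\{x\}$ is an atom with spherical boundary, and $\imath$ restricted to $\clos\{x\}$ is an isomorphism onto $\clos\{\imath(x)\}$, another atom with spherical boundary, sending $\bord{}{\alpha}x$ to $\bord{}{\alpha}\imath(x)$. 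So I want: the $n$-dimensional elements of a molecule $U$ with $\dmn{U} = n$ are determined, as a set-with-multiplicity together with their boundaries, by the poset structure — but more to the point, I claim $\imath$ fixes each $x_i$. Argue by induction on the list: $\bord{}{-}U = \bord{}{-}\tilde U_1$ at stage $1$ contains $\bord{}{-}x_1$ as a spherical submolecule, and one shows (using that $\bord{}{-}U$ is a molecule of dimension $n-1$, to which the outer IH applies, so $\imath$ fixes $\bord{}{-}U$ pointwise) that $x_1$ is characterised among $U^{(n)}$ as the unique $n$-element with $\bord{}{-}x_1 \subseteq \bord{}{-}U$; since $\imath$ fixes $\bord{}{-}U$ pointwise and permutes $U^{(n)}$ preserving the relation $\bord{}{-}x \subseteq \bord{}{-}U$, it must fix $x_1$. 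Then peel off $U_1$ and repeat with $\tilde U_2 = U_2 \cp{n-1} \ldots$, whose input boundary $\bord{}{-}\tilde U_2$ is again fixed pointwise once we know $\imath$ fixes $\bord{}{-}x_1$ and $\bord{}{+}x_1$ — this needs the boundary-compatibility $\imath(\bord{}{\alpha}\tilde U_1) = \bord{}{\alpha}\tilde U_1$, which follows because $\tilde U_1$ is built from $\bord{}{-}U$ and the $\clos\{x_i\}$ that $\imath$ now fixes. Iterating, $\imath$ fixes all of $U^{(n)}$ and all of $\skel{n-1}{U} = \bord{}{-}U \cup \bord{}{+}U \cup \bigcup_i \bord{}{}\clos\{x_i\}$, hence fixes $U$.

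The main obstacle, as flagged, is that the composite decomposition $U = U_1 \cp{k} \cdots \cp{k} U_m$ is not canonical, so one cannot simply say ``$\imath$ preserves the decomposition and apply the inductive hypothesis to each $U_i$''. The resolution is to pin down the top-dimensional elements $x_i$ one at a time using the already-rigid lower skeleton — the outer induction on dimension gives that $\bord{}{\pm}U$, and then each successively built $\bord{}{\pm}\tilde U_j$, is fixed pointwise by $\imath$, and each $x_i$ is uniquely determined by its input boundary sitting inside the appropriate fixed $(n-1)$-boundary via Proposition \ref{prop:boundary_submol}.
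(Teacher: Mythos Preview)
Your overall strategy --- induct on dimension, use the inductive hypothesis to fix the boundaries pointwise, then pin down the top-dimensional elements --- matches the paper's, and your treatment of the atom case is correct. For the non-atom case, however, the key claim is false as stated: ``$x_1$ is characterised among $U^{(n)}$ as the unique $n$-element with $\bord{}{-}x_1 \subseteq \bord{}{-}U$'' fails already for $U = O^2 \cp{0} O^2$, where both $2$-dimensional elements have their input boundaries contained in $\bord{}{-}U$. So knowing that $\imath$ preserves the set $\{x \in U^{(n)} : \bord{}{-}x \subseteq \bord{}{-}U\}$ does not by itself give $\imath(x_1) = x_1$.

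The repair is short, and is essentially the mechanism the paper uses. Since $\imath$ fixes $\bord{}{-}U$ pointwise and $\bord{}{-}x_1 \subseteq \bord{}{-}U$, it fixes each $y \in \sbord{}{-}x_1$ (nonempty by Proposition~\ref{prop:reg_inhabit}); by Lemma~\ref{lem:molecbasic} such a $y$ is covered by at most one element with orientation $-$, and since $\imath(x_1)$ also covers $\imath(y)=y$ with orientation $-$ (Lemma~\ref{lem:sameinclusions}), we get $\imath(x_1)=x_1$. Your iteration through the listing from Proposition~\ref{prop:boundary_submol} then goes through. The paper organises the same idea differently: rather than peeling off $x_1, x_2, \ldots$ in a fixed order, it treats each $n$-dimensional $x$ individually by building a path $x=x_0, y_0, x_1, \ldots, y_m$ with $y_i \in \sbord{}{+}x_i \cap \sbord{}{-}x_{i+1}$ terminating at some $y_m \in \sbord{}{+}U$, and then runs the covering argument backwards along the path. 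Both routes rest on the same use of Lemma~\ref{lem:molecbasic}.
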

\begin{proof}
We proceed by induction on the dimension and submolecules of $U$. If $n = 0$, this is obvious. Suppose $n > 0$. Then $\imath(\bord{}{\alpha}U) = \bord{}{\alpha}U$, so $\imath$ restricts to an automorphism of $\bord{}{\alpha}U$, a molecule of lower dimension, which by the inductive hypothesis must be the identity. By the same reasoning, if $\imath(x) = x$ for an element $x$, then $\imath$ is also the identity on $\clos\{x\}$, so it suffices to prove that $\imath$ fixes the maximal elements of $U$. If $U$ is an atom, this is obvious.

Otherwise, if $x$ is maximal and $\dmn{x} < n$, then $x \in \bord{}{}U$, so we have already established that $\imath(x) = x$. Suppose $\dmn{x} = n$. By Lemma \ref{lem:molecbasic} and Proposition \ref{prop:reg_inhabit}, we can construct a sequence $x = x_0 \to y_0 \to x_1 \to \ldots \to x_m \to y_m$ of elements of $U$, where the $y_i$ are $(n-1)$\nbd dimensional, the $x_i$ are $n$\nbd dimensional, $y_i \in \sbord{}{+}x_{i} \cap \sbord{}{-}x_{i+1}$ for $i < m$, and $y_m \in \sbord{}{+}x_m \cap \sbord{}{+}U$: at every $x_i$, we can always pick $y_{i} \in \sbord{}{+}x_i$, and if $y_i \in \sbord{}{+}U$ we stop, otherwise we take $x_{i+1}$ such that $y_i \in \sbord{}{-}x_{i+1}$.

Any such sequence is mapped by $\imath$ to a sequence with the same property, that is, $\imath(x) \to \imath(y_0) \to \ldots \to \imath(x_m) \to \imath(y_m) = y_m$. By Lemma \ref{lem:molecbasic}, $y_m$ is only covered by $x_m$ in $U$, so $\imath(x_m) = x_m$, hence also $\imath(y_{m-1}) = y_{m-1}$. Then $y_{m-1}$ is only covered by $x_{m-1}$ with orientation $+$, and proceeding backwards we find that $\imath(x) = x$.
\end{proof}

Let $\molecin$ be the full subcategory of $\rdcpxin$ on molecules of any dimension, and, for each regular directed complex $P$, let $\slice{\molecin}{P}$ be the comma category of inclusions of molecules into $P$, that is, the category whose objects are inclusions $U \hookrightarrow P$ of molecules into $P$, and morphisms are commutative triangles
\begin{equation*}
\begin{tikzpicture}[baseline={([yshift=-.5ex]current bounding box.center)}]
	\node[scale=1.25] (0) at (-1.25,1.25) {$U$};
	\node[scale=1.25] (1) at (0,0) {$P$};
	\node[scale=1.25] (2) at (1.25,1.25) {$V$};
	\draw[1cinc] (0) to (2);
	\draw[1cincl] (0) to (1);
	\draw[1cinc] (2) to (1);
\end{tikzpicture}
\end{equation*}
of inclusions.

\begin{prop} \label{prop:preordermolec}
Let $P$ be a regular directed complex. The category $\slice{\molecin}{P}$ is a preorder.
\end{prop}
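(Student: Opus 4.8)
The plan is to prove the Proposition by observing that $\slice{\molecin}{P}$ is thin for the purely formal reason that all of its objects are monomorphisms. Recall that, by definition, a morphism in $\slice{\molecin}{P}$ from an object $u\colon U \incl P$ to an object $v\colon V \incl P$ is an inclusion $h\colon U \incl V$ with $v \circ h = u$. So I would take two such morphisms $h, h'\colon U \incl V$ and note that $v \circ h = u = v \circ h'$. By Definition \ref{dfn:inclclps} a map of oriented graded posets is determined by its underlying function, and $v$ has injective underlying function; hence $v$ is a monomorphism and $h = h'$ follows. Thus there is at most one morphism between any pair of objects, which is exactly the statement that $\slice{\molecin}{P}$ is a preorder.

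Viewed this way, the substance of the claim is in the ambient set-up, not in the Proposition itself: the point is that $\molecin$ consists of inclusions, i.e.\ injective maps, so that $\slice{\molecin}{P}$ genuinely behaves as a poset-like indexing of the ``submolecules of $P$''. The only thing to be careful about is this faithfulness of $\ogpos$ on underlying sets, which is immediate from the definition of a map; no appeal to regularity or to the combinatorial lemmas of this subsection is needed for the Proposition as stated.

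If, beyond thinness, one wanted antisymmetry — so that $\slice{\molecin}{P}$ is a genuine partial order and not merely a preorder — one would additionally invoke Lemma \ref{lem:molecnoauto}: a pair of morphisms $h\colon U \incl V$ and $h'\colon V \incl U$ over $P$ composes to automorphisms $h'\circ h$ of $U$ and $h \circ h'$ of $V$, each of which is the identity by that Lemma, so $U$ and $V$ have the same image in $P$. I do not expect any real obstacle here: the genuine work of this subsection (the rigidity Lemma \ref{lem:molecnoauto} and the characterisations of maps and inclusions in Lemmas \ref{lem:closedmap}--\ref{lem:sameinclusions}) is already in hand, and the Proposition is a one-line consequence of injectivity.
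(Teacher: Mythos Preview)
Your argument is correct, and it is genuinely different from the paper's. You use the purely formal fact that a comma category over monomorphisms is thin: since every object $v\colon V \incl P$ is an injective map and maps of oriented graded posets are determined by their underlying functions, $v$ is a monomorphism, and $v\circ h = u = v\circ h'$ forces $h = h'$. This needs neither regularity nor Lemma~\ref{lem:molecnoauto}.

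The paper instead invokes Lemma~\ref{lem:molecnoauto} to show that two inclusions $U \incl P$ with the same image are equal, and concludes that $\slice{\molecin}{P}$ is equivalent to a sub\emph{poset} of the subset lattice of $P$. This is a stronger conclusion than thinness: it identifies the preorder concretely with a poset of subsets, which is what gets used downstream (for instance in Example~\ref{exm:pushoutcomposition}, where one wants to pass freely between ``inclusion of a molecule'' and ``molecule as a subset''). Your final paragraph correctly anticipates that the rigidity lemma is what buys this extra step. So: your proof is cleaner for the proposition as literally stated, while the paper's proof packages in the skeletal description that the rest of the section relies on.
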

\begin{proof}
By Lemma \ref{lem:molecnoauto}, two inclusions $\imath_1, \imath_2: U \incl P$ are equal if and only if they have the same image in $P$. This establishes an equivalence between $\slice{\molecin}{P}$ and a subposet of the subset lattice of $P$.
\end{proof}

Let us look at some basic limits and colimits in $\rdcpx$.

\begin{prop}
The directed complex $1$ with a single element is the terminal object of $\rdcpx$.
\end{prop}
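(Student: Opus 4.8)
The plan is to reduce the whole statement to the single fact that boundary operators in a regular directed complex never return the empty set. To begin, I would observe that $1$ --- the oriented graded poset with a single $0$\nbd dimensional element, say $*$ --- indeed belongs to $\rdcpx$: it has no elements of positive dimension, so the two conditions defining a directed complex are vacuous, and $\clos\{*\} = \{*\}$ has spherical boundary by the base case $n = 0$ of the definition.

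Uniqueness of a morphism $P \to 1$ is automatic, since there is exactly one function from the underlying set of $P$ to the singleton underlying $1$, namely the constant function $f$ with $f(x) = *$. So the only thing to verify is that this $f$ is a map of oriented graded posets, i.e.\ that $\bord{n}{\alpha}f(x) = f(\bord{n}{\alpha}x)$ for all $x \in P$, $n \in \mathbb{N}$, and $\alpha \in \{+,-\}$. Here the left-hand side is $\bord{n}{\alpha}\clos\{*\}$, and since $*$ is maximal and $0$\nbd dimensional this equals $\{*\}$ for every $n$ and $\alpha$; the right-hand side is the image of the closed set $\bord{n}{\alpha}x$ under $f$, which equals $\{*\}$ when $\bord{n}{\alpha}x$ is inhabited and $\emptyset$ otherwise. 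Hence the map axiom for $f$ is \emph{equivalent} to the assertion that $\bord{n}{\alpha}x$ is inhabited for all $x$, $n$, $\alpha$.

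To finish I would establish that assertion. If $n \geq \dmn{x}$ then $\bord{n}{\alpha}x = \clos\{x\} \ni x$. If $n < \dmn{x}$ then, by regularity, $\clos\{x\}$ is a molecule with spherical boundary, so Lemma~\ref{lem:kbound_spherical} (applied with $U = \clos\{x\}$) shows that $\bord{n}{\alpha}x$ is an $n$\nbd molecule with spherical boundary, hence pure and $n$\nbd dimensional by Lemma~\ref{lem:basic_reg}(a), and in particular nonempty. (Equivalently, one could invoke that every molecule is inhabited --- an immediate induction on the defining clauses --- together with the fact that $\bord{n}{\alpha}x$ is always a molecule; the non-vacuous input is ultimately Proposition~\ref{prop:reg_inhabit}.) I do not expect any real obstacle here; the only point worth stressing is that verifying the map axiom for the constant function genuinely uses that boundaries are never empty, which is exactly what regularity buys us, so the terminal-object statement is not quite as formal as it might first appear.
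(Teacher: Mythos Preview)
Your proof is correct and follows the same approach as the paper --- verify that the unique set-function $P \to 1$ satisfies the map axiom --- though you spell out in detail what the paper dismisses as ``trivially a map''; your reduction of the map axiom to nonemptiness of every $\bord{n}{\alpha}x$ is exactly the point that needs checking. One minor remark: this nonemptiness already follows from the directed-complex axioms alone (each $\bord{k}{\alpha}x$ is a molecule by Steiner's result, and molecules are inhabited by an immediate induction on their definition), so regularity is not strictly what ``buys'' it --- but your route through Lemmas~\ref{lem:kbound_spherical} and~\ref{lem:basic_reg} is perfectly valid in $\rdcpx$.
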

\begin{proof}
Let $P$ be a regular directed complex. There is a unique function from $P$ to $1$, and it is trivially a map.
\end{proof}

\begin{prop} \label{prop:pushouts_exist}
The category $\rdcpxin$ has an initial object and pushouts, created by the forgetful functor to $\cat{Set}$, and preserved by the inclusion of subcategories $\rdcpxin \hookrightarrow \rdcpx$.
\end{prop}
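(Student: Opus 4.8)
\section*{Proof proposal}

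The plan is to take the empty oriented graded poset as the initial object and to build pushouts on underlying sets. For the initial object, take $\emptyset$: it is vacuously a regular directed complex, and for any regular directed complex $P$ the unique function $\emptyset \to P$ vacuously satisfies \eqref{dfn:mapdef}, so it is the unique map $\emptyset \to P$, and an inclusion; since $\emptyset$ is also the initial object of $\cat{Set}$ and of $\rdcpx$, the claims about the initial object follow at once. For pushouts, fix a span $P_1 \xleftarrow{\imath_1} U \xrightarrow{\imath_2} P_2$ of inclusions of regular directed complexes, let $(P; j_1, j_2)$ be the pushout of the underlying functions in $\cat{Set}$, and --- since $\imath_1, \imath_2$ are injective, so are $j_1, j_2$ --- realise $P$ concretely as $P_1$ and $P_2$ with $\imath_1(U)$ and $\imath_2(U)$ identified along $U$, the $j_i$ being the evident injections.

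The first substantial step is to equip $P$ with an oriented graded poset structure. By Lemmas \ref{lem:closedmap} and \ref{lem:sameinclusions}, each $\imath_i$ is an order-reflecting closed embedding compatible with orientations, so $\imath_i(U)$ is closed in $P_i$; hence in $P$ the only elements below an element coming from $P_1 \setminus \imath_1(U)$ are again from $P_1$, and symmetrically for $P_2$. Using this, one checks that ``$x \le y$ iff $x$ and $y$ have representatives in a common $P_i$ with $x \le_{P_i} y$'' is a well-defined partial order, that $P$ is graded with a dimension function restricting to those of $P_1$ and $P_2$ (which agree on $U$ because inclusions preserve dimension), and that the orientations of $P_1$ and $P_2$ --- which agree on the Hasse edges inside $U$ by Lemma \ref{lem:sameinclusions} --- glue to a well-defined orientation on $P$. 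With this structure each $j_i \colon P_i \to P$ is an order-reflecting closed embedding compatible with orientations, hence an inclusion of oriented graded posets, and $\clos\{j_i(x)\} = j_i(\clos\{x\})$ for all $x \in P_i$.

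Because being a regular directed complex is a condition local to closures of single elements, $P$ inherits it: any $x \in P$ equals $j_i(x')$ for some $i$ and $x' \in P_i$, and $j_i$ restricts to an isomorphism $\clos\{x'\} \incliso \clos\{x\}$ of oriented graded posets, so the conditions ``$\clos\{x\}$ has spherical boundary'', ``$\bord{}{\alpha}x$ is a molecule'' and ``$\bord{}{\alpha}\bord{}{\beta}x = \bord{n-2}{\alpha}x$'' transfer from $P_i$. For the universal property, given a cocone $(Q; g_1, g_2)$ with $g_1\imath_1 = g_2\imath_2$, let $h \colon P \to Q$ be the unique function with $hj_i = g_i$; for $x = j_i(x')$ one computes $\bord{n}{\alpha}h(x) = \bord{n}{\alpha}g_i(x') = g_i(\bord{n}{\alpha}x') = h(j_i(\bord{n}{\alpha}x')) = h(\bord{n}{\alpha}x)$, using that $g_i$ and $j_i$ are maps and that $\bord{n}{\alpha}x = j_i(\bord{n}{\alpha}x')$; so $h$ is a map. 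Hence $(P; j_1, j_2)$ is a pushout in $\rdcpx$ whose underlying set is the set-theoretic pushout, and whose oriented graded poset structure is the unique one making the $j_i$ maps --- which is exactly the statement that the forgetful functor creates the pushout and that $\rdcpxin \hookrightarrow \rdcpx$ preserves it; restricting to cocones of inclusions, one further checks that the comparison map $h$ is then again an inclusion, giving the pushout in $\rdcpxin$.

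The two points that need care are the second step --- where it is the closedness of $\imath_i(U)$ in $P_i$, together with the agreement of the two orientations on $U$, that prevents the amalgamation from creating spurious order relations, destroying gradedness, or conflicting with a single orientation --- and the injectivity of the comparison map $h$ into a cocone of inclusions. Everything else is formal: regularity is automatic since it only sees closures of individual elements, and the universal property is a direct computation on the underlying set-theoretic pushout.
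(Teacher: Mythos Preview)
Your approach mirrors the paper's: build the pushout on underlying sets, observe that the closure of any element in the amalgam is isomorphic to its closure in one of the $P_i$, deduce that the regular directed complex axioms transfer, and check the universal property. You are considerably more explicit than the paper (which dispatches the universal property as ``a simple check'') in invoking Lemmas~\ref{lem:closedmap} and~\ref{lem:sameinclusions} and in verifying that the induced map $h$ satisfies \eqref{dfn:mapdef}.

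There is one genuine gap, in your last sentence: the claim that the comparison map $h$ into a cocone of \emph{inclusions} is itself an inclusion is false in general. Take $U = \emptyset$, $P_1 = P_2 = Q = 1$, and $g_1 = g_2 = \idcat{1}$; the set-theoretic pushout is $1+1$, and the induced $h\colon 1+1 \to 1$ is not injective. In fact the coproduct of $1$ with itself does not exist in $\rdcpxin$, so the proposition as literally stated is slightly too strong --- and the paper's own proof glosses over exactly this point. What is correct, and what is actually used later (Corollary~\ref{cor:globpos_is_colimit}, Example~\ref{exm:pushoutcomposition}, Proposition~\ref{prop:laxgray_preserve_colim}), is that the pushout of a span of inclusions exists in $\rdcpx$, is computed on underlying sets, and has its structural maps $j_1, j_2$ lying in $\rdcpxin$; your argument establishes precisely this.
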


\begin{remark}
It follows that both $\rdcpxin$ and $\rdcpx$ also have all finite coproducts. 
\end{remark}
\begin{proof}
The empty directed complex $\emptyset$ is clearly initial in both categories.

Let $\imath_1: Q \incl P_1, \imath_2: Q \incl P_2$ be a span of inclusions. We let $P_1 \cup_Q P_2$ be the pushout of the underlying span of sets, that is, the quotient of the disjoint union $P_1 + P_2$ of sets by the relation $\imath_1(x) \sim \imath_2(x)$ for all $x \in Q$. This comes with injective functions $j_1: P_1 \incl P_1 \cup_Q P_2$ and $j_2: P_2 \incl P_1 \cup_Q P_2$, and becomes a poset by letting $j_i(x) \leq j_i(y)$ if and only if $x \leq y$ in $P_i$. Now $\clos\{j_i(x)\} \simeq \clos\{x\}$ for all $x \in J_i$, and every element of $P_1 \cup_Q P_2$ is of the form $j_i(x)$ for some $i$ and $x \in J_i$, so $P_1 \cup_Q P_2$ is graded, and inherits an orientation from $P_1$ and $P_2$, compatibly on $Q$. 

For the same reason, with this orientation $P_1 \cup_Q P_2$ is a regular directed complex. The universal property is a simple check.
\end{proof}

\begin{cor} \label{cor:globpos_is_colimit}
Any regular directed complex is the colimit of the diagram of inclusions of its atoms.
\end{cor}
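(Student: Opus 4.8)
The plan is to exhibit $P$, together with the subset inclusions $\lambda_x \colon \clos{\{x\}} \incl P$ of its atoms, as $\colimit D$ in $\rdcpx$, where $D$ is the diagram of inclusions of its atoms. The index category of $D$ is the comma category $\slice{\atomin}{P}$ of atoms over $P$; by Proposition \ref{prop:preordermolec} this is a preorder, equivalent to the poset $P$ itself via $x \mapsto \clos{\{x\}}$, so we may as well take $D$ to be the functor $P \to \rdcpx$ sending $x$ to the atom $\clos{\{x\}}$ --- a regular directed complex, being a closed subset of $P$ with greatest element $x$, whose boundary is spherical because $P$ is regular --- and sending each relation $x \leq y$ to the subset inclusion $\clos{\{x\}} \incl \clos{\{y\}}$. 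The family $(\lambda_x)_{x \in P}$ clearly forms a cocone under $D$. I will verify the universal property by hand, with cocones taken in $\rdcpx$; the same argument works for cocones in $\ogpos$, so in fact $P = \colimit D$ there as well.

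So let $(\mu_x \colon \clos{\{x\}} \to Q)_{x \in P}$ be an arbitrary cocone under $D$, and define $f \colon P \to Q$ by $f(x) := \mu_x(x)$, the value of $\mu_x$ at the greatest element of $\clos{\{x\}}$. The first thing to observe is that $f$ restricts to $\mu_x$ on the closed subset $\clos{\{x\}}$, for every $x$: if $z \leq x$, the cocone equation for the inclusion $\clos{\{z\}} \incl \clos{\{x\}}$ gives $\mu_x(z) = \mu_z(z) = f(z)$. Granting this, $f$ is a map of oriented graded posets, for if $x \in P$, $n \in \mathbb{N}$ and $\alpha \in \{+,-\}$, then $\bord{n}{\alpha}x \subseteq \clos{\{x\}}$, and so
\[
  f(\bord{n}{\alpha}x) = \mu_x(\bord{n}{\alpha}x) = \bord{n}{\alpha}\mu_x(x) = \bord{n}{\alpha}f(x),
\]
the middle equality holding because $\mu_x$ is a map and the last because $f(x) = \mu_x(x)$. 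Since each $\lambda_x$ is the subset inclusion, $f \circ \lambda_x = \mu_x$; and any $g \colon P \to Q$ with $g \circ \lambda_x = \mu_x$ for all $x$ satisfies $g(x) = \mu_x(x) = f(x)$ for every $x \in P$, so $f$ is the unique mediating map. As $D$ lands in the full subcategory $\rdcpx$ and $P \in \rdcpx$, this establishes $P = \colimit D$ in $\rdcpx$, which is Corollary \ref{cor:globpos_is_colimit}.

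I do not expect a genuine obstacle here. The only step that uses anything is the promotion of the pointwise formula $f(x) = \mu_x(x)$ to an honest map of oriented graded posets, which rests on the cocone condition forcing the $\mu_x$ to agree on common sub-atoms, together with the fact that the operators $\bord{n}{\alpha}$ are computed locally inside $\clos{\{x\}}$. It is worth stressing that the colimit must be understood in $\rdcpx$ and not in $\rdcpxin$: already for $P = O^0 + O^0$, the cocone into $Q = O^0$ that collapses the two points admits no mediating inclusion, so that diagram of atoms has no colimit in $\rdcpxin$ at all.
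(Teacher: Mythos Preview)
Your proof is correct and takes a more explicit route than the paper's. The paper argues in one line: the statement holds for underlying sets, and the colimit can be assembled from the pushouts and finite coproducts of Proposition~\ref{prop:pushouts_exist}, which are computed set-theoretically. You instead verify the universal property in $\rdcpx$ by hand, defining the mediating map pointwise as $f(x) = \mu_x(x)$ and using the cocone condition to show that $f$ restricts to $\mu_x$ on each $\clos\{x\}$, hence satisfies the defining equation~(\ref{dfn:mapdef}) for maps. Your approach is elementary and self-contained; the paper's is terser but leans on the preceding proposition. Your closing remark --- that the colimit must be read in $\rdcpx$ rather than $\rdcpxin$, witnessed by $P = O^0 + O^0$ with the cocone collapsing both atoms to a single point --- is a genuine observation: the set-theoretic pushout need not satisfy the universal property against arbitrary cocones of inclusions, so whatever Proposition~\ref{prop:pushouts_exist} supplies, the colimit one actually uses here is the one in $\rdcpx$.
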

\begin{proof}
This is true of the underlying sets and functions, and the colimit can be constructed by pushouts and finite coproducts.
\end{proof}

\begin{exm} \label{exm:pushoutcomposition}
Let $U, V$ and $W$ be regular molecules together with necessarily unique isomorphisms $W \incliso \bord{k}{+}U$ and $W \incliso \bord{k}{-}V$. We can take the pushout
\begin{equation*}
\begin{tikzpicture}[baseline={([yshift=-.5ex]current bounding box.center)}]
	\node[scale=1.25] (0) at (0,1.5) {$W$};
	\node[scale=1.25] (1) at (2.5,0) {$U\cp{k}V$};
	\node[scale=1.25] (2) at (0,0) {$U$};
	\node[scale=1.25] (3) at (2.5,1.5) {$V$};
	\draw[1cinc] (0) to (3);
	\draw[1cincl] (0) to (2);
	\draw[1cinc] (2) to node[auto,swap] {$j_1$} (1);
	\draw[1cincl] (3) to node[auto] {$j_2$} (1);
	\draw[edge] (1.6,0.2) to (1.6,0.7) to (2.3,0.7);
\end{tikzpicture}
\end{equation*}
in $\rdcpxin$; then $U \cp{k} V$ is a molecule, equal to the $k$\nbd composite $j_1(U) \cp{k} j_2(V)$. 

Conversely, if $P$ is a molecule, decomposing as $U \cp{k} V$, then $P$ is the pushout of the span of inclusions $(U \cap V \subseteq U$, $U \cap V \subseteq V)$. In this sense, the two interpretations of $\cp{k}$ --- a decomposition of subsets of a regular directed complex, and an operation composing different regular directed complexes --- are compatible with each other. 

Moreover, thanks to Proposition \ref{prop:preordermolec}, if an equation between expressions built from the $\cp{k}$ and the $\bord{k}{\alpha}$ holds for molecules in all regular directed complexes, then it holds \emph{up to unique isomorphism} for the same expressions seen as operations on regular directed complexes. 

For example, for all $n$\nbd molecules $U_1, U_2 \subseteq P$ and all $k < n-1$, it holds that $\bord{n-1}{\alpha}(U_1 \cp{k} U_2) = \bord{n-1}{\alpha}U_1 \cp{k} \bord{n-1}{\alpha}U_2$ when $U_1 \cap U_2 = \bord{k}{+}U_1 = \bord{k}{-}U_2$. It follows that for all $n$\nbd molecules $U$ and $V$ with $\bord{k}{+}U$ isomorphic to $\bord{k}{-}V$, there is a unique isomorphism $\bord{n-1}{\alpha}(U \cp{k} V) \incliso \bord{n-1}{\alpha}U \cp{k} \bord{n-1}{\alpha}V$.
\end{exm}

We will now prove that lax Gray products and joins are compatible with maps of directed complexes, so in particular they define monoidal structures on $\rdcpx$.

\begin{prop}
Let $f: P \to P'$ and $g: Q \to Q'$ be two maps of directed complexes. Then there is a map $f \tensor g: P \tensor Q \to P' \tensor Q'$ of directed complexes whose underlying function is $f \times g$.
\end{prop}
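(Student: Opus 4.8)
The plan is to verify that $f \tensor g$, defined on underlying sets as $f \times g$, satisfies the defining equation \eqref{dfn:mapdef} of a map of oriented graded posets, namely $\bord{n}{\alpha}(f \tensor g)(x \tensor y) = (f \tensor g)(\bord{n}{\alpha}(x \tensor y))$ for all $x \tensor y \in P \tensor Q$, $n \in \mathbb{N}$, and $\alpha \in \{+,-\}$. Since $P \tensor Q$ carries the product partial order, the closure $\clos\{x \tensor y\}$ is itself a lax Gray product $\clos\{x\} \tensor \clos\{y\}$; as $\clos\{x\}$ and $\clos\{y\}$ are atoms, hence molecules, Proposition \ref{prop:molec-compose} applies and gives
\begin{equation*}
	\bord{k}{\alpha}(\clos\{x\} \tensor \clos\{y\}) = \bigcup_{i=(k-\dmn{y}) \lor 0}^{\dmn{x} \land k} \bord{i}{\alpha}x \tensor \bord{k-i}{(-)^i\alpha}y.
\end{equation*}

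The key computation is then to push $f \times g$ through this formula. First I would record the elementary fact that for a map $h \colon R \to R'$ and closed $V \subseteq R$, we have $h(V) = \bigcup_{v \in V}\clos\{h(v)\}$ and, more to the point, $h(V \tensor W) = h|_V(V) \tensor h|_W(W)$ when $V \subseteq R$, $W \subseteq S$ are closed and $h = f \times g$ — this is immediate since the map acts coordinatewise and closures of products are products of closures. Applying this to each summand $\bord{i}{\alpha}x \tensor \bord{k-i}{(-)^i\alpha}y$ and using that $f$ and $g$ are themselves maps, so $f(\bord{i}{\alpha}x) = \bord{i}{\alpha}f(x)$ and $g(\bord{k-i}{(-)^i\alpha}y) = \bord{k-i}{(-)^i\alpha}g(y)$, we obtain
\begin{equation*}
	(f \times g)\left(\bord{k}{\alpha}(\clos\{x\} \tensor \clos\{y\})\right) = \bigcup_{i} \bord{i}{\alpha}f(x) \tensor \bord{k-i}{(-)^i\alpha}g(y),
\end{equation*}
where the index range is governed by $\dmn{f(x)} \leq \dmn{x}$ and $\dmn{g(y)} \leq \dmn{y}$ (Lemma \ref{lem:closedmap}). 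By Proposition \ref{prop:molec-compose} again, the right-hand side is exactly $\bord{k}{\alpha}(\clos\{f(x)\} \tensor \clos\{g(y)\}) = \bord{k}{\alpha}(f \tensor g)(x \tensor y)$, which is the required identity.

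The one point needing care is that the index range $(k-\dmn{y})\lor 0 \le i \le \dmn{x}\land k$ may genuinely shrink when dimensions drop under $f$ or $g$; I would check that the "extra" summands present for $\clos\{x\}\tensor\clos\{y\}$ but absent for $\clos\{f(x)\}\tensor\clos\{g(y)\}$ contribute nothing new, because when $i > \dmn{f(x)}$ one has $\bord{i}{\alpha}f(x) = \clos\{f(x)\}$ and the term is already absorbed into a summand with smaller $i$ (and symmetrically in the $Q$-coordinate); this uses the monotonicity $\bord{i}{\alpha}R \subseteq \bord{j}{\beta}R$ for $i < j$, exactly as exploited in the proof of Proposition \ref{prop:grayspher}. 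This is the main (and only mild) obstacle; everything else is the bookkeeping of unwinding the coordinatewise action of $f \times g$ through Steiner's boundary formula. Once \eqref{dfn:mapdef} is established, functoriality of $\tensor$ and the fact that it restricts to $\rdcpx$ (by the Corollary following Proposition \ref{prop:grayspher}) are immediate, so $\tensor$ is a monoidal structure on $\rdcpx$, with the associator and unitor isomorphisms inherited from the product of posets.
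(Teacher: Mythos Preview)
Your proposal is correct and follows essentially the same route as the paper: apply Proposition~\ref{prop:molec-compose} to express $\bord{k}{\alpha}(x \tensor y)$ and $\bord{k}{\alpha}(f(x) \tensor g(y))$ as unions of products of boundaries, then use that $f$ and $g$ commute with boundaries. The paper's proof is the same four-line chain of equalities you outline, except that it writes $\bigcup_i$ without specifying bounds and silently ignores the index-range issue you flag; your observation that the extra summands are absorbed via $\bord{j}{\beta}W \subseteq \bord{j'}{\gamma}W$ for $j < j'$ is the honest justification for that step.
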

\begin{proof}
Let $x \in P^{(n)}$ and $y \in Q^{(m)}$. By Proposition \ref{prop:molec-compose} and the fact that $f, g$ are maps of oriented graded posets, we have
\begin{align*}
	\bord{k}{\alpha}(f \tensor g(x \tensor y)) & = \bord{k}{\alpha}(f(x) \tensor g(y)) = \bigcup_i \bord{i}{\alpha}f(x) \tensor \bord{k-i}{(-)^i\alpha}g(y) = \\
	& = \bigcup_i f(\bord{i}{\alpha}x) \tensor g(\bord{k-i}{(-)^i\alpha}y) = \\
	& = f \tensor g\left(\bigcup_i \bord{i}{\alpha}x \tensor \bord{k-i}{(-)^i\alpha}y\right) = f \tensor g(\bord{k}{\alpha}(x\tensor y)).
\end{align*}
This proves that $f \tensor g$ is a map of oriented graded posets.
\end{proof}

\begin{cor}
The lax Gray product determines a monoidal structure on $\rdcpx$, whose unit is the terminal object $1$, restricting to a monoidal structure on $\atom$.

The forgetful functor is a monoidal functor $(\rdcpx, -\tensor-, 1) \to (\pos, -\times-, 1)$. 
\end{cor}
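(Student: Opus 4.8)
The plan is to promote the two preceding results --- that $P \tensor Q$ is a regular directed complex whenever $P$ and $Q$ are, and that a pair of maps $f, g$ induces a map $f \tensor g$ whose underlying function is $f \times g$ --- to a full monoidal structure, using throughout the trivial remark that a map of oriented graded posets \emph{is} a function with a property, so that two maps are equal as soon as they agree as functions. As a consequence, every diagram of maps that must commute will do so as soon as its image under the forgetful functor $\rdcpx \to \cat{Set}$ commutes, and every candidate structure map that must be a map of oriented graded posets has already been checked to be one.

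First I would record that $\tensor$ is a functor $\rdcpx \times \rdcpx \to \rdcpx$: on objects this is the corollary that the lax Gray product of regular directed complexes is a regular directed complex, and on morphisms $f \tensor g$ is the map produced by the preceding proposition. Since that map has underlying function $f \times g$, functoriality (preservation of identities and of composites) follows from $\idcat{P} \times \idcat{Q} = \idcat{P \times Q}$ and $(f' \times g') \circ (f \times g) = (f' \circ f) \times (g' \circ g)$ for the underlying functions, together with the remark above, the relevant identities and composites being already known to be maps.

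Next I would exhibit the coherence data. The Construction already records associativity and unitality with unit $1 = O^0$ up to isomorphism; I would make this precise by taking for $\alpha_{P,Q,R}$, $\lambda_P$, and $\rho_P$ the canonical bijections of the underlying posets and checking that they respect orientations. For the associator this is immediate from the orientation rule, which assigns to an edge of a triple Gray product in the $P$, $Q$, and $R$ directions at a point $x \tensor y \tensor z$ the labels $o_P$, $(-)^{\dmn{x}}o_Q$, and $(-)^{\dmn{x}+\dmn{y}}o_R$ respectively, independently of the bracketing; for the unitors, $1$ contributes no covering edges and its unique element has dimension $0$, so the sign it contributes is always $+$, whence $1 \tensor P \cong P \cong P \tensor 1$ leaves orientations untouched. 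Naturality of $\alpha$, $\lambda$, $\rho$ reduces, again via the remark, to naturality of the underlying set bijections against $f \times g \times h$.

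Finally, the pentagon and triangle identities are equalities of maps of oriented graded posets, hence hold because they hold for the underlying functions, where they are the coherence axioms of the cartesian monoidal category $(\cat{Set}, \times, 1)$; this establishes the monoidal structure on $\rdcpx$. The restriction to $\atom$ is immediate: if $U$ and $V$ are atoms with greatest elements $u$ and $v$, then $U \tensor V$ is a regular directed complex with greatest element $u \tensor v$, hence an atom, and $1 = O^0$ is an atom, so $\tensor$ together with its coherence data restricts to $\atom$. The forgetful functor $\rdcpx \to \pos$ sends $P \tensor Q$ to the product poset $P \times Q$ by the very definition of the lax Gray product, $1$ to the terminal poset, and $\alpha, \lambda, \rho$ to the canonical isomorphisms of $(\pos, \times, 1)$; it is therefore strict monoidal, in particular monoidal. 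The only point requiring genuine care is the orientation bookkeeping for the associator and unitors; everything else is formal.
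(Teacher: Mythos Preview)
Your proposal is correct. The paper states this as a corollary without proof, having already noted in the construction of the lax Gray product that associativity and unitality hold ``up to isomorphism'' and that this is ``straightforward''; you have simply spelled out that straightforward argument, and your key observation --- that maps of oriented graded posets are functions with a property, so all coherence diagrams commute once their images in $\cat{Set}$ do --- is exactly the right one.
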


\begin{prop} \label{prop:laxgray_preserve_colim}
The lax Gray product preserves the initial object and pushouts of inclusions in each variable.
\end{prop}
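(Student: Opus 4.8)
The plan is to reduce the statement to the analogous fact in $\cat{Set}$, where cartesian product with a fixed set preserves all colimits, and then transport it back using that pushouts in $\rdcpxin$ are created by the forgetful functor to $\cat{Set}$, and preserved by $\rdcpxin \hookrightarrow \rdcpx$ (Proposition~\ref{prop:pushouts_exist}). For the initial object there is nothing to do: $P \tensor \emptyset$ and $\emptyset \tensor Q$ have underlying set $\emptyset$, so they are the empty oriented graded poset, which is initial.

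For pushouts, fix a regular directed complex $P$ and a span of inclusions $\imath_1 \colon R \incl Q_1$, $\imath_2 \colon R \incl Q_2$, with pushout $Q := Q_1 \cup_R Q_2$ and coprojections $j_i \colon Q_i \incl Q$. Since the lax Gray product is a bifunctor (established above), applying $P \tensor -$ produces a commutative square of maps $P \tensor R \to P \tensor Q_i \to P \tensor Q$ whose underlying functions, being $\idcat{P} \times \imath_i$ and $\idcat{P} \times j_i$, are injective; so each is an inclusion of regular directed complexes, and the square lies in $\rdcpxin$. To check it is a pushout there, it suffices, since the forgetful functor creates pushouts in $\rdcpxin$, to verify that its underlying square of sets is a pushout in $\cat{Set}$. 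This is immediate: the underlying set of $P \tensor X$ is the product $P \times X$, naturally in $X$; the endofunctor $P \times -$ of $\cat{Set}$ preserves all colimits, having right adjoint $(-)^P$; and the forgetful functor sends the pushout square $R \to Q_i \to Q$ to a pushout of sets (Proposition~\ref{prop:pushouts_exist}). Being a pushout in $\rdcpxin$, the square is also one in $\rdcpx$. The argument for $-\tensor Q$ with $Q$ fixed is identical, using that $-\times Q$ preserves colimits of sets.

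It remains to observe that the lax Gray orientation of $P \tensor Q$ is the one that the created pushout carries --- glued from those of $P \tensor Q_1$ and $P \tensor Q_2$ along $P \tensor R$ --- so that the square above is built on the \emph{right} object. This is forced by the uniqueness in the creation of pushouts, and can also be checked directly: every covering edge of the poset $P \times Q$ has the form $x \tensor y \to x' \tensor y$ with $x$ covering $x'$ in $P$, or $x \tensor y \to x \tensor y'$ with $y$ covering $y'$ in $Q$, and in the second case this edge, and the closures of $y$ and $y'$, are computed inside whichever of $Q_1, Q_2$ contains $y$, so by Lemma~\ref{lem:sameinclusions} the lax Gray label it receives agrees with the one it receives in the corresponding $P \tensor Q_i$. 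I expect no genuine obstacle here: the content is entirely that cartesian product preserves colimits, together with the locality of orientations on covering edges.
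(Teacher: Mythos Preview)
Your proposal is correct and follows essentially the same approach as the paper: reduce to the fact that cartesian product in $\cat{Set}$ preserves colimits, using that the relevant colimits in $\rdcpxin$ are created by the forgetful functor (Proposition~\ref{prop:pushouts_exist}). The paper's proof is a one-liner to this effect; you have unpacked it carefully, and your final paragraph on orientations makes explicit the uniqueness-of-lifted-structure step that the paper leaves implicit in the word ``created''.
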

\begin{proof}
As shown in Proposition \ref{prop:pushouts_exist}, these colimits are created by the forgetful functor to $\cat{Set}$, and the statement is true for the cartesian product of sets.
\end{proof}

Next, we move on to joins of regular directed complexes. To prove functoriality, we will use the functoriality of suspensions.

\begin{cons}
Let $f: P \to Q$ be a map of oriented graded posets. Then $\Sigma f: \Sigma P \to \Sigma Q$, defined by $\bot^\alpha \mapsto \bot^\alpha$ and $\Sigma x \mapsto \Sigma f(x)$ is also a map of oriented graded posets: for all $x \in P$, by construction we have $\bord{k}{\alpha}(\Sigma x) = \Sigma(\bord{k-1}{\alpha}x)$ for all $k > 0$, and $\bord{0}{\alpha}(\Sigma x) = \{\bot^\alpha\}$. Hence, 
\begin{equation*}
	\Sigma f(\bord{k}{\alpha}(\Sigma x)) = \Sigma(f(\bord{k-1}{\alpha}x)) = \Sigma(\bord{k-1}{\alpha}f(x)) = \bord{k}{\alpha}\Sigma f(\Sigma x)
\end{equation*}
when $k > 0$, and the few remaining cases can be easily checked. 

This assignment respects composition and identities, so it defines an endofunctor $\Sigma$ on $\ogpos$, which restricts to an endofunctor on $\rdcpx$.
\end{cons}

\begin{cons}
If $f: P \to P'$ and $g: Q \to Q'$ are two maps of directed complexes, then $\Sigma f \tensor \Sigma g$ sends the image of $j: P \join Q \to \Sigma P \tensor \Sigma Q$ to the image of $j: P' \join Q' \to \Sigma P' \tensor \Sigma Q'$. By injectivity, $j$ has a partial inverse $\invrs{j}$ on its image, so it makes sense to define
\begin{equation} \label{eq:join_maps}
	f \join g := j;(\Sigma f \tensor \Sigma g); \invrs{j} : P \join Q \to P' \join Q'.
\end{equation}
\end{cons}

\begin{prop}
The join determines a monoidal structure on $\rdcpx$, whose unit is the empty directed complex $\emptyset$, restricting to a monoidal structure on $\atom$.
\end{prop}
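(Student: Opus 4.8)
**Proof proposal for: "The join determines a monoidal structure on $\rdcpx$, whose unit is the empty directed complex $\emptyset$, restricting to a monoidal structure on $\atom$."**

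The plan is to transport the monoidal structure along the construction of the join from two sources: the already-established monoidal structure $(\rdcpx, -\tensor-, 1)$ together with the suspension endofunctor, and the trivially associative, unital operation $P \mapsto (P\join Q)_\bot \cong P_\bot \tensor Q_\bot$ at the level of oriented graded posets. First I would record that the assignment $(f,g) \mapsto f \join g$ of equation (\ref{eq:join_maps}) is functorial: since $j$ is injective and natural (the square relating $j$ for $(P,Q)$ and for $(P',Q')$ commutes by construction of $\Sigma f \tensor \Sigma g$ on the images), and since $\Sigma$ and $-\tensor-$ are functorial, composing $j;(\Sigma f \tensor \Sigma g);\invrs j$ respects identities and composition — this is a routine diagram chase using that $\invrs j$ is a genuine two-sided inverse on the relevant images. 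Combined with the fact, already noted in the excerpt, that $U \join V$ has spherical boundary when $U, V$ do (hence $P \join Q \in \rdcpx$ when $P, Q \in \rdcpx$, and $P\join Q \in \atom$ when $P, Q$ are atoms, since the join of two atoms has a greatest element $\fnct{n}\join\fnct{m}$), this gives a well-defined bifunctor $-\join- : \rdcpx \times \rdcpx \to \rdcpx$ restricting to $\atom$.

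Next I would establish the coherence data. The associator and unitors should be induced by the corresponding isomorphisms for $-\tensor-$ on $\rdcpx$, pulled back along $j$; more directly, the associativity isomorphism $(P\join Q)\join R \incliso P \join (Q \join R)$ and the unit isomorphisms $\emptyset \join P \incliso P \incliso P \join \emptyset$ already exist \emph{as isomorphisms of oriented graded posets} — this was asserted in the Construction defining the join ("The join is clearly associative and has the empty oriented graded poset $\emptyset$ as unit up to isomorphism"), and follows because $(-)_\bot$ turns join into $\tensor$, which is associative and unital with unit $1 = O^0 = \emptyset_\bot$. What remains is to check these isomorphisms are \emph{natural} with respect to the bifunctor $-\join-$ and satisfy the pentagon and triangle identities. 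For naturality: given maps $f, g, h$, both composites in the naturality square, when post-composed with $j$ and the suspension-tensor data, agree because the associator/unitor for $\tensor$ is natural; then injectivity of $j$ (and of the suspension inclusion $P_\bot \to \Sigma P$) lets one cancel $j$ and conclude. For pentagon and triangle: these are equations between isomorphisms in $\rdcpx$; by Proposition \ref{prop:preordermolec} (or rather its consequence for general regular directed complexes via Lemma \ref{lem:molecnoauto}), parallel isomorphisms between molecules in a regular directed complex are equal, and more generally any two isomorphisms with the same underlying set map coincide, so it suffices to check the coherence diagrams commute on underlying sets — which reduces to the pentagon/triangle for the cartesian-product-style operation $(-)_\bot\tensor(-)_\bot$, i.e.\ to the (known) coherence of $\tensor$ on $\pos$.

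The main obstacle I anticipate is not any single deep step but the bookkeeping around the partial inverse $\invrs j$: one must be careful that $\Sigma f \tensor \Sigma g$ really does restrict to a map between the images of $j$ (this uses that $j$ picks out exactly the elements of $\Sigma P \tensor \Sigma Q$ of the form $\bot^+\tensor\bot^+$, $\Sigma x \tensor \bot^+$, $\bot^+\tensor \Sigma y$, or $\Sigma x \tensor \Sigma y$, and that $\Sigma f$ preserves $\bot^+$), and that the various composites $j;(-);\invrs j$ can be manipulated as if $j$ were an isomorphism onto its image — which it is, but only after restricting codomains. Once that is set up cleanly, everything else is a transport of structure. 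A clean way to organise the whole argument is: (i) observe the join bifunctor, associator, and unitors are all obtained by restricting the corresponding data on $\ogpos$ (where associativity and unitality of join are immediate from $(-)_\bot$), and (ii) invoke that $\rdcpx$ and $\atom$ are \emph{full} subcategories of $\ogpos$ closed under $-\join-$, so a monoidal structure on $\ogpos$ restricting appropriately immediately yields one on $\rdcpx$ and $\atom$. This reduces the claim to: $-\join-$ is a monoidal structure on $\ogpos$ — which in turn is transported along $(-)_\bot$ from the fact, provable exactly as the analogous Corollary for $\tensor$ was, that $(\ogpos, -\tensor-, 1)$ is monoidal. I would present the proof in this condensed form, noting that the verification is "the same as for the lax Gray product, via the isomorphism $(P \join Q)_\bot \cong P_\bot \tensor Q_\bot$", and spelling out only the point about $\invrs j$.
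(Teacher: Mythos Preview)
Your proposal correctly identifies the overall architecture --- transport the monoidal data through $j$ from the suspension-tensor construction, and note that associativity and unitality are inherited from $(-)_\bot$ turning join into $\tensor$ --- and the coherence discussion is fine (indeed somewhat more thorough than what the paper bothers with). However, there is a genuine gap at the most basic step: you never verify that $f \join g$ is a \emph{map of oriented graded posets}, i.e.\ that $\bord{k}{\alpha}\big((f\join g)(z)\big) = (f\join g)\big(\bord{k}{\alpha}z\big)$ for all $z$. You argue that the assignment $(f,g)\mapsto f\join g$ respects composition and identities (``routine diagram chase''), which is true, but that only shows functoriality of the underlying set-map construction; it does not show that each $f\join g$ lies in $\rdcpx$ rather than merely in $\cat{Set}$. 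The injection $j: P\join Q \to \Sigma P \tensor \Sigma Q$ is \emph{not} a morphism in $\ogpos$ --- it shifts dimension by one and does not commute with boundaries on the nose --- so the composite $j;(\Sigma f \tensor \Sigma g);\invrs{j}$ is a priori only a function, and the boundary compatibility must be checked directly.

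This is exactly the content of the paper's proof. The paper records the relations $\invrs{j}(\bord{k+1}{\alpha}j(z)) = \bord{k}{\alpha}z$ and $\clos j(\bord{k}{\alpha}z) = \bord{k+1}{\alpha}j(z)$, then introduces an auxiliary notion (``closed relative to $j$'') to mediate between $j(\bord{k}{\alpha}z)$, which is not closed in $\Sigma P \tensor \Sigma Q$, and its closure $\bord{k+1}{\alpha}j(z)$, on which one can apply the known boundary-compatibility of $\Sigma f \tensor \Sigma g$. The point is that $\Sigma f \tensor \Sigma g$ may send elements of $\clos j(\bord{k}{\alpha}z) \setminus j(\bord{k}{\alpha}z)$ outside the image of $j$, so one needs to check that $\invrs{j}$ applied to $\Sigma f \tensor \Sigma g(j(\bord{k}{\alpha}z))$ agrees with $\invrs{j}$ applied to $\Sigma f \tensor \Sigma g(\clos j(\bord{k}{\alpha}z))$. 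Your ``main obstacle'' paragraph gestures at bookkeeping around $\invrs{j}$ but only addresses whether $\Sigma f\tensor\Sigma g$ restricts set-theoretically to the images, not this boundary-closure interaction. Your alternative route via $(-)_\bot$ does not sidestep the issue either: $(-)_\bot$ does not extend to a functor on $\ogpos$ landing in $\rdcpx$ (indeed the paper remarks that suspensions, unlike $(-)_\bot$, preserve the class of directed complexes), so one cannot simply transport the morphism-level structure along it.
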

\begin{proof}
Functoriality follows from the functoriality of $\Sigma(-) \tensor \Sigma(-)$, so it suffices to prove that $f \join g$ is a map of oriented graded posets. For all $z \in P \join Q$, we have 
\begin{equation*}
	\invrs{j}(\bord{k+1}{\alpha}j(z)) = \bord{k}{\alpha}z, \quad \quad \quad	\clos j(\bord{k}{\alpha}z) = \bord{k+1}{\alpha}j(z). 
\end{equation*}
Now, say that $W \subseteq \Sigma P \tensor \Sigma Q$ is \emph{closed relative to $j$} if $w \in W$ and $j(z) \leq w$ for some $z \in P \join Q$ implies that $j(z) \in W$. Then:
\begin{enumerate}
	\item for all closed $U \subseteq P \join Q$, the subset $j(U)$ is closed relative to $j$,
	\item if $W$ is closed relative to $j$, then $\invrs{j}(W) = \invrs{j}(\clos W)$, and
	\item $\Sigma f \tensor \Sigma g$ preserves the property of closure relative to $j$.
\end{enumerate}
Because for all $k$ and $z \in P \join Q$, the subset $j(\bord{k}{\alpha}z)$ is closed relative to $j$, we have
\begin{equation*}
	\invrs{j}(\Sigma f \tensor \Sigma g(\clos j(\bord{k}{\alpha}z))) = \invrs{j}(\Sigma f \tensor \Sigma g(j(\bord{k}{\alpha}z))) = f\join g (\bord{k}{\alpha}z)
\end{equation*}
and
\begin{align*}
	\invrs{j}(\Sigma f \tensor \Sigma g(\clos j(\bord{k}{\alpha}z))) & = \invrs{j}(\Sigma f \tensor \Sigma g(\bord{k+1}{\alpha}j(z))) \\ 
	& = \invrs{j}(\bord{k+1}{\alpha}(\Sigma f \tensor \Sigma g(j(z))) = \bord{k}{\alpha}(f\join g)(z).
\end{align*}
So $f \join g$ is compatible with boundaries: it is a map of oriented graded posets.
\end{proof}

\begin{prop} \label{lem:join_pushout_incl}
The join preserves pushouts of inclusions in each variable.
\end{prop}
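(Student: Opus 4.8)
The plan is to reduce the statement to a colimit computation in $\cat{Set}$, using Proposition \ref{prop:pushouts_exist}: pushouts of inclusions in $\rdcpxin$ are created by the forgetful functor to $\cat{Set}$ and remain pushouts under $\rdcpxin \hookrightarrow \rdcpx$. Fix a regular directed complex $R$; I will treat the functor $R \join -$, the case of $- \join R$ being symmetric.

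I would first record two elementary facts. First, $R \join -$ preserves inclusions: if $\imath\colon P \incl P'$ is an inclusion, then by (\ref{eq:join_maps}) the map $\idcat{R} \join \imath$ factors as $j;(\Sigma\idcat{R} \tensor \Sigma\imath);\invrs{j}$, and since $\Sigma$ and $-\tensor-$ preserve injectivity of underlying functions, and $j$, $\invrs{j}$ are injective on the relevant domains, $\idcat{R} \join \imath$ is injective, hence an inclusion. Second, unwinding the definition of the join through $(R \join P)_\bot \cong R_\bot \tensor P_\bot$, the underlying set of $R \join P$ is $R + P + (R \times P)$, naturally in $P$, with each inclusion $R \join P_i \incl R \join P$ acting as the evident coproduct of inclusions.

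Now apply $R \join -$ to a pushout square of inclusions with apex $P := P_1 \cup_Q P_2$. On underlying sets, $R \join -$ acts as $P \mapsto R + P + (R \times P)$, a binary coproduct of the constant functor at $R$, the identity functor, and $R \times -$; each of these preserves pushouts in $\cat{Set}$ --- the constant and identity functors trivially, and $R \times -$ because it is a left adjoint --- and a coproduct of pushout-preserving functors is pushout-preserving, since colimits commute with colimits. Hence the underlying set of $R \join P$ is the pushout of the underlying sets of $R \join P_1$ and $R \join P_2$ over $R \join Q$. It then remains only to match up the extra structure: by Proposition \ref{prop:pushouts_exist}, the pushout in $\rdcpxin$ is this set-theoretic pushout with the unique grading and orientation restricting to those of $R \join P_1$ and $R \join P_2$; and since every element of $R \join P$ lies in the image of one of the inclusions $R \join P_i \incl R \join P$, which preserve dimension and orientation by Lemmas \ref{lem:closedmap} and \ref{lem:sameinclusions}, the grading and orientation of $R \join P$ are precisely those. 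So $R \join P$, with these inclusions, is the pushout in $\rdcpxin$, hence, by Proposition \ref{prop:pushouts_exist}, also in $\rdcpx$.

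The one genuine subtlety --- and the reason this does not follow formally by declaring $R \join -$ a left adjoint --- is that $R \join -$ is \emph{not} one: for instance $R \join \emptyset \cong R$ shows it fails to preserve the initial object, and more generally it does not preserve all pushouts or coproducts. Writing the underlying set as $R + P + (R \times P)$ is exactly what confines the potential failure of colimit-preservation to the summand $R \times P$, where it does not occur; the remainder is the routine verification, via the results above, that joins of inclusions are inclusions and that the gradings and orientations glue.
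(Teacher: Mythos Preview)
Your proof is correct and takes a genuinely different route from the paper's. The paper argues in two lines: the endofunctor $\Sigma$ preserves pushout diagrams of inclusions, and the result then follows from Proposition~\ref{prop:laxgray_preserve_colim} together with the definition of the join via $f \join g = j;(\Sigma f \tensor \Sigma g);\invrs{j}$. In other words, the paper reduces everything to the Gray product case, for which the analogous statement has already been established. Your argument instead unpacks the underlying set of $R \join P$ as $R + P + (R \times P)$ and verifies pushout-preservation directly in $\cat{Set}$, then lifts back via Proposition~\ref{prop:pushouts_exist}. Your approach is more explicit and self-contained, and your final paragraph correctly isolates why a one-line ``left adjoint'' argument is unavailable; the paper's approach is shorter because it recycles the machinery already built for $\tensor$ and $\Sigma$, but it leaves implicit the step of transporting the pushout back along the partial inverse $\invrs{j}$, which your set-level decomposition handles transparently.
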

\begin{proof}
The $\Sigma$ endofunctor preserves pushout diagrams of inclusions. The statement then follows from Proposition \ref{prop:laxgray_preserve_colim} and the definition of joins.
\end{proof}

\begin{remark}
Notice, however, that $\Sigma$ does not preserve the initial object, nor any coproducts, and neither does the join operation.
\end{remark}

\begin{remark}
By definition, there are injective functions $P \incl P \join Q$ and $Q \incl P \join Q$ for all oriented graded posets $P, Q$. These are in fact inclusions of oriented graded posets, natural in $P$ and $Q$.
\end{remark}

To conclude this section, we rapidly treat duals of directed complexes and their interaction with lax Gray products and joins.

\begin{cons}
Let $P$ be an oriented graded poset, and $J \subseteq \mathbb{N}\setminus\{0\}$. Then $\oppn{J}{P}$, the \emph{$J$\nbd dual} of $P$, is the oriented graded poset with the same underlying poset as $P$, and the orientation $o'$ defined by 
\begin{equation*}
o'(c_{x,y}) := \begin{cases}
		-o(c_{x,y}), & \dmn{x} \in J, \\
		o(c_{x,y}), & \dmn{x} \not\in J,
	\end{cases}
\end{equation*}
for all elements $x, y \in P$ such that $x$ covers $y$. 

We write $\opp{P}$, $\coo{P}$, and $\oppall{P}$ for $\oppn{J}{P}$ in the cases, respectively, $J = \{2n-1\}_{n>0}$, $J = \{2n\}_{n>0}$, and $J = \mathbb{N} \setminus \{0\}$.
\end{cons}

\begin{prop}
Let $P$ be a (regular) directed complex, $J \subseteq \mathbb{N}^+$. Then $\oppn{J}{P}$ is a (regular) directed complex. The assignment $P \mapsto \oppn{J}{P}$ extends to an involutive endofunctor of $\rdcpx$.
\end{prop}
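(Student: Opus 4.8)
The plan is to verify the two defining conditions of a directed complex directly from the definition of the $J$-dual, then observe that regularity and functoriality come essentially for free. Since $\oppn{J}{P}$ has the same underlying poset as $P$, it is automatically a finite graded poset with the same dimension function, and the same skeleta, intervals, closures, and covering relation; only the orientation $o$ has been replaced by $o'$. The key bookkeeping fact is how the boundary operators transform: for a closed subset $U$ and each $k$, one checks from the definition of $\sbord{k}{\alpha}$ that
\begin{equation*}
	\sbord{k}{\alpha}(\restr{U}{\oppn{J}{P}}) = \begin{cases}
		\sbord{k}{-\alpha}U, & k \in J, \\
		\sbord{k}{\alpha}U, & k \notin J,
	\end{cases}
\end{equation*}
since an element $x$ of dimension $k$ lies in $\sbord{k}{\alpha}$ exactly when every covering edge out of it is labelled $\alpha$, and passing to $\oppn{J}{P}$ flips precisely the labels on edges whose top element has dimension $k \in J$. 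Taking closures, the same relation holds for $\bord{k}{\alpha}$ (the second, ``maximal elements'' clause in the definition of $\bord{k}{\alpha}$ is orientation-independent). In short, $J$-duality relabels the input/output boundaries in each dimension lying in $J$, and leaves the others alone.

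From this, condition (1) of being a directed complex --- that $\bord{}{\alpha}x$ is a molecule for each $x$ --- follows because $\bord{}{\alpha}(\restr{x}{\oppn{J}{P}})$ equals either $\bord{}{\alpha}x$ or $\bord{}{-\alpha}x$, both molecules in $P$; and a subset is a molecule purely in terms of the order structure and the composition operations $\cp{k}$, which are themselves definable from the $\bord{k}{\alpha}$, so the relabelling carries molecules of $P$ to molecules of $\oppn{J}{P}$. Similarly condition (2), $\bord{}{\alpha}(\bord{}{\beta}x) = \bord{n-2}{\alpha}x$, transforms into an instance of the same identity in $P$ with the signs $\alpha, \beta$ possibly flipped according to whether $n-1, n \in J$, and remains true. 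Regularity is even easier: having spherical boundary is defined through conditions (1), (2) of the ``spherical boundary'' definition, each of which is a Boolean combination of equalities between boundary sets, all of which are preserved (up to the sign-flips above) by $J$-duality; so $\clos\{x\}$ has spherical boundary in $\oppn{J}{P}$ iff it does in $P$.

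For functoriality: a map $f : P \to Q$ satisfies $\bord{n}{\alpha}f(x) = f(\bord{n}{\alpha}x)$, and applying the boundary-transformation formula on both sides shows the same function $f$, viewed as a function $\oppn{J}{P} \to \oppn{J}{Q}$, satisfies the defining equation for a map (the sign-flip in dimension $n \in J$ happens identically on both sides). Composition and identities are preserved since the underlying functions are unchanged, and $\oppn{J}{\oppn{J}{P}} = P$ on the nose because $(-)(-) = (+)$; so $\oppn{J}{-}$ is an involutive endofunctor of $\ogpos$ restricting to $\rdcpx$. The only mildly delicate point --- the ``main obstacle'' if there is one --- is being careful that the boundary-transformation formula is stated and applied at the level of the full $\bord{k}{\alpha}$ operators (not just $\sbord{k}{\alpha}$), so that the inductive definition of molecules and the clauses defining spherical boundary really do transport verbatim; once that lemma is in hand the rest is routine.
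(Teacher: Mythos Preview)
Your approach is correct and essentially the same as the paper's, which is very terse: it simply says that molecules go to molecules by a straightforward induction on their structure, that spherical boundary is preserved if and only if it was, and that functoriality is an easy check since $\oppn{J}{f}$ has the same underlying function as $f$. One minor indexing slip: the sign flip for $\sbord{k}{\alpha}$ occurs when $k+1 \in J$, not $k \in J$, since the covering edges $c_{y,x}$ relevant to $\sbord{k}{\alpha}$ have $\dim(y) = k+1$; this does not affect the shape of the argument.
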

\begin{proof}
If $U$ is an $n$\nbd molecule, then $\oppn{J}{U}$ is an $n$\nbd molecule, by a straightforward induction on the structure of $U$. Moreover $\oppn{J}{U}$ has spherical boundary if and only if $U$ does. It follows that $\oppn{J}{P}$ is a (regular) directed complex if and only if $P$ is. Functoriality on $\rdcpx$ is an easy check (a map $f$ has the same underlying function as $\oppn{J}{f}$).
\end{proof}

The following is a simple fact about atoms and duals.

\begin{lem} \label{lem:reverse_surj}
Let $U$ and $V$ be atoms, $n = \dmn{U} > \dmn{V}$, and let $p: U \surj V$ be a surjective map. Then there is a map $p': \oppn{n}{U} \surj V$ such that $\restr{p}{\bord{}{\alpha}U} = \restr{p'}{\bord{}{-\alpha}\oppn{n}{U}}$ for $\alpha \in \{+,-\}$.
\end{lem}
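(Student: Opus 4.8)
The plan is to show that $p$, regarded as a function on the underlying poset of $\oppn{n}{U}$ — which is literally the same poset as that of $U$ — is already a map $\oppn{n}{U} \surj V$ with the required property, so that we may simply take $p' := p$. All that then needs checking is the defining equation of a map at the one place where the boundaries of $\oppn{n}{U}$ genuinely differ from those of $U$, namely at the greatest element of $U$.

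First I would pin down the shape of $\oppn{n}{U}$. Write $\top$ for the greatest element of $U$; since $\dmn{\top} = n$ and any $x$ with $\dmn{x} = n$ satisfies $x \leq \top$, the element $\top$ is the \emph{only} $n$\nbd dimensional element of $U$, and it covers exactly the elements of $U^{(n-1)}$. Hence the $\{n\}$\nbd dual reverses precisely the orientations of the edges $c_{\top, x}$ with $x \in U^{(n-1)}$ and leaves every other edge unchanged. From this I would extract two consequences: (i) $\sbord{}{\alpha}(\oppn{n}{U}) = \sbord{}{-\alpha}U$, and since $U$ and $\oppn{n}{U}$ are atoms (the $\{n\}$\nbd dual has the same underlying poset, hence the same greatest element), they are pure and $n$\nbd dimensional by Lemma \ref{lem:basic_reg}, so also $\bord{}{\alpha}(\oppn{n}{U}) = \bord{}{-\alpha}U$; (ii) for every $x$ with $\dmn{x} < n$, the closed set $\clos\{x\}$ does not contain $\top$ and therefore carries the same orientation in $U$ and in $\oppn{n}{U}$, so that $\bord{k}{\alpha}x$ is unchanged for all $k$ and $\alpha$. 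I would also record that $p(\top)$ is the greatest element $\top_V$ of $V$: since $p$ is surjective and order-preserving (Lemma \ref{lem:closedmap}), $V = p(U) \subseteq \clos\{p(\top)\}$, which forces $\top_V \leq p(\top) \leq \top_V$; in particular $\dmn{p(\top)} = \dmn{V} < n$.

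Then I would set $p' := p$ and verify $\bord{k}{\alpha}p'(x) = p'(\bord{k}{\alpha}x)$ for all $x \in \oppn{n}{U}$, $k$, $\alpha$. For $\dmn{x} < n$ this is exactly the equation already known for $p \colon U \to V$, by (ii). For $x = \top$, the boundary $\bord{k}{\alpha}\top$ computed in $\oppn{n}{U}$ equals $U$ for $k \geq n$, equals $\bord{}{-\alpha}U$ for $k = n-1$ by (i), and equals $\bord{k}{\alpha}U$ for $k < n-1$, whereas $\bord{k}{\alpha}p(\top) = \bord{k}{\alpha}\top_V$ is all of $V$ as soon as $k \geq \dmn{V}$; a short case analysis on the position of $k$ relative to $n$ and $\dmn{V}$ then reduces each instance to the facts that $p$ is a map and that $p(\top) = \top_V$. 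This bookkeeping is the step requiring the most care, but it is elementary and I do not expect a genuine obstacle. Finally, $p'$ is surjective because $p$ is, and since $\bord{}{-\alpha}(\oppn{n}{U}) = \bord{}{\alpha}U$ by (i) while $p'$ and $p$ have the same underlying function, the identity $\restr{p}{\bord{}{\alpha}U} = \restr{p'}{\bord{}{-\alpha}\oppn{n}{U}}$ holds by construction.
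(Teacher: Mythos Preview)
Your proposal is correct and takes essentially the same approach as the paper: both define $p'$ to agree with $p$ as a function on the underlying set, and the verification that this is a map boils down to the observation $p(\bord{}{+}U) = p(\bord{}{-}U) = V$ (which you obtain as the $k=n-1$ case of the map equation at $\top$, using $\dmn{p(\top)} < n$). The paper's proof is a terse three lines, whereas you spell out the case analysis at $\top$ explicitly; your invocation of Lemma~\ref{lem:basic_reg} for purity is unnecessary since any atom is trivially pure, but this does no harm.
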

\begin{proof}
The statement defines $p'$ on $\bord{}{}\oppn{n}{U}$; it suffices to extend it by sending the greatest element of $\oppn{n}{U}$ to the greatest element of $V$. This is well-defined because $p(\bord{}{+}U) = p(\bord{}{-}U) = V$.
\end{proof}

\begin{prop}
Let $P, Q$ be two oriented graded posets. Then:
\begin{enumerate}[label=(\alph*)]
	\item $x \tensor y \mapsto y \tensor x$ defines an isomorphism between $\opp{(P \tensor Q)}$ and $\opp{Q} \tensor \opp{P}$ and between $\coo{(P \tensor Q)}$ and $\coo{Q} \tensor \coo{P}$;
	\item $x \join y \mapsto y \join x$ defines an isomorphism between $\opp{(P \join Q)}$ and $\opp{Q} \join \opp{P}$.
\end{enumerate}
Consequently, $x \tensor y \mapsto x \tensor y$ defines an isomorphism between $\oppall{(P \tensor Q)}$ and $\oppall{P} \tensor \oppall{Q}$. These isomorphisms are natural for maps of oriented graded posets.
\end{prop}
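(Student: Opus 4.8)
The plan is to reduce the whole statement to one elementary check. Every isomorphism asserted has underlying function the coordinate swap $(a,b)\mapsto(b,a)$ — for the join, its restriction to the underlying poset of $P\join Q$, which is that of $P_\bot\tensor Q_\bot$ minus its bottom element. In each case this is a bijective isomorphism of underlying posets (products, $(-)_\bot$, and the duals $\oppn{J}{-}$ leave underlying posets alone), so by Definition \ref{dfn:inclclps} it is an isomorphism of oriented graded posets as soon as it (hence, by Lemma \ref{lem:sameinclusions}, also its inverse) respects the orientation of covering edges. Naturality then comes for free: a map of oriented graded posets is determined by its underlying function, and the relevant naturality squares commute on underlying functions because $\mathrm{swap}\circ(f\times g)=(g\times f)\circ\mathrm{swap}$ (and similarly for joins).

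For part (a) I would record that $\dmn(x\tensor y)=\dmn x+\dmn y$ and that, by the definition of $\oppn{J}{-}$, passing to the $J$\nbd dual multiplies the orientation of a covering edge $c_{z,w}$ by the sign $-$ precisely when $\dmn z\in J$. Then I compare the two kinds of covering edges of $P\tensor Q$ — those varying the $P$\nbd coordinate, $c_{x\tensor y,\,x'\tensor y}$, and those varying the $Q$\nbd coordinate, $c_{x\tensor y,\,x\tensor y'}$ — with their images under the swap inside $\opp{Q}\tensor\opp{P}$ (resp.\ $\coo{Q}\tensor\coo{P}$), using the two orientation rules of the lax Gray product. The only content is the parity identity that, on a $P$\nbd coordinate edge, the sign $(-)^{\dmn x+\dmn y}$ produced by dualising $P\tensor Q$ equals $(-)^{\dmn y}\cdot(-)^{\dmn x}$, where $(-)^{\dmn y}$ is the Gray shift in $\opp Q\tensor\opp P$ and $(-)^{\dmn x}$ is the dual sign on the $\opp P$ factor; and the dual identity on $Q$\nbd coordinate edges. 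The $\coo{}$ case is identical with $(-)^{n}$ replaced by $(-)^{n+1}$ on edges, which is legitimate since a covering edge always has top element of dimension $\geq 1$.

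For part (b) I would work through the suspension presentation of the join, since $(-)_\bot$ shifts dimensions and makes the sign bookkeeping awkward. First I would note the sub-lemma that, for any oriented graded poset $R$, the identity on underlying posets is an isomorphism $\coo{(\Sigma R)}\incliso\Sigma(\opp R)$: on covering edges of $\Sigma R$, the new bottom edges $c_{\Sigma y,\bot^\alpha}$ lie in dimension $1$, which $\coo{}$ fixes, while an edge $c_{\Sigma x,\Sigma y}$ lies in dimension $\dmn x+1$, which $\coo{}$ flips iff $\dmn x$ is odd, exactly matching $\opp R$. Since $j:P\join Q\incl\Sigma P\tensor\Sigma Q$ is an inclusion and $\dmn j(z)=\dmn z+1$, the same underlying function $j$ is an inclusion $\opp{(P\join Q)}\incl\coo{(\Sigma P\tensor\Sigma Q)}$ — because $\coo{}$ flips a covering edge of $\Sigma P\tensor\Sigma Q$ in dimension $\dmn z+1$ iff $\opp{}$ flips the corresponding edge of $P\join Q$ in dimension $\dmn z$. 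Now apply part (a) for $\coo{}$ to get $\coo{(\Sigma P\tensor\Sigma Q)}\incliso\coo{(\Sigma Q)}\tensor\coo{(\Sigma P)}$, then the sub-lemma to rewrite this as $\Sigma(\opp Q)\tensor\Sigma(\opp P)$, and observe that this composite carries the $j$\nbd image of $\opp{(P\join Q)}$ onto the $j$\nbd image of $\opp Q\join\opp P$ and restricts to $x\join y\mapsto y\join x$ (from $j(x\join y)=\Sigma x\tensor\Sigma y$, $j(x)=\Sigma x\tensor\bot^+$, $j(y)=\bot^+\tensor\Sigma y$ and the defining property of $j$). As a bijection compatible with the orientations on both sides it is an isomorphism of oriented graded posets.

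Finally, the ``consequently'' clause is formal: $\oppall{}=\opp{}\circ\coo{}=\coo{}\circ\opp{}$ because the two duals commute and their index sets — the odd numbers and the positive even numbers — partition $\mathbb{N}\setminus\{0\}$. Applying part (a) twice together with functoriality of $\opp{}$ gives $\oppall{(P\tensor Q)}=\opp{(\coo{(P\tensor Q)})}\incliso\opp{(\coo Q\tensor\coo P)}\incliso\opp{(\coo P)}\tensor\opp{(\coo Q)}=\oppall P\tensor\oppall Q$, and the composite underlying function is $\mathrm{swap}$ followed by $\mathrm{swap}$, i.e.\ the identity, so the isomorphism is $x\tensor y\mapsto x\tensor y$; naturality is as in the first paragraph. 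The only place requiring genuine care is matching parities: in (a), tracking the Gray\nbd product shift against the dual signs, and in (b), arranging that the dimension shift of $j$ converts $\opp{}$ on the join into $\coo{}$ on the suspended Gray product.
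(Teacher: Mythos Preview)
Your proof is correct. The paper itself does not give a proof here: it simply writes ``Identical to the proof of [Proposition 4.23, hadzihasanovic2018combinatorial]'', deferring entirely to an external reference. Your argument is therefore a genuine, self-contained replacement.

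Your treatment of (a) is the direct parity check one would expect and is presumably what the cited reference does. Your treatment of (b) is a little more interesting: rather than working with the $(-)_\bot$ presentation of the join directly, you route through the suspension inclusion $j: P\join Q \incl \Sigma P \tensor \Sigma Q$, exploiting the sub-lemma $\coo{(\Sigma R)} \simeq \Sigma(\opp R)$ and the fact that the dimension shift $\dmn j(z) = \dmn z + 1$ converts $\opp{}$ on the join into $\coo{}$ on the ambient Gray product. This is exactly in the spirit of how the paper proves functoriality of the join (equation \eqref{eq:join_maps} and the surrounding discussion), so it fits the paper's methodology well, and it avoids the bookkeeping of orientations on edges touching $\bot$ that the $(-)_\bot$ route would require. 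The ``consequently'' clause and naturality are handled correctly and formally.
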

\begin{proof}
Identical to the proof of \cite[Proposition 4.23]{hadzihasanovic2018combinatorial}.
\end{proof}

\subsection{Some constructions of molecules} \label{sec:constructions}

In this section, we give a few simple constructions of molecules and maps that we will use in the rest of the article. 

\begin{cons} \label{cons:substitution}
Let $U, V, W$ be regular $n$\nbd molecules such that 
\begin{enumerate}
	\item $V, W$ have spherical boundary, $V \subsph U$, and
	\item there are (necessarily unique) isomorphisms $\bord{}{-}W \incliso \bord{}{-}V$ and $\bord{}{+}W \incliso \bord{}{+}V$ of $(n-1)$\nbd molecules.
\end{enumerate}
The isomorphisms restrict to isomorphisms of the lower-dimensional boundaries, and because $V$ and $W$ have spherical boundary, this suffices to induce an isomorphism $\imath: \bord{}{}W \incliso \bord{}{}V$. 

The \emph{substitution} $U[W/V]$ of $W$ for $V \subsph U$ is the result of replacing $V$ with $W$ in $U$, identifying $\bord{}{}V$ and $\bord{}{}W$ through $\imath$: that is, $U[W/V]$ is the set $(U\setminus V)+ W$, with $x \leq y$ if and only if
\begin{itemize}
	\item $x, y \in U\setminus V$ and $x \leq y$ in $U$, or $x, y \in W$ and $x \leq y$ in $W$, or
	\item $x \in U\setminus V$, $y \in W$, and for some $z \in \bord{}{}W$ we have $x \leq \imath(z)$ in $U$ and $z \leq y$ in $W$, or
	\item $x \in W$, $y \in U\setminus V$, and for some $z \in \bord{}{}W$ we have $x \leq z$ in $W$ and $\imath(z) \leq y$ in $U$.
\end{itemize}
The poset $U[W/V]$ is still a pure graded poset, and inherits an orientation from those of $U$ and $W$. Notice that $\bord{}{\alpha}U[W/V]$ is isomorphic to $\bord{}{\alpha}U$.
\end{cons}

\begin{prop} \label{prop:substitution}
$U[W/V]$ is a regular $n$\nbd molecule, $W \subsph U[W/V]$. If $V \subsph V' \submol U$ for another $n$\nbd molecule $V'$, then $W \subsph V'[W/V] \submol U[W/V]$. Moreover, if $U$ has spherical boundary, so does $U[W/V]$.
\end{prop}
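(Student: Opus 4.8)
The plan is to prove the statement by induction on $n = \dmn{U}$, following the inductive structure of the definition of molecule, and at each stage transporting the relevant properties across the identification $\imath: \bord{}{}W \incliso \bord{}{}V$. The base case $n = 0$ is degenerate: a $0$-molecule is a point, $V = U$, and $U[W/V] = W$, so all claims hold trivially. For $n > 0$, I would first record the structural observation already noted in Construction \ref{cons:substitution}, that $\bord{}{\alpha}U[W/V]$ is isomorphic to $\bord{}{\alpha}U$; this handles the boundary of $U[W/V]$ once we know $U[W/V]$ is a molecule, and immediately gives the last sentence (if $U$ has spherical boundary, then $\bord{}{\alpha}U[W/V] \cong \bord{}{\alpha}U$ are $(n-1)$-molecules with spherical boundary satisfying the intersection condition, so $U[W/V]$ does too).

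The main work is showing $U[W/V]$ is a regular $n$-molecule with $W \subsph U[W/V]$, and I would do this by induction on submolecules of $U$. If $U$ is an atom, then $V \subsph U$ forces either $V = \bord{}{\alpha}U$ for some $\alpha$ — but then $V$ cannot have a spherical boundary unless $n=0$, handled above — or $V = U$, in which case $U[W/V] = W$ and the claim is immediate. If $U$ decomposes as $U_1 \cp{k} U_2$, then since $V$ is a submolecule of $U$ with spherical boundary, $V$ is a submolecule of $U_1$ or of $U_2$ (using that submolecules of $U$ are built from the decomposition tree, and a molecule with spherical boundary is pure and $n$-dimensional by Lemma \ref{lem:basic_reg}, hence cannot straddle the $\cp{k}$ gluing for $k < n-1$; for $k = n-1$ one argues via Proposition \ref{prop:boundary_submol} that $V$ lands in one of the two halves). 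Say $V \submol U_1$. Then $U[W/V] = (U_1[W/V]) \cp{k} U_2$: the gluing boundary $\bord{k}{+}U_1$ is disjoint from the interior of $V$ or, if it meets $\bord{}{}V$, is preserved under the substitution because substitution only modifies $V$'s interior and reglues along $\imath$, which is the identity on the shared boundary data. By the inductive hypothesis $U_1[W/V]$ is a regular $n$-molecule with $W \subsph U_1[W/V]$, and $\bord{k}{+}(U_1[W/V]) \cong \bord{k}{+}U_1 = \bord{k}{-}U_2$, so the composite is defined and is a molecule, with $W \subsph U_1[W/V] \submol U[W/V]$. Regularity is inherited since every element of $U[W/V]$ lies in $U\setminus V$ (where its closure is unchanged from $U$) or in $W$ (where its closure is unchanged from $W$), and both $U$ and $W$ are regular.

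The statement $W \subsph V'[W/V] \submol U[W/V]$ when $V \subsph V' \submol U$ follows by the same bookkeeping: $V'$ itself decomposes and $V$ lands in one factor, so $V'[W/V]$ is obtained by substituting inside that factor and regluing; by the main part applied to $V'$ in place of $U$, $W \subsph V'[W/V]$, and tracking the decomposition tree of $U$ shows $V'[W/V]$ remains a submolecule of $U[W/V]$ (the submolecule relation is generated by the composition decompositions, and substitution inside $V$ commutes with all decompositions not touching $V$). The step I expect to be the main obstacle is the careful verification that $V$, having spherical boundary, must be entirely contained in one of the two halves of \emph{any} decomposition $U = U_1 \cp{k} U_2$ — in particular the $k = n-1$ case, where one genuinely needs purity of $V$ (Lemma \ref{lem:basic_reg}(a)) together with the submolecule-boundary analysis of Proposition \ref{prop:boundary_submol} to rule out $V$ straddling the gluing surface. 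Once that localisation lemma is in hand, the rest is routine transport of structure along $\imath$.
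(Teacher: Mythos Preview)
Your approach is the same as the paper's: induct on the submolecule structure, split along a decomposition where $V$ lands in one factor, and observe that substitution leaves boundaries unchanged. But you are making the key step harder than it is. You do not need $V$ to land in one factor of \emph{every} decomposition $U = U_1 \cp{k} U_2$; you only need \emph{some} such decomposition, and that is immediate from the definition of $\submol$. Since $\submol$ is the reflexive--transitive closure of the relation ``is a factor in a proper decomposition of'', if $V \submol V'$ with $V \neq V'$ then there is a chain $V = W_0 \submol \ldots \submol W_m = V'$ in which each $W_i$ is a factor of $W_{i+1}$; in particular $V'$ has a proper decomposition $V'_1 \cp{k} V'_2$ with $V \submol W_{m-1} = V'_i$ for some $i$. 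No appeal to purity, Lemma~\ref{lem:basic_reg}, or Proposition~\ref{prop:boundary_submol} is needed for this localisation, and the ``main obstacle'' you flag dissolves. The paper phrases the same induction from the bottom up (on increasing $V'$ with $V \subsph V' \submol U$) rather than top down, but the content is identical.

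A minor point: your atom case is muddled. If $U$ is an $n$-atom then $V \subsph U$ already forces $V = U$, since $V$ is an $n$-molecule and an atom has no proper $n$-submolecules; $\bord{}{\alpha}U$ is $(n-1)$-dimensional and is not a candidate for $V$ at all.
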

\begin{proof}
By induction on increasing $V'$ with $V \subsph V' \submol U$: if $V' = V$, then $V'[W/V] = W$, which is a regular $n$\nbd molecule by assumption. 

Otherwise, $V'$ has a proper decomposition $V'_1 \cp{k} V'_2$, with $V \subsph V'_i$; without loss of generality, let $i = 1$. Then $V'_1[W/V]$ is a regular $n$\nbd molecule with boundaries isomorphic to those of $V'_1$, so $V'_1[W/V] \cp{k} V'_2$ is well-defined, a regular molecule, and equal to $V'[W/V]$. Moreover $W \subsph V'_1[W/V] \submol V'[W/V]$. We conclude using the fact that chains $V \sqsubset \ldots \sqsubset U$ are finite.

Finally, because substitutions do not affect boundaries, if $U$ has spherical boundary, so does $U[W/V]$. 
\end{proof}

The following construction formalises the fact that, given two regular $n$\nbd molecules $U, V$ with isomorphic spherical boundaries, we can form an $(n+1)$\nbd atom with $U$ as its input boundary, and $V$ as its output boundary. 

\begin{cons}
Let $U$ and $V$ be regular $n$\nbd molecules with spherical boundary and (necessarily unique) isomorphisms $\bord{}{-}U \incliso \bord{}{-}V$ and $\bord{}{+}U \incliso \bord{}{+}V$.

Form the pushout of $\bord{}{}U \subseteq U$ and $\bord{}{}U \incliso \bord{}{}V \subseteq V$ in $\rdcpx$; then, let $U \celto V$ be the oriented graded poset obtained by adjoining a single $(n+1)$\nbd dimensional element $\top$ with $\bord{}{-}\top := U$ and $\bord{}{+}\top := V$. Then $U \celto V$ is an $(n+1)$\nbd atom with spherical boundary.
\end{cons}

In particular, for all regular $n$\nbd molecules $U$ with spherical boundary, $\bord{}{-}U$ and $\bord{}{+}U$ have isomorphic boundaries; thus $\bord{}{-}U \celto \bord{}{+}U$ is well-defined, and has boundaries isomorphic to those of $U$.

\begin{dfn} 
For all regular molecules $U$ with spherical boundary, we write $\compos{U}$ for the atom $\bord{}{-}U \celto \bord{}{+}U$.
\end{dfn}

\begin{cons} \label{cons:ou}
Let $U$ be a molecule in a directed complex. We define $\infl{U}$ to be the poset $(O^1 \tensor U)/\sim$, where $\sim$ is the equivalence relation generated by
\begin{equation*}
	\fnct{0}^- \tensor x \sim \fnct{1} \tensor x \sim \fnct{0}^+ \tensor x \text{ for all } x \in \bord{}{}U.
\end{equation*}
Because $\fnct{0}^-\tensor y$ covers $\fnct{0}^-\tensor x$ with orientation $\alpha$ if and only if $\fnct{0}^+ \tensor y$ covers $\fnct{0}^+ \tensor x$ with orientation $\alpha$, $\infl{U}$ inherits an orientation from $O^1 \tensor U$; this is the only orientation making the quotient $q: O^1 \tensor U \surj \infl{U}$ a map of oriented graded posets.

Intuitively, $\infl{U}$ is a cylinder on $U$ with its sides squashed. The natural map 
\begin{equation*}
	! \tensor \idcat{U}: O^1 \tensor U \surj 1 \tensor U \incliso U,
\end{equation*}
where $!: O^1 \surj 1$ is the unique map onto the terminal object, descends to the quotient, factoring as
\begin{equation*}
\begin{tikzpicture}[baseline={([yshift=-.5ex]current bounding box.center)}]
	\node[scale=1.25] (0) at (-1.5,1.25) {$O^1 \tensor U$};
	\node[scale=1.25] (1) at (0,0) {$\infl{U}$};
	\node[scale=1.25] (2) at (1.5,1.25) {$U$};
	\draw[1csurj] (0) to node[auto] {$! \tensor \idcat{U}$} (2);
	\draw[1csurj] (0) to node[auto,swap] {$q$} (1);
	\draw[1csurj] (1) to node[auto,swap] {$p_U$} (2);
\end{tikzpicture}
\end{equation*}
for a unique $p_U: \infl{U} \surj U$. This has the property that
\begin{equation*}
\begin{tikzpicture}[baseline={([yshift=-.5ex]current bounding box.center)}]
	\node[scale=1.25] (0) at (-1.25,1.25) {$U$};
	\node[scale=1.25] (1) at (0,0) {$O(U)$};
	\node[scale=1.25] (2) at (1.25,1.25) {$U$};
	\draw[1c] (0) to node[auto] {$\idcat{U}$} (2);
	\draw[1cinc] (0) to node[auto,swap] {$\imath^{\alpha}$} (1);
	\draw[1csurj] (1) to node[auto,swap] {$p_U$} (2);
\end{tikzpicture}
\end{equation*}
commutes for $\alpha \in \{+,-\}$, where $\imath^\alpha$ is the isomorphic inclusion of $U$ into $\bord{}{\alpha}O(U)$.
\end{cons}

We want to prove that, if $U$ is a regular molecule with spherical boundary, then so is $O(U)$. It is convenient to prove the following, more general result. Let $C \subseteq \bord{}{}U$ be a closed subset, and let $U_C$ be the quotient of $O^1 \tensor U$ by the equivalence relation generated by
\begin{equation*}
	\fnct{0}^- \tensor x \sim \fnct{1} \tensor x \sim \fnct{0}^+ \tensor x \text{ for all } x \in C.
\end{equation*}
Then $U_C$ admits a unique orientation making the quotient $q_C: O^1 \tensor U \surj U_C$ a map of oriented graded posets. When $C = \bord{}{}U$, we recover $U_C = O(U)$. 

\begin{lem} \label{lem:uc_regular}
Let $U$ be an $n$\nbd molecule in a directed complex and $C \subseteq \bord{}{}U$ a closed subset. Then $U_C$ is an $(n+1)$\nbd molecule. If $U$ has spherical boundary, so does $U_C$, and if $U$ is regular, so is $U_C$.
\end{lem}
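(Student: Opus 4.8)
The plan is to prove all three assertions together, after strengthening the statement: I would allow $C$ to be an \emph{arbitrary} closed subset of $U$, not only of $\bord{}{}U$, with $U_C$ defined by the same quotient of $O^1 \tensor U$; this is the form that supports an induction, and it specialises to the stated case. I would induct on $n = \dim U$, and for fixed $n$ enumerate the elements of $C$ by non-decreasing dimension, $C = \{x_1, \ldots, x_r\}$, so that every $C_i := \{x_1, \ldots, x_i\}$ is closed, proving by an inner induction on $i$ that $U_{C_i}$ is an $(n+1)$\nbd molecule, with spherical boundary when $U$ has one and regular when $U$ is. For $i = 0$ we have $U_{C_0} = O^1 \tensor U$, which is an $(n+1)$\nbd molecule by Proposition \ref{prop:molec-compose}, has spherical boundary when $U$ does by Proposition \ref{prop:grayspher}, and is regular when $U$ is since $O^1$ is a regular directed complex.

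For the step from $C_{i-1}$ to $C_i$, note that $x_i$ is minimal in $\{x_i, \ldots, x_r\}$, so every element strictly below $x_i$ already lies in $C_{i-1}$. Hence inside $U_{C_{i-1}}$ the closed subset $\clos\{\fnct{1}\tensor x_i\}$ is $O^1 \tensor \clos\{x_i\}$ with all of its side fibres --- those over $\bord{}{}\clos\{x_i\}$ --- already collapsed, so it is canonically identified with $\infl{\clos\{x_i\}}$; since $\dim\clos\{x_i\} \leq n$, the outer inductive hypothesis makes this an $(\dim x_i + 1)$\nbd atom, regular with spherical boundary when $U$ is, whose input and output boundaries are both the copy of $\clos\{x_i\}$. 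Passing to $U_{C_i}$ collapses the single remaining fibre, over the unique interior element $x_i$ of the atom $\clos\{x_i\}$; this is exactly the replacement of the submolecule $\infl{\clos\{x_i\}} \submol U_{C_{i-1}}$ by $\clos\{x_i\}$, i.e. $U_{C_i}$ is the pushout of $p_{\clos\{x_i\}}\colon \infl{\clos\{x_i\}}\surj\clos\{x_i\}$ along the inclusion of $\infl{\clos\{x_i\}}$ into $U_{C_{i-1}}$. So the whole step reduces to the following local claim: \emph{if $W$ is an $(n+1)$\nbd molecule and $\infl{V} \submol W$ for an atom $V$ of dimension at most $n$, then the poset $W'$ obtained from $W$ by collapsing $\infl{V}$ onto $V$ along $p_V$ is again an $(n+1)$\nbd molecule, with spherical boundary if $W$ has one and regular if $W$ is.}

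I would prove this local claim by induction on the submolecule structure of $W$: when $W$ is an atom the verification is direct, using that $\infl{V}$ is ``degenerate'', $\bord{}{-}\infl{V} = \bord{}{+}\infl{V} = V$, so that the collapse creates no boundary mismatch; when $W = W_1 \cp{j} W_2$, the sub-atom $\infl{V}$ either lies in one of the factors, in which case one collapses there and re-composes via Example \ref{exm:pushoutcomposition} (the collapse leaving $\bord{j}{\alpha}W_\ell$ untouched), or it is cut by the interface $W_1 \cap W_2 = \bord{j}{+}W_1$ along its own boundary and one treats its two halves inside $W_1$ and $W_2$ separately, using the boundary formulae for composites together with Proposition \ref{prop:molec-compose} in the sub-cases where $\infl{V}$ meets $\bord{}{}W$. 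The main obstacle is this local claim, and within it the bookkeeping in the composite case: one must track which cells drop a dimension under $p_V$ --- only $\fnct{1}\tensor(\text{greatest element of }V)$ does, from $\dim V + 1$ to $\dim V$ --- and verify that a decomposition of $W$ can always be chosen that does not cut $\infl{V}$ transversally through that cell, so that no interface $\bord{j}{+}W_1 = \bord{j}{-}W_2$ is disturbed. Granting the local claim, the sphericality and regularity of $U_C$ follow: $\bord{}{\alpha}U_C$ is the image under the quotient map of $\bord{}{\alpha}(O^1\tensor U)$, which Proposition \ref{prop:molec-compose} writes as a union of Gray products of boundaries of $U$ of strictly smaller dimension, each covered by the strengthened inductive hypothesis; and every atom of $U_C$ is either its greatest element (covered by the sphericality statement) or the image of some closure $\clos\{w\}$ with $\dim w \leq n$ in $O^1 \tensor U$, hence of the form $(\clos\{y\})_D$ with $D$ closed in $\clos\{y\}$ and $\dim y < n$, to which the inductive hypothesis applies again.
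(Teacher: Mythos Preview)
Your element-by-element collapse is an appealing picture, but the step it rests on fails: $\infl{\clos\{x_i\}}$ is in general \emph{not} a submolecule of $U_{C_{i-1}}$. Take $U = O^1$ and $C = \{\fnct{0}^-\} \subseteq \bord{}{}U$. Then $U_{C_0} = O^1 \tensor O^1$ is a $2$\nbd atom, and $\infl{\clos\{\fnct{0}^-\}} \simeq O^1$ is the edge $\clos\{\fnct{1}\tensor\fnct{0}^-\}$ in its boundary. An atom admits no proper decomposition $W_1 \cp{k} W_2$, hence has no proper submolecules, so your local claim as stated (with the hypothesis $\infl{V} \submol W$) does not apply here. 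The proof plan you give for it --- induction on the submolecule structure of $W$, splitting into ``$\infl{V}$ lies in one factor'' versus ``$\infl{V}$ is cut by the interface along its own boundary'' --- presupposes that $\infl{V}$ is compatible with some decomposition of $W$, which is exactly what fails. Weakening the hypothesis to ``$\infl{V}$ is a closed subset'' does not help: the atom case you call ``direct'' then becomes the entire content, since showing that the collapse of an atom $W$ is still an atom with spherical boundary forces you down into $\bord{}{\alpha}W$, where you face the same collapse problem one dimension lower. That can be made into an argument, but it is a dimension induction rather than the submolecule induction you sketch, and the bookkeeping you flag as ``the main obstacle'' never gets resolved this way.

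The paper's argument avoids the submolecule issue entirely by working globally with $q_C$ rather than one fibre at a time. Since $q_C$ is a map of oriented graded posets it commutes with all $\bord{k}{\alpha}$, so any decomposition $V_1 \cp{k} V_2$ of $O^1 \tensor U$ pushes forward to a decomposition $q_C(V_1) \cp{k} q_C(V_2)$ of $U_C$, and atoms go to atoms; this yields the molecule structure of $U_C$ immediately from that of $O^1 \tensor U$. Sphericality is then a direct computation of $\bord{k}{+}U_C \cap \bord{k}{-}U_C$ using the Gray-product boundary formula of Proposition~\ref{prop:molec-compose}, and regularity comes from observing that every atom of $U_C$ is either isomorphic to an atom of $U$ or of the form $(\clos\{x'\})_D$ with $D = C \cap \bord{}{}x'$ and $\dim x' < n$, which the dimension induction handles. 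No local collapsing lemma is needed.
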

\begin{proof}
We know that $O^1 \tensor U$ is an $(n+1)$\nbd molecule; using the fact that $q_C$ is compatible with boundaries, we can obtain an expression of $U_C$ as a composite of atoms by induction on expressions of $O^1 \tensor U$ as a composite of atoms.

For $k \leq n$, we have
\begin{equation*}
	\bord{k}{\alpha}(O^1 \tensor U) = \{\fnct{0}^\alpha\} \tensor \bord{k}{\alpha}U \cup O^1 \tensor \bord{k-1}{-\alpha}U,
\end{equation*}
and it is easy to see that $\bord{k}{\alpha}U_C = q_C(\bord{k}{\alpha}(O^1 \tensor U))$. Then $\bord{k}{+}U_C \cap \bord{k}{-}U_C$ is equal to
\begin{align*}
	& \left(q_C(\{\fnct{0}^+\} \tensor \bord{k}{+}U) \cap q_C(\{\fnct{0}^-\} \tensor \bord{k}{-}U)\right) \cup \left(q_C(\{\fnct{0}^+\} \tensor \bord{k}{+}U) \cap q_C(O^1 \tensor \bord{k-1}{+}U)\right) \cup \\
	& \cup \left(q_C(O^1 \tensor \bord{k-1}{-}U) \cap q_C(\{\fnct{0}^-\} \tensor \bord{k}{-}U)\right) \cup \left(q_C(O^1 \tensor \bord{k-1}{-}U) \cap q_C(O^1 \tensor \bord{k-1}{+}U)\right),
\end{align*}
which is equal to
\begin{align*}
	& q_C(O^1 \tensor (C \cap \bord{k}{+}U \cap \bord{k}{-}U)) \cup q_C(\{\fnct{0}^+\} \tensor \bord{k-1}{+}U) \; \cup \\
	& \cup q_C(\{\fnct{0}^-\} \tensor \bord{k-1}{-}U) \cup q_C(O^1 \tensor (\bord{k-1}{+}U \cap \bord{k-1}{-}U)).
\end{align*}
If $U$ has spherical boundary, the first term is included in the union of the following two by definition of $C$, and because $\bord{k}{+}U \cap \bord{k}{-}U = \bord{k-1}{}U$ when $k < n$, and $C \subseteq \bord{}{}U$ when $k = n$; while the last term is equal to $q_C(O^1 \tensor \bord{k-2}{}U)$. It follows that 
\begin{equation*}
	\bord{k}{+}U_C \cap \bord{k}{-}U_C = \bord{k-1}{+}U_C \cup \bord{k-1}{-}U_C,
\end{equation*}
and $U_C$ has spherical boundary.

Finally, suppose that $U$ is regular. Elements $x \in U_C$ are either the image of some $\fnct{0}^\alpha \tensor x'$ for $x' \in U$, in which case $\clos\{x\}$ in $U_C$ is isomorphic to $\clos\{x'\}$ in $U$, or they are the image of $\fnct{1} \tensor x'$ for some $x' \in U \setminus C$. But then $\clos\{x\}$ is isomorphic to $\clos\{x'\}_D$, where $D := C \cap \bord{}{}x'$, which is a molecule with spherical boundary by what we proved earlier.
\end{proof}

\begin{prop} \label{prop:ou_regular}
Let $U$ be a regular $n$\nbd molecule with spherical boundary. Then $\infl{U}$ is a regular $(n+1)$\nbd molecule with spherical boundary.
\end{prop}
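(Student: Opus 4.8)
The plan is to recognise $\infl{U}$ as an instance of the auxiliary construction $U_C$: taking $C = \bord{}{}U$ in the definition of $U_C$ recovers precisely $U_C = \infl{U}$, as already noted immediately before Lemma \ref{lem:uc_regular}. Since $U$ is a regular $n$\nbd molecule with spherical boundary, Lemma \ref{lem:uc_regular} applied with this choice of $C$ yields at once that $U_{\bord{}{}U} = \infl{U}$ is an $(n+1)$\nbd molecule, that it has spherical boundary because $U$ does, and that it is regular because $U$ is. That is the whole proof.

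It is worth recording why the more general Lemma \ref{lem:uc_regular} is set up first, rather than treating $C = \bord{}{}U$ directly: this shows up in the regularity step of that lemma. An element of $U_C$ of the form $\fnct{1} \tensor x'$ with $x' \in U \setminus C$ has its closure isomorphic to $\clos\{x'\}_D$ with $D := C \cap \bord{}{}x'$, and this $D$ is typically a proper closed subset of $\bord{}{}x'$, even when one started from $C = \bord{}{}U$. So the induction on the dimension of $x'$ only closes if the statement is already available for an arbitrary closed subset $C$; hence the generalisation is built in from the outset, and the proposition then follows as the expected special case.

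Consequently there is no real obstacle left at this point: all the combinatorial work has been done in Lemma \ref{lem:uc_regular}, in particular the computation of $\bord{k}{+}U_C \cap \bord{k}{-}U_C$ and the verification that it reduces to $\bord{k-1}{+}U_C \cup \bord{k-1}{-}U_C$, which is where both the identity $\bord{k}{+}U \cap \bord{k}{-}U = \bord{k-1}{}U$ for $k < n$ and the hypothesis $C \subseteq \bord{}{}U$ for $k = n$ get used. The proof of the proposition is therefore a one-line appeal to that lemma.
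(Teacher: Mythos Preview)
Your proposal is correct and matches the paper's own proof exactly: the paper simply writes ``Follows from Lemma \ref{lem:uc_regular} with $C := \bord{}{}U$.'' Your additional paragraph explaining why the generalisation to arbitrary closed $C \subseteq \bord{}{}U$ is needed (because closures of elements $\fnct{1} \tensor x'$ give rise to $\clos\{x'\}_D$ with $D$ a proper subset) is a correct and useful piece of commentary that the paper leaves implicit.
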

\begin{proof}
Follows from Lemma \ref{lem:uc_regular} with $C := \bord{}{}U$.
\end{proof}

\begin{remark}
If $U$ is an $n$\nbd atom, then $O(U)$ is an $(n+1)$\nbd atom isomorphic to $U \celto U$. Letting $O^0(U) := U$ and $O^n(U) := O(O^{n-1}(U))$ for $n > 0$, in particular, $O^{n-1}(1)$ is isomorphic to $O^n$.
\end{remark}

\begin{remark}
While the construction $O(-)$ is not functorial in general, it satisfies a restricted naturality over \emph{surjective} maps of $n$\nbd atoms of the same dimension. For a surjective map $f: U \to V$ of $n$\nbd atoms, since $f(\bord{}{\alpha}U) = \bord{}{\alpha}V$, the commutative square
\begin{equation*}
\begin{tikzpicture}[baseline={([yshift=-.5ex]current bounding box.center)}]
	\node[scale=1.25] (0) at (0,1.5) {$O^1 \tensor U$};
	\node[scale=1.25] (1) at (3,0) {$V$};
	\node[scale=1.25] (2) at (0,0) {$U$};
	\node[scale=1.25] (3) at (3,1.5) {$O^1 \tensor V$};
	\draw[1csurj] (0) to node[auto] {$\idcat{O^1} \tensor f$} (3);
	\draw[1csurj] (0) to node[auto,swap] {$! \tensor \idcat{U}$} (2);
	\draw[1csurj] (2) to node[auto,swap] {$f$} (1);
	\draw[1csurj] (3) to node[auto] {$! \tensor \idcat{V}$} (1);
\end{tikzpicture}
\end{equation*}
descends to a commutative square
\begin{equation*}
\begin{tikzpicture}[baseline={([yshift=-.5ex]current bounding box.center)}]
	\node[scale=1.25] (0) at (0,1.5) {$O(U)$};
	\node[scale=1.25] (1) at (2.5,0) {$V$.};
	\node[scale=1.25] (2) at (0,0) {$U$};
	\node[scale=1.25] (3) at (2.5,1.5) {$O(V)$};
	\draw[1csurj] (0) to node[auto] {$O(f)$} (3);
	\draw[1csurj] (0) to node[auto,swap] {$p_U$} (2);
	\draw[1csurj] (2) to node[auto,swap] {$f$} (1);
	\draw[1csurj] (3) to node[auto] {$p_V$} (1);
\end{tikzpicture}
\end{equation*}
Because $O(f)$ is also a surjective map of $(n+1)$\nbd atoms, we can iterate this construction, letting $O^0(f) := f: U \surj V$ and $O^n(f) := O(O^{n-1}(f)): O^n(U) \surj O^n(V)$. 
\end{remark}

\begin{remark}
Construction \ref{cons:ou} will allow us to produce ``units'' on diagrams shaped as molecules with spherical boundary in a diagrammatic set. 

Its importance is the reason why we use regular, and not constructible directed complexes as in \cite{hadzihasanovic2018combinatorial}: Proposition \ref{prop:ou_regular} does not hold for constructible molecules, that is, there are constructible molecules $U$ such that $O(U)$ is not constructible.

For example, let $V$ be the following constructible 3-atom:
\begin{equation*}
\begin{tikzpicture}[baseline={([yshift=-.5ex]current bounding box.center)},scale=.8]
\begin{scope}
	\node[0c] (0) at (-1.5,0) {};
	\node[0c] (1) at (1.5,0) {};
	\node[0c] (a) at (-.75,-1) {};
	\node[0c] (m) at (.25,0) [label=above:$y$] {};
	\draw[1c, out=75,in=105] (0) to (1);
	\draw[1c, out=-75,in=150] (0) to (a);
	\draw[1c, out=0, in=-105] (a) to (1);
	\draw[1c] (a) to (m);
	\draw[1c] (m) to (1);
	\draw[2c] (-.6, -.6) to (-.6,.6);
	\draw[2c] (.5, -.9) to (.5,0);
	\draw[3c1] (2,0) to (3.25,0);
	\draw[3c2] (2,0) to (3.25,0);
	\draw[3c3] (2,0) to (3.25,0);
\end{scope}
\begin{scope}[shift={(5.25,0)}]
	\node[0c] (0) at (-1.5,0) {};
	\node[0c] (1) at (1.5,0) {};
	\node[0c] (a) at (-.75,-1) {};
	\draw[1c, out=75,in=105] (0) to (1);
	\draw[1c, out=-75,in=150] (0) to (a);
	\draw[1c, out=0, in=-105] (a) to (1);
	\draw[2c] (0, -.8) to (0,.8);
	\node[scale=1.25] at (1.75,-1) {,};
\end{scope}
\end{tikzpicture}
\end{equation*}
and let $C := \{y\}$. By Lemma \ref{lem:uc_regular}, $V_C$ is a regular 4-atom with spherical boundary, but $\bord{}{+}V_C$ has the following form:
\begin{equation*}
\begin{tikzpicture}[baseline={([yshift=-.5ex]current bounding box.center)},scale=.8]
\begin{scope}
	\path[fill, color=gray!20] (-1.5,0) to [out=-75,in=150] (-.75,-1) to [out=15,in=-105,looseness=1.05] (.375,.5) to [out=-60,in=180] (1.5,0) to (1.75,1) to [out=105,in=75,looseness=1.1] (-1.25,1) to (-1.5,0);
	\node[0c] (0) at (-1.5,0) {};
	\node[0c] (1) at (1.5,0) {};
	\node[0c] (a) at (-.75,-1) {};
	\node[0c] (m) at (.375,.5) {};
	\node[0c] (0b) at (-1.25,1) {};
	\node[0c] (1b) at (1.75,1) {};
	\draw[1c, out=75,in=105] (0) to (1);
	\draw[1c, out=75,in=105] (0b) to (1b);
	\draw[1c, out=-75,in=150] (0) to (a);
	\draw[1c, out=0, in=-105] (a) to (1);
	\draw[1c] (0) to (0b);
	\draw[1c] (1) to (1b);
	\draw[1c,out=15, in=-105] (a) to (m);
	\draw[1c,out=-60,in=180] (m) to (1);
	\draw[2c] (-.6, -.6) to (-.6,.6);
	\draw[2c] (.6, -.9) to (.6,.1);
	\draw[2c] (.1, .9) to (.1,1.9);
	\draw[3c1] (1.75,.5) to (3,.5);
	\draw[3c2] (1.75,.5) to (3,.5);
	\draw[3c3] (1.75,.5) to (3,.5);
\end{scope}
\begin{scope}[shift={(4.5,0)}]
	\path[fill, color=gray!20] (-.75,-1) to (-.5,0) to [out=75,in=165,looseness=1.05] (.375,.5) to [out=45,in=165] (1.75,1) to (1.5,0) to [out=-105,in=0,looseness=1.1] (-.75,-1);
	\node[0c] (0) at (-1.5,0) {};
	\node[0c] (1) at (1.5,0) {};
	\node[0c] (a) at (-.75,-1) {};
	\node[0c] (m) at (.375,.5) {};
	\node[0c] (0b) at (-1.25,1) {};
	\node[0c] (1b) at (1.75,1) {};
	\node[0c] (ab) at (-.5,0) {};
	\draw[1c, out=75,in=105] (0b) to (1b);
	\draw[1c, out=-75,in=150] (0) to (a);
	\draw[1c, out=-75,in=150] (0b) to (ab);
	\draw[1c, out=0, in=-105] (a) to (1);
	\draw[1c, out=75, in=165] (ab) to (m);
	\draw[1c, out=45, in=165] (m) to (1b);
	\draw[1c] (0) to (0b);
	\draw[1c] (1) to (1b);
	\draw[1c] (a) to (ab);
	\draw[1c,out=15, in=-105] (a) to (m);
	\draw[1c,out=-60,in=180] (m) to (1);
	\draw[2c] (-1, -.7) to (-1,.3);
	\draw[2c] (-.2, -.7) to (-.2,.3);
	\draw[2c] (1.1, 0) to (1.1,1);
	\draw[2c] (.1, .7) to (.1,1.9);
	\draw[2c] (.6, -.9) to (.6,.1);
	\draw[3c1] (1.75,.5) to (3,.5);
	\draw[3c2] (1.75,.5) to (3,.5);
	\draw[3c3] (1.75,.5) to (3,.5);
\end{scope}
\begin{scope}[shift={(9,0)}]
	\path[fill, color=gray!20] (-1.25,1) to [out=-75,in=150] (-.5,0) to [out=0,in=-105] (1.75,1) to [out=105,in=75,looseness=1.1] (-1.25,1);
	\node[0c] (0) at (-1.5,0) {};
	\node[0c] (1) at (1.5,0) {};
	\node[0c] (a) at (-.75,-1) {};
	\node[0c] (m) at (.375,.5) {};
	\node[0c] (0b) at (-1.25,1) {};
	\node[0c] (1b) at (1.75,1) {};
	\node[0c] (ab) at (-.5,0) {};
	\draw[1c, out=75,in=105] (0b) to (1b);
	\draw[1c, out=-75,in=150] (0) to (a);
	\draw[1c, out=-75,in=150] (0b) to (ab);
	\draw[1c, out=0, in=-105] (a) to (1);
	\draw[1c, out=0, in=-105] (ab) to (1b);
	\draw[1c, out=75, in=165] (ab) to (m);
	\draw[1c, out=45, in=165] (m) to (1b);
	\draw[1c] (0) to (0b);
	\draw[1c] (1) to (1b);
	\draw[1c] (a) to (ab);
	\draw[2c] (-1, -.7) to (-1,.3);
	\draw[2c] (.4, -.9) to (.4,.1);
	\draw[2c] (.8, 0) to (.8,1);
	\draw[2c] (.1, .7) to (.1,1.9);
	\draw[3c1] (1.75,.5) to (3,.5);
	\draw[3c2] (1.75,.5) to (3,.5);
	\draw[3c3] (1.75,.5) to (3,.5);
\end{scope}
\begin{scope}[shift={(13.5,0)}]
	\node[0c] (0) at (-1.5,0) {};
	\node[0c] (1) at (1.5,0) {};
	\node[0c] (a) at (-.75,-1) {};
	\node[0c] (0b) at (-1.25,1) {};
	\node[0c] (1b) at (1.75,1) {};
	\node[0c] (ab) at (-.5,0) {};
	\draw[1c, out=75,in=105] (0b) to (1b);
	\draw[1c, out=-75,in=150] (0) to (a);
	\draw[1c, out=-75,in=150] (0b) to (ab);
	\draw[1c, out=0, in=-105] (a) to (1);
	\draw[1c, out=0, in=-105] (ab) to (1b);
	\draw[1c] (0) to (0b);
	\draw[1c] (1) to (1b);
	\draw[1c] (a) to (ab);
	\draw[2c] (-1, -.7) to (-1,.3);
	\draw[2c] (.4, -.9) to (.4,.1);
	\draw[2c] (.25, .2) to (.25,1.8);
	\node[scale=1.25] at (1.75,-1) {,};
\end{scope}
\end{tikzpicture}
\end{equation*}
where the shaded area in each diagram is the input boundary of the following 3\nbd atom. This is not a constructible 3\nbd molecule, because no pair of 3\nbd atoms forms a constructible 3\nbd molecule: the union of the first two does not have spherical boundary, and the union of the next two is not a molecule.

It is easy to produce a constructible 3-molecule $U$ that contains $V$ as an atom, and such that $V \cap \bord{}{}U = \{y\}$. Then $O(U)$ contains $V_C$ as an atom, hence it is not constructible. 
\end{remark}

The following construction produces shapes of ``unitor'' cells.

\begin{cons} \label{cons:unitor_molec}
Let $U$ be an $n$\nbd atom, and $V \subsph \bord{}{-}U$ a regular $(n-1)$\nbd molecule with spherical boundary. There is an $(n+1)$\nbd atom $L^V_U := U \celto (O(V) \cup U)$, where the union is along $\bord{}{+}O(V) \incliso V \subsph \bord{}{-}U$, and a retraction $\ell^V_U: L^V_U \surj U$ 
\begin{itemize}
	\item sending the greatest element of $L^V_U$ to the greatest element of $U$,
	\item equal to the identity on both isomorphic copies of $U$ in $\bord{}{}L^V_U$, and
	\item equal to $p_V: O(V) \surj V$ on $O(V) \subsph \bord{}{+}L^V_U$.
\end{itemize}
Dually in dimension $n$, if $V' \subsph \bord{}{+}U$, there is an $(n+1)$\nbd atom $R^{V'}_U := U \celto (U \cup O(V'))$, together with a retraction $r^{V'}_U: R^{V'}_U \surj U$. 

Dually in dimension $n+1$, we also have two atoms 
\begin{equation*}
	\tilde{L}^V_U := \oppn{n+1}{L^V_U}, \quad \quad \tilde{R}^{V'}_U := \oppn{n+1}{R^{V'}_U}
\end{equation*}
with retractions $\tilde{\ell}^V_U: \tilde{L}^V_U \surj U$ and $\tilde{r}^{V'}_U: \tilde{R}^{V'}_U \surj U$. 
\end{cons}

\begin{cons} \label{cons:fattening}
Let $U$ be an $n$\nbd atom, $V$ an $(n-1)$\nbd atom, and $f: U \surj V$ a surjective map. There is a surjective map $f_\prec: U \surj O(V)$ defined as follows:
\begin{itemize}
	\item $f_\prec$ maps the greatest element of $U$ to the greatest element of $O(V)$;
	\item if $x \in \bord{}{\alpha}U$, then $f_\prec$ maps it to $f(x) \in V \incliso \bord{}{\alpha}O(V)$.
\end{itemize}
We can see $f_\prec$ as a ``fattened'' version of $f$, splitting the image of its two boundaries. If $p_V: O(V) \surj V$ is the map of Construction \ref{cons:ou}, we have that 
\begin{equation*}
\begin{tikzpicture}[baseline={([yshift=-.5ex]current bounding box.center)}]
	\node[scale=1.25] (0) at (-1.25,1.25) {$U$};
	\node[scale=1.25] (1) at (0,0) {$O(V)$};
	\node[scale=1.25] (2) at (1.25,1.25) {$V$};
	\draw[1csurj] (0) to node[auto] {$f$} (2);
	\draw[1csurj] (0) to node[auto,swap] {$f_\prec$} (1);
	\draw[1csurj] (1) to node[auto,swap] {$p_V$} (2);
\end{tikzpicture}
\end{equation*}
commutes. 
\end{cons}

Next, we generalise the notion of a merger tree \cite[Construction 2.18]{hadzihasanovic2018combinatorial} from constructible molecules to regular molecules with spherical boundary.

\begin{cons} \label{cons:mergertree}
Let $U$ be a regular $n$\nbd molecule with spherical boundary. A \emph{binary split} of $U$ is a pair $U_1, U_2 \subsph U$ of $n$\nbd molecules with spherical boundary such that
\begin{enumerate}
	\item $U_1 \cap U_2 \subseteq \bord{}{+}U_1 \cap \bord{}{-}U_2$, 
	\item $\tilde{U_1} := U_1 \cup \bord{}{-}U$ and $\tilde{U_2} := U_2 \cup \bord{}{+}U$ are molecules, $U_1 \subsph \tilde{U_1}$ and $U_2 \subsph \tilde{U_2}$, and
	\item $U = U_1 \cup U_2 = \tilde{U_1} \cp{n-1} \tilde{U_2}$.
\end{enumerate}
We say that $U$ is \emph{unsplittable} if it admits no binary split.

A \emph{merger tree} $\mathfrak{T}$ for $U$ is a rooted binary tree whose vertices are labelled with $n$\nbd molecules $V \subsph U$, such that
\begin{enumerate}
	\item the root is labelled $U$,
	\item the leaves are labelled with unsplittable molecules, and
	\item if the children of a vertex labelled $V$ are labelled $V_1$ and $V_2$, then $V = V_1 \cup V_2$ is a binary split of $V$.
\end{enumerate}
\end{cons}

\begin{exm}
All atoms are unsplittable. The only unsplittable constructible molecules are atoms: by \cite[Theorem 5.18]{hadzihasanovic2018combinatorial}, merger trees for constructible molecules are merger trees in the generalised sense, and every constructible molecule admits a merger tree whose leaves are labelled with atoms.

The 3-molecule of \cite[Remark 7.12]{hadzihasanovic2018combinatorial} is an unsplittable regular 3-molecule with three maximal elements.
\end{exm}

\begin{remark} \label{rmk:binarysplit}
If a regular $n$\nbd molecule $U$ with spherical boundary has two maximal elements, then it admits a binary split: using Lemma \ref{lem:composition_form}, we can write $U = \tilde{U_1} \cp{n-1} \tilde{U_2}$ where $\tilde{U_1}$ and $\tilde{U_2}$ contain a single $n$\nbd dimensional element $x_1 \in \tilde{U_1}$, $x_2 \in \tilde{U_2}$; since $U$ is pure, $U = \clos\{x_1\} \cup \clos\{x_2\}$, and necessarily $\clos\{x_1\} \subsph \tilde{U_1}$ and $\clos\{x_2\} \subsph \tilde{U_2}$. 
\end{remark}

\subsection{Diagrammatic sets} \label{sec:presheaves}

\begin{dfn}
A \emph{diagrammatic set} is a presheaf on $\atom$. Diagrammatic sets, together with their morphisms of presheaves, form a category $\dgmset$.
\end{dfn}

We will assume that $\atom$ is skeletal: isomorphism of directed complexes is decidable, so this is a reasonable requirement, and spares us the trouble of dealing with pairs of elements that are related by an isomorphism of atoms.

We have the Yoneda embedding $\atom \incl \dgmset$. By Corollary \ref{cor:globpos_is_colimit} and the universal property of presheaf categories as free cocompletions, this extends to an embedding $\rdcpx \incl \dgmset$. We will casually identify any regular directed complex with its embedding. 

\begin{cons}
Let $X$ be a diagrammatic set, and $J \subseteq \mathbb{N}^+$. The \emph{$J$\nbd dual} $\oppn{J}{X}$ of $X$ is the diagrammatic set defined by $\oppn{J}{X}(-) := X(\oppn{J}{-})$. For each $J$, this defines an endofunctor $\oppn{J}{-}$ on $\dgmset$.

In particular, we write $\oppall{X}$, $\opp{X}$, and $\coo{X}$ for $X(\oppall{(-)})$, $X(\opp{(-)})$, and $X(\coo{(-)})$, respectively.
\end{cons}

In \cite[Section 4]{hadzihasanovic2018combinatorial}, we considered the extension of the lax Gray product and join of constructible directed complexes to constructible polygraphs, defined as presheaves on a category of constructible atoms and inclusions. These extend with no effort to regular directed complexes and diagrammatic sets, and we bundle the relevant facts together into a single statement.

\begin{prop}
The following facts hold.
\begin{enumerate}
	\item The monoidal structure $(-\tensor-, 1)$ on $\rdcpx$ extends to a monoidal biclosed structure $(-\tensor-, \homoplax{-}{-}, \homlax{-}{-}, 1)$ on $\dgmset$.
	\item The monoidal structure $(-\join-, \emptyset)$ on $\rdcpx$ extends to $\dgmset$, with natural inclusions $X \incl X \join Y, Y \incl X \join Y$ for all diagrammatic sets $X, Y$. 
	
	This monoidal structure is locally biclosed, in the sense that, for all $X$, there are right adjoints to the functors $\dgmset \to \slice{X}{\dgmset}$ defined by
\begin{equation*}
	f: Y \to Z \;\; \mapsto
\begin{tikzpicture}[baseline={([yshift=-.5ex]current bounding box.center)}]
	\node[scale=1.25] (0) at (-1.25,-1.25) {$X \join Y$};
	\node[scale=1.25] (1) at (0,0) {$X$};
	\node[scale=1.25] (2) at (1.25,-1.25) {$X \join Z$};
	\draw[1cincl] (1) to (0);
	\draw[1cinc] (1) to (2);
	\draw[1c] (0) to node[auto] {$\mathrm{id}_X \join f$} (2);
	\node[scale=1.25] at (2,-1.4) {,}; 
\end{tikzpicture} \quad
	f: Y \to Z \;\; \mapsto
\begin{tikzpicture}[baseline={([yshift=-.5ex]current bounding box.center)}]
	\node[scale=1.25] (0) at (-1.25,-1.25) {$Y \join X$};
	\node[scale=1.25] (1) at (0,0) {$X$};
	\node[scale=1.25] (2) at (1.25,-1.25) {$Z \join X$};
	\draw[1cincl] (1) to (0);
	\draw[1cinc] (1) to (2);
	\draw[1c] (0) to node[auto] {$f \join \mathrm{id}_X$} (2);
	\node[scale=1.25] at (2,-1.4) {.};
\end{tikzpicture}
\end{equation*}
	\item There are canonical isomorphisms $\opp{(X \tensor Y)} \simeq \opp{Y} \tensor \opp{X}$, $\coo{(X \tensor Y)} \simeq \coo{Y} \tensor \coo{X}$, $\oppall{(X \tensor Y)} \simeq \oppall{X} \tensor \oppall{Y},$ and $\opp{(X \join Y)} \simeq \opp{Y} \join \opp{X}$, natural in the diagrammatic sets $X, Y$. 
\end{enumerate}
\end{prop}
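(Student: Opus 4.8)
The plan is to reduce each of the three statements to the corresponding fact about atoms, already established, and transport it to the presheaf category $\dgmset$ via the presentation of a diagrammatic set as a colimit of its cells. For (1): the lax Gray product restricts to a (non-symmetric) monoidal structure on the small category $\atom$ with unit $1 = O^0$, so by Day's theorem it extends, uniquely up to canonical isomorphism, to a monoidal biclosed structure on $\dgmset$ that is cocontinuous in each variable; concretely $X \tensor Y = \colimit_{(a \to X,\, b \to Y)} a \tensor b$ over the product of categories of elements, with $\homoplax{-}{-}$ and $\homlax{-}{-}$ the right adjoints to $- \tensor Y$ and $X \tensor -$ (two internal homs, since $\tensor$ is not symmetric). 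That this extends the lax Gray product of $\rdcpx$ under $\rdcpx \incl \dgmset$ is checked by comparing colimit presentations: by Corollary~\ref{cor:globpos_is_colimit} every regular directed complex is built from atoms by finite coproducts and pushouts of inclusions, and by Proposition~\ref{prop:laxgray_preserve_colim} the lax Gray product of $\rdcpx$ preserves exactly these colimits in each variable, so both products compute the same colimit of products of atoms.

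For (2): the unit $\emptyset$ of the join is the initial object of $\dgmset$ and is not preserved by the join (indeed $\emptyset \join Y \cong Y$), so the join is \emph{not} a Day convolution over $\atom$. The fix is to Day-convolve over the augmented category $\atom_\emptyset := \atom \cup \{\emptyset\}$, which carries a genuine monoidal structure with unit $\emptyset$ and on which the join of objects is computed exactly as in $\rdcpx$ (atoms being closed under join, with $a \join b$ having greatest element $\top_a \join \top_b$ and spherical boundary); this mirrors the extension of the ordinal-sum join on $\Delta$ to augmented simplicial sets. One then observes that $\dgmset$ is equivalent to the full subcategory of presheaves on $\atom_\emptyset$ with singleton value at $\emptyset$, that this subcategory is closed under the Day join — because a colimit over $\atom_\emptyset$, which has initial object $\emptyset$, is just evaluation at $\emptyset$ — and that the unit (the representable at $\emptyset$) corresponds to the empty diagrammatic set; this is the extended join. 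Unitality and associativity descend from $\atom_\emptyset$; compatibility with the join of $\rdcpx$ uses that suspension, hence the join, preserves pushouts of inclusions (Proposition~\ref{lem:join_pushout_incl}), together with the nerve presentation of a regular directed complex over $\atom_\emptyset$. The inclusions $X \incl X \join Y \hookleftarrow Y$ are induced from the inclusions $a \incl a \join b$ of atoms and are monomorphisms, monos being detected pointwise. For local biclosedness, the functor $\dgmset \to \slice{X}{\dgmset}$, $(f\colon Y \to Z) \mapsto \idcat{X} \join f$, is cocontinuous — colimits in $\slice{X}{\dgmset}$ absorb the common summand $X$ — and, both categories being locally presentable, it has a right adjoint by the adjoint functor theorem; symmetrically for $- \join X$.

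For (3): each $\oppn{J}{-}$ on $\dgmset$ is precomposition with the involutive endofunctor $\oppn{J}{-}$ of $\atom$, hence is itself an equivalence of categories, so cocontinuous and, after the reindexing $\mathrm{el}(\oppn{J}{X}) \simeq \mathrm{el}(X)$, commuting with the colimit formulas for $\tensor$ and $\join$. The isomorphisms $\opp{(X \tensor Y)} \simeq \opp{Y} \tensor \opp{X}$, $\coo{(X \tensor Y)} \simeq \coo{Y} \tensor \coo{X}$, $\oppall{(X \tensor Y)} \simeq \oppall{X} \tensor \oppall{Y}$, and $\opp{(X \join Y)} \simeq \opp{Y} \join \opp{X}$ then follow termwise from their atom-level counterparts, which are established with naturality in the proposition concluding Section~\ref{sec:maps}; naturality in $X, Y$ is inherited from the atom level via functoriality of colimits.

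The main obstacle is (2): because the join is not cocontinuous it falls outside the Day-convolution framework over $\atom$, so one must either carry out the passage to $\atom_\emptyset$ carefully — verifying that the restriction back to $\dgmset$ really is a monoidal structure agreeing with that on $\rdcpx$ — or write out an explicit cell-wise formula for $X \join Y$ and check the slice adjunction by hand. Parts (1) and (3) are essentially bookkeeping on top of combinatorial facts already proven for atoms.
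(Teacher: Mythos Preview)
Your proposal is correct and follows the expected Day-convolution strategy. Note that the paper does not actually prove this proposition: the text immediately preceding it says the facts ``extend with no effort'' from \cite[Section 4]{hadzihasanovic2018combinatorial} (the author's earlier work on constructible polygraphs), and then states the proposition without a proof environment. Your argument --- Day convolution over $\atom$ for the lax Gray product, the standard augmentation trick over $\atom_\emptyset$ for the join (exactly parallel to the ordinal-sum join on augmented simplicial sets), and transporting the duality isomorphisms along colimits --- is precisely the content that the cited reference would supply, so you have in effect reconstructed the omitted proof rather than diverged from it.
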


Let us introduce some terminology.

\begin{dfn}
Let $X$ be a diagrammatic set. An $n$\nbd \emph{diagram} of shape $U$ in $X$ is a morphism $x: U \to X$, where $U$ is an $n$\nbd molecule. An $n$\nbd diagram of shape $U$ is \emph{spherical} if $U$ has spherical boundary, and it is an $n$\nbd \emph{cell} if $U$ is an $n$\nbd atom.

If $x$ is a diagram of shape $U$, the \emph{input $k$\nbd boundary} of $x$ is the diagram $\bord{k}{-}x := \imath^-_k;x$, where $\imath^-_k: \bord{k}{-}U \incl U$ is the inclusion of the input $k$\nbd boundary of $U$ into $U$. Similarly, the \emph{output $k$\nbd boundary} of $x$ is $\bord{k}{+}x := \imath^+_k;x$, where $\imath^+_k: \bord{k}{+}U \incl U$ is the inclusion of the output $k$\nbd boundary of $U$ into $U$. We also write $\bord{k}{}x := \imath_k;x$, where $\imath$ is the inclusion $\bord{k}{}U \incl U$. When $U$ is $n$\nbd dimensional and $k = n-1$, we omit the index.

If $x$ is an $n$\nbd diagram, we write $x: y^- \celto y^+$ to express that $\bord{n-1}{\alpha}x = y^\alpha$ for $\alpha \in \{+,-\}$. We say that two $n$\nbd diagrams $x, x': y^- \celto y^+$ are \emph{parallel}.

Let $x$ be an $n$\nbd diagram of shape $U$, and let $V \incl U$ be the inclusion of a directed complex $V$. We write $\restr{x}{V}$ for the restriction of $x$ to $V$, and $\restr{x}{V} \subseteq x$. We also write $y \subsph x$ if $y = \restr{x}{V}$ for some spherical submolecule $V \subsph U$, and say $y$ is a \emph{spherical subdiagram} of $x$.

If $x$ is a diagram of shape $U = U_1 \cp{k} U_2$, we write $x = x_1 \cp{k} x_2$ where $x_1, x_2$ are the restrictions of $x$ to $U_1$ and $U_2$, respectively. If $z$ is a spherical $n$\nbd diagram of shape $W$, parallel to $y \subsph x$ of shape $V$, we write $x[z/y]$ for the $n$\nbd diagram of shape $U[W/V]$ which is equal to $z$ on $W$, and to $x$ on $U \setminus V$. 
\end{dfn}

\begin{dfn}
Suppose $x_1$ and $x_2$ are $n$\nbd diagrams of shapes $U_1$ and $U_2$ with the following property: there is a directed complex $V$ and inclusions $V \incl \bord{}{+}U_1$ and $V \incl \bord{}{-}U_2$ such that 
\begin{enumerate}
	\item $\restr{x_1}{V} = \restr{x_2}{V} =: y$, and
	\item the pushout
\begin{equation*}
\begin{tikzpicture}[baseline={([yshift=-.5ex]current bounding box.center)}]
	\node[scale=1.25] (0) at (0,1.5) {$V$};
	\node[scale=1.25] (1) at (2.5,0) {$U$};
	\node[scale=1.25] (2) at (0,0) {$U_1$};
	\node[scale=1.25] (3) at (2.5,1.5) {$U_2$};
	\draw[1cinc] (0) to (3);
	\draw[1cincl] (0) to (2);
	\draw[1cinc] (2) to (1);
	\draw[1cincl] (3) to (1);
	\draw[edge] (1.6,0.2) to (1.6,0.7) to (2.3,0.7);
\end{tikzpicture}
\end{equation*}
is a regular $n$\nbd molecule with $U_1, U_2 \submol U$.
\end{enumerate}
Then we write $x_1 \cup x_2$ for the $n$\nbd diagram of shape $U$ defined by $\restr{(x_1 \cup x_2)}{U_i} = x_i$ for $i = 1,2$, and call it the \emph{union} of $x_1$ and $x_2$ \emph{along} $y \subseteq \bord{}{+}x_1, \bord{}{-}x_2$. We also write $y = x_1 \cap x_2$. We will often leave $y$ implicit, when clear from context.
\end{dfn}

The following is a general fact that we will use in a few occasions.

\begin{cons}
Let $\cat{D}$ be a small category. For each subcategory $\cat{C}$ of $\cat{D}$, there is a restriction functor $-_\cat{C}: \psh{}{\cat{D}} \to \psh{}{\cat{C}}$. This functor has a left adjoint $\imath_\cat{C}: \psh{}{\cat{C}} \to \psh{}{\cat{D}}$, with the following description: for each presheaf $X$ on $\cat{C}$, the elements of $\imath_\cat{C}X(d)$ are pairs $(x \in X(c), f: d \to c)$, where $c$ is an object of $\cat{C}$ and $f: d \to c$ a morphism of $\cat{D}$, quotiented by the relation
\begin{equation*}
	(g^*x \in X(c), f: d \to c) \sim (x \in X(c'), f;g: d \to c')
\end{equation*}
for all morphisms $g: c \to c'$ in $\cat{C}$. If $h: d' \to d$ is a morphism in $\cat{D}$, $h^*$ sends the equivalence class of $(x, f)$ in $\imath_\cat{C}X(d)$ to the equivalence class of $(x, h;f)$ in $\imath_\cat{C}X(d')$.

The functor $\imath_\cat{C}$ can equally be characterised as the left Kan extension of the inclusion $\cat{C} \incl \cat{D} \incl \psh{}{\cat{D}}$ along the Yoneda embedding $\cat{C} \incl \psh{}{\cat{C}}$.
\end{cons}

\begin{lem} \label{lem:leftadj_full}
Suppose $\cat{C}$ is a full subcategory of $\cat{D}$. Then $\imath_\cat{C}: \psh{}{\cat{C}} \to \psh{}{\cat{D}}$ is full and faithful.
\end{lem}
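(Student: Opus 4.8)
The plan is to use the adjunction $\imath_\cat{C} \dashv -_\cat{C}$ recorded in the preceding construction, together with the standard fact that a left adjoint is full and faithful if and only if the unit of the adjunction is a natural isomorphism. Thus it suffices to prove that, for every presheaf $X$ on $\cat{C}$, the unit component $\eta_X \colon X \to (\imath_\cat{C}X)_\cat{C}$ is an isomorphism in $\psh{}{\cat{C}}$; since this is a morphism of presheaves on $\cat{C}$, it can be checked one object of $\cat{C}$ at a time.

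Fix an object $c$ of $\cat{C}$. By the explicit description of $\imath_\cat{C}$, an element of $(\imath_\cat{C}X)_\cat{C}(c) = \imath_\cat{C}X(c)$ is the equivalence class of a pair $(x \in X(c'),\, f\colon c \to c')$ with $c'$ an object of $\cat{C}$ and $f$ a \emph{morphism of $\cat{D}$}, modulo the generating relation $(g^*x, f) \sim (x, f;g)$ for $g$ a morphism of $\cat{C}$. This is the one place where fullness enters: since $\cat{C}$ is full in $\cat{D}$, the morphism $f$ is in fact a morphism of $\cat{C}$, so applying the generating relation with $g := f$ (now legitimately a $\cat{C}$-morphism) rewrites the class of $(x,f)$ as the class of $(f^*x,\idcat{c})$. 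Since $\eta_X$ sends $\xi \in X(c)$ to the class of $(\xi,\idcat{c})$, this already shows $\eta_X$ is surjective at $c$. For injectivity I will exhibit an explicit two-sided inverse, namely the assignment sending the class of $(x \in X(c'),\, f\colon c \to c')$ to $f^*x \in X(c)$, which makes sense precisely because $f$ is a $\cat{C}$-morphism; checking that it respects the generating relation reduces to the identity $f^*\circ g^* = (g\circ f)^*$ expressing functoriality of $X$, after which it is immediate that it is inverse to $\eta_X$ on both sides.

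The argument has no real obstacle; the only care needed is the bookkeeping in the last step — keeping track of the variance of $X$ and of the direction in which composites are formed in the generating relation — so that the candidate inverse is genuinely well defined on equivalence classes and genuinely two-sided. If one prefers to sidestep the combinatorial description, there is a slicker route: $\imath_\cat{C}$ is cocontinuous as a left adjoint and $-_\cat{C}$ is computed pointwise, so $\eta$ is a natural transformation between cocontinuous endofunctors of $\psh{}{\cat{C}}$, and it suffices to check that it is invertible on the representable presheaves $\homset{\cat{C}}(-,c)$; there $\imath_\cat{C}$ takes $\homset{\cat{C}}(-,c)$ to $\homset{\cat{D}}(-,c)$, whose restriction to $\cat{C}$ sends $c'$ to $\homset{\cat{D}}(c',c) = \homset{\cat{C}}(c',c)$ by full faithfulness of $\cat{C}\incl\cat{D}$.
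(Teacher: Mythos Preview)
Your proof is correct and follows essentially the same approach as the paper: both show that the unit of the adjunction is an isomorphism by using fullness of $\cat{C}$ in $\cat{D}$ to reduce every element of $\imath_\cat{C}X(c)$ to a canonical representative $(x,\idcat{c})$, and then invoke the standard characterisation of full and faithful left adjoints (the paper cites Mac Lane, Theorem IV.3.1). Your version spells out the inverse and its well-definedness more carefully than the paper does, and the alternative route via representables you sketch at the end is a valid shortcut, but the core argument is the same.
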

\begin{proof}
If $c$ is an object of $\cat{C}$, each equivalence class of $\imath_\cat{C}X(c)$ has a unique representative of the form $(x, \idcat{c})$. It follows that the components $X \to (\imath_\cat{C}X)_\cat{C}$ of the unit of the adjunction are isomorphisms, and we can apply \cite[Theorem IV.3.1]{maclane1971cats}.
\end{proof}

We will use adjunctions of the form $\imath_\cat{C} \dashv -_\cat{C}$ for subcategories of $\atom$. 

\begin{exm}
Let $\tilde{\cat{O}}$ be the full subcategory of $\atom$ on the globes $O^n$. Presheaves on $\tilde{\cat{O}}$ can be identified with \emph{reflexive} $\omega$\nbd graphs, for which see \cite[Example 10.1.2]{leinster2004higher}. 

Letting $\globsetrefl$ denote the category of reflexive $\omega$\nbd graphs, the restriction functor $-_{\tilde{\cat{O}}}: \dgmset \to \globsetrefl$ has a full and faithful left adjoint $\imath_{\tilde{\cat{O}}}: \globsetrefl \to \dgmset$. Thus reflexive $\omega$\nbd graphs can be identified with a full subcategory of diagrammatic sets.
\end{exm}
In some cases, there is a canonical choice of representatives for elements of $\imath_\cat{C}X$.

\begin{exm} \label{exm:freedeg}
Let $\rpol := \psh{}{\atomin}$; we conjecture that this is equivalent to the category of \emph{regular polygraphs} of \cite{henry2018regular}, which should follow from Conjecture \ref{conj:freegen}.

Let $u: \rpol \to \dgmset$ be left adjoint to the restriction $\dgmset \to \rpol$; this can be seen as the functor that ``freely adds degeneracies'' to a regular polygraph.

It follows from Lemma \ref{lem:factor_clpsincl} that, if $X$ is a regular polygraph, each cell of shape $U$ in $uX$ has a unique representative of the form $(x \in X(V), f: U \surj V)$, where $f$ is a surjective map of atoms. This explicit description can be used to prove easily that the adjunction is monadic: that is, diagrammatic sets can be seen as regular polygraphs with algebraic structure.
\end{exm}

\begin{cons} \label{cons:nskeleton}
The category $\atom$ has a filtration given by the full subcategories $\atom_n$ for $n \geq 0$, whose objects are the atoms of dimension $k \leq n$. We call a presheaf on $\atom_n$ an \emph{$n$\nbd diagrammatic set}, and write $n\dgmset$ for their category.

The restrictions $-_{\leq n}: \dgmset \to n\dgmset$ have full and faithful left adjoints,
\begin{equation*}
	\imath_n: n\dgmset \to \dgmset.
\end{equation*}
If $X$ is a diagrammatic set, we write $\skel{n}{X} := \imath_nX_{\leq n}$, and call the counit $\skel{n}{X} \to X$ the \emph{$n$\nbd skeleton} of $X$. 

The restrictions factor as a sequence of restriction functors 
\begin{equation*}
	\ldots \to n\dgmset \to (n-1)\dgmset \to \ldots \to 0\dgmset,
\end{equation*}
all with full and faithful left adjoints. By universal properties, the skeleta of $X$ form a sequence of morphisms
\begin{equation*}
	\skel{0}{X} \to \ldots \to \skel{n-1}{X} \to \skel{n}{X} \to \ldots
\end{equation*}
over $X$, whose colimit is $X$. 
\end{cons}

\begin{cons} \label{cons:ncoskel}
The restrictions $-_{\leq n}: \dgmset \to n\dgmset$ also have right adjoints $\imath^!_n: n\dgmset \to \dgmset$, by right Kan extension of $\atom_n \incl \dgmset$ along the Yoneda embedding $\atom_n \incl n\dgmset$. 

If $X$ is a presheaf on $\atom_n$, we have for all atoms $U$
\begin{equation*}
	\imath^!_nX(U) \simeq \homset{\dgmset}(U, \imath^!_n X) \simeq \homset{n\dgmset}(\skel{n}{U}, X),
\end{equation*}
that is, $\imath^!_nX$ has one cell of shape $U$ for every way of mapping the directed complex $\skel{n}{U}$ into $X$. If $U$ has dimension $k \leq n$, these are the same as the cells of $X$. If $k > n$, since $\skel{n}{\bord{}{}U} \incl \skel{n}{U}$, any two parallel $k$\nbd cells in $\imath^!_n X$ must be equal, and between any two parallel spherical $(k-1)$\nbd diagrams in $\imath^!_n X$ there must be a $k$\nbd cell.

If $X$ is a diagrammatic set, we write $\coskel{n}{X} := \imath_n^! X_{\leq n}$, and call the counit $X \to \coskel{n}{X}$ the \emph{$n$\nbd coskeleton} of $X$. The coskeleta of $X$ form a tower of morphisms 
\begin{equation} \label{eq:coskel-tower}
	\ldots \to \coskel{n}{X} \to \coskel{n-1}{X} \to \ldots \to \coskel{0}{X}
\end{equation}
under $X$, whose limit is $X$.
\end{cons}

\begin{dfn}
We say that $X$ is \emph{$n$\nbd coskeletal} if its $n$\nbd coskeleton is an isomorphism.
\end{dfn}

\begin{dfn}
Let $x: U \to X$ be an $n$\nbd cell in a diagrammatic set. We say that $x$ is \emph{irreducible} if, whenever $x = p^*y$ for some surjective map $p: U \surj V$ and $y \in X(V)$, it follows that that $U = V$ and $p$ is the identity. We write $\nondeg{n}{X}$ for the set of irreducible $n$\nbd cells of $X$.

We say that $x$ is \emph{degenerate} if there exist a $k$\nbd cell $y: V \to X$, with $k < n$, and a surjective map $p: U \surj V$ such that $x = p^*y$. We write $\degg{X}$ for the set of degenerate cells of $X$. 
\end{dfn}

\begin{remark}
Every irreducible cell is non-degenerate. A non-degenerate cell, however, may not be irreducible, because of the existence of surjective maps between atoms of the same dimension. This should be contrasted with simplicial sets, where the two corresponding notions coincide.
\end{remark}

Because atoms have finitely many elements, and surjective maps decrease the number of elements, every cell of a diagrammatic set must be equal to $p^*y$ for some irreducible cell $y$. However, this expression may not in general be unique.

\begin{exm}
Let $X$ be the quotient $O^2/\bord{}{}O^2$, that is, the pushout
\begin{equation*}
\begin{tikzpicture}[baseline={([yshift=-.5ex]current bounding box.center)}]
	\node[scale=1.25] (0) at (0,1.5) {$\bord{}{}O^2$};
	\node[scale=1.25] (1) at (2.5,0) {$X$};
	\node[scale=1.25] (2) at (0,0) {$1$};
	\node[scale=1.25] (3) at (2.5,1.5) {$O^2$};
	\draw[1cinc] (0) to (3);
	\draw[1c] (0) to (2);
	\draw[1cinc] (2) to (1);
	\draw[1c] (3) to node[auto] {$x$} (1);
	\draw[edge] (1.6,0.2) to (1.6,0.7) to (2.3,0.7);
\end{tikzpicture}
\end{equation*}
in $\dgmset$; then $x$ is the only irreducible 2\nbd cell in $X$. 

Let $U := (O^1 \cp{0} O^1) \celto O^1$; there are two surjective maps $p_1, p_2: U \surj O^2$, the first of which collapses the first copy of $O^1$, and the second of which collapses the second copy of $O^1$ in the input boundary of $U$. The 2-cells $p_1^*x$ and $p_2^*x$ have equal boundaries in $X$, and we can let $X'$ be the quotient of $X$ by $p_1^*x \sim p_2^*x$, that is, the coequaliser of the pair $p_1^*x, p_2^*x: U \to X$. In $X'$, the cell $x$ is still irreducible, and we have $p_1^*x = p_2^*x$, but $p_1 \neq p_2$.
\end{exm}

\begin{dfn}
We say that $X$ has the \emph{Eilenberg-Zilber property} if, for all atoms $U$ and cells $x: U \to X$, there is a unique irreducible cell $y: V \to X$ and a unique surjective map $p: U \surj V$ such that $x = p^*y$. 
\end{dfn}

\begin{exm}
By the remark in Example \ref{exm:freedeg}, for all regular polygraphs $X$, the diagrammatic set $uX$ has the Eilenberg-Zilber property. 
\end{exm}

\begin{prop} \label{prop:ez_polygraph}
Let $X$ be a diagrammatic set. For each irreducible cell $x \in \nondeg{n}{X}$, let $U(x)$ be the shape of $x$. The following are equivalent:
\begin{enumerate}[label=(\alph*)]
	\item $X$ has the Eilenberg-Zilber property;
	\item for all $n > 0$, the diagram
\begin{equation} \label{eq:ez_pushout}
\begin{tikzpicture}[baseline={([yshift=-.5ex]current bounding box.center)}]
	\node[scale=1.25] (0) at (0,2) {$\displaystyle \coprod_{x\in\nondeg{n}{X}} \bord{}{} U(x)$};
	\node[scale=1.25] (1) at (3.5,2) {$\displaystyle\coprod_{x\in\nondeg{n}{X}} U(x)$};
	\node[scale=1.25] (2) at (0,0) {$\skel{n-1}{X}$};
	\node[scale=1.25] (3) at (3.5,0) {$\skel{n}{X}$};
	\draw[1c] (0) to node[auto,swap] {$(\bord{}{}x)_{x\in\nondeg{n}{X}}$} (2);
	\draw[1c] (1) to node[auto] {$(x)_{x\in\nondeg{n}{X}}$} (3);
	\draw[1cinc] (0) to (1);
	\draw[1cinc] (2) to (3);
	\draw[edge] (2.5,0.2) to (2.5,0.8) to (3.3,0.8);
\end{tikzpicture}
\end{equation}
is a pushout in $\dgmset$.
\end{enumerate}
\end{prop}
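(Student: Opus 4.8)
The plan is to compute, pointwise on atoms, both the pushout $P$ of the top and left maps of \eqref{eq:ez_pushout} and the presheaf $\skel{n}{X}$, and to compare them through the canonical comparison $P \to \skel{n}{X}$. First I would reduce to atoms of dimension exactly $n$. The restriction $-_{\leq n}\colon \dgmset \to n\dgmset$ has both a left adjoint $\imath_n$ and a right adjoint (Construction \ref{cons:nskeleton}); since $\imath_n$ is moreover fully faithful (Lemma \ref{lem:leftadj_full}), colimit-preserving, and fixes atoms of dimension $\leq n$, the square \eqref{eq:ez_pushout} is a pushout in $\dgmset$ if and only if its restriction to $\atom_n$ is a pushout in $n\dgmset$, which may be tested at each atom $V$ with $\dmn{V} \leq n$. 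When $\dmn{V} < n$, every map $V \to U(x)$ has image of dimension $< n$, hence misses the greatest element of $U(x)$ and factors through $\bord{}{}U(x)$; so the top map of the square is a bijection at $V$ and the square is automatically a pushout there. All the content is therefore at atoms $V$ with $\dmn{V} = n$.

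Fix such a $V$. By Proposition \ref{lem:factor_clpsincl} a map $V \to U(x)$ of $n$\nbd atoms is either surjective or has a proper closed image, in which case it factors through $\bord{}{}U(x) \incl U(x)$; these alternatives are disjoint, so $U(x)(V) \cong \{V \surj U(x)\} \sqcup \bord{}{}U(x)(V)$ and the top map of the square is, at $V$, a coproduct of injections. Consequently $P(V)$ is the disjoint union of $\skel{n-1}{X}(V)$ and $\coprod_{x \in \nondeg{n}{X}} \{V \surj U(x)\}$, and the comparison $P(V) \to \skel{n}{X}(V) = X(V)$ sends a surjection $p\colon V \surj U(x)$ to $p^*x$, and sends an element of $\skel{n-1}{X}(V)$ --- which, by the explicit description of $\imath_{n-1}$ together with Proposition \ref{lem:factor_clpsincl}, is represented by a pair $(y, p)$ with $y$ an irreducible cell of dimension $< n$ and $p$ a surjection --- to $p^*y$. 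This comparison is always surjective, since atoms are finite, so every cell of shape $V$ is $p^*y$ for some surjection $p$ and irreducible $y$. Thus \eqref{eq:ez_pushout} is a pushout for all $n > 0$ if and only if this comparison is injective for every $V$ and every $n$, and I claim this is equivalent to uniqueness of irreducible factorisations at every shape, i.e.\ to the Eilenberg--Zilber property.

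For (a) $\Rightarrow$ (b) this is quick: given the Eilenberg--Zilber property, no cell admits irreducible factors of two different dimensions, so the two summands of $P(V)$ have disjoint images in $X(V)$; the comparison is injective on the $\coprod\{V\surj U(x)\}$ summand by the uniqueness clause; and it is injective on the $\skel{n-1}{X}(V)$ summand because $q_1^*y_1 = q_2^*y_2$ with the $y_i$ irreducible forces $y_1 = y_2$ and $q_1 = q_2$, hence equality of the representing classes. For (b) $\Rightarrow$ (a) I would argue by induction on $\dmn{V}$, existence of an irreducible factorisation being automatic. Given $z = p_1^*y_1 = p_2^*y_2$, injectivity and disjointness of the summands of $P(V)$ rule out the $y_i$ having different dimensions; if both $\dmn{U(y_i)} = \dmn{V}$, then injectivity on the $\coprod\{V \surj U(x)\}$ summand yields $(y_1, p_1) = (y_2, p_2)$; and if both $\dmn{U(y_i)} < \dmn{V}$, then $(y_1, p_1)$ and $(y_2, p_2)$ represent elements of $\skel{\dmn{V}-1}{X}(V)$ with the same image $z$ under the injective comparison, hence the same element of $\skel{\dmn{V}-1}{X}(V)$. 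The step I expect to be the main obstacle is the last one: one must show that two pairs $(y_1, q_1)$, $(y_2, q_2)$ with $y_i$ irreducible and $q_i$ surjective that become identified in $\skel{\dmn{V}-1}{X}(V)$ are already equal. This should follow, along the generating relation for $\imath_{\dmn{V}-1}$, from the rigidity fact that if $y_1$ is irreducible and $y_1 = g^*y_2$ with $y_2$ irreducible then $g$ is an isomorphism, hence the identity by Lemma \ref{lem:molecnoauto}; and it is available precisely because the inductive hypothesis supplies the Eilenberg--Zilber property at all shapes of dimension $< \dmn{V}$ that occur.
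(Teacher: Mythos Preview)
Your approach is the same as the paper's: form the comparison map from the pushout to $\skel{n}{X}$, observe it is always surjective (because every cell has \emph{some} irreducible factorisation), and show that injectivity at every $n$ is equivalent to the Eilenberg--Zilber property. The paper compresses all of this into one sentence (``it is easy to see''), whereas you have unpacked it carefully and correctly, including the reduction to atoms of top dimension via $\imath_n \dashv (-)_{\leq n}$ and the splitting $U(x)(V) \cong \{V \surj U(x)\} \sqcup \bord{}{}U(x)(V)$ coming from the surjective/injective factorisation.

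The only place that deserves a second look is the step you yourself flag. Your ``rigidity fact'' as stated --- that $y_1 = g^*y_2$ with both $y_i$ irreducible forces $g$ to be an isomorphism --- is not true for arbitrary $g$ (take $g$ an inclusion $1 \incl O^1$); it holds only once you also know the \emph{other} side of the generating relation has a surjective second component, which forces $g$ to be a surjective inclusion. More to the point, the zigzag linking $(y_1,p_1)$ to $(y_2,p_2)$ in $\skel{m-1}{X}(V)$ need not pass through pairs that are already in irreducible--surjective form, so checking the generating relation only on such pairs is not enough. Your instinct to use the inductive hypothesis is right: using EZ at shapes of dimension $<m$ together with Proposition~\ref{lem:factor_clpsincl}, one can define a ``canonical form'' function $\phi$ on \emph{all} pairs $(z,f)$ by first factoring $f = p;\imath$, then taking the unique irreducible factorisation of $\imath^*z$; a short computation shows $\phi$ respects the generating relation and fixes irreducible--surjective pairs, which closes the gap. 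A slicker alternative that sidesteps the induction on $\dmn V$ entirely is to iterate the pushouts from (b) at levels $1,\ldots,m-1$, evaluated at $V$, to obtain directly $\skel{m-1}{X}(V) \cong \coprod_{k<m}\coprod_{x\in\nondeg{k}{X}}\{V \surj U(x)\}$; uniqueness of representatives is then immediate.
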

\begin{proof}
Since every $n$\nbd cell of $X$ is of the form $p^*x$ for some irreducible cell $x$ of dimension $k \leq n$ and surjective map $p$, there is always a surjective morphism 
\begin{equation*}
	\skel{n-1}{X} \cup \coprod_{x\in\nondeg{n}{X}} U(x) \to \skel{n}{X}.
\end{equation*}
It is easy to see that this morphism is also injective if and only if $X$ has the Eilenberg-Zilber property. 
\end{proof}

\begin{remark}
Since the top side of (\ref{eq:ez_pushout}) is a monomorphism and $\dgmset$ is a topos, the bottom side is also a monomorphism. It follows that if $X$ satisfies the Eilenberg-Zilber property, the $n$\nbd skeleton $\skel{n}{X} \to X$ is a monomorphism.
\end{remark}

\section{Representable diagrammatic sets}

\subsection{Equivalences and representability} \label{sec:equivalences}

The following are the fundamental definitions for this section.

\begin{dfn}
Let $W$ be an $(n+1)$\nbd atom, $n \geq 0$. A \emph{horn} of $W$ is an inclusion $\Lambda \incl W$ whose image is $W$ minus the greatest element and a single $n$\nbd dimensional element. 

In particular, $\Lambda \incl W$ is a \emph{composition horn} if the image of $\Lambda$ is $\bord{}{\alpha}W$ for some $\alpha \in \{+,-\}$, that is, $\bord{}{-\alpha}W$ is an atom, and its greatest element is the single $n$\nbd dimensional element not in the image of $\Lambda$. In this case, $\Lambda$ is an $n$\nbd molecule with spherical boundary.

Let $X$ be a diagrammatic set. A \emph{horn of $W$ in $X$} is a pair of a horn $\Lambda \incl W$ and a morphism $\lambda: \Lambda \to X$. A \emph{filler} for the horn is an $(n+1)$\nbd cell $h: W \to X$ such that
\begin{equation*}
\begin{tikzpicture}[baseline={([yshift=-.5ex]current bounding box.center)}]
	\node[scale=1.25] (0) at (0,1.5) {$\Lambda$};
	\node[scale=1.25] (1) at (2,1.5) {$X$};
	\node[scale=1.25] (2) at (0,0) {$W$};
	\draw[1c] (0) to node[auto] {$\lambda$} (1);
	\draw[1cinc] (0) to (2);
	\draw[1c] (2) to node[auto,swap] {$h$} (1);
\end{tikzpicture}
\end{equation*}
commutes. 
\end{dfn}

\begin{dfn}
For $n > 0$, a \emph{ternary} $(n+1)$\nbd atom $W$ is an $(n+1)$\nbd atom with three $n$\nbd dimensional elements. Necessarily, one boundary of $W$ splits into two $n$\nbd atoms $W_+$ and $W_-$, with $W_+ \cap W_- \subseteq \bord{}{+}W_+ \cap \bord{}{-}W_-$, and the other boundary is a single $n$\nbd atom $W_0$. Let
\begin{equation*}
	\Lambda^W_- := W_+ \cup W_0, \quad \quad \Lambda^W_+ := W_- \cup W_0.
\end{equation*}

Let $X$ be a diagrammatic set, $x: U \to X$ an $n$\nbd cell of $X$ and $V \subseteq \bord{}{\alpha}U$. A \emph{division horn for $x$ at $V$} is a pair of an inclusion $\imath: U \incl W$ such that $\imath(U) = W_\alpha$ and $\imath(V) = W_+ \cap W_-$, and a horn $\lambda: \Lambda^W_{-\alpha} \to X$ such that
\begin{equation*}
\begin{tikzpicture}[baseline={([yshift=-.5ex]current bounding box.center)}]
	\node[scale=1.25] (0) at (0,1.5) {$U$};
	\node[scale=1.25] (1) at (2,0) {$X$};
	\node[scale=1.25] (2) at (0,0) {$\Lambda^W_{-\alpha}$};
	\draw[1c] (0) to node[auto] {$x$} (1);
	\draw[1cinc] (0) to (2);
	\draw[1c] (2) to node[auto,swap] {$\lambda$} (1);
\end{tikzpicture}
\end{equation*}
commutes. If $\alpha = +$, we call this a \emph{left division horn}, and if $\alpha = -$ a \emph{right division horn}. We write $\horn{x: U \to X, V \subseteq \bord{}{\alpha}U}$ for the set of division horns for $x$ at $V$. 
\end{dfn}

\begin{remark} \label{rmk:dual_horn}
If $W$ is an $(n+1)$\nbd atom, every horn of $W$ is isomorphic to a horn of $\oppn{n+1}{W}$. In particular, for each division horn for $x: U \to X$ at $V$, given by $\Lambda \incl W$ and $\lambda: \Lambda \to X$, there is a \emph{dual} division horn, where $\Lambda$ is seen as a horn of $\oppn{n+1}{W}$.
\end{remark}

\begin{dfn}
Let $X$ be a diagrammatic set and $e: U \to X$ an $n$\nbd cell of $X$, $n > 0$. Coinductively, we say that $e$ is an \emph{$n$\nbd equivalence} if all division horns $\Lambda \incl W, \lambda: \Lambda \to X$ for $e$ at $\bord{}{+}U$ or at $\bord{}{-}U$ have a filler $h: W \to X$ which is an $(n+1)$\nbd equivalence.

We write $\equi{n}{X}$ for the set of $n$\nbd equivalences of $X$.
\end{dfn}

\begin{remark} \label{rmk:explain_equi}
The definition requires some explanation. The idea is that an $(n+1)$\nbd equivalence $h$ such that $\bord{}{\alpha}h$ is an $n$\nbd cell exhibits $\bord{}{\alpha}h$ as a \emph{weak composite} of the spherical $n$\nbd diagram $\bord{}{-\alpha}h$. Thus, filling a division horn for $e$ at $\bord{}{+}U$ is finding a solution $x$ to a well-formed equation 
\begin{equation*}
	e \cup x = y
\end{equation*}
where equality is ``up to higher equivalence'', and $e \cup x$ is a union along the entire output boundary $\bord{}{+}e$. Similarly, filling a division horn for $e$ at $\bord{}{-}U$ is finding a solution to an equation $x \cup e = y$ where the union is along $\bord{}{-}e$. 

Thus, if $e$ is an equivalence, any cell that can factor through $e$ along a submolecule of its boundary does so. This is similar to the definition of universal cells in the opetopic or multitopic approach \cite{baez1998higher}, yet dispenses with any requirement of weak uniqueness, in favour of non-unique but \emph{two-sided} factorisation. 

This requires the input-output symmetry of regular atoms in a fundamental way, and cannot be adapted to asymmetric shapes such as opetopes or oriented simplices. An important advantage is that having a purely existential requirement allows us to obtain a proper coinductive definition, and tackle infinite-dimensional higher categories.
\end{remark}

\begin{remark} \label{rmk:coinduction}
Because coinductive definitions and proofs are not very common outside of computer science, we take some time to justify more in detail the definition and the corresponding proof method. 

Let $X$ be a diagrammatic set, and let $S$ be the set of all its cells. For all subsets $A \subseteq S$, define
\begin{align*}
	\mathcal{F}(A) := \; & \{x: U \to X \,|\, \text{ for all } \alpha \in \{+,-\} \text{ and } (\Lambda \incl W, \lambda: \Lambda \to X) \in \horn{x, \bord{}{\alpha}U}, \\
	& \text{there exists } (h: W \to X) \in A \text{ such that } \restr{h}{\Lambda} = \lambda \};
\end{align*}
that is, $\mathcal{F}(A)$ contains the cells $x: U \to X$ with the property that all division horns for $x$ at $\bord{}{\alpha}U$ have fillers in $A$.

Clearly if $A \subseteq B$, then $\mathcal{F}(A) \subseteq \mathcal{F}(B)$, so $\mathcal{F}$ defines an endofunctor on the powerset $\mathcal{P}(S)$ seen as a poset. Any such endofunctor has a greatest fixed point (terminal coalgebra) which can be constructed as the sequential limit of
\begin{equation*}
	\ldots \subseteq \mathcal{F}^k(S) \subseteq \ldots \subseteq \mathcal{F}(S) \subseteq S.
\end{equation*}
By definition, this coincides with $\equi{}{X} := \bigcup_{n\in\mathbb{N}}\equi{n}{X}$.

This provides the following proof method: given a subset $A \subseteq S$, if $A \subseteq \mathcal{F}(A)$ (so $A$ is a coalgebra for $\mathcal{F}$), then $A \subseteq \equi{}{X}$. We will use this to prove closure and characterisation properties of equivalences. 

Because of the grading of $S$ and its subsets given by the dimension of diagrams, such proofs may look like ``inductive proofs without the base case''. Indeed, if $A = \bigcup_{n\in\mathbb{N}} A_n$, to prove $A \subseteq \mathcal{F}(A)$ we need to show that for each $n$, given $x \in A_n$, there are enough $(n+1)$\nbd dimensional fillers in $A_{n+1}$. 

Informally, this proof may be phrased as follows. Let $P(n)$ be the statement that $A_n \subseteq \equi{n}{X}$. Suppose for all $n$, we find horn fillers for $x \in A_n$ in $A_{n+1}$. ``Assuming'' $P(n+1)$, those horn fillers are actually in $\equi{n+1}{X}$, which implies that $x \in \equi{n}{X}$, hence that $P(n)$ holds. 

From the coinductive proof principle, we have concluded that $P(n)$ holds for all $n$, so it may seem as if we proved $\forall n \, P(n)$ from the fact that, for all $n$, $P(n+1)$ implies $P(n)$. This, of course, is \emph{not} a valid proof principle, since it would allow us to derive a contradiction from a uniformly false $P(n)$: the point is not that the fillers ``are equivalences by assumption'', but rather that they are in $A_{n+1}$. 
\end{remark}

We have the following easy properties.

\begin{prop} \label{prop:equiv_coprod}
Let $\{X_i\}_{i \in I}$ be a family of diagrammatic sets, and $\coprod_{i \in I} X_i$ its coproduct. Then $\equi{}{\left(\coprod_{i \in I} X_i\right)} = \coprod_{i \in I} \equi{}{X_i}$.
\end{prop}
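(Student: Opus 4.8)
The plan is to run the coinductive proof method of Remark~\ref{rmk:coinduction} twice: once on $X := \coprod_{i \in I} X_i$ and once on each summand $X_i$. First I would record the relevant formal facts: coproducts in $\dgmset$ are computed pointwise, the coprojections $\iota_i \colon X_i \incl \coprod_i X_i$ are monomorphisms, and every atom and every molecule (hence every molecule with spherical boundary and every ternary atom) is connected as a presheaf — an atom or ternary atom because it has a greatest element, a molecule by induction on submolecules, since a decomposition $U_1 \cp{k} U_2$ glues two connected pieces along the nonempty $k$\nbd molecule $U_1 \cap U_2$, and a morphism out of such a shape into a coproduct is determined by its restrictions to atoms, which must land in a common summand by the face relation. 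Consequently any morphism from an atom, a molecule with spherical boundary, or a ternary atom into $\coprod_i X_i$ factors through a unique $\iota_i$. In particular, for an $n$\nbd cell $e \colon U \to \coprod_i X_i$ sitting in the summand $X_i$ as $\bar e \colon U \to X_i$, precomposition by $\iota_i$ and the monicity of $\iota_i$ set up a bijection between the division horns for $e$ at $\bord{}{\alpha}U$ in $\coprod_i X_i$ and the division horns for $\bar e$ at $\bord{}{\alpha}U$ in $X_i$, compatible with fillers: a filler $W \to \coprod_i X_i$ of a horn all of whose data lands in $X_i$ must itself factor through $\iota_i$, since $W$ is connected and the horn is nonempty.

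For the inclusion $\coprod_i \equi{}{X_i} \subseteq \equi{}{\coprod_i X_i}$, let $S$ denote the set of all cells of $\coprod_i X_i$ and let $A \subseteq S$ be the set of cells whose factorisation through some $\iota_i$ lies in $\equi{}{X_i}$; I would show $A$ is a coalgebra for the operator $\mathcal{F}$ of $\coprod_i X_i$. Given $x \in A$ sitting in $X_i$ as $\bar x \in \equi{}{X_i}$ and a division horn $(\Lambda \incl W, \lambda)$ for $x$, the observation above produces a division horn $(\Lambda \incl W, \bar\lambda)$ for $\bar x$ in $X_i$; since $\bar x$ is an equivalence of $X_i$ this horn has a filler $\bar h \in \equi{}{X_i}$, and $\iota_i \circ \bar h$ is then a filler of the original horn lying in $A$. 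Hence $A \subseteq \mathcal{F}(A)$, so $A \subseteq \equi{}{\coprod_i X_i}$ by the coinductive principle.

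For the reverse inclusion I would fix $i$ and let $B_i$ be the set of cells $c$ of $X_i$ with $\iota_i \circ c \in \equi{}{\coprod_j X_j}$, and show $B_i$ is a coalgebra for the operator $\mathcal{F}_i$ of $X_i$. Given $c \in B_i$ and a division horn $(\Lambda \incl W, \mu)$ for $c$ in $X_i$, postcomposing with $\iota_i$ gives a division horn for $\iota_i \circ c$ in $\coprod_j X_j$; as the latter is an equivalence, this horn has a filler $h \colon W \to \coprod_j X_j$ with $h \in \equi{}{\coprod_j X_j}$, and since $W$ is connected and $\restr{h}{\Lambda} = \iota_i \circ \mu$ lands in $X_i$, $h = \iota_i \circ \bar h$ for a filler $\bar h$ of the original horn; by monicity of $\iota_i$ this $\bar h$ lies in $B_i$. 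Thus $B_i \subseteq \mathcal{F}_i(B_i)$, so $B_i \subseteq \equi{}{X_i}$; applying this to the summand factorisation of an arbitrary $e \in \equi{}{\coprod_j X_j}$ (which exists because the shape of $e$ is an atom, hence connected) shows that $e$ lies in $\coprod_j \equi{}{X_j}$. Combining the two inclusions gives the claim.

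The only genuinely nontrivial point is the connectedness input — that the horn shapes $\Lambda^W_+$, $\Lambda^W_-$, the atoms $W$, and the cell shapes $U$ are connected as presheaves, so that morphisms out of them into a coproduct factor uniquely through one summand; once that is in place the rest is a routine double application of the coinductive scheme, keeping in mind, as in Remark~\ref{rmk:coinduction}, that the dimension grading makes these coalgebra checks ``inductions without a base case'' rather than ordinary inductions.
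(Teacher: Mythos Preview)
Your proof is correct and follows the same idea as the paper, which dispatches the statement in one line: ``Immediate from the fact that cell and every horn in $\coprod_{i \in I} X_i$ factor through an inclusion $X_j \incl \coprod_{i \in I} X_i$.'' You have simply unpacked that sentence carefully --- the connectedness of atoms, ternary atoms, and horn shapes is precisely what justifies the factorisation, and the two coinductive passes make explicit what the paper leaves to the reader.
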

\begin{proof}
Immediate from the fact that cell and every horn in $\coprod_{i \in I} X_i$ factor through an inclusion $X_j \incl \coprod_{i \in I} X_i$.
\end{proof}

\begin{prop} \label{prop:equiv_prod}
Let $\{X_i\}_{i \in I}$ be a family of diagrammatic sets, and $\prod_{i \in I} X_i$ its product. If $\{x_i: U \to X_i\}_{i \in I}$ is a family of equivalences with the same shape $U$, then the cell $(x_i)_{i \in I}: U \to \prod_{i \in I} X_i$ is an equivalence.
\end{prop}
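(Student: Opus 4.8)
The plan is to apply the coinductive proof principle of Remark \ref{rmk:coinduction} to the diagrammatic set $X := \prod_{i\in I}X_i$. Write $\pi_i\colon X \to X_i$ for the projections, let $S$ be the set of all cells of $X$, and put
\begin{equation*}
	A := \{\, y\colon V \to X \mid V \text{ an atom, and } \pi_i\circ y \in \equi{}{X_i} \text{ for all } i \in I \,\} \subseteq S.
\end{equation*}
The cell $(x_i)_{i\in I}\colon U \to X$, characterised by $\pi_i\circ(x_i)_{i\in I} = x_i$, belongs to $A$ by hypothesis; so it suffices to show $A \subseteq \mathcal{F}(A)$, as the principle then gives $A \subseteq \equi{}{X}$, and in particular $(x_i)_{i\in I} \in \equi{}{X}$.

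To prove $A \subseteq \mathcal{F}(A)$, fix $y \in A$, of shape an $n$\nbd atom $V$ (necessarily $n > 0$, since each $\pi_i\circ y$ is an equivalence), fix $\alpha \in \{+,-\}$, and fix a division horn for $y$ at $\bord{}{\alpha}V$: an inclusion $\imath\colon V \incl W$ with $\imath(V) = W_\alpha$ and $\imath(\bord{}{\alpha}V) = W_+ \cap W_-$, together with a horn $\lambda\colon \Lambda^W_{-\alpha} \to X$ whose restriction along $\imath$ (corestricted to $W_\alpha \subseteq \Lambda^W_{-\alpha}$) is $y$. Since limits of presheaves are computed objectwise, there is an isomorphism $\homset{\dgmset}(-, X) \simeq \prod_{i\in I}\homset{\dgmset}(-, X_i)$, natural in the first variable. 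Through it, $\lambda$ corresponds to a family $(\lambda_i)_{i\in I}$ with $\lambda_i\colon \Lambda^W_{-\alpha} \to X_i$, and naturality in the domain ensures that the restriction of $\lambda_i$ along $\imath$ is $\pi_i\circ y$; thus $(\imath\colon V \incl W,\ \lambda_i)$ is a division horn for $\pi_i\circ y$ at $\bord{}{\alpha}V$. As $\pi_i\circ y$ is an equivalence, it admits a filler $h_i\colon W \to X_i$ that is an $(n+1)$\nbd equivalence and satisfies $\restr{h_i}{\Lambda^W_{-\alpha}} = \lambda_i$.

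Let $h\colon W \to X$ be the cell corresponding to $(h_i)_{i\in I}$ under the same isomorphism. Then $\restr{h}{\Lambda^W_{-\alpha}}$ corresponds to $(\restr{h_i}{\Lambda^W_{-\alpha}})_{i\in I} = (\lambda_i)_{i\in I}$, so $\restr{h}{\Lambda^W_{-\alpha}} = \lambda$ and $h$ fills the horn; moreover $\pi_i\circ h = h_i \in \equi{}{X_i}$ for all $i$, so $h \in A$. Since $y$ and $\alpha$ were arbitrary, this proves $A \subseteq \mathcal{F}(A)$, and hence the proposition.

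The only point requiring care --- the nearest thing to an obstacle in an otherwise routine argument --- is that the passages $\lambda \leftrightarrow (\lambda_i)_{i}$ and $h \leftrightarrow (h_i)_{i}$ commute with restriction along $\imath$ and along the horn inclusion $\Lambda^W_{-\alpha}\incl W$; but this is exactly naturality in the domain of the isomorphism $\homset{\dgmset}(-, \prod_{i}X_i)\simeq\prod_{i}\homset{\dgmset}(-, X_i)$, which holds because products of presheaves, being limits, are computed objectwise.
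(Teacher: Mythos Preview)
Your proof is correct and follows essentially the same approach as the paper's: define $A$ to be the set of cells of the product whose projections are all equivalences, verify $A \subseteq \mathcal{F}(A)$ by projecting division horns and reassembling the equivalence fillers, and conclude by coinduction. Your version is more explicit about the naturality justifying that restriction commutes with projection, but the argument is the same.
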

\begin{proof}
Let $A$ be the set of cells of $X := \prod_{i \in I} X_i$ whose projections are all equivalences, and let $x \in A$ be a cell of shape $U$. 

A division horn $\Lambda \incl W, \lambda: \Lambda \to X$ for $x$ at $\bord{}{\alpha}U$ produces a division horn $\lambda; p_i: \Lambda \to X_i$ for the projection $p_i(x)$ at $\bord{}{\alpha}U$, for each $i \in I$. By assumption, this produces an equivalence filler $h_i: W \to X_i$ for each $i \in I$. It follows that $(h_i)_{i \in I} \in A$. Thus $A \subseteq \mathcal{F}(A)$, and by coinduction $A \subseteq \equi{}{X}$. 
\end{proof}

After the explanation in Remark \ref{rmk:explain_equi}, the following definition should be clear: a diagrammatic set is representable if it has weak composites of all spherical diagrams. 

\begin{dfn}
Let $X$ be a diagrammatic set. We say that $X$ is \emph{representable} if all composition horns $\Lambda \incl W, x: \Lambda \to X$ have a filler $c(x): W \to X$ which is an $(n+1)$\nbd equivalence.

We call $c(x)$ a \emph{compositor} for $x$. If the image of $\Lambda$ is $\bord{}{\alpha}W$, we call $\compos{x} := \restr{c(x)}{\bord{}{-\alpha}W}$ a \emph{weak composite} of $x$.
\end{dfn}

We will abbreviate ``representable diagrammatic set'' as RDS.

\begin{remark}
The use of \emph{representable} here is modelled on Hermida's representable multicategories \cite{hermida2000representable}, and should not be confused with the notion of representable presheaf.
\end{remark}

\begin{remark}
The definition of representable diagrammatic sets is similar to the definition of fibrant objects in a cofibrantly generated model structure, as objects that have the right lifting property with respect to a set of generating trivial cofibrations, except for the condition that $c(x)$ be an equivalence. 

This is analogous to the condition that certain horns have \emph{thin} fillers in the complicial model of higher categories \cite{verity2008weak}, with the difference that the choice of thin cells is structure on a simplicial set, whereas being an equivalence is a property of a cell in a diagrammatic set. However \emph{saturated} complicial sets, as described in \cite{riehl2018complicial}, may be a more accurate parallel. See also Remark \ref{rmk:stratified}.
\end{remark}

We have a first, simple closure result for representables. 

\begin{prop} \label{prop:rep_product_closure}
Let $\{X_i\}_{i \in I}$ be a family of RDSs. Then its coproduct $\coprod_{i \in I} X_i$ and its product $\prod_{i \in I} X_i$ in $\dgmset$ are both representable.
\end{prop}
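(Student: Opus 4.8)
The plan is to reduce both cases to the representability of the individual $X_i$, together with Propositions \ref{prop:equiv_coprod} and \ref{prop:equiv_prod} on equivalences in coproducts and products.

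For the coproduct, I would start with a composition horn $\Lambda \incl W$, where $W$ is an $(n+1)$\nbd atom, and a horn $x: \Lambda \to \coprod_{i \in I} X_i$. Since $\Lambda$ is a molecule, hence connected, the morphism $x$ factors through the inclusion of a single summand $X_j$ --- this is exactly the observation already used in the proof of Proposition \ref{prop:equiv_coprod} --- say $x = x_j;\iota_j$ with $x_j: \Lambda \to X_j$ and $\iota_j: X_j \incl \coprod_{i \in I} X_i$. By representability of $X_j$, the composition horn $x_j$ has a filler $c(x_j): W \to X_j$ that is an $(n+1)$\nbd equivalence; then $c(x_j);\iota_j$ restricts along $\Lambda \incl W$ to $x_j;\iota_j = x$, so it is a filler for $x$, and by Proposition \ref{prop:equiv_coprod} it lies in $\equi{}{\left(\coprod_{i \in I} X_i\right)}$. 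Hence $\coprod_{i \in I} X_i$ is representable.

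For the product, I would again take a composition horn $\Lambda \incl W$ and a horn $x: \Lambda \to \prod_{i \in I} X_i$, and post-compose with each projection $p_i$ to obtain a horn $x;p_i: \Lambda \to X_i$ of the same composition horn in $X_i$. Representability of $X_i$ provides a filler $c_i: W \to X_i$ that is an $(n+1)$\nbd equivalence; by the universal property of the product these assemble into a cell $c := (c_i)_{i \in I}: W \to \prod_{i \in I} X_i$, and since $\restr{c_i}{\Lambda} = x;p_i$ for all $i$, the same universal property forces $\restr{c}{\Lambda} = x$, so $c$ is a filler for $x$. Finally $\{c_i: W \to X_i\}_{i \in I}$ is a family of equivalences of the common shape $W$, so Proposition \ref{prop:equiv_prod} gives that $c$ is an equivalence, and $\prod_{i \in I} X_i$ is representable.

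The argument is essentially formal, so I do not anticipate a genuine obstacle; the only delicate point is the factorisation of a horn through one summand in the coproduct case, and that is already granted by the connectedness of molecules invoked in Proposition \ref{prop:equiv_coprod}.
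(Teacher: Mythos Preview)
Your proposal is correct and follows essentially the same approach as the paper: the paper's proof simply cites ``general properties of classes of objects defined by a right lifting property'' for the existence of fillers, and Propositions \ref{prop:equiv_coprod} and \ref{prop:equiv_prod} for the fact that these fillers are equivalences. You have merely unpacked the first part explicitly (connectedness for the coproduct, the universal property for the product), which is exactly what underlies the paper's one-line appeal.
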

\begin{proof}
The fact that fillers for composition horns exist follows from general properties of classes of objects defined by a right lifting property. The fact that these fillers are equivalences follows from Proposition \ref{prop:equiv_coprod} and \ref{prop:equiv_prod}.
\end{proof}

\begin{remark}
On the other hand, the lax Gray product $X \tensor Y$ of two representables is almost never representable: already if $X$ and $Y$ have at least one non-degenerate 1-cell each, say $x: x_1 \celto x_2$ in $X$ and $y: y_1 \celto y_2$ in $Y$, then the spherical 1-diagram
\begin{equation*}
	(x \tensor y_1)\cp{0}(x_2 \tensor y): x_1 \tensor y_1 \celto x_2 \tensor y_2
\end{equation*}
has no weak composite. 
\end{remark}

\subsection{Closure properties of equivalences} \label{sec:closure}

At the moment, it is not clear that equivalences satisfy any of their expected properties, in particular that equivalence cells satisfy a 2-out-of-3 property (relative to weak composition witnessed by higher equivalences) and that all degenerate cells are equivalences. 

To prove this, we will proceed as follows: we will add all degenerate cells to $\equi{}{X}$ and ``saturate'' by closing under the desired 2-out-of-3 property, obtaining a set 
\begin{equation*}
	\satur{\infty}{\equi{}{X} \cup \degg{X}} \supseteq \equi{}{X};
\end{equation*}
then we will prove that, if $X$ is representable,
\begin{equation*}
	\satur{\infty}{\equi{}{X} \cup \degg{X}} \subseteq \mathcal{F}(\satur{\infty}{\equi{}{X} \cup \degg{X}})
\end{equation*}
where $\mathcal{F}$ is the functor of Remark \ref{rmk:coinduction}, so by coinduction $\satur{\infty}{\equi{}{X} \cup \degg{X}} = \equi{}{X}$. 

\begin{cons}
Let $A$ be a set of cells of a diagrammatic set $X$. We define $\satur{}{A}$ to be the set of cells $x: U \to X$ such that either $x \in A$, or, letting $n = \dmn{U}$, there exists an $(n+1)$\nbd atom $W$, an inclusion $U \incl W$, and a cell $h: W \to X$ such that $h \in A$ and, for some $\alpha \in \{+,-\}$, either
\begin{itemize}
	\item $\restr{h}{V} \in A$ for all $n$\nbd atoms $V \submol \bord{}{\alpha}W$ and $x = \restr{h}{\bord{}{-\alpha}W}$, or
	\item $W$ is ternary, $\restr{h}{W_\alpha}, \restr{h}{W_0} \in A$, $W_+ \cap W_- = \bord{}{\alpha}W_\alpha$, and $x = \restr{h}{W_{-\alpha}}$.
\end{itemize}
We then let $\satur{\infty}{A} := \bigcup_{n\in \mathbb{N}} \satur{n}{A}$. 
\end{cons}

The idea is that $\satur{\infty}{A}$ is the closure of $A$ under composition of cells in $A$ and division of cells in $A$ by cells in $A$ (along the entire boundary of the latter), both witnessed by higher-dimensional cells in $A$.

\begin{lem} \label{lem:2satur_reverse}
Let $A$ be a set of cells in a diagrammatic set $X$, and $e: x \celto y$ be a cell in $A \cap \mathcal{F}(A)$. Then there is a cell $\tilde{e}: y \celto x$ in $\satur{2}{A}$.
\end{lem}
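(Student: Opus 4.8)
The plan is to produce $\tilde{e}$ in two stages, each consuming one layer of the saturation operation: first build a ``weak unit on $y$'', a cell $u_y : y \celto y$ in $\satur{}{A}$, by dividing $e$ on the \emph{left} by itself; then divide $e$ on the \emph{right}, using $u_y$ as the target of the division, to obtain $\tilde{e}$ in $\satur{2}{A} = \satur{}{\satur{}{A}}$. Write $U$ for the shape of $e$, an $n$\nbd atom with $n \geq 1$, so that $x = \bord{}{-}e$ and $y = \bord{}{+}e$ have shapes $\bord{}{-}U$ and $\bord{}{+}U$, regular $(n-1)$\nbd molecules with spherical boundary for which $\bord{}{}(\bord{}{-}U) = \bord{}{}(\bord{}{+}U)$. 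Using the constructions of Section \ref{sec:constructions}, I would first record the auxiliary atoms: the $n$\nbd atoms $P := \bord{}{+}U \celto \bord{}{+}U$ and $Q := \bord{}{+}U \celto \bord{}{-}U$ are well-defined, the $n$\nbd molecules $U \cp{n-1} P$ and $Q \cp{n-1} U$ are regular with spherical boundary, and hence $W^1 := (U \cp{n-1} P) \celto U$ and $W^2 := (Q \cp{n-1} U) \celto P$ are ternary $(n+1)$\nbd atoms.

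Stage 1: form the left division horn for $e$ at $\bord{}{+}U$ given by the inclusion $\imath : U \incliso W^1_+$ (so that $W^1_0$ is the remaining copy of $U$, $W^1_- = P$, and $W^1_+ \cap W^1_- = \bord{}{+}W^1_+$) together with the horn $\lambda^1 : \Lambda^{W^1}_- \to X$ that sends both $W^1_+$ and $W^1_0$ to $e$. Since $e \in \mathcal{F}(A)$, this horn has a filler $h^1 : W^1 \to X$ in $A$; set $u_y := \restr{h^1}{W^1_-}$. Tracing boundaries through $h^1$, using globularity, gives $\bord{}{-}u_y = \bord{}{+}e = \bord{}{+}u_y$, i.e.\ $u_y : y \celto y$. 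As $W^1$ is ternary, $\restr{h^1}{W^1_+} = \restr{h^1}{W^1_0} = e \in A$, $W^1_+ \cap W^1_- = \bord{}{+}W^1_+$, and $u_y = \restr{h^1}{W^1_-}$, the defining clause of $\satur{}{A}$ yields $u_y \in \satur{}{A}$.

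Stage 2: form the right division horn for $e$ at $\bord{}{-}U$ given by the inclusion $\imath' : U \incliso W^2_-$ (so that $W^2_+ = Q$, $W^2_0 = P$, and $W^2_+ \cap W^2_- = \bord{}{-}W^2_-$) together with the horn $\lambda^2 : \Lambda^{W^2}_+ \to X$ sending $W^2_-$ to $e$ and $W^2_0$ to $u_y$ (legitimate, since $u_y$ has shape $P$). Again $e \in \mathcal{F}(A)$ supplies a filler $h^2 : W^2 \to X$ in $A$; set $\tilde{e} := \restr{h^2}{W^2_+}$. Boundary-tracing gives $\bord{}{-}\tilde{e} = \bord{}{-}u_y = y$ and $\bord{}{+}\tilde{e} = \bord{}{-}e = x$, so $\tilde{e} : y \celto x$. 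Finally $W^2$ is ternary, $\restr{h^2}{W^2_-} = e \in A \subseteq \satur{}{A}$, $\restr{h^2}{W^2_0} = u_y \in \satur{}{A}$, $W^2_+ \cap W^2_- = \bord{}{-}W^2_-$, and $h^2 \in A \subseteq \satur{}{A}$; so one more application of the defining clause of $\satur{}{-}$ places $\tilde{e}$ in $\satur{}{\satur{}{A}} = \satur{2}{A}$.

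I expect the main obstacle to be the routine but finicky verification that the prescribed horns $\lambda^1, \lambda^2$ are well-defined, i.e.\ that their two prescribed restrictions agree on the overlaps of the constituent $n$\nbd atoms. These overlaps are lower-dimensional boundary submolecules on which both restrictions factor through the boundary of $e$ (resp.\ of $u_y$) via the canonical identifications, and agreement follows from the rigidity of molecules in regular directed complexes (Lemma \ref{lem:molecnoauto}): any two inclusions of a molecule with the same image coincide. The remaining checks — that $P$, $Q$, $W^1$, $W^2$ are genuinely atoms with spherical boundary, and the boundary computations for $u_y$ and $\tilde{e}$ — are straightforward unwindings of the boundary formulas of Section \ref{sec:constructions} together with the identity $\bord{}{}(\bord{}{-}U) = \bord{}{}(\bord{}{+}U)$; the only conceptual point is the two-stage ``build a unit, then divide'' strategy, forced by the fact that $e \cp{n-1} \tilde{e}$ never forms a spherical diagram, so $\tilde{e}$ cannot be obtained as the missing face of a single division horn for $e$.
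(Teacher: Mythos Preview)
Your proof is correct and follows essentially the same two-stage strategy as the paper: first divide $e$ by itself on the left to obtain a cell $u_y: y \celto y$ in $\satur{}{A}$, then divide $e$ on the right against $u_y$ to obtain $\tilde{e}: y \celto x$ in $\satur{2}{A}$. The only cosmetic differences are that the paper builds the ternary atoms with the split boundary on the \emph{output} side (e.g.\ $U \celto (U \cp{n-1} P)$ and $P \celto (\oppn{n}{U} \cp{n-1} U)$) rather than the input side, and uses $\oppn{n}{U}$ rather than your $Q = \bord{}{+}U \celto \bord{}{-}U$ as the shape of $\tilde{e}$; by Remark~\ref{rmk:dual_horn} these yield the same division horns, so the arguments are interchangeable.
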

\begin{proof}
Suppose $e$ is an $n$\nbd cell of shape $U$, and let $W := U \celto (U \cp{n-1} (\bord{}{+}U \celto \bord{}{+}U))$, a ternary $(n+1)$\nbd atom. Then the morphism $\lambda: \Lambda^W_- \to X$ equal to $e$ both on $W_+$ and on $W_0$ is a left division horn for $e$ at $\bord{}{+}U$; filling it, that is, ``dividing $e$ by itself'', we obtain a cell $h: W \to X$ in $A$, restricting on $W_-$ to $e': y \celto y$, such that $e' \in \satur{1}{A}$.

Now, let $W' := (\bord{}{+}U \celto \bord{}{+}U) \celto (\oppn{n}{U} \cp{n-1} U)$, another ternary atom. The morphism $\lambda': \Lambda^{W'}_+ \to X$ equal to $e'$ on $W'_0$ and to $e$ on $W'_-$ is a right division horn for $e$ at $\bord{}{-}U$. Filling it, we obtain a cell $h': W' \to X$ in $A$, restricting on $W'_+$ to $\tilde{e}: y \celto x$, which belongs to $\satur{2}{A}$.
\end{proof}

\begin{prop} \label{prop:equiv_reverse}
If $e: x \celto y$ is an equivalence in a diagrammatic set $X$, then there is a cell $\tilde{e}: y \celto x$ in $\satur{2}{\equi{}{X}}$.
\end{prop}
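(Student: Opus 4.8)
The plan is to obtain this as an immediate corollary of Lemma \ref{lem:2satur_reverse}, applied with $A := \equi{}{X}$. The only point to verify is that the hypothesis of that lemma holds, namely $e \in \equi{}{X} \cap \mathcal{F}(\equi{}{X})$, where $\mathcal{F}$ is the endofunctor on $\mathcal{P}(S)$ of Remark \ref{rmk:coinduction}. This is automatic: $\equi{}{X}$ is, by construction, the greatest fixed point of $\mathcal{F}$, so $\mathcal{F}(\equi{}{X}) = \equi{}{X}$, and therefore $\equi{}{X} \cap \mathcal{F}(\equi{}{X}) = \equi{}{X}$. Since $e$ is an equivalence, $e \in \equi{}{X}$, so Lemma \ref{lem:2satur_reverse} applies and produces a cell $\tilde{e}: y \celto x$ in $\satur{2}{\equi{}{X}}$, which is exactly the claim.

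Unwinding the construction inside the lemma (now using only the defining property of equivalences, not representability of $X$): if $e$ is an $n$\nbd cell of shape $U$, one first fills the left division horn for $e$ at $\bord{}{+}U$ obtained by gluing two copies of $e$ along the ternary atom $W := U \celto (U \cp{n-1} (\bord{}{+}U \celto \bord{}{+}U))$. Since $e$ is an equivalence this filler $h$ is again an equivalence, and its $W_-$\nbd restriction is a cell $e': y \celto y$ with $e' \in \satur{1}{\equi{}{X}}$. One then fills the right division horn for $e$ at $\bord{}{-}U$ given, on the ternary atom $W' := (\bord{}{+}U \celto \bord{}{+}U) \celto (\oppn{n}{U} \cp{n-1} U)$, by $e'$ on $W'_0$ and $e$ on $W'_-$; the resulting equivalence filler restricts on $W'_+$ to the desired cell $\tilde{e}: y \celto x$, which lies in $\satur{2}{\equi{}{X}}$.

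I do not foresee a real obstacle here, since all the combinatorial work — building the two ternary atoms and checking that the glued horns are genuine division horns for $e$ — is already carried out in the proof of Lemma \ref{lem:2satur_reverse}. The only thing worth emphasising is the logical move: an equivalence is exactly a cell all of whose division horns (at either boundary) have equivalence fillers, so feeding $\mathcal{F}$ the set $\equi{}{X}$ recovers $\equi{}{X}$ itself, and the lemma's hypothesis $e \in A \cap \mathcal{F}(A)$ degenerates to ``$e$ is an equivalence''.
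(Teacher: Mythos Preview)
Your proof is correct and matches the paper's approach exactly: the paper's proof is the single line ``Follows from the lemma with $A := \equi{}{X}$'', and you have simply made explicit why the hypothesis $e \in A \cap \mathcal{F}(A)$ is automatic (namely, $\equi{}{X}$ is a fixed point of $\mathcal{F}$). The additional unwinding paragraph is unnecessary but harmless.
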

\begin{proof}
Follows from the lemma with $A := \equi{}{X}$.
\end{proof}

\begin{lem} \label{lem:nary_composite}
Let $X$ be a representable diagrammatic set, $x$ a spherical $n$\nbd diagram, and $\compos{x}$ a weak composite. Then there are $(n+1)$\nbd cells 
\begin{equation*}
	h: \compos{x} \celto x, \quad \quad h': x \celto \compos{x}, 
\end{equation*}
such that $h, h' \in \satur{\infty}{\equi{}{X} \cup \degg{X}}$. If all $n$\nbd cells in $x$ belong to $\satur{\infty}{\equi{}{X} \cup \degg{X}}$, then so does $\compos{x}$.
\end{lem}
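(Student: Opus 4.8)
The plan is to observe that the compositor furnished by representability already does almost all of the work, and in two different ways. Write $\Lambda$ for the shape of $x$, and fix the $(n+1)$\nbd atom $W$, the inclusion $\Lambda \incliso \bord{}{\alpha}W$, and the compositor $c(x)\colon W \to X$ with $\restr{c(x)}{\Lambda} = x$ and $\compos{x} = \restr{c(x)}{\bord{}{-\alpha}W}$, as in the definitions of composition horn and weak composite; recall that $c(x)$ is an $(n+1)$\nbd equivalence, so $c(x) \in \equi{}{X} \subseteq \satur{\infty}{\equi{}{X} \cup \degg{X}}$. Without loss of generality take $\alpha = -$, so that $c(x)$ is a cell $x \celto \compos{x}$, and set $h' := c(x)$; the case $\alpha = +$ is symmetric, with the roles of $h$ and $h'$ exchanged, or may be reduced to it via Remark \ref{rmk:dual_horn}. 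For the opposite direction, apply Proposition \ref{prop:equiv_reverse} to the equivalence $c(x)$: it produces a cell $h \colon \compos{x} \celto x$ lying in $\satur{2}{\equi{}{X}}$. Since $\satur{2}{-}$ is monotone and $\satur{2}{\equi{}{X}} \subseteq \satur{2}{(\equi{}{X} \cup \degg{X})} \subseteq \satur{\infty}{\equi{}{X} \cup \degg{X}}$, this $h$ is of the required kind, and the first assertion follows.

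For the second assertion, set $A := \satur{\infty}{\equi{}{X} \cup \degg{X}}$ and reuse the same $c(x)$, this time as a witness that $\compos{x} \in \satur{}{A}$ via the first clause of the definition of $\satur{}{-}$, taking the inclusion $\bord{}{-\alpha}W \incl W$ and the cell $c(x) \colon W \to X$. Indeed $c(x) \in \equi{}{X} \subseteq A$; the $n$\nbd atoms $V \submol \bord{}{\alpha}W$ are, under the isomorphism $\Lambda \incliso \bord{}{\alpha}W$, exactly the shapes of the $n$\nbd cells of the diagram $x$, so each $\restr{c(x)}{V}$ is such a cell and lies in $A$ by hypothesis; and $\compos{x} = \restr{c(x)}{\bord{}{-\alpha}W}$ by construction. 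Hence $\compos{x} \in \satur{}{A}$. Finally, since $\Lambda$ is a finite poset it has only finitely many $n$\nbd atoms as submolecules, so the finitely many cells $c(x)$ and $\restr{c(x)}{V}$ witnessing this membership all belong to a common finite stage $\satur{N}{(\equi{}{X} \cup \degg{X})}$; therefore $\compos{x} \in \satur{N+1}{(\equi{}{X} \cup \degg{X})} \subseteq A$, that is, $\satur{}{A} = A$ and $\compos{x} \in A$.

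I do not expect a genuine obstacle: the argument is essentially a repackaging of the definition of $c(x)$ together with Proposition \ref{prop:equiv_reverse}. The two points that warrant a little care are keeping the orientation conventions consistent, so that $h$ and $h'$ come out with exactly the boundaries $\compos{x} \celto x$ and $x \celto \compos{x}$ asserted (dealt with by the split on $\alpha \in \{+,-\}$), and the remark that $\satur{\infty}{-}$ is idempotent thanks to the finiteness of atoms — this is what allows the single witnessing cell $c(x)$ to certify $\compos{x} \in A$ rather than merely $\compos{x} \in \satur{}{A}$.
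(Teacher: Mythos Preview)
Your argument is correct and follows the same approach as the paper. The paper fixes the specific horn $U \incl (\compos{U} \celto U)$, obtaining $h: \compos{x} \celto x$ directly as the compositor and then $h'$ via Proposition~\ref{prop:equiv_reverse}, whereas you take the opposite orientation and swap the roles of $h$ and $h'$; this is the symmetric case you already flag. For the second assertion the paper simply invokes ``its closure property'', while you spell out why $\satur{}{A} = A$ via the finiteness of the witnessing data; your justification is a welcome elaboration of what the paper leaves implicit.
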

\begin{proof}
Let $x: U \to X$ be a spherical $n$\nbd diagram. We can suppose $\compos{x}$ is given by the filler $h$ of the horn $U \incl (\compos{U} \celto U)$, $x: U \to X$. Then $h \in \equi{}{X}$, and by Proposition \ref{prop:equiv_reverse} there is also a cell $h': x \celto \compos{x}$ in $\satur{2}{\equi{}{X}}$. \emph{A fortiori}, $h, h' \in \satur{\infty}{\equi{}{X} \cup \degg{X}}$. If all $n$\nbd cells in $x$ belong to $\satur{\infty}{\equi{}{X} \cup \degg{X}}$, then by its closure property so does $\compos{x}$.
\end{proof}

\begin{dfn}
Let $A$ be a set of cells in a diagrammatic set $X$, and $x: U \to X$ an $n$\nbd cell in $\mathcal{F}(A)$. We say that $A$ is \emph{unbiased for $x$} if, for all $\alpha \in \{+,-\}$ and division horns for $x$ at $\bord{}{\alpha}U$, there are fillers of dual horns $h: W \to X$, $h': \oppn{n+1}{W} \to X$ in $A$ such that $\restr{h}{W_{-\alpha}} = \restr{h'}{{\oppn{n+1}{W}}_{-\alpha}}$. 

If $B \subseteq \mathcal{F}(A)$, we say that $A$ is \emph{unbiased for $B$} if $A$ is unbiased for each $x \in B$.
\end{dfn}

\begin{remark}
Under the interpretation of ``filling a division horn for $x$ at $\bord{}{\alpha}U$'' as ``dividing a cell by $x$'', the restriction of a filler $h: W \to X$ to $W_{-\alpha}$ is the result of the division. If $x \in \mathcal{F}(A)$, given a division horn for $x$ at $\bord{}{\alpha}U$, both it and its dual in the sense of Remark \ref{rmk:dual_horn} have fillers in $A$, but filling the original horn and its dual may give different ``results of the division''. If $A$ is unbiased for $x$, then the fillers of dual horns can be chosen in such a way that they produce the same result.
\end{remark}

\begin{lem} \label{lem:eq_deg_unbiased}
Let $X$ be a representable diagrammatic set. Then:
\begin{enumerate}
	\item $\equi{}{X} \cup \degg{X} \subseteq \mathcal{F}(\satur{\infty}{\equi{}{X} \cup \degg{X}})$;
	\item $\satur{\infty}{\equi{}{X} \cup \degg{X}}$ is unbiased for $\equi{}{X} \cup \degg{X}$. 
\end{enumerate}
\end{lem}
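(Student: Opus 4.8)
The plan is to treat the two generating classes $\equi{}{X}$ and $\degg{X}$ separately, and in each case to prove both items at once, since the horn fillers produced for item (1) are exactly those used in item (2). Write $A := \satur{\infty}{\equi{}{X} \cup \degg{X}}$. First I would record that $A$ is closed under $\satur{}{-}$, hence $\satur{k}{A} = A$ for all $k$: any cell obtained from cells of $A$ by a single composition, or by a single division of a cell of $A$ by a cell of $A$ (with the required higher witnesses in $A$), already lies in $A$. By Remark \ref{rmk:dual_horn} it is enough to handle division horns at an output boundary, say at $\bord{j}{+}U$, the input case being dual.

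For the class $\equi{}{X}$ this is essentially formal. Since $\equi{}{X}$ is the greatest fixed point of $\mathcal{F}$ we have $\equi{}{X} = \mathcal{F}(\equi{}{X})$, so $\equi{}{X} \subseteq \mathcal{F}(A)$ by monotonicity of $\mathcal{F}$ together with $\equi{}{X} \subseteq A$; concretely, a division horn for an $n$\nbd equivalence $e$ has, by definition of equivalence, a filler $h\colon W \to X$ which is an $(n+1)$\nbd equivalence, so $h \in \equi{}{X} \subseteq A$. For the unbiased refinement, such an $h$ lies in $A \cap \mathcal{F}(A)$, so Lemma \ref{lem:2satur_reverse} (which is available in any dimension $\geq 1$) yields a reversal $\tilde{h}$, of shape $\oppn{n+1}{W}$, in $\satur{2}{A} = A$, with $\bord{}{-}\tilde{h} = \bord{}{+}h$ and $\bord{}{+}\tilde{h} = \bord{}{-}h$. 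This $\tilde{h}$ fills the dual horn, and since $W_{-\alpha}$ is an $n$\nbd face of the boundary of $W$ that splits into $W_+ \cup W_-$, and correspondingly of the opposite boundary of $\oppn{n+1}{W}$, the reversal leaves the restriction there unchanged: $\restr{\tilde{h}}{W_{-\alpha}} = \restr{h}{W_{-\alpha}}$, which is exactly the unbiased condition.

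The class $\degg{X}$ carries the real content, as the fillers must be produced by hand. Write $x = p^{*}y$ with $p\colon U \surj V$ a surjective map of atoms and $\dmn{V} < \dmn{U} = n$, and take a division horn for $x$ at $\bord{j}{+}U$, given by $\imath\colon U \incl W$ and $\lambda\colon \Lambda^{W}_{-} \to X$, so $W = (U \cp{j} W_{-}) \celto W_{0}$ with $W_{0} = \compos{U \cp{j} W_{-}}$ and $g := \restr{\lambda}{W_{0}}$. The output boundary $g$ is essentially arbitrary — it is constrained only on the small set $W_{+} \cap W_{0} \subseteq \bord{}{}U$ — so the filler cannot be a mere pullback of a degeneracy along a surjection $W \surj W_{0}$. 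Instead the plan is to assemble it from a ``unitor'' cell, built out of the cylinder construction $\infl{-}$ of Construction \ref{cons:ou} and the unitor atoms $L^{-}_{-}$, $R^{-}_{-}$ of Construction \ref{cons:unitor_molec} — which express that composing $W_{-}$ with the degenerate cell $x$ changes nothing up to the reshapings $p$ and $p_{W_{-}}$ — together with an equivalence filler for the remaining spherical diagram, which exists because $X$ is representable. The assembled cell $h$ then lies in $A$ by closure of $A$ under composition, and satisfies $\restr{h}{U} = x$, $\restr{h}{W_{0}} = g$. For unbiasedness in this case one repeats the construction with $\oppn{n+1}{-}$ applied to each constituent, using Lemma \ref{lem:reverse_surj} to transport $p$ (and its companion maps) across the top-dimensional duals; this leaves all restrictions to $W_{-\alpha}$ unchanged, giving the matching dual filler directly without reference to Lemma \ref{lem:2satur_reverse}.

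The main obstacle is precisely this degenerate case: for a genuinely higher-dimensional degenerate $x$ and an arbitrary admissible output boundary $g$, one must exhibit a filler that provably lies in $A$, and do so coherently enough that the dual filler restricts to the \emph{same} cell on $W_{-\alpha}$. Everything else — the equivalence case and the matching of restrictions — is formal once Lemma \ref{lem:2satur_reverse} and the unitor machinery of Constructions \ref{cons:ou} and \ref{cons:unitor_molec} are in place.
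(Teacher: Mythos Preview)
Your plan matches the paper's proof closely: the equivalence case is handled exactly as you describe (via $\equi{}{X} = \mathcal{F}(\equi{}{X})$ and the reversal of Proposition~\ref{prop:equiv_reverse}), and the degenerate case is indeed the substantive part, built from unitor-type degeneracies (Construction~\ref{cons:unitor_molec}), a cell obtained from the reversed surjection $f'\colon \oppn{n}{U} \surj V$ of Lemma~\ref{lem:reverse_surj}, and a compositor from representability, all glued into a spherical $(n{+}1)$-diagram whose weak composite lies in $A$.

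One correction: your claim that the dual filler in the degenerate case can be produced ``without reference to Lemma~\ref{lem:2satur_reverse}'' is not quite right. The unitor-type degenerate pieces can indeed be reversed via Lemma~\ref{lem:reverse_surj}, but the compositor $h\colon (f'^{*}y \cup z) \celto \compos{f'^{*}y \cup z}$ is an equivalence, not a pullback along a surjection, and to get a reverse $h'\colon \compos{f'^{*}y \cup z} \celto (f'^{*}y \cup z)$ with the \emph{same} weak composite $\compos{f'^{*}y \cup z}$ --- which is exactly what unbiasedness demands, since this cell is $\restr{k}{W_{-}}$ --- you need Proposition~\ref{prop:equiv_reverse} (i.e.\ Lemma~\ref{lem:2satur_reverse} applied to $\equi{}{X}$). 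Invoking representability afresh would only give a compositor for a possibly different weak composite, breaking the matching condition on $W_{-\alpha}$.
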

\begin{proof}
We have $\equi{}{X} = \mathcal{F}(\equi{}{X}) \subseteq \mathcal{F}(\satur{\infty}{\equi{}{X} \cup \degg{X}})$. Moreover, if $e: U \to X$ is an equivalence, then every division horn for $e$ at $\bord{}{\alpha}U$ has a filler $h: x \celto y$ in $\equi{}{X}$ which is itself an equivalence. Proposition \ref{prop:equiv_reverse} then gives a cell $h': y \celto x$ in $\satur{2}{\equi{}{X}}$, which is a filler of the dual horn. This proves that $\satur{2}{\equi{}{X}}$ is unbiased for $\equi{}{X}$, and \emph{a fortiori} so is $\satur{\infty}{\equi{}{X} \cup \degg{X}}$.

Next, let $x: U \to X$ be an $n$\nbd cell in $\degg{X}$, that is, $x = f^*y$ for some surjective map $f: U \surj V$ and cell $y: V \to X$, where $V$ has lower dimension than $U$. Let $\Lambda \incl W, \lambda: \Lambda \to X$ be a division horn for $x$ at $\bord{}{+}U$: then $\restr{\lambda}{W_0}$ is an $n$\nbd cell $z: W_0 \to X$ and by Proposition \ref{prop:boundary_submol} $\bord{}{-}x \subsph \bord{}{-}z$. Suppose $W_0 = \bord{}{-}W$.

Now, consider the $(n+1)$\nbd atom $L^{\bord{}{-}U}_{W_0} := W_0 \celto (O(\bord{}{-}U) \cup W_0)$ as in Construction \ref{cons:unitor_molec}, together with the retraction $q := \ell_{W_0}^{\bord{}{-}U}: L^{\bord{}{-}U}_{W_0} \surj W_0$. Then, $q^*z$ is a degenerate $(n+1)$\nbd cell $z \celto (p_{\bord{}{-}U}^*(\bord{}{-}x) \cup z)$. 

Let $f': \oppn{n}{U} \surj V$ be the ``reverse'' of $f$ defined in Lemma \ref{lem:reverse_surj}. Then, let $\tilde{U}$ be the $(n+1)$\nbd atom $O(\bord{}{-}U) \celto (U \cp{n-1} \oppn{n}{U})$. There is a surjective map $g: \tilde{U} \surj V$, 
\begin{itemize}
	\item sending the greatest element of $\tilde{U}$ to the greatest element of $V$,
	\item equal to the composite of $p_{\bord{}{-}U}: O(\bord{}{-}U) \surj \bord{}{-}U$ and $\restr{f}{\bord{}{-}U}: \bord{}{-}U \surj V$ on $\bord{}{-}\tilde{U}$, and 
	\item equal to $f$ on $U \submol \bord{}{+}\tilde{U}$ and to $f'$ on $\oppn{n}{U} \submol \bord{}{+}\tilde{U}$.
\end{itemize}
Then $g^*y$ is a degenerate $(n+1)$\nbd cell $p_{\bord{}{-}U}^*(\bord{}{-}x) \celto (x \cup f'^*y)$.

Finally, $f'^*y \cup z$, where the union is along $\bord{}{+}(f'^*y) = \bord{}{-}x \subsph \bord{}{-}z$, is a spherical $n$\nbd diagram, which by representability has a compositor $h: (f'^*y \cup z) \celto \compos{f'^*y \cup z}$.

The union 
\begin{equation*}
	q^*z \cup g^*y \cup h: z \celto (x \cup \compos{f'^*y \cup z})
\end{equation*}
is a spherical $(n+1)$-diagram in $X$, whose $(n+1)$\nbd cells are either degenerate or equivalences. It follows from Lemma \ref{lem:nary_composite} that it has a weak composite $k \in \satur{\infty}{\equi{}{X} \cup \degg{X}}$, which fills $\lambda$.

Now, observe that both $q: L^{\bord{}{-}U}_{W_0} \celto W_0$ and $g: \tilde{U} \surj V$ can be reversed as by Lemma \ref{lem:reverse_surj}, giving surjective maps $q': \tilde{L}^{\bord{}{-}U}_{W_0} \celto W_0$ and $g': \oppn{n+1}{\tilde{U}} \surj V$ and degenerate $(n+1)$\nbd cells
\begin{equation*}
	q'^*z: (p_{\bord{}{-}U}^*(\bord{}{-}x) \cup z) \celto z, \quad \quad g'^*y: (x \cup f'^*y) \celto p_{\bord{}{-}U}^*(\bord{}{-}x).
\end{equation*}
Moreover, by Proposition \ref{prop:equiv_reverse}, we can ``reverse'' the compositor $h$ to obtain an $(n+1)$\nbd cell $h': \compos{f'^*y \cup z} \celto (f'^*y \cup z)$ in $\satur{2}{\equi{}{X}}$. Then 
\begin{equation*}
	h' \cup g'^*y \cup q'^*z: (x \cup \compos{f'^*y \cup z}) \celto z
\end{equation*}
also has a weak composite $k' \in \satur{\infty}{\equi{}{X} \cup \degg{X}}$, which fills the dual horn of $\lambda$, and such that $\restr{k}{W_-} = \restr{k'}{\oppn{n+1}{W}_-} = \compos{f'^*y \cup z}$. 

The case of horns for $x$ at $\bord{}{-}U$ is dual. Hence $\degg{X} \subseteq \mathcal{F}(\satur{\infty}{\equi{}{X} \cup \degg{X}})$, and $\satur{\infty}{\equi{}{X} \cup \degg{X}}$ is unbiased for $\degg{X}$, which completes the proof.
\end{proof}

\begin{dfn}
Let $x: U \to X$ be a spherical $n$\nbd diagram in a representable diagrammatic set. A \emph{unit} $\thin{x}$ on $x$ is a weak composite of the spherical $(n+1)$\nbd diagram $p_U^*x: O(U) \to X$. 
\end{dfn}

By Lemma \ref{lem:nary_composite}, since all the $(n+1)$\nbd cells in $p_U^*x$ are degenerate, $\thin{x}$ belongs to $\satur{\infty}{\equi{}{X} \cup \degg{X}}$. Moreover, we can assume that we have cells $h: \thin{x} \celto p_U^*x$ and $h': p_U^*x \celto \thin{x}$ both in $\satur{\infty}{\equi{}{X} \cup \degg{X}}$.

\begin{cons} \label{cons:unitors}
Let $x: U \to X$ be an $n$\nbd cell in a representable diagrammatic set, and let $V \subsph \bord{}{-}U$, $y := \restr{x}{V} \subsph \bord{}{-}x$. If $\ell^V_U: L^V_U \surj U$ is the map of Construction \ref{cons:unitor_molec}, we have that $(\ell^V_U)^*x: L_U^V \to X$ is an $(n+1)$\nbd cell $x \celto (p_U^*y \cup x)$. Taking a compositor $h': p_U^*y \celto \thin{y}$, we can consider the spherical $(n+1)$\nbd diagram 
\begin{equation*}
	(\ell^V_U)^*x \cup h': x \celto (\thin{y} \cup x).
\end{equation*}
This has a weak composite $\ell_x^V: x \celto (\thin{y} \cup x)$, which belongs to $\satur{\infty}{\equi{}{X} \cup \degg{X}}$.

Similarly, using the dual atoms and maps of Construction \ref{cons:unitor_molec}, and the ``reverse compositors'', given $V' \subsph \bord{}{+}U$ and $z := \restr{x}{V'}$, we can build cells 
\begin{equation*}
	\tilde{\ell}_x^V: (\thin{y} \cup x) \celto x, \quad \quad r_x^{V'}: x \celto (x \cup \thin{z}), \quad \quad \tilde{r}_x^{V'}: (x \cup \thin{z}) \celto x,
\end{equation*}
all belonging to $\satur{\infty}{\equi{}{X} \cup \degg{X}}$. We call the $\ell_x^V, \tilde{\ell}_x^V$ \emph{left unitors}, and the $r_x^{V'}, \tilde{r}_x^{V'}$ \emph{right unitors} for $x$.
\end{cons}

\begin{lem} \label{lem:satur_inductive_step}
Let $X$ be a representable diagrammatic set, and let $B$ be a subset of $\satur{\infty}{\equi{}{X} \cup \degg{X}} \cap \mathcal{F}(\satur{\infty}{\equi{}{X} \cup \degg{X}})$.

If $\satur{\infty}{\equi{}{X} \cup \degg{X}}$ is unbiased for $B$, then $\satur{}{B} \subseteq \mathcal{F}(\satur{\infty}{\equi{}{X} \cup \degg{X}})$, and $\satur{\infty}{\equi{}{X} \cup \degg{X}})$ is unbiased for $\satur{}{B}$.
\end{lem}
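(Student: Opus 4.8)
The plan is to run a case analysis on how a cell $x$ of shape $U$ and dimension $n$ enters $\satur{}{B}$, reducing everything to the hypotheses on $B$ together with the closure and reversal results already proved. Write $S := \satur{\infty}{\equi{}{X} \cup \degg{X}}$. First I would record two easy facts: $S$ is closed under a single saturation step, since any witnessing data for a cell of $\satur{}{S}$ already lies at a finite stage $\satur{k}{\equi{}{X}\cup\degg{X}}$, so $\satur{}{S}=S$; and $B\subseteq S$, so Proposition \ref{prop:equiv_reverse} applied with $A:=S$ produces a reverse in $S$ of any cell of $S$. If $x\in B$, there is nothing to prove: $x\in\mathcal{F}(S)$ and $S$ is unbiased for $x$ by hypothesis. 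Otherwise $x$ comes equipped with an $(n+1)$\nbd atom $W$, an inclusion $U\incl W$, and a cell $h\colon W\to X$ in $B$, falling into the \emph{composition} case (where $\bord{}{\alpha}W$ splits into $n$\nbd atoms $V$ with $\restr{h}{V}\in B$, and $x=\restr{h}{\bord{}{-\alpha}W}$ is a weak composite of the spherical $n$\nbd diagram $\bord{}{\alpha}h$, with $h$ the compositor) or the \emph{division} case (where $W$ is ternary, $\restr{h}{W_\alpha},\restr{h}{W_0}\in B$, and $x=\restr{h}{W_{-\alpha}}$). In either case $h\in B\subseteq\mathcal{F}(S)$, so division horns for $h$ and for the cells $\restr{h}{V}$, $\restr{h}{W_0}$, $\restr{h}{W_\alpha}$ all admit fillers in $S$, and — since $S$ is unbiased for $B$ — these fillers can be chosen together with fillers of their dual horns agreeing on the quotient.

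Next, fixing $\beta$ and a division horn $\Lambda\incl W^*,\ \lambda\colon\Lambda\to X$ for $x$ at some $V'\subseteq\bord{}{\beta}U$, the idea is to manufacture a spherical $(n+1)$\nbd diagram $D$ in $X$, all of whose $(n+1)$\nbd cells lie in $S$, with $\bord{n}{\gamma}D$ isomorphic to $\bord{n}{\gamma}W^*$ for $\gamma\in\{+,-\}$ and restricting on the copy of $\Lambda$ inside it to $\lambda$; by Lemma \ref{lem:nary_composite} it then has a weak composite $\compos{D}\in S$, and this cell (of shape isomorphic to $W^*$) is the required filler. In the division case I would precompose $\lambda$ with the witness $h$ to turn it into a division horn for $\restr{h}{W_0}$ (resp. $\restr{h}{W_\alpha}$), both in $B\subseteq\mathcal{F}(S)$, fill that in $S$, and reassemble $D$ from this filler together with $h$ and its reverse. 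In the composition case I would use Lemma \ref{lem:composition_form} to reduce, by induction on the number of maximal cells of $\bord{}{\alpha}h$, to the case where this diagram is a single $\cp{k}$\nbd composite of two cells (or one cell), then: transport the horn across $h$ and its reverse so that $x$ is traded for $\bord{}{\alpha}h$; divide the cell on $W^*_0$ successively by the cells of $\bord{}{\alpha}h$, each step being a division horn for a cell of $B\subseteq\mathcal{F}(S)$ and hence fillable in $S$, the order of the steps chosen compatibly with the position of $V'$ inside $\bord{}{\beta}U$ via Proposition \ref{prop:boundary_submol}; and absorb the degenerate mismatches produced on lower boundaries using the left and right unitors of Construction \ref{cons:unitors}, which again lie in $S$. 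Glueing all these $(n+1)$\nbd cells gives $D$.

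For the unbiasedness clause I would repeat the whole construction on the dual horn $\Lambda\incl\oppn{n+1}{W^*}$. Because the auxiliary division horns were filled compatibly with their duals (unbiasedness of $S$ for $B$), and because reversal (Proposition \ref{prop:equiv_reverse}) is symmetric, the mirror diagram $D'$ can be built so that $D$ and $D'$ use literally the same quotient cells on the relevant boundaries; then their weak composites satisfy $\restr{\compos{D}}{W^*_{-\beta}}=\restr{\compos{D'}}{\oppn{n+1}{W^*}_{-\beta}}$, which is exactly the condition for $S$ to be unbiased for $x$. Horns for $x$ at $\bord{}{-\beta}U$ are handled by the same argument with $\beta$ reversed. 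Together this gives $\satur{}{B}\subseteq\mathcal{F}(S)$ and that $S$ is unbiased for $\satur{}{B}$.

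The main obstacle I expect is the combinatorial bookkeeping in the composition case: checking that the diagram $D$ one assembles is genuinely a spherical molecule whose $n$\nbd boundaries match those of $W^*$ (so that $\compos{D}$ really does fill $\Lambda$), arranging that the cells of $\bord{}{\alpha}h$ can be peeled off the target cell in an order compatible with where the horn $V'$ sits inside $\bord{}{\beta}U$, making sure the inserted unitors repair precisely the lower boundaries affected at each step without themselves breaking sphericity, and carrying out the dual construction closely enough in parallel that the two weak composites coincide on $W^*_{-\beta}$ rather than merely up to a higher equivalence.
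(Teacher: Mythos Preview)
Your overall strategy matches the paper's proof: case-split on how the cell (call it $e$, as the paper does) enters $\satur{}{B}$, assemble a spherical $(n+1)$-diagram of cells in $S$ with the correct boundary, take its weak composite as the filler, and run a parallel construction for the dual horn to get unbiasedness. The composition case is also essentially right and coincides with the paper's case~1: write the witnessing diagram as $e_1 \cup \ldots \cup e_m$ via Proposition~\ref{prop:boundary_submol}, successively divide the target cell $z := \restr{\lambda}{W^*_0}$ by each $e_i$ (each $e_i \in B \subseteq \mathcal{F}(S)$), and cap off with the reverse $h'$ of $h$. No unitors are needed in this case; your mention of them there is misplaced.

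The genuine gap is in the division case. Take the situation $h: e_1 \celto (e_2 \cup e)$ with $\bord{}{+}e_2 \subsph \bord{}{-}e$ and $h, e_1, e_2 \in B$ (the paper's case~2). Your plan to ``precompose $\lambda$ with $h$ to turn it into a division horn for $e_1$ or $e_2$'' does not type-check: a division horn for $e$ at $\bord{}{+}U$ provides $z$ with $\bord{}{-}e \subsph \bord{}{-}z$, but $\bord{}{-}e_1 = \bord{}{-}e[\bord{}{-}e_2 / \bord{}{+}e_2]$ lives in a \emph{different} ambient boundary, so one cannot divide $z$ by $e_1$ directly, nor by $e_2$. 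The missing idea, and where the unitors actually enter, is to first manufacture a weak inverse $e_2^*$ of $e_2$ by dividing a unit $\thin{\bord{}{+}e_2}$ by $e_2$; then a left unitor on $z$ followed by the cell $\thin{\bord{}{+}e_2} \celto (e_2^* \cup e_2)$ inserts $e_2^* \cup e_2$ into the input of $z$, after which composing out $e_2 \cup z$ yields a cell $x'$ with $\bord{}{-}e_1 \subsph \bord{}{-}x'$. Only now can one divide by $e_1$, and then $h$ together with further unitors reshapes the result into $e \cup (\text{something})$. There is also a subtlety in the unbiasedness half that your sketch elides: the intermediate cell $d: z \celto (e_2^* \cup x')$ is \emph{not} reversible just by invoking unbiasedness of $S$ for $B$ or Lemma~\ref{lem:2satur_reverse} (neither applies to it), and the paper constructs its reverse by a separate argument using a second compositor $\tilde{c}$ for $e_2 \cup z$.

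A minor point: division horns in the definition of $\mathcal{F}$ are only at the full boundaries $\bord{}{\alpha}U$, not at arbitrary $V' \subseteq \bord{}{\beta}U$, so your extra generality there is not needed.
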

\begin{proof}
Let $e: U \to X$ be an $n$\nbd cell in $\satur{}{B}$. If $e \in B$, there is nothing to prove. Otherwise, let $\Lambda \incl W, \lambda: \Lambda \to X$ be a division horn for $e$ at $\bord{}{+}U$, that is, $\restr{\lambda}{W_0}$ is an $n$\nbd cell $x$ and, by Proposition \ref{prop:boundary_submol}, $\bord{}{-}e \subsph \bord{}{-}x$. Suppose $W_0 = \bord{}{-}W$; because we will construct fillers both of $\lambda$ and its dual horn, this is inconsequential. We proceed by case distinction.

\begin{enumerate}
	\item Suppose there is an $(n+1)$\nbd cell $h: e \celto y$, where $h$ and every $n$\nbd cell in $y$ belong to $B$. By Lemma \ref{lem:2satur_reverse} applied to $A := \satur{\infty}{\equi{}{X} \cup \degg{X}}$, there is also an $(n+1)$\nbd cell $h': y \celto e$ in $\satur{\infty}{\equi{}{X} \cup \degg{X}}$.
	
	By Lemma \ref{lem:composition_form}, we can write $y = y_1 \cp{n} \ldots \cp{n} y_m$, where each $y_i$ contains a single $n$\nbd cell $e_i$; because $y$ is spherical, its shape is pure and $n$\nbd dimensional, so $y = e_1 \cup \ldots \cup e_m$. 
	
	By Proposition \ref{prop:boundary_submol},
	\begin{equation*}
		\bord{}{-}e_1 \subsph \bord{}{-}y = \bord{}{-}e \subsph \bord{}{-}x,
	\end{equation*}
	and $e_1 \in \mathcal{F}(\satur{\infty}{\equi{}{X} \cup \degg{X}})$, so we can ``divide $x$ by $e_1$'' instead of $e$, and obtain a cell $k_1: x \celto (e_1 \cup x_1)$ in $\satur{\infty}{\equi{}{X} \cup \degg{X}}$. Now, letting $y' := y_2 \cp{n} \ldots \cp{n} y_m$, we have
	\begin{equation*}
		\bord{}{-}x_1 = \bord{}{-}x[\bord{}{+}e_1/\bord{}{-}e_1] \supsph \bord{}{-}e[\bord{}{+}e_1/\bord{}{-}e_1] = \bord{}{-}y' \supsph \bord{}{-}e_2,
	\end{equation*}
	using Proposition \ref{prop:substitution}; because $e_2 \in \mathcal{F}(\satur{\infty}{\equi{}{X} \cup \degg{X}})$, we can ``divide $x_1$ by $e_2$'', and obtain a cell $k_2: x_1 \celto (e_2 \cup x_2)$, also in $\satur{\infty}{\equi{}{X} \cup \degg{X}}$. Now
	\begin{equation*}
		k_1 \cup k_2: x \celto (e_1 \cup e_2 \cup x_2)
	\end{equation*}
	is a spherical $(n+1)$\nbd diagram. We can iterate this, each time dividing $x_j$ by $e_{j+1}$, for $j < m$, to obtain cells 
	\begin{equation*}
		k_{j+1}: x_j \celto (e_{j+1} \cup x_{j+1}),
	\end{equation*}
	until we obtain a spherical $(n+1)$\nbd diagram
	\begin{equation*}
		k_1 \cup \ldots \cup k_m: x \celto (y \cup x_m).
	\end{equation*}
	Then $k_1 \cup \ldots \cup k_m \cup h': x \celto (e \cup x_m)$ is also spherical, therefore it has a weak composite, still in $\satur{\infty}{\equi{}{X} \cup \degg{X}}$, which works as a filler of $\lambda$. 
	
	Since $\satur{\infty}{\equi{}{X} \cup \degg{X}}$ is unbiased for $e_1, \ldots, e_m$, we can also obtain cells
	\begin{equation*}
		k'_{j+1}: (e_{j+1} \cup x_{j+1}) \celto x_j
	\end{equation*}
	for each $j < m$ (where we let $x_0 := x$), and a weak composite of the spherical diagram $h \cup k'_m \cup \ldots \cup k'_1: (e \cup x_m) \celto x$ belongs to $\satur{\infty}{\equi{}{X} \cup \degg{X}}$ and fills the dual horn.
	
	The case where we start with $h: y \celto e$, where $h$ and all the cells in $y$ are in $B$, is dual.
	
	\item Suppose there is an $(n+1)$\nbd cell $h: e_1 \celto (e_2 \cup e)$, where $h, e_1, e_2 \in B$, and $\bord{}{+}e_2 \subsph \bord{}{-}e$. As in the previous point, we can also find a cell $h': (e_2 \cup e) \celto e_1$ in $\satur{\infty}{\equi{}{X} \cup \degg{X}}$.
	
	Suppose $e_2: y \celto z$, and take a unit $\thin{z}: z \celto z$ in $\satur{\infty}{\equi{}{X} \cup \degg{X}}$. Since $e_2 \in \mathcal{F}(\satur{\infty}{\equi{}{X} \cup \degg{X}})$, we can ``divide $\thin{z}$ by $e_2$'' to obtain an $n$\nbd cell $e_2^*: z \celto y$, and, by unbiasedness, $(n+1)$\nbd cells
	\begin{equation*}
		k: (e_2^* \cup e_2) \celto \thin{z}, \quad \quad k': \thin{z} \celto (e_2^* \cup e_2),
	\end{equation*}
	both in $\satur{\infty}{\equi{}{X} \cup \degg{X}}$. 
	
	Since $z \subsph \bord{}{-}e \subsph \bord{}{-}x$, there is a left unitor $\ell^V_x: x \celto (\thin{z} \cup x)$ for the suitable $V \subsph \bord{}{-}W_0$, and 
	\begin{equation*}
		\ell^V_x \cup k': x \celto (e_2^* \cup e_2 \cup x)
	\end{equation*}
	is a spherical $(n+1)$\nbd diagram of cells in $\satur{\infty}{\equi{}{X} \cup \degg{X}}$, with a weak composite $\compos{\ell^V_x \cup k'}$ in $\satur{\infty}{\equi{}{X} \cup \degg{X}}$. 
	
	The $n$\nbd diagram $e_2 \cup x$ is spherical and binary, so by representability we can find a weak composite $x'$ and an equivalence $c: x' \celto (e_2 \cup x)$. Since 
	\begin{equation*}
	e_2 \cup x = \bord{}{+}c \subsph \bord{}{+}\compos{\ell^V_x \cup k'},
	\end{equation*}
	we can divide $\compos{\ell^V_x \cup k'}$ by $c$ to obtain an $(n+1)$\nbd cell 
	\begin{equation*}
		d: x \celto (e_2^* \cup x'),
	\end{equation*}
	which still belongs to $\satur{\infty}{\equi{}{X} \cup \degg{X}}$. Then
	\begin{equation*}
		\bord{}{-}x' = \bord{}{-}x[\bord{}{-}e_2/\bord{}{+}e_2] \sqsupseteq \bord{}{-}e[\bord{}{-}e_2/\bord{}{+}e_2] = \bord{}{-}e_1;
	\end{equation*}
	because $e_1 \in \mathcal{F}(\satur{\infty}{\equi{}{X} \cup \degg{X}})$, we can divide $x'$ by $e_1$, and get an $(n+1)$\nbd cell $m: x' \celto (e_1 \cup x'')$ in $\satur{\infty}{\equi{}{X} \cup \degg{X}}$. 
	
	Then, $d \cup m: x \celto (e_2^* \cup e_1 \cup x'')$ is a spherical $(n+1)$\nbd diagram, and so are, successively,
	\begin{align*}
		(d \cup m) \cup h: & \; x \celto (e_2^* \cup e_2 \cup e \cup x''), \\
		((d \cup m) \cup h) \cup k: & \; x \celto (1_z \cup e \cup x''), \\
		(((d \cup m) \cup h) \cup k) \cup \tilde{\ell}_{e}^V: & \; x \celto (e \cup x''),
	\end{align*}
	where $\tilde{\ell}_{e}^V: (1_z \cup e) \celto e$ is a left unitor. All these $(n+1)$\nbd cells belong to $\satur{\infty}{\equi{}{X} \cup \degg{X}}$, so they have a weak composite in $\satur{\infty}{\equi{}{X} \cup \degg{X}}$, which works as a filler of $\lambda$. 
	
	As for the dual horn, the only cell that we constructed that cannot obviously be reversed using either Lemma \ref{lem:2satur_reverse} or unbiasedness is $d: x \celto (e_2^* \cup x')$. Indeed, while we have 
	\begin{equation*}
		k \cup \tilde{l}_x^V: (e_2^* \cup e_2 \cup x) \celto x
	\end{equation*}
	and a weak composite $\compos{k \cup \tilde{l}_x^V}$, we are not guaranteed that there is an equivalence $(e_2 \cup x) \celto x'$ by which we can divide $\compos{k \cup \tilde{l}_x^V}$. Nevertheless, there is a compositor $\tilde{c}: (e_2 \cup x) \celto \tilde{x}$ for some $\tilde{x}$, so we can obtain a cell
	\begin{equation*}
		\tilde{d}: (e_2^* \cup \tilde{x}) \celto x
	\end{equation*}
	in $\satur{\infty}{\equi{}{X} \cup \degg{X}}$. Then $\tilde{c} \cup \tilde{d}: (e_2^* \cup e_2 \cup x) \celto x$ is a spherical $(n+1)$\nbd diagram, and so is
	\begin{equation*}
		c \cup (\tilde{c} \cup \tilde{d}): (e_2^* \cup x') \celto x.
	\end{equation*}
	This has a weak composite $d': (e_2^* \cup x') \celto x$, still in $\satur{\infty}{\equi{}{X} \cup \degg{X}}$, which we use to reverse $d$. This enables us to fill the dual horn of $\lambda$ with a cell $(e \cup x'') \celto x$.

	The case where we start with $h: (e_2 \cup e) \celto e_1$ with $e_1, e_2, h \in B$ is dual.
	
	\item Suppose there is an $(n+1)$\nbd cell $h: e_1 \celto (e \cup e_2)$, where $h, e_1, e_2 \in B$, and $\bord{}{-}e_2 \subsph \bord{}{+}e$. As in the previous two points, there is also a cell $h': (e \cup e_2) \celto e_1$ in $\satur{\infty}{\equi{}{X} \cup \degg{X}}$. 
	
	In this case, $\bord{}{-}e = \bord{}{-}e_1$, and we can divide $x$ by $e_1$, producing an $(n+1)$\nbd cell $k: x \celto (e_1 \cup x')$ in $\satur{\infty}{\equi{}{X} \cup \degg{X}}$. Finally, by representability, there exist an $n$\nbd cell $x'$ and an equivalence $c: (e_2 \cup x') \celto x''$. The $(n+1)$\nbd diagram
	\begin{equation*}
		(k \cup h) \cup c: x \celto (e \cup x'')
	\end{equation*}
	is spherical and has a weak composite in $\satur{\infty}{\equi{}{X} \cup \degg{X}}$, which fills $\lambda$.
	
	Both $k$ and $c$ can be ``reversed'', the first because $\satur{\infty}{\equi{}{X} \cup \degg{X}}$ is unbiased for $e_1 \in B$, the second by Proposition \ref{prop:equiv_reverse}, so we have cells
	\begin{equation*}
		k': (e_1 \cup x') \celto x, \quad \quad c': x'' \celto (e_2 \cup x'),
	\end{equation*}
	in $\satur{\infty}{\equi{}{X} \cup \degg{X}}$, and a spherical diagram 
	\begin{equation*}
		c' \cup (h' \cup k'): (e \cup x'') \celto x
	\end{equation*}
	with a weak composite in $\satur{\infty}{\equi{}{X} \cup \degg{X}}$, which fills the dual horn of $\lambda$.
	
	The case where we start with $h: (e \cup e_2) \celto e_1$ with $e_1, e_2, h \in B$ is dual.
\end{enumerate}
This proves the existence in $\satur{\infty}{\equi{}{X} \cup \degg{X}}$ of fillers of $\lambda$ and its dual horn which restrict to the same cell on $W_-$. 

By duality, we can also find fillers of division horns for $e$ at $\bord{}{-}U$ which restrict to the same cell on $W_+$. Therefore $e \in \mathcal{F}(\satur{\infty}{\equi{}{X} \cup \degg{X}})$, and $\satur{\infty}{\equi{}{X} \cup \degg{X}}$ is unbiased for $e$. 
\end{proof}

\begin{thm} \label{thm:equiv_closure}
If $X$ is a representable diagrammatic set, $\satur{\infty}{\equi{}{X} \cup \degg{X}} = \equi{}{X}$.
\end{thm}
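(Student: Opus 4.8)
The plan is to prove the two inclusions of the claimed equality separately: the inclusion $\equi{}{X} \subseteq \satur{\infty}{\equi{}{X} \cup \degg{X}}$ is immediate from the construction of the saturation, and the reverse inclusion is obtained from the coinductive proof principle of Remark \ref{rmk:coinduction}, fed by the two preceding lemmas.

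Write $A := \satur{\infty}{\equi{}{X} \cup \degg{X}}$. First I would record that $\equi{}{X} \subseteq A$, since for any set $A'$ of cells one has $A' \subseteq \satur{}{A'} \subseteq \satur{\infty}{A'}$, and $\equi{}{X} \subseteq \equi{}{X} \cup \degg{X}$. For the reverse inclusion $A \subseteq \equi{}{X}$, by the coinductive principle it is enough to verify $A \subseteq \mathcal{F}(A)$, where $\mathcal{F}$ is the monotone endofunctor of Remark \ref{rmk:coinduction} whose greatest fixed point is $\equi{}{X}$.

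To get $A \subseteq \mathcal{F}(A)$ I would filter $A$ as $A = \bigcup_{n \geq 0} \satur{n}{\equi{}{X} \cup \degg{X}}$ (with $\satur{0}{\equi{}{X} \cup \degg{X}}$ read as $\equi{}{X} \cup \degg{X}$ itself), and show that each term of this union lies in $\mathcal{F}(A)$. I would prove this by induction on $n$, carrying along as an auxiliary hypothesis that $A$ is unbiased for $\satur{n}{\equi{}{X} \cup \degg{X}}$. The base case $n = 0$ is exactly Lemma \ref{lem:eq_deg_unbiased}: it gives both $\equi{}{X} \cup \degg{X} \subseteq \mathcal{F}(A)$ and that $A$ is unbiased for $\equi{}{X} \cup \degg{X}$. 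For the inductive step, set $B := \satur{n}{\equi{}{X} \cup \degg{X}}$; by the inductive hypothesis $B \subseteq \mathcal{F}(A)$ and $A$ is unbiased for $B$, while $B \subseteq A$ holds trivially, so $B \subseteq A \cap \mathcal{F}(A)$ and the hypotheses of Lemma \ref{lem:satur_inductive_step} are met. Its conclusion yields $\satur{}{B} = \satur{n+1}{\equi{}{X} \cup \degg{X}} \subseteq \mathcal{F}(A)$ together with unbiasedness of $A$ for it, closing the induction. Taking the union over all $n$ gives $A \subseteq \mathcal{F}(A)$, hence $A \subseteq \equi{}{X}$, and combining with the first inclusion finishes the proof.

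I do not expect a genuine obstacle at this stage: all the substantive work — the explicit constructions of reverse cells, units, unitors, and the case analysis on how a cell sits inside a saturation witness — has already been carried out in Lemmas \ref{lem:eq_deg_unbiased} and \ref{lem:satur_inductive_step}, and what remains is a bookkeeping induction assembling them. The only point to be vigilant about is the subtlety discussed in Remark \ref{rmk:coinduction}: the verification of $A \subseteq \mathcal{F}(A)$ must be a single statement about the whole set $A$, and must never implicitly use ``$A \subseteq \equi{}{X}$'' while being proved. The layout above respects this, since each $\satur{n}{\equi{}{X} \cup \degg{X}} \subseteq \mathcal{F}(A)$ is established outright, and the coinductive principle is invoked only once, at the very end.
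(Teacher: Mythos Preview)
Your proposal is correct and follows essentially the same approach as the paper: after noting the trivial inclusion, both run an induction on $n$ showing $\satur{n}{\equi{}{X} \cup \degg{X}} \subseteq \mathcal{F}(\satur{\infty}{\equi{}{X} \cup \degg{X}})$ together with the unbiasedness condition, using Lemma~\ref{lem:eq_deg_unbiased} as base case and Lemma~\ref{lem:satur_inductive_step} as inductive step, then apply the coinductive principle once at the end. Your discussion of the coinductive subtlety is a welcome addition but does not alter the argument.
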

\begin{proof}
The inclusion $\equi{}{X} \subseteq \satur{\infty}{\equi{}{X} \cup \degg{X}}$ is obvious. For the converse, we prove by induction that, for all $n \geq 0$, $\satur{n}{\equi{}{X} \cup \degg{X}} \subseteq \mathcal{F}(\satur{\infty}{\equi{}{X} \cup \degg{X}})$ and $\satur{\infty}{\equi{}{X} \cup \degg{X}})$ is unbiased for $\satur{n}{\equi{}{X} \cup \degg{X}}$: the base case is given by Lemma \ref{lem:eq_deg_unbiased}, and the inductive step by Lemma \ref{lem:satur_inductive_step}.

It follows that $\satur{\infty}{\equi{}{X} \cup \degg{X}} \subseteq \mathcal{F}(\satur{\infty}{\equi{}{X} \cup \degg{X}})$. By coinduction, then, $\satur{\infty}{\equi{}{X} \cup \degg{X}} \subseteq \equi{}{X}$.
\end{proof}

\begin{remark}
This implies, in retrospect, that in the proofs towards this result, whenever we constructed cells in $\satur{\infty}{\equi{}{X} \cup \degg{X}}$, they were actually equivalences. In particular, in a representable diagrammatic set
\begin{enumerate}
	\item all degenerate cells are equivalences;
	\item there are ``unbiased'' compositors for spherical diagrams $c: \compos{x} \celto x$, $c': x \celto \compos{x}$, which are equivalences,
	\item units on spherical diagrams are equivalences.
\end{enumerate}
\end{remark}

\begin{remark}
We obtain an equivalent definition of RDS by requiring composition horns $\Lambda \incl W, x: \Lambda \to X$, where $\Lambda$ is $n$\nbd dimensional, to have fillers only when $\Lambda$ contains two $n$\nbd cells (``there are binary weak composites''), or when it is non-atomic and unsplittable. Indeed, 
\begin{itemize}
	\item when $\Lambda$ is an atom, we can pick the degenerate cell $p_\Lambda^*x: x \celto x$ as a filler;
	\item when $\Lambda$ has a binary split $\Lambda_1 \cup \Lambda_2$, for $x_i := \restr{x}{\Lambda_i}$, we can inductively construct compositors $h_i: \compos{x_i} \celto x_i$ and then compose them with a binary compositor $h: \compos{x} \celto \compos{x_1} \cup \compos{x_2}$. 
\end{itemize}
\emph{A priori}, we obtain ``compositors'' which are only in $\satur{\infty}{\equi{}{X} \cup \degg{X}})$ rather than $\equi{}{X}$, but it can be checked that the proofs all go through anyway.
\end{remark}

Theorem \ref{thm:equiv_closure} implies all the expected properties of equivalences: we have a sequence of easily proved consequences.

\begin{dfn} \label{dfn:equiv_relation}
Let $x, y$ be two parallel spherical $n$\nbd diagrams in a diagrammatic set $X$. We write $x \simeq y$, and say that $x$ is \emph{equivalent} to $y$, if there exists an $(n+1)$\nbd equivalence $h: x \celto y$.
\end{dfn}

\begin{prop}
Let $X$ be a representable diagrammatic set. Then $\simeq$ is an equivalence relation on spherical diagrams of $X$.
\end{prop}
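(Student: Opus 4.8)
The plan is to establish reflexivity, symmetry and transitivity separately, in each case reducing to Theorem~\ref{thm:equiv_closure} (the identity $\satur{\infty}{\equi{}{X} \cup \degg{X}} = \equi{}{X}$) and the closure properties of equivalences recorded in the remark immediately following it.

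For reflexivity, given a spherical $n$\nbd diagram $x$ of shape $U$, I would take a unit $\thin{x}$ on $x$, which exists in the representable $X$: it is a weak composite of the spherical $(n+1)$\nbd diagram $p_U^*x: \infl{U} \to X$, and $\infl{U}$ has spherical boundary by Proposition~\ref{prop:ou_regular}. Since $\imath^\alpha;p_U = \idcat{U}$ by Construction~\ref{cons:ou}, one has $\bord{}{\alpha}(p_U^*x) = x$, hence $\bord{}{\alpha}\thin{x} = x$ and $\thin{x}: x \celto x$; and $\thin{x}$ is an $(n+1)$\nbd equivalence by the remark after Theorem~\ref{thm:equiv_closure}. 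So $x \simeq x$. For symmetry, if $h: x \celto y$ is an $(n+1)$\nbd equivalence, Proposition~\ref{prop:equiv_reverse} produces a cell $y \celto x$ lying in $\satur{2}{\equi{}{X}} \subseteq \satur{\infty}{\equi{}{X} \cup \degg{X}} = \equi{}{X}$, the last equality being Theorem~\ref{thm:equiv_closure}; this cell is an $(n+1)$\nbd equivalence, so $y \simeq x$.

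For transitivity, suppose $h: x \celto y$ and $h': y \celto z$ are $(n+1)$\nbd equivalences, of shapes $W$ and $W'$; write $U_x = \bord{}{-}W$, $U_y = \bord{}{+}W = \bord{}{-}W'$, $U_z = \bord{}{+}W'$ for the shapes of $x$, $y$, $z$. I would form the union $h \cup h': W \cp{n} W' \to X$ along $y$. Granting that $W \cp{n} W'$ has spherical boundary, $h \cup h'$ is a spherical $(n+1)$\nbd diagram, so by representability it has a weak composite $\compos{h \cup h'}$, with $\bord{}{-}\compos{h \cup h'} = \bord{}{-}h = x$ and $\bord{}{+}\compos{h \cup h'} = \bord{}{+}h' = z$. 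Its only $(n+1)$\nbd cells are $h$ and $h'$, both equivalences and hence in $\satur{\infty}{\equi{}{X} \cup \degg{X}}$, so Lemma~\ref{lem:nary_composite} puts $\compos{h \cup h'}$ into $\satur{\infty}{\equi{}{X} \cup \degg{X}} = \equi{}{X}$ (Theorem~\ref{thm:equiv_closure}). Thus $\compos{h \cup h'}: x \celto z$ is an $(n+1)$\nbd equivalence and $x \simeq z$.

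The one step not reading off directly from an earlier result is the sphericity of $W \cp{n} W'$, and that is where I expect the small amount of bookkeeping to be. Its input and output $n$\nbd boundaries are $U_x$ and $U_z$, which have spherical boundary since $x$ and $z$ are spherical diagrams; the remaining condition reduces to $U_x \cap U_z = \bord{}{}U_x = \bord{}{}U_z$ inside $W \cp{n} W'$. This holds because $\bord{n-1}{}W = \bord{}{}U_x$ and $\bord{n-1}{}W' = \bord{}{}U_z$ each coincide with $\bord{}{}U_y$, and those copies of $\bord{}{}U_y$ are identified under the gluing along $y$ — which makes sense precisely because $x$, $y$, $z$ are pairwise parallel, so the boundary data match up.
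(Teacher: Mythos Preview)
Your proposal is correct and follows essentially the same approach as the paper: reflexivity via units, symmetry via reversal of an equivalence, and transitivity via the weak composite of the pasted pair of equivalences, all ultimately appealing to Theorem~\ref{thm:equiv_closure}. The only minor difference is in symmetry, where the paper divides $e$ by a unit $\thin{x}$ to obtain a weak inverse $e^*$ directly (and then applies the 2-out-of-3 closure), whereas you invoke Proposition~\ref{prop:equiv_reverse}; both routes are short and reduce to the same theorem. Your explicit verification that $W \cp{n} W'$ has spherical boundary is a detail the paper leaves implicit.
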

\begin{proof}
By representability, there is a unit $\thin{x}: x \celto x$ on every spherical diagram $x$ in $X$, and Theorem \ref{thm:equiv_closure} implies that units are equivalences. This proves reflexivity.

If $e: x \celto y$ is an $n$\nbd equivalence, we can divide it by a unit $\thin{x}: x \celto x$, and obtain a cell $e^*: y \celto x$ together with an equivalence $h: \thin{x} \celto e\cp{n-1}e^*$. By Theorem \ref{thm:equiv_closure}, $e^*$ is an equivalence. This proves symmetry.

Finally, if $e: x \celto y$ and $e': y \celto z$ are both $n$\nbd equivalences, then $e \cp{n-1} e': x \celto z$ is a spherical $n$\nbd diagram. This has a weak composite, which by Theorem \ref{thm:equiv_closure} is an equivalence. This proves transitivity.
\end{proof}

\begin{cor}
Let $x$ be a spherical diagram in a representable diagrammatic set. Then weak composites of $x$ and units on $x$ are unique up to equivalence.
\end{cor}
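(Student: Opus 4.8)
The plan is to deduce both claims from Lemma~\ref{lem:nary_composite}, Theorem~\ref{thm:equiv_closure}, and the fact just proved that $\simeq$ is an equivalence relation. First I would fix a spherical $n$\nbd diagram $x$ of shape $U$ and two weak composites $\compos{x}$ and $\compos{x}'$ of $x$. By definition each arises as the restriction of a compositor to the non-filled, atomic boundary of a composition horn of some $(n+1)$\nbd atom $W$; since the two $n$\nbd dimensional boundaries of $W$ share the same $(n-1)$\nbd boundary, both $\compos{x}$ and $\compos{x}'$ are $n$\nbd cells parallel to $x$, hence to each other, so that $\simeq$ is indeed defined on them.

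Next I would apply Lemma~\ref{lem:nary_composite} to each weak composite to obtain $(n+1)$\nbd cells $h: x \celto \compos{x}$ and $k: x \celto \compos{x}'$ lying in $\satur{\infty}{\equi{}{X} \cup \degg{X}}$; by Theorem~\ref{thm:equiv_closure} this set coincides with $\equi{}{X}$, so $h$ and $k$ are $(n+1)$\nbd equivalences. Thus $x \simeq \compos{x}$ and $x \simeq \compos{x}'$, and since $\simeq$ is an equivalence relation, symmetry and transitivity yield $\compos{x} \simeq \compos{x}'$. For units, I would simply recall that a unit $\thin{x}$ on $x$ is, by definition, a weak composite of the spherical $(n+1)$\nbd diagram $p_U^*x: O(U) \to X$; hence any two units on $x$ are two weak composites of one and the same spherical diagram, and the previous argument applies verbatim.

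I do not expect any real obstacle: the substantive work is entirely contained in Theorem~\ref{thm:equiv_closure} (the identification $\satur{\infty}{\equi{}{X} \cup \degg{X}} = \equi{}{X}$) and in Lemma~\ref{lem:nary_composite} (which supplies the connecting equivalences between a diagram and each of its weak composites). The only point needing a line of justification is the parallelism check in the first paragraph, ensuring that $\simeq$ is meaningfully applied to a pair of weak composites of $x$.
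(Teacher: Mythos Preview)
Your proposal is correct and follows essentially the same route as the paper. The paper's argument is slightly more streamlined: it observes directly that $\compos{x}$ is a weak composite of $x$ if and only if $\compos{x} \simeq x$ (the compositor is an equivalence \emph{by definition}, so the detour through Lemma~\ref{lem:nary_composite} and Theorem~\ref{thm:equiv_closure} is not needed), and then concludes by transitivity and symmetry exactly as you do; the statement about units is likewise reduced to the case of weak composites of $p_U^*x$.
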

\begin{proof}
A cell $\compos{x}$ is a weak composite of the spherical diagram $x$ if and only if $\compos{x} \simeq x$. Thus if $\compos{x}'$ is another weak composite of $x$, we have $\compos{x} \simeq \compos{x}'$. In particular, units on $x$, defined as weak composites of $p_U^*x$, are unique up to equivalence.
\end{proof}

Transitivity holds in a more general sense, compatibly with the spherical subdiagram relation.

\begin{prop} \label{prop:equi-subst}
Let $x, y, y', z$ be spherical diagrams in a representable diagrammatic set. Suppose $x \simeq y$, $y \subsph y'$, and $y' \simeq z$. Then $y'[x/y] \simeq z$.
\end{prop}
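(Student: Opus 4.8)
\emph{Plan.} The plan is to reduce the statement, via transitivity of $\simeq$, to proving $y'[x/y] \simeq y'$, and then to exhibit the required $(n+1)$\nbd equivalence by whiskering the given equivalence between $x$ and $y$ into a unit cylinder over $y'$.

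Since $\simeq$ is an equivalence relation on spherical diagrams (the preceding proposition) and $y' \simeq z$, it suffices to prove $y'[x/y] \simeq y'$; transitivity then gives $y'[x/y] \simeq z$. Here $y'[x/y]$ is spherical by Proposition \ref{prop:substitution}, and it is parallel to $y'$ because substitution does not alter the boundary (using $\bord{}{}x = \bord{}{}y$), so $\simeq$ between the two makes sense. Write $V', V, W$ for the shapes of $y', y, x$, so that $V \subsph V'$, $\bord{}{\alpha}W \cong \bord{}{\alpha}V$, and $\bord{}{\alpha}(V'[W/V]) \cong \bord{}{\alpha}V'$ for $\alpha \in \{+,-\}$.

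Next I would build the witnessing cell. Let $e: x \celto y$ be an $(n+1)$\nbd equivalence witnessing $x \simeq y$; its shape is an $(n+1)$\nbd atom $E$ with $\bord{}{-}E = W$, $\bord{}{+}E = V$. Let $p_{V'}: \infl{V'} \surj V'$ be the collapse of Construction \ref{cons:ou}, so that $p_{V'}^*y': \infl{V'} \to X$ is a degenerate $(n+1)$\nbd diagram restricting to $y'$ on each of the two isomorphic copies $\bord{}{\alpha}\infl{V'} \cong V'$, and hence to $y$ on the copy of $V$ inside $\bord{}{-}\infl{V'}$. Form the $(n+1)$\nbd molecule $M := E \cup_V \infl{V'}$, gluing $E$ to $\infl{V'}$ along the identification of $\bord{}{+}E = V$ with $V \subsph \bord{}{-}\infl{V'}$; this is a regular $(n+1)$\nbd molecule with spherical boundary, with $\bord{}{-}M \cong V'[W/V]$ and $\bord{}{+}M \cong V'$. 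Define $H: M \to X$ by $\restr{H}{E} := e$ and $\restr{H}{\infl{V'}} := p_{V'}^*y'$; these agree on the shared copy of $V$, where both restrict to $y$, so $H$ is a well-defined spherical $(n+1)$\nbd diagram with $H: y'[x/y] \celto y'$. Every $(n+1)$\nbd cell of $H$ is either $e$, an equivalence, or lies in $\infl{V'}$, where it has the form $p_{\clos\{a\}}^*a$ for an $n$\nbd cell $a$ of $V'$, hence is degenerate and so an equivalence by Theorem \ref{thm:equiv_closure}. Thus all $(n+1)$\nbd cells of $H$ lie in $\satur{\infty}{\equi{}{X} \cup \degg{X}}$, and Lemma \ref{lem:nary_composite} (applied one dimension up) yields a weak composite $\compos{H} \in \satur{\infty}{\equi{}{X} \cup \degg{X}} = \equi{}{X}$. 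Since $\compos{H}: \bord{}{-}H \celto \bord{}{+}H$, i.e.\ $\compos{H}: y'[x/y] \celto y'$, this is the desired equivalence, so $y'[x/y] \simeq y'$ and therefore $y'[x/y] \simeq z$.

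The main obstacle is the claim that $M = E \cup_V \infl{V'}$ really is a regular molecule with spherical boundary having the stated boundaries --- intuitively, $M$ is the cell $e$ whiskered into the slot $V$ of the degenerate cylinder on $y'$. Once $M$ is known to be a molecule, its sphericity is automatic, since $\bord{}{\pm}(V'[W/V])$ are molecules with spherical boundary isomorphic to $\bord{}{\pm}V'$; that it is a molecule can be established either by an induction on a decomposition of $V'$ with $V$ occurring as a consecutive block of $n$\nbd atoms (paralleling Proposition \ref{prop:substitution} and Construction \ref{cons:ou}), or by realising $E$, $\infl{V'}$ and the gluing inside a single ambient regular directed complex and reading off $M$ as a submolecule there. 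An alternative to the whole construction is an induction on submolecules of $V'$ above $V$, peeling off binary splits (Construction \ref{cons:mergertree}) and combining the inductive equivalence with a unit on the complementary spherical piece via Lemma \ref{lem:nary_composite}; this trades the molecule-hood of $M$ for bookkeeping about the sphericity of the split pieces.
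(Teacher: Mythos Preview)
Your argument is correct in outline, but it takes a detour that the paper avoids. The paper does not reduce to transitivity at all: it simply takes the two given equivalences $e: x \celto y$ and $e': y' \celto z$, forms the union $e \cup e'$ along $\bord{}{+}e = y \subsph y' = \bord{}{-}e'$, and observes that this is already a spherical $(n+1)$\nbd diagram $y'[x/y] \celto z$ whose weak composite is an equivalence by Theorem~\ref{thm:equiv_closure}. Your cylinder $p_{V'}^*y'$ plays exactly the role that $e'$ plays in the paper --- both are $(n+1)$\nbd diagrams with input boundary $y'$ --- so your construction is the paper's construction with $e'$ replaced by the degenerate cylinder, followed by a separate appeal to transitivity using $e'$. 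Nothing is gained by the split.

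The ``main obstacle'' you flag --- that the glued shape $E \cup_V O(V')$ is a regular molecule --- is real, and the same issue is present (implicitly) in the paper's $E \cup_V E'$, where $E'$ is the atom shaping $e'$. In both cases one needs that gluing an $(n+1)$\nbd atom along its output $V$ into the input $V' \supsph V$ of another $(n+1)$\nbd molecule yields a molecule; this is the kind of union the paper uses freely throughout Section~\ref{sec:closure}, and can be justified by the induction you sketch (peeling off the decomposition witnessing $V \submol V'$). One small imprecision: the $(n+1)$\nbd cells of $p_{V'}^*y'$ are not literally of the form $p_{\clos\{a\}}^*(\restr{y'}{\clos\{a\}})$, since the atoms of $O(V')$ are the $\clos\{a\}_C$ of Lemma~\ref{lem:uc_regular} with $C = \bord{}{}V' \cap \bord{}{}a$, not $O(\clos\{a\})$ in general; but they are nonetheless degenerate (each factors through an $n$\nbd atom via the restriction of $p_{V'}$), so your conclusion is unaffected.
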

\begin{proof}
There are equivalences $e: x \celto y$ and $e': y' \celto z$. The union $e \cup e'$ along $\bord{}{+}e \subsph \bord{}{-}e'$ is a spherical diagram with a weak composite $\compos{e \cup e'}: y'[x/y] \celto z$, which by Theorem \ref{thm:equiv_closure} is an equivalence.
\end{proof}

\begin{prop}
Let $x, y$ be parallel $n$\nbd cells in a representable diagrammatic set. If $x \simeq y$ and $x$ is an equivalence, then $y$ is an equivalence.
\end{prop}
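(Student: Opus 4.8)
The plan is to deduce this from Theorem~\ref{thm:equiv_closure}, which for a representable $X$ gives $\equi{}{X} = \satur{\infty}{\equi{}{X} \cup \degg{X}}$. Since it is immediate from the definition of $\satur{}{-}$ that $\satur{}{\equi{}{X}} \subseteq \satur{\infty}{\equi{}{X} \cup \degg{X}}$, it will be enough to show $y \in \satur{}{\equi{}{X}}$.

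First I would unpack the hypothesis $x \simeq y$: it provides an $(n+1)$\nbd equivalence $h: x \celto y$, so $h \in \equi{}{X}$. Writing $U$ for the common shape of the parallel $n$\nbd cells $x$ and $y$ --- an $n$\nbd atom, since cells are atom-shaped --- the shape of $h$ is the $(n+1)$\nbd atom $W := U \celto U$, whose input and output boundaries $\bord{}{-}W$ and $\bord{}{+}W$ are isomorphic copies of $U$, with $\restr{h}{\bord{}{-}W} = x$ and $\restr{h}{\bord{}{+}W} = y$. Now I would apply the first clause in the definition of $\satur{}{-}$ with $A := \equi{}{X}$, $\alpha := -$, the atom $W$, the cell $h$, and the isomorphic inclusion $U \incliso \bord{}{+}W \incl W$. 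Its hypotheses all hold: $h \in \equi{}{X}$; the only $n$\nbd atom $V \submol \bord{}{\alpha}W = \bord{}{-}W$ is $\bord{}{-}W$ itself (as $\bord{}{-}W \cong U$ is an atom, it has a single $n$\nbd dimensional element), and $\restr{h}{\bord{}{-}W} = x \in \equi{}{X}$ by hypothesis; and $y = \restr{h}{\bord{}{+}W} = \restr{h}{\bord{}{-\alpha}W}$. Hence $y \in \satur{}{\equi{}{X}}$, and by Theorem~\ref{thm:equiv_closure} this set is contained in $\equi{}{X}$, so $y$ is an equivalence.

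Since each step is just a matter of matching definitions, I do not expect a genuine obstacle. The only subtleties are the orientation bookkeeping --- taking $\alpha = -$ so that the known equivalence $x$ lies on the boundary $\bord{}{\alpha}W$ over which the first clause of $\satur{}{-}$ quantifies, while the target cell $y$ lies on $\bord{}{-\alpha}W$ --- and the observation, valid precisely because $x$ is atom-shaped, that the universally quantified condition ``$\restr{h}{V} \in \equi{}{X}$ for all $n$\nbd atoms $V \submol \bord{}{-}W$'' collapses to the single instance $\restr{h}{\bord{}{-}W} = x$. One could instead try to verify $\equi{}{X} \cup \{y\} \subseteq \mathcal{F}(\ldots)$ and invoke coinduction directly, but that would be strictly more work than routing through Theorem~\ref{thm:equiv_closure}.
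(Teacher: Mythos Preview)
Your argument is correct, and it is genuinely different from the paper's. The paper verifies the defining property of equivalence for $y$ directly: since $x$ and $y$ are parallel, every division horn for $y$ is also a division horn for $x$; one fills it using that $x$ is an equivalence, obtaining a filler with $x$ in the relevant face, and then weakly composes with $e: x \celto y$ or $e': y \celto x$ (the latter obtained from symmetry of $\simeq$) to replace $x$ by $y$. The resulting filler is an equivalence because equivalences are closed under weak composition --- itself a consequence of Theorem~\ref{thm:equiv_closure}.

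Your route bypasses the explicit horn-filling altogether: you observe that the single witness $h: x \celto y$, together with $x \in \equi{}{X}$, places $y$ in $\satur{}{\equi{}{X}}$ by the first clause of the saturation operator, and then Theorem~\ref{thm:equiv_closure} does the rest. This is shorter and makes the dependence on Theorem~\ref{thm:equiv_closure} completely transparent; the paper's proof is closer to the coinductive definition and shows concretely how fillers for $y$ arise from those for $x$, which is perhaps more informative but not needed here. Either way, Theorem~\ref{thm:equiv_closure} is doing the heavy lifting.
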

\begin{proof}
By symmetry, there are equivalences $e: x \celto y$ and $e': y \celto x$. Since $x$ and $y$ are parallel, any division horn for $x$ is also a division horn for $y$. Given an equivalence filler $h$ for a division horn for $x$, such that $x \subsph \bord{}{\alpha}h$, we can weakly compose $h$ with $e$ (if $\alpha = +$) or $e'$ (if $\alpha = -$) to obtain an equivalence filler for the corresponding division horn for $y$. 
\end{proof}

In a representable diagrammatic set, we can also recognise units by the fact that they act like units for weak composition. 

\begin{prop}
Let $1_x: x \celto x$ be an $n$\nbd cell in a representable diagrammatic set. The following are equivalent:
\begin{enumerate}[label=(\alph*)]
	\item $1_x$ is a unit on $x$;
	\item for all $n$\nbd cells $y, z$ with $x \subsph \bord{}{-}y$ and $x \subsph \bord{}{+}z$, we have
	\begin{equation*}
		(1_x \cup y) \simeq y, \quad \quad (z \cup 1_x) \simeq z.
	\end{equation*}
\end{enumerate}
\end{prop}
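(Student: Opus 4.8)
The plan is to prove the two implications separately, using Theorem~\ref{thm:equiv_closure} to ensure that all the structural cells produced along the way --- units (Construction~\ref{cons:ou}), unitors (Construction~\ref{cons:unitors}), and compositors --- are genuine equivalences, so that each one witnesses a relation $\simeq$. Write $V$ for the shape of the spherical $(n-1)$\nbd diagram $x$, and $p_V^*x: \infl{V} \to X$ for the degenerate $n$\nbd diagram of Construction~\ref{cons:ou}; recall from the proof of the corollary on uniqueness of weak composites that a cell is a weak composite of a spherical diagram if and only if it is equivalent to it, so that $1_x$ is a unit on $x$ \emph{precisely when} $1_x \simeq p_V^*x$.

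For (a)~$\Rightarrow$~(b): let $y$ be an $n$\nbd cell with $x \subsph \bord{}{-}y$, say $x = \restr{y}{V_0}$ with $V_0 \subsph \bord{}{-}U_y$, where $U_y$ is the shape of $y$. Construction~\ref{cons:unitors} applied to $y$ and $V_0$ yields a left unitor $\tilde{\ell}_y^{V_0}: (u \cup y) \celto y$, where $u$ is \emph{some} unit on $\restr{y}{V_0} = x$ and the union is taken along the copy of $x$ inside $\bord{}{-}y$; by Theorem~\ref{thm:equiv_closure} it is an equivalence, so $(u \cup y) \simeq y$. Since weak composites --- hence units on $x$ --- are unique up to equivalence, $1_x \simeq u$; and $u \subsph (u \cup y)$, so Proposition~\ref{prop:equi-subst} gives $(1_x \cup y) = (u \cup y)[1_x/u] \simeq y$. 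The relation $(z \cup 1_x) \simeq z$ for $x \subsph \bord{}{+}z$ follows symmetrically from a right unitor $\tilde{r}_z^{V_0'}$.

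For (b)~$\Rightarrow$~(a): fix a unit $u$ on $x$. Then $u: x \celto x$ is itself an $n$\nbd cell satisfying (a), hence, by the implication just proved, also (b). Consider the spherical $n$\nbd diagram $1_x \cup u$ obtained by gluing $1_x$ to $u$ along the entire boundary $\bord{}{+}(1_x) = x = \bord{}{-}u$, i.e.\ the composite $1_x \cp{n-1} u$. Property (b) for $1_x$, applied with $y := u$ (legitimate since $x \subsph \bord{}{-}u = x$), gives $(1_x \cup u) \simeq u$; property (b) for $u$, applied with $z := 1_x$ (legitimate since $x \subsph \bord{}{+}(1_x) = x$), gives $(1_x \cup u) \simeq 1_x$. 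By symmetry and transitivity of $\simeq$ we obtain $1_x \simeq u$, and since $u$ is a weak composite of $p_V^*x$ we have $u \simeq p_V^*x$; hence $1_x \simeq p_V^*x$, i.e.\ $1_x$ is a unit on $x$. No circularity arises, since this step invokes (a)~$\Rightarrow$~(b) only for the reference unit $u$.

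I expect the only delicate point to be the bookkeeping in (a)~$\Rightarrow$~(b): one must check that the unit occurring in the unitor $\tilde{\ell}_y^{V_0}$ is a unit on $x$ itself rather than on some other restriction, that the gluing face in the unitor is exactly the one singled out by the hypothesis $x \subsph \bord{}{-}y$, and that the substitution $(u \cup y)[1_x/u]$ really equals $1_x \cup y$ --- all of which follow from the explicit description of $L_y^{V_0}$ in Construction~\ref{cons:unitor_molec} together with the fact that $1_x$ and $u$ are parallel $n$\nbd cells $x \celto x$. Everything else is a routine chase through the definitions of $\simeq$ and of weak composite; the conceptual content of (b)~$\Rightarrow$~(a) is just the observation that $1_x$ and a reference unit absorb one another from opposite sides of the composite $1_x \cp{n-1} u$.
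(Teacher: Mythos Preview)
Your proof is correct and follows essentially the same approach as the paper's. For (a)$\Rightarrow$(b) the paper simply says it ``follows from the fact that unitors are equivalences''; your extra care in noting that the unitor of Construction~\ref{cons:unitors} involves \emph{some} unit $u$ on $x$ and then substituting $1_x$ for $u$ via Proposition~\ref{prop:equi-subst} is a legitimate elaboration of that one-line argument. For (b)$\Rightarrow$(a) the paper writes the chain $1_x \simeq (1_x \cp{n-1} \thin{x}') \simeq \thin{x}' \simeq p_U^*x$, which is exactly your argument: the first step uses (b) for the reference unit $\thin{x}'$ with $z := 1_x$, the second uses (b) for $1_x$ with $y := \thin{x}'$.
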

\begin{proof}
The implication from \emph{(a)} to \emph{(b)} follows from the fact that unitors are equivalences. For the converse, we know that there is a unit $\thin{x}'$ which is a weak composite of $p_U^*x$. Then
\begin{equation*}
	1_x \simeq (1_x \cp{n-1} \thin{x}') \simeq \thin{x}' \simeq p_U^*x,
\end{equation*}
so $1_x$ is also a weak composite of $p_U^*x$. 
\end{proof}

\subsection{Morphisms of representables} \label{sec:morphrep}

Diagrammatic sets have degeneracies as structure which is strictly preserved by all morphisms. However, for a morphism of RDSs to qualify as a functor of weak $\omega$\nbd categories, we would expect it to also preserve composites, at least weakly. We will show that no additional condition is required: every morphism of representables in $\dgmset$ sends equivalences to equivalences, hence weak composites to weak composites. For this we need an alternative characterisation of equivalences.

\begin{dfn}
Let $e: x \celto y$ be an $n$\nbd cell in a diagrammatic set $X$. Let $x$ be of shape $U$ and $y$ of shape $V$. We say that $e$ is \emph{weakly invertible} if there exists an $n$\nbd cell $e^*: y \celto x$ and weakly invertible $(n+1)$\nbd cells $h: e \cp{n-1} e^* \celto p_U^*x$ and $h': e^* \cp{n-1} e \celto p_V^*y$. In this case, $e^*$ is called a \emph{weak inverse} of $e$.

We write $\invrt{X}$ for the set of invertible cells of $X$.
\end{dfn}

\begin{remark}
This is another coinductive definition, with the following coinductive proof principle: for all subsets $A \subseteq S$ of the set $S$ of cells of $X$, let
\begin{align*}
\mathcal{I}(A) := \; & \{e: x \celto y \,|\, \text{ there exist } (e^*: y \celto x) \in S \\
	& \text{and } (h: e \cp{n-1} e^* \celto p_U^*x), (h': e^* \cp{n-1} e \celto p_V^*y) \in A \}.
\end{align*}
If $A \subseteq \mathcal{I}(A)$, then $A \subseteq \invrt{X}$. 
\end{remark}

\begin{remark}
Unlike equivalences, weakly invertible cells obviously determine a symmetric relation on spherical diagrams: if $e: x \celto y$ is weakly invertible, then a weak inverse $e^*: y \celto x$ is also weakly invertible.
\end{remark}

\begin{lem} \label{lem:equi_in_invrt}
Let $X$ be a representable diagrammatic set. Then $\equi{}{X} \subseteq \invrt{X}$.
\end{lem}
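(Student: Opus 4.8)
The strategy is to apply the coinductive proof principle for weak invertibility stated in the remark just before this lemma: it suffices to exhibit a set $A$ of cells with $\equi{}{X} \subseteq A$ and $A \subseteq \mathcal{I}(A)$. The natural candidate is $A := \equi{}{X}$ itself, using the closure results of Section \ref{sec:closure}. So the plan is to take an arbitrary $n$\nbd equivalence $e: x \celto y$ of shape $U$, with $x$ of shape $U$, and produce a weak inverse $e^*: y \celto x$ together with witnessing cells $h: e \cp{n-1} e^* \celto p_U^* x$ and $h': e^* \cp{n-1} e \celto p_V^* y$, all of which are themselves equivalences — so that they automatically lie in $A$ and feed the coinduction.

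First I would construct $e^*$. Since $e$ is an equivalence, every division horn for $e$ at $\bord{}{-}U$ has an equivalence filler; dividing $e$ by a unit $\thin{x}: x \celto x$ (which exists by representability, and is an equivalence by Theorem \ref{thm:equiv_closure}) on the appropriate left division horn yields a cell $e^*: y \celto x$ together with an equivalence $h: e \cp{n-1} e^* \celto \thin{x}$. Composing $h$ with an equivalence $\thin{x} \celto p_U^* x$ — which exists because $\thin{x}$ is by definition a weak composite of $p_U^* x$, hence equivalent to it — and using transitivity of $\simeq$ (Proposition on $\simeq$ being an equivalence relation), we get an equivalence $e \cp{n-1} e^* \celto p_U^* x$, which we rename $h$. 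For the other side, I would similarly divide $e$ by a unit $\thin{y}: y \celto y$ using a right division horn to obtain a cell $e^{**}: y \celto x$ with an equivalence $h': e^{**} \cp{n-1} e \celto p_V^* y$; then I would argue $e^* \simeq e^{**}$ — both are ``weak inverses on one side'', and since $e$ is an equivalence one can divide to identify them up to equivalence — and use Proposition \ref{prop:equi-subst} (substitution of equivalent subdiagrams) to replace $e^{**}$ by $e^*$ inside $h'$, obtaining the required $h': e^* \cp{n-1} e \celto p_V^* y$ which remains an equivalence by Theorem \ref{thm:equiv_closure}.

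With these in hand, $e \in \mathcal{I}(\equi{}{X})$, and since $h, h'$ are equivalences the same argument applies to them, so $\equi{}{X} \subseteq \mathcal{I}(\equi{}{X})$; by the coinductive principle $\equi{}{X} \subseteq \invrt{X}$.

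The main obstacle I anticipate is the bookkeeping in matching the shapes and boundaries so that the two one-sided weak inverses can be identified, i.e. showing $e^* \simeq e^{**}$ cleanly: one must set up the division horns so that $e^*$ and $e^{**}$ are genuinely parallel cells $y \celto x$, and then run the standard ``inverses are unique up to equivalence'' argument — divide $e^{**}$ (or compose with $h, h'$ appropriately) to get an equivalence $e^* \celto e^{**}$. This is routine given Theorem \ref{thm:equiv_closure} (everything we build lands in $\satur{\infty}{\equi{}{X}\cup\degg{X}} = \equi{}{X}$), but it requires care that each union along a boundary is well-formed and spherical, so that weak composites exist. Everything else is a direct application of representability plus the closure theorem.
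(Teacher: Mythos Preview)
Your proposal is correct and follows essentially the same approach as the paper: take $A := \equi{}{X}$, divide units $\thin{x}$ and $\thin{y}$ by $e$ to obtain two one-sided candidate inverses, compose with equivalences $\thin{x} \celto p_U^*x$ and $\thin{y} \celto p_V^*y$, and then identify the two candidates up to equivalence. The paper writes out the identification explicitly as the chain $e' \simeq (e' \cp{n-1} \thin{x}) \simeq (e' \cp{n-1} e \cp{n-1} e^*) \simeq (\thin{y} \cp{n-1} e^*) \simeq e^*$, which is exactly the ``inverses are unique up to equivalence'' argument you anticipated.
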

\begin{proof}
Let $\thin{x}$ and $\thin{y}$ be units on spherical diagrams $x$ and $y$ of shapes $U$ and $V$, respectively. Suppose $e: x \celto y$ is an $n$\nbd equivalence. Dividing $\thin{x}$ and $\thin{y}$ by $e$, we get $n$\nbd cells $e^*: y \celto x$ and $e': y \celto x$, together with equivalences $h: e \cp{n-1} e^* \celto 1_x$ and $h': e' \cp{n-1} e \celto 1_y$. 

Weakly precomposing the first with an equivalence $1_x \celto p_U^*x$ gives an equivalence $e \cp{n-1} e^* \celto p_U^*x$. Moreover, by the results of the previous section we have
\begin{equation*}
	e' \simeq (e' \cp{n-1} 1_x) \simeq (e' \cp{n-1} e \cp{n-1} e^*) \simeq (1_y \cp{n-1} e^*) \simeq e^*,
\end{equation*}
so weakly postcomposing $h'$ with an equivalence $1_y \celto p_V^*y$ and precomposing with an equivalence $e^* \celto e'$ we obtain an equivalence $e^* \cp{n-1} e \celto p_V^*y$. This proves that $\equi{}{X} \subseteq \mathcal{I}(\equi{}{X})$, hence that $\equi{}{X} \subseteq \invrt{X}$.
\end{proof}

The converse requires slightly more effort.

\begin{lem} \label{lem:invrt_closure}
Let $X$ be a representable diagrammatic set. Then $\invrt{X}$ is closed under weak composition.
\end{lem}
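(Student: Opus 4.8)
The plan is to prove this by coinduction, in close analogy with the classical fact that isomorphisms (or equivalences) compose. Concretely, I would let $A$ be the set of all cells of $X$ that occur as a weak composite $\compos{w}$ of a spherical diagram $w$ all of whose top\nbd dimensional cells lie in $\invrt{X}$. Every cell $e\in\invrt{X}$ is a weak composite of itself (its unit being an equivalence, hence in $\invrt{X}$ by Lemma \ref{lem:equi_in_invrt}, so $e\simeq e$), so $\invrt{X}\subseteq A$; conversely, once $A\subseteq\invrt{X}$ is established, the lemma follows, since any weak composite of a spherical diagram of weakly invertible cells belongs to $A$. By the coinductive principle for $\invrt{X}$ it therefore suffices to show $A\subseteq\mathcal{I}(A)$.

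To verify this I would take $e=\compos{w}\in A$, say $e\colon x\celto y$, and use Lemma \ref{lem:composition_form} (as in the proof of Lemma \ref{lem:nary_composite}) to write $w=e_1\cp{n-1}\ldots\cp{n-1}e_m$ as a composite of its $n$\nbd cells, with $e_i\colon x_{i-1}\celto x_i$, $x_0=x$, $x_m=y$. Each $e_i$ lies in $\invrt{X}$, so comes with a weak inverse $e_i^*$ and weakly invertible $(n+1)$\nbd cells $h_i\colon e_i\cp{n-1}e_i^*\celto p^*x_{i-1}$ and $h_i'\colon e_i^*\cp{n-1}e_i\celto p^*x_i$. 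Since weak invertibility is a symmetric relation, $w^*:=e_m^*\cp{n-1}\ldots\cp{n-1}e_1^*$ is again a spherical $n$\nbd diagram of weakly invertible cells, so the candidate weak inverse $e^*:=\compos{w^*}\colon y\celto x$ lies in $A$. It then remains to produce witnesses $h\colon e\cp{n-1}e^*\celto p^*x$ and $h'\colon e^*\cp{n-1}e\celto p^*y$ lying in $A$.

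For $h$ I would build a ``telescope'' spherical $(n+1)$\nbd diagram $D\colon e\cp{n-1}e^*\celto p^*x$ out of the following pieces, each whiskered on the left and right by suitable $n$\nbd cells (using degenerate $(n+1)$\nbd cells $p^*(-)$ as whiskers): first the compositors $c\colon e\celto w$ and $c^*\colon e^*\celto w^*$ afforded by representability, which carry $e\cp{n-1}e^*$ to $e_1\cp{n-1}\ldots\cp{n-1}e_m\cp{n-1}e_m^*\cp{n-1}\ldots\cp{n-1}e_1^*$; then, for $i=m,m-1,\ldots,1$, the witness $h_i$, which contracts the adjacent pair $e_i\cp{n-1}e_i^*$ to the degenerate cell $p^*x_{i-1}$, followed (for $i>1$) by a unitor from Construction \ref{cons:unitors} absorbing that degenerate cell; after the $m$\nbd th step one is left with $p^*x_0=p^*x$. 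Every $(n+1)$\nbd cell occurring in $D$ is a compositor, a unitor, or a degeneracy --- hence an equivalence, so in $\invrt{X}$ by Theorem \ref{thm:equiv_closure} and Lemma \ref{lem:equi_in_invrt} --- or one of the $h_i$, which is weakly invertible by hypothesis. Thus $D$ is a spherical $(n+1)$\nbd diagram all of whose $(n+1)$\nbd cells lie in $\invrt{X}$, so a weak composite $h:=\compos{D}$ exists, is parallel to $D$ and hence has boundary $e\cp{n-1}e^*\celto p^*x$, and belongs to $A$ by definition. A symmetric construction, using the compositors in the opposite direction (available by Theorem \ref{thm:equiv_closure}) together with the $h_i'$, yields $h'\colon e^*\cp{n-1}e\celto p^*y$ in $A$. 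Hence $e\in\mathcal{I}(A)$, so $A\subseteq\mathcal{I}(A)$ and therefore $A\subseteq\invrt{X}$.

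The main work --- and the place most at risk of hiding subtleties --- is assembling the telescope $D$: one has to check that each whiskering is a well\nbd formed union of diagrams (the relevant $(n-1)$\nbd boundaries genuinely agree), that the successive $\cp{n}$\nbd composites are defined, and that the resulting shape is a legitimate $(n+1)$\nbd molecule with spherical boundary, so that ``weak composite of $D$'' is meaningful; much of this parallels the manipulations already carried out in Section \ref{sec:closure}. A secondary, purely logical point of care is that throughout one uses only that equivalences are weakly invertible (Lemma \ref{lem:equi_in_invrt}), and never the converse, which is not yet available at this stage.
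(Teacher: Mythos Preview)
Your approach is essentially the paper's: set up a coinduction, decompose the diagram into its individual $n$\nbd cells, reverse each, and build a telescope of contractions $e_i\cp{n-1}e_i^*\celto p^*(-)$ interleaved with absorptions, then take a weak composite. You are more explicit than the paper about naming the coinductive set $A$, which is helpful.

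There is one genuine oversimplification worth flagging. You write $w=e_1\cp{n-1}\ldots\cp{n-1}e_m$ with $e_i\colon x_{i-1}\celto x_i$, as if the $n$\nbd cells chain globe\nbd to\nbd globe. Lemma~\ref{lem:composition_form} only gives $w=f_1\cp{n-1}\ldots\cp{n-1}f_m$ where each $f_i$ contains a single $n$\nbd cell $e_i\colon x_i\celto y_i$ of some shape $U_i$; the $e_i$ need not have matching boundaries, and $w^*$ must be formed as $e_m^*\cup\ldots\cup e_1^*$ (a union along submolecules), not as a $\cp{n-1}$\nbd chain. This is exactly the subtlety you anticipated in your last paragraph, and it does bite: the ``unitor absorbing $p^*x_{i-1}$'' step is not quite as in Construction~\ref{cons:unitors}, because the degenerate cell sits inside a larger diagram rather than against a single cell. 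The paper handles this by prefixing a unit $\thin{x}$ to the whole telescope at the outset, so that at each stage $\thin{x}\cup\tilde{e}_{j}$ is a spherical subdiagram into which the just\nbd created degenerate cell can be absorbed via $(\thin{x}\cup\tilde{e}_{j})\cup p_{U_{j+1}}^*x_{j+1}\simeq(\thin{x}\cup\tilde{e}_{j})$; this is cleaner than invoking unitors on individual cells. With that adjustment (and writing $\cup$ where you wrote $\cp{n-1}$), your argument goes through and matches the paper's.
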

\begin{proof}
Let $e$ be a spherical $n$\nbd diagram in $X$, and let $\compos{e}: x \celto y$ be a weak composite. Write $e = f_1 \cp{n-1} \ldots \cp{n-1} f_m$, where each $f_i$ contains a single $n$\nbd cell $e_i: x_i \celto y_i$ of shape $U_i$, and suppose the $e_i$ all have weak inverses $e^*_i$. Then $e^* := e_m^* \cup \ldots \cup e_1^*$ is also a spherical diagram, which has a weak composite $\compos{e^*}: y \celto x$. We also let
\begin{equation*}
	\tilde{e}_j := e_1 \cup \ldots \cup e_j, \quad \quad \tilde{e}^*_j := e_j^* \cup \ldots \cup e_1^*.
\end{equation*}
Fix a unit $\thin{x}$ on $x$; we have
\begin{equation*}
	\compos{e} \cp{n-1} \compos{e^*} \simeq e \cp{n-1} e^* \simeq \thin{x} \cp{n-1} e \cp{n-1} e^*.
\end{equation*} 
Now, $e_m \cp{n-1} e_m^*$ is a spherical subdiagram of the latter diagram, so we can compose with a weakly invertible cell $h: (e_m \cp{n-1} e_m^*) \celto p_{U_m}^*x_m$ to obtain a cell
\begin{equation*}
	\thin{x} \cp{n-1} e \cp{n-1} e^* \celto \thin{x} \cp{n-1} (\tilde{e}_{m-1} \cup p_{U_m}^*x_m \cup \tilde{e}_{m-1}^*),
\end{equation*}
Since $1_x \cup \tilde{e}_{m-1}$ is a spherical subdiagram, from $(1_x \cup \tilde{e}_{m-1}) \cup p_{U_m}^*x_m \simeq (1_x \cup \tilde{e}_{m-1})$ we obtain
\begin{equation*}
	1_x \cp{n-1} (e' \cup p_{U_m}^*x_m \cup e'^*) \simeq 1_x \cup (\tilde{e}_{m-1} \cp{n-1} \tilde{e}_{m-1}^*).
\end{equation*}
Next, $e_{m-1} \cp{n-1} e_{m-1}^*$ is a spherical subdiagram. Proceeding like before, we can obtain cells
\begin{equation*}
	\thin{x} \cup (\tilde{e}_{j} \cp{n-1} \tilde{e}_{j}^*) \celto \thin{x} \cup (\tilde{e}_{j-1} \cp{n-1} \tilde{e}_{j-1}^*)
\end{equation*}
for $j = 1,\ldots,m$. Finally, from $e_1 \cp{n-1} e_1^* \celto p_{U_1}^*x_1$, and $1_x \cup p_{U_1}^*x_1 \simeq 1_x \simeq p_{U}^*x$, we can construct a cell
\begin{equation*}
	\compos{e} \cp{n-1} \compos{e^*} \celto p_U^*x
\end{equation*}
as a weak composite of weakly invertible cells and of equivalences, which by Lemma \ref{lem:equi_in_invrt} are also weakly invertible. The construction of a cell $\compos{e^*} \cp{n-1} \compos{e} \celto p_V^*y$ is dual, and we conclude by coinduction.
\end{proof}

\begin{prop} \label{prop:equi_equals_invrt}
Let $X$ be a representable diagrammatic set. Then $\invrt{X} = \equi{}{X}$.
\end{prop}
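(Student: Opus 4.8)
The plan is as follows. The inclusion $\equi{}{X}\subseteq\invrt{X}$ is Lemma \ref{lem:equi_in_invrt}, so all that remains is $\invrt{X}\subseteq\equi{}{X}$. Since $\equi{}{X}$ is the greatest fixed point of the monotone operator $\mathcal{F}$ of Remark \ref{rmk:coinduction}, it suffices, by coinduction, to prove $\invrt{X}\subseteq\mathcal{F}(\invrt{X})$: every division horn of a weakly invertible cell --- at its input boundary or at its output boundary, and including the dual horns --- has a filler that is again weakly invertible. The strategy is to transport, essentially verbatim, the horn-filling argument of the proof of Lemma \ref{lem:satur_inductive_step}, with $\invrt{X}$ in the role played there by $\satur{\infty}{\equi{}{X}\cup\degg{X}}$.

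The first thing to record is that $\invrt{X}$ enjoys all the structural properties of $\satur{\infty}{\equi{}{X}\cup\degg{X}}$ that proof relies on: it contains $\equi{}{X}$, and hence (by Theorem \ref{thm:equiv_closure}) every degenerate cell; it is closed under weak composition (Lemma \ref{lem:invrt_closure}); taking weak inverses supplies, for each $h\colon a\celto b$ in $\invrt{X}$, a cell $h^*\colon b\celto a$ in $\invrt{X}$ (the remark following the definition of weakly invertible cell); and it is unbiased for $\equi{}{X}$, since by Lemma \ref{lem:eq_deg_unbiased} together with Theorem \ref{thm:equiv_closure} even the smaller set $\equi{}{X}$ is.

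Now let $e\colon x\celto y$ be an $n$\nbd cell in $\invrt{X}$ with $x$ of shape $U$ and $y$ of shape $V$, weak inverse $e^*\colon y\celto x$, and weak-invertibility witnesses $h\colon e\cp{n-1}e^*\celto p_U^*x$ and $h'\colon e^*\cp{n-1}e\celto p_V^*y$ in $\invrt{X}$. To fill a division horn of $e$ at its output boundary, I run the third case in the proof of Lemma \ref{lem:satur_inductive_step}, presenting $e$ through the reverse $h^*\colon p_U^*x\celto(e\cp{n-1}e^*)$ of $h$ --- that is, with the data of that case instantiated as $e_1:=p_U^*x$, $e_2:=e^*$, and the witnessing higher cell taken to be $h^*$, whose reverse is $h\in\invrt{X}$. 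The crucial point is that in that argument the only cell one ever divides by is $e_1$, and here $e_1=p_U^*x$ is an equivalence, so it lies in $\mathcal{F}(\equi{}{X})\subseteq\mathcal{F}(\invrt{X})$ and $\invrt{X}$ is unbiased for it; all remaining steps use only representability, weak composition of cells in $\invrt{X}$, and Proposition \ref{prop:equiv_reverse}. The outcome is a filler of the horn together with a filler of its dual, both assembled as weak composites of equivalences and cells of $\invrt{X}$, hence both in $\invrt{X}$. A division horn of $e$ at its input boundary is filled symmetrically, by the dual of the same argument, presenting $e$ through $h\colon(e\cp{n-1}e^*)\celto p_U^*x$ itself, again with $e_1:=p_U^*x$ and $e_2:=e^*$. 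Hence $\invrt{X}\subseteq\mathcal{F}(\invrt{X})$, and coinduction gives $\invrt{X}\subseteq\equi{}{X}$.

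I expect the real work to be purely combinatorial bookkeeping: checking that the $(n+1)$\nbd cell produced as the weak composite of the spherical diagram assembled in the transported argument actually has the shape of the horn $W$ being filled, and that the submolecule containments invoked through Proposition \ref{prop:boundary_submol} hold in this configuration. This is, however, identical to the bookkeeping already carried out in Section \ref{sec:closure}. The one genuinely new subtlety is the need to present $e$ through $p_U^*x$ --- equivalently, to arrange that the only divisor ever used is $p_U^*x$ and never $e^*$ --- since dividing by $e^*$ would require $e^*\in\mathcal{F}(\invrt{X})$, a statement of the same logical depth as the conclusion and therefore not available as a hypothesis inside the coinductive step.
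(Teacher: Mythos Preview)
Your proof is correct and follows the same coinductive strategy as the paper: show $\invrt{X}\subseteq\mathcal{F}(\invrt{X})$ by filling division horns for a weakly invertible $e$ from its weak-inverse data, then invoke Lemma~\ref{lem:invrt_closure} to keep the filler in $\invrt{X}$. The paper's argument is just a more direct unfolding of your transported case~3: rather than dividing $z$ by the degenerate cell $p_U^*x$ (which produces an auxiliary $z'$ and requires unbiasedness of $\invrt{X}$ for $p_U^*x$), it inserts a unitor $\ell_z^V\colon z\celto(p_V^*x\cup z)$ directly; and for the dual horn it simply takes a weak inverse of the constructed filler rather than reversing each constituent piece. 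Your indirection through Lemma~\ref{lem:satur_inductive_step} is sound and buys modularity; the paper's route buys transparency and sidesteps the shape bookkeeping you flag at the end.
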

\begin{proof}
We have already proved one inclusion. For the other, let $e: x \celto y$ be a weakly invertible $n$\nbd cell of shape $U$, and let $\Lambda \incl W, \lambda: \Lambda \to X$ be a division horn for $e$ at $\bord{}{+}U$, that is, $\restr{\lambda}{W_0}$ is an $n$\nbd cell $z$ with $x \subsph \bord{}{-}z$. 

Suppose $W_0 = \bord{}{-}W$ and let $V := \bord{}{-}U$. There is a unitor $\ell_z^{V}: z \celto (p_V^*x \cup z)$, and by assumption there are a weak inverse $e^*: y \celto x$ and a weakly invertible cell $h: p_V^*x \celto (e \cp{n-1} e^*)$. Now, $\ell_z^{V} \cup h: z \celto (e \cp{n-1} e^*) \cup z$ is a spherical diagram, and $e^* \cup z$ is a spherical subdiagram of its output boundary. Picking a weak composite $z'$ and compositor $c: (e^* \cup z) \celto z'$, we finally obtain a spherical $(n+1)$\nbd diagram
\begin{equation*}
	\ell_z^{V} \cup h \cup c: z \celto e \cup z'
\end{equation*}
whose $(n+1)$\nbd cells are all weakly invertible. By Lemma \ref{lem:invrt_closure}, its weak composite is weakly invertible, and works as a filler of $\lambda$; any weak inverse fills the dual horn.

The case of division horns for $e$ at $\bord{}{-}U$ is dual. This proves that $\invrt{X} \subseteq \mathcal{F}(\invrt{X})$, so by coinduction $\invrt{X} \subseteq \equi{}{X}$.
\end{proof}

\begin{thm} \label{thm:equiv_preserve}
Let $f: X \to Y$ be a morphism of RDSs. Then $f$ maps equivalences to equivalences.
\end{thm}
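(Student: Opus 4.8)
The direct definition of equivalence is not manifestly preserved by a morphism $f \colon X \to Y$: a division horn for $e$ in $X$ does push forward to a division horn for $f(e)$ in $Y$, but the equivalence property requires fillers for \emph{all} division horns for $f(e)$ in $Y$, and most of these need not come from horns in $X$. So the strategy is to route through the alternative characterisation: by Proposition~\ref{prop:equi_equals_invrt}, in any RDS we have $\equi{}{X} = \invrt{X}$ and $\equi{}{Y} = \invrt{Y}$, so it suffices to show $f$ sends weakly invertible cells to weakly invertible cells. Weak invertibility, unlike the equivalence property, is a condition asserting the \emph{existence} of witnessing cells and equations that are all built from structural operations, and hence transports along $f$.

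First I would record the (routine) fact that a morphism of presheaves $f \colon X \to Y$ on $\atom$ preserves the shape of every cell and commutes with all structural operations used in the definition of weak invertibility: it commutes with faces $\bord{}{\alpha}$ and with degeneracies $p_U^{*}(-)$ (both being instances of precomposition with a map of directed complexes, $f(p^{*}x) = p^{*}(f(x))$), and with unions $x_1 \cup x_2$ of diagrams along a boundary, hence with the partial compositions $\cp{k}$ (since $x_1 \cup x_2$ is the morphism out of a pushout of shapes, and $f$ applied to it is the induced morphism out of the same pushout).

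Then I would apply the coinductive principle for $\invrt{(-)}$ stated after the definition of weak invertibility. Take $A := \{ f(e) \,|\, e \in \invrt{X} \}$, a set of cells of $Y$, and check $A \subseteq \mathcal{I}(A)$: given $e \colon x \celto y$ weakly invertible in $X$, with $x$ of shape $U$ and $y$ of shape $V$, there are $e^{*} \colon y \celto x$ and weakly invertible $(n+1)$\nbd cells $h \colon e \cp{n-1} e^{*} \celto p_U^{*}x$ and $h' \colon e^{*} \cp{n-1} e \celto p_V^{*}y$; applying $f$ and using the previous paragraph gives $f(e^{*}) \colon f(y) \celto f(x)$ together with $f(h) \colon f(e) \cp{n-1} f(e^{*}) \celto p_U^{*}(f(x))$ and $f(h') \colon f(e^{*}) \cp{n-1} f(e) \celto p_V^{*}(f(y))$, and $f(h), f(h') \in A$ because $h, h'$ are weakly invertible. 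Hence $f(e) \in \mathcal{I}(A)$. By coinduction $A \subseteq \invrt{Y}$, i.e. $f(\invrt{X}) \subseteq \invrt{Y}$, and composing with the two instances of Proposition~\ref{prop:equi_equals_invrt} yields $f(\equi{}{X}) \subseteq \equi{}{Y}$.

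\textbf{Main obstacle.} There is no serious difficulty once the characterisation $\equi{}{} = \invrt{}$ is available; the only thing to be careful about is the bookkeeping in the second paragraph, namely that $f$ genuinely commutes with $p_U^{*}(-)$ and with the $\cp{k}$ on diagrams (not just on cells), so that the data witnessing $f(e) \in \invrt{Y}$ is exactly the $f$\nbd image of the data witnessing $e \in \invrt{X}$. The conceptual weight is entirely carried by Proposition~\ref{prop:equi_equals_invrt}, whose proof in turn rested on Lemmas~\ref{lem:equi_in_invrt} and~\ref{lem:invrt_closure} and Theorem~\ref{thm:equiv_closure}.

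\begin{proof}
By Proposition~\ref{prop:equi_equals_invrt}, applied to both $X$ and $Y$, we have $\equi{}{X} = \invrt{X}$ and $\equi{}{Y} = \invrt{Y}$, so it suffices to prove that $f$ maps weakly invertible cells to weakly invertible cells.

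Being a morphism of presheaves on $\atom$, $f$ preserves the shape of each cell and commutes with the structural operations: for any diagram $x \colon U \to X$ and map of regular directed complexes $p \colon V \to U$ we have $f(p^{*}x) = p^{*}(f(x))$, so $f$ commutes with faces $\bord{}{\alpha}$ and with degeneracies $p_U^{*}(-)$; and for any union $x_1 \cup x_2$ of diagrams along a common boundary, $f(x_1 \cup x_2) = f(x_1) \cup f(x_2)$, since both are the morphisms induced out of the same pushout of shapes, so $f$ commutes with the partial compositions $\cp{k}$.

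Let $A := \{ f(e) \,|\, e \in \invrt{X} \}$, a set of cells of $Y$. Given $f(e) \in A$, where $e \colon x \celto y$ is a weakly invertible $n$\nbd cell of $X$ with $x$ of shape $U$ and $y$ of shape $V$, there are by definition a cell $e^{*} \colon y \celto x$ of $X$ and weakly invertible $(n+1)$\nbd cells $h \colon e \cp{n-1} e^{*} \celto p_U^{*}x$ and $h' \colon e^{*} \cp{n-1} e \celto p_V^{*}y$. Applying $f$ and using the previous paragraph, $f(e) \colon f(x) \celto f(y)$ has $f(x)$ of shape $U$ and $f(y)$ of shape $V$, $f(e^{*}) \colon f(y) \celto f(x)$ is a cell of $Y$, and
\begin{equation*}
	f(h) \colon f(e) \cp{n-1} f(e^{*}) \celto p_U^{*}(f(x)), \qquad f(h') \colon f(e^{*}) \cp{n-1} f(e) \celto p_V^{*}(f(y)),
\end{equation*}
with $f(h), f(h') \in A$ since $h$ and $h'$ are weakly invertible. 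Hence $f(e) \in \mathcal{I}(A)$, so $A \subseteq \mathcal{I}(A)$, and by the coinductive principle for weakly invertible cells, $A \subseteq \invrt{Y}$. Therefore $f(\equi{}{X}) = f(\invrt{X}) \subseteq \invrt{Y} = \equi{}{Y}$.
\end{proof}
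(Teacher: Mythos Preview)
Your proof is correct and follows essentially the same approach as the paper: reduce to weak invertibility via Proposition~\ref{prop:equi_equals_invrt}, then push forward the witnessing data $(e^{*}, h, h')$ along $f$ using $f(p_U^{*}x) = p_U^{*}f(x)$ and $f(e \cp{n-1} e^{*}) = f(e) \cp{n-1} f(e^{*})$, and conclude by coinduction that $f(\invrt{X}) \subseteq \invrt{Y}$. The paper's proof is slightly terser but otherwise identical.
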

\begin{proof}
By Proposition \ref{prop:equi_equals_invrt}, it suffices to show that $f$ preserves weakly invertible cells. Suppose $e: x \celto y$ is weakly invertible, let $e^*: y \celto x$ be a weak inverse, and $h: e \cp{n-1} e^* \celto p_U^*x, h': e^* \cp{n-1} e \celto p_V^*y$ be weakly invertible. 

Because $f(p_U^*x) = p_U^*f(x)$ and $f(p_V^*y) = p_V^*f(y)$, we have cells
\begin{equation*}
	f(h): f(e) \cp{n-1} f(e^*) \celto p_U^*f(x), \quad \quad f(h'): f(e^*) \cp{n-1} f(e) \celto p_V^*f(y)
\end{equation*}
in $Y$. This proves that $f(\invrt{X}) \subseteq \mathcal{I}(f(\invrt{X}))$, and by coinduction we conclude that $f(\invrt{X}) \subseteq \invrt{Y}$. 
\end{proof}

\begin{cor}
Let $f: X \to Y$ be a morphism of RDSs, and let $x$ be a spherical diagram in $X$. Then:
\begin{enumerate}[label=(\alph*)]
	\item for all weak composites $\compos{x}$ of $x$, $f(\compos{x})$ is a weak composite of $f(x)$;
	\item for all units $\thin{x}$ on $x$, $f(\thin{x})$ is a unit on $f(x)$.
\end{enumerate}	
\end{cor}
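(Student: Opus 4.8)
The plan is to deduce both statements immediately from Theorem \ref{thm:equiv_preserve}, using only that a morphism of diagrammatic sets, being a morphism of presheaves on $\atom$, strictly commutes with restriction along inclusions of molecules and with the action of degeneracy maps; writing $f(y)$ for the image under $f$ of a diagram $y$ in $X$, this gives $\restr{f(y)}{V} = f(\restr{y}{V})$ and $g^*f(y) = f(g^*y)$ for all the inclusions $V$ and maps $g$ of atoms that appear below. I would begin by noting that if $x: U \to X$ is a spherical $n$\nbd diagram then $f(x): U \to Y$ is a diagram of the same shape $U$, hence again spherical, and since $Y$ is an RDS both assertions are well-posed.

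For part (a), I would take a weak composite $\compos{x}$ of $x$ and unfold the definition: there is a composition horn $\Lambda \incl W$ with $\Lambda$ of shape $U$, together with a compositor $c: W \to X$ --- that is, an $(n+1)$\nbd equivalence with $\restr{c}{\Lambda} = x$ and $\restr{c}{\bord{}{-\alpha}W} = \compos{x}$, where $\bord{}{\alpha}W$ is the image of $\Lambda$. Applying $f$, the cell $f(c): W \to Y$ then satisfies $\restr{f(c)}{\Lambda} = f(x)$ and $\restr{f(c)}{\bord{}{-\alpha}W} = f(\compos{x})$, and by Theorem \ref{thm:equiv_preserve} it is an $(n+1)$\nbd equivalence in $Y$. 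Hence $f(c)$ is a compositor for $f(x)$ exhibiting $f(\compos{x})$ as a weak composite of $f(x)$.

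For part (b), I would use that a unit $\thin{x}$ on $x$ is, by definition, a weak composite of the spherical $(n+1)$\nbd diagram $p_U^*x: O(U) \to X$, which is spherical by Proposition \ref{prop:ou_regular}. Since $f$ commutes with the action of the map $p_U: O(U) \surj U$, we have $f(p_U^*x) = p_U^*f(x)$. Applying part (a) to the diagram $p_U^*x$ with its weak composite $\thin{x}$ then yields that $f(\thin{x})$ is a weak composite of $p_U^*f(x)$, which is exactly the statement that $f(\thin{x})$ is a unit on $f(x)$.

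I do not expect any real obstacle: all the content sits in Theorem \ref{thm:equiv_preserve}, and what remains is the routine observation that morphisms of diagrammatic sets preserve shapes of diagrams and commute strictly with the structural face and degeneracy operations, which is immediate from their being morphisms of presheaves.
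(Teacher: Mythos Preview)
Your proof is correct and follows essentially the same approach as the paper: both invoke Theorem \ref{thm:equiv_preserve} on a compositor to obtain (a), and both use that $f(p_U^*x) = p_U^*f(x)$ for (b). The only cosmetic difference is that for (b) the paper applies the theorem directly to an equivalence $e: p_U^*x \celto \thin{x}$ exhibiting $\thin{x}$ as a unit, whereas you reduce (b) to (a); these are the same argument.
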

\begin{proof}
By Theorem \ref{thm:equiv_preserve}, if $c: \compos{x} \celto x$ is an equivalence exhibiting $\compos{x}$ as a weak composite of $x$, then $f(c): f(\compos{x}) \celto f(x)$ exhibits $f(\compos{x})$ as a weak composite of $x$. If $e: p_U^*x \celto 1_x$ is an equivalence exhibiting $1_x$ as a unit on $x$, then $f(e): p_U^*f(x) \celto f(1_x)$ exhibits $f(1_x)$ as a unit on $f(x)$.
\end{proof}

We have the following consequence (which may hold more generally).

\begin{prop}
Let $\{X_i\}_{i \in I}$ be a family of RDSs, and let $x = (x_i)_{i \in I}$ be a cell of $\prod_{i \in I} X_i$. Then $x$ is an equivalence if and only if each projection $x_i$ is an equivalence.
\end{prop}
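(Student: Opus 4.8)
\section*{Proof proposal}

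The plan is to obtain both implications essentially for free from results already in hand. For the ``if'' direction, suppose each projection $x_i \colon U \to X_i$ is an equivalence. A cell of $\prod_{i\in I} X_i$ is, by the universal property of the product, exactly a family of cells $(x_i \colon U \to X_i)_{i\in I}$ all of the \emph{same} shape $U$; hence Proposition \ref{prop:equiv_prod} applies verbatim and gives that $x = (x_i)_{i\in I}$ is an equivalence. This direction therefore needs no new argument.

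For the ``only if'' direction, first recall that by Proposition \ref{prop:rep_product_closure} the product $\prod_{i\in I} X_i$ is itself representable. For each $j \in I$, the projection $p_j \colon \prod_{i\in I} X_i \to X_j$ is a morphism of presheaves, hence a morphism in $\dgmset$ between two RDSs, that is, a morphism of RDSs. By Theorem \ref{thm:equiv_preserve}, $p_j$ maps equivalences to equivalences; so if $x$ is an equivalence, then so is $x_j = p_j(x)$ for every $j \in I$.

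There is no genuine obstacle here: the substantive work has already been done, in Proposition \ref{prop:equiv_prod} (a direct coinductive argument on the product) and in Theorem \ref{thm:equiv_preserve} (which rests on the characterisation $\invrt{X} = \equi{}{X}$ of Proposition \ref{prop:equi_equals_invrt}). The only point to verify, and it is immediate, is that the projections out of a product of RDSs qualify as morphisms of RDSs so that Theorem \ref{thm:equiv_preserve} applies; this is precisely where representability of the product, Proposition \ref{prop:rep_product_closure}, is used. (Alternatively, one could bypass Theorem \ref{thm:equiv_preserve} and argue directly: if $x$ is weakly invertible in $\prod_{i\in I} X_i$, then applying $p_j$ to a weak inverse and to the witnessing weakly invertible $(n+1)$\nbd cells, and using $p_j(p_U^*x) = p_U^*p_j(x)$, shows $x_j \in \invrt{X_j} = \equi{}{X_j}$.)
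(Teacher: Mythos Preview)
Your proof is correct and follows essentially the same approach as the paper: the ``if'' direction is Proposition~\ref{prop:equiv_prod}, and the ``only if'' direction uses representability of the product (Proposition~\ref{prop:rep_product_closure}) together with Theorem~\ref{thm:equiv_preserve} applied to the projections. The paper's proof is terser and leaves the invocation of Theorem~\ref{thm:equiv_preserve} implicit, but the argument is the same.
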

\begin{proof}
One implication is Proposition \ref{prop:equiv_prod}. For the other, suppose $x$ is an equivalence. By Proposition \ref{prop:rep_product_closure} $\prod_{i \in I} X_i$ is representable, therefore each projection $p_j: \prod_{i \in I} X_i \to X_j$ maps $x$ to an equivalence.
\end{proof}

Thus we have the following candidate for a category of weak $\omega$\nbd categories and weak functors.
\begin{dfn}
We write $\rdgmset$ for the full subcategory of $\dgmset$ on the representable diagrammatic sets.
\end{dfn}

We leave it to future work to investigate any further structure on $\rdgmset$, including higher morphisms, model structures, or monoidal and closed structures. 

We briefly note that, given the properties of the relation $\simeq$, we have an obvious candidate for a notion of weak equivalence in $\rdgmset$, based on the $\omega$\nbd weak equivalences of \cite{lafont2010folk}.

\begin{dfn}
Let $f: X \to Y$ be a morphism of RDSs. We say that $f$ is a \emph{weak equivalence} if 
\begin{enumerate}
	\item for all 0-cells $y$ in $Y$, there exists a 0-cell $x$ in $X$ such that $f(x) \simeq y$, and
	\item for all parallel spherical $n$\nbd diagrams $x_1, x_2$ in $X$ and $(n+1)$\nbd cells $y: f(x_1) \celto f(x_2)$ in $Y$,  there exists an $(n+1)$\nbd cell $x: x_1 \celto x_2$ in $X$ such that $f(x) \simeq y$.
\end{enumerate}
\end{dfn}

Some of the proofs of \cite{lafont2010folk} translate immediately.
\begin{prop}
Let $f: X \to Y$ be a weak equivalence of RDSs. If $x, x'$ are parallel spherical $n$\nbd diagrams in $X$ such that $f(x) \simeq f(x')$, then $x \simeq x'$.
\end{prop}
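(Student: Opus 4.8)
The plan is to transport the weak-inverse data of an equivalence in $Y$ back along $f$ and close the argument coinductively, exploiting the identification $\invrt{X} = \equi{X}$ (and $\invrt{Y} = \equi{Y}$) from Proposition \ref{prop:equi_equals_invrt}.

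First I would fix an $(n+1)$-equivalence $\bar h\colon f(x)\celto f(x')$ witnessing $f(x)\simeq f(x')$. Since $x,x'$ are parallel spherical $n$-diagrams in $X$, the second clause of the definition of weak equivalence produces an $(n+1)$-cell $g\colon x\celto x'$ in $X$ with $f(g)\simeq\bar h$. As equivalences are stable under $\simeq$ (a result of Section \ref{sec:closure}) and $Y$ is an RDS, $f(g)$ is an equivalence, hence $f(g)\in\invrt{Y}$. It is therefore enough to prove $g\in\equi{X}$, since this exhibits $x\simeq x'$.

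The crux is the coinductive claim that the set $A$ of cells $e$ of $X$ with $f(e)\in\invrt{Y}$ satisfies $A\subseteq\mathcal{I}(A)$, where $\mathcal{I}$ is the functor from the definition of weakly invertible cells; then $A\subseteq\invrt{X}=\equi{X}$ by coinduction and Proposition \ref{prop:equi_equals_invrt}, and we are done because $g\in A$. To check $A\subseteq\mathcal{I}(A)$, take an $m$-cell $e\colon x_1\celto x_2$ in $A$, with $x_i$ of shape $U_i$. From $f(e)\in\invrt{Y}$ I would extract an $m$-cell $\bar e^*\colon f(x_2)\celto f(x_1)$ and weakly invertible $(m+1)$-cells $\bar k\colon f(e)\cp{m-1}\bar e^*\celto p_{U_1}^* f(x_1)$ and $\bar k'\colon \bar e^*\cp{m-1} f(e)\celto p_{U_2}^* f(x_2)$, which are equivalences in the RDS $Y$ by Proposition \ref{prop:equi_equals_invrt}. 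Applying the second clause of the definition of weak equivalence to the parallel spherical $(m-1)$-diagrams $x_2,x_1$ and the cell $\bar e^*$ yields an $m$-cell $e^*\colon x_2\celto x_1$ in $X$ with $f(e^*)\simeq\bar e^*$. Now $e\cp{m-1}e^*$ and $p_{U_1}^* x_1$ are parallel spherical $m$-diagrams $x_1\celto x_1$ (the common shape of $e\cp{m-1}e^*$ is a union of two $m$-atoms along a spherical boundary, hence has spherical boundary, and $\infl{U_1}$ is spherical by Proposition \ref{prop:ou_regular}), and, using that $\simeq$ is a congruence for $\cp{m-1}$ — an instance of Proposition \ref{prop:equi-subst}, obtained by substituting into one factor — and that $f$ strictly preserves degeneracies,
\[
	f(e\cp{m-1}e^*) = f(e)\cp{m-1}f(e^*)\;\simeq\; f(e)\cp{m-1}\bar e^*\;\simeq\; p_{U_1}^* f(x_1) = f(p_{U_1}^* x_1),
\]
where the first $\simeq$ comes from $f(e^*)\simeq\bar e^*$ and the second from $\bar k$. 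Applying the second clause of the definition once more to the parallel spherical $m$-diagrams $e\cp{m-1}e^*$ and $p_{U_1}^* x_1$ gives an $(m+1)$-cell $k\colon (e\cp{m-1}e^*)\celto p_{U_1}^* x_1$ in $X$ with $f(k)$ $\simeq$-equivalent to the equivalence just constructed, so $f(k)$ is an equivalence, i.e. $f(k)\in\invrt{Y}$ and $k\in A$. Symmetrically, from $\bar k'$ one obtains $k'\colon (e^*\cp{m-1}e)\celto p_{U_2}^* x_2$ in $A$, and $e^*$ together with $k,k'$ witnesses $e\in\mathcal{I}(A)$.

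The main obstacle I anticipate is conceptual rather than computational: lifting along the weak equivalence $f$ never yields an equivalence of $X$ directly, only a cell whose $f$-image lies in $\invrt{Y}$, so the proof cannot be a one-step lift and must be organised as a coinduction on the coalgebra $A$. The remaining points — parallelism and sphericity of all the composites and degeneracy cells involved, the congruence of $\simeq$ for $\cp{m-1}$ via Proposition \ref{prop:equi-subst}, and $\invrt{}=\equi{}$ in an RDS — are routine.
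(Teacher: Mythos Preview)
Your proof is correct and follows the same coinductive strategy as the argument the paper cites from \cite{lafont2010folk}: lift a witness of $f(x)\simeq f(x')$ to a cell $g$ in $X$, then show $g$ is weakly invertible by a coinduction on the set $A=\{e: f(e)\in\invrt{Y}\}$, using the lifting clause of the weak equivalence at each step to produce the inverse and the two witnessing cells. The paper gives no details beyond the citation, so you have in effect supplied the adaptation of that argument to the RDS setting.
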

\begin{proof}
Essentially the same as \cite[Lemma 6]{lafont2010folk}.
\end{proof}

\begin{prop}
Let $f: X \to Y$ and $g: Y \to Z$ be weak equivalences of RDSs. Then:
\begin{enumerate}[label=(\alph*)]
	\item if $f$ and $g$ are weak equivalences, then so is $f;g$, and
	\item if $f;g$ and $g$ are weak equivalences, then so is $f$.
\end{enumerate}
\end{prop}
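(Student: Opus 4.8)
The plan is to obtain both parts from short diagram chases, using three ingredients already established above: every morphism of RDSs preserves equivalences (Theorem~\ref{thm:equiv_preserve}); the relation $\simeq$ on spherical diagrams is an equivalence relation, in particular transitive; and a weak equivalence reflects $\simeq$ (the immediately preceding proposition). As a preliminary I would record the only ``geometric'' input: a morphism of diagrammatic sets sends an $n$-diagram of shape $U$ to an $n$-diagram of the same shape $U$ and commutes with the boundary operators $\bord{k}{\alpha}$; since every atom has spherical boundary, it follows that such a morphism sends parallel spherical diagrams to parallel spherical diagrams, so all the intermediate diagrams constructed below automatically meet the hypotheses of the definition of weak equivalence.

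For part~(a), assuming $f$ and $g$ are weak equivalences, I would verify conditions~1 and~2 of the definition for $f;g$. For condition~1, given a $0$-cell $z$ of $Z$, use condition~1 for $g$ to get a $0$-cell $y$ of $Y$ with $g(y)\simeq z$, then condition~1 for $f$ to get a $0$-cell $x$ of $X$ with $f(x)\simeq y$; applying $g$ (which preserves equivalences) to the last relation and using transitivity yields $(f;g)(x)\simeq z$. For condition~2, given parallel spherical $n$-diagrams $x_1,x_2$ in $X$ and an $(n+1)$-cell $z\colon (f;g)(x_1)\celto(f;g)(x_2)$ in $Z$, first apply condition~2 for $g$ to the parallel spherical diagrams $f(x_1),f(x_2)$ in $Y$ to get an $(n+1)$-cell $y\colon f(x_1)\celto f(x_2)$ in $Y$ with $g(y)\simeq z$, then apply condition~2 for $f$ to get an $(n+1)$-cell $x\colon x_1\celto x_2$ in $X$ with $f(x)\simeq y$; since $g$ preserves equivalences, $(f;g)(x)=g(f(x))\simeq g(y)\simeq z$.

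For part~(b), assuming $f;g$ and $g$ are weak equivalences, I would verify conditions~1 and~2 for $f$, this time routing through $g$ and its reflection property. For condition~1, given a $0$-cell $y$ of $Y$, apply condition~1 for $f;g$ to the $0$-cell $g(y)$ of $Z$ to get a $0$-cell $x$ of $X$ with $g(f(x))=(f;g)(x)\simeq g(y)$; since $f(x)$ and $y$ are (trivially parallel) spherical $0$-diagrams in $Y$ and $g$ reflects $\simeq$, conclude $f(x)\simeq y$. For condition~2, given parallel spherical $n$-diagrams $a_1,a_2$ in $X$ and an $(n+1)$-cell $b\colon f(a_1)\celto f(a_2)$ in $Y$, push $b$ forward to $g(b)\colon(f;g)(a_1)\celto(f;g)(a_2)$ in $Z$, apply condition~2 for $f;g$ to get an $(n+1)$-cell $a\colon a_1\celto a_2$ in $X$ with $g(f(a))=(f;g)(a)\simeq g(b)$, and finally use that $g$ reflects $\simeq$ on the parallel spherical $(n+1)$-diagrams $f(a)$ and $b$ to conclude $f(a)\simeq b$.

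I do not expect a real obstacle: once Theorem~\ref{thm:equiv_preserve} and the reflection proposition are in place the argument is bookkeeping. The one step that warrants a moment's care is, in part~(b), confirming that the pairs $f(x),y$ and $f(a),b$ are genuinely parallel spherical diagrams so that the reflection proposition applies --- which is exactly what the preliminary observation of the first paragraph guarantees. I would also note in passing that the third case of $2$-out-of-$3$, that $f;g$ and $f$ being weak equivalences forces $g$ to be one, is not asserted here: proving it would require lifting arbitrary spherical diagrams of $Y$ back to $X$, which the definition of weak equivalence does not afford.
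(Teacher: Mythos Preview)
Your proof is correct and follows the standard argument, which is exactly what the paper intends: its own proof is just the one-line reference ``Essentially the same as \cite[Lemma 7 and 8]{lafont2010folk}'', and your chase via preservation of equivalences (Theorem~\ref{thm:equiv_preserve}) for part~(a) and via the reflection proposition for part~(b) is precisely how those lemmas go. Your closing remark that the third 2-out-of-3 case is not covered also matches the paper, which explicitly leaves that case to future work.
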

\begin{proof}
Essentially the same as \cite[Lemma 7 and 8]{lafont2010folk}.
\end{proof}

Adapting the proof of the remaining 2-out-of-3 property seems to require more effort, and we leave it to future work.

\begin{remark}
It is not hard to see that, in a non-representable diagrammatic set, equivalences as we defined them do not satisfy the same nice properties as in a representable: there is no reason, for example, that $\simeq$ should be transitive, since a spherical diagram of equivalences may not have a weak composite.

It seems reasonable, however, that there could exist a more general class of equivalences --- perhaps based on equivalence \emph{diagrams} rather than cells --- which satisfies them in every diagrammatic set, and coincides with our class in the representable case. For example, we could have required as part of the definition that composition horns of ternary atoms have fillers when their restriction to $W_+$ or $W_-$ is an equivalence. Moreover, we could have had both ``problems'' and ``solution'' of filling problems be spherical diagrams, and not just cells. 

Ultimately, we decided to focus on what seemed the shortest and simplest way to representables, but it is worth investigating its generalisations. In particular, it would be desirable to extend the definition of weak equivalence to morphisms between diagrammatic sets. Indeed, we conjecture that there is a model structure on $\dgmset$ whose
\begin{enumerate}
	\item fibrant objects are the representables, 
	\item cofibrant objects are those satisfying the Eilenberg-Zilber property, and
	\item weak equivalences extend weak equivalences of representables.
\end{enumerate}
\end{remark}

\subsection{Nerves of strict $\omega$-categories} \label{sec:nerves}

In \cite{hadzihasanovic2018combinatorial}, we observed that the construction $P \mapsto (\molec{}{P})^*$ is functorial on inclusions of directed complexes. In fact, the functoriality extends to maps.

\begin{prop}
The assignment $P \mapsto (\molec{}{P})^*$ for regular directed complexes extends to a faithful functor $(\molec{}{-})^*: \rdcpx \to \omegacat$.
\end{prop}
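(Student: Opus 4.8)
The plan is to exploit the universal property of $(-)^* \colon \pomegacat \to \omegacat$. Since this functor is left adjoint to the inclusion and, as recalled above, the unit $\molec{}{P} \to (\molec{}{P})^*$ is an inclusion of partial $\omega$-categories, giving an $\omega$-functor $(\molec{}{P})^* \to (\molec{}{Q})^*$ is the same as giving a morphism of partial $\omega$-categories $\molec{}{P} \to (\molec{}{Q})^*$. So for each map $f \colon P \to Q$ I would construct such a morphism $\phi_f$ and let $(\molec{}{f})^*$ be its transpose. (One could instead use Proposition \ref{lem:factor_clpsincl} to reduce to surjections, since functoriality on $\rdcpxin$ is already known, but the direct construction seems no harder.)

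To build $\phi_f$: by Lemma \ref{lem:closedmap}, $f$ is closed and dimension-non-increasing, so for $x \in P$ of dimension $n$ the image $f(\clos\{x\}) = \clos\{f(x)\}$ is an atom of some dimension $m = \dmn{f(x)} \leq n$, hence a cell of $(\molec{}{Q})^*$; I would set $\phi_f(\clos\{x\}) := \varepsilon^{\,n-m}\clos\{f(x)\}$, an $n$-cell, and extend to arbitrary molecules by declaring $\phi_f(U_1 \cp{k} U_2) := \phi_f(U_1) \cp{k} \phi_f(U_2)$, the composite taken in the genuine $\omega$-category $(\molec{}{Q})^*$. Before this makes sense one needs $\bord{k}{+}\phi_f(U_1) = \bord{k}{-}\phi_f(U_2)$, which follows from boundary-compatibility of $\phi_f$ and the hypothesis $\bord{k}{+}U_1 = \bord{k}{-}U_2$ defining the composite $U_1 \cp{k} U_2$ in $\molec{}{P}$.

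The work then splits into: (i) well-definedness of $\phi_f$, i.e. independence of the way a molecule is cut into atoms; (ii) the verification that $\phi_f$ commutes with boundaries, units, and the partial compositions of $\molec{}{P}$; and (iii) functoriality and faithfulness. For (i) I would argue that any two decompositions of an $n$-molecule $U$ can be connected, through instances of the associativity, interchange and unit axioms, to the ``layered'' normal form $U = \tilde U_1 \cp{n-1} \cdots \cp{n-1} \tilde U_m$ supplied by Lemma \ref{lem:composition_form} and Proposition \ref{prop:boundary_submol}; since those axioms hold strictly in $(\molec{}{Q})^*$ and $\phi_f$ is defined so as to commute with the $\cp{k}$, its value is unchanged along such reductions. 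It is worth noting that the unit-insertion $\varepsilon^{\,n-m}$ is precisely what makes the unit-law reductions go through when $f$ collapses a submolecule to lower dimension, and that the image $f(U)$ of a molecule need not be a molecule of $Q$ (for instance, $f$ may fold a composite of arrows onto a loop), so the target must be $(\molec{}{Q})^*$ rather than $\molec{}{Q}$. For (ii), boundary-compatibility $\bord{k}{\alpha}\phi_f(U) = \phi_f(\bord{k}{\alpha}U)$ is an induction on submolecules, using that $f$ commutes with boundaries of elements, the decomposition of Lemma \ref{lem:composition_form}, and the auxiliary fact that $\phi_f$ sends a molecule all of whose cells are carried by $f$ into a single atom $\clos\{a\}$ to some iterated unit $\varepsilon^{\,j}\clos\{a\}$; compatibility with units is immediate from the definition, and with partial compositions it is built in. For (iii), functoriality $(\molec{}{g \circ f})^* = (\molec{}{g})^* \circ (\molec{}{f})^*$ follows from the uniqueness in the universal property plus the easy identity $(\molec{}{g})^* \circ \phi_f = \phi_{g \circ f}$ (check on atoms, using that $(\molec{}{g})^*$ commutes with $\varepsilon$), and faithfulness is clear because $(\molec{}{f})^*$ sends $\clos\{x\}$ to $\varepsilon^{\,\dmn{x} - \dmn{f(x)}}\clos\{f(x)\}$, from which $\clos\{f(x)\}$, hence its greatest element $f(x)$, is recovered by iterating boundary operators.

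I expect step (i), well-definedness, to be the main obstacle: it is a genuine coherence statement about pasting in $\molec{}{P}$ that has to be matched against the strict axioms of the $\omega$-category $(\molec{}{Q})^*$, and making it precise is where the structural results on molecule decompositions — Lemma \ref{lem:composition_form} and Proposition \ref{prop:boundary_submol} — do the heavy lifting. Everything else is bookkeeping with the definitions.
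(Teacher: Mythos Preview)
Your approach is correct and is essentially the expected elaboration of the paper's proof, which is simply a pointer to \cite[Proposition 5.28]{hadzihasanovic2018combinatorial}. That reference handles inclusions, where your step (i) is vacuous because the image of a molecule under an inclusion is again a molecule and one can set $\phi_\imath(U) := \imath(U)$ directly; the extension to arbitrary maps is precisely the insertion of iterated units $\varepsilon^{\,n-m}$ that you describe to absorb dimension drops, together with the observation that the target must be $(\molec{}{Q})^*$ rather than $\molec{}{Q}$. Your identification of well-definedness as the only non-routine point is accurate; one small caveat is that the layered form of Lemma~\ref{lem:composition_form} is not itself unique (the value of $k$ and the ordering of the factors can vary), so the reduction to normal form must be iterated down through dimensions and interleaved with the interchange law --- but this is standard and is how the cited argument proceeds.
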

\begin{proof}
Same as \cite[Proposition 5.28]{hadzihasanovic2018combinatorial}.
\end{proof}

\begin{remark}
The $\omega$\nbd categories $(\molec{}{P})^*$ are composition-generated by the atoms of $P$; the functors $(\molec{}{P})^* \to (\molec{}{Q})^*$ in the subcategory are precisely those that map generators to generators, possibly of lower dimension.
\end{remark}

We can assume that $\atom$ is a small category, since it has countably many objects up to unique isomorphism, and finitely many morphisms between any two of them. Moreover, $\dgmset$ is locally small, and $\omegacat$ is cocomplete, so we are in the conditions of \cite[Theorem X.4.1-2]{maclane1971cats}: the left Kan extension of the embedding $\atom \incl \omegacat$ along the Yoneda embedding $\atom \incl \dgmset$ exists and is computed by the coend
\begin{equation*}
	X \mapsto \hatto{X} := \int^{U \in \atom} (\molec{}{U})^* \times X(U).
\end{equation*}
This has a right adjoint $\nerve{}$ defined by
\begin{equation*}
	\nerve{X}(-) := \homset{\omegacat}((\molec{}{-})^*,X),
\end{equation*}
that is, the cells of shape $U$ in $\nerve{X}$ are the functors $x: (\molec{}{U})^* \to X$. We call this the diagrammatic \emph{nerve} of the strict $\omega$\nbd category $X$.

Taking the left Kan extension of $\hatto{-}$ along $\atom \incl \rdcpx$, we also obtain a functor $\hatto{-}: \rdcpx \to \omegacat$.

\begin{prop}
The functor $\nerve{}: \omegacat \to \dgmset$ is faithful.
\end{prop}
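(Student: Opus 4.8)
The plan is to show that the nerve functor $\nerve{}: \omegacat \to \dgmset$ reflects equality of morphisms, i.e.\ that if $f, g: X \to Y$ are strict $\omega$\nbd functors with $\nerve{f} = \nerve{g}$, then $f = g$. First I would unwind what $\nerve{f}$ does: on cells of shape $U$, $\nerve{f}$ sends a functor $x: (\molec{}{U})^* \to X$ to the composite $x; f: (\molec{}{U})^* \to Y$. So $\nerve{f} = \nerve{g}$ means that for every atom $U$ and every functor $x: (\molec{}{U})^* \to X$ we have $x;f = x;g$.

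The key step is then to observe that every cell of $X$, viewed as an element of the underlying $\omega$\nbd graph, is in the image of such a functor. Concretely, for an $n$\nbd cell $a \in X_n$, there is a functor $x_a: (\molec{}{O^n})^* \to X$ classifying $a$: since $O^n$ is the $n$\nbd globe, $(\molec{}{O^n})^*$ is the free $\omega$\nbd category on a single $n$\nbd cell (the $n$\nbd cell being the greatest element $\fnct{n}$ of $O^n$, with its iterated boundaries), and by freeness there is a unique $\omega$\nbd functor $x_a$ with $x_a(\fnct{n}) = a$. Applying the hypothesis to $U = O^n$ and $x = x_a$, we get $x_a; f = x_a; g$, hence in particular $f(a) = (x_a;f)(\fnct{n}) = (x_a;g)(\fnct{n}) = g(a)$. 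Since this holds for every cell $a$ in every dimension, $f$ and $g$ agree on underlying $\omega$\nbd graphs, and a functor of ($\omega$\nbd)categories is determined by its action on the underlying graph together with the fact that it commutes with units and compositions --- but those are properties, not extra data, so $f = g$.

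The main thing to verify carefully is the identification of $(\molec{}{O^n})^*$ as the free $\omega$\nbd category on a single $n$\nbd cell, so that the classifying functor $x_a$ exists and is unique: this is where the hypothesis ``faithful'' (rather than merely ``conservative'' or ``injective on objects'') really bites, since we use that a single $n$\nbd cell in $X$ determines a unique functor out of it. This should follow from the fact that $\molec{}{O^n}$ is the partial $\omega$\nbd category freely generated by the atoms of $O^n$, which are exactly $\fnct{k}^\alpha$ for $k < n$ and $\fnct{n}$, and that in $O^n$ all the lower atoms are forced to be the iterated boundaries of $\fnct{n}$; so $(\molec{}{O^n})^*$ is generated by $\fnct{n}$ alone, with no relations, giving the desired universal property. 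Alternatively, and more cheaply, one can avoid identifying the free $\omega$\nbd category explicitly: it suffices to exhibit, for each $a \in X_n$, \emph{some} atom $U$ and \emph{some} functor $x: (\molec{}{U})^* \to X$ hitting $a$ in the sense that $f$ and $g$ agreeing on $x;f, x;g$ forces $f(a) = g(a)$; taking $U = O^n$ and $x$ the classifying functor of $a$ works, and the only fact needed is the existence of that classifying functor, which is immediate since $(\molec{}{O^n})^*$ contains an $n$\nbd cell $\fnct{n}$ and any map of $\omega$\nbd graphs sending $\fnct{n}$ to $a$ that is compatible with boundaries is a functor --- indeed here $\molec{}{O^n}$ has just one non-degenerate cell in each relevant dimension, so no composition constraints arise.

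I expect the main obstacle, such as it is, to be bookkeeping: being precise about the free generation of $(\molec{}{O^n})^*$ and checking that ``agrees on all cells of the underlying $\omega$\nbd graph'' genuinely implies ``equal as $\omega$\nbd functors'', which reduces to the standard fact that the forgetful functor $\omegacat \to \globset$ is faithful. Given the earlier results in the paper (the adjunction $\hatto{-} \dashv \nerve{}$, and the description of cells of $\nerve{X}$ as functors $(\molec{}{U})^* \to X$), this is essentially a one-paragraph argument once the free-generation point is granted, and the paper may well just cite ``same as \cite[Proposition 5.28]{hadzihasanovic2018combinatorial}'' or an analogous faithfulness statement, since faithfulness of a nerve functor of this kind is a soft and standard consequence of having enough representables (here, the globes) in the source category.
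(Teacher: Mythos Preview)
Your proposal is correct and is essentially an unfolding of the paper's proof. The paper states it in one line: the underlying $\omega$\nbd graph functor $\mathcal{U}: \omegacat \to \globset$ factors as $\nerve{}$ followed by the presheaf restriction $-_{\mathbf{O}}: \dgmset \to \globset$, and since $\mathcal{U}$ is faithful, so is $\nerve{}$; your argument with the globes $O^n$ is exactly what verifies that factorisation and reduces to the same standard fact you name at the end.
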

\begin{proof}
The underlying $\omega$\nbd graph functor $\mathcal{U}: \omegacat \to \globset$ factors as the functor $\nerve{}: \omegacat \to \dgmset$ followed by the presheaf restriction $-_\mathbf{O}: \dgmset \to \globset$. Because $\mathcal{U}$ is faithful, so is $\nerve{}$.
\end{proof}

We recall the definition of polygraphs.

\begin{dfn}
Let $X$ be a partial $\omega$\nbd category and $n \in \mathbb{N}$. The \emph{$n$\nbd skeleton} $\skel{n}{X}$ of $X$ is the partial $\omega$\nbd category whose underlying $\omega$\nbd graph is
\begin{equation*}
\begin{tikzpicture}
	\node[scale=1.25] (0) at (0,0) {$X_0$};
	\node[scale=1.25] (1) at (2,0) {$\ldots$};
	\node[scale=1.25] (2) at (4,0) {$X_n$};
	\node[scale=1.25] (3) at (6,0) {$\idd{}(X_n)$};
	\node[scale=1.25] (4) at (8,0) {$\ldots$};
	\node[scale=1.25] (5) at (10,0) {$\idd{m}(X_n)$};
	\node[scale=1.25] (6) at (12,0) {$\ldots$};
	\draw[1c] (1.west |- 0,.15) to node[auto,swap] {$\bord{}{+}$} (0.east |- 0,.15);
	\draw[1c] (1.west |- 0,-.15) to node[auto] {$\bord{}{-}$} (0.east |- 0,-.15);
	\draw[1c] (2.west |- 0,.15) to node[auto,swap] {$\bord{}{+}$} (1.east |- 0,.15);
	\draw[1c] (2.west |- 0,-.15) to node[auto] {$\bord{}{-}$} (1.east |- 0,-.15);
	\draw[1c] (3.west |- 0,.15) to node[auto,swap] {$\bord{}{+}$} (2.east |- 0,.15);
	\draw[1c] (3.west |- 0,-.15) to node[auto] {$\bord{}{-}$} (2.east |- 0,-.15);
	\draw[1c] (4.west |- 0,.15) to node[auto,swap] {$\bord{}{+}$} (3.east |- 0,.15);
	\draw[1c] (4.west |- 0,-.15) to node[auto] {$\bord{}{-}$} (3.east |- 0,-.15);
	\draw[1c] (5.west |- 0,.15) to node[auto,swap] {$\bord{}{+}$} (4.east |- 0,.15);
	\draw[1c] (5.west |- 0,-.15) to node[auto] {$\bord{}{-}$} (4.east |- 0,-.15);
	\draw[1c] (6.west |- 0,.15) to node[auto,swap] {$\bord{}{+}$} (5.east |- 0,.15);
	\draw[1c] (6.west |- 0,-.15) to node[auto] {$\bord{}{-}$} (5.east |- 0,-.15);
	\node[scale=1.25] at (12.5,-.25) {$,$};
\end{tikzpicture}
\end{equation*}
that is, the restriction of $X$ to cells $x$ with $\dmn{x} \leq n$; unit and composition operations are restricted as appropriate. 

There is an obvious inclusion $\skel{n}X \hookrightarrow X$, which factors through $\skel{m}{X} \hookrightarrow X$ for all $m > n$. Any partial $\omega$\nbd category is the sequential colimit of its skeleta.
\end{dfn}

For each $n \in \mathbb{N}$, let $\hatto{O^n}$ be the $n$\nbd globe as an $\omega$\nbd category: $\hatto{O^n}$ has two $k$\nbd dimensional cells $\underline{k}^+, \underline{k}^-$ for each $k < n$, and a single $n$\nbd dimensional cell $\underline{n}$, such that $\bord{k}{\alpha}\underline{n} = \underline{k}^\alpha$ for all $k < n$. Let $\bord{}{}\hatto{O^n}$ be the $(n-1)$\nbd skeleton of $\hatto{O^n}$. There is a bijection between $n$\nbd cells $x$ of a partial $\omega$\nbd category $X$ and functors $x: \hatto{O^n} \to X$, and we will identify the two. We write $\bord{}{}x$ for the precomposition of $x$ with the inclusion $\bord{}{}\hatto{O^n} \hookrightarrow \hatto{O^n}$.

For a family $\{X_i\}_{i \in I}$ of $\omega$\nbd categories, let $\coprod_{i \in I} X_i$ be its coproduct in $\omegacat$; the sets of $n$\nbd cells and the boundary, unit, and composition operations of the coproduct are all induced pointwise by coproducts of sets and functions. Given a family of functors $\{f_i: X_i \to Y\}_{i \in I}$, let $(f_i)_{i \in I}: \coprod_{i \in I}X_i \to Y$ be the functor produced by the universal property of coproducts.

\begin{dfn}
A \emph{polygraph} is an $\omega$\nbd category $X$ together with families $\{\nondeg{n}{X}\}_{n \in \mathbb{N}}$ of $n$\nbd dimensional cells of $X$, such that, for all $n$,
\begin{equation*}
\begin{tikzpicture}[baseline={([yshift=-.5ex]current bounding box.center)}]
	\node[scale=1.25] (0) at (0,2) {$\displaystyle \coprod_{x\in\nondeg{n}{X}} \bord{}{} \hatto{O^n}$};
	\node[scale=1.25] (1) at (3.5,2) {$\displaystyle\coprod_{x\in\nondeg{n}{X}} \hatto{O^n}$};
	\node[scale=1.25] (2) at (0,0) {$\skel{n-1}{X}$};
	\node[scale=1.25] (3) at (3.5,0) {$\skel{n}{X}$};
	\draw[1c] (0) to node[auto,swap] {$(\bord{}{}x)_{x\in\nondeg{n}{X}}$} (2);
	\draw[1c] (1) to node[auto] {$(x)_{x\in\nondeg{n}{X}}$} (3);
	\draw[1cinc] (0) to (1);
	\draw[1cinc] (2) to (3);
	\draw[edge] (2.5,0.2) to (2.5,0.8) to (3.3,0.8);
\end{tikzpicture}
\end{equation*}
is a pushout diagram in $\omegacat$. The cells in $\nondeg{n}{X}$ are called $n$\nbd dimensional \emph{generators} of $X$.

A \emph{map} of polygraphs is a functor $f: X \to Y$ of $\omega$\nbd categories that sends $n$\nbd dimensional generators of $X$ to $n$\nbd dimensional generators of $Y$; that is, $f$ restricts to, and is essentially determined by a sequence of functions $\{f_n: \nondeg{n}{X} \to \nondeg{n}{Y}\}_{n \in \mathbb{N}}$. Polygraphs and their maps form a category $\pol$.
\end{dfn}

\begin{dfn} \label{dfn:freegen}
Let $P$ be a directed complex. We say that $P$ is \emph{freely generating} if $(\molec{}{P})^*$ admits the structure of a polygraph, whose $n$\nbd dimensional generators are the $n$\nbd dimensional atoms of $P$.
\end{dfn}

\begin{prop}
Let $P$ be a freely generating regular directed complex. Then $\hatto{P}$ and $(\molec{}{P})^*$ are isomorphic.
\end{prop}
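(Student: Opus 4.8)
The plan is to identify both $\hatto{P}$ and $(\molec{}{P})^*$ with the same sequential colimit of pushouts. Since $\hatto{U} = (\molec{}{U})^*$ for every atom $U$ (by the coend formula for $\hatto{-}$ together with the co-Yoneda lemma), the functors $\hatto{-}$ and $(\molec{}{-})^*$ have the same restriction to $\atom$; as $\hatto{-}\colon\rdcpx\to\omegacat$ is by definition the left Kan extension of this restriction along $\atom\incl\rdcpx$, there is a canonical natural transformation $\hatto{-}\Rightarrow(\molec{}{-})^*$ which is the identity on atoms, and the claim amounts to its component at $P$ being an isomorphism. Since $\hatto{-}\colon\dgmset\to\omegacat$ is a left adjoint --- to $\nerve{}$ --- it preserves all colimits; so by Corollary \ref{cor:globpos_is_colimit} it suffices to prove that the cocone $\{(\molec{}{U})^*\to(\molec{}{P})^*\}$ over the atoms of $P$ is a colimit cocone.

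First I would record the two pushout presentations. On the one side, $P$, viewed as a diagrammatic set, has the Eilenberg--Zilber property: its irreducible $n$\nbd cells are precisely the subset inclusions $\clos\{x\}\incl P$ for $x\in P^{(n)}$. Hence by Proposition \ref{prop:ez_polygraph}, $\skel{n}{P}$ is the pushout in $\dgmset$ of $\skel{n-1}{P}\leftarrow\coprod_{x\in P^{(n)}}\bord{}{}\clos\{x\}\to\coprod_{x\in P^{(n)}}\clos\{x\}$, and $P$ is the sequential colimit of the $\skel{n}{P}$ (Construction \ref{cons:nskeleton}). Applying the colimit-preserving $\hatto{-}$ exhibits $\hatto{P}$ as the sequential colimit of the pushouts of $\hatto{\skel{n-1}{P}}\leftarrow\coprod\hatto{\bord{}{}\clos\{x\}}\to\coprod(\molec{}{\clos\{x\}})^*$. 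On the other side, since $P$ is freely generating, $(\molec{}{P})^*$ carries the structure of a polygraph with $\nondeg{n}{}=P^{(n)}$, and this is exactly the datum of the pushout squares $\skel{n}{(\molec{}{P})^*}=\skel{n-1}{(\molec{}{P})^*}\cup_{\coprod\bord{}{}\hatto{O^n}}\coprod\hatto{O^n}$, with $(\molec{}{P})^*$ their sequential colimit.

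Now I would prove $\hatto{\skel{n}{P}}\cong\skel{n}{(\molec{}{P})^*}$ by induction on $n$, compatibly with the inclusions, so that the two colimits agree. The case $n=0$ is immediate, both sides being the discrete $\omega$\nbd category on $P^{(0)}$. For the inductive step, after substituting the inductive isomorphism, matching the two presentations reduces --- by the pasting law for pushouts --- to the following statement: for each $n$\nbd atom $\clos\{x\}$ of $P$, the square
\begin{equation*}
\begin{tikzpicture}[baseline={([yshift=-.5ex]current bounding box.center)}]
	\node[scale=1.25] (0) at (0,1.5) {$\bord{}{}\hatto{O^n}$};
	\node[scale=1.25] (1) at (3.5,1.5) {$\hatto{O^n}$};
	\node[scale=1.25] (2) at (0,0) {$(\molec{}{\bord{}{}\clos\{x\}})^*$};
	\node[scale=1.25] (3) at (3.5,0) {$(\molec{}{\clos\{x\}})^*$};
	\draw[1cinc] (0) to (1);
	\draw[1c] (0) to (2);
	\draw[1cinc] (2) to (3);
	\draw[1c] (1) to (3);
	\draw[edge] (2.8,0.25) to (2.8,0.7) to (3.2,0.7);
\end{tikzpicture}
\end{equation*}
is a pushout in $\omegacat$, where the top map is the standard inclusion of the boundary of the $n$\nbd globe and the left map classifies the parallel pair $(\bord{}{-}\clos\{x\},\bord{}{+}\clos\{x\})$; granting this, substituting it into the pushout defining $\hatto{\skel{n}{P}}$ and composing pushouts yields precisely the polygraph pushout defining $\skel{n}{(\molec{}{P})^*}$.

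The main obstacle is this last statement. I would prove it by first showing that a closed subset of a freely generating regular directed complex is again freely generating: the atoms of such a subset form a subset of the generators of $(\molec{}{P})^*$ closed under taking boundaries, and the sub-$\omega$\nbd category they generate is the free $\omega$\nbd category on the corresponding sub-polygraph --- a fact that should be extracted from the polygraph structure along the lines of \cite{hadzihasanovic2018combinatorial}. Applying this to the atom $\clos\{x\}$ and to its boundary $\bord{}{}\clos\{x\}=\clos\{x\}\setminus\{x\}$, we obtain that $(\molec{}{\clos\{x\}})^*$ is a polygraph whose unique top-dimensional generator $x$ is, as a cell, globular of dimension $n$ and thus classified by a functor $\hatto{O^n}\to(\molec{}{\clos\{x\}})^*$, and whose $(n-1)$\nbd skeleton is $(\molec{}{\bord{}{}\clos\{x\}})^*$; the square above is then the $n$\nbd th defining pushout of this polygraph. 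I expect the remaining work --- tracking attaching maps through the inductive isomorphism and checking compatibility with boundary inclusions --- to be routine but a little lengthy.
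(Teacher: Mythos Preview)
Your argument is correct and is, in substance, the one the paper is citing: identify both sides as the same sequential colimit of pushouts, using on one side that $\hatto{-}$ is a left adjoint applied to the Eilenberg--Zilber skeletal filtration of $P$, and on the other side that the freely-generating hypothesis is precisely the polygraph presentation of $(\molec{}{P})^*$. The paper does not spell this out; it simply points to \cite[Proposition 6.8]{hadzihasanovic2018combinatorial}, where the same argument is carried out for constructible directed complexes.

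One remark on the bookkeeping: your key pushout square has $(\molec{}{\bord{}{}\clos\{x\}})^*$ in the lower-left corner, but the pushout you obtain from applying $\hatto{-}$ to the Eilenberg--Zilber square has $\hatto{\bord{}{}\clos\{x\}}$ there instead. To paste, you need $\hatto{\bord{}{}\clos\{x\}}\cong(\molec{}{\bord{}{}\clos\{x\}})^*$, which is the very statement you are proving applied to the lower-dimensional complex $\bord{}{}\clos\{x\}$. So the induction is really on $\dim P$, and your ``closed subsets of freely generating are freely generating'' lemma is exactly what is needed to keep the hypothesis available. You have identified this correctly; just make sure the induction is phrased as ranging over all freely generating regular directed complexes of dimension $\leq n$, not only over the skeleta of a fixed $P$.
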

\begin{proof}
Same as \cite[Proposition 6.8]{hadzihasanovic2018combinatorial}.
\end{proof}

The following conjecture is analogous to, and implies \cite[Conjecture 6.6]{hadzihasanovic2018combinatorial}.
\begin{conj} \label{conj:freegen}
Every regular directed complex is freely generating.
\end{conj}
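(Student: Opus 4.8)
The plan is to reduce the conjecture to a purely combinatorial \emph{acyclicity} statement about regular directed complexes and then to invoke Steiner's characterisation of free $\omega$\nbd categories. Recall that $(\molec{}{P})^*$ is the $\omega$\nbd category freely generated by the partial $\omega$\nbd category $\molec{}{P}$, which (by the cited Propositions~2.9 and~2.13 of \cite{steiner1993algebra}) is composition-generated by the atoms $\clos\{x\}$. To show $P$ is freely generating in the sense of Definition~\ref{dfn:freegen}, it suffices to verify the pushout square in the definition of polygraph for each $n$, equivalently that every cell of $(\molec{}{P})^*$ has a factorisation into units on atoms that is unique modulo the structural equations of $\omega$\nbd categories. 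I would obtain this from Steiner's theorem (\cite{steiner2004omega}) that the $\omega$\nbd category $\nu K$ associated to an augmented directed chain complex $K$ with a \emph{loop-free, unital} basis is precisely the polygraph on that basis.

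The first step is to realise a regular directed complex $P$ as such a chain complex: take the free abelian group on $P$ in each degree, with differential encoding the signed covering relation of the oriented Hasse diagram, so that the basis elements $\langle x\rangle$ correspond to atoms and the atomic substructure records $\bord{}{\alpha}x$ as the $\alpha$\nbd part of $\partial\langle x\rangle$. One then needs a comparison functor $(\molec{}{P})^* \to \nu(\mathbb{Z}P)$ and must check it is an isomorphism. The \emph{unitality} half of Steiner's hypothesis — that the two boundaries of each atom are themselves well-formed pasting diagrams — should follow by induction from Proposition~\ref{prop:boundary_submol}, Lemma~\ref{lem:composition_form}, and the existence of merger trees (Construction~\ref{cons:mergertree}) with Remark~\ref{rmk:binarysplit}: iterating binary splits exhibits each molecule of $P$ as a composite of atoms, and Lemma~\ref{lem:basic_reg} keeps the intermediate boundaries pure and controlled.

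The \emph{loop-freeness} half is where the real work lies, and it is the step I expect to be the main obstacle. One must show that the relation on basis elements generated by ``$x \vartriangleleft y$ whenever $\sbord{}{+}x$ meets $\sbord{}{-}y$, or $x$ lies in a boundary of $y$'' has no directed cycles. The local structure is controlled: by Lemma~\ref{lem:molecbasic} and the fact (proven in the excerpt) that every regular directed complex is an oriented thin poset, each codimension-one element is covered by at most two cells, with opposite orientations, which excludes short ``zig-zag'' loops. The difficulty is ruling out \emph{long} cycles $x_1 \vartriangleleft x_2 \vartriangleleft \ldots \vartriangleleft x_1$: unlike a molecule, a regular directed complex need not have spherical boundary globally, so there is no ambient acyclicity to exploit. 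This is exactly the point at which the analogous Conjecture~6.6 of \cite{hadzihasanovic2018combinatorial} for constructible complexes was left open, and I expect a proof to require a genuinely new combinatorial argument — plausibly a potential function on $P$, built from the frame dimension $\frdmn{-}$ or from a careful analysis of how the input and output boundaries of distinct atoms may nest, using the spherical-boundary condition on atoms via Lemma~\ref{lem:kbound_spherical}.

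Once loop-freeness and unitality are in hand, Steiner's theorem identifies $(\molec{}{P})^*$ with the polygraph on the atoms of $P$, which is the conjecture; the isomorphism $\hatto{P} \simeq (\molec{}{P})^*$ then follows from the proposition above. As a fallback, if global acyclicity failed for some pathological $P$, one would instead isolate a subclass of directed complexes — strictly larger than the constructible ones but on which the argument of the previous paragraph provably goes through — and recalibrate the shape category $\atom$ accordingly, as discussed in Remark~\ref{rmk:ifconjfalse}; the results of Section~\ref{sec:nerves} depend on the conjecture only through this freeness statement, so such a recalibration would suffice to make them unconditional.
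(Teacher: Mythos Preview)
The statement you are attempting to prove is labelled a \emph{conjecture} in the paper and has no proof there; it is explicitly left open, with several downstream results (Proposition~\ref{prop:omega_representable}, Theorem~\ref{thm:nerve_ff}) stated conditionally on it, and Remark~\ref{rmk:ifconjfalse} discusses contingencies should it turn out false. So there is no proof in the paper to compare your proposal against.

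That said, your proposed route via Steiner's loop-freeness criterion has a concrete obstruction, and the paper itself flags it. In Remark~\ref{rmk:ifconjfalse} the author observes that loop-free directed complexes --- precisely the class to which Steiner's theorem from \cite{steiner2004omega} applies --- are \emph{not} closed under substitution or the $O(-)$ construction, whereas regular directed complexes are (Proposition~\ref{prop:substitution}, Proposition~\ref{prop:ou_regular}). It follows that there exist regular directed complexes that fail Steiner's loop-freeness, so the global acyclicity of your $\vartriangleleft$ relation is simply false in general: this is not a ``main obstacle'' to be overcome but a dead end. Your fallback of restricting to an acyclic subclass is also ruled out by the same remark, since any such subclass would fail to be closed under the operations on which the theory of representable diagrammatic sets relies. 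A proof of the conjecture, if one exists, must therefore use a freeness criterion strictly weaker than loop-freeness --- one that tolerates directed cycles in the covering relation yet still forces uniqueness of decompositions in $(\molec{}{P})^*$. No such criterion is supplied or cited in the paper, which is precisely why the statement remains a conjecture.
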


\begin{remark} \label{rmk:ifconjfalse}
Some important aspects of the relation between representable diagrammatic sets and strict $\omega$\nbd categories are conditional to the validity of this conjecture, which we believe to be true. If, on the other hand, it turned out to be false, we see two ways of salvaging the results in this section.
\begin{enumerate}
	\item It may be possible to restrict to a freely generating subclass of regular directed complexes which is closed under the constructions used in the definitions and proofs relative to RDSs: mainly, substitutions and the $O(-)$ construction. Then one could restrict to this class to develop an equivalent, better-behaved theory. We note that loop-free directed complexes, which are known to be freely generating, are \emph{not} closed under these (in \cite[Appendix A.2]{henry2019non}, S.\ Henry suggested that the failure to recognise this is one of the critical flaws of \cite{kapranov1991infty}).  
	\item The conjecture is likely to be equivalent to the statement that any two \emph{regular polyplexes} in the sense of \cite{henry2018regular} have isomorphic underlying polygraphs, and these are uniquely determined by their oriented graded poset of generators. A counterexample may suggest what information is missing from the oriented graded poset in order to determine a unique regular polyplex, and one could try working with a richer combinatorial structure containing this additional information.
\end{enumerate}
On the other hand, failures of presentations of $\omega$\nbd categories, such as directed complexes, to be freely generating have so far been linked to some kinds of topological degeneracies in cell shapes, which do not apply to regular directed complexes. A counterexample to the conjecture may even be interpreted as a failure of the combinatorics of $\omega$\nbd categorical (globular) composition to identify what should be identical ``directed spaces''. 
\end{remark}

If Conjecture \ref{conj:freegen} holds and $U$ is an $n$\nbd molecule, then $\hatto{U}$ can be identified with $(\molec{}{U})^*$, and $U$ with an $n$\nbd cell of $\hatto{U}$, which a functor $x: \hatto{U} \to X$ maps to an $n$\nbd cell $x(U)$ of $X$.

\begin{lem} \label{lem:omega_deg_equiv}
Let $U$ be an $n$\nbd atom, $X$ an $\omega$\nbd category, and $x: \hatto{U} \to X$ a functor. Assume Conjecture \ref{conj:freegen}. If $x(U)$ is a degenerate $n$\nbd cell of $X$, then the transpose $x: U \to \nerve{X}$ is an $n$\nbd equivalence of $\nerve{X}$.
\end{lem}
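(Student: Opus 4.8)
The plan is to apply the coinductive proof principle of Remark \ref{rmk:coinduction} to the set
\[
	A := \{\,\xi: V \to \nerve{X} \mid V \text{ an atom and } \xi(V) \text{ a degenerate cell of } X\,\}.
\]
Throughout we use Conjecture \ref{conj:freegen} to identify a cell $\xi$ of shape $V$ in $\nerve{X}$ with a strict functor $\xi: (\molec{}{V})^* \to X$, and hence to speak of the cell $\xi(V) \in X$. Since $x(U)$ is degenerate, $x \in A$, so it suffices to show $A \subseteq \mathcal{F}(A)$ for the functor $\mathcal{F}$ of Remark \ref{rmk:coinduction} associated to $\nerve{X}$: coinduction then gives $A \subseteq \equi{}{\nerve{X}}$, whence $x \in \equi{n}{\nerve{X}}$ as required. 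A degenerate $m$\nbd cell of $X$ is an iterated unit, hence in particular of the form $\idd{}w$ with $w := \bord{}{-}\xi(V) = \bord{}{+}\xi(V)$; it is this shape of $\xi(V)$ that I shall use.

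So fix $\xi \in A$ of shape $V$ (write $m := \dmn{V}$), a sign $\alpha$, and a division horn $(\imath: V \incl W,\ \lambda: \Lambda^W_{-\alpha} \to \nerve{X})$ for $\xi$ at $\bord{}{\alpha}V$; the task is to produce a filler $h: W \to \nerve{X}$ lying in $A$. Because the horn is at the full boundary $\bord{}{\alpha}V$, we have $W_+ \cap W_- = \imath(\bord{}{\alpha}V) = \bord{}{\alpha}W_\alpha$, so one $m$\nbd boundary of $W$ is the composite $W_+ \cp{m-1} W_-$ and the other is $W_0$, while $\Lambda^W_{-\alpha} = W_\alpha \cup W_0$. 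From the ternary-atom identities $\bord{}{+}W_+ = \bord{}{-}W_-$, $\bord{}{-}W_+ = \bord{}{-}W_0$ and $\bord{}{+}W_- = \bord{}{+}W_0$ --- all consequences of $\bord{}{}(\bord{}{-}W) = \bord{}{}(\bord{}{+}W) = \bord{m-1}{}W$ --- one checks that every atom of $W$ other than $W$ itself and $W_{-\alpha}$ lies in $\Lambda^W_{-\alpha}$, and that $\bord{}{}W_{-\alpha}$ does too. By Conjecture \ref{conj:freegen}, $(\molec{}{W})^*$ is the free $\omega$\nbd category on the polygraph of atoms of $W$, so to extend $\lambda$ to a functor $h$ it is enough to choose boundary-compatible images for the two remaining generators $W_{-\alpha}$ and $W$. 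Writing $\zeta := (\restr{\lambda}{W_0})(W_0) \in X_m$, a comparison of the two restrictions of $\lambda$ to the boundary face common to $W_\alpha$ and $W_0$, together with $\xi(V) = \idd{}w$, forces the relevant $(m-1)$\nbd boundary of $\zeta$ to be $w$; hence $h(W_{-\alpha}) := \zeta$ is boundary-compatible. The strict unit law ($\idd{}w \cp{m-1} \zeta = \zeta$, or its mirror) then gives $h(W_+ \cp{m-1} W_-) = h(W_+) \cp{m-1} h(W_-) = \zeta = h(W_0)$, so the two $m$\nbd boundaries of $W$ are sent to the parallel pair $(\zeta, \zeta)$, and one may put $h(W) := \idd{}\zeta$. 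This $h$ restricts to $\lambda$ on $\Lambda^W_{-\alpha}$, and $h(W) = \idd{}\zeta$ is degenerate, so $h \in A$; as $\alpha$ and the horn were arbitrary, $\xi \in \mathcal{F}(A)$.

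There is no genuine case distinction over which of the two $m$\nbd boundaries of $W$ is the one that splits, nor over the sign $\alpha$: the internal combinatorics of a ternary atom is symmetric, and one can in any case invoke Remark \ref{rmk:dual_horn} to pass to $\oppn{m+1}{W}$. I expect the only real work to be the bookkeeping in the second paragraph: identifying precisely which atoms of $W$ lie in $\Lambda^W_{-\alpha}$, and verifying that the assignments $W_{-\alpha} \mapsto \zeta$ and $W \mapsto \idd{}\zeta$ agree with the already-determined images of their boundaries --- this is the one place where Conjecture \ref{conj:freegen} is genuinely needed, since it is freeness of the polygraph of atoms of $W$ that allows a boundary-compatible assignment on generators to extend to a functor out of $(\molec{}{W})^*$. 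Conceptually nothing is hard: because $\xi(V)$ is a strict unit, the strict unit law pins the ``missing cell'' of the division down to $\zeta$ itself, so the filler can be taken to be a degenerate $(m+1)$\nbd cell.
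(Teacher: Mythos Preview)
Your overall strategy is exactly the paper's: take $A$ to be the set of cells whose image in $X$ is a unit, show $A \subseteq \mathcal{F}(A)$ by sending the missing generator $W_{-\alpha}$ to $\zeta$ and $W$ to $\idd{}\zeta$, and conclude by coinduction. However, the ``ternary-atom identities'' you invoke --- $\bord{}{+}W_+ = \bord{}{-}W_-$ and $\bord{}{-}W_+ = \bord{}{-}W_0$ --- are \emph{false} in general, and your boundary-compatibility check rests on them. Globularity of $W$ gives you only $\bord{}{-}(W_+ \cup W_-) = \bord{}{-}W_0$, and Proposition~\ref{prop:boundary_submol} then yields $\bord{}{-}W_+ \subsph \bord{}{-}W_0$, a \emph{submolecule} relation, not an equality. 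Concretely, one can take $W_+$ to be a $2$-globe with output edge $e$ and $W_-$ a $2$-atom whose input is $(e,e')$ for some further edge $e'$; then $\bord{}{-}W_-$ and $\bord{}{-}W_0$ each contain $e'$, while $\bord{}{+}W_+$ and $\bord{}{-}W_+$ do not. So neither is ``the relevant boundary of $\zeta$'' equal to $w$, nor is $W_+ \cup W_-$ literally $W_+ \cp{m-1} W_-$.

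The correct argument, which the paper gives, is that $\bord{}{-}W_-$ is obtained from $\bord{}{-}W_0$ by the substitution $\bord{}{-}W_0[\bord{}{+}W_+/\bord{}{-}W_+]$, so $h(\bord{}{-}W_-)$ and $\zeta(\bord{}{-}W_0)$ are given by the \emph{same} composite expression in $X$ except that the occurrence of $\xi(\bord{}{-}V)$ is replaced by $\xi(\bord{}{+}V)$. Since $\xi(V) = \idd{}w$ forces $\xi(\bord{}{+}V) = \xi(\bord{}{-}V) = w$, these two cells of $X$ coincide, and the assignment $h(W_{-\alpha}) := \zeta$ is boundary-compatible. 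Likewise, $h(W_+ \cup W_-)$ is in general a pasting composite of $\idd{}w$, $\zeta$, and iterated units on lower-dimensional cells, which by the strict unit laws of $X$ collapses to $\zeta = h(W_0)$; this is what justifies $h(W) := \idd{}\zeta$. Once you replace your equalities by these substitution-based comparisons, the proof goes through exactly as you outline.
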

\begin{proof}
If $x(U)$ is degenerate, it means that $x(\bord{}{+}U) = x(\bord{}{-}U)$, and $x(U) = \idd{}(x(\bord{}{\alpha}U))$. Let $\Lambda \incl W, \lambda: \Lambda \to \nerve{X}$ be a division horn for $x$ at $\bord{}{+}U$, that is, $\restr{\lambda}{W_0}$ is an $n$\nbd cell $z$ with $\bord{}{-}x \subsph \bord{}{-}z$. Then $z$ has a transpose $z: \hatto{(W_0)} \to X$, such that $z(W_0)$ is an $n$\nbd cell of $X$. 

We define a functor $h: \hatto{W} \to X$ which sends $W$ to $\idd{}(z(W_0))$, is equal to $z$ on $W_0$ and to $x$ on $W_+$, and sends $W_-$ to $z(W_0)$ again. This defines $h$ on all generators of $\hatto{W}$, and is well-defined: $z(\bord{}{-}W_0)$ has an expression as a composite involving $x(\bord{}{-}U)$, and $h(\bord{}{-}W_-)$ has the same expression with $x(\bord{}{+}U)$ replacing $x(\bord{}{-}U)$, which is equal in $X$. Finally, $h(W_+ \cup W_-)$ is a composite of $z(W_0)$ with a degenerate cell, hence it is equal to $z(W_0)$, so it makes sense to ask that $h(W)$ be a unit on $z(W_0)$. 

The transpose $h: W \to \nerve{X}$ fills $\lambda$, and the case of a division horn for $x$ at $\bord{}{-}U$ is dual. It follows that, letting $A := \{x: U \to \nerve{X} \,|\, x(U) \text{ is degenerate}\}$, we have $A \subseteq \mathcal{F}(A)$, and by coinduction $A \subseteq \equi{}{X}$.
\end{proof}

\begin{prop} \label{prop:omega_representable}
Let $X$ be an $\omega$\nbd category, and assume Conjecture \ref{conj:freegen}. Then $\nerve{X}$ is representable.
\end{prop}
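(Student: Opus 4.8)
The plan is to construct, for each composition horn, an explicit equivalence filler whose value on the top cell is a strict unit in $X$, and then appeal to Lemma \ref{lem:omega_deg_equiv}.

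First I would set up notation. Fix a composition horn $\Lambda \incl W$, where $W$ is an $(n+1)$\nbd atom, the image of $\Lambda$ is $\bord{}{\alpha}W$ for some $\alpha$, and $\bord{}{-\alpha}W$ is an $n$\nbd atom whose greatest element $b$ is the unique $n$\nbd dimensional element of $W$ outside $\Lambda$; note that the proper faces of $b$ coincide with $\bord{n-1}{}W \subseteq \Lambda$. Given a horn $\lambda: \Lambda \to \nerve{X}$, transpose it (using Conjecture \ref{conj:freegen} to identify $\hatto{\Lambda}$ with $(\molec{}{\Lambda})^*$) to a functor $\lambda: \hatto{\Lambda} \to X$; since $\Lambda$ is an $n$\nbd molecule, it corresponds to an $n$\nbd cell of $\hatto{\Lambda}$ which $\lambda$ sends to some $a := \lambda(\Lambda) \in X_n$. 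This $a$ is the ``strict composite'' of the diagram $\lambda$, and $\bord{}{\beta}a = \lambda(\bord{}{\beta}\Lambda) = \lambda(\bord{n-1}{\beta}W)$ for $\beta \in \{+,-\}$.

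Next I would build the filler. Define a functor $h: \hatto{W} \to X$ on generators by letting $h$ agree with $\lambda$ on all atoms of $\Lambda$, sending the $n$\nbd generator $b$ to $a$, and sending the top $(n+1)$\nbd generator $W$ to the unit $\idd{}(a)$. Compatibility with the boundary operators is a direct check using the directed-complex axioms: the faces of $b$ lie in $\bord{n-1}{}W \subseteq \Lambda$ where $h = \lambda$, and $\bord{}{\beta}b = \bord{n-1}{\beta}W$, so $h(b):=a$ has the prescribed boundary; and $\bord{}{+}\idd{}(a) = \bord{}{-}\idd{}(a) = a$ matches both $h(\bord{}{\alpha}W) = \lambda(\Lambda) = a$ and $h(\bord{}{-\alpha}W) = h(b) = a$. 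By the polygraph structure of $(\molec{}{W})^*$ afforded by the conjecture, this generator data extends uniquely to a functor $h: \hatto{W} \to X$, extending $\lambda$; its transpose $h: W \to \nerve{X}$ fills the horn. Finally, since $h(W) = \idd{}(a)$ is a degenerate $(n+1)$\nbd cell of $X$, Lemma \ref{lem:omega_deg_equiv} (applied with $n+1$ in place of $n$) shows that $h: W \to \nerve{X}$ is an $(n+1)$\nbd equivalence. Hence every composition horn in $\nerve{X}$ has an equivalence filler, so $\nerve{X}$ is representable.

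The only genuinely delicate step is verifying that the assignment on generators does extend to a functor $\hatto{W} \to X$; this is exactly where Conjecture \ref{conj:freegen} is used in an essential way, since it is what identifies $\hatto{W}$ with $(\molec{}{W})^*$ and, through the polygraph structure, guarantees that a functor out of it is freely determined by compatible generator data. Everything else reduces to routine bookkeeping with the input/output boundaries of the atom $W$ and the definition of a strict unit in $X$.
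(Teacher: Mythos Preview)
Your proposal is correct and follows essentially the same approach as the paper: define the filler as a functor $\hatto{W} \to X$ that agrees with the given horn on $\Lambda$, sends the missing $n$\nbd generator to the strict composite $\lambda(\Lambda)$, and sends the top cell to its unit, then invoke Lemma \ref{lem:omega_deg_equiv}. You are somewhat more explicit than the paper about where the polygraph structure from Conjecture \ref{conj:freegen} is actually used and about checking boundary compatibility, but the argument is identical in substance.
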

\begin{proof}
Let $\Lambda \incl W, x: \Lambda \to \nerve{X}$ be a composition horn. The transpose $x: \hatto{\Lambda} \to X$ is a pasting diagram in $X$, so $x(\Lambda)$ is a cell of $X$. We define a functor $c(x): \hatto{W} \to X$ which sends $W$ to $\idd{}x(\Lambda)$, is equal to $x$ on $\Lambda$, and sends $W_0$ to $x(\Lambda)$. Its transpose $c(x): W \to \nerve{X}$ fills the horn, and by Lemma \ref{lem:omega_deg_equiv} it is an equivalence.
\end{proof}

Thus, conditional to Conjecture \ref{conj:freegen}, we can see $\nerve{}$ as a faithful functor from $\omegacat$ to $\rdgmset$, including strict $\omega$\nbd categories as a subcategory of weak $\omega$\nbd categories. This functor is far from being full, since morphisms of diagrammatic sets do not know about composition in an $\omega$\nbd category: for example, two isomorphic cells of which only one is a composite may be exchanged.

\begin{exm}
Let $X$ be a 2-category generated by 0-cells $x, y, z$, 1-cells $a: x \celto y$, $b: y \celto z$, and $c,d: x \celto z$, and 2-cells $h: c \celto d$, $h^*: d \celto c$, subject to equations
\begin{equation*}
	a \cp{0} b = c, \quad \quad h \cp{1} h^* = \idd{}c, \quad \quad h^* \cp{1} h = \idd{}d.
\end{equation*}
The generators are the only non-degenerate cells of $X$. For each atom $U$, we can define an involution $s$ on $\nerve{X}(U)$ such that a functor $x: \hatto{U} \to X$ maps a 1\nbd cell to $c$ if and only if $s(x): \hatto{U} \to X$ maps it to $d$. Indeed, this determines $s(x)$ completely on the 1\nbd dimensional generators; then, for each 2\nbd dimensional generator, there is always only one possible choice given the image of its boundaries.

This determines a morphism $s: \nerve{X} \to \nerve{X}$ of representable diagrammatic sets, which is not the image of an endofunctor of $X$, because 
\begin{equation*}
	s(a \cp{0} b) = d \neq c = s(a) \cp{0} s(b).
\end{equation*}
\end{exm}

However, we defined representability as a \emph{property} of diagrammatic sets, the existence of a pair of compositors for every spherical diagram of two cells. We can also treat compositors as a fixed \emph{structure} on a representable diagrammatic set.

\begin{dfn}
An \emph{algebraic} RDS is a representable diagrammatic set $X$ equipped with a weak composite $\compos{x}$ and a pair of compositors $c(x): \compos{x} \celto x$, $c'(x): x \celto \compos{x}$ for every spherical diagram $x$ in $X$ whose shape is either
\begin{enumerate}
	\item a molecule with two maximal elements, or
	\item an unsplittable, non-atomic molecule.
\end{enumerate}

We write $\rdgmsetalg$ for the category of algebraic RDSs, together with \emph{strict} morphisms which respect the choice of compositors.
\end{dfn}

\begin{remark}
From a constructive point of view, it may be preferable to consider all RDSs as having an algebraic choice of compositors; in that case, the difference between $\rdgmset$ and $\rdgmsetalg$ is purely at the level of morphisms. The two perspectives are equivalent modulo choice.
\end{remark}

\begin{remark}
The compositors $c(x)$ and $c'(x)$ can always be chosen to be weak inverses of each other.
\end{remark}

As seen in the proof of Proposition \ref{prop:omega_representable}, $\nerve{X}$ has a canonical choice of compositors, where the weak composite of a spherical diagram is, in fact, its composite in $X$, and the compositor maps to a unit in $X$. Any functor of $\omega$\nbd categories clearly respects this choice, so assuming Conjecture \ref{conj:freegen}, $\nerve{}$ factors through a functor
\begin{equation*}
	\nervealg{}: \omegacat \to \rdgmsetalg.
\end{equation*}
We will prove that $\nervealg{}$ is full and faithful. 
 
\begin{cons}
Let $X$ be an algebraic RDS, $x: U \to X$ a spherical $n$\nbd diagram. For each merger tree $\mathfrak{T}$ of $U$ (Construction \ref{cons:mergertree}), we define an $n$\nbd cell $\compos{x}_\mathfrak{T}$ and $(n+1)$\nbd equivalences 
\begin{equation*}
	c_\mathfrak{T}(x): \compos{x}_\mathfrak{T} \celto x, \quad \quad c'_\mathfrak{T}(x): x \celto \compos{x}_\mathfrak{T},
\end{equation*}
by induction on the structure of $\mathfrak{T}$.
\begin{itemize}
	\item If $\mathfrak{T}$ is equal to its root, either $U$ is an atom, and we let 
		\begin{equation*}
			\compos{x}_\mathfrak{T} := x, \quad \quad c_\mathfrak{T}(x), c'_\mathfrak{T}(x) := p_U^*x: x \celto x,
		\end{equation*}
	or $U$ is non-atomic and unsplittable, and we let
		\begin{equation*}
			\compos{x}_\mathfrak{T} := \compos{x}, \quad \quad c_\mathfrak{T}(x) := c(x), c'_\mathfrak{T}(x) := c'(x).
		\end{equation*}
	\item Suppose $\mathfrak{T}$ branches into $\mathfrak{T}_1$ and $\mathfrak{T}_2$, labelled by molecules $U_1$ and $U_2$ with spherical boundary. Letting $x_1 := \restr{x}{U_1}, x_2 := \restr{x}{U_2}$, we have $(n+1)$\nbd equivalences 
	\begin{align*}
		c_\mathfrak{T_1}(x_1): \compos{x_1}_\mathfrak{T_1} \celto x_1, \quad \quad & c'_\mathfrak{T_1}(x_1): x_1 \celto \compos{x_1}_\mathfrak{T_1}, \\
		c_\mathfrak{T_2}(x_2): \compos{x_2}_\mathfrak{T_2} \celto x_2, \quad \quad & c'_\mathfrak{T_2}(x_2): x_2 \celto \compos{x_2}_\mathfrak{T_2}.
	\end{align*} 
	We define $\compos{x}_\mathfrak{T} := \compos{\compos{x_1}_\mathfrak{T_1} \cup \compos{x_2}_\mathfrak{T_2}}$, and 
	\begin{itemize}
		\item if $U_1, U_2$ are both atoms, then $c_\mathfrak{T}(x) := c(x)$ and $c'_\mathfrak{T}(x) := c'(x)$;
		\item if $U_1$ is an atom and $U_2$ is not, then 
		\begin{equation*}
			c_\mathfrak{T}(x) := \compos{c(x_1 \cup \compos{x_2}_\mathfrak{T_2}) \cup c_\mathfrak{T_2}(x_2)}, \quad c'_\mathfrak{T}(x) :=  \compos{c'_\mathfrak{T_2}(x_2) \cup c'(x_1 \cup \compos{x_2}_\mathfrak{T_2})};
		\end{equation*}
		\item if $U_2$ is an atom and $U_1$ is not, then
		\begin{equation*}
			c_\mathfrak{T}(x) := \compos{c(\compos{x_1}_\mathfrak{T_1} \cup x_2) \cup c_\mathfrak{T_1}(x_1)}, \quad c'_\mathfrak{T}(x) :=  \compos{c'_\mathfrak{T_1}(x_1) \cup c'(\compos{x_1}_\mathfrak{T_1} \cup x_2)};
		\end{equation*}
		\item if neither $U_1$ nor $U_2$ are atoms,
	\begin{align*}
		c_\mathfrak{T}(x) & := \compos{\compos{c(\compos{x_1}_\mathfrak{T_1} \cup \compos{x_2}_\mathfrak{T_2}) \cup c_\mathfrak{T_1}(x_1)} \cup c_\mathfrak{T_2}(x_2)}, \\
		c'_\mathfrak{T}(x) & := \compos{c'_\mathfrak{T_2}(x_2) \cup \compos{c'_\mathfrak{T_1}(x_1) \cup c'(\compos{x_1}_\mathfrak{T_1} \cup \compos{x_2}_\mathfrak{T_2})}}.
	\end{align*}
	\end{itemize}
\end{itemize}
\end{cons}

Intuitively, the construction produces a weak composite and a compositor for each spherical diagram of shape $U$, depending on a choice of sequential binary splits of $U$. These coincide with the structural compositors of $X$ for binary diagrams or $n$\nbd ary unsplittable diagrams.

\begin{prop} \label{prop:asso_preserve}
Let $f: X \to Y$ be a strict morphism of algebraic RDSs. For all spherical diagrams $x: U \to X$ and all merger trees $\mathfrak{T}$ for $U$, 
\begin{equation*}
	f(c_\mathfrak{T}(x)) = c_\mathfrak{T}(f(x)), \quad \quad f(c'_\mathfrak{T}(x)) = c'_\mathfrak{T}(f(x)).
\end{equation*}
\end{prop}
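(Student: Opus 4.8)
The plan is to prove the statement by induction on the structure of the merger tree $\mathfrak{T}$, carrying along the slightly stronger inductive hypothesis that $f(\compos{x}_\mathfrak{T}) = \compos{f(x)}_\mathfrak{T}$ as well. The decisive preliminary observation --- the one that makes the definition of a strict morphism suffice --- is that each of $\compos{x}_\mathfrak{T}$, $c_\mathfrak{T}(x)$, $c'_\mathfrak{T}(x)$ is a single \emph{cell}, not a general diagram. Indeed, inspecting the construction: in the base cases they are a degeneracy $p_U^*x$ or one of the structural compositors $c(x)$, $c'(x)$ of a non-atomic unsplittable molecule, and in the branching step each is produced as a weak composite $\compos{\cdot}$, hence again a cell. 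It follows that every union $\cup$ appearing in the construction is a union of two cells, and hence that every $\compos{\cdot}$, $c(\cdot)$, $c'(\cdot)$ appearing is applied to a binary spherical diagram, that is, a molecule with exactly two maximal elements; for such diagrams the algebraic structure of $X$ (and of $Y$) provides a chosen weak composite and a chosen pair of compositors.

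Granting this, it remains only to check that $f$ commutes with each of the three kinds of operation used to build $\compos{x}_\mathfrak{T}$, $c_\mathfrak{T}(x)$, $c'_\mathfrak{T}(x)$ from the data attached to the subtrees. First, $f$ commutes with restriction to a submolecule and with union along a shared boundary: $\restr{f(x)}{V} = f(\restr{x}{V})$ since both sides are $x$ precomposed with the inclusion $V \incl U$ and postcomposed with $f$, and $f(x_1 \cup x_2) = f(x_1) \cup f(x_2)$ by the universal property of the pushout defining the shape of $x_1 \cup x_2$. Second, $f$ commutes with the degeneracy operation, $f(p_U^*x) = p_U^*f(x)$, since $p_U^*x$ is $x$ precomposed with $p_U$ (this is the fact already used in the Corollary following Theorem \ref{thm:equiv_preserve}). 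Third, $f$ commutes with the chosen binary weak composites and compositors, $f(\compos{x}) = \compos{f(x)}$, $f(c(x)) = c(f(x))$ and $f(c'(x)) = c'(f(x))$, by the very definition of a strict morphism of algebraic RDSs; the first identity is recovered from the second by taking input boundaries.

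Finally I would run the induction case by case, mirroring the construction. For the root cases ($U$ an atom, or $U$ non-atomic and unsplittable) the conclusion is immediate from the three commutation facts. For $\mathfrak{T}$ branching into $\mathfrak{T}_1$, $\mathfrak{T}_2$ labelled by $U_1$, $U_2$, I would invoke the inductive hypothesis for $\restr{x}{U_1}$ with $\mathfrak{T}_1$ and for $\restr{x}{U_2}$ with $\mathfrak{T}_2$ --- valid because $f(\restr{x}{U_i}) = \restr{f(x)}{U_i}$ --- and then push $f$ through the defining formulas for $\compos{x}_\mathfrak{T}$, $c_\mathfrak{T}(x)$, $c'_\mathfrak{T}(x)$ in each of the four sub-cases (both $U_i$ atoms; $U_1$ an atom, $U_2$ not; $U_2$ an atom, $U_1$ not; neither an atom), applying the three facts at each layer of the nested $\compos{\cdot}$ and $\cup$. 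I do not anticipate a real obstacle: the only delicate point is the bookkeeping ensuring that every $\compos{\cdot}$, $c(\cdot)$, $c'(\cdot)$ stays binary --- which is exactly the preliminary observation --- together with the routine verification that the diagram shapes line up, so that the formula for $c_\mathfrak{T}(f(x))$ is obtained from the formula for $c_\mathfrak{T}(x)$ by applying $f$ termwise.
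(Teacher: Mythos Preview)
Your proposal is correct and follows essentially the same approach as the paper, which simply states that the atomic case holds because $f(p_U^*x) = p_U^*f(x)$ and the rest is ``a simple induction, using the fact that $f$ preserves the structural compositors.'' Your version is a careful unpacking of that induction; in particular, your preliminary observation that every $\compos{\cdot}$, $c(\cdot)$, $c'(\cdot)$ in the recursion is applied to a binary (or non-atomic unsplittable) diagram is exactly the point that makes the strictness hypothesis sufficient, and the paper leaves this implicit.
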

\begin{proof}
If $x$ is a cell, that is, $U$ is atomic, the statement is true because $f(p_U^*x) = p_U^*f(x)$. Otherwise, it is proved by a simple induction, using the fact that $f$ preserves the structural compositors.
\end{proof}

\begin{dfn}
Let $X$ be an algebraic RDS. We say that $X$ is \emph{associative} if, for each spherical diagram $x: U \to X$ in $X$, and each pair $\mathfrak{T}_1$, $\mathfrak{T}_2$ of merger trees for $U$, we have
\begin{equation*}
	\compos{x}_\mathfrak{T_1} = \compos{x}_\mathfrak{T_2}, \quad \quad c_\mathfrak{T_1}(x) = c_\mathfrak{T_2}(x), \quad \quad c'_\mathfrak{T_1}(x) = c'_\mathfrak{T_2}(x).
\end{equation*}
In an associative algebraic RDS, we write $\compos{x}, c(x), c'(x)$ for $\compos{x}_\mathfrak{T}, c_\mathfrak{T}(x), c'_\mathfrak{T}(x)$ where $\mathfrak{T}$ is an arbitrary merger tree.
\end{dfn}

\begin{prop} \label{prop:omega_associative}
Let $X$ be an $\omega$\nbd category, and assume Conjecture \ref{conj:freegen}. Then $\nervealg{X}$ is associative.
\end{prop}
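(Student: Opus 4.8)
\end{prop}
\begin{proof}
The plan is to show that, in $\nervealg{X}$, neither $\compos{x}_\mathfrak{T}$ nor $c_\mathfrak{T}(x)$, $c'_\mathfrak{T}(x)$ depends on the merger tree $\mathfrak{T}$ of $U$ at all: each of them transposes to a functor into $X$ that is completely determined by $x$ together with the strict composition of $X$. Throughout we invoke Conjecture \ref{conj:freegen} to identify $\hatto{U}$ with $(\molec{}{U})^*$, so that a spherical $n$\nbd diagram $x \colon U \to \nervealg{X}$ transposes to a functor $x \colon \hatto{U} \to X$ and determines a well-defined $n$\nbd cell $\nu(x) := x(U)$ of $X$.

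First I would unwind the canonical compositors of $\nervealg{X}$ from the proof of Proposition \ref{prop:omega_representable}. For a spherical diagram $x \colon U \to \nervealg{X}$ whose shape is a molecule with two maximal elements, or an unsplittable non-atomic molecule, the structural weak composite $\compos{x}$ is the cell of shape $\compos{U}$ whose transpose $\hatto{\compos{U}} \to X$ agrees with $x$ on $\bord{}{}U = \bord{}{}\compos{U}$ and sends the greatest element of $\compos{U}$ to $\nu(x)$; and the structural compositors $c(x) \colon \compos{x} \celto x$ and $c'(x) \colon x \celto \compos{x}$, of shapes $\compos{U} \celto U$ and $U \celto \compos{U}$, transpose to functors that send the greatest $(n+1)$\nbd dimensional element to the unit $\idd{}\nu(x)$. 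The bookkeeping fact worth recording is that a functor out of $\hatto{(\compos{U} \celto U)}$ is determined by its two boundary restrictions together with the image of the top $(n+1)$\nbd cell; for such a compositor the boundary restrictions are $x$ and $\compos{x}$, while the top cell goes to some $(n+1)$\nbd cell of $X$ from $\nu(x)$ to $\nu(x)$.

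The heart of the argument is an induction on the structure of $\mathfrak{T}$ establishing: \emph{(i)} $\compos{x}_\mathfrak{T}$ has shape $\compos{U}$ and transposes to the functor with top value $\nu(x)$ and with the boundaries of $x$; and \emph{(ii)} $c_\mathfrak{T}(x), c'_\mathfrak{T}(x)$ have shapes $\compos{U} \celto U$, $U \celto \compos{U}$ and transpose to functors sending the top $(n+1)$\nbd cell to $\idd{}\nu(x)$. The base cases are the previous paragraph (unsplittable non-atomic $U$) and, for $U$ an atom, the fact that $p_U^*x$ transposes to $\idd{}\nu(x)$. For the inductive step, with $\mathfrak{T}$ branching into $\mathfrak{T}_1, \mathfrak{T}_2$ along a binary split of $U$ into $U_1, U_2$ (and $x_i$ the restriction of $x$ to $U_i$): by the inductive hypothesis each $\compos{x_i}_{\mathfrak{T}_i}$ transposes to a cell of shape $\compos{U_i}$ with top value $\nu(x_i)$, and each $c_{\mathfrak{T}_i}(x_i), c'_{\mathfrak{T}_i}(x_i)$ transposes to a \emph{unit} $(n+1)$\nbd cell of $X$. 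Every cell occurring inside the iterated weak composites defining $\compos{x}_\mathfrak{T}$, $c_\mathfrak{T}(x)$, $c'_\mathfrak{T}(x)$ therefore transposes either to a unit (the structural compositor pieces) or to a cell whose top value is some $\nu(x_i)$ (the $\compos{x_i}_{\mathfrak{T}_i}$ pieces). Since in $\nervealg{X}$ a weak composite is by definition the \emph{strict} composite of $X$ equipped with a unit as compositor, and since $\idd{}$ is a functor and so commutes with every $\cp{k}$, each such weak composite transposes again to a unit $\idd{}c$, where $c$ is the corresponding strict composite of $n$\nbd cells; and, by functoriality of $(\molec{}{-})^*$ and the decomposition $U = \tilde{U_1} \cp{n-1} \tilde{U_2}$ provided by the binary split, this $c$ is exactly $\nu(x) = x(U)$. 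This gives \emph{(i)} and \emph{(ii)}.

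The proposition then follows immediately: two functors out of $\hatto{\compos{U}}$ with the same boundary and top value coincide, so $\compos{x}_{\mathfrak{T}_1} = \compos{x}_{\mathfrak{T}_2}$; and two functors out of $\hatto{(\compos{U} \celto U)}$ (resp. $\hatto{(U \celto \compos{U})}$) with the same boundary restrictions and top $(n+1)$\nbd cell coincide, so $c_{\mathfrak{T}_1}(x) = c_{\mathfrak{T}_2}(x)$ and $c'_{\mathfrak{T}_1}(x) = c'_{\mathfrak{T}_2}(x)$. I expect the main obstacle to be the last clause of the inductive step: checking that the strict composite of the $\nu(x_i)$ produced by the recursive construction genuinely equals $x(U)$. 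This requires following carefully how $U$ is rebuilt from the $U_i$ through $\tilde{U_1} \cp{n-1} \tilde{U_2}$ in Construction \ref{cons:mergertree}, and using that strict composition of pasting diagrams in $(\molec{}{U})^*$ is well-defined and coherent — a bookkeeping consequence of Steiner's pasting theorem together with the associativity, unit and interchange laws of strict $\omega$\nbd categories — so that whiskering each $\nu(x_i)$ with the lower-dimensional pieces of $\bord{}{}U$ lying on the opposite side reassembles exactly into $x(U)$. The rest is routine unwinding of the definitions of $\nervealg{}$ and of the compositor construction.
\end{proof}
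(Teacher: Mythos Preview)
Your proof is correct and follows essentially the same approach as the paper's: show that the transposes of $\compos{x}_{\mathfrak T}$ and $c_{\mathfrak T}(x), c'_{\mathfrak T}(x)$ are determined by $x(U)$ and by units on $x(U)$, independently of $\mathfrak T$. The paper's version is much terser --- it simply asserts that both $\compos{x}_{\mathfrak T_i}$ send $\compos{U}$ to $x(U)$, and that the compositors send the top element to ``a composite of units, hence a unit'' --- whereas you spell out the underlying induction on $\mathfrak T$; the concern you flag at the end about reassembling $x(U)$ from the $\nu(x_i)$ is in fact routine, since in the strict $\omega$\nbd category $X$ composition is associative and interchange holds, so any sequence of binary compositions of the $x_i$ along the split yields $x(U)$.
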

\begin{proof}
Let $x: U \to \nervealg{X}$ be a spherical diagram with transpose $x: \hatto{U} \to X$. For any pair $\mathfrak{T}_1, \mathfrak{T}_2$ of merger trees for $U$, the functors $\compos{x}_\mathfrak{T_1}, \compos{x}_\mathfrak{T_2}: \hatto{\compos{U}} \to X$ are equal on the boundary, and both send $\compos{U}$ to $x(U)$, therefore $\compos{x}_\mathfrak{T_1} = \compos{x}_\mathfrak{T_2}$. 

Then $c_\mathfrak{T_1}(x), c_\mathfrak{T_2}(x): \hatto{(\compos{U} \celto U)} \to X$ are defined and equal on their boundaries, and send $\compos{U} \celto U$ to a composite of units, hence a unit. Thus $c_\mathfrak{T_1}(x) = c_\mathfrak{T_2}(x)$, and similarly $c'_\mathfrak{T_1}(x) = c'_\mathfrak{T_2}(x)$.
\end{proof}

\begin{cons} \label{cons:shell}
Let $U$ be an $n$\nbd molecule with spherical boundary. We construct an $n$\nbd molecule $\shell{U} \supsph U$ with spherical boundary, which we call a \emph{shell} of $U$, with the property that $\bord{}{\alpha}\shell{U}$ is isomorphic to $O^{n-1}$ for both $\alpha \in \{+,-\}$. Moreover, if $U$ and $V$ are $n$\nbd molecules with isomorphic, spherical boundaries, then
\begin{equation*}
	\shell{U} \incliso \shell{V}[U/V],
\end{equation*}
that is, $\shell{U}$ and $\shell{V}$ are isomorphic outside of $U, V$.

If $n = 0, 1$, then $\shell{U} := U$. If $n > 0$, we let $\shell{U}$ be the colimit
\begin{equation*} 
\begin{tikzpicture}[baseline={([yshift=-.5ex]current bounding box.center)}]
	\node[scale=1.25] (0) at (-2,-1.5) {$O^{n-1} \celto \shell{(\bord{}{-}U)}$};
	\node[scale=1.25] (1) at (0,0) {$\bord{}{-}U$};
	\node[scale=1.25] (2) at (2,-1.5) {$U$};
	\node[scale=1.25] (3) at (4,0) {$\bord{}{+}U$};
	\node[scale=1.25] (4) at (6,-1.5) {$\shell{(\bord{}{+}U)} \celto O^{n-1}$,};
	\draw[1cincl] (1) to node[auto,swap] {$j_1$} (0);
	\draw[1cinc] (1) to node[auto] {$\imath^-$} (2);
	\draw[1cincl] (3) to node[auto,swap] {$\imath^+$} (2);
	\draw[1cinc] (3) to node[auto] {$j_2$} (4);
\end{tikzpicture}
\end{equation*}
where $j_1$ and $j_2$ are the inclusions of $\bord{}{\alpha}U$ as a submolecule of $\shell{(\bord{}{\alpha}U)}$. 

The $n$\nbd molecules $O^{n-1} \celto \shell{(\bord{}{-}U)}$ and $\shell{(\bord{}{+}U)} \celto O^{n-1}$ are well-defined and have spherical boundary by the inductive hypothesis on the boundaries of the shell of an $(n-1)$\nbd molecule, and if the $\shell{(\bord{}{\alpha}U)}$ are $(n-1)$\nbd molecules containing $\bord{}{\alpha}U$ as a spherical submolecule, then $\shell{U}$ is an $n$\nbd molecule with $\bord{}{\alpha}\shell{U}$ isomorphic to $O^{n-1}$, and containing $U$ as a spherical subdiagram. The fact that $\shell{U}$ and $\shell{V}[U/V]$ are isomorphic is also evident.
\end{cons}

\begin{lem} \label{lem:minimal_suffice}
Let $X, Y$ be $\omega$\nbd categories and $f: \nervealg{X} \to \nervealg{Y}$ a strict morphism. Assuming Conjecture \ref{conj:freegen}, $f$ is uniquely determined by the functions
\begin{equation*}
	f: \nervealg{X}(O^n) \to \nervealg{Y}(O^n), \quad n \geq 0.
\end{equation*}
\end{lem}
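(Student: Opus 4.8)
The plan is to show that two strict morphisms $f,g:\nervealg{X}\to\nervealg{Y}$ which agree on every cell of globular shape must coincide, arguing by induction on the dimension of atoms. A morphism of diagrammatic sets is a natural transformation of presheaves on $\atom$, so it suffices to prove $f_U=g_U$ on $\nervealg{X}(U)$ for every atom $U$; for $\dmn{U}\leq 1$ the atom $U$ is isomorphic to the globe $O^{\dmn{U}}$, so this holds by hypothesis. Assume the claim for atoms of dimension $<n$, with $n\geq 2$, and fix an $n$-atom $U$ and a cell $x:U\to\nervealg{X}$, which by Conjecture \ref{conj:freegen} we regard as a functor $x:\hatto{U}=(\molec{}{U})^*\to X$.

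Using the shell of Construction \ref{cons:shell}, I would attach to $x$ a canonical spherical diagram $\shell{x}:\shell{U}\to\nervealg{X}$ extending $x$: it restricts to $x$ on $U$, restricts to $\shell{(\bord{}{\alpha}x)}$ on $\shell{\bord{}{\alpha}U}$ (recursively on dimension), and sends the two collar atoms $O^{n-1}\celto\shell{\bord{}{-}U}$ and $\shell{\bord{}{+}U}\celto O^{n-1}$ to the canonical compositors $c(\shell{(\bord{}{-}x)})$ and $c'(\shell{(\bord{}{+}x)})$. These shapes match because $\compos{\shell{\bord{}{\alpha}U}}\cong O^{n-1}$, and $\nervealg{X}$ admits canonical compositors of spherical diagrams since it is associative (Proposition \ref{prop:omega_associative}); alternatively one fixes once and for all a merger tree for each molecule shape. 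Since $\bord{}{\alpha}\shell{U}\cong O^{n-1}$, the weak composite $\compos{\shell{x}}$ is a cell of globular shape $\compos{\shell{U}}\cong O^{n-1}\celto O^{n-1}\cong O^n$.

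The crucial step is the identity $\compos{\shell{y}}(\underline{n})=y(U)$ for every $\omega$-category $Z$ and $n$-cell $y:U\to\nervealg{Z}$, where $\underline{n}$ is the top generator of $O^n\cong\compos{\shell{U}}$. I would prove this by a secondary induction on $n$, using the explicit description of the diagrammatic nerve from the proof of Proposition \ref{prop:omega_representable}: the weak composite of a spherical diagram in $\nervealg{Z}$ is its honest composite in $Z$, and a canonical compositor sends the top atom to a \emph{unit} of $Z$. Hence $\compos{\shell{y}}(\underline{n})$ is the composite in $Z$ of the pasting $\shell{y}$, whose top-dimensional constituents are $y(U)$ and the two collar atoms, the latter mapping to units on the input, resp.\ output, $(n{-}1)$-boundary of $y(U)$ (here the secondary hypothesis identifies the composite of $\shell{(\bord{}{\alpha}y)}$ with $\bord{}{\alpha}y(U)$); up to the unit laws $\idd{n}(\bord{n-1}{-}c)\cp{n-1}c=c=c\cp{n-1}\idd{n}(\bord{n-1}{+}c)$ in $Z$, and noting that the lower-dimensional shell data contribute only units, this composite collapses to $y(U)$. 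I expect this bookkeeping — verifying that the shell composes back down to the original atom — to be the main technical obstacle, though it is conceptually routine.

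To conclude: $f$ preserves faces (being a presheaf morphism), canonical compositors (being strict, Proposition \ref{prop:asso_preserve}) and hence weak composites, so an easy induction on dimension gives $f(\shell{x})=\shell{f(x)}$ and therefore $f(\compos{\shell{x}})=\compos{\shell{f(x)}}$; likewise for $g$. As $\compos{\shell{x}}$ is a globular cell of $\nervealg{X}$, we get $f(\compos{\shell{x}})=g(\compos{\shell{x}})$ by hypothesis, and evaluating at $\underline{n}$ and invoking the crux identity gives $f(x)(U)=g(x)(U)$. For each proper atom-face $V\submol U$ we have $f(x)(V)=f(\restr{x}{V})(V)=g(\restr{x}{V})(V)=g(x)(V)$ by the outer inductive hypothesis, since $\dmn{V}<n$ and $f$ commutes with restriction along $V\incl U$. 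Since $\hatto{U}=(\molec{}{U})^*$ is composition-generated by its atoms, a functor $\hatto{U}\to Y$ is determined by its values on those generators, so $f(x)=g(x)$; this completes the induction and hence the proof.
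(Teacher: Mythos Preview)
Your proposal is correct and follows essentially the same approach as the paper: build the shell extension $\shell{x}$ from canonical compositors, use strictness to get $f(\shell{x})=\shell{f(x)}$, and reduce $f(x)(U)$ to the globular value $f(\compos{\shell{x}})(O^n)$ via the identity $y(U)=\compos{\shell{y}}(O^n)$. The paper records this identity as the terse chain $x(U)=\shell{x}(\shell{U})=\compos{\shell{x}}(\compos{\shell{U}})=\compos{\shell{x}}(O^n)$ rather than your explicit secondary induction, but the content is the same.
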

\begin{proof}
By Proposition \ref{prop:omega_associative}, there are uniquely determined compositors $c(x), c'(x)$ for all spherical diagrams in $\nervealg{X}$ and $\nervealg{Y}$. For each spherical diagram $x: U \to \nervealg{X}$, we define an extension
\begin{equation*}
	\shell{x}: \shell{U} \to \nervealg{X}, \quad \quad x \subsph \shell{x},
\end{equation*}
by induction on the dimension of $U$; we do the same for spherical diagrams in $\nervealg{Y}$. This will have the property that if $x$ and $y$ are parallel, then $\shell{x} = \shell{y}[x/y]$. 

If $n = 0,1$, then $\shell{x} := x$. Otherwise, we define $\shell{x}$ to be
\begin{itemize}
	\item equal to $c(\shell{(\restr{x}{\bord{}{-}U})}): \compos{\shell{(\restr{x}{\bord{}{-}U}})} \celto \shell{(\restr{x}{\bord{}{-}U})}$ on $O^{n-1} \celto \shell{(\bord{}{-}U)}$,
	\item equal to $x$ on $U$, and
	\item equal to $c'(\shell{(\restr{x}{\bord{}{+}U})}): \shell{(\restr{x}{\bord{}{+}U})} \celto \compos{\shell{(\restr{x}{\bord{}{+}U})}}$ on $\shell{(\bord{}{+}U)} \celto O^{n-1}$.
\end{itemize}
This is well-defined by the inductive hypothesis, and clearly has the stated property.

Because $\shell{x}$ is equal to a compositor on every atom of $\shell{U}$ which is not in $U$, and by Proposition \ref{prop:asso_preserve} $f$ preserves compositors, it follows that for all $x$,
\begin{equation} \label{eq:shell_preserve}
	f(\shell{x}) = \shell{f(x)}.
\end{equation}
Now, we will prove by induction on $n$ that $f: \nervealg{X}(O^n) \to \nervealg{Y}(O^n)$ determines uniquely $f: \nervealg{X}(U) \to \nervealg{Y}(U)$ for all $n$\nbd atoms $U$. For $n = 0, 1$, there is nothing to prove. 

For $n > 1$, let $x: U \to \nervealg{X}$ be an $n$\nbd cell with transpose $x: \hatto{U} \to X$. By the inductive hypothesis, $f(\restr{x}{\bord{}{\alpha}U})$ is already determined for $\alpha \in \{+,-\}$, so it suffices to determine $f(x)(U)$ as a cell of $Y$. But by the definition of compositors in $\nervealg{X}$,
\begin{equation*}
	x(U) = \shell{x}(\shell{U}) = \compos{\shell{x}}(\compos{\shell{U}}) = \compos{\shell{x}}(O^n),
\end{equation*}
where the last equality holds because $\compos{\shell{U}} = O^n$. The same equations hold for cells in $\nervealg{Y}$, so
\begin{equation*}
	f(x)(U) = \shell{f(x)}(\shell{U}) = \compos{\shell{f(x)}}(O^n) = f(\compos{\shell{x}})(O^n),
\end{equation*}
by (\ref{eq:shell_preserve}) and the fact that $f$ preserves structural weak composites. Thus $f(x)$ is determined by $f(\compos{\shell{x}})$, which completes the proof.
\end{proof}

For the main result, we need a slight extension of the shell construction.

\begin{cons} \label{cons:shell_kcomp}
For $n > 0$, $k < n$, consider the $n$\nbd molecules $O^n \cp{k} O^n$. These have spherical boundary if and only if $k = n-1$, and have the property that 
\begin{equation*}
	\bord{}{\alpha}(O^n \cp{k} O^n) \incliso O^{n-1} \cp{k} O^{n-1}
\end{equation*}
for $k < n-1$. 

Even though the $O^n \cp{k} O^n$ do not have spherical boundary, we will show that we can define a shell $\shell{(O^n \cp{k} O^n)}$ as in Construction \ref{cons:shell} and that it has spherical boundary. We fix $k$ and proceed by induction on $n$. If $n = k+1$, this is the shell as previously defined. 

Let $n > k+1$. We let $\shell{(O^n \cp{k} O^n)}$ be the colimit 
\begin{equation*} 
\begin{tikzpicture}[baseline={([yshift=-.5ex]current bounding box.center)}]
	\node[scale=1.25] (0) at (-2,-1.5) {$O^{n-1} \celto \shell{(O^{n-1} \cp{k} O^{n-1})}$};
	\node[scale=1.25] (1) at (0,0) {$O^{n-1} \cp{k} O^{n-1}$};
	\node[scale=1.25] (2) at (2,-1.5) {$O^n \cp{k} O^n$};
	\node[scale=1.25] (3) at (4,0) {$O^{n-1} \cp{k} O^{n-1}$};
	\node[scale=1.25] (4) at (6,-1.5) {$\shell{(O^{n-1} \cp{k} O^{n-1})} \celto O^{n-1}$};
	\draw[1cincl] (1) to node[auto,swap] {$j_1$} (0);
	\draw[1cinc] (1) to node[auto] {$\imath^-$} (2);
	\draw[1cincl] (3) to node[auto,swap] {$\imath^+$} (2);
	\draw[1cinc] (3) to node[auto] {$j_2$} (4);
\end{tikzpicture}
\end{equation*}
which is well-defined. If we label as $O^n_1, O^n_2$ the leftmost and rightmost copy of $O^n$ in $O^n \cp{k} O^n$, since
\begin{align*}
	\bord{}{-}O^n_1 & \subsph \shell{(O^{n-1} \cp{k} O^{n-1})} \incliso \bord{}{+}(O^{n-1} \celto \shell{(O^{n-1} \cp{k} O^{n-1})}),  \\
	\bord{}{+}O^n_2 & \subsph \shell{(O^{n-1} \cp{k} O^{n-1})} \incliso \bord{}{-}(\shell{(O^{n-1} \cp{k} O^{n-1})} \celto O^{n-1}),
\end{align*}
we have that
\begin{align*}
	U_1 & := (O^{n-1} \celto \shell{(O^{n-1} \cp{k} O^{n-1})}) \cup O^n_1, \\
	U_2 & := O^n_2 \cup (\shell{(O^{n-1} \cp{k} O^{n-1})} \celto O^{n-1}),
\end{align*}
are both  $n$\nbd molecules with spherical boundary, and $O^n_1 \subsph U_1, O^n_2 \subsph U_2$. Therefore $\shell{(O^n \cp{k} O^n)} = U_1 \cp{n-1} U_2$ also has spherical boundary, and $\bord{}{\alpha}\shell{(O^n \cp{k} O^n)} \incliso O^{n-1}$ for both $\alpha \in \{+,-\}$.
\end{cons}

\begin{thm} \label{thm:nerve_ff}
Assume Conjecture \ref{conj:freegen}. The functor $\nervealg{}: \omegacat \to \rdgmsetalg$ is full and faithful.
\end{thm}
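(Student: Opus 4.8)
The plan is to deduce faithfulness formally and to prove fullness by reconstructing a strict functor from a given morphism of nerves. Faithfulness is immediate: the forgetful functor $\rdgmsetalg \to \dgmset$ precomposed with $\nervealg{}$ is the nerve $\nerve{}: \omegacat \to \dgmset$, which was shown to be faithful, so $\nervealg{}$ is faithful. For fullness, suppose $f: \nervealg{X} \to \nervealg{Y}$ is a strict morphism. Since $\nervealg{X}(O^n) = \homset{\omegacat}(\hatto{O^n}, X) \cong X_n$, and likewise for $Y$, the components of $f$ on globes are functions $F_n: X_n \to Y_n$, and I would argue that $F := (F_n)_{n \in \mathbb{N}}$ underlies a functor of $\omega$\nbd categories with $\nervealg{F} = f$. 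The easy part is that $F$ is a morphism of $\omega$\nbd graphs preserving units: because $f$ is a morphism of presheaves it commutes with the presheaf actions of the face inclusions $\imath^\alpha: O^{n-1} \incl O^n$ and of the degeneracy $p_{O^n}: O^{n+1} \surj O^n$, and under the identifications above these actions are exactly $\bord{}{\alpha}$ and $\idd{}$.

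The substantial step is preservation of the compositions $\cp{k}$. Given $k$\nbd composable $n$\nbd cells $x,y$ of $X$, I would form the diagram $d: O^n \cp{k} O^n \to \nervealg{X}$ with $\restr{d}{O^n_1} = x$ and $\restr{d}{O^n_2} = y$. When $k = n-1$ the shape $O^n \cp{n-1} O^n$ has spherical boundary and two maximal elements, so, $\nervealg{X}$ being representable (Proposition \ref{prop:omega_representable}), it carries a structural compositor, and the weak composite $\compos{d}$ has shape $\compos{O^n \cp{n-1} O^n} \incliso O^n$; unwinding the definition of the canonical compositors of $\nervealg{X}$ (as in the proof of Proposition \ref{prop:omega_representable}), $\compos{d}$ corresponds to the cell $x \cp{n-1} y$ of $X$. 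When $k < n-1$ the shape is not spherical, so instead I would pass to the generalised shell $\shell{d}: \shell{(O^n \cp{k} O^n)} \to \nervealg{X}$ of Construction \ref{cons:shell_kcomp}: this is a spherical diagram with $\compos{\shell{(O^n \cp{k} O^n)}} \incliso O^n$, and since every atom of $\shell{(O^n \cp{k} O^n)}$ outside $O^n \cp{k} O^n$ is sent by $\shell{d}$ to a compositor, hence in $X$ to a unit cell, the unit laws of $X$ give that $\compos{\shell{d}}$ again corresponds to $x \cp{k} y$. In both cases $f$, being strict, preserves the relevant structural weak composite; and by the same reasoning as in Lemma \ref{lem:minimal_suffice}, using that $f$ preserves all compositors (Proposition \ref{prop:asso_preserve}) and extending equation (\ref{eq:shell_preserve}) to the diagrams of Construction \ref{cons:shell_kcomp}, I would obtain $f(\shell{d}) = \shell{f(d)}$, where $f(d)$ is the diagram with components $F(x)$ and $F(y)$. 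Evaluating $f(\compos{\shell{d}}) = \compos{\shell{f(d)}}$ (respectively $f(\compos{d}) = \compos{f(d)}$) at the top cell of $O^n$ then yields $F(x \cp{k} y) = F(x) \cp{k} F(y)$.

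Having shown $F: X \to Y$ is a functor, I would finish by observing that $\nervealg{F}$ is a strict morphism of algebraic RDSs --- post\nbd composition with a functor preserves the canonical compositors of $\nervealg{X}$, since these are induced by composition and units in $X$ --- which agrees with $f$ on every globe $O^n$ by construction of $F$. Lemma \ref{lem:minimal_suffice} says a strict morphism is determined by its restriction to globes, so $\nervealg{F} = f$, completing the proof of fullness.

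I expect the main obstacle to be the verification in the third paragraph that the generalised shell diagram $\shell{d}$ has weak composite representing \emph{exactly} $x \cp{k} y$, and that $f$ commutes with the shell construction for the non\nbd spherical molecules $O^n \cp{k} O^n$. Both points require unwinding the definition of the canonical compositors of a diagrammatic nerve, carefully tracking the images of the shell atoms as units and discharging them through the unit laws of strict $\omega$\nbd categories, and checking that Construction \ref{cons:shell_kcomp} inherits the naturality property (\ref{eq:shell_preserve}) enjoyed by ordinary shells; everything else is routine bookkeeping with presheaf operations.
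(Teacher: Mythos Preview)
Your proposal is correct and follows essentially the same route as the paper: extract the underlying $\omega$\nbd graph morphism from $f$ on globes, verify preservation of $\cp{k}$ via the generalised shell $\shell{(O^n \cp{k} O^n)}$ of Construction \ref{cons:shell_kcomp} together with the naturality $f(\shell{d}) = \shell{f(d)}$, and invoke Lemma \ref{lem:minimal_suffice} for uniqueness. The only cosmetic difference is that the paper treats all $k < n$ uniformly through the shell (the base case $n = k+1$ of Construction \ref{cons:shell_kcomp} already being the ordinary shell), whereas you separate out $k = n-1$; this is harmless but unnecessary.
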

\begin{proof}
We have already proved faithfulness. Let $X, Y$ be $\omega$\nbd categories, and consider a strict morphism $f: \nervealg{X} \to \nervealg{Y}$. 

Applying the restriction functor $\rdgmset \to \globsetrefl$, we see that $f$ determines a morphism $\tilde{f}$ of the underlying reflexive $\omega$\nbd graphs of $X$ and $Y$. Moreover, by Lemma \ref{lem:minimal_suffice}, $f$ is uniquely determined by $\tilde{f}$. Thus it suffices to prove that $\tilde{f}$ is compatible with the composition operations of $X$ and $Y$.

Suppose $x_1, x_2$ are $n$\nbd cells of $X$ such that $x_1 \cp{k} x_2$ is defined. This composition is classified by a pasting diagram
\begin{equation*}
	x: \hatto{(O^n \cp{k} O^n)} \to X
\end{equation*}
such that $x(O^n \cp{k} O^n) = x_1 \cp{k} x_2$, whose transpose is a diagram $x: O^n \cp{k} O^n \to \nervealg{X}$. 

We can proceed exactly as in the proof of Lemma \ref{lem:minimal_suffice}, and define a spherical $n$\nbd diagram $\shell{x}: \shell{(O^n \cp{k} O^n)} \to \nervealg{X}$ with $x_1, x_2 \subsph \shell{x}$, and such that $\shell{f(x)} = f(\shell{x})$. We then have
\begin{equation*}
	x_1 \cp{k} x_2 = x(O^n \cp{k} O^n) = \shell{x}(\shell{(O^n\cp{k} O^n)}) = \compos{\shell{x}}(\compos{\shell{(O^n\cp{k} O^n)}}) = \compos{\shell{x}}(O^n),
\end{equation*}
so the composite $x_1 \cp{k} x_2$ is classified by the transpose of $\compos{\shell{x}}: O^n \to \nervealg{X}$, and $\tilde{f}$ maps it to the transpose of $f(\compos{\shell{x}}): O^n \to \nervealg{Y}$. Then
\begin{align*}
	\tilde{f}(x_1 \cp{k} x_2) & = f(\compos{\shell{x}})(\compos{\shell{(O^n\cp{k} O^n)}}) = \compos{\shell{f(x)}}(\compos{\shell{(O^n\cp{k} O^n)}}) = \\
		& = f(x)(O^n \cp{k} O^n) = \tilde{f}(x_1) \cp{k} \tilde{f}(x_2).
\end{align*}
So $\tilde{f}: X \to Y$ is a functor of $\omega$\nbd categories, and $f = \nervealg{\tilde{f}}$. 
\end{proof}

Thus, conditional to Conjecture \ref{conj:freegen}, we can see $\omegacat$ as a full subcategory of $\rdgmsetalg$. We do not have, at the moment, an ``internal'' characterisation of strict $\omega$\nbd categories inside $\rdgmsetalg$: being associative is certainly a necessary condition, but not having required any compatibility between compositors and degeneracies, we cannot expect it to be sufficient. 

In fact, let us leave the following open question, a kind of ``C.\ Simpson's conjecture'' for algebraic RDSs.

\begin{conj}
Every RDS is weakly equivalent to an associative algebraic RDS.
\end{conj}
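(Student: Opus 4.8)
The plan is to produce, for an arbitrary RDS $X$, a weak equivalence $X \to X'$ with $X'$ an associative algebraic RDS, by a transfinite rectification of the compositor data organised by dimension. The guiding principle is that, in an RDS, the ``space of compatible choices'' of weak composite and compositors for a given spherical diagram with fixed boundary is contractible in the appropriate sense --- by Theorem \ref{thm:equiv_closure} and the uniqueness\nbd up\nbd to\nbd equivalence corollaries of Section \ref{sec:closure} --- so that strictifying associativity should be possible in the same way one strictifies the associator of a bicategory, the interchange obstruction (which is the genuine obstacle to \emph{full} strictification, cf.\ \cite{simpson2009homotopy}) being absent from the notion of associative algebraic RDS.

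Concretely, I would build $X'$ as the colimit of a tower $X = X_0 \to X_1 \to \ldots$ of RDSs and weak equivalences, where the passage $X_{n-1} \to X_n$ does two things at level $n$ (possibly itself requiring a sub\nbd induction). Assume compositor data has been chosen, strictly associatively, for all spherical diagrams of dimension $< n$. First, adjoin to $X_{n-1}$, for every spherical $n$\nbd diagram $x$ of shape $U$ with $U$ binary or non\nbd atomic and unsplittable, a structural weak composite and a pair of compositors $c(x), c'(x)$ --- using representability of $X_{n-1}$ to fill the relevant composition horns and Theorem \ref{thm:equiv_preserve}/Proposition \ref{prop:equi_equals_invrt} to choose them as weak inverses; this is a pushout of ``trivial'' horn inclusions and is a weak equivalence by the kind of argument used in Proposition \ref{prop:omega_representable}. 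Then, for each non\nbd atomic $U$ and each pair of merger trees $\mathfrak{T}_1,\mathfrak{T}_2$, the cells $\compos{x}_{\mathfrak{T}_1}$ and $\compos{x}_{\mathfrak{T}_2}$ are parallel and equivalent, and the compositors $c_{\mathfrak{T}_i}(x)$ are connected by an $(n+2)$\nbd equivalence; one coequalises these in $\dgmset$, simultaneously over all $x$ and all pairs of trees, and checks that the quotient is still representable --- degenerate cells, boundaries, and the saturation machinery of Section \ref{sec:closure} being preserved because one only collapses along equivalences --- and that the quotient map is a weak equivalence. Iterating and passing to the colimit gives $X'$; every merger tree then yields the same compositors, so $X'$ is associative, and $X'$ is still an RDS because representability and equivalences are stable under the filtered colimit of a tower of weak equivalences each of which is the identity below dimension $n$.

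The main obstacle is the coequalisation step and, behind it, the control of the infinite tower of coherences it generates: collapsing a spherical $(n+1)$\nbd diagram of equivalences to a single $(n+1)$\nbd cell is harmless in a contractible\nbd like situation, but verifying that the quotient remains representable --- that no composition horn loses its equivalence filler when two merger\nbd tree compositors are identified --- requires rectifying all the \emph{higher} discrepancies (between the $(n+2)$\nbd equivalences relating $c_{\mathfrak{T}_1}(x)$ and $c_{\mathfrak{T}_2}(x)$, and so on), which is exactly the $\omega$\nbd dimensional coherence bookkeeping that the coinductive methods of this paper handle cleanly for \emph{properties} but not obviously for strictifications; presumably one must interleave the dimension\nbd $n$ steps rather than complete each in one shot. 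An alternative, likely cleaner once the foundations are in place, is to deduce the conjecture from a model structure on $\dgmset$ with representables as fibrant objects (conjectured at the end of Section \ref{sec:morphrep}): if associative algebraic RDSs could be identified with the fibrant objects of a Quillen\nbd equivalent ``algebraic'' model structure, fibrant replacement would give the weak equivalence directly. A third route, closer to Section \ref{sec:nerves}, would adapt a Yoneda\nbd style strictification, embedding $X$ into a strict ambient structure of ``representable'' diagrams and closing under the RDS operations; but since the target of such an embedding cannot be a strict $\omega$\nbd category (the homotopy hypothesis fails for those), it would have to land in associative algebraic RDSs from the outset, and it is unclear what that ambient structure should be.
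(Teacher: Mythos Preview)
The statement you are attempting to prove is stated in the paper as a \emph{conjecture}, explicitly introduced as ``an open question, a kind of `C.\ Simpson's conjecture' for algebraic RDSs''. The paper does not provide a proof, so there is nothing to compare your proposal against.

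Your proposal is therefore not a reconstruction of an argument in the paper but an outline of a possible attack on an open problem. As such it is reasonable as a research sketch, and you are candid about where the difficulties lie. A few remarks are still in order. First, the coequalisation step is genuinely the crux: quotienting a diagrammatic set by identifying parallel equivalence cells does not obviously preserve representability, since representability is a lifting property and quotients do not in general preserve lifting properties on the right. The analogy with strictifying bicategory associators is imperfect, because there one works with a finite hierarchy of coherence cells and a coherence theorem already in hand, whereas here the coherences are unbounded and the very statement that ``the space of compositor choices is contractible'' is what you would need to make precise and prove. Second, your model\nbd structure route presupposes the conjectured model structure of Section~\ref{sec:morphrep}, which is itself open, and would still require identifying associative algebraic RDSs as the fibrant objects of some transferred structure, which is not addressed anywhere in the paper. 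Third, the Yoneda\nbd style route you mention is, as you note, circular without an independent construction of the target.

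In short: there is no gap relative to the paper because the paper contains no proof; your proposal is a plausible but incomplete programme, and the obstacles you identify are real.
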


\subsection{Low-dimensional examples} \label{sec:lowdim}

The previous section proved that, conditional to a conjecture, the theory of strict $\omega$\nbd categories embeds into the theory of representable diagrammatic sets, and the last part of the article will prove that, in a specific sense, so does the classical theory of homotopy types. 

The goal of this section is to give a concrete idea of the theory of \emph{weak $n$\nbd categories} exhibited by the theory of RDSs, especially for low dimensions. This is, in fact, where we believe that RDSs have an advantage over other formalisms, because of their flexible combinatorics of pasting diagrams. 

First, we need a definition of what a weak $n$\nbd category is in the form of a RDS.

\begin{dfn}
A representable diagrammatic set $X$ is \emph{$n$\nbd truncated} if
\begin{enumerate}
	\item $X$ is $(n+1)$\nbd coskeletal,
	\item any two parallel $(n+1)$\nbd cells of $X$ are equal, and
	\item if $x, y$ are parallel $n$\nbd cells of $X$ and $e: x \celto y$ is an $(n+1)$\nbd cell, then $x = y$.
\end{enumerate}
We write $n \rdgmset$ for the full subcategory of $\rdgmset$ on the $n$\nbd truncated RDSs, and $n \rdgmsetalg$ for the full subcategory of $\rdgmsetalg$ on the $n$\nbd truncated algebraic RDSs.
\end{dfn}

\begin{remark}
An $n$\nbd truncated RDS, being $(n+1)$\nbd coskeletal, is fully determined by its $(n+1)$\nbd skeleton. Thus we may see $n \rdgmset$ as a subcategory of $(n+1)\dgmset$.
\end{remark}

Let us prove a couple of properties of coskeletal diagrammatic sets, before focussing on $n$\nbd truncated RDSs. 

\begin{lem} \label{lem:coskel_fillers}
Let $X$ be an $n$\nbd coskeletal diagrammatic set. Then for all $k > n$
\begin{enumerate}[label=(\alph*)]
	\item all horns of $(k+1)$\nbd atoms have fillers in $X$, and
	\item all $k$\nbd cells of $X$ are equivalences.
\end{enumerate}
\end{lem}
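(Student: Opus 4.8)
The plan is to reduce both parts to a single observation: in an $n$\nbd coskeletal diagrammatic set, a cell of dimension greater than $n$ is freely and uniquely determined by its boundary. Since $X \cong \coskel{n}{X} = \imath^!_n(X_{\leq n})$, the adjunction $-_{\leq n} \dashv \imath^!_n$ of Construction \ref{cons:ncoskel} yields a bijection $\homset{\dgmset}(Z,X) \cong \homset{n\dgmset}(Z_{\leq n}, X_{\leq n})$ natural in $Z$; consequently, if $\iota\colon Z \incl Z'$ is an inclusion such that $\iota_{\leq n}$ is an isomorphism, then $\iota^*\colon \homset{\dgmset}(Z',X) \to \homset{\dgmset}(Z,X)$ is a bijection. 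For a $k$\nbd atom $U$ with $k > n$, the only element of $U$ not lying in $\bord{}{}U$ is the greatest one, of dimension $k > n$, so the inclusion $\bord{}{}U \incl U$ restricts to an isomorphism on $n$\nbd skeleta; hence the boundary map $X(U) \to \homset{\dgmset}(\bord{}{}U, X)$ is a bijection. I expect this to follow at once from the remarks after Construction \ref{cons:ncoskel}, where $\skel{n}{\bord{}{}U} = \skel{n}{U}$ for $\dmn U > n$ is already observed; the one point to pin down is that a map of atoms is closed and dimension-non-increasing (Lemma \ref{lem:closedmap}), which is what forces the restriction to be an isomorphism and not merely a monomorphism. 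I will refer to the displayed bijection as $(\star)$.

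For part (a), let $\Lambda \incl W$ be a horn of a $(k+1)$\nbd atom with $k > n$ and $\lambda\colon \Lambda \to X$; I would fill first over $\bord{}{}W$ and then over all of $W$. Writing $x_0$ for the $k$\nbd dimensional element of $W$ missing from $\Lambda$ and $V := \clos\{x_0\}$ for the corresponding $k$\nbd atom, the routine poset identities $\bord{}{}W = \Lambda \cup V$ and $\Lambda \cap V = \bord{}{}V$ show that a morphism out of $\bord{}{}W$ is exactly a pair of morphisms out of $\Lambda$ and $V$ agreeing on $\bord{}{}V$. By $(\star)$ applied to $V$, the restriction $\restr{\lambda}{\bord{}{}V}$ extends to a $k$\nbd cell $v\colon V \to X$; it agrees with $\lambda$ on $\bord{}{}V$, so $\lambda$ and $v$ glue to $\bar\lambda\colon \bord{}{}W \to X$ restricting to $\lambda$ on $\Lambda$. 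Then $(\star)$ applied to $W$, which has dimension $k+1 > n$, extends $\bar\lambda$ to a $(k+1)$\nbd cell $h\colon W \to X$, and $\restr{h}{\Lambda} = \restr{\bar\lambda}{\Lambda} = \lambda$, so $h$ is a filler.

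For part (b), I would use the coinductive principle of Remark \ref{rmk:coinduction}. Let $A$ be the set of all cells of $X$ of dimension greater than $n$; the claim is $A \subseteq \mathcal{F}(A)$. Given $e\colon U \to X$ in $A$ with $\dmn U = k > n$ and a division horn $\Lambda \incl W$, $\lambda\colon \Lambda \to X$ for $e$ at $\bord{}{\alpha}U$, the atom $W$ is a ternary $(k+1)$\nbd atom and $\Lambda \incl W$ is in particular a horn of $W$; since $k > n$, part (a) supplies a filler $h\colon W \to X$ with $\restr{h}{\Lambda} = \lambda$, and $h$ is a $(k+1)$\nbd cell with $k+1 > n$, hence $h \in A$. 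Therefore $A \subseteq \mathcal{F}(A)$, and coinduction gives $A \subseteq \equi{}{X}$, which is part (b). Note that no circularity arises: part (b) in dimension $k$ only invokes part (a) in dimension $k+1$.

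The main obstacle, such as it is, is step $(\star)$ — making the translation from ``$n$\nbd coskeletal'' to ``cells of dimension $> n$ are their boundaries'' fully rigorous; once that is in hand, parts (a) and (b) are short, consisting only of the poset bookkeeping in (a) and the coinductive wrap-up in (b).
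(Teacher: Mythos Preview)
Your proof is correct and follows essentially the same approach as the paper. The paper's argument is slightly more direct: it observes in one step that $\skel{n}{\Lambda} = \skel{n}{W}$ (since both elements removed from $W$ to form $\Lambda$ have dimension $>n$), giving an immediate bijection between horns and fillers, whereas you factor this through $\bord{}{}W$ and apply your $(\star)$ twice; the coinductive conclusion of part (b) is identical.
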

\begin{proof}
Let $W$ be a $(k+1)$\nbd atom, and $\Lambda \incl W$ a horn. If $k > n$, then $\skel{n}{\Lambda} = \skel{n}{W}$: it follows that horns of $W$ in $X$ are in bijection with their fillers, so they all have fillers. The fact that all $k$\nbd cells are equivalences then follows by a simple coinduction.
\end{proof}

\begin{prop} \label{prop:unique_composite}
Let $X$ be an $n$\nbd truncated RDS. Then for all $k > n$
\begin{enumerate}[label=(\alph*)]
	\item all spherical $(k-1)$\nbd diagrams have a unique weak composite, and
	\item all $k$\nbd cells of $X$ are equivalences.
\end{enumerate}
\end{prop}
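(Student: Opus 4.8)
The plan is to deduce both statements from the results just established: Theorem~\ref{thm:equiv_closure} (and its corollaries) which give us that weak composites are unique up to equivalence and that units behave as units, together with Lemma~\ref{lem:coskel_fillers} applied in the range $k > n$, keeping in mind the additional $n$-truncation hypotheses (3) and (2), namely that an $(n+1)$-cell between parallel $n$-cells forces them to be equal, and that parallel $(n+1)$-cells are equal. Note that an $n$-truncated RDS is $(n+1)$-coskeletal, so Lemma~\ref{lem:coskel_fillers} applies with ``$n$'' replaced by ``$n+1$'': for all $k > n+1$, all horns of $(k+1)$-atoms have fillers and all $k$-cells are equivalences. The work is therefore concentrated in the borderline dimensions $k = n+1$ and $k = n+2$, where coskeletality alone is not enough and we must invoke truncation conditions (2) and (3).

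First I would prove (b). For $k > n+1$ this is immediate from Lemma~\ref{lem:coskel_fillers}(b). For $k = n+1$: given an $(n+1)$-cell $e: x \celto y$, any division horn for $e$ is a horn of an $(n+2)$-atom, which has a filler $h: W \to X$ by Lemma~\ref{lem:coskel_fillers}(a) (since $n+2 > n+1$); and $h$, being an $(n+2)$-cell, is an equivalence by Lemma~\ref{lem:coskel_fillers}(b). So every division horn for $e$ has an equivalence filler, hence $e \in \equi{n+1}{X}$. This handles all $k > n$.

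Next I would prove (a). For $k > n+1$, existence of a weak composite is part of representability, and uniqueness up to equivalence is the Corollary after Definition~\ref{dfn:equiv_relation}; but in fact uniqueness is strict here: if $\compos{z}$ and $\compos{z}'$ are two weak composites of a spherical $(k-1)$-diagram $z$, then $\compos{z} \simeq \compos{z}'$, so there is a $k$-cell $e: \compos{z} \celto \compos{z}'$ between parallel $k$-cells. When $k > n+1$, i.e. $k - 1 \geq n+1$, the two parallel $k$-cells $\compos{z}, \compos{z}'$ are equal by $n$-truncation condition (2) if $k = n+2$ (parallel $(n+1)$-cells are... wait, here $k = n+2$ so these are $(n+2)$-cells — use coskeletality: any two parallel $(n+2)$-cells agree since $\skel{n+1}{\bord{}{}W} = \skel{n+1}{W}$, as in Lemma~\ref{lem:coskel_fillers}). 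More cleanly: for $k \geq n+2$, $(n+1)$-coskeletality forces any two parallel $k$-cells to be equal, so the weak composite is unique on the nose. For $k = n+1$: a spherical $n$-diagram $z$ has a weak composite $\compos{z}$ by representability; if $\compos{z}'$ is another, the equivalence $\compos{z} \simeq \compos{z}'$ provides an $(n+1)$-cell $e: \compos{z} \celto \compos{z}'$ between parallel $n$-cells, and truncation condition (3) forces $\compos{z} = \compos{z}'$.

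The main obstacle I anticipate is bookkeeping the exact index ranges so that the appropriate truncation hypothesis is available in each borderline case, and in particular making sure that in dimension $k = n+1$ the uniqueness of the weak composite genuinely uses condition (3) rather than circularly invoking (a) itself; the argument must go: existence from representability, then the equivalence relation from Theorem~\ref{thm:equiv_closure}, then collapse via (3). Everything else is a routine application of Lemma~\ref{lem:coskel_fillers} and the already-established theory of equivalences, so no genuinely new construction is needed.
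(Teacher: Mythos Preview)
Your argument for (a) is essentially the paper's: weak composites are unique up to equivalence, and then truncation condition (3) collapses the equivalence at $k=n+1$, condition (2) at $k=n+2$, and bare $(n+1)$\nbd coskeletality thereafter. (You do slip and call the weak composites ``$k$\nbd cells'' when they are $(k-1)$\nbd cells, but the reasoning survives the typo.)

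Your argument for (b) at $k=n+1$, however, has a real gap. You invoke Lemma~\ref{lem:coskel_fillers}(a) to fill a division horn of an $(n+2)$\nbd atom, justifying this by ``$n+2 > n+1$''. But read the lemma carefully: in an $m$\nbd coskeletal set it guarantees fillers for horns of $(k+1)$\nbd atoms only when $k > m$, because the horn must omit no element of dimension $\leq m$. With $m = n+1$, this yields fillers for horns of atoms of dimension $\geq n+3$, \emph{not} $n+2$. A horn of an $(n+2)$\nbd atom is missing an $(n+1)$\nbd dimensional face, which lies in the $(n+1)$\nbd skeleton, so $(n+1)$\nbd coskeletality alone cannot conjure it. You would need to produce that missing $(n+1)$\nbd cell by hand, and nothing in your argument does so.

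The paper proceeds in the opposite order precisely to avoid this. Having established (a) first, for (b) at $k=n+1$ it takes $e: x \celto y$, chooses compositors $c: x \celto \compos{x}$ and $c': \compos{y} \celto y$, and factors $e \simeq c \cp{n} e' \cp{n} c'$ with $e': \compos{x} \celto \compos{y}$. Truncation condition (3) forces $\compos{x} = \compos{y} =: z$, and condition (2) forces $e' = p_U^* z$; so $e$ is equivalent to a composite of two equivalences and a degenerate cell, hence an equivalence by Theorem~\ref{thm:equiv_closure}. This is not merely a different route: it is where the truncation hypotheses actually do their work in part (b), and your shortcut via coskeletality bypasses them illegitimately.
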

\begin{proof}
Let $x$ be a spherical $(k-1)$\nbd diagram, and let $\compos{x}$ and $\compos{x}'$ be two weak composites of $x$. Then $\compos{x}$ and $\compos{x}'$ are parallel cells, which by definition implies they are equal if $k > n+1$, and $\compos{x} \simeq \compos{x}'$, which implies they are equal if $k = n+1$. 

Since $X$ is $(n+1)$\nbd coskeletal, all $k$\nbd cells are equivalences for $k > n+1$. Let $e: x \celto y$ be an $(n+1)$\nbd cell, and pick compositors $c: x \celto \compos{x}$ and $c': \compos{y} \celto y$. Then 
\begin{equation*}
	e \simeq c \cp{n} e' \cp{n} c'
\end{equation*}
for some $(n+1)$\nbd cell $e': \compos{x} \celto \compos{y}$. Since $X$ is $n$\nbd truncated, $\compos{x} = \compos{y} =: z$, and $e' = p_U^*z$, where $U$ is the shape of $z$. It follows that $e$ is a weak composite of two equivalences and a degenerate cell, hence an equivalence.
\end{proof}

\begin{remark}
It follows that, in an $n$\nbd truncated RDS, two parallel spherical $n$\nbd diagrams $x, y$ have a cell between them if and only if $x \simeq y$, if and only if the unique weak composites $\compos{x}, \compos{y}$ are equal.
\end{remark}

\begin{cor} \label{cor:1alg_iso_1}
The categories $1 \rdgmset$ and $1 \rdgmsetalg$ are isomorphic.
\end{cor}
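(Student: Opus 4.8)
The plan is to exhibit the forgetful functor $1\rdgmsetalg \to 1\rdgmset$ as an isomorphism of categories, by showing that on a $1$-truncated RDS the algebraic choice of compositors is forced, so that the extra structure is no structure at all. By definition $1\rdgmsetalg$ has the same objects as $1\rdgmset$ equipped with a choice of weak composite $\compos{x}$ and compositors $c(x): \compos{x} \celto x$, $c'(x): x \celto \compos{x}$ for every spherical $1$-diagram $x$ whose shape is a molecule with two maximal elements or an unsplittable non-atomic molecule; since in dimension $1$ every non-atomic molecule $O^1 \cp{0} \ldots \cp{0} O^1$ with $m \geq 2$ components is splittable and has $\geq 2$ maximal elements, and the unsplittable molecules in dimension $1$ are exactly the atoms $O^1$ (for which no compositor data is required), the only shapes carrying compositor data are the binary ones $O^1 \cp{0} O^1$. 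So the content of the statement is: on a $1$-truncated RDS, the weak composite and compositors of a binary spherical $1$-diagram are uniquely determined, and every morphism automatically respects them.

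First I would apply Proposition \ref{prop:unique_composite}(a) with $n = 1$, $k = 2$: in a $1$-truncated RDS every spherical $1$-diagram has a \emph{unique} weak composite, so for a binary $x: O^1\cp{0}O^1 \to X$ the object $\compos{x}$ is not a choice at all. Next I would show the compositors $c(x), c'(x)$ are also unique: they are $2$-cells $\compos{x} \celto x$ and $x \celto \compos{x}$, and by condition (2) in the definition of $1$-truncated RDS any two parallel $2$-cells are equal, so $c(x)$ and $c'(x)$ are each the \emph{unique} $2$-cell with the prescribed boundary — one exists because $\compos{x} \simeq x$ by Proposition \ref{prop:unique_composite}, together with the remark following it, or directly from representability. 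Hence the data of an algebraic structure on a $1$-truncated RDS is unique when it exists, and it does exist (every $1$-truncated RDS is in particular an RDS, and the canonical construction of weak composites plus the uniqueness of $2$-cells supplies the compositors), so the forgetful functor is a bijection on objects.

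For morphisms: a morphism $f: X \to Y$ in $1\rdgmset$ between $1$-truncated RDSs automatically ``respects the choice of compositors'', because there is nothing to respect — $f(\compos{x})$ is a weak composite of $f(x)$ by the Corollary after Theorem \ref{thm:equiv_preserve}, hence \emph{equals} $\compos{f(x)}$ by uniqueness, and $f(c(x))$ is a $2$-cell $f(\compos{x}) \celto f(x) = \compos{f(x)} \celto f(x)$, hence equals $c(f(x))$ by uniqueness of parallel $2$-cells; similarly for $c'$. So $\homset{1\rdgmsetalg}(X,Y) = \homset{1\rdgmset}(X,Y)$ under the forgetful map, and the functor is fully faithful as well as bijective on objects, i.e. an isomorphism of categories. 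The only mild subtlety — and the step I'd be most careful about — is confirming that the algebraic structure genuinely \emph{exists} on every $1$-truncated RDS (so that the forgetful functor is surjective, not merely injective, on objects); but this is immediate from representability giving a weak composite and from condition (2) giving the compositor $2$-cells for free. No genuine obstacle is expected; the proof is essentially an unwinding of Proposition \ref{prop:unique_composite} and the definitions, which is presumably why the author states it as a corollary.
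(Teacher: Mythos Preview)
Your proof is correct and follows essentially the same approach as the paper's: uniqueness of weak composites from Proposition~\ref{prop:unique_composite}, uniqueness of compositors from the condition that parallel 2-cells coincide, hence every morphism is automatically strict. Your version is more explicit about the morphism side and about existence of the algebraic structure, whereas the paper compresses all of this into three sentences.

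One minor imprecision (which the paper's own proof shares): the algebraic structure on an RDS requires compositors for spherical diagrams of \emph{every} dimension, not just dimension~1, so your reduction to the binary shape $O^1 \cp{0} O^1$ is not the whole story. This is harmless, though: for a spherical $m$-diagram with $m \geq 2$, the compositors are $(m+1)$-cells with $m+1 \geq 3$ in a 2-coskeletal diagrammatic set, hence unique by coskeletality, and the weak composite is unique by the same Proposition~\ref{prop:unique_composite}. So the case $m = 1$ really is the only one with content.
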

\begin{proof}
In a 1-truncated RDS, every spherical 1-diagram has a unique weak composite. Two compositors for the same diagram are parallel 2-cells, hence they are equal. It follows that any morphism between 1-truncated RDSs is strict.
\end{proof}

\begin{prop} \label{prop:unique_inverse}
Let $X$ be an $n$\nbd truncated RDS, and let $e: x \celto y$ be an $n$\nbd equivalence. Then $e$ has a unique weak inverse.
\end{prop}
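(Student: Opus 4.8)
Existence of a weak inverse is immediate from Proposition~\ref{prop:equi_equals_invrt}: since $e$ is an $n$\nbd equivalence it lies in $\invrt{X}$, so it has at least one weak inverse. The content of the statement is therefore uniqueness.

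Suppose $e^*, e^{**}: y \celto x$ are two weak inverses of $e: x \celto y$, with $x$ of shape $U$ and $y$ of shape $V$; then $e^*$ and $e^{**}$ are parallel spherical $n$\nbd cells, both of shape $V \celto U$. By definition of weak inverse, and using Proposition~\ref{prop:equi_equals_invrt} to promote the weakly invertible witnesses to equivalences, we have $e \cp{n-1} e^* \simeq p_U^*x$ and $e^{**} \cp{n-1} e \simeq p_V^*y$. The plan is then to run the standard two-sided-inverse computation up to equivalence, exactly as in the proof of Lemma~\ref{lem:equi_in_invrt}: regarding $e^{**} \cp{n-1} e \cp{n-1} e^*$ as a single spherical $n$\nbd diagram, and using the unit laws together with the substitution-compatibility of $\simeq$ (Proposition~\ref{prop:equi-subst}, and the fact that $p_U^*x$ and $p_V^*y$ are degenerate, hence equivalent to units on $x$ and $y$), one obtains
\begin{equation*}
	e^{**} \simeq e^{**} \cp{n-1} p_U^*x \simeq e^{**} \cp{n-1} e \cp{n-1} e^* \simeq p_V^*y \cp{n-1} e^* \simeq e^*,
\end{equation*}
where the outer equivalences are the right and left unit laws and the two inner ones replace an equivalent spherical subdiagram ($e \cp{n-1} e^*$, resp.\ $e^{**} \cp{n-1} e$) inside $e^{**} \cp{n-1} e \cp{n-1} e^*$.

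Having established $e^* \simeq e^{**}$, there is in particular an $(n+1)$\nbd cell from $e^*$ to $e^{**}$; since $X$ is $n$\nbd truncated and $e^*, e^{**}$ are parallel $n$\nbd cells, condition~(3) in the definition of an $n$\nbd truncated RDS forces $e^* = e^{**}$, which completes the argument. The only step needing care is the middle one: one must check that $e^{**} \cp{n-1} e \cp{n-1} e^*$ is a well-formed spherical $n$\nbd diagram and that the two ``re-bracketings'' are instances of Proposition~\ref{prop:equi-subst} applied to the subdiagrams $e^{**} \cp{n-1} e$ and $e \cp{n-1} e^*$ (with the outer diagram taken equivalent to itself). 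Since this is identical to the manipulation already carried out in the proof of Lemma~\ref{lem:equi_in_invrt}, I do not expect any genuine obstacle here.
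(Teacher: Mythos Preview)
Your proof is correct and follows exactly the approach the paper takes. The paper's own proof is a two-line sketch: ``The usual argument for uniqueness of inverses up to equivalence gives $e' \simeq e''$, and since $X$ is $n$\nbd truncated, $e' = e''$''; you have simply spelled out that usual argument (the same chain appearing in the proof of Lemma~\ref{lem:equi_in_invrt}) and then invoked condition~(3) of $n$\nbd truncatedness.
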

\begin{proof}
Let $e', e''$ be two weak inverses. The usual argument for uniqueness of inverses up to equivalence gives $e' \simeq e''$, and since $X$ is $n$\nbd truncated, $e' = e''$. 
\end{proof}

An \emph{$n$\nbd category} is an $\omega$\nbd category whose $k$\nbd cells are all degenerate for $k > n$. The following is evident.

\begin{prop} \label{prop:nerve_truncated}
Let $X$ be an $n$\nbd category, and assume Conjecture \ref{conj:freegen}. Then $\nerve{X}$ is an $n$\nbd truncated RDS.
\end{prop}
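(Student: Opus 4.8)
The plan is to verify, in turn, the three defining conditions of an $n$\nbd truncated RDS for $\nerve{X}$, representability being already granted by Proposition \ref{prop:omega_representable}. The single fact doing all the work is that an $n$\nbd category $X$ has only degenerate cells in dimensions above $n$, so that every cell $c \in X$ of dimension at least $n+1$ is an identity on its boundary: $\bord{}{-}c = \bord{}{+}c$ and $c = \idd{}(\bord{}{+}c)$, whence $c$ is recovered from this common boundary cell. I also use that a cell of shape $U$ in $\nerve{X}$ is a functor $x: (\molec{}{U})^* \to X$, which, since $(\molec{}{U})^*$ is generated as an $\omega$\nbd category by the atoms of $U$ (every molecule being a composite of atoms by Lemma \ref{lem:composition_form}), is determined by its values $x(V)$ on the atoms $V$ of $U$; and that an $n$\nbd atom is determined up to unique isomorphism by its boundary.

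For condition (3), let $x, y$ be parallel $n$\nbd cells of $\nerve{X}$ and $e: x \celto y$ an $(n+1)$\nbd cell, of shape $W$; then $x$ and $y$ share a shape $U$ with $\bord{}{-}W = \bord{}{+}W = U$. Since $e(W) \in X_{n+1}$ is degenerate, $\bord{}{-}e(W) = \bord{}{+}e(W)$; but $\bord{}{-}e(W) = e(\bord{}{-}W) = x(U)$ and $\bord{}{+}e(W) = e(\bord{}{+}W) = y(U)$, so $x$ and $y$ take the same value on the top atom of $U$. Being parallel, they also agree on $\bord{}{}U$, hence on all atoms of $U$, so $x = y$. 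For condition (2), parallel $(n+1)$\nbd cells $x, x'$ share a shape $W$, agree on $\bord{}{}W$, and satisfy $x(W) = \idd{}(\bord{}{+}x(W)) = \idd{}(\bord{}{+}x'(W)) = x'(W)$ because these cells are degenerate; so $x = x'$.

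For condition (1), I would show the counit $\nerve{X} \to \coskel{n+1}{\nerve{X}}$ is a bijection on cells of each atom $U$. By Construction \ref{cons:ncoskel}, and transposing along $\hatto{-} \dashv \nerve{}$ while using Conjecture \ref{conj:freegen} to identify $\hatto{\skel{n+1}{U}}$ with the sub-$\omega$\nbd category $\skel{n+1}{(\molec{}{U})^*}$ generated by the atoms of $U$ of dimension at most $n+1$, this reduces to showing that restriction of functors along $\skel{n+1}{(\molec{}{U})^*} \incl (\molec{}{U})^*$ is a bijection onto $\homset{\omegacat}(\skel{n+1}{(\molec{}{U})^*}, X)$. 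Injectivity is a downward induction on $k$: the value of a functor on an atom of dimension $k > n+1$ equals $\idd{}$ applied $k-n$ times to its $n$\nbd boundary, hence is determined by values of dimension at most $n$. Surjectivity follows by extending a given functor $y$ on the skeleton, sending an atom $V$ with $\dmn{V} = k > n+1$ to the $k-n$\nbd fold unit on $y(\bord{n}{}V)$; this is a well-defined functor because the only compatibility to check occurs in dimension $n+1$, where every cell of $X$ is automatically an identity on its boundary, and because units compose to units in higher dimensions. Naturality in $U$ is clear.

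The main obstacle, such as it is, lies in condition (1): one must pin down that the $(n+1)$\nbd skeleton of the $\omega$\nbd category $(\molec{}{U})^*$ is exactly the sub-$\omega$\nbd category on the low-dimensional atoms, which is precisely what Conjecture \ref{conj:freegen} supplies by presenting $(\molec{}{U})^*$ as a polygraph with the atoms of $U$ as generators, and one must confirm that the extension-by-iterated-units is genuinely a functor. Conditions (2) and (3), by contrast, require no such care, being immediate consequences of the rigidity of identity cells in an $n$\nbd category.
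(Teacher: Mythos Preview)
Your verification is correct and is precisely the natural elaboration of what the paper leaves implicit: the paper provides no proof, merely declaring the proposition ``evident''. One minor slip: in your argument for condition (3) you write ``$\bord{}{-}W = \bord{}{+}W = U$'', but as closed subsets of the $(n+1)$\nbd atom $W$ the input and output boundaries are distinct (they meet only in their common $(n-1)$\nbd boundary); what you need, and what holds under the skeletality assumption on $\atom$, is that each is isomorphic to $U$, so that $x$ and $y$ are functors with the same domain whose values on the greatest element coincide because $e(W)$ is a unit. This does not affect the validity of the argument.
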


Thus if $n\cat{Cat}$ is the full subcategory of $\omegacat$ on the $n$\nbd categories, we have for all $n \geq 0$ a faithful functor $\nerve{}: n\cat{Cat} \to n \rdgmset$ and a full and faithful functor $\nervealg{}: n\cat{Cat} \to n \rdgmset$.

\begin{remark}
For $n \leq 2$, we know for certain that regular directed complexes $P$ of dimension $n$ are freely generating. Any non-free identifications which may occur in $k$\nbd skeleta for $k > n$ in $(\molec{}{}P)^*$ are inconsequential if we are looking at functors into an $n$\nbd category $X$, since all parallel $k$\nbd cells in $X$ are equal. This is sufficient to remove the conditional on Proposition \ref{prop:nerve_truncated} for $n \leq 2$.   
\end{remark}

Now, $1\cat{Cat}$ is isomorphic to the usual category $\cat{Cat}$ of small categories.

\begin{prop}
$\nerve{}: \cat{Cat} \to 1\rdgmset$ is an equivalence of categories.
\end{prop}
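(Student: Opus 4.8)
The plan is to deduce this from two facts: that $\nerve{}$ restricted to $1$\nbd categories is full and faithful, and that it is essentially surjective onto $1$\nbd truncated RDSs. The first is almost immediate: by Corollary~\ref{cor:1alg_iso_1} the categories $1\rdgmset$ and $1\rdgmsetalg$ are isomorphic, under an isomorphism identifying $\nerve{}$ with $\nervealg{}$, and $\nervealg{}: \cat{Cat} \to 1\rdgmsetalg$ is full and faithful by Theorem~\ref{thm:nerve_ff} --- whose conditional hypothesis may be dropped here, since the target $\omega$\nbd categories are $1$\nbd categories and, as observed after Proposition~\ref{prop:nerve_truncated}, this suffices. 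So the real work is to show that every $1$\nbd truncated RDS $X$ is isomorphic, in $\dgmset$, to $\nerve{C}$ for a small category $C$.

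I would build $C$ directly: its objects are the $0$\nbd cells of $X$; a morphism $a \to b$ is a $1$\nbd cell $f$ of $X$ with $\bord{}{-}f = a$, $\bord{}{+}f = b$; the identity on $a$ is the degenerate cell $p^*_{O^0}a$; and the composite of $f: a\to b$ and $g: b\to c$ is the weak composite $\compos{f\cp{0}g}$, which exists and is unique by Proposition~\ref{prop:unique_composite}. The axioms of a category follow from a single observation: in a $1$\nbd truncated RDS every $2$\nbd cell is an equivalence (Proposition~\ref{prop:unique_composite}), so any two parallel spherical $1$\nbd diagrams connected by a $2$\nbd cell or by a spherical $2$\nbd diagram are equivalent, hence have equal weak composites. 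Unit laws then follow because $p^*_{O^0}a$ is a unit on the $0$\nbd cell $a$ (it is a weak composite of itself, degenerate cells being equivalences by Theorem~\ref{thm:equiv_closure}) and units are two-sided units for weak composition; associativity follows by whiskering a compositor $\compos{f\cp{0}g} \celto f\cp{0}g$ with the degenerate $2$\nbd cell on $h$, and symmetrically on the other side, to exhibit both $(f; g); h$ and $f; (g; h)$ as the weak composite of the ternary diagram $f\cp{0}g\cp{0}h$.

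Next I would define a morphism $\phi: X \to \nerve{C}$ by sending a cell $x: U \to X$ to the functor $(\molec{}{U})^* \to C$ whose value on the $0$\nbd and $1$\nbd dimensional atoms of $U$ is given by restriction of $x$, extended to composite $1$\nbd molecules by composition in $C$; this respects the relations imposed by the $2$\nbd and higher atoms of $U$ by the observation above, and respects $2$\nbd and higher-dimensional composition relations trivially since $C$ is $1$\nbd dimensional. Both $X$ and $\nerve{C}$ are $2$\nbd coskeletal ($X$ because it is $1$\nbd truncated, $\nerve{C}$ because a functor from $(\molec{}{U})^*$ to a $1$\nbd category is determined by $\skel{2}{U}$), so it suffices to show $\phi$ is bijective on $k$\nbd cells for $k \leq 2$. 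For $k \leq 1$ this is clear from the construction. For $k = 2$ one checks that, for a $2$\nbd atom $U$, a $2$\nbd cell of shape $U$ --- in $X$ or in $\nerve{C}$ --- is exactly a pair of parallel spherical $1$\nbd diagrams of shapes $\bord{}{-}U$, $\bord{}{+}U$ with equal weak composite: uniqueness is the $1$\nbd truncation hypothesis, and existence given such a pair uses representability together with reversal of compositors (Proposition~\ref{prop:equiv_reverse}) to assemble a $2$\nbd diagram of shape isomorphic to $U$; since $\phi$ is bijective on $1$\nbd cells and preserves weak composites, it matches these descriptions up. I expect the main obstacle to be precisely this last point --- reconciling the combinatorially rigid shape $U$ with the ``boundary plus a weak-composite equation'' description of its cells --- together with the bookkeeping needed to verify that $x \mapsto \phi(x)$ is well defined, i.e.\ that it genuinely respects all the relations presenting $(\molec{}{U})^*$; this is where the input--output symmetry of regular atoms, via ``every $2$\nbd cell is an equivalence'', is doing the work.
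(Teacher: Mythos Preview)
Your approach is essentially the same as the paper's: you invoke Corollary~\ref{cor:1alg_iso_1} together with Theorem~\ref{thm:nerve_ff} for full faithfulness, and then build a category $C$ from the $0$- and $1$-cells of $X$ with composition given by unique weak composites, exactly as the paper does with its $\tilde{X}$. The only difference is that the paper writes ``it is easy to see that $\tilde{X}$ is well-defined, and that $X$ is isomorphic to $\nerve{\tilde{X}}$'' and stops, whereas you unpack this --- the category axioms, the definition of $\phi$, the bijection on $2$-cells --- in detail; your unpacking is correct, and in particular your characterisation of $2$-cells in a $1$-truncated RDS as pairs of parallel $1$-diagrams with equal weak composite is the right one.
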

\begin{proof}
By Corollary \ref{cor:1alg_iso_1}, we know that $\nerve{}$ is full and faithful. Let $X$ be a 1-truncated RDS. We define a category $\tilde{X}$ with $\tilde{X}_0 := X(1)$, $\tilde{X}_1 := X(O^1)$, with the obvious boundaries; $\idd{}v :=\ !^*v$ for all $v \in X(1)$, where $!: O^1 \to 1$ is the unique map to the terminal object; and $x\cp{0}y := \compos{x \cp{0} y}$ for all 0-composable 1-cells, well defined because $x \cp{0} y: O^1 \cp{0} O^1 \to X$ is a spherical 1-diagram. 

It is easy to see that $\tilde{X}$ is well-defined, and that $X$ is isomorphic to $\nerve{\tilde{X}}$.
\end{proof}

Thus 1-truncated RDSs and their morphisms are equivalent to small categories and functors, as expected. The first non-trivial case is that of 2-truncated RDSs and bicategories. We recall the definition of bicategory, in order to fix the notation.

\begin{dfn}
A \emph{bicategory} $B$ is a reflexive 2-graph together with the following data:
\begin{enumerate}
	\item a family of 2-cells, the \emph{1-composites} $\{p\cpm{1}q: a \celto c\}$, indexed by composable pairs of 2-cells $p: a \celto b$ and $q: b \celto c$;
	\item a family of 1-cells $\{a \cpm{0} b: x \celto z\}$, indexed by 0-composable pairs of 1-cells $a: x \celto y$, $b: y \celto z$, and a family of 2-cells $\{p \cpm{0} q: a \cpm{0} c \celto b \cpm{0} d\}$, indexed by 0-composable pairs of 2-cells $p: a \celto b$, $q: c \celto d$, both called \emph{0-composites};
	\item a family of 2-cells, the \emph{associators} $\{\alpha_{a,b,c}: (a \cpm{0} b) \cpm{0} c \celto a \cpm{0} (b \cpm{0} c)\}$, indexed by 0-composable triples of 1-cells $a: x \celto y$, $b: y \celto z$, $c: z \celto w$;
	\item two families of 2-cells, the \emph{left unitors} $\{\lambda_a: \idd{}{x} \cpm{0} a \celto a\}$ and the \emph{right unitors} $\{\rho_a: a \cpm{0} \idd{}{y} \celto a\}$, indexed by 1-cells $a: x \celto y$.
\end{enumerate}
These are subject to the following conditions:
\begin{enumerate}
	\item 1-composition is associative and unital with 2-units, that is, $(p\cpm{1} q)\cpm{1} r = p\cpm{1}(q\cpm{1}r)$, and $p\cpm{1}\idd{}{b} = p = \idd{}{a}\cpm{1}q$, whenever both sides make sense;
	\item 0-composition is natural with respect to 1-composition and 2-units, that is, 
	\begin{align*} 
		(p_1\cpm{1}p_2) \cpm{0} (q_1\cpm{1}q_2) & = (p_1 \cpm{0} q_1)\cpm{1}(p_2 \cpm{0} q_2), \\
		\idd{}{a} \cpm{0} \idd{}{b} & = \idd{}(a \cpm{0} b), 
	\end{align*}
	whenever the left-hand side makes sense;
	\item the associators and the unitors are natural in their parameters, that is, for all $p: a \celto a'$, $q: b \celto b'$, and $r: c \celto c'$, the following diagrams commute:
	\begin{equation*}
\begin{tikzpicture}[baseline={([yshift=-.5ex]current bounding box.center)}]
\begin{scope}
	\node[scale=1.25] (0) at (-1.5,.75) {$(a \cpm{0} b) \cpm{0} c$};
	\node[scale=1.25] (1) at (2,.75) {$a \cpm{0} (b \cpm{0} c)$};
	\node[scale=1.25] (2) at (-1.5,-.75) {$(a' \cpm{0} b') \cpm{0} c'$};
	\node[scale=1.25] (3) at (2,-.75) {$a' \cpm{0} (b' \cpm{0} c')$};
	\draw[1c] (0.east) to node[auto] {$\alpha_{a,b,c}$} (1.west);
	\draw[1c] (2.east) to node[auto,swap] {$\alpha_{a',b',c'}$} (3.west);
	\draw[1c] (0.south) to node[auto,swap] {$(p \cpm{0} q) \cpm{0} r$} (2.north);
	\draw[1c] (1.south) to node[auto] {$p \cpm{0} (q \cpm{0} r)$} (3.north);
	\node[scale=1.25] at (3.5,-.9) {,};
\end{scope}
\begin{scope}[shift={(6,0)}]
	\node[scale=1.25] (0) at (-1,.75) {$\idd{}{x} \cpm{0} a$};
	\node[scale=1.25] (1) at (1,.75) {$a$};
	\node[scale=1.25] (0') at (3,.75) {$a \cpm{0} \idd{}{y}$};
	\node[scale=1.25] (2) at (-1,-.75) {$\idd{}{x} \cpm{0} a'$};
	\node[scale=1.25] (3) at (1,-.75) {$a'$};
	\node[scale=1.25] (2') at (3,-.75) {$a' \cpm{0} \idd{}{y}$};
	\draw[1c] (0.east) to node[auto] {$\lambda_a$} (1.west);
	\draw[1c] (2.east) to node[auto,swap] {$\lambda_{a'}$} (3.west);
	\draw[1c] (0'.west) to node[auto, swap] {$\rho_a$} (1.east);
	\draw[1c] (2'.west) to node[auto] {$\rho_{a'}$} (3.east);
	\draw[1c] (0.south) to node[auto,swap] {$\idd{}{x} \cpm{0} p$} (2.north);
	\draw[1c] (0'.south) to node[auto] {$p \cpm{0} \idd{}{y}$} (2'.north);
	\draw[1c] (1.south) to node[auto] {$p$} (3.north);
\end{scope}
\end{tikzpicture}
\end{equation*}
	commute;
	\item the associators and unitors satisfy the pentagon and triangle equations, that is, for all $a: x \celto y$, $b: y \celto z$, $c: z \celto w$, $d: w \celto v$, the following diagrams commute:
	\begin{equation*}
\begin{tikzpicture}[baseline={([yshift=-.5ex]current bounding box.center)}]
	\node[scale=1.25] (0) at (-5,.75) {$((a \cpm{0} b) \cpm{0} c) \cpm{0} d$};
	\node[scale=1.25] (1) at (5,.75) {$(a \cpm{0} b) \cpm{0} (c \cpm{0} d)$};
	\node[scale=1.25] (2) at (-5,-.75) {$(a \cpm{0} (b \cpm{0} c)) \cpm{0} d$};
	\node[scale=1.25] (2b) at (0,-.75) {$a \cpm{0} ((b \cpm{0} c) \cpm{0} d)$};
	\node[scale=1.25] (3) at (5,-.75) {$a \cpm{0} (b \cpm{0} (c \cpm{0} d))$};
	\draw[1c] (0.east) to node[auto] {$\alpha_{a\cpm{0} b,c,d}$} (1.west);
	\draw[1c] (2.east) to node[auto,swap] {$\alpha_{a,b\cpm{0} c, d}$} (2b.west);
	\draw[1c] (2b.east) to node[auto,swap] {$\idd{}{a} \cpm{0} \alpha_{b,c,d}$} (3.west);
	\draw[1c] (0.south) to node[auto,swap] {$\alpha_{a,b,c} \cpm{0} \idd{}{d}$} (2.north);
	\draw[1c] (1.south) to node[auto] {$\alpha_{a,b,c \cpm{0} d}$} (3.north);
	\node[scale=1.25] at (6.5,-.9) {,};
\end{tikzpicture}
\end{equation*}
	\begin{equation*}
\begin{tikzpicture}[baseline={([yshift=-.5ex]current bounding box.center)}]
	\node[scale=1.25] (0) at (-2.5,.75) {$(a \cpm{0} \idd{}{y}) \cpm{0} b$};
	\node[scale=1.25] (1) at (2.5,.75) {$a \cpm{0} (\idd{}{y} \cpm{0} b)$};
	\node[scale=1.25] (3) at (2.5,-.75) {$a \cpm{0} b$};
	\draw[1c] (0.east) to node[auto] {$\alpha_{a,\idd{}{y},b}$} (1.west);
	\draw[1c] (0) to node[auto,swap] {$\rho_a \cpm{0} \idd{}{b}$} (3);
	\draw[1c] (1.south) to node[auto] {$\idd{}{a} \cpm{0} \lambda_b$} (3.north);
	\node[scale=1.25] at (3.5,-.8) {;};
\end{tikzpicture}
\end{equation*}
	\item the associators and unitors are invertible with respect to 1-composition.
\end{enumerate}
Given two bicategories $B$, $C$, a \emph{functor} $f: B \to C$ is a morphism of the underlying 2-graphs that respects 1-composition and 2-units, together with
\begin{enumerate}
	\item a family of invertible 2-cells in $C$ $\{f_{a,b}: f(a) \cpm{0} f(b) \celto f(a\cpm{0} b)\}$, indexed by 0-composable pairs of 1-cells $a: x \celto y$, $b: y \celto z$ of $B$, and
	\item a family of invertible 2-cells in $C$ $\{f_x: \idd{}{f(x)} \celto f(\idd{}{x})\}$, indexed by 0-cells $x$ of $B$,
\end{enumerate}
where the first one is natural in its parameters, that is, for all $p: a \celto a'$, $q: b \celto b'$, the diagram
\begin{equation*}
\begin{tikzpicture}[baseline={([yshift=-.5ex]current bounding box.center)}]
	\node[scale=1.25] (0) at (-1.5,.75) {$f(a) \cpm{0} f(b)$};
	\node[scale=1.25] (1) at (2,.75) {$f(a \cpm{0} b)$};
	\node[scale=1.25] (2) at (-1.5,-.75) {$f(a') \cpm{0} f(b')$};
	\node[scale=1.25] (3) at (2,-.75) {$f(a' \cpm{0} b')$};
	\draw[1c] (0.east) to node[auto] {$f_{a,b}$} (1.west);
	\draw[1c] (2.east) to node[auto,swap] {$f_{a',b'}$} (3.west);
	\draw[1c] (0.south) to node[auto,swap] {$f(p) \cpm{0} f(q)$} (2.north);
	\draw[1c] (1.south) to node[auto] {$f(p \cpm{0} q)$} (3.north);
\end{tikzpicture}
\end{equation*}
commutes, and both families are compatible with the associators and unitors, in the sense that, for all $a: x \celto y$, $b: y \celto z$, $c: z \celto w$, the following diagrams commute:
		\begin{equation*}
\begin{tikzpicture}[baseline={([yshift=-.5ex]current bounding box.center)}]
	\node[scale=1.25] (0) at (-5,.75) {$(f(a) \cpm{0} f(b)) \cpm{0} f(c)$};
	\node[scale=1.25] (0b) at (0,.75) {$f(a) \cpm{0} (f(b) \cpm{0} f(c))$};
	\node[scale=1.25] (1) at (5,.75) {$f(a) \cpm{0} f(b \cpm{0} c)$};
	\node[scale=1.25] (2) at (-5,-.75) {$f(a \cpm{0} b) \cpm{0} f(c)$};
	\node[scale=1.25] (2b) at (0,-.75) {$f((a \cpm{0} b) \cpm{0} c)$};
	\node[scale=1.25] (3) at (5,-.75) {$f(a \cpm{0} (b \cpm{0} c))$};
	\draw[1c] (0.east) to node[auto] {$\alpha_{f(a),f(b),f(c)}$} (0b.west);
	\draw[1c] (0b.east) to node[auto] {$\idd{}{f(a)} \cpm{0} f_{b,c}$} (1.west);
	\draw[1c] (2.east) to node[auto,swap] {$f_{a \cpm{0} b,c}$} (2b.west);
	\draw[1c] (2b.east) to node[auto,swap] {$f(\alpha_{a,b,c})$} (3.west);
	\draw[1c] (0.south) to node[auto,swap] {$f_{a,b} \cpm{0} \idd{}{f(c)}$} (2.north);
	\draw[1c] (1.south) to node[auto] {$f_{a,b\cpm{0} c}$} (3.north);
	\node[scale=1.25] at (6.5,-.9) {,};
\end{tikzpicture}
\end{equation*}
\begin{equation*}
\begin{tikzpicture}[baseline={([yshift=-.5ex]current bounding box.center)}]
\begin{scope}
	\node[scale=1.25] (0) at (-4,.75) {$\idd{}{f(x)} \cpm{0} f(a)$};
	\node[scale=1.25] (1b) at (0,.75) {$f(\idd{}{x}) \cpm{0} f(a)$};
	\node[scale=1.25] (1) at (4,.75) {$f(\idd{}{x} \cpm{0} a)$};
	\node[scale=1.25] (3) at (4,-.75) {$f(a)$};
	\draw[1c] (0.east) to node[auto] {$f_x \cpm{0} \idd{}{f(a)}$} (1b.west);
	\draw[1c] (1b.east) to node[auto] {$f_{\idd{}{x},a}$} (1.west);
	\draw[1c] (0) to node[auto,swap] {$\lambda_{f(a)}$} (3);
	\draw[1c] (1.south) to node[auto] {$f(\lambda_a)$} (3.north);
	\node[scale=1.25] at (5,-.9) {,};
\end{scope}
\end{tikzpicture}
\end{equation*}
\begin{equation*}
\begin{tikzpicture}[baseline={([yshift=-.5ex]current bounding box.center)}]
\begin{scope}
	\node[scale=1.25] (0) at (-4,.75) {$f(a) \cpm{0} \idd{}{f(y)}$};
	\node[scale=1.25] (1b) at (0,.75) {$f(a) \cpm{0} f(\idd{}{y})$};
	\node[scale=1.25] (1) at (4,.75) {$f(a \cpm{0} \idd{}{y})$};
	\node[scale=1.25] (3) at (4,-.75) {$f(a)$};
	\draw[1c] (0.east) to node[auto] {$\idd{}{f(a)} \cpm{0} f_y$} (1b.west);
	\draw[1c] (1b.east) to node[auto] {$f_{a,\idd{}{y}}$} (1.west);
	\draw[1c] (0) to node[auto,swap] {$\rho_{f(a)}$} (3);
	\draw[1c] (1.south) to node[auto] {$f(\rho_a)$} (3.north);
	\node[scale=1.25] at (5,-.9) {.};
\end{scope}
\end{tikzpicture}
\end{equation*}
A functor is \emph{strictly unital} if the $f_x$ are all 2-units, and \emph{strict} if it is strictly unital and the $f_{a,b}$ are all 2-units. 
\end{dfn}

Let $\bicatun$ be the category of bicategories and strictly unital functors, and $\bicatst$ the category of bicategories and strict functors. We claim that $\bicatun$ is equivalent to $2\rdgmset$, and $\bicatst$ to $2\rdgmsetalg$. This follows from our results in \cite[Section 5]{hadzihasanovic2018weak} after observing that a 2\nbd truncated RDS is equivalent to a \emph{representable merge-bicategory} which has, in addition, a choice of coherent unitors, preserved by all morphisms. 

Since it is straightforward to connect the two formalisms, we will be brief in our treatment, and only define the two sides of the equivalence; details can be filled in using \cite[Section 5]{hadzihasanovic2018weak}.

\begin{cons} \label{cons:rds_to_bicat}
Let $X$ be a 2\nbd truncated RDS with a choice of compositors. We can always assume that dual pairs of compositors are each other's weak inverse, and by Proposition \ref{prop:unique_inverse} a ``one-sided'' choice of 2-dimensional compositors uniquely determines their duals.

The 2-skeleton of the reflexive $\omega$\nbd graph $X_{\tilde{\cat{O}}}$ is a reflexive 2-graph, which we denote by $\mathcal{B}X$. We endow $\mathcal{B}X$ with the structure of a bicategory, as follows.
\begin{enumerate}
	\item For each 1-composable pair of 2-cells $p: a \celto b$ and $q: b \celto c$, by Proposition \ref{prop:unique_composite} the spherical 2-diagram $p\cp{1}q$ has a unique weak composite $\compos{p\cp{1}q}$, and we define $p \cpm{1} q := \compos{p\cp{1}q}$. 
	
	\item For each 0-composable pair of 1-cells $a: x \celto y$, $b: y \to z$, let $a \cpm{0} b := \compos{a \cp{0} b}$ be the chosen weak composite of the spherical 1-diagram $a \cp{0} b$. For each 0-composable pair of 2-cells $p: a \celto c$, $q: b \celto d$, let $h: a \cpm{0} b \celto a \cp{0} b$ and $h': c \cp{0} d \celto c \cpm{0} d$ be the chosen compositors. Then we define $p \cpm{0} q$ to be the unique weak composite of the spherical 2-diagram
	\begin{equation*}
		h\cp{1}(p \cp{0} q)\cp{1}h': a \cpm{0} b \celto c \cpm{0} d
	\end{equation*}
	of shape $\shell{(O^2 \cp{0} O^2)}$, as in Construction \ref{cons:shell_kcomp}.
	\item For each composable triple of 1-cells $a, b, c$, we have binary compositors 
	\begin{equation*}\begin{gathered}
		h: a\cpm{0}b \celto a\cp{0}b, \quad \quad k': b\cp{0}c \celto b \cpm{0} c, \\
		\ell: (a\cpm{0}b)\cpm{0}c \celto (a\cpm{0}b)\cp{0}c, \quad \quad m': a\cp{0}(b\cpm{0}c) \celto a\cpm{0}(b\cpm{0}c).
	\end{gathered} \end{equation*}
	We define $\alpha_{a,b,c}$ to be the unique weak composite of the spherical 2-diagram
	\begin{equation*}
		\ell \cp{1} (h \cp{0} c) \cp{1} (a \cp{0} k') \cp{1} m': (a\cpm{0}b)\cpm{0}c \celto a\cpm{0}(b\cpm{0}c).
	\end{equation*}
	All the cells in the diagram are (weakly) invertible, so $\alpha_{a,b,c}$ is invertible.
	\item From Construction \ref{cons:unitor_molec}, we have 2-atoms $\tilde{L}^1_{O^1} = \tilde{R}^1_{O^1} := (O^1 \cp{0} O^1) \celto O^1$ and retractions $\tilde{\ell}^1_{O^1}: \tilde{L}^1_{O^1} \surj O^1$ and $\tilde{r}^{1}_{O^1}: \tilde{R}^{1}_{O^1} \surj O^1$. For each 1-cell $a: x \celto y$, let $h: \idd{}x \cpm{0} a \celto \idd{}x \cp{0} a$ and $k: a \cpm{0} \idd{}y \celto a \cp{0} \idd{}x$ be compositors. Then we define 
	\begin{equation*}
		\lambda_a := h \cpm{1} (\tilde{\ell}^1_{O^1})^*a, \quad \quad \rho_a := k \cpm{1} (\tilde{r}^1_{O^1})^*a.
	\end{equation*}
	These are weak composites of an equivalence and a degenerate cell, hence equivalences, hence (weakly) invertible.
\end{enumerate}
Now, let $Y$ be another 2\nbd truncated RDS, and $f: X \to Y$ a morphism. We endow the restriction $\mathcal{B}f: \mathcal{B}X \to \mathcal{B}Y$ of $f$ with the structure of a strictly unital functor of bicategories, as follows.
\begin{enumerate}
	\item For each 0-composable pair of 1-cells $a: x \celto y$, $b: y \celto z$ of $X$, a compositor $h': a \cp{0} b \celto a \cpm{0} b$ is mapped to $f(h'): f(a) \cp{0} f(b) \celto f(a \cpm{0} b)$, and in $Y$ there is a compositor $k: f(a) \cpm{0} f(b) \celto f(a) \cp{0} f(b)$. We define $f_{a,b}$ to be the unique weak composite of the spherical 2-diagram 
	\begin{equation*}
		k \cp{1} f(h'): f(a) \cpm{0} f(b) \celto f(a \cpm{0} b).
	\end{equation*}
	\item For each 0-cell $x$ of $X$, we define $f_x$ to be the 2-unit on $\idd{}f(x) = f(\idd{}x)$.
\end{enumerate}
If $f: X \to Y$ is a strict morphism, then $f(h')$ is the unique weak inverse of $k$ in $Y$, so the $f_{a,b}$ are units, and $\mathcal{B}f$ is a strict functor.

This defines functors $\mathcal{B}: 2\rdgmset \to \bicatun$ and $\mathcal{B}: 2\rdgmsetalg \to \bicatst$.
\end{cons}

\begin{cons}
Let $B$ be a bicategory; we define a diagrammatic set $\nerve{B}$ as follows. The 0-cells and 1-cells of $\nerve{B}$ are the same as those of $B$. The 2-cells 
\begin{equation*}
	p: a_1 \cp{0} \ldots \cp{0} a_n \celto b_1 \cp{0} \ldots \cp{0} b_m
\end{equation*}
of $\nerve{B}$ correspond to 2-cells
\begin{equation} \label{eq:bracketing}
	p: (\ldots(a_1\cpm{0} a_2) \ldots \cpm{0} a_{n-1}) \cpm{0} a_n \celto (\ldots(b_1\cpm{0} b_2) \ldots \cpm{0} b_{m-1}) \cpm{0} b_m
\end{equation}
of $B$. Given any spherical 2-diagram in $\nerve{B}$, we can always insert 2-units, associators and their inverses to ``rebracket'' so to obtain a composable diagram in $B$, whose composite corresponds to a 2-cell in $\nerve{B}$; the coherence theorem for bicategories \cite[Theorem 3.1]{maclane1963natural} implies that the result is independent of any choices made. For any two parallel spherical 2-diagrams $x, y$ in $\nerve{B}$, there is a unique 3-cell $e: x \celto y$ if and only if the composite of $x$ is equal to the composite of $y$ in $B$. Finally, we impose that $\nerve{B}$ be 3-coskeletal. 

The face maps of $\nerve{B}$ are the obvious ones. The degenerate 1-cells of $\nerve{B}$ are determined by the underlying reflexive 2-graph of $B$, and similarly for the action of $p_{O^1}: O^2 \surj O^1$. Any other surjective map $U \surj O^1$ factors through $p_{O^1}$, so it suffices to describe the action of surjective maps $f: U \surj V$ between different 2-atoms. If $p$ is a 2-cell of shape $V$, then the input and output boundary $f^*p$ are equal to the input and output boundary of $p$ with some 1-units inserted. We can always precompose and postcompose $p$ with unitors and their inverses to obtain a 2-cell of $B$ of the type of $f^*p$, and by coherence the result is independent of the particular way that this is done. 

Finally, coherence also implies that, given a surjective map $f$ from a 3-atom, the action of its restriction to the 2-boundaries takes diagrams with equal composites to diagrams with equal composites, so there is a unique way of defining the action of $f$. 

This suffices to define $\nerve{B}$ as a 3-coskeletal diagrammatic set. Moreover, $\nerve{B}$ is representable, and has a canonical choice of compositors for diagrams $a \cp{0} b$ of two 1-cells: we let $\compos{a \cp{0} b} := a \cpm{0} b$, and the compositor $c: a \cpm{0} b \celto a \cp{0} b$ corresponds to the 2-unit $\idd{}(a \cpm{0} b)$ in $B$. By construction, $\nerve{B}$ is 2-truncated.

Given a strictly unital functor $f: B \to C$ of bicategories, we define a morphism $\nerve{f}: \nerve{B} \to \nerve{C}$ as follows: $\nerve{f}$ is equal to $f$ on 0-cells and 1-cells. Given a 2-cell $p$ of $\nerve{B}$, corresponding to a 2-cell of $B$ of type (\ref{eq:bracketing}), by \cite[Corollary 1.8]{joyal1993braided} there are unique 2-cells of $C$ of type
\begin{align*}
	f((\ldots(a_1\cpm{0} a_2) \ldots )\cpm{0} a_n) & \celto (\ldots(f(a_1)\cpm{0} f(a_2))\ldots )\cpm{0} f(a_n), \\
	(\ldots(f(b_1)\cpm{0} f(b_2))\ldots )\cpm{0} f(b_m) & \celto f((\ldots(b_1\cpm{0} b_2) \ldots )\cpm{0} b_m),
\end{align*}
built from 2-units, associators, and their inverses in $C$, images through $f$ of associators and ther inverses in $B$, and the structural 2-cells $f_{a,b}$. Then we define $\nerve{f}(p)$ to be the unique 2-cell of the correct type obtained by precomposing and postcomposing $f(p)$ with these 2-cells. There is a unique way of extending $\nerve{f}$ to higher-dimensional cells. 

Notice that, if $f$ if a strict functor, then $f(a \cpm{0} b) = f(a) \cpm{0} f(b)$, and $\nerve{f}$ takes compositors in $\nerve{B}$ to compositors in $\nerve{C}$, that is, $\nerve{f}$ is a strict morphism.

This defines functors $\nerve{}: \bicatun \to 2\rdgmset$ and $\bicatst \to 2\rdgmsetalg$.
\end{cons}

\begin{remark}
If the bicategory $B$ is in fact a strict 2-category, then $\nerve{B}$ is isomorphic to its diagrammatic nerve as previously defined.
\end{remark}

\begin{prop}
The functors $\mathcal{B}$ and $\nerve{}$ are two sides of an equivalence of categories between $2\rdgmset$ and $\bicatun$, restricting to an equivalence between $2\rdgmsetalg$ and $\bicatst$.
\end{prop}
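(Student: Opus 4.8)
The plan is to exhibit functors inverse to each other up to natural isomorphism, i.e.\ to produce natural isomorphisms $\nerve{}\mathcal{B} \cong \mathrm{id}_{2\rdgmset}$ and $\mathcal{B}\nerve{} \cong \mathrm{id}_{\bicatun}$, reducing everything to the equivalence between representable merge-bicategories and bicategories established in \cite[Section 5]{hadzihasanovic2018weak}. The first, and main, step is to make precise the claim that the data of a $2$\nbd truncated RDS $X$ is the same as that of a representable merge-bicategory equipped with a coherent choice of unitors. Since $X$ is $3$\nbd coskeletal, it is determined by its $3$\nbd skeleton; by the truncation conditions, the latter is determined by the reflexive $2$\nbd graph underlying $X_{\tilde{\cat{O}}}$, the sets of $2$\nbd cells of every $2$\nbd atom shape together with their face and degeneracy maps, and the relation $\simeq$ on parallel spherical $2$\nbd diagrams, which by Proposition \ref{prop:unique_composite} and the remark following it is equivalent to the assignment of a unique weak composite to each such diagram. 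This is exactly the structure of a representable merge-bicategory, with the unitors supplied by Construction \ref{cons:unitors}; conversely, from such a structure one reconstructs a $3$\nbd coskeletal representable $2$\nbd truncated RDS. The correspondence is functorial and compatible with morphisms --- a morphism of $2$\nbd truncated RDSs restricts to a morphism of the underlying structures preserving degeneracies, hence the unitors built from them, and conversely --- and I would check that the round-trip natural transformations in both directions are isomorphisms, so that it is an equivalence of categories.

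Granting this, the second step is to observe that under this identification $\mathcal{B}$ and $\nerve{}$, as defined in Construction \ref{cons:rds_to_bicat} and the subsequent construction of $\nerve{B}$, coincide with the two functors of \cite[Section 5]{hadzihasanovic2018weak} passing between representable merge-bicategories with coherent unitors and bicategories, and between degeneracy-preserving morphisms and strictly unital functors; here the coherence theorem for bicategories, used in the form \cite[Theorem 3.1]{maclane1963natural} and \cite[Corollary 1.8]{joyal1993braided}, is precisely what guarantees that $\nerve{B}$ and $\nerve{f}$ are well-defined and independent of the chosen bracketings. The two natural isomorphisms then follow. Spelling them out: the isomorphism $X \incliso \nerve{\mathcal{B}X}$ is the identity on $0$\nbd and $1$\nbd cells and sends a $2$\nbd cell $p$ of shape $U$ to the $2$\nbd cell of $\mathcal{B}X$ of the bracketed type (\ref{eq:bracketing}) obtained by composing $p$ with the canonical unitors and associators, while $\mathcal{B}\nerve{B} \incliso B$ is the identity on underlying reflexive $2$\nbd graphs, the structural $2$\nbd cells matching by coherence.

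For the algebraic refinement I would first note that in dimension $2$ every non-atomic regular molecule with spherical boundary admits a binary split: the two-maximal-element case is Remark \ref{rmk:binarysplit}, and the general case follows by induction on the number of maximal elements, using Lemma \ref{lem:composition_form}. Hence an algebraic structure on a $2$\nbd truncated RDS amounts to a choice of binary compositors, and a strict morphism is one preserving them; on the bicategory side this is exactly the datum of chosen $0$\nbd composites $a \cpm{0} b$ together with their unitors, preserved on the nose, i.e.\ the data of a strict functor. Therefore $\mathcal{B}$ and $\nerve{}$ restrict to an equivalence between $2\rdgmsetalg$ and $\bicatst$.

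The step I expect to be the main obstacle is the first one: the careful bookkeeping required to identify a $2$\nbd truncated RDS with a representable merge-bicategory with coherent unitors --- translating the combinatorics of $2$\nbd atom shapes, faces, degeneracies, and the $\simeq$\nbd relation into the axioms of \cite{hadzihasanovic2018weak}, back again, and verifying that this translation is functorial compatibly with all the remaining structure.
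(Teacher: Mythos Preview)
Your proposal is correct and follows essentially the same approach as the paper, which simply states that the proof is ``essentially the same'' as the equivalence between bicategories and merge-bicategories in \cite[Section 5]{hadzihasanovic2018weak}. You have fleshed out the reduction that the paper only gestures at: identifying a $2$\nbd truncated RDS with a representable merge-bicategory equipped with coherent unitors, and then invoking the cited equivalence; your observation for the algebraic refinement, that in dimensions $\leq 2$ every non-atomic regular molecule with spherical boundary is splittable (so the algebraic structure reduces to a choice of binary compositors for $1$\nbd diagrams, while compositors for $2$\nbd diagrams are unique $3$\nbd cells by truncation), is the right way to see that strict morphisms correspond to strict functors.
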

\begin{proof}
Essentially the same proof as the equivalence between bicategories and merge-bicategories in \cite[Section 5]{hadzihasanovic2018weak}.
\end{proof}

\begin{remark}
Unlike in \cite{hadzihasanovic2018weak}, we do not obtain general functors of bicategories; this is due to the presence of structural degeneracies which are strictly preserved by morphisms of diagrammatic sets. We could recover the general case here, and plausibly in higher-dimensional cases, by using morphisms of the underlying regular polygraphs; in \cite[Appendix B]{hadzihasanovic2018combinatorial} we sketched a definition of representability in the absence of structural degeneracies. However, in addition to the handling of equivalences being more complicated, morphisms of regular polygraphs would not automatically preserve equivalences. 
\end{remark}

\begin{remark}
Construction \ref{cons:rds_to_bicat} is a blueprint for transitioning from an RDS $X$ to an algebraic weak higher category structure on the reflexive $\omega$\nbd graph $X_{\tilde{\cat{O}}}$:
\begin{itemize}
	\item given $k$\nbd composable $n$\nbd cells $x, y$, one can extend the diagram 
	\begin{equation*}
		x \cp{k} y: O^n \cp{k} O^n \to X
	\end{equation*}
	to a spherical diagram
	\begin{equation*}
		\shell{(x \cp{k} y)}: \shell{(O^n \cp{k} O^n)} \to X
	\end{equation*}
	adding compositors, similarly to the proof of Theorem \ref{thm:nerve_ff}, and use a chosen weak composite of $\shell{(x \cp{k} y)}$ as the algebraic $k$\nbd composite of $x$ and $y$;
	\item degenerate cells produced by the retractions of Construction \ref{cons:unitor_molec} may be used as higher-dimensional unitors;
	\item combinations of degenerate cells and compositors should provide other structural cells, such as associators and interchangers.
\end{itemize}
Of course, beyond the boundary case of three dimensions, treating the algebraic structure as explicitly as with bicategories is prohibitive. We hope that something more could be said about the relation between RDSs and more abstract approaches to algebraic weak higher categories, such as Leinster's \cite{leinster2004higher}; this should be investigated in future work.
\end{remark}

We conclude this section by briefly considering the case of three dimensions, where strict and weak higher categories diverge. A benchmark for a model of weak 3-categories is the ability to construct non-trivial braidings between pairs of 2-cells with degenerate boundaries, which, as displayed for example in \cite{joyal2007weak}, is necessary for the modelling of homotopy 3-types with non-trivial Whitehead brackets. 

The ability of RDSs to model all homotopy 3-types (in the sense of the homotopy hypothesis) is implied by the results of the following section; however, we take the opportunity here to show that one can construct spherical \emph{braiding diagrams} in an arbitrary diagrammatic set, only using the combinatorics of degeneracies, independently of representability. 

This is an important feature of our model, since it implies that a higher-dimensional theory presented as a diagrammatic set --- where one wants to work explicitly with diagrams, as ``free composites'', rather than take weak composites --- will automatically come with braiding diagrams. For example, a ``2-PRO'' modelled as a diagrammatic set with a single 0-cell and a single irreducible 2-cell $a$ will have braiding diagrams 
\begin{equation*}
	b, b^*: a \cp{1} a \celto a \cp{1} a, 
\end{equation*}
allowing one to encode the data of a PROB, the braided analogue of a PROP \cite{lack2004composing}. By contrast, ``2-PROs'' modelled as strict 3-categories are highly degenerate. Thus diagrammatic sets seem to be a solution to the problem we set at the end of \cite[Section 2.3]{hadzihasanovic2017algebra}. 

\begin{exm} \label{exm:braiding}
Consider a diagrammatic set $X$ with a 0-cell $v$ and two 2-cells 
\begin{equation*}
	a, b:\, !^*v \celto\, !^*v
\end{equation*}
of shape $O^2$, where $!$ is the unique map from $O^1$ to $1$. Let $U_1, U_2$ be the 2-atoms
\begin{equation*}
\begin{tikzpicture}[baseline={([yshift=-.5ex]current bounding box.center)},scale=.8]
\begin{scope}
	\node[0c] (0) at (-1.5,0) {};
	\node[0c] (m) at (0,0) {};
	\node[0c] (1) at (1.5,0) {};
	\draw[1c, out=75, in=105, looseness=1.5] (0) to (1);
	\draw[1c, out=-75, in=-105, looseness=1.5] (0) to (1);
	\draw[1c, out=60,in=120] (0) to (m);
	\draw[1c, out=-60,in=-120] (0) to (m);
	\draw[1c] (m) to node[auto] {$z$} (1);
	\draw[2c] (0,-1.2) to node[auto,swap] {$x$} (0,-.2);
	\draw[2c] (0,.2) to node[auto,swap] {$x'$} (0,1.2);
	\draw[2c] (-.75,-.5) to node[auto,swap] {$y$} (-.75,.5);
	\node[scale=1.25] at (2,-1.25) {,};
\end{scope}
\begin{scope}[shift={(5,0)}]
	\node[0c] (0) at (-1.5,0) {};
	\node[0c] (m) at (0,0) {};
	\node[0c] (1) at (1.5,0) {};
	\draw[1c, out=75, in=105, looseness=1.5] (0) to (1);
	\draw[1c, out=-75, in=-105, looseness=1.5] (0) to (1);
	\draw[1c, out=60,in=120] (m) to (1);
	\draw[1c] (0) to node[auto] {$z$} (m);
	\draw[1c, out=-60,in=-120] (m) to (1);
	\draw[2c] (0,-1.2) to node[auto,swap] {$x$} (0,-.2);
	\draw[2c] (0,.2) to node[auto,swap] {$x'$} (0,1.2);
	\draw[2c] (.75,-.5) to node[auto,swap] {$y$} (.75,.5);
	\node[scale=1.25] at (2,-1.25) {.};
\end{scope}
\end{tikzpicture}
\end{equation*}
There are surjective maps $p_1: (O^2 \celto U_1) \surj O^2$, $p_2: (O^2 \celto U_2) \surj O^2$, which
\begin{enumerate}
	\item send the greatest element to $\fnct{2}$,
	\item are equal to the identity on $O^2$,
	\item send $y$ to $\fnct{2}$ and collapse $z, x, x'$ onto their boundaries.
\end{enumerate}
Then in $X$ there is a spherical 3-diagram
\begin{equation*}
\begin{tikzpicture}[baseline={([yshift=-.5ex]current bounding box.center)},scale=.8]
\begin{scope}
	\path[fill, color=gray!20] (-1.5,0) to [out=75,in=105,looseness=1.6] (1.5,0) -- cycle;
	\node[0c] (0) at (-1.5,0) {};
	\node[0c] (1) at (1.5,0) {};
	\draw[1c, out=75, in=105, looseness=1.5] (0) to (1);
	\draw[1c, out=-75, in=-105, looseness=1.5] (0) to (1);
	\draw[1c] (0) to (1);
	\draw[2c] (0,-1.2) to node[auto,swap] {$\,a$} (0,-.1);
	\draw[2c] (0,.1) to node[auto,swap] {$\,b$} (0,1.2);
	\draw[3c1] (1.75,0) to node[above=6pt] {$p_2^*b$} (3,0);
	\draw[3c2] (1.75,0) to (3,0);
	\draw[3c3] (1.75,0) to (3,0);
\end{scope}
\begin{scope}[shift={(4.5,0)}]
	\path[fill, color=gray!20] (-1.5,0) to [out=-75,in=-105,looseness=1.6] (1.5,0) -- cycle;
	\node[0c] (0) at (-1.5,0) {};
	\node[0c] (1) at (1.5,0) {};
	\node[0c] (m1) at (-.375,.75) {};
	\draw[1c, out=75, in=105, looseness=1.5] (0) to (1);
	\draw[1c, out=-75, in=-105, looseness=1.5] (0) to (1);
	\draw[1c] (0) to (1);
	\draw[1c, out=45,in=-165] (0) to (m1);
	\draw[1c, out=30,in=120] (m1) to (1);
	\draw[1c, out=-45,in=170] (m1) to (1);
	\draw[2c] (0,-1.2) to node[auto,swap] {$\,a$} (0,-.1);
	\draw[2c] (.5,.1) to node[auto,swap] {$\,b$} (.5,.9);
	\draw[2c] (-.4,-.1) to (-.4,.8);
	\draw[2c] (-.6,.6) to (-.6,1.4);
	\draw[3c1] (1.75,0) to node[above=6pt] {$p_1^*a$} (3,0);
	\draw[3c2] (1.75,0) to (3,0);
	\draw[3c3] (1.75,0) to (3,0);
\end{scope}
\begin{scope}[shift={(9,0)}]
	\path[fill, color=gray!20] (-1.5,0) to [out=-10,in=135] (.375,-.75) to [out=15,in=-135] (1.5,0) to [out=170,in=-45] (-.375,.75) to [out=-165,in=45] (-1.5,0);
	\node[0c] (0) at (-1.5,0) {};
	\node[0c] (1) at (1.5,0) {};
	\node[0c] (m1) at (-.375,.75) {};
	\node[0c] (m2) at (.375,-.75) {};
	\draw[1c, out=75, in=105, looseness=1.5] (0) to (1);
	\draw[1c, out=-75, in=-105, looseness=1.5] (0) to (1);
	\draw[1c] (0) to (1);
	\draw[1c, out=45,in=-165] (0) to (m1);
	\draw[1c, out=30,in=120] (m1) to (1);
	\draw[1c, out=-45,in=170] (m1) to (1);
	\draw[1c, out=15,in=-135] (m2) to (1);
	\draw[1c, out=-60,in=-150] (0) to (m2);
	\draw[1c, out=-10,in=135] (0) to (m2);
	\draw[2c] (-.6,-.9) to node[auto,swap] {$\,a$} (-.6,-.1);
	\draw[2c] (.5,.1) to node[auto,swap] {$\,b$} (.5,.9);
	\draw[2c] (-.4,-.1) to (-.4,.8);
	\draw[2c] (-.6,.6) to (-.6,1.4);
	\draw[2c] (.4,-.8) to (.4,.1);
	\draw[2c] (.6,-1.4) to (.6,-.6);
	\draw[3c1] (1.75,0) to node[above=6pt] {$!^*v$} (3,0);
	\draw[3c2] (1.75,0) to (3,0);
	\draw[3c3] (1.75,0) to (3,0);
\end{scope}
\begin{scope}[shift={(13.5,0)}]
	\node[0c] (0) at (-1.5,0) {};
	\node[0c] (1) at (1.5,0) {};
	\node[0c] (m1) at (-.375,.75) {};
	\node[0c] (m2) at (.375,-.75) {};
	\draw[1c, out=75, in=105, looseness=1.5] (0) to (1);
	\draw[1c, out=-75, in=-105, looseness=1.5] (0) to (1);
	\draw[1c] (m1) to (m2);
	\draw[1c, out=45,in=-165] (0) to (m1);
	\draw[1c, out=30,in=120] (m1) to (1);
	\draw[1c, out=-45,in=170] (m1) to (1);
	\draw[1c, out=15,in=-135] (m2) to (1);
	\draw[1c, out=-60,in=-150] (0) to (m2);
	\draw[1c, out=-10,in=135] (0) to (m2);
	\draw[2c] (-.6,-.9) to node[auto,swap] {$\,a$} (-.6,-.1);
	\draw[2c] (.5,.1) to node[auto,swap] {$\,b$} (.5,.9);
	\draw[2c] (-.5,-.3) to (-.5,.6);
	\draw[2c] (-.6,.6) to (-.6,1.4);
	\draw[2c] (.5,-.6) to (.5,.3);
	\draw[2c] (.6,-1.4) to (.6,-.6);
	\node[scale=1.25] at (1.75,-1) {,};
\end{scope}
\end{tikzpicture}
\end{equation*}
where the shaded area indicates the input boundary of the following cell. Now, let $V_1$ and $V_2$ be the 3-atoms
\begin{equation*}
\begin{tikzpicture}[baseline={([yshift=-.5ex]current bounding box.center)},scale=.8]
\begin{scope}
	\node[0c] (a) at (-1.25,.5) {};
	\node[0c] (b) at (0,-.75) {};
	\node[0c] (c) at (1.25,.5) {};
	\draw[1c, out=60, in=120] (a) to (c);
	\draw[1c, out=-75, in=165] (a) to node[auto,swap] {$z$} (b);
	\draw[1c, out=15, in=-105] (b) to (c);
	\draw[1c] (a) to (c);
	\draw[2c] (-.1,.5) to node[auto,swap] {$x$} (-.1,1.3);
	\draw[2c] (-.1,-.6) to node[auto,swap] {$y$} (-.1,.5);
	\draw[3c1] (1.625,0) to (2.875,0);
	\draw[3c2] (1.625,0) to (2.875,0);
	\draw[3c3] (1.625,0) to (2.875,0);
\end{scope}
\begin{scope}[shift={(4.5,0)}]
	\node[0c] (a) at (-1.25,.5) {};
	\node[0c] (b) at (0,-.75) {};
	\node[0c] (c) at (1.25,.5) {};
	\draw[1c, out=60, in=120] (a) to (c);
	\draw[1c, out=-75, in=165] (a) to node[auto,swap] {$z$} (b);
	\draw[1c, out=15, in=-105] (b) to (c);
	\draw[1c, out=75, in=165] (b) to (c);
	\draw[2c] (.8,-.4) to node[auto] {$x'$} (.8,.5);
	\draw[2c] (-.3,-.4) to node[auto,swap] {$y'$} (-.3,1.1);
	\node[scale=1.25] at (1.5,-.75) {,};
\end{scope}
\begin{scope}[shift={(8,.25)}]
	\node[0c] (a) at (-1.25,-.5) {};
	\node[0c] (b) at (0,.75) {};
	\node[0c] (c) at (1.25,-.5) {};
	\draw[1c, out=-60, in=-120] (a) to (c);
	\draw[1c, out=75, in=-165] (a) to (b);
	\draw[1c, out=-15, in=105] (b) to node[auto] {$z$} (c);
	\draw[1c] (a) to (c);
	\draw[2c] (-.1,-1.3) to node[auto,swap] {$x$} (-.1,-.5);
	\draw[2c] (-.1,-.5) to node[auto,swap] {$y$} (-.1,.6);
	\draw[3c1] (1.625,-.25) to (2.875,-.25);
	\draw[3c2] (1.625,-.25) to (2.875,-.25);
	\draw[3c3] (1.625,-.25) to (2.875,-.25);
\end{scope}
\begin{scope}[shift={(12.5,.25)}]
	\node[0c] (a) at (-1.25,-.5) {};
	\node[0c] (b) at (0,.75) {};
	\node[0c] (c) at (1.25,-.5) {};
	\draw[1c, out=-60, in=-120] (a) to (c);
	\draw[1c, out=75, in=-165] (a) to (b);
	\draw[1c, out=-15, in=105] (b) to node[auto] {$z$} (c);
	\draw[1c, out=-15, in=-105] (a) to (b);
	\draw[2c] (-.8,-.4) to node[auto,swap] {$x'$} (-.8,.5);
	\draw[2c] (.3,-1.1) to node[auto,swap] {$y'$} (.3,.4);
	\node[scale=1.25] at (1.5,-1) {;};
\end{scope}
\end{tikzpicture}
\end{equation*}
there are surjective maps $q_1: V_1 \surj O^2$, $q_2: V_2 \surj O^2$, which
\begin{enumerate}
	\item send the greatest element to $\fnct{2}$,
	\item are equal to the identity on $\clos\{x\}$ and $\clos\{x'\}$, and
	\item collapse $z, y, y'$ onto their boundaries.
\end{enumerate}
We then have a 3-diagram in $X$
\begin{equation*}
\begin{tikzpicture}[baseline={([yshift=-.5ex]current bounding box.center)},scale=.8]
\begin{scope}
	\path[fill, color=gray!20] (-.375,.75) to (.375,-.75) to [out=15,in=-135] (1.5,0) to [out=120,in=30, looseness=1.1] (-.375,.75);
	\node[0c] (0) at (-1.5,0) {};
	\node[0c] (1) at (1.5,0) {};
	\node[0c] (m1) at (-.375,.75) {};
	\node[0c] (m2) at (.375,-.75) {};
	\draw[1c, out=75, in=105, looseness=1.5] (0) to (1);
	\draw[1c, out=-75, in=-105, looseness=1.5] (0) to (1);
	\draw[1c] (m1) to (m2);
	\draw[1c, out=45,in=-165] (0) to (m1);
	\draw[1c, out=30,in=120] (m1) to (1);
	\draw[1c, out=-45,in=170] (m1) to (1);
	\draw[1c, out=15,in=-135] (m2) to (1);
	\draw[1c, out=-60,in=-150] (0) to (m2);
	\draw[1c, out=-10,in=135] (0) to (m2);
	\draw[2c] (-.6,-.9) to node[auto,swap] {$\,a$} (-.6,-.1);
	\draw[2c] (.5,.1) to node[auto,swap] {$\,b$} (.5,.9);
	\draw[2c] (-.5,-.3) to (-.5,.6);
	\draw[2c] (-.6,.6) to (-.6,1.4);
	\draw[2c] (.5,-.6) to (.5,.3);
	\draw[2c] (.6,-1.4) to (.6,-.6);
	\draw[3c1] (1.75,0) to node[above=6pt] {$q_1^*b$} (3,0);
	\draw[3c2] (1.75,0) to (3,0);
	\draw[3c3] (1.75,0) to (3,0);
\end{scope}
\begin{scope}[shift={(4.5,0)}]
	\path[fill, color=gray!20] (-1.5,0) to [out=-60,in=-150, looseness=1.1] (.375,-.75) to (-.375,.75) to [out=-165, in=45] (-1.5,0);
	\node[0c] (0) at (-1.5,0) {};
	\node[0c] (1) at (1.5,0) {};
	\node[0c] (m1) at (-.375,.75) {};
	\node[0c] (m2) at (.375,-.75) {};
	\draw[1c, out=75, in=105, looseness=1.5] (0) to (1);
	\draw[1c, out=-75, in=-105, looseness=1.5] (0) to (1);
	\draw[1c] (m1) to (m2);
	\draw[1c, out=45,in=-165] (0) to (m1);
	\draw[1c, out=30,in=120] (m1) to (1);
	\draw[1c, out=15,in=-135] (m2) to (1);
	\draw[1c, out=90,in=150] (m2) to (1);
	\draw[1c, out=-60,in=-150] (0) to (m2);
	\draw[1c, out=-10,in=135] (0) to (m2);
	\draw[2c] (-.6,-.9) to node[auto,swap] {$\,a$} (-.6,-.1);
	\draw[2c] (.9,-.6) to node[auto,pos=.3] {$b$} (.9,.2);
	\draw[2c] (-.5,-.3) to (-.5,.6);
	\draw[2c] (.4,-.2) to (.4,.8);
	\draw[2c] (-.6,.6) to (-.6,1.4);
	\draw[2c] (.6,-1.4) to (.6,-.6);
	\draw[3c1] (1.75,0) to node[above=6pt] {$q_2^*a$} (3,0);
	\draw[3c2] (1.75,0) to (3,0);
	\draw[3c3] (1.75,0) to (3,0);
\end{scope}
\begin{scope}[shift={(9,0)}]
	\path[fill, color=gray!20] (-1.5,0) to [out=-60,in=-150, looseness=1.1] (.375,-.75) to [out=90,in=150, looseness=1.1] (1.5,0) to [out=120,in=30, looseness=1.1] (-.375,.75) to [out=-90, in=-30, looseness=1.1] (-1.5,0);
	\node[0c] (0) at (-1.5,0) {};
	\node[0c] (1) at (1.5,0) {};
	\node[0c] (m1) at (-.375,.75) {};
	\node[0c] (m2) at (.375,-.75) {};
	\draw[1c, out=75, in=105, looseness=1.5] (0) to (1);
	\draw[1c, out=-75, in=-105, looseness=1.5] (0) to (1);
	\draw[1c] (m1) to (m2);
	\draw[1c, out=45,in=-165] (0) to (m1);
	\draw[1c, out=-30,in=-90] (0) to (m1);
	\draw[1c, out=30,in=120] (m1) to (1);
	\draw[1c, out=15,in=-135] (m2) to (1);
	\draw[1c, out=90,in=150] (m2) to (1);
	\draw[1c, out=-60,in=-150] (0) to (m2);
	\draw[2c] (-.9,-.2) to node[auto,swap,pos=.7] {$a$} (-.9,.6);
	\draw[2c] (.9,-.6) to node[auto,pos=.3] {$b$} (.9,.2);
	\draw[2c] (-.4,-.8) to (-.4,.2);
	\draw[2c] (.4,-.2) to (.4,.8);
	\draw[2c] (-.6,.6) to (-.6,1.4);
	\draw[2c] (.6,-1.4) to (.6,-.6);
	\draw[3c1] (1.75,0) to node[above=6pt] {$!^*v$} (3,0);
	\draw[3c2] (1.75,0) to (3,0);
	\draw[3c3] (1.75,0) to (3,0);
\end{scope}
\begin{scope}[shift={(13.5,0)}]
	\node[0c] (0) at (-1.5,0) {};
	\node[0c] (1) at (1.5,0) {};
	\node[0c] (m1) at (-.375,.75) {};
	\node[0c] (m2) at (.375,-.75) {};
	\draw[1c, out=75, in=105, looseness=1.5] (0) to (1);
	\draw[1c, out=-75, in=-105, looseness=1.5] (0) to (1);
	\draw[1c, out=-45, in=135] (0) to (1);
	\draw[1c, out=45,in=-165] (0) to (m1);
	\draw[1c, out=-30,in=-90] (0) to (m1);
	\draw[1c, out=30,in=120] (m1) to (1);
	\draw[1c, out=15,in=-135] (m2) to (1);
	\draw[1c, out=90,in=150] (m2) to (1);
	\draw[1c, out=-60,in=-150] (0) to (m2);
	\draw[2c] (-.9,-.2) to node[auto,swap,pos=.7] {$a$} (-.9,.6);
	\draw[2c] (.9,-.6) to node[auto,pos=.3] {$b$} (.9,.2);
	\draw[2c] (-.2,-1) to (-.2,0);
	\draw[2c] (.2,-0) to (.2,1);
	\draw[2c] (-.6,.6) to (-.6,1.4);
	\draw[2c] (.6,-1.4) to (.6,-.6);
	\node[scale=1.25] at (1.75,-1) {,};
\end{scope}
\end{tikzpicture}
\end{equation*}
and finally, letting $p_1'$ and $p_2'$ be the ``reverse'' surjective maps of $p_1$ and $p_2$, a 3-diagram
\begin{equation*}
\begin{tikzpicture}[baseline={([yshift=-.5ex]current bounding box.center)},scale=.8]
\begin{scope}
	\path[fill, color=gray!20] (-1.5,0) to [out=75,in=105,looseness=1.6] (1.5,0) to [out=135,in=-45, looseness=1.1] (-1.5,0);
	\node[0c] (0) at (-1.5,0) {};
	\node[0c] (1) at (1.5,0) {};
	\node[0c] (m1) at (-.375,.75) {};
	\node[0c] (m2) at (.375,-.75) {};
	\draw[1c, out=75, in=105, looseness=1.5] (0) to (1);
	\draw[1c, out=-75, in=-105, looseness=1.5] (0) to (1);
	\draw[1c, out=-45, in=135] (0) to (1);
	\draw[1c, out=45,in=-165] (0) to (m1);
	\draw[1c, out=-30,in=-90] (0) to (m1);
	\draw[1c, out=30,in=120] (m1) to (1);
	\draw[1c, out=15,in=-135] (m2) to (1);
	\draw[1c, out=90,in=150] (m2) to (1);
	\draw[1c, out=-60,in=-150] (0) to (m2);
	\draw[2c] (-.9,-.2) to node[auto,swap,pos=.7] {$a$} (-.9,.6);
	\draw[2c] (.9,-.6) to node[auto,pos=.3] {$b$} (.9,.2);
	\draw[2c] (-.2,-1) to (-.2,0);
	\draw[2c] (.2,-0) to (.2,1);
	\draw[2c] (-.6,.6) to (-.6,1.4);
	\draw[2c] (.6,-1.4) to (.6,-.6);
	\draw[3c1] (1.75,0) to node[above=6pt] {$p_1'^*a$} (3,0);
	\draw[3c2] (1.75,0) to (3,0);
	\draw[3c3] (1.75,0) to (3,0);
\end{scope}
\begin{scope}[shift={(4.5,0)}]
	\path[fill, color=gray!20] (-1.5,0) to [out=-75,in=-105,looseness=1.6] (1.5,0) to [out=135,in=-45, looseness=1.1] (-1.5,0);
	\node[0c] (0) at (-1.5,0) {};
	\node[0c] (1) at (1.5,0) {};
	\node[0c] (m2) at (.375,-.75) {};
	\draw[1c, out=75, in=105, looseness=1.5] (0) to (1);
	\draw[1c, out=-75, in=-105, looseness=1.5] (0) to (1);
	\draw[1c, out=-45, in=135] (0) to (1);
	\draw[1c, out=15,in=-135] (m2) to (1);
	\draw[1c, out=90,in=150] (m2) to (1);
	\draw[1c, out=-60,in=-150] (0) to (m2);
	\draw[2c] (0,.1) to node[auto,swap] {$\,a$} (0,1.2);
	\draw[2c] (.9,-.6) to node[auto,pos=.3] {$b$} (.9,.2);
	\draw[2c] (-.2,-1) to (-.2,0);
	\draw[2c] (.6,-1.4) to (.6,-.6);
	\draw[3c1] (1.75,0) to node[above=6pt] {$p_2'^*b$} (3,0);
	\draw[3c2] (1.75,0) to (3,0);
	\draw[3c3] (1.75,0) to (3,0);
\end{scope}
\begin{scope}[shift={(9,0)}]
	\node[0c] (0) at (-1.5,0) {};
	\node[0c] (1) at (1.5,0) {};
	\draw[1c, out=75, in=105, looseness=1.5] (0) to (1);
	\draw[1c, out=-75, in=-105, looseness=1.5] (0) to (1);
	\draw[1c, out=-45, in=135] (0) to (1);
	\draw[2c] (0,.1) to node[auto,swap] {$\,a$} (0,1.2);
	\draw[2c] (0,-1.2) to node[auto,swap] {$\,b$} (0,-.1);
	\node[scale=1.25] at (1.75,-1) {.};
\end{scope}
\end{tikzpicture}
\end{equation*}
Putting together the three 3-diagrams, we obtain a spherical diagram $h: a \cp{1} b \celto b \cp{1} a$, which works as a braiding of $a$ and $b$. Starting from $p_1^*b$ and $p_2^*a$, instead, we can construct a second, inequivalent braiding, where $a$ is moved to the right of $b$. 
\end{exm}

\section{Homotopy hypothesis} \label{sec:homotopy}

\subsection{Simplices as molecules} \label{sec:simplices}

Throughout this part, we will assume some knowledge of simplicial sets and their homotopy theory; see \cite[Chapter 4]{fritsch1990cellular} or \cite{goerss2009simplicial} as references. We also borrow most of our notation from the latter.

Here, we treat the combinatorics that we will need in the rest of the article. First of all, we look at how simplices can be seen as particular molecules, and simplicial sets as particular diagrammatic sets. 

\begin{dfn}
Let $\deltacat$ be the simplex category, and $\deltain$ its subcategory of monomorphisms. A \emph{simplicial set} is a presheaf on $\deltacat$, and a \emph{semi-simplicial set} is a presheaf on $\deltain$. With morphisms of presheaves, these form categories $\sset$ and $\semisset$, respectively. 
\end{dfn}

\begin{cons}
For all $n \geq 0$, the $(n+1)$\nbd fold join $\Delta^n := \underbrace{1\join\ldots\join 1}_{n+1}$ can be identified with Street's oriented $n$\nbd simplex \cite{street1987algebra}: for $0 \leq k \leq n$, 
\begin{itemize}
	\item the $k$\nbd th co-face map $d^k: \Delta^{n-1} \incl \Delta^{n}$ is the inclusion 
\begin{equation*}
\begin{tikzpicture}[baseline={([yshift=-.5ex]current bounding box.center)}]
	\node[scale=1.25] (0) at (-1.5,0) {$\underbrace{1\join\ldots\join 1}_{k} \join \emptyset \join \underbrace{1\join\ldots\join 1}_{n-k}$};
	\node[scale=1.25] (1) at (4,0) {$\underbrace{1\join\ldots\join 1}_{n+1},$};
	\draw[1cinc] (0.east |- 0,.15) to node[auto] {$\mathrm{id}\join \imath \join \mathrm{id}$} (1.west  |- 0,.15);
\end{tikzpicture}
\end{equation*} 
where $\imath: \emptyset \to 1$ is the unique inclusion of the initial object;
\item the $k$\nbd th co-degeneracy map $s^k: \Delta^{n+1} \surj \Delta^{n}$ is the map
\begin{equation*}
\begin{tikzpicture}[baseline={([yshift=-.5ex]current bounding box.center)}]
	\node[scale=1.25] (0) at (-3,0) {$\underbrace{1\join\ldots\join 1}_{k} \join 1 \join 1 \join \underbrace{1\join\ldots\join 1}_{n-k}$};
	\node[scale=1.25] (1) at (4,0) {$\underbrace{1\join\ldots\join 1}_{k} \join 1 \join \underbrace{1\join\ldots\join 1}_{n-k}$,};
	\draw[1cinc] (0.east |- 0,.15) to node[auto] {$\mathrm{id}\join p \join \mathrm{id}$} (1.west  |- 0,.15);
\end{tikzpicture}
\end{equation*} 
where $p: 1\join 1 \to 1$ is the unique map to the terminal object.
\end{itemize}
The co-faces $d^k$ with $k$ odd correspond to elements of the output boundary, and those with $k$ even to elements of the input boundary of $\Delta^n$. 

These maps satisfy the cosimplicial identities, so they define inclusions of subcategories $\deltain \incl \atomin$ and $\deltacat \incl \atom$. Consequently, there are restriction functors $\delres{-}: \dgmset \to \sset$ and $\rpol \to \semisset$, with left adjoints $\imath_\Delta: \sset \to \dgmset$ and $\semisset \to \rpol$.

With the characterisation of $P \join Q$ by $(P \join Q)_\bot \simeq P_\bot \tensor Q_\bot$, if $1_\bot = \{\bot < \top\}$, every element $x \in \Delta^n$ is uniquely of the form
\begin{equation*}
	\top^{j_1}\bot^{k_1}\ldots\top^{j_{m}}\bot^{k_m} := \underbrace{\top \tensor \ldots \tensor \top}_{j_1} \tensor \underbrace{\bot \tensor \ldots \tensor \bot}_{k_1} \tensor \ldots \tensor \underbrace{\top \tensor \ldots \tensor \top}_{j_m} \tensor \underbrace{\bot \tensor \ldots \tensor \bot}_{k_m},
\end{equation*}
where $m \geq 1$, $j_i \neq 0$ if $i \neq 0$ and $k_i \neq 0$ if $i \neq m$, $\sum_i j_i = \dmn{x} + 1 > 0$, and $\sum_i (j_i + k_i) = n+1$. 
\end{cons}

\begin{prop}
The inclusions $\deltain \incl \atomin$ and $\deltacat \incl \atom$ are full.
\end{prop}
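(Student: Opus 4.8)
The plan is to show that any map of oriented graded posets $f\colon \Delta^m \to \Delta^n$ (respectively, any inclusion $\imath\colon \Delta^m \incl \Delta^n$) coincides with a morphism coming from $\deltacat$ (respectively, $\deltain$), i.e.\ with a composite of the co-faces $d^k$ and co-degeneracies $s^k$ defined in the preceding construction. Since both inclusions of categories are faithful --- because a map of directed complexes is determined by its underlying function, and the cosimplicial/simplicial identities recover the combinatorics of $\deltacat$ --- the content is surjectivity on hom-sets: every $\ogpos$-map between oriented simplices is a simplicial operator.

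First I would recall the normal-form description of elements of $\Delta^n$ as words $\top^{j_1}\bot^{k_1}\cdots\top^{j_m}\bot^{k_m}$ given at the end of the excerpt, and translate it into the standard combinatorial picture: $\Delta^n$ as an oriented graded poset is isomorphic to the poset of nonempty subsets of $\{0,\dots,n\}$ ordered by inclusion, where a $k$-dimensional element is a $(k+1)$-element subset, with the Street orientation. Under this dictionary, an element $x$ of dimension $k$ corresponds to a subset $\{a_0 < \dots < a_k\}$, and the faces $\sbord{k-1}{\alpha}x$ are the subsets obtained by deleting an entry, with sign depending on the parity of its position. Then I would argue that a map $f\colon \Delta^m \to \Delta^n$ of oriented graded posets is determined by its restriction to the vertices $\Delta^m{}^{(0)}$: indeed, by Lemma~\ref{lem:closedmap} $f$ is closed and order-preserving, so $f$ of a $k$-simplex $S$ is the closure of $f(S)$, hence determined by the images of the vertices of $S$. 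The key point is to show that the induced map on vertices, viewed as a function $\{0,\dots,m\} \to \{0,\dots,n\}$, is \emph{monotone}: this is where the orientation is used. Using equation~(\ref{dfn:mapdef}), $f(\sbord{0}{-}x) = \sbord{0}{-}f(x)$ and $f(\sbord{0}{+}x) = \sbord{0}{+}f(x)$ for every $1$-dimensional $x$; since in the oriented simplex the input $0$-boundary of an edge $\{a<b\}$ is its smaller vertex $a$ and the output $0$-boundary is its larger vertex $b$ (by the Street orientation convention, co-faces of odd index giving the output), compatibility with these two boundaries forces $f$ to send smaller vertices to vertices that are $\leq$ the images of larger vertices along each edge, and hence globally monotone. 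Conversely, any monotone function $\{0,\dots,m\} \to \{0,\dots,n\}$ extends to an $\ogpos$-map $\Delta^m \to \Delta^n$ by taking images of subsets (and this is exactly the standard simplicial operator). This establishes the bijection for $\deltacat \incl \atom$; restricting to injective vertex maps, which correspond exactly to inclusions of oriented graded posets by Lemma~\ref{lem:sameinclusions}, gives fullness of $\deltain \incl \atomin$.

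There is one subtlety I would be careful about: a monotone vertex map need not be injective even when the resulting simplicial operator is, and conversely one must check that a non-injective monotone vertex map really does give a well-defined $\ogpos$-map (the issue being that a $(k+1)$-element subset can map to a smaller subset, so the image of a $k$-simplex can have lower dimension, which is fine since maps are only required to be dimension-non-increasing). Verifying the boundary compatibility~(\ref{dfn:mapdef}) in full for an arbitrary simplicial operator is the routine but slightly fiddly computation of matching Street's orientation signs on the two sides; I would either cite the cosimplicial identities already recorded in the construction (which show the $d^k, s^k$ are $\ogpos$-maps, hence so are all their composites, giving one inclusion) and only prove the reverse --- that every $\ogpos$-map is such a composite --- via the vertex-map argument above.

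The main obstacle I expect is making the "$f$ is determined by and corresponds to a monotone vertex map" step fully rigorous with the correct handling of the Street orientation: one must pin down precisely which co-face indices contribute to $\sbord{}{+}$ versus $\sbord{}{-}$, trace this through the join-of-$1$'s presentation, and confirm that compatibility of $f$ with \emph{all} $\alpha$-boundaries (not just the $0$-dimensional ones) is automatic once monotonicity on vertices is known --- this last implication should follow by induction on dimension using the inductive description of $\bord{}{\alpha}$ of a simplex in terms of lower simplices, but it needs to be stated carefully. Everything else is bookkeeping.
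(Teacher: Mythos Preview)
Your approach is correct and genuinely different from the paper's. The paper argues as follows: for inclusions, it counts the $(n-1)$\nbd dimensional elements of $\Delta^n$ (there are exactly $n+1$, so the $d^k$ exhaust them), and then any inclusion $\Delta^k \incl \Delta^n$ with $k<n$ factors through some $d^i$, giving fullness of $\deltain \incl \atomin$ by induction. For general maps, the paper uses the surjection--inclusion factorisation and inducts on $n$: the restrictions $d^i;p$ of a surjection $p:\Delta^n \surj \Delta^k$ to the codimension\nbd one faces are simplicial by induction, they form a compatible family of order-preserving maps $[n-1]\to[k]$, and since any two vertices lie in a common face (for $n>1$) these glue to a unique order-preserving $[n]\to[k]$, which must agree with $p$ because both send the greatest element to the greatest element.

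Your route --- identify elements of $\Delta^n$ with nonempty subsets of $[n]$, show that any $\ogpos$-map is determined by its vertex map, and that this vertex map is monotone via the boundary condition on $1$\nbd simplices --- is more direct and yields the bijection with simplicial operators in one stroke. The paper's argument never needs to compute the orientation explicitly; yours does (as you note), but in exchange you avoid the gluing step and get a cleaner conceptual statement. One small correction: your aside that ``a monotone vertex map need not be injective even when the resulting simplicial operator is'' is backwards --- injectivity of the operator and of the vertex map are equivalent. Also, your justification that $f$ is determined by its vertex images is a little thin as written; the clean way is to observe that the minimal elements of $\clos\{f(S)\}=f(\clos\{S\})$ are exactly the $f(\{i\})$ for $i\in S$ (using only that $f$ is closed and order-preserving), whence $f(S)=g(S)$ as subsets of $[n]$.
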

\begin{proof}
The atom $\Delta^n$ has exactly $n+1$ elements of dimension $n-1$, so the maps $d^k: \Delta^{n-1} \incl \Delta^n$ exhaust the inclusions of $\Delta^{n-1}$ into $\Delta^n$. Since any inclusion $\Delta^k \incl \Delta^n$ for $k < n-1$ factors through $\Delta^{n-1}$, this suffices to prove that $\deltain$ is a full subcategory of $\atomin$. 

Let $p: \Delta^n \surj \Delta^k$ be a surjective map; if $n = k$ it must be the identity, so suppose $n > k$. Recall that a morphism $\Delta^n \to \Delta^m$ in $\deltacat$ corresponds uniquely to an order-preserving map $f: [n] \to [m]$, where $[n]$ is the linear order $\{0 < \ldots < n\}$: in particular, the $i$\nbd th co-face $d^i: [n-1] \to [n]$ is the inclusion of $\{0 < \ldots < i-1 < i+1 < \ldots < n\}$.

By induction, the restrictions $d^i;p: \Delta^{n-1} \to \Delta^k$ to the faces of $\Delta^n$, for $0 \leq i \leq n$, are all in $\deltacat$, hence they correspond to a system of order-preserving maps $f_i: [n-1] \to [k]$, with the property that $d^i;f_j = d^{j-1};f_i$ for $i < j$. 

We may suppose $n > 1$, handling the few remaning cases explicitly: then any pair of elements of $[n]$ is in the image of $d^i$ for some $i$, and any pair $f_i$, $f_j$ agrees on the overlaps. The system therefore extends uniquely to an order-preserving map $f: [n] \to [k]$ such that $f_i = d^i;f$ for $0 \leq i \leq n$, corresponding to a map $f: \Delta^n \to \Delta^k$ in $\deltacat$ which is equal to $p$ on its boundary. Both $f$ and $p$ must map the greatest element of $\Delta^n$ to the greatest element of $\Delta^k$, and we conclude that $f = p$. 
\end{proof}

\begin{cor}
The functors $\imath_\Delta: \sset \to \dgmset$ and $\imath_\Delta: \semisset \to \rpol$ are full and faithful.
\end{cor}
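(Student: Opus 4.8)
The plan is to deduce this as a direct corollary of Lemma \ref{lem:leftadj_full}, using the preceding proposition. First I would recall the relevant identifications: $\sset = \psh{}{\deltacat}$, $\dgmset = \psh{}{\atom}$, $\semisset = \psh{}{\deltain}$, and $\rpol = \psh{}{\atomin}$, and that the functors $\imath_\Delta$ appearing in the statement were introduced precisely as the left adjoints of the restriction functors $\delres{-}: \dgmset \to \sset$ and $\rpol \to \semisset$ induced by the subcategory inclusions $\deltacat \incl \atom$ and $\deltain \incl \atomin$. In the notation of the general construction of $\imath_\cat{C} \dashv -_\cat{C}$ for a subcategory $\cat{C}$ of a small category $\cat{D}$, this means $\imath_\Delta = \imath_{\deltacat}$ in the first case and $\imath_\Delta = \imath_{\deltain}$ in the second; both are characterised as the left Kan extension of the composite $\cat{C} \incl \cat{D} \incl \psh{}{\cat{D}}$ along the Yoneda embedding, which matches exactly the $\imath_\Delta$ of the earlier construction since the inclusions $\deltain \incl \atomin$, $\deltacat \incl \atom$ were defined via the cosimplicial identities for the co-faces and co-degeneracies. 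So there is essentially nothing to check here beyond unwinding definitions.

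Next, the preceding proposition establishes that $\deltain \incl \atomin$ and $\deltacat \incl \atom$ are \emph{full} subcategory inclusions. This is the only substantive input, and it is already available. With fullness in hand, I would simply apply Lemma \ref{lem:leftadj_full} with $(\cat{C},\cat{D}) = (\deltacat,\atom)$ and with $(\cat{C},\cat{D}) = (\deltain,\atomin)$: the lemma says that whenever $\cat{C}$ is a full subcategory of $\cat{D}$, the left adjoint $\imath_\cat{C}: \psh{}{\cat{C}} \to \psh{}{\cat{D}}$ is full and faithful (its proof being that each equivalence class in $\imath_\cat{C}X(c)$, for $c$ an object of $\cat{C}$, has the unique representative $(x,\idcat{c})$, so the unit of the adjunction is an isomorphism and one invokes \cite[Theorem IV.3.1]{maclane1971cats}). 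This yields immediately that $\imath_\Delta: \sset \to \dgmset$ and $\imath_\Delta: \semisset \to \rpol$ are full and faithful.

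I do not expect any obstacle: the entire combinatorial content — the verification that every inclusion $\Delta^{n-1} \incl \Delta^n$ is a co-face and that every surjection $\Delta^n \surj \Delta^k$ is a co-degeneracy, via the reconstruction of an order-preserving map $f:[n]\to[k]$ from its restrictions to the faces — was already carried out in proving the preceding proposition. The corollary is purely formal once that fullness is known, so the "hard part" has been discharged upstream; all that remains is the bookkeeping identification of $\imath_\Delta$ with the generic left adjoint $\imath_\cat{C}$.
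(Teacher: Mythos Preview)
Your proposal is correct and matches the paper's own proof exactly: the paper simply writes ``Follows from the proposition and Lemma \ref{lem:leftadj_full}.'' Your expanded explanation of why $\imath_\Delta$ coincides with the generic $\imath_\cat{C}$ is accurate bookkeeping, but the argument is precisely the one-line deduction the paper gives.
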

\begin{proof}
Follows from the proposition and Lemma \ref{lem:leftadj_full}.
\end{proof}

\begin{remark}
Every simplicial set $K$, seen as a diagrammatic set $\imath_\Delta K$, satisfies the Eilenberg-Zilber property: $K$ can always be built as the colimit of the sequence of pushouts 
\begin{equation*}
\begin{tikzpicture}[baseline={([yshift=-.5ex]current bounding box.center)}]
	\node[scale=1.25] (0) at (0,2) {$\displaystyle \coprod_{x\in\nondeg{n}{K}} \bord{}{} \Delta^n$};
	\node[scale=1.25] (1) at (3.5,2) {$\displaystyle\coprod_{x\in\nondeg{n}{K}} \Delta^n$};
	\node[scale=1.25] (2) at (0,0) {$\imath^\deltacat_{n-1}\sigma^\deltacat_{\leq n-1}K$};
	\node[scale=1.25] (3) at (3.5,0) {$\imath^\deltacat_n\sigma^\deltacat_{\leq n}K$};
	\draw[1c] (0) to node[auto,swap] {$(\bord{}{}x)_{x\in\nondeg{n}{K}}$} (2);
	\draw[1c] (1) to node[auto] {$(x)_{x\in\nondeg{n}{K}}$} (3);
	\draw[1cinc] (0) to (1);
	\draw[1cinc] (2) to (3);
	\draw[edge] (2.5,0.2) to (2.5,0.8) to (3.3,0.8);
\end{tikzpicture}
\end{equation*}
where $\nondeg{n}{K}$ is the set of non-degenerate $n$\nbd simplices of $K$. Because $\imath_\Delta$ is left adjoint, it preserves pushouts: the claim then follows from Proposition \ref{prop:ez_polygraph}, after observing that the irreducible elements of $\imath_\Delta K$ correspond to the non-degenerate simplices of $K$, and that $\imath_\Delta$ commutes with skeleta.
\end{remark}

We will now define a sequence of maps $a_n: \Delta^n \surj O^n$ relating the $n$\nbd simplices to the $n$\nbd globes, which we will use in the next section to transform $n$\nbd cells of a diagrammatic set $X$ into $n$\nbd simplices of $X_\Delta$. Recall Construction \ref{cons:ou} and Construction \ref{cons:fattening}.

\begin{dfn} \label{dfn:an_maps}
Let $a_0 := \idcat{}: \Delta^0 \surj O^0$. For $n \geq 0$, if $a_n$ is defined, we define $a_{n+1}: \Delta^{n+1} \surj O^{n+1}$ to be the composite
\begin{equation*}
	\begin{tikzpicture}[baseline={([yshift=-.5ex]current bounding box.center)}]
	\node[scale=1.25] (0) at (-3,0) {$\Delta^{n+1}$};
	\node[scale=1.25] (1) at (0,0) {$O(\Delta^n)$};
	\node[scale=1.25] (2) at (3,0) {$O^{n+1}.$};
	\draw[1csurj] (0) to node[auto] {$s^0_\prec$} (1);
	\draw[1csurj] (1) to node[auto] {$O(a_n)$} (2);
\end{tikzpicture}
\end{equation*}
Unravelling the recursion, we see that $a_n = s^0_\prec;O(s^0_\prec);\ldots;O^{n-1}(s^0_\prec)$. 

These maps make the following diagrams commute, for $n \geq 0$:
\begin{equation} \label{eq:an_faces}
\begin{tikzpicture}[baseline={([yshift=-.5ex]current bounding box.center)}]
	\node[scale=1.25] (0) at (0,1.5) {$\Delta^n$};
	\node[scale=1.25] (1) at (2.5,0) {$O^{n+1}$,};
	\node[scale=1.25] (2) at (0,0) {$\Delta^{n+1}$};
	\node[scale=1.25] (3) at (2.5,1.5) {$O^n$};
	\draw[1csurj] (0) to node[auto] {$a_n$} (3);
	\draw[1cinc] (0) to node[auto,swap] {$d^0$} (2);
	\draw[1csurj] (2) to node[auto,swap] {$a_{n+1}$} (1);
	\draw[1cinc] (3) to node[auto] {$\imath^+$} (1);
\end{tikzpicture} \quad \quad 
\begin{tikzpicture}[baseline={([yshift=-.5ex]current bounding box.center)}]
	\node[scale=1.25] (0) at (0,1.5) {$\Delta^n$};
	\node[scale=1.25] (1) at (2.5,0) {$O^{n+1}$.};
	\node[scale=1.25] (2) at (0,0) {$\Delta^{n+1}$};
	\node[scale=1.25] (3) at (2.5,1.5) {$O^n$};
	\draw[1csurj] (0) to node[auto] {$a_n$} (3);
	\draw[1cinc] (0) to node[auto,swap] {$d^1$} (2);
	\draw[1csurj] (2) to node[auto,swap] {$a_{n+1}$} (1);
	\draw[1cinc] (3) to node[auto] {$\imath^-$} (1);
\end{tikzpicture}
\end{equation} 
\end{dfn}

\begin{remark} \label{rmk:an_explicit}
The maps $a_n$ can be given a fairly simple explicit description: they are defined by
\begin{align*}
	\top^{n+1} & \; \mapsto \; \fnct{n}, \\
	\bot^k\top^j & \; \mapsto \; \fnct{(j-1)}^+, \\
	(\ldots)\top\bot^k\top^j & \; \mapsto \; \fnct{j}^-,
\end{align*}
where $k > 0$. Notice that any element ending with $\bot$ is sent to $\fnct{0}^-$.
\end{remark}

We will also need a modified sequence, mapping an $n$\nbd simplex to an $n$\nbd atom with \emph{two} $(n-1)$\nbd dimensional elements in its output boundary.

\begin{cons}
Let $\compglob{n+1}$ be the $(n+1)$\nbd atom $O^n \celto O^n \cp{n-1} O^n$. This is equal to $O^{n+1}$ with its output boundary split into two $n$\nbd globes, the images of inclusions $\imath^+_1, \imath^+_2: O^n \incl \compglob{n+1}$: where $O^{n+1}$ has a unique $n$\nbd dimensional element $\fnct{n}^+$, $\compglob{n+1}$ has two $n$\nbd dimensional elements $\fnct{n}_1^+$ and $\fnct{n}_2^+$, and an additional $(n-1)$\nbd dimensional element $\fnct{n-1}^0 \in \sbord{}{+}\fnct{n}_1^+ \cap \sbord{}{-}\fnct{n}_2^+$. 

There are two surjective maps $p_1, p_2: \compglob{n+1} \surj O^{n+1}$, the first sending $\fnct{n}_2^+$ to $\fnct{n}^+$, hence sending $\fnct{n-1}^0$ and $\fnct{n}_1^+$ to $\fnct{n-1}^-$; the other sending $\fnct{n}_1^+$ to $\fnct{n}^+$, hence sending $\fnct{n-1}^0$ and $\fnct{n}_2^+$ to $\fnct{n-1}^+$. 
\end{cons}

\begin{dfn}
We define a sequence of surjective maps $c_{n+1}: \Delta^{n+1} \surj \compglob{n+1}$ for $n > 0$. These are defined by
\begin{align*}
	\top^{n+2} & \; \mapsto \; \fnct{n+1}, \\
	(\bot\top^2)\top^{n-1} & \; \mapsto \; \fnct{n}_2^+, \\
	(\top^2\bot)\top^{n-1} & \; \mapsto \; \fnct{n}_1^+, \\
	(\bot\top\bot)\top^{n-1} & \; \mapsto \; \fnct{n-1}^0,
\end{align*}
and, for $k > 0$,
\begin{align*} \bot^k\top^j & \; \mapsto \; \fnct{(j-1)}^+, \\
	(\ldots)\top\bot^k\top^j & \; \mapsto \; \fnct{j}^-,
\end{align*}
for all elements not covered by the previous cases.

Using the explicit description of the $a_n$ from Remark \ref{rmk:an_explicit}, one can check that the following diagrams commute:
\begin{equation} \label{eq:cn_faces}
\begin{tikzpicture}[baseline={([yshift=-.5ex]current bounding box.center)}]
	\node[scale=1.25] (0) at (0,1.5) {$\Delta^n$};
	\node[scale=1.25] (1) at (2.5,0) {$\compglob{n+1}$,};
	\node[scale=1.25] (2) at (0,0) {$\Delta^{n+1}$};
	\node[scale=1.25] (3) at (2.5,1.5) {$O^n$};
	\draw[1csurj] (0) to node[auto] {$a_n$} (3);
	\draw[1cinc] (0) to node[auto,swap] {$d^0$} (2);
	\draw[1csurj] (2) to node[auto,swap] {$c_{n+1}$} (1);
	\draw[1cinc] (3) to node[auto] {$\imath_2^+$} (1);
\end{tikzpicture} \quad \quad 
\begin{tikzpicture}[baseline={([yshift=-.5ex]current bounding box.center)}]
	\node[scale=1.25] (0) at (0,1.5) {$\Delta^n$};
	\node[scale=1.25] (1) at (2.5,0) {$\compglob{n+1}$,};
	\node[scale=1.25] (2) at (0,0) {$\Delta^{n+1}$};
	\node[scale=1.25] (3) at (2.5,1.5) {$O^n$};
	\draw[1csurj] (0) to node[auto] {$a_n$} (3);
	\draw[1cinc] (0) to node[auto,swap] {$d^1$} (2);
	\draw[1csurj] (2) to node[auto,swap] {$c_{n+1}$} (1);
	\draw[1cinc] (3) to node[auto] {$\imath^-$} (1);
\end{tikzpicture} \quad \quad 
\begin{tikzpicture}[baseline={([yshift=-.5ex]current bounding box.center)}]
	\node[scale=1.25] (0) at (0,1.5) {$\Delta^n$};
	\node[scale=1.25] (1) at (2.5,0) {$\compglob{n+1}$,};
	\node[scale=1.25] (2) at (0,0) {$\Delta^{n+1}$};
	\node[scale=1.25] (3) at (2.5,1.5) {$O^n$};
	\draw[1csurj] (0) to node[auto] {$a_n$} (3);
	\draw[1cinc] (0) to node[auto,swap] {$d^2$} (2);
	\draw[1csurj] (2) to node[auto,swap] {$c_{n+1}$} (1);
	\draw[1cinc] (3) to node[auto] {$\imath_1^+$} (1);
\end{tikzpicture}
\end{equation} 
and 
\begin{equation} \label{eq:cn_projection}
\begin{tikzpicture}[baseline={([yshift=-.5ex]current bounding box.center)}]
	\node[scale=1.25] (0) at (-1.25,1.25) {$\Delta^{n+1}$};
	\node[scale=1.25] (1) at (0,0) {$\compglob{n+1}$};
	\node[scale=1.25] (2) at (1.25,1.25) {$O^{n+1}$};
	\draw[1csurj] (0) to node[auto] {$a_{n+1}$} (2);
	\draw[1csurj] (0) to node[auto,swap] {$c_{n+1}$} (1);
	\draw[1csurj] (1) to node[auto,swap] {$p_1$} (2);
	\node[scale=1.25] at (1.5,0) {.};
\end{tikzpicture}
\end{equation}
Notice also that $c_2: \Delta^2 \surj \compglob{2}$ is an isomorphism.
\end{dfn}

Finally, the following construction is crucial in the proof of Theorem \ref{thm:homotopy_group_iso}.

\begin{cons} \label{cons:delta_extract}
For $k \geq 0, n \geq 2$, we define regular $(k+n)$\nbd molecules $\extr{k}{n}$ with spherical boundary, with the property that
\begin{enumerate}
	\item for $\alpha \in \{+,-\}$, $\bord{}{\alpha}\extr{k}{n}$ is isomorphic to $\bord{}{\alpha}O^k(\Delta^n)$, and
	\item $O^{k+1}(\Delta^{n-1}) \subsph \extr{k}{n}$.
\end{enumerate}
For all $n \geq 2$, let $\extr{0}{n}$ be the pushout
\begin{equation*}
\begin{tikzpicture}[baseline={([yshift=-.5ex]current bounding box.center)}]
	\node[scale=1.25] (0) at (0,1.5) {$\Delta^{n-1}$};
	\node[scale=1.25] (1) at (2.5,0) {$\extr{0}{n};$};
	\node[scale=1.25] (2) at (0,0) {$\Delta^n$};
	\node[scale=1.25] (3) at (2.5,1.5) {$O(\Delta^{n-1})$};
	\draw[1cinc] (0) to node[auto] {$\imath^-$} (3);
	\draw[1cincl] (0) to node[auto,swap] {$d^0$} (2);
	\draw[1cinc] (2) to (1);
	\draw[1cincl] (3) to (1);
	\draw[edge] (1.6,0.2) to (1.6,0.7) to (2.3,0.7);
\end{tikzpicture}
\end{equation*}
this splits into $\Delta^n \cup O(\Delta^{n-1})$, and its boundary is isomorphic to the boundary of $\Delta^n = O^0(\Delta^n)$. 

Supposing that $\extr{k}{n}$ has been defined, and satisfies the two properties, we define $\extr{k+1}{n}$ to be the colimit of
\begin{equation*} 
\begin{tikzpicture}[baseline={([yshift=-.5ex]current bounding box.center)}]
	\node[scale=1.25] (0) at (-2,-1.5) {$O^k(\Delta^n) \celto \extr{k}{n}$};
	\node[scale=1.25] (1) at (0,0) {$O^{k+1}(\Delta^{n-1})$};
	\node[scale=1.25] (2) at (2,-1.5) {$O^{k+2}(\Delta^{n-1})$};
	\node[scale=1.25] (3) at (4,0) {$O^{k+1}(\Delta^{n-1})$};
	\node[scale=1.25] (4) at (6,-1.5) {$\extr{k}{n} \celto O^k(\Delta^n)$,};
	\draw[1cincl] (1) to node[auto,swap] {$j_1$} (0);
	\draw[1cinc] (1) to node[auto] {$\imath^-$} (2);
	\draw[1cincl] (3) to node[auto,swap] {$\imath^+$} (2);
	\draw[1cinc] (3) to node[auto] {$j_2$} (4);
\end{tikzpicture}
\end{equation*}
where $j_1$ and $j_2$ are the inclusion of $O^{k+1}(\Delta^{n-1})$ as a submolecule of $\extr{k}{n}$. This is a regular molecule with spherical boundary and three maximal elements, isomorphic to $O^k(\Delta^n) \celto \extr{k}{n}$, $O^{k+2}(\Delta^{n-1})$, and $\extr{k}{n} \celto O^k(\Delta^n)$, respectively.

Now, we define a sequence of retractions $r_{k,n}: \extr{k}{n} \surj O^{k+1}(\Delta^{n-1})$, with the property that
\begin{equation} \label{eq:retract_rkn}
\begin{tikzpicture}[baseline={([yshift=-.5ex]current bounding box.center)}]
	\node[scale=1.25] (0) at (0,1.5) {$\bord{}{}\extr{k}{n}$};
	\node[scale=1.25] (1) at (3,0) {$O^{k+1}(\Delta^{n-1})$};
	\node[scale=1.25] (2) at (0,0) {$\extr{k}{n}$};
	\node[scale=1.25] (3) at (3,1.5) {$O^{k}(\Delta^n)$};
	\draw[1cinc] (0) to (3);
	\draw[1cincl] (0) to (2);
	\draw[1csurj] (2) to node[auto,swap] {$r_{k,n}$} (1);
	\draw[1csurj] (3) to node[auto] {$O^k(s^0_\prec)$} (1);
\end{tikzpicture}
\end{equation}
commutes, where $\bord{}{}\extr{k}{n}$ is included isomorphically into $\bord{}{}O^k(\Delta^n)$:
\begin{itemize}
	\item $r_{0,n}: \extr{0}{n} \surj O(\Delta^{n-1})$ is equal to $s^0$ on $\Delta^n$, and to the identity on $O(\Delta^{n-1})$;
	\item if $r_{k,n}$ is defined, we let $r_{k+1,n}: \extr{k+1}{n} \surj O^{k+2}(\Delta^{n-1})$ be 
	\begin{enumerate}[leftmargin=5pt]
		\item equal to $r_{k,n}$ from the output boundary of $O^k(\Delta^n) \celto \extr{k}{n}$ to the input boundary of $O^{k+2}(\Delta^{n-1})$, and from the input boundary of $\extr{k}{n} \celto O^k(\Delta^n)$ to the output boundary of $O^{k+2}(\Delta^{n-1})$;
		\item equal to $O^k(s^0_\prec)$ from the input boundary of $O^k(\Delta^n) \celto \extr{k}{n}$ to the input boundary of $O^{k+2}(\Delta^{n-1})$, and from the output boundary of $\extr{k}{n} \celto O^k(\Delta^n)$ to the output boundary of $O^{k+2}(\Delta^{n-1})$;
		\item sending the greatest elements of $O^k(\Delta^n) \celto \extr{k}{n}$ and $\extr{k}{n} \celto O^k(\Delta^n)$ to the greatest elements of $\bord{}{-}O^{k+2}(\Delta^{n-1})$ and $\bord{}{+}O^{k+2}(\Delta^{n-1})$, respectively;
		\item equal to the identity on $O^{k+2}(\Delta^{n-1})$. 
	\end{enumerate}
\end{itemize}
Assuming (\ref{eq:retract_rkn}) commutes, the retraction $r_{k+1,n}$ is well-defined, and the commutativity of the respective diagram holds by construction.

Next, we define a modified sequence $\extrtil{k}{n}$ by
\begin{align*}
	\extrtil{k}{2} & := \extr{k}{2}, \\
	\extrtil{k}{n} & := \extr{k}{n}[\extrtil{k+1}{n-1}/O^{k+1}(\Delta^{n-1})] \quad \text{for $n > 2$}.
\end{align*}
This has the property that
\begin{equation*}
	\extrtil{k}{n} \supsph \extrtil{k+1}{n-1} \supsph \ldots \supsph \extrtil{k+n-2}{2} \supsph O^{k+n-1}(\Delta^1) = O^{k+n},
\end{equation*}
and the retractions $r_{k,n}$ induce a sequence of retractions
\begin{equation*}
	\extrtil{k}{n} \surj \extrtil{k+1}{n-1} \surj \ldots \surj \extrtil{k+n-2}{2} \surj O^{k+n},
\end{equation*}
whose composite we call $\tilde{r}_{k,n}: \extrtil{k}{n} \surj O^{k+n}$. 

In particular, $O^n \subsph \extrtil{0}{n}$ for all $n \geq 2$, and by the commutativity of (\ref{eq:retract_rkn}) and the definition of $a_n: \Delta^n \surj O^n$, the diagram
\begin{equation} \label{eq:retract_r0n}
\begin{tikzpicture}[baseline={([yshift=-.5ex]current bounding box.center)}]
	\node[scale=1.25] (0) at (0,1.5) {$\bord{}{}\Delta^n$};
	\node[scale=1.25] (1) at (2.5,0) {$O^n$};
	\node[scale=1.25] (2) at (0,0) {$\extrtil{0}{n}$};
	\node[scale=1.25] (3) at (2.5,1.5) {$\Delta^n$};
	\draw[1cinc] (0) to (3);
	\draw[1cincl] (0) to (2);
	\draw[1csurj] (2) to node[auto,swap] {$\tilde{r}_{0,n}$} (1);
	\draw[1csurj] (3) to node[auto] {$a_n$} (1);
\end{tikzpicture}
\end{equation}
commutes; here $\bord{}{}\Delta^n \incl \extrtil{0}{n}$ is the isomorphic inclusion into $\bord{}{}\extrtil{0}{n}$. 
\end{cons}

\subsection{Combinatorial homotopy groups} \label{sec:combinatorial}

In this section, we define a notion of fibration of diagrammatic sets, by analogy with Kan fibrations of simplicial sets, and then define combinatorial homotopy groups for a Kan (fibrant) diagrammatic set, in such a way that 
\begin{enumerate}
	\item $\delres{-}$ sends fibrations of diagrammatic sets to fibrations of simplicial sets, and in particular Kan diagrammatic sets to Kan complexes, and
	\item the combinatorial homotopy groups of a Kan diagrammatic set $X$ are isomorphic to the simplicial combinatorial homotopy groups of $\delres{X}$. 
\end{enumerate}
This will allow us to define a weak equivalence of Kan diagrammatic sets as a morphism which induces isomorphisms of homotopy groups, and know that $\delres{-}$ will send it to a weak equivalence of Kan complexes. 

\begin{dfn}
Let $f: X \to Y$ be a morphism of diagrammatic sets. We say that $f$ is a \emph{fibration} if it satisfies the right lifting property with respect to all horns $\Lambda \incl U$, that is, for all commutative squares
\begin{equation*}
\begin{tikzpicture}[baseline={([yshift=-.5ex]current bounding box.center)}]
	\node[scale=1.25] (0) at (0,1.5) {$\Lambda$};
	\node[scale=1.25] (1) at (1.5,1.5) {$X$};
	\node[scale=1.25] (2) at (0,0) {$U$};
	\node[scale=1.25] (3) at (1.5,0) {$Y$};
	\draw[1c] (0) to (1);
	\draw[1c] (2) to (3);
	\draw[1cinc] (0) to (2);
	\draw[1c] (1) to node[auto] {$f$} (3);
	\draw[1cdash] (2) to (1);
\end{tikzpicture}
\end{equation*}
in $\dgmset$, there exists a morphism $U \to X$ as pictured that makes both triangles commute.

A diagrammatic set $X$ is \emph{Kan} if the unique morphism $X \to 1$ is a fibration.
\end{dfn}

\begin{prop} \label{prop:kan_allequiv}
Let $X$ be a Kan diagrammatic set. Then $X$ is representable and all cells of $X$ are equivalences.
\end{prop}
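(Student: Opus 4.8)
The plan is to prove the two claims in turn, deducing both from the lifting property against horns. Recall that a \emph{composition horn} $\Lambda \incl W$ is a special case of a horn: its image is $\bord{}{\alpha}W$ for some $\alpha$, the complementary boundary $\bord{}{-\alpha}W$ is an atom, and $\Lambda$ itself is then an $n$-molecule with spherical boundary. Similarly, each division horn $\Lambda^W_{-\alpha} \incl W$ for a ternary atom $W$ is a horn in the sense of the definition of fibration, since its image is $W$ minus the greatest element and a single $n$-dimensional element. So every composition horn $\lambda : \Lambda \to X$ and every division horn for a cell in $X$, being maps into the Kan object $X$, admit fillers by the right lifting property against $X \to 1$. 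This immediately gives that \emph{every} horn of \emph{every} $(n+1)$-atom in $X$ has a filler --- what remains is to see that these fillers can be taken to be equivalences.

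First I would establish that every cell of $X$ is an equivalence. Let $A$ be the set of all cells of $X$; I claim $A \subseteq \mathcal{F}(A)$, where $\mathcal{F}$ is the functor of Remark \ref{rmk:coinduction}, whence $A \subseteq \equi{}{X}$ by the coinductive proof principle. Indeed, take any $x : U \to X$ and any division horn $\Lambda \incl W, \lambda : \Lambda \to X$ for $x$ at $\bord{}{\alpha}U$. Since $\Lambda \incl W$ is a horn of the $(n+1)$-atom $W$, the Kan property produces a filler $h : W \to X$ with $\restr{h}{\Lambda} = \lambda$; and $h \in A$ trivially. Thus every division horn for $x$ has a filler lying in $A$, i.e. $x \in \mathcal{F}(A)$. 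By coinduction $A \subseteq \equi{}{X}$, so every cell of $X$ is an equivalence.

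Representability is then immediate: given a composition horn $\Lambda \incl W, x : \Lambda \to X$, the Kan property gives a filler $c(x) : W \to X$, which is an $(n+1)$-cell of $X$, hence an $(n+1)$-equivalence by the previous paragraph. This is exactly the condition defining a representable diagrammatic set, so $X$ is representable.

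I do not expect a serious obstacle here: the whole argument is a direct unwinding of definitions plus one application of the coinductive proof principle already spelled out in Remark \ref{rmk:coinduction}. The only point requiring a moment's care is the observation that composition horns and division horns (for ternary atoms) genuinely \emph{are} horns in the sense used to define fibrations --- which follows at once from the shape descriptions in the definitions of horn, composition horn, and division horn --- so that the Kan lifting property applies to them. Everything else, including the verification that $A \subseteq \mathcal{F}(A)$, is formal.
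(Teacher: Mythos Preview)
Your proof is correct and follows essentially the same approach as the paper: you use the Kan lifting property to show that the set of all cells is an $\mathcal{F}$-coalgebra, hence contained in $\equi{}{X}$ by coinduction, and then observe that composition-horn fillers are automatically equivalences. The paper's proof is slightly more terse but structurally identical.
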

\begin{proof}
Since all division horns in $X$ have fillers, the set $S$ of all cells of $X$ satisfies $S \subseteq \mathcal{F}(S)$, where $\mathcal{F}$ is the functor of Remark \ref{rmk:coinduction}, so by coinduction $S = \equi{}{X}$. Moreover, all composition horns in $X$ have fillers, which are necessarily equivalences, so $X$ is representable.
\end{proof}

\begin{remark}
It follows that, for two parallel spherical $n$\nbd diagrams $x, y$ in a Kan diagrammatic set, we have $x \simeq y$ if and only if there is a cell $e: x \celto y$.
\end{remark}

Recall that a Kan fibration is a map of simplicial sets with the right lifting property with respect to all horns $\Lambda_k^n \incl \Delta^n$, where the image of $\Lambda_k^n$ is $\Delta^n$ minus its $n$\nbd simplex and the $k$\nbd th face, for $0 \leq k \leq n$. These correspond biunivocally to the horns of $\Delta^n$ seen as an $n$\nbd atom; moreover, any morphism $\Delta^n \to X$ or $\Lambda_k^n \to X$ factors through $\imath_\Delta \delres{X} \to X$. Thus we have the following.

\begin{prop} 
Let $f: X \to Y$ be a fibration of diagrammatic sets. Then the map $\delres{f}: \delres{X} \to \delres{Y}$ is a Kan fibration.
\end{prop}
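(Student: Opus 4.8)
The plan is to deduce this formally from the adjunction $\imath_\Delta \dashv \delres{-}$, by transposing lifting problems across it, together with the identification of $\imath_\Delta$ applied to a simplicial horn with a diagrammatic horn. First I would recall that $\delres{f}$ is a Kan fibration if and only if it has the right lifting property with respect to every horn inclusion $\Lambda^n_k \incl \Delta^n$ with $n \geq 1$ and $0 \leq k \leq n$ (see \cite{goerss2009simplicial}), so it suffices to solve an arbitrary such lifting problem. Given morphisms $u \colon \Lambda^n_k \to \delres{X}$ and $v \colon \Delta^n \to \delres{Y}$ of simplicial sets with $\delres{f} \circ u = v \circ (\Lambda^n_k \incl \Delta^n)$, I would transpose across the adjunction to obtain $\bar u \colon \imath_\Delta \Lambda^n_k \to X$ and $\bar v \colon \imath_\Delta \Delta^n \to Y$ in $\dgmset$ with $f \circ \bar u = \bar v \circ \imath_\Delta(\Lambda^n_k \incl \Delta^n)$.

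The next step, and the one I would spell out carefully, is to recognise this transposed problem as a horn lifting problem for $f$. Since $\imath_\Delta$ is the left Kan extension of the inclusion $\deltacat \incl \atom \incl \dgmset$ along the Yoneda embedding, it restricts on representables to that inclusion; hence $\imath_\Delta \Delta^n$ is the atom $\Delta^n$ viewed as a diagrammatic set. Being a left adjoint, $\imath_\Delta$ preserves the colimit presentation of $\Lambda^n_k$ by its non-degenerate faces, so by Proposition \ref{prop:pushouts_exist} the diagrammatic set $\imath_\Delta \Lambda^n_k$ is the sub-directed-complex $\Lambda \subseteq \Delta^n$ obtained by deleting the greatest element and the $k$-th $(n-1)$-dimensional element, and $\imath_\Delta(\Lambda^n_k \incl \Delta^n)$ is the horn inclusion $\Lambda \incl \Delta^n$; this is precisely the biunivocal correspondence noted just before the statement. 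Thus $(\bar u, \bar v)$ is a lifting problem for $f$ against the horn $\Lambda \incl \Delta^n$.

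Since $f$ is a fibration of diagrammatic sets, this problem has a solution $h \colon \Delta^n \to X$; transposing $h$ back across the adjunction and invoking naturality of the transposition bijections shows that the resulting $\bar h \colon \Delta^n \to \delres{X}$ solves the original simplicial lifting problem. Hence $\delres{f}$ has the right lifting property against all $\Lambda^n_k \incl \Delta^n$ and is a Kan fibration. Apart from the identification of $\imath_\Delta$ on horns everything is formal, and even that identification is essentially handed to us by the remarks preceding the statement, so I do not anticipate a genuine obstacle here.
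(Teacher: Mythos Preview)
Your proof is correct and is essentially the same argument as the paper's, just spelled out in more detail: the paper simply observes that simplicial horns $\Lambda^n_k \incl \Delta^n$ correspond biunivocally to horns of the atom $\Delta^n$, and that morphisms $\Delta^n \to X$ or $\Lambda^n_k \to X$ factor through $\imath_\Delta \delres{X} \to X$, which is the same content as your adjunction transposition. The only cosmetic difference is that you phrase it via the unit/counit of $\imath_\Delta \dashv \delres{-}$, whereas the paper phrases it as a factorisation; both amount to the same formal argument.
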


\begin{cor}
Let $X$ be a Kan diagrammatic set. Then $\delres{X}$ is a Kan complex.
\end{cor}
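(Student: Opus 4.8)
The statement follows immediately from the preceding proposition applied to the unique morphism $X \to 1$. The plan is simply to note that being a Kan complex means exactly that the unique map of simplicial sets $\delres{X} \to \delres{1}$ is a Kan fibration, and that $\delres{1}$ is the terminal simplicial set: indeed $\delres{1}(\Delta^n) = 1(\Delta^n)$, and $1$ is the terminal presheaf on $\atom$, so $\delres{1}$ is the terminal presheaf on $\deltacat$.

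Concretely, I would argue: since $X$ is Kan, the morphism $X \to 1$ is a fibration of diagrammatic sets by definition. By the previous proposition, $\delres{(X \to 1)}: \delres{X} \to \delres{1}$ is a Kan fibration. Because the restriction functor $\delres{-}$ preserves the terminal object (it is a right adjoint, or one can check it pointwise as above), $\delres{1}$ is the terminal simplicial set $\Delta^0$-free point, and hence $\delres{X} \to \delres{1}$ being a Kan fibration says precisely that $\delres{X}$ is a Kan complex.

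There is essentially no obstacle here — the only thing to verify is that $\delres{-}$ sends the terminal object of $\dgmset$ to the terminal object of $\sset$, which is routine since $\delres{-}$ has a left adjoint $\imath_\Delta$, hence preserves all limits, in particular the terminal object. So the proof is a one-line deduction; the work was already done in establishing that $\delres{-}$ carries fibrations to Kan fibrations, which in turn rested on the correspondence between horns of $\Delta^n$ as an atom and the simplicial horns $\Lambda^n_k \incl \Delta^n$, together with the fact that every morphism out of $\Delta^n$ or $\Lambda^n_k$ into a diagrammatic set $X$ factors through $\imath_\Delta \delres{X} \to X$.

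\begin{proof}
Apply the previous proposition to the unique morphism $X \to 1$, which is a fibration precisely because $X$ is Kan. Since $\delres{-}$ is a right adjoint, it preserves the terminal object, so $\delres{1}$ is the terminal simplicial set; thus $\delres{X} \to \delres{1}$ being a Kan fibration means exactly that $\delres{X}$ is a Kan complex.
\end{proof}
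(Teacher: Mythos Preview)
Your proof is correct and is exactly the intended argument: the paper states this as an immediate corollary of the preceding proposition, with no further proof given, so applying that proposition to $X \to 1$ and noting $\delres{1}$ is terminal is precisely what is meant.
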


\begin{remark} \label{rmk:modelstruct}
We expect fibrations of diagrammatic sets to be those of a cofibrantly generated model structure on $\dgmset$, Quillen equivalent via $\imath_\Delta \dashv \delres{-}$ to the classical model structure on $\sset$: as generating cofibrations one can take the inclusions $\{\bord{}{}U \incl U\}$, and as generating trivial cofibrations the horns $\{\Lambda \incl U\}$, for all atoms $U$. We note that, unlike the situation in $\sset$, only the diagrammatic sets with the Eilenberg-Zilber property would be cofibrant. We postpone this development to future work.
\end{remark}

\begin{remark} \label{rmk:stratified}
A \emph{stratified simplicial set} is a simplicial set $K$ together with a set of \emph{thin} simplices which contains no 0-simplices, and contains all degenerate simplices. Stratified simplicial sets are the underlying structure of the complicial model of weak $\omega$\nbd categories \cite{verity2008weak}: a complicial set is a stratified simplicial set with the property that certain horns have thin fillers, conditional on some of their faces being thin. Kan complexes, as the ``$\omega$\nbd groupoids'' of this model, correspond to the case of all $n$\nbd simplices for $n > 0$ being marked thin.

By our results in Section \ref{sec:closure}, if $X$ is a representable diagrammatic set, then $\delres{X}$ admits a structure of stratified simplicial set whose thin simplices are the equivalences of $X$. If $X$ is Kan, by Proposition \ref{prop:kan_allequiv}, this corresponds to marking all $n$\nbd simplices of $\delres{X}$ as thin: hence, from an $\omega$\nbd groupoid in our model we retrieve an $\omega$\nbd groupoid in the complicial model. We conjecture that this extends to representable diagrammatic sets: that is, if $X$ is a representable diagrammatic set, then $\delres{X}$ with the equivalences of $X$ as thin simplices is a complicial set.
\end{remark}

Next, we define the combinatorial homotopy groups of a Kan diagrammatic set. 

\begin{dfn}
Let $X$ be a Kan diagrammatic set. We define $\pin{0}{}{X}$ to be the set of $\simeq$-equivalence classes of 0-cells $v: 1 \to X$.

Let $v: 1 \to X$ be a 0-cell of $X$. We define $\pin{n}{}{X,v}$ to be the set of $\simeq$-equivalence classes of $n$\nbd cells $x: O^n \to X$ such that
\begin{equation*}
\begin{tikzpicture}[baseline={([yshift=-.5ex]current bounding box.center)}]
	\node[scale=1.25] (0) at (0,1.5) {$\bord{}{}O^n$};
	\node[scale=1.25] (1) at (1.5,1.5) {$1$};
	\node[scale=1.25] (2) at (0,0) {$O^n$};
	\node[scale=1.25] (3) at (1.5,0) {$X$};
	\draw[1c] (0) to node[auto] {$!$} (1);
	\draw[1c] (2) to node[auto,swap] {$x$} (3);
	\draw[1cinc] (0) to (2);
	\draw[1c] (1) to node[auto] {$v$} (3);
\end{tikzpicture}
\end{equation*}
commutes. This becomes a group with the following structure.
\begin{itemize}
	\item Let $[x], [y]$ be two equivalence classes with representatives $x, y$. Then $x \cp{n-1} y$ is a spherical $n$\nbd diagram, which by representability has a weak composite $\compos{x \cp{n-1} y}$. We define $[x] * [y] := [\compos{x \cp{n-1} y}]$.
	\item The unit is the equivalence class $[!^*v]$, where $!$ is the unique morphism $O^n \to 1$. 
	\item Let $[x]$ be an equivalence class with representative $x$. Because every cell of $X$ is an equivalence, $x$ has a weak inverse $x^*$, and we define $\invrs{[x]} := [x^*]$.
\end{itemize}

If $f: X \to Y$ is a morphism of Kan diagrammatic sets and $v: 1 \to X$ a 0-cell of $X$, we also define functions
\begin{equation*}
	\pin{0}{}{f}: \pin{0}{}{X} \to \pin{0}{}{Y}, \quad \quad \pin{n}{}{f}: \pin{n}{}{X,v} \to \pin{n}{}{Y,f(v)}
\end{equation*}
for $n \geq 1$, by $[x] \mapsto [f(x)]$. These are all well-defined since morphisms respect the equivalence relation.
\end{dfn}

\begin{prop}
For all $n \geq 1$, $\pin{n}{}{X,v}$ is well-defined as a group. If $f: X \to Y$ is a morphism of Kan diagrammatic sets, then $\pin{n}{}{f}$ is a homomorphism of groups.
\end{prop}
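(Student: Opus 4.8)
The plan is to reduce every assertion to the structural results of Section \ref{sec:closure} and Section \ref{sec:morphrep}. Recall that a Kan diagrammatic set $X$ is representable with all cells equivalences (Proposition \ref{prop:kan_allequiv}), so we may freely use that $\simeq$ is an equivalence relation on spherical diagrams, that weak composites are unique up to $\simeq$, that equivalences contain all degenerate cells and are closed under weak composition (Theorem \ref{thm:equiv_closure}), that $\simeq$ respects the spherical-subdiagram relation (Proposition \ref{prop:equi-subst}), and that every cell is weakly invertible (Lemma \ref{lem:equi_in_invrt}). First I would check that $*$ is well-defined: the shape $O^n \cp{n-1} O^n$ is a molecule with spherical boundary, so $x \cp{n-1} y$ has a weak composite; since every boundary $\bord{k}{\alpha}x, \bord{k}{\alpha}y$ for $k < n$ factors through $1$, so does every boundary of $\compos{x \cp{n-1} y}$, and its class lies in $\pin{n}{}{X,v}$, independently of the chosen weak composite by uniqueness up to $\simeq$. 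For independence of representatives, from equivalences $e: x \celto x'$ and $e': y \celto y'$ the spherical $(n+1)$-diagram $e \cp{n-1} e'$ consists of equivalences, so by Theorem \ref{thm:equiv_closure} its weak composite is an equivalence $x \cp{n-1} y \celto x' \cp{n-1} y'$, hence $\compos{x \cp{n-1} y} \simeq x \cp{n-1} y \simeq x' \cp{n-1} y' \simeq \compos{x' \cp{n-1} y'}$.

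For the group axioms I would repeatedly use that $[w] = [\compos{z}]$ whenever $w \simeq z$. \emph{Associativity}: the ternary shape $O^n \cp{n-1} O^n \cp{n-1} O^n$ is again a molecule with spherical boundary, and by Proposition \ref{prop:equi-subst} both $\compos{x \cp{n-1} y} \cp{n-1} z$ and $x \cp{n-1} \compos{y \cp{n-1} z}$ are $\simeq$-equivalent to $x \cp{n-1} y \cp{n-1} z$, so $([x]*[y])*[z]$ and $[x]*([y]*[z])$ both equal $[\compos{x \cp{n-1} y \cp{n-1} z}]$. \emph{Units}: writing $u := \bord{}{-}x = {!}^*v$ for the $(n-1)$-dimensional restriction, one checks that $p_{O^{n-1}}^*u = {!}^*v$ (both are the unique map $O^n \to 1$ followed by $v$); since ${!}^*v$ is a degenerate, hence equivalence, filler of its own composition horn, it is a weak composite of $p_{O^{n-1}}^*u$, i.e. a unit on $u$, and the unit laws established at the end of Section \ref{sec:closure} give ${!}^*v \cp{n-1} x \simeq x \simeq x \cp{n-1} {!}^*v$, so $[{!}^*v]$ is a two-sided identity. \emph{Inverses}: as every cell of $X$ is an equivalence it is weakly invertible, so $x$ has a weak inverse $x^*$, necessarily of shape $O^n$ with boundaries again factoring through $1$, satisfying $x \cp{n-1} x^* \simeq p_{O^{n-1}}^*u = {!}^*v \simeq x^* \cp{n-1} x$; hence $[x]*[x^*] = [{!}^*v] = [x^*]*[x]$.

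For $f: X \to Y$: since $X$ and $Y$ are representable, $f$ is a morphism of RDSs, so it maps equivalences to equivalences by Theorem \ref{thm:equiv_preserve} (this re-justifies that $\pin{n}{}{f}$ is well-defined) and maps weak composites to weak composites by its Corollary. As $f$ is a presheaf morphism it commutes with the face operations and with ${!}$ and $v$, so $f(x \cp{n-1} y) = f(x) \cp{n-1} f(y)$ and $f({!}^*v) = {!}^*f(v)$; therefore $f(\compos{x \cp{n-1} y})$ is a weak composite of $f(x) \cp{n-1} f(y)$, giving $\pin{n}{}{f}([x]*[y]) = [\compos{f(x) \cp{n-1} f(y)}] = \pin{n}{}{f}([x]) * \pin{n}{}{f}([y])$, and $\pin{n}{}{f}$ carries $[{!}^*v]$ to the identity of $\pin{n}{}{Y,f(v)}$.

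The only delicate point is the bookkeeping around the constant cell ${!}^*v$: it must be recognised simultaneously as a degenerate $n$-cell, as a weak composite of itself, and as a unit on its own $(n-1)$-boundary $u$, and one must verify the boundary conditions guaranteeing that the constructed cells ($\compos{x \cp{n-1} y}$, the weak inverse $x^*$, and so on) land back in $\pin{n}{}{X,v}$. Beyond this, every step is a formal consequence of the closure and preservation theorems already proved.
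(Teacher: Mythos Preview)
Your overall strategy matches the paper's, and most steps are correct, but there is one genuine slip in the independence-of-representatives argument. You claim that $e \cp{n-1} e'$ is a spherical $(n+1)$-diagram and then take its weak composite. However, $e$ and $e'$ are $(n+1)$-cells of shape $O^{n+1}$, so $e \cp{n-1} e'$ has shape $O^{n+1} \cp{n-1} O^{n+1}$; and as noted in Construction \ref{cons:shell_kcomp}, the molecule $O^{m} \cp{k} O^{m}$ has spherical boundary only when $k = m-1$. Here $k = n-1 < n = m-1$, so the shape is \emph{not} spherical, and weak composites in the sense of this paper are only defined for spherical diagrams. The fix is immediate and is precisely what the paper does: apply Proposition \ref{prop:equi-subst} twice, substituting $y'$ for $y$ and then $x'$ for $x$ inside $x \cp{n-1} y$ (each $O^n$ factor is a spherical submolecule of $O^n \cp{n-1} O^n$), to obtain $x \cp{n-1} y \simeq x' \cp{n-1} y'$.

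Your treatment of associativity, inverses, and the homomorphism property agrees with the paper's. Your unit argument differs from the paper's: rather than constructing the unitor cells explicitly via $p_1, p_2: \compglob{n+1} \surj O^{n+1}$, you recognise ${!}^*v$ as a unit on $u = \bord{}{-}x$ and then invoke the abstract characterisation of units at the end of Section \ref{sec:closure}. This is a perfectly valid alternative and arguably cleaner, though the paper's explicit construction has the advantage of producing concrete cells that are reused later (in the definition of the $c_{n+1}$ maps and in computing $\alpha_n$).
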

\begin{proof}
There are several things to check.
\begin{enumerate}
	\item \emph{$[x]*[y]$ is independent of the representatives $x, y$.} Suppose $x \simeq x'$, $y \simeq y'$, and let $z$, $z'$ be weak composites of $x\cp{n-1}y$ and $x'\cp{n-1}y'$, respectively. By Proposition \ref{prop:equi-subst}, we have 
	\begin{equation*}
		x \cp{n-1} y \simeq (x \cp{n-1} y)[y'/y][x'/x] = x' \cp{n-1} y',
	\end{equation*}
	hence $z \simeq z'$. 
	
	\item \emph{Multiplication is associative.} Using Proposition \ref{prop:equi-subst}, we have
	\begin{equation*}
		\compos{\compos{x \cp{n-1} y}\cp{n-1}z} \simeq x \cp{n-1} y \cp{n-1} z \simeq \compos{x \cp{n-1} \compos{y\cp{n-1}z}}.
	\end{equation*}
	
	\item \emph{$[!^*v]$ is a unit.} For all $x$, precomposing $p_{O^n}^*x: O^{n+1} \to X$ with $p_1: \compglob{n+1} \surj O^{n+1}$ yields a cell $l_x: x \celto\, !^*v\cp{n-1}x$, and precomposing with $p_2: \compglob{n+1} \surj O^{n+1}$ a cell $r_x: x \celto x \cp{n-1} !^*v$, exhibiting $[x] = [!^*v] * [x] = [x]*[!^*v]$.
	
	\item \emph{Each element has a two-sided inverse.} Since $x^*$ is a weak inverse of $x$, there are cells $e: \,!^*v \celto x \cp{n-1} x^* $ and $e': \,!^*v \celto x^* \cp{n-1} x$, exhibiting $\invrs{[x]}$ as an inverse of $[x]$.
\end{enumerate}
This completes the proof that $\pin{n}{}{X,v}$ is a group. 

If $[x]*[y] = [z]$, exhibited by $k: z \celto x \cp{n-1} y$, then $f(k): f(z) \celto f(x) \cp{n-1} f(y)$ exhibits $[f(x)]*[f(y)] = [f(z)]$. Moreover, $f(!^*v) = \,!^*f(v)$. This proves that $\pin{n}{}{f}$ is a homomorphism of groups.
\end{proof}

\begin{remark}
The groups $\pin{n}{}{X}$ are abelian for $n \geq 2$. While it does not seem too hard to show this directly, generalising the construction of braidings from Example \ref{exm:braiding}, it will also follow from the isomorphism between the homotopy groups of $X$ and those of the Kan complex $\delres{X}$, that we will soon prove.
\end{remark}

Next, we recall the definition of the combinatorial homotopy groups of a Kan complex. We adopt the opposite convention with respect to \cite[Section I.7]{goerss2009simplicial}, by having the ``relevant'' faces of simplices be the first few, as opposed to the last few, to avoid that they oscillate between input and output boundaries; the two conventions are easily determined to be equivalent. 

If $x: \Delta^n \to K$ is an $n$\nbd simplex in a simplicial set $K$, we write $d_k x := d^k;x$ and $\bord{}{}x = (d_0 x,d_1 x,\ldots,d_n x)$. 

\begin{dfn}
Let $K$ be a Kan complex. We define $\pin{0}{\Delta}{K}$ to be the set of equivalence classes of 0-simplices $v: \Delta^0 \to K$, where $v \sim w$ if there exists a 1-simplex $x$ with $\bord{}{}x = (w,v)$. 

Let $v: \Delta^0 \to K$ be a 0-simplex of $K$. We define $\pin{n}{\Delta}{K,v}$ to be the set of equivalence classes of $n$\nbd simplices $x: \Delta^n \to K$ such that
\begin{equation*}
\begin{tikzpicture}[baseline={([yshift=-.5ex]current bounding box.center)}]
	\node[scale=1.25] (0) at (0,1.5) {$\bord{}{}\Delta^n$};
	\node[scale=1.25] (1) at (1.5,1.5) {$\Delta^0$};
	\node[scale=1.25] (2) at (0,0) {$\Delta^n$};
	\node[scale=1.25] (3) at (1.5,0) {$K$};
	\draw[1c] (0) to node[auto] {$!$} (1);
	\draw[1c] (2) to node[auto,swap] {$x$} (3);
	\draw[1cinc] (0) to (2);
	\draw[1c] (1) to node[auto] {$v$} (3);
\end{tikzpicture}
\end{equation*}
commutes, where $x \sim y$ if there exists a simplicial homotopy from $x$ to $y$ relative to $\bord{}{}\Delta^n$, that is, a map $h: \Delta^0 \times \Delta^n \to K$ such that the diagrams
\begin{equation*}
\begin{tikzpicture}[baseline={([yshift=-.5ex]current bounding box.center)}]
	\node[scale=1.25] (0) at (0,1.5) {$\Delta^0 \times \bord{}{}\Delta^n$};
	\node[scale=1.25] (1) at (2.5,1.5) {$\Delta^0$};
	\node[scale=1.25] (2) at (0,0) {$\Delta^0 \times \Delta^n$};
	\node[scale=1.25] (3) at (2.5,0) {$K$,};
	\draw[1c] (0) to node[auto] {$!$} (1);
	\draw[1c] (2) to node[auto,swap] {$h$} (3);
	\draw[1cinc] (0) to (2);
	\draw[1c] (1) to node[auto] {$v$} (3);
\end{tikzpicture} \quad \quad \quad
\begin{tikzpicture}[baseline={([yshift=-.5ex]current bounding box.center)}]
	\node[scale=1.25] (0) at (0,1.5) {$\Delta^n$};
	\node[scale=1.25] (1) at (2.5,0) {$K$};
	\node[scale=1.25] (2) at (2.5,1.5) {$\Delta^1 \times \Delta^n$};
	\node[scale=1.25] (3) at (5,1.5) {$\Delta^n$};
	\draw[1cinc] (0) to node[auto] {$(d^1,\idcat{})$} (2);
	\draw[1cincl] (3) to node[auto,swap] {$(d^0,\idcat{})$} (2);
	\draw[1c] (0) to node[auto,swap] {$x$} (1);
	\draw[1c] (2) to node[auto] {$h$} (1);
	\draw[1c] (3) to node[auto] {$y$} (1);
\end{tikzpicture}
\end{equation*}
commute.  This becomes a group as follows.
\begin{itemize}
	\item Let $[x], [y]$ be two equivalence classes with representatives $x, y$. There is a well-defined horn $\Lambda^{n+1}_1 \to K$ equal to $y$ on $d^0$, to $x$ on $d^2$, and to $!^*v$ on $d^i$ for $i > 2$. This has a filler $h: \Delta^{n+1} \to K$, and we define $[x]*[y] := [d_1 h]$. 
	\item The unit is the equivalence class $[!^*v]$.
	\item Let $[x]$ be an equivalence class with representative $x$. There is a well-defined horn $\Lambda^{n+1}_0 \to K$ equal to $x$ on $d^2$ and to $!^*v$ on all other faces. This has a filler $h: \Delta^{n+1} \to K$, and we define $\invrs{[x]} := [d_0 h]$. 
\end{itemize}
If $f: K \to L$ is a map between Kan complexes, there are also functions
\begin{equation*}
	\pin{0}{\Delta}{f}: \pin{0}{\Delta}{K} \to \pin{0}{\Delta}{L}, \quad \quad \pin{n}{\Delta}{f}: \pin{n}{\Delta}{K,v} \to \pin{n}{\Delta}{L,f(v)}
\end{equation*}
for $n \geq 1$, defined by $[x] \mapsto [f(x)]$.
\end{dfn}

\begin{prop}\emph{\cite[Theorem I.7.2]{goerss2009simplicial}}
For all $n \geq 1$, $\pin{n}{\Delta}{X,v}$ is well-defined as a group, and abelian for $n \geq 2$. If $f: K \to L$ is a map between Kan complexes, then $\pin{n}{\Delta}{f}$ is a group homomorphism.
\end{prop}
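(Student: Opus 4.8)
This is a classical result in the homotopy theory of simplicial sets, and the plan is to follow the standard argument as in \cite[Section I.7]{goerss2009simplicial}, adjusted to our ``first faces'' convention. The first step is to check that the relation $\sim$ of ``simplicial homotopy relative to $\bord{}{}\Delta^n$'' is an equivalence relation on the set of $n$\nbd simplices with constant boundary at $v$: reflexivity comes from a degeneracy, while symmetry and transitivity are obtained by filling suitable horns of $\Delta^{n+1}$ whose faces are prescribed by the given homotopies and by degenerate simplices, using that $K$ is Kan. One then records the convenient characterisation: $[x] * [y] = [z]$ holds if and only if there exists an $(n+1)$\nbd simplex $w$ with $d_0 w = y$, $d_1 w = z$, $d_2 w = x$, and $d_i w = \, !^*v$ for $i > 2$; the existence of at least one such $w$ is the definition, and the content is that $[z]$ does not depend on the choice of $w$ nor on the representatives $x, y$.

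Next I would prove that characterisation is independent of choices. Given two fillers $w, w'$ of the defining horn, and given homotopies $x \sim x'$, $y \sim y'$, one assembles the relevant simplices into a horn of $\Delta^{n+2}$ — with most faces degenerate and the remaining ones given by $w$, $w'$, and the homotopies — and fills it; the two ``new'' faces of the filler then exhibit $d_1 w \sim d_1 w'$. This bookkeeping with the simplicial identities, keeping track of which faces are forced to be $!^*v$, is the part that requires the most care, and I expect it to be the main obstacle. Associativity is handled similarly: from fillers witnessing $[x]*[y]$, $([x]*[y])*[z]$, $[y]*[z]$ one builds and fills a horn of $\Delta^{n+2}$, whose last missing face witnesses $[x]*([y]*[z]) = ([x]*[y])*[z]$. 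The class $[!^*v]$ is a two-sided unit because $d_1$ of an appropriate degenerate $(n+1)$\nbd simplex realises the defining relation with one argument equal to $!^*v$; inverses come from filling the horn $\Lambda^{n+1}_0 \to K$ that is $x$ on $d^2$ and $!^*v$ elsewhere, and taking $[d_0 h]$.

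For commutativity when $n \geq 2$, I would use the Eckmann--Hilton-type argument: for $n \geq 2$ there is enough room to build an $(n+1)$\nbd simplex (again by a horn-filling argument, with sufficiently many degenerate faces) showing that the multiplication agrees with an ``interchanged'' multiplication and is unital on both sides, forcing it to be abelian. Finally, functoriality is immediate: a map $f: K \to L$ of simplicial sets preserves faces, degeneracies and products of representables, hence sends the constant maps $!^*v$ to $!^*f(v)$, sends relative homotopies to relative homotopies (so $\pin{n}{\Delta}{f}$ is well-defined on equivalence classes), and sends any $(n+1)$\nbd simplex $w$ witnessing $[x]*[y]=[z]$ to one witnessing $[f(x)]*[f(y)]=[f(z)]$, so $\pin{n}{\Delta}{f}$ is a group homomorphism; for $n = 0$ it is well-defined on $\sim$-classes by the same reasoning. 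Since the argument is entirely standard, in the paper it suffices to cite \cite[Theorem I.7.2]{goerss2009simplicial}.
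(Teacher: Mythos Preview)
Your proposal is correct and matches the paper's approach exactly: the paper gives no proof for this proposition and simply cites \cite[Theorem I.7.2]{goerss2009simplicial}, which is precisely what you conclude suffices at the end of your proposal.
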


The following is trivial.
\begin{prop} \label{prop:pi0_simplicial}
Let $X$ be a Kan diagrammatic set. Then $\pin{0}{}{X}$ and $\pin{0}{\Delta}{\delres{X}}$ are naturally isomorphic. 
\end{prop}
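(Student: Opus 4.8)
\textbf{Proof proposal for Proposition \ref{prop:pi0_simplicial}.} The statement asserts that for a Kan diagrammatic set $X$, the set $\pin{0}{}{X}$ of $\simeq$-equivalence classes of $0$-cells is naturally isomorphic to $\pin{0}{\Delta}{\delres{X}}$, the set of equivalence classes of $0$-simplices of the simplicial set $\delres{X}$. Since this is labelled ``trivial'', the plan is just to unwind the two definitions and observe they coincide on the nose.

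First I would note that, by Construction \ref{cons:ou} of the co-face maps, $\Delta^0 \incl \atom$ is identified with the atom $O^0 = 1$, so a $0$-simplex of $\delres{X}$ is literally the same data as a $0$-cell $v: 1 \to X$; in particular the underlying sets of $0$-cells of $X$ and $0$-simplices of $\delres{X}$ are canonically identified, and this identification is natural in $X$. Next I would compare the two equivalence relations. In $\delres{X}$, two $0$-simplices $v, w$ are identified when there is a $1$-simplex $x$ with $\bord{}{}x = (w, v)$, i.e. $d^0 x = w$ and $d^1 x = v$; by the identification of $\Delta^1$ with the $1$-globe $O^1$ (via $\Delta^1 = 1 \join 1$, whose $\bot$-extension is $1_\bot \tensor 1_\bot$) and the fact that under this identification $d^0$ picks out the output $0$-boundary and $d^1$ the input $0$-boundary, this says exactly that there is a $1$-cell $x: v \celto w$ in $X$. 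On the diagrammatic side, $v \simeq w$ in $\pin{0}{}{X}$ means there is a $1$-\emph{equivalence} $e: v \celto w$. So a priori the simplicial relation (existence of \emph{any} $1$-cell) looks coarser than the diagrammatic one (existence of an \emph{equivalence}), and I must reconcile them.

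The reconciliation is immediate from Proposition \ref{prop:kan_allequiv}: since $X$ is Kan, \emph{every} cell of $X$ is an equivalence, so the existence of a $1$-cell $v \celto w$ already entails the existence of a $1$-equivalence $v \celto w$. Hence the two relations on the common set of $0$-cells agree. (One should also remark, or note it is automatic, that both relations are equivalence relations — reflexivity via the degenerate/unit $1$-cell $!^*v$, symmetry and transitivity either from the general theory of $\simeq$ in a representable diagrammatic set or, simplicially, from the Kan condition on $\delres{X}$.) Therefore the identity map on $0$-cells descends to a bijection $\pin{0}{}{X} \xrightarrow{\sim} \pin{0}{\Delta}{\delres{X}}$, and since everything in sight is induced by the fixed identification $\Delta^0 \cong O^0$ and postcomposition with morphisms $X \to Y$, the bijection is natural in $X$.

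There is essentially no obstacle here: the only point requiring a moment's thought is that the seemingly weaker simplicial relation coincides with the diagrammatic one, and that is handled in one line by $\equi{}{X} = S$ for Kan $X$ (Proposition \ref{prop:kan_allequiv}). The remaining identifications — $\Delta^0$ with $O^0$, $\Delta^1$ with $O^1$, and the matching of $d^0, d^1$ with output and input boundaries — are the defining content of Construction \ref{cons:ou} and the preceding identification of iterated joins with Street's oriented simplices, so they may simply be cited.
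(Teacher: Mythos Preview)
Your proposal is correct and takes essentially the same approach as the paper, which simply observes that the $n$-cells of $X$ are in bijection with the $n$-simplices of $\delres{X}$ for $n=0,1$; you spell out the one subtlety the paper leaves implicit, namely that the two equivalence relations agree because in a Kan diagrammatic set every $1$-cell is already an equivalence (Proposition \ref{prop:kan_allequiv}). One small citation slip: the co-face maps $d^k$ are introduced in the construction at the start of Section \ref{sec:simplices}, not in Construction \ref{cons:ou}.
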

\begin{proof}
The $n$\nbd cells of $X$ are in bijection with the $n$\nbd simplices of $\delres{X}$ for $n = 0,1$. 
\end{proof}

Now, we connect the homotopy groups of a Kan diagrammatic set $X$ with those of the Kan complex $\delres{X}$ by a sequence of homomorphisms.

\begin{cons}
Let $X$ be a Kan diagrammatic set, and $v: 1 \to X$; we still write $v$ for $\delres{v}: \Delta^0 \equiv \delres{1} \to \delres{X}$. For $n \geq 1$, we define a function
\begin{align*}
	\alpha_n: \pin{n}{}{X,v} & \to \pin{n}{\Delta}{\delres{X},v}, \\
	[x] & \mapsto [a_n^*x],
\end{align*}
where $a_n: \Delta^n \surj O^n$ is the map of Definition \ref{dfn:an_maps}. 

First of all, $\alpha_n$ is well-defined, that is, independent of the representative of $[x]$: if $x \simeq y$, that is, there exists an $(n+1)$\nbd cell $h: x \celto y$, then $a_{n+1}^*h$ is an $(n+1)$\nbd simplex of $\delres{X}$ with
\begin{equation*}
	\bord{}{}(a_{n+1}^*h) = (a_n^*y, a_n^*x, !^*v, \ldots, !^*v)
\end{equation*}
by the commutativity of (\ref{eq:an_faces}). This exhibits 
\begin{equation*}
	[a_n^*x] = [!^*v]*[a_n^*y] = [a_n^*y]
\end{equation*}
in $\pin{n}{\Delta}{\delres{X},v}$.

Moreover, $\alpha_n$ is a homomorphism of groups. We have $!^*v = a_n^*(!^*v): \Delta^n \to X$, so $\alpha_n$ preserves the unit. If $k: z \celto x \cp{n-1} y$ is an $(n+1)$\nbd cell of shape $\compglob{n+1}$ exhibiting $[x]*[y] = [z]$, then $c_{n+1}^*k$ is an $(n+1)$\nbd simplex with
\begin{equation*}
	\bord{}{}(c_{n+1}^*k) = (a_n^*y, a_n^*z, a_n^*x, !^*v, \ldots, !^*v)
\end{equation*}
by the commutativity of (\ref{eq:cn_faces}). This exhibits $\alpha_n[x]*\alpha_n[y] = \alpha_n[z]$ in $\pin{n}{\Delta}{\delres{X},v}$.

Finally, the $\alpha_n$ are natural in the sense that, if $f: X \to Y$ is a morphism of Kan diagrammatic sets, then the square
\begin{equation} \label{eq:naturality_alphan}
\begin{tikzpicture}[baseline={([yshift=-.5ex]current bounding box.center)}]
	\node[scale=1.25] (0) at (0,1.5) {$\pin{n}{}{X,v}$};
	\node[scale=1.25] (1) at (4,1.5) {$\pin{n}{}{Y,f(v)}$};
	\node[scale=1.25] (2) at (0,0) {$\pin{n}{\Delta}{\delres{X},v}$};
	\node[scale=1.25] (3) at (4,0) {$\pin{n}{\Delta}{\delres{Y},\delres{f}(v)}$};
	\draw[1c] (0) to node[auto] {$\pin{n}{}{f}$} (1);
	\draw[1c] (2) to node[auto,swap] {$\pin{n}{\Delta}{\delres{f}}$} (3);
	\draw[1c] (0) to node[auto,swap] {$\alpha_n$} (2);
	\draw[1c] (1) to node[auto] {$\alpha_n$} (3);
\end{tikzpicture}
\end{equation}
commutes in the category of groups and homomorphisms.
\end{cons}

\begin{thm} \label{thm:homotopy_group_iso}
For all $n \geq 1$, $\alpha_n: \pin{n}{}{X,v} \to \pin{n}{\Delta}{\delres{X},v}$ is an isomorphism.
\end{thm}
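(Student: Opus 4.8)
The plan is to prove that $\alpha_n$ is both surjective and injective, by constructing explicit inverses using the molecules $\extrtil{k}{n}$ and retractions $\tilde{r}_{k,n}$ of Construction \ref{cons:delta_extract}, which are designed precisely to ``extract'' a globe from a simplex up to a chain of degeneracies.

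\textbf{Surjectivity.} Given a class $[s] \in \pin{n}{\Delta}{\delres{X},v}$, represented by an $n$-simplex $s: \Delta^n \to X$ with $\bord{}{}s = \,!^*v$, I want to produce an $n$-cell of $X$ mapping to it. The idea is to use $\extrtil{0}{n}$, which contains $\Delta^n$ as a spherical submolecule with the same boundary, and also contains $O^n$ as a spherical submolecule with $\tilde{r}_{0,n}: \extrtil{0}{n} \surj O^n$ restricting on $\Delta^n$ to $a_n$ by the commutativity of (\ref{eq:retract_r0n}). Since $X$ is Kan, hence representable with all cells equivalences (Proposition \ref{prop:kan_allequiv}), one extends $s$ along $\Delta^n \subsph \extrtil{0}{n}$: the molecule $\extrtil{0}{n}$ is built from $\Delta^n$ by composing with atoms of the form $O^k(\Delta^n) \celto (\ldots)$ and similar, and the complementary atoms can be filled one by one using the Kan property (all the relevant inclusions are horns, or can be reduced to horns after noting boundaries are fixed). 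Once we have $\tilde{s}: \extrtil{0}{n} \to X$ extending $s$, we set $x := \restr{\tilde{s}}{O^n}$. Then $\tilde{s}$ provides a spherical diagram witnessing $a_n^*x \simeq s$ (both are restrictions of $\tilde{s}$, and all cells of $\tilde{s}$ are equivalences, so by Theorem \ref{thm:equiv_closure} and Proposition \ref{prop:equi-subst} their ``difference'' is an equivalence), hence $\alpha_n[x] = [s]$. One must also check $x$ has the right degenerate boundary, which follows since $\bord{}{}s = \,!^*v$ and $\bord{}{}\extr{k}{n}$ matches $\bord{}{}O^k(\Delta^n)$.

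\textbf{Injectivity.} Suppose $\alpha_n[x] = \alpha_n[y]$, i.e.\ there is a simplicial homotopy $h: \Delta^0 \times \Delta^n \to \delres{X}$ from $a_n^*x$ to $a_n^*y$ rel $\bord{}{}\Delta^n$; equivalently (since $\delres{X}$ is Kan) an $(n+1)$-simplex $k: \Delta^{n+1} \to X$ with $\bord{}{}k = (a_n^*y, a_n^*x, !^*v,\ldots,!^*v)$. I want an $(n+1)$-cell $h': x \celto y$ in $X$. The strategy mirrors surjectivity one dimension up: extend $k$ along $\Delta^{n+1} \subsph \extrtil{0}{n+1}$ to $\tilde{k}: \extrtil{0}{n+1} \to X$ using the Kan property, then observe that $\extrtil{0}{n+1}$ contains $O^{n+1}$ with $\tilde{r}_{0,n+1}$ restricting to $a_{n+1}$ on $\Delta^{n+1}$. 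Restricting $\tilde{k}$ to $O^{n+1}$ gives a candidate $(n+1)$-cell whose boundary faces are obtained from $a_n^*x$, $a_n^*y$ and degeneracies; one then checks using (\ref{eq:an_faces}), the explicit description of $a_n$ in Remark \ref{rmk:an_explicit}, and the fact that $\bord{}{}k$ has degenerate tail, that this cell is parallel to an equivalence exhibiting $x \simeq y$ (composing with unitors and the degenerate cells coming from the collapsed simplicial faces). Thus $[x] = [y]$ in $\pin{n}{}{X,v}$.

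\textbf{Main obstacle.} The crux is the extension step: verifying that $\Delta^n \subsph \extrtil{0}{n}$ (resp.\ $\Delta^{n+1} \subsph \extrtil{0}{n+1}$) can be filled using only the Kan horn-filling property, with control over which cells get identified up to equivalence. This requires unwinding the inductive colimit definition of $\extr{k}{n}$ and $\extrtil{k}{n}$: each $\extr{k+1}{n}$ is a composite of three pieces along $(n-1+k)$-boundaries, and the ``new'' maximal cells beyond $\Delta^n$ must be seen to be fillable horns relative to what is already defined. The bookkeeping of which submolecule inclusions are horns — and a careful argument that the resulting extension is unique up to equivalence so that $\alpha_n$ is genuinely a two-sided inverse and not just a surjection/injection pair — is where the real work lies; the commuting squares (\ref{eq:an_faces}), (\ref{eq:cn_faces}), (\ref{eq:retract_rkn}) and (\ref{eq:retract_r0n}) are exactly the compatibilities needed to make the boundary computations go through, so the proof is mostly a matter of assembling them in the right order together with the closure properties of equivalences from Section \ref{sec:closure}.
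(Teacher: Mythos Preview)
Your proposal has the right intuition --- use the molecules $\extrtil{k}{n}$ and Kan horn-filling to move between simplices and globes --- but both halves diverge from the paper's proof in ways that create genuine gaps.

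\textbf{Surjectivity.} You propose to extend $s$ along the inclusion $\Delta^n \subsph \extrtil{0}{n}$ of $n$\nbd molecules, filling complementary atoms one at a time, then restrict to $O^n$. The paper instead takes a single horn of the $(n+1)$\nbd atom $\Delta^n \celto \extrtil{0}{n}$, with only the greatest element of $O^n$ missing; one Kan fill produces $h$ with $\tilde{x} := \restr{\bord{}{+}h}{O^n}$. This sidesteps the ``bookkeeping'' you flag as the main obstacle entirely. More seriously, once you have $x$ with $s \simeq a_n^*x$ in the diagrammatic sense, you still need $[s] = [a_n^*x]$ in $\pin{n}{\Delta}{\delres{X},v}$, which requires an explicit $(n+1)$\nbd \emph{simplex} with the correct faces. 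The paper builds this by forming the spherical diagram $h \cp{n} p^*\tilde{x}$ (using the map $p: (\extrtil{0}{n} \celto \Delta^n) \surj O^n$ assembled from $\tilde{r}_{0,n}$ and $a_n$), taking a weak composite $k$ of shape $O(\Delta^n)$, and then pulling back along $s^0_\prec: \Delta^{n+1} \surj O(\Delta^n)$. Your sketch does not address this translation from diagrammatic equivalence to simplicial homotopy.

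\textbf{Injectivity.} Here the gap is sharper. You propose to extend the witnessing $(n+1)$\nbd simplex $k$ along $\Delta^{n+1} \subsph \extrtil{0}{n+1}$ and restrict to $O^{n+1}$. But the boundary of that restricted cell is whatever the extension happens to put there; there is no mechanism forcing it to be $x$ and $y$ rather than some cells merely equivalent to them. The paper avoids this by first reducing to triviality of the kernel (using that $\alpha_n$ is a homomorphism, so it suffices to show $[a_n^*x] = [!^*v]$ implies $[x] = [!^*v]$), and then performing \emph{two} horn fills. The first, in $\Delta^{n+1} \celto \extr{0}{n+1}$, extracts a cell $h': \,!^*v \celto a_n^*x$ of shape $O(\Delta^n)$. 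The second uses a purpose-built molecule $V$ (a colimit of $\Delta^n \celto \extrtil{0}{n}$, $O^{n+1}$, and $\extrtil{0}{n} \celto \Delta^n$ glued along $O^n$) and the atom $W := O(\Delta^n) \celto V$: the horn $\Lambda' \incl W$ is filled with $h'$ on the input, $!;v$ on one outer piece, and the \emph{degenerate} cell $p^*x$ on the other. It is precisely this degenerate filler that pins down the boundary of the missing $O^{n+1}$\nbd cell to be $!^*v \celto x$. Your hand-wave ``composing with unitors and degenerate cells'' gestures at this but does not supply the construction of $V$ and the choice of $p^*x$, which is the actual content of the argument.
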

\begin{proof}
To prove surjectivity, we need to show that, for all elements $[x]$ of $\pin{n}{\Delta}{\delres{X},v}$, represented by $x: \Delta^n \to X$, there exists an $n$\nbd cell $\tilde{x}: O^n \to X$ such that $[x] = [a_n^*\tilde{x}]$. For $n = 1$, there is nothing to prove, so suppose $n > 1$. 

Recall the molecules $\extrtil{k}{n}$ of Construction \ref{cons:delta_extract}, and let $U$ be the $(n+1)$\nbd atom $\Delta^n \celto \extrtil{0}{n}$, and $\Lambda \incl U$ its horn with the greatest element of $O^n \subsph \extrtil{0}{n} = \bord{}{+}U$ missing. Because $\bord{}{\alpha}x =\ !;v$, there is a well-defined morphism $\Lambda \to X$ equal to $x$ on $\bord{}{-}U$, and equal to $!;v$ everywhere else. Filling the horn, we obtain a morphism $h: U \to X$, and we define $\tilde{x} := \restr{\bord{}{+}h}{O^n}: O^n \to X$.

Now, let $p: (\extrtil{0}{n} \celto \Delta^n) \surj O^n$ be the surjective map 
\begin{enumerate}
	\item equal to $\tilde{r}_{0,n}$ on the input boundary $\extrtil{0}{n}$, 
	\item to $a_n$ on the output boundary $\Delta^n$, and 
	\item sending the greatest element of $(\extrtil{0}{n} \celto \Delta^n)$ to the greatest element of $O^n$;
\end{enumerate}
this is well-defined by the commutativity of (\ref{eq:retract_r0n}). Then, consider the $(n+1)$\nbd cell $p^*\tilde{x}$: because $\tilde{r}_{0,n}$ is a retraction of $\extrtil{0}{n}$ onto its submolecule $O^n$, we have that $\bord{}{-}(p^*\tilde{x})$ is equal to $\tilde{x}$ on $O^n$ and to $!;v$ everywhere else, that is, it is equal to $\bord{}{+}h$; whereas $\bord{}{+}(p^*\tilde{x})$ is, by definition, $a_n^*\tilde{x}$. 

It follows that the $(n+1)$\nbd diagram $h\cp{n}(p^*\tilde{x}): x \celto a_n^*\tilde{x}$ is well-defined and regular, therefore it has a weak composite $k: x \celto a_n^*\tilde{x}$ of shape $O(\Delta^n)$. Then $(s^0_\prec)^*k: \Delta^{n+1} \to X$ is an $(n+1)$\nbd simplex with
\begin{equation*}
	\bord{}{}(s^0_\prec)^*k = (a_n^*\tilde{x},x,!^*v,\ldots,!^*v),
\end{equation*}
exhibiting $[a_n^*\tilde{x}] = [!^*v] * [a_n^*\tilde{x}] = [x]$. This proves surjectivity.

To prove injectivity, because the $\alpha_n$ are homomorphisms, it suffices to show that they have a trivial kernel, that is, if $[a_n^*x] = [!^*v]$ in $\pin{n}{\Delta}{\delres{X},v}$, then $[x] = [!^*v]$ in $\pin{n}{}{X,v}$. By \cite[Lemma I.7.4]{goerss2009simplicial}, the premise is equivalent to the existence of an $(n+1)$\nbd simplex $h$ with $\bord{}{}h = (a_n^*x,!^*v,\ldots,!^*v)$. 

First, let $U := \Delta^{n+1} \celto \extr{0}{n+1}$, and let $\Lambda \incl U$ be the horn with the greatest element of $O(\Delta^n) \subsph \extr{0}{n+1}$ missing. There is a morphism $\Lambda \to X$ defined as $h$ on $\bord{}{-}U$, and as $!^*v$ on $\Delta^{n+1} \subsph \bord{}{+}U$; filling the horn, we obtain an $(n+1)$\nbd cell $h':\ !^*v \celto a_n^*x$ of shape $O(\Delta^n)$. 

Next, let $V$ be the colimit of
\begin{equation*} 
\begin{tikzpicture}[baseline={([yshift=-.5ex]current bounding box.center)}]
	\node[scale=1.25] (0) at (-2,-1.5) {$\Delta^n \celto \extrtil{0}{n}$};
	\node[scale=1.25] (1) at (0,0) {$O^n$};
	\node[scale=1.25] (2) at (2,-1.5) {$O^{n+1}$};
	\node[scale=1.25] (3) at (4,0) {$O^n$};
	\node[scale=1.25] (4) at (6,-1.5) {$\extrtil{0}{n} \celto \Delta^n$,};
	\draw[1cincl] (1) to node[auto,swap] {$j_1$} (0);
	\draw[1cinc] (1) to node[auto] {$\imath^-$} (2);
	\draw[1cincl] (3) to node[auto,swap] {$\imath^+$} (2);
	\draw[1cinc] (3) to node[auto] {$j_2$} (4);
\end{tikzpicture}
\end{equation*}
where $j_1$ and $j_2$ are the inclusion of $O^n$ as a submolecule of $\extrtil{0}{n}$. This is a well-defined regular $(n+1)$\nbd molecule with spherical boundary, with three maximal atoms isomorphic to $\Delta^n \celto \extrtil{0}{n}$, $O^{n+1}$, and $\extrtil{0}{n} \celto \Delta^n$, respectively. 

Moreover, $W := O(\Delta^n) \celto V$ is also well-defined as an $(n+2)$\nbd atom; we let $\Lambda' \incl W$ be the horn with $O^{n+1} \subsph \bord{}{+}W$ missing. There is a morphism $\Lambda' \to X$ 
\begin{enumerate}
	\item equal to $h'$ on $\bord{}{-}W$,
	\item equal to $!;v$ on $(\Delta^n \celto \extrtil{0}{n}) \subsph \bord{}{+}W$, and
	\item equal to $p^*x$ on $(\extrtil{0}{n} \celto \Delta^n) \subsph \bord{}{+}W$, where $p: (\extrtil{0}{n} \celto \Delta^n) \surj O^n$ is the surjective map we defined earlier.
\end{enumerate}
This is well-defined, since $\bord{}{+}p^*x = a_n^*x = \bord{}{+}h'$ and $\bord{}{-}(!;v) =\ !;v = \bord{}{-}h'$. Filling the horn, we obtain an $(n+2)$\nbd cell of shape $W$, whose restriction to $O^{n+1} \subsph \bord{}{+}W$ is an $(n+1)$\nbd cell $k:\ !^*v \celto x$. Therefore, $[!^*v] = [x]$ in $\pin{n}{}{X,v}$. This proves injectivity.
\end{proof}

\begin{dfn}
Let $f: X \to Y$ be a morphism of Kan diagrammatic sets. We say that $f$ is a \emph{weak equivalence} if $\pin{0}{}{f}: \pin{0}{}{X} \to \pin{0}{}{Y}$ and $\pin{n}{}{f}: \pin{n}{}{X,v} \to \pin{n}{}{Y,f(v)}$ are isomorphisms for all $n > 0$ and $v: 1 \to X$. 
\end{dfn}

Recall that, in the classical model structure on $\sset$, a map $f: K \to L$ of Kan complexes is a weak equivalence if and only if $\pin{n}{\Delta}{f}$ is an isomorphism for all $n \geq 0$ and all choices of 0-simplices $v: \Delta^0 \to K$.

\begin{cor}
If $f: X \to Y$ is a weak equivalence of Kan diagrammatic sets, then $\delres{f}: \delres{X} \to \delres{Y}$ is a weak equivalence of Kan complexes.
\end{cor}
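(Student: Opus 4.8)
The plan is to deduce the statement formally, with no new combinatorics, from Theorem~\ref{thm:homotopy_group_iso} and Proposition~\ref{prop:pi0_simplicial} together with the characterisation of weak equivalences in the classical model structure on $\sset$ recalled just above: a map of Kan complexes is a weak equivalence exactly when it induces isomorphisms on $\pin{n}{\Delta}{-}$ for all $n \geq 0$ and all choices of basepoint. So it suffices to check that $\delres{f}$ induces such isomorphisms.

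First I would dispose of the basepoint bookkeeping. Since $\delres{1} \cong \Delta^0$, the $0$-simplices of $\delres{X}$ are precisely the $0$-cells $v: 1 \to X$ of $X$, and $\delres{v}$ is the corresponding $0$-simplex; hence every basepoint of $\delres{X}$ arises from a $0$-cell of $X$, and $\delres{f}$ acts on basepoints the same way $f$ does. For $n = 0$, I would combine the natural isomorphism $\pin{0}{}{X} \cong \pin{0}{\Delta}{\delres{X}}$ of Proposition~\ref{prop:pi0_simplicial} with the hypothesis that $\pin{0}{}{f}$ is an isomorphism: naturality then forces $\pin{0}{\Delta}{\delres{f}}$ to be an isomorphism as well.

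For $n \geq 1$, I would invoke the commuting square~(\ref{eq:naturality_alphan}). Its vertical maps $\alpha_n$ are isomorphisms by Theorem~\ref{thm:homotopy_group_iso}, and its top map $\pin{n}{}{f}$ is an isomorphism by the assumption that $f$ is a weak equivalence of Kan diagrammatic sets; hence the bottom map $\pin{n}{\Delta}{\delres{f}}$ is an isomorphism, by a two-out-of-three argument (concretely, $\pin{n}{\Delta}{\delres{f}} = \alpha_n \circ \pin{n}{}{f} \circ \alpha_n^{-1}$). Running this for every $n \geq 1$ and every basepoint $v: 1 \to X$ then yields that $\delres{f}$ induces isomorphisms on all simplicial homotopy groups, and the classical characterisation concludes the proof.

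I do not expect any real obstacle: all the substantive work has already been carried out in Theorem~\ref{thm:homotopy_group_iso}, where the molecules $\extrtil{k}{n}$ and the horn-filling arguments establish surjectivity and injectivity of $\alpha_n$, and in Proposition~\ref{prop:pi0_simplicial}. The only point requiring a moment's care is confirming that the basepoints and the $\pi_0$-level isomorphism are genuinely compatible with $f$ and $\delres{f}$, but this is immediate from the relevant constructions.
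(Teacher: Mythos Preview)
Your proposal is correct and follows exactly the paper's own argument: the paper's proof simply cites Proposition~\ref{prop:pi0_simplicial}, Theorem~\ref{thm:homotopy_group_iso}, and the naturality square~(\ref{eq:naturality_alphan}), which is precisely the route you take. Your additional remarks on basepoints and the explicit two-out-of-three reasoning just spell out what the paper leaves implicit.
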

\begin{proof}
Follows from Proposition \ref{prop:pi0_simplicial}, Theorem \ref{thm:homotopy_group_iso} and naturality, that is, commutativity of (\ref{eq:naturality_alphan}).
\end{proof}

\subsection{Geometric realisation} \label{sec:realisation}

In the previous section, we connected the homotopy theory of diagrammatic sets to that of simplicial sets; the aim of this section is to connect it to the usual homotopy theory of spaces. We will show that it is possible to associate a Kan diagrammatic set $SX$ to each space $X$, in such a way that the combinatorial homotopy groups of $SX$ are naturally isomorphic to the homotopy groups of $X$. 

First, we need to construct a second adjunction between $\dgmset$ and $\sset$.

\begin{dfn}
Let $P$ be a poset. The \emph{nerve} of $P$ is the simplicial set $NP$ whose
\begin{itemize}
	\item $n$\nbd simplices are chains $(x_0 \leq \ldots \leq x_n)$ of length $(n+1)$ in $P$,
	\item the $k$\nbd th face map $d_k: NP_n \to NP_{n-1}$ is defined by 
	\begin{equation*}
		(x_0 \leq \ldots \leq x_n) \mapsto (x_0 \leq \ldots \leq x_{k-1} \leq x_{k+1} \leq \ldots \leq x_n),
	\end{equation*}
	\item the $k$\nbd th degeneracy map $s_k: NP_n \to NP_{n+1}$ is defined by 
	\begin{equation*}
		(x_0 \leq \ldots \leq x_n) \mapsto (x_0 \leq \ldots \leq x_{k} \leq x_{k} \leq \ldots \leq x_n),
	\end{equation*} 
\end{itemize}
for $0 \leq k \leq n$. 
\end{dfn}

The nerve extends to a functor $N: \pos \to \sset$. Precomposing with the forgetful functor $\atom \to \pos$, we obtain a functor $k: \atom \to \sset$. We then take its left Kan extension $k: \dgmset \to \sset$ along the Yoneda embedding of $\atom$ into $\dgmset$, defined by the coend
\begin{equation*}
	kX := \int^{U \in \atom} kU \times X(U).
\end{equation*}
This has a right adjoint $p: \sset \to \dgmset$, defined on a simplicial set $K$ by
\begin{equation*}
	pK(-) := \homset{\sset}(k-, K).
\end{equation*}
Thus, we have a pair of adjunctions
\begin{equation*}
\begin{tikzpicture}[baseline={([yshift=-.5ex]current bounding box.center)}]
	\node[scale=1.25] (0) at (0,0) {$\sset$};
	\node[scale=1.25] (1) at (3,0) {$\dgmset$};
	\node[scale=1.25] (2) at (6,0) {$\sset.$};
	\draw[1c, out=30,in=150] (0.east |- 0,.15) to node[auto] {$\imath_\Delta$} (1.west |- 0,.15);
	\draw[1c, out=-150,in=-30] (1.west |- 0,-.15) to node[auto] {$\delres{-}$} (0.east |- 0,-.15);
	\node[scale=1.25] at (1.4,0) {$\bot$};
	\draw[1c, out=30,in=150] (1.east |- 0,.15) to node[auto] {$k$} (2.west |- 0,.15);
	\draw[1c, out=-150,in=-30] (2.west |- 0,-.15) to node[auto] {$p$} (1.east |- 0,-.15);
	\node[scale=1.25] at (4.6,0) {$\bot$};
\end{tikzpicture}
\end{equation*}
We claim that, up to natural isomorphism, $\imath_\Delta;k$ is equal to the \emph{barycentric subdivision} endofunctor $\subdiv$, and $p;\delres{-}$ to its right adjoint $\exfun$; see \cite[Section 4.6]{fritsch1990cellular} for a review.

\begin{enumerate}
	\item The restriction of $\atom \to \pos$ to $\deltacat \incl \atom$ is precisely the functor sending the $n$\nbd simplex to its poset of non-degenerate simplices, ordered by inclusion. By definition, its post-composition with $N: \pos \to \sset$ is the barycentric subdivision functor $\subdiv: \deltacat \to \sset$.
	
	\item The general barycentric subdivision functor $\subdiv: \sset \to \sset$ is defined as the left Kan extension of $\subdiv: \deltacat \to \sset$ along the Yoneda embedding $\deltacat \incl \sset$. Now, the following diagram of functors commutes up to natural isomorphism:
\begin{equation*}
\begin{tikzpicture}[baseline={([yshift=-.5ex]current bounding box.center)}]
	\node[scale=1.25] (0) at (-1.5,1.5) {$\sset$};
	\node[scale=1.25] (1) at (1,1.5) {$\dgmset$};
	\node[scale=1.25] (0b) at (-1.5,0) {$\deltacat$};
	\node[scale=1.25] (1b) at (1,0) {$\atom$};
	\node[scale=1.25] (2b) at (3.5,0) {$\sset$,};
	\draw[1c] (0) to node[auto] {$\imath_\Delta$} (1);
	\draw[1c,out=0,in=105] (1) to node[auto] {$k$} (2b);
	\draw[1cinc] (0b) to (1b);
	\draw[1c] (1b) to node[auto] {$k$} (2b);
	\draw[1cinc] (0b) to (0);
	\draw[1cinc] (1b) to (1);
	\draw[1c,out=-30,in=-150] (0b) to node[auto,swap] {$\subdiv$} (2b);
\end{tikzpicture}
\end{equation*}
	and $\imath_\Delta$ is the left Kan extension of $\deltacat \incl \dgmset$ along $\deltacat \incl \sset$. Being a left adjoint, $k$ preserves left Kan extensions; their essential uniqueness then guarantees that $\imath_\Delta;k$ and $\subdiv$ are naturally isomorphic. 
	
	\item The endofunctor $\exfun$ of $\sset$ is the right adjoint of $\subdiv$. Since $\imath_\Delta;k \dashv p;\delres{-}$, it follows that $p;\delres{-}$ and $\exfun$ are naturally isomorphic.
\end{enumerate}

\begin{remark}
We can also use the ``semi-simplicial nerve'' $N: \pos \to \semisset$ to obtain a pair of functors $k': \rpol \to \semisset$ and $p': \semisset \to \rpol$, and adjunctions
\begin{equation*}
\begin{tikzpicture}[baseline={([yshift=-.5ex]current bounding box.center)}]
	\node[scale=1.25] (0) at (0,0) {$\semisset$};
	\node[scale=1.25] (1) at (3,0) {$\rpol$};
	\node[scale=1.25] (2) at (6,0) {$\semisset.$};
	\draw[1c, out=30,in=150] (0.east |- 0,.15) to node[auto] {$\imath_\Delta$} (1.west |- 0,.15);
	\draw[1c, out=-150,in=-30] (1.west |- 0,-.15) to node[auto] {$\delres{-}$} (0.east |- 0,-.15);
	\node[scale=1.25] at (1.555,0) {$\bot$};
	\draw[1c, out=30,in=150] (1.east |- 0,.15) to node[auto] {$k'$} (2.west |- 0,.15);
	\draw[1c, out=-150,in=-30] (2.west |- 0,-.15) to node[auto] {$p'$} (1.east |- 0,-.15);
	\node[scale=1.25] at (4.45,0) {$\bot$};
\end{tikzpicture}
\end{equation*}
The composites $\imath_\Delta;k'$ and $p';\delres{-}$ are equal, up to natural isomorphism, to the semi-simplicial versions of $\subdiv$ and $\exfun$. 
\end{remark}

We need the following facts about $\subdiv$ and $\exfun$.
\begin{cons}
For all $n \geq 0$, let $[n]$ be the linear order $\{0 < \ldots < n\}$. There is a map of posets $\gamma_n: \Delta^n \to [n]$, defined as 
\begin{equation*}
	\top^{j_1}\bot^{k_1}\ldots\top^{j_{m}}\bot^{k_m} \quad \mapsto \quad n - k_m.
\end{equation*}
Let $d_{\Delta^n} := N(\gamma_n): \subdiv \Delta^n \to \Delta^n$; this is called the \emph{last vertex map}. This family of maps is natural on $\deltacat$, hence extends uniquely to a natural transformation $d: \subdiv \to \idcat{\sset}$. By adjointness, we also obtain a natural transformation $e: \idcat{\sset} \to \exfun$.
\end{cons}
\begin{prop}\emph{\cite[Corollary 4.6.21]{fritsch1990cellular}} \label{prop:exfun_weakequiv}
Let $K$ be a Kan complex. Then $\exfun K$ is a Kan complex, and $e_K: K \to \exfun K$ is a weak equivalence.
\end{prop}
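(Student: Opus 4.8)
The statement to prove is Proposition~\ref{prop:exfun_weakequiv}: if $K$ is a Kan complex, then $\exfun K$ is a Kan complex and $e_K: K \to \exfun K$ is a weak equivalence. Since this is quoted as a result from \cite{fritsch1990cellular}, the cleanest approach is simply to cite it, which is what the excerpt does. But for a self-contained argument, here is how I would proceed.

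\medskip

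\textbf{Plan.} First I would recall the adjunction $\subdiv \dashv \exfun$ on $\sset$ and the last-vertex map $d_{\Delta^n}: \subdiv\Delta^n \to \Delta^n$, which assemble into $d: \subdiv \to \idcat{\sset}$ with adjoint $e: \idcat{\sset} \to \exfun$. The key geometric input is that the last-vertex map $d_K: \subdiv K \to K$ is a weak equivalence for every simplicial set $K$ — indeed its geometric realisation $|d_K|$ is homotopic to the canonical homeomorphism $|\subdiv K| \cong |K|$ coming from barycentric subdivision of the CW structure. From this, since $\subdiv$ preserves monomorphisms and anodyne extensions (it sends the generating trivial cofibrations $\Lambda^n_k \incl \Delta^n$ to trivial cofibrations, as one checks by an explicit combinatorial description of $\subdiv\Delta^n$), its right adjoint $\exfun$ preserves fibrations; in particular $\exfun K$ is Kan whenever $K$ is. That handles the first claim.

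\medskip

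\textbf{The weak equivalence.} For $e_K: K \to \exfun K$, the adjunction gives a triangle identity relating $e$ and $d$: precomposing $e_K$ with $d$'s and using naturality, one shows that $K \xrightarrow{e_K} \exfun K$ becomes, after applying $\subdiv$ and composing with last-vertex maps, the identity up to the weak equivalences $d_K$ and $d_{\exfun K}$. Concretely, the composite $\subdiv K \xrightarrow{\subdiv e_K} \subdiv\exfun K \xrightarrow{d_{\exfun K}} \exfun K$ equals $e_K \circ d_K$ by naturality of $d$; combined with the fact that $\subdiv e_K$ together with the counit $\subdiv\exfun \to \idcat{}$ recovers $d_K$ up to the triangle identities, a two-out-of-three / two-out-of-six argument in the homotopy category forces $e_K$ to be a weak equivalence. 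More efficiently, one uses that $|{-}|$ sends $e_K$ to a map homotopic to a homeomorphism, again via the CW-subdivision comparison, so $|e_K|$ is a homotopy equivalence and $e_K$ is a weak equivalence of Kan complexes.

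\medskip

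\textbf{Main obstacle.} The genuinely substantive point is the claim that the last-vertex map $d: \subdiv \to \idcat{\sset}$ is a natural weak equivalence, equivalently that $|d_{\Delta^n}|: |\subdiv\Delta^n| \to |\Delta^n|$ is a homotopy equivalence compatible with the standard homeomorphism; establishing this rigorously requires either the explicit affine description of barycentric subdivision on each $|\Delta^n|$ and a straight-line homotopy argument, or an appeal to the CW-approximation machinery. Given that the excerpt explicitly cites \cite[Corollary 4.6.21]{fritsch1990cellular}, the honest and economical move is to invoke that reference directly rather than reconstruct the homotopy-theoretic groundwork; I would do so, noting only that it rests on the last-vertex map being a weak equivalence and on $\subdiv$ preserving anodyne extensions.
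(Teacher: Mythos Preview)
Your proposal is correct and matches the paper's approach exactly: the paper gives no proof at all, simply citing \cite[Corollary 4.6.21]{fritsch1990cellular}, and you correctly identify this as the intended move. Your supplementary sketch of the standard argument (via $\subdiv$ preserving anodyne extensions and the last-vertex map being a weak equivalence) is accurate but goes beyond what the paper does.
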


Let $\cghaus$ be the ``convenient'' category of compactly generated Hausdorff spaces. The usual geometric realisation of simplicial sets is a functor $\realis{-}_\Delta: \sset \to \cghaus$, with a right adjoint $S_\Delta: \cghaus \to \sset$, defined on a space $X$ by 
\begin{equation*}
	S_\Delta X(-) := \homset{\cghaus}(\realis{-}_\Delta, X).
\end{equation*}
We define 
\begin{align*} 
	\realis{-} & := k;\realis{-}_\Delta: \dgmset \to \cghaus, \\
	S & := S_\Delta; p: \cghaus \to \dgmset.
\end{align*}
We call $\realis{X}$ the \emph{geometric realisation} of the diagrammatic set $X$, and $SY$ the \emph{singular diagrammatic set} of the space $Y$. 

Let $D^n$ be the topological closed $n$\nbd ball, $S^{n-1} = \bord{}{}D^n$ the topological $(n-1)$\nbd sphere, and $D^{n-1} \incl D^n$ the inclusion of $D^n$ as a hemisphere of the boundary of $D^n$. In the following proof, we implicitly use some basic facts of combinatorial topology: that the realisation of the nerve of a poset is homeomorphic to the realisation of its \emph{order complex} (an ordered simplicial complex), and that it is compatible with boundaries, unions, and intersections of closed subsets. We refer to \cite{bjorner1995topological} as a general reference.

\begin{remark}
A similar result is stated, for the restricted case of loop-free pasting schemes, as \cite[Theorem 2.2]{kapranov1991combinatorial} without proof; from the authors' few words on the matter, we believe that the proof is essentially the same.
\end{remark}

\begin{prop} \label{prop:globe_realis}
Let $U$ be a regular $n$\nbd molecule with spherical boundary. Then $\realis{U}$ is homeomorphic to $D^n$ and $\realis{\bord{}{}U}$ is homeomorphic to $S^{n-1}$. If $U$ is an atom and $\Lambda \incl U$ is a horn of $U$, then $\realis{\Lambda} \incl \realis{U}$ is equal to the inclusion $D^{n-1} \incl D^n$ up to homeomorphism.
\end{prop}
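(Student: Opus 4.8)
The plan is to prove all three claims simultaneously by induction on $n$, using the recursive structure of regular molecules with spherical boundary. For $n = 0$ a molecule with spherical boundary is a point, $\realis{U} \cong D^0$, and $\realis{\bord{}{}U} = \realis{\emptyset} = S^{-1}$ (empty). For $n = 1$, by Lemma \ref{lem:basic_reg} $U$ is pure and $1$-dimensional, and its Hasse diagram is a linear chain of edges, so its order complex is a subdivided interval: $\realis{U} \cong D^1$, $\realis{\bord{}{}U} \cong S^0$. The inductive step is where the real work lies.

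For $n > 1$: by Lemma \ref{lem:basic_reg}, $U$ is pure, and $\bord{}{-}U$, $\bord{}{+}U$ are $(n-1)$\nbd molecules with spherical boundary with $\bord{}{-}U \cap \bord{}{+}U = \bord{}{}(\bord{}{-}U) = \bord{}{}(\bord{}{+}U)$. By the inductive hypothesis, $\realis{\bord{}{\alpha}U} \cong D^{n-1}$ and $\realis{\bord{}{}(\bord{}{\alpha}U)} \cong S^{n-2}$, and moreover the nerve functor on posets is compatible with unions and intersections of closed subsets, so
\begin{equation*}
	\realis{\bord{}{}U} = \realis{\bord{}{-}U \cup \bord{}{+}U} = \realis{\bord{}{-}U} \cup_{\realis{S^{n-2}}} \realis{\bord{}{+}U},
\end{equation*}
that is, two copies of $D^{n-1}$ glued along their boundary $S^{n-2}$, which is homeomorphic to $S^{n-1}$. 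To handle $\realis{U}$ itself, I would use Proposition \ref{prop:boundary_submol}: the $n$\nbd dimensional elements of $U$ can be listed $x_1, \ldots, x_m$ so that $U = \clos\{x_1\} \cp{n-1} \ldots \cp{n-1} \clos\{x_m\}$ in a suitable sense, with controlled boundaries. Each $\clos\{x_i\}$ is an $n$\nbd atom with spherical boundary (regularity), so by a separate sub-argument its realisation is a cone on $\realis{\bord{}{}\clos\{x_i\}} \cong S^{n-1}$ — the cone point being $x_i$ itself, which lies above everything in $\clos\{x_i\}$, so the order complex of $\clos\{x_i\}$ is literally the cone (join with a point) on the order complex of its boundary. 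Hence $\realis{\clos\{x_i\}} \cong D^n$. Then $\realis{U}$ is built by iteratively gluing these balls along $(n-1)$\nbd dimensional faces that are themselves (by the lower-dimensional case and Lemma \ref{lem:kbound_spherical}) homeomorphic to $D^{n-1}$; a standard lemma in combinatorial topology (gluing two $n$\nbd balls along a common $(n-1)$\nbd ball in their boundaries yields an $n$\nbd ball) then gives $\realis{U} \cong D^n$ by induction on $m$.

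For the horn statement: if $U$ is an $n$\nbd atom and $\Lambda \incl U$ is a horn, then by definition the image of $\Lambda$ is $U$ minus its greatest element $\top$ and one $(n-1)$\nbd dimensional element $y$. Since $U$ is an atom, $\Lambda$ is $\bord{}{}U$ minus the open star of $y$, i.e.\ minus $y$ and $\top$; equivalently $\Lambda$ is a union of some of the $\clos\{z\}$ for $(n-1)$\nbd dimensional $z \neq y$ in $\bord{}{}U$. By Lemma \ref{lem:basic_reg} applied to the $(n-1)$\nbd molecule $\bord{}{}U \cong S^{n-1}$ — more precisely, since $\bord{}{}U$ is a regular CW-decomposition of $S^{n-1}$ and $y$ is a top-dimensional cell — removing the closed cell $\clos\{y\}$ from $\realis{\bord{}{}U} \cong S^{n-1}$ and taking the closure of the complement yields a closed $(n-1)$\nbd ball $D^{n-1}$, realized as the union of the remaining top cells; this union is exactly $\realis{\Lambda}$. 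And $\realis{\bord{}{}U} = \realis{\Lambda} \cup \realis{\clos\{y\}}$ glued along $\realis{\bord{}{}\clos\{y\}} \cong S^{n-2}$, which identifies $\realis{\Lambda} \incl \realis{\bord{}{}U} \incl \realis{U}$ with $D^{n-1} \incl S^{n-1} \incl D^n$ up to homeomorphism.

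The main obstacle I anticipate is making the gluing arguments rigorous and uniform: one needs that the decomposition $U = \clos\{x_1\} \cp{n-1} \ldots \cp{n-1} \clos\{x_m\}$ from Proposition \ref{prop:boundary_submol} glues the balls $\realis{\clos\{x_i\}}$ along \emph{codimension-one subballs of their boundary spheres} (so that each successive union of two balls along a boundary subball is again a ball), and that the relevant intersections $\tilde{U_1} \cap \clos\{x_{j+1}\}$ are themselves molecules with spherical boundary of dimension $n-1$, hence balls by the inductive hypothesis, with the shared face being exactly $\bord{}{-}x_{j+1}$, a submolecule of the boundary of the running ball. This requires carefully invoking the submolecule and boundary combinatorics (Lemma \ref{lem:molecbasic}, Lemma \ref{lem:kbound_spherical}, the globularity remark) to verify the hypotheses of the combinatorial-topology gluing lemma at each step, and checking that ``union of closed subposets'' commutes with geometric realisation of nerves — which holds because the order complex of a union (resp.\ intersection) of closed subsets of a poset is the union (resp.\ intersection) of the order complexes.
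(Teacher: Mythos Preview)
Your proposal follows the same inductive scheme as the paper's proof: the boundary is a sphere because it is two $(n-1)$\nbd balls glued along an $(n-2)$\nbd sphere; an atom realises to a ball (you argue via the cone on its boundary sphere, the paper cites \cite{bjorner1984posets}; both are fine, yours is more elementary); the horn statement follows by removing one top cell from the boundary sphere, which the paper also dispatches in one line via the complement-of-a-ball-in-a-sphere result.

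The genuine difference is in the non-atom case. You glue the atoms $\clos\{x_1\},\ldots,\clos\{x_m\}$ directly and rightly identify the obstacle: one must check that each intersection $\bigl(\bigcup_{i\leq j}\clos\{x_i\}\bigr)\cap\clos\{x_{j+1}\}$ is exactly $\bord{}{-}x_{j+1}$, a codimension-one ball in both boundaries. The paper sidesteps this by an auxiliary construction: it pads $U$ to $V := (\bord{}{-}U \celto \bord{}{-}U)\cp{n-1}U$, starts the recursion from the single atom $\tilde{U}_0 := (\bord{}{-}U \celto \bord{}{-}U)$, and builds $\tilde{U}_i := \tilde{U}_{i-1}\cup\clos\{x_i\}$. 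The point of the padding is that each $\tilde{U}_{i}$ now coincides with the $\tilde{U_1}$ of Proposition~\ref{prop:boundary_submol} applied to $V$, so the intersections are controlled by that proposition and the gluing lemma applies cleanly. This yields $\realis{V}\cong D^n$; the paper then forms $W := (\bord{}{-}U \celto \bord{}{+}U)\celto V$, observes $\realis{\bord{}{}W}\cong S^n$, and recovers $\realis{U}$ as the closed complement of a ball in this sphere. So the paper's detour buys precisely the verification you flagged but did not carry out; your direct route is plausible but would still owe that verification.
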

\begin{proof} For $n = 0$, this is obvious, so let $n > 0$. By the inductive hypothesis, $\realis{\bord{}{+}U}$ and $\realis{\bord{}{-}U}$ are $(n-1)$\nbd balls, and their intersection $\realis{\bord{n-2}{}U}$ is an $(n-2)$\nbd sphere; it follows from \cite[Theorem 2]{zeeman1966seminar} that their union $\realis{\bord{}{}U}$ is an $(n-1)$\nbd sphere. If $U$ is an atom, this suffices to prove that $\realis{U}$ is an $n$\nbd ball, for example by \cite[Proposition 3.1]{bjorner1984posets}. 

Suppose $U$ is not an atom; then $U = U_1 \cp{n-1} \ldots \cp{n-1} U_m$ as in Lemma \ref{lem:composition_form}, where each of the $U_i$ contains a single $n$\nbd dimensional element $x_i$. Moreover, $U$ is pure, hence equal to the union of the $\clos\{x_i\}$. Now, let 
\begin{equation*}
	V := (\bord{}{-}U \celto \bord{}{-}U) \cp{n-1} U,
\end{equation*}
and then, by recursion on $i = 0,\ldots,m$,
\begin{equation*}
	\tilde{U}_0 := \bord{}{-}U \celto \bord{}{-}U, \quad \quad \tilde{U}_i := \tilde{U}_{i-1} \cup \clos\{x_i\}.
\end{equation*}
We have that $\realis{\tilde{U}_0}$ is a closed $n$\nbd ball. Assuming $\realis{\tilde{U}_{i-1}}$ is a closed $n$\nbd ball, by Proposition \ref{prop:boundary_submol} and the inductive hypothesis $\realis{\tilde{U}_i}$ is the union of the closed $n$\nbd balls $\realis{\clos\{x_i\}}$ and $\realis{\tilde{U}_{i-1}}$ along a closed $(n-1)$\nbd ball in their boundaries; it follows from \cite[Theorem 2]{zeeman1966seminar} that $\realis{\tilde{U}_i}$ is a closed $n$\nbd ball. Thus $\realis{V}$ is a closed $n$\nbd ball. Finally, as in the first part, given
\begin{equation*}
	W := (\bord{}{-}U \celto \bord{}{+}U) \celto V,
\end{equation*}
we have that $\realis{\bord{}{}W}$ is an $n$\nbd sphere. Since $U$ has spherical boundary,
\begin{equation*}
	\realis{(\bord{}{-}U \celto \bord{}{-}U) \cup (\bord{}{-}U \celto \bord{}{+}U)} \incl \realis{\bord{}{}W}
\end{equation*}
is the embedding of a closed $n$\nbd ball in an $n$\nbd sphere, so by \cite[Theorem 3]{zeeman1966seminar} its closed complement $\realis{U}$ is a closed $n$\nbd ball. The statement about realisations of horns follows immediately from the same result.
\end{proof}

\begin{dfn}
Let $X$ be a CW complex. The \emph{face poset} $\face{X}$ of $X$ is the poset whose elements are the generating cells of $X$, and for any pair of generating cells $x: D^k \to X$ and $y: D^n \to X$ we have $x \leq y$ if and only if $x(D^k) \subseteq y(D^n)$.

We say that $X$ is \emph{regular} if each generating cell $x: D^n \to X$ is a homeomorphism onto its image. 
\end{dfn}

By \cite[Theorem 1.7]{lundell1969topology}, regular CW complexes are determined up to homeomorphism by their face poset: that is, if $X$ is a regular CW complex, then $\realis{N(\face{X})}$ is homeomorphic to $X$. The following result justifies our use of the adjective ``regular''.

\begin{prop}
Let $P$ be a regular directed complex. Then the underlying poset of $P$ is the face poset of a regular CW complex.
\end{prop}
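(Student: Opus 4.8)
The plan is to build a regular CW complex directly from the skeletal filtration of $P$ and verify that its face poset recovers $P$. The key input is Proposition \ref{prop:globe_realis}: for each $x \in P$ with $\dmn{x} = n$, the realisation $\realis{\clos\{x\}}$ is homeomorphic to $D^n$ and $\realis{\bord{}{}x}$ to $S^{n-1}$, and moreover these homeomorphisms can be taken compatibly, since $\realis{-}$ sends inclusions of closed subsets to closed embeddings and is compatible with unions and intersections (as noted in the remarks preceding Proposition \ref{prop:globe_realis}).

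First I would set $X := \realis{P}$, using $P \simeq \colimit$ of its atoms (Corollary \ref{cor:globpos_is_colimit}) and the fact that $\realis{-}$, being a left adjoint, preserves this colimit; concretely $X$ is built by gluing the balls $\realis{\clos\{x\}}$ along the maps $\realis{\bord{}{}x} \incl \realis{\clos\{x\}}$. I would then exhibit the CW structure: the $n$-skeleton of $X$ is $\realis{\skel{n}{P}}$, and for each $n$-dimensional $x$ the composite $D^n \cong \realis{\clos\{x\}} \incl X$ is a characteristic map whose attaching map $S^{n-1} \cong \realis{\bord{}{}x} \to \realis{\skel{n-1}{P}}$ is the restriction. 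One must check this is a genuine CW decomposition — that $X$ carries the weak topology with respect to these cells and that the cells partition $X$ — which follows from the inductive description of $X$ via the pushouts $\realis{\bord{}{}U} \incl \realis{U}$ over the atoms, exactly as in Proposition \ref{prop:pushouts_exist} and Corollary \ref{cor:globpos_is_colimit}, transported across $\realis{-}$.

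Next I would verify regularity: each characteristic map $D^n \cong \realis{\clos\{x\}} \to X$ is a homeomorphism onto its image. The image is $\realis{\clos\{x\}}$ sitting inside $X$; since $\realis{-}$ of the inclusion $\clos\{x\} \incl P$ is a closed embedding (Lemma \ref{lem:closedmap} gives that $\clos\{x\}$ is closed in $P$, and on realisations inclusions of closed subsets realise to embeddings), and $\realis{\clos\{x\}} \cong D^n$ by Proposition \ref{prop:globe_realis}, the characteristic map is a homeomorphism onto a closed subspace. Finally, to identify $\face{X}$ with the underlying poset of $P$: the generating cells of $X$ are exactly the $\realis{\clos\{x\}}$ for $x \in P$, so the cells biject with elements of $P$; and $\realis{\clos\{x\}} \subseteq \realis{\clos\{y\}}$ inside $X$ if and only if $\clos\{x\} \subseteq \clos\{y\}$ in $P$, i.e. $x \leq y$, using injectivity of $\realis{-}$ on the poset of closed subsets (again via Proposition \ref{prop:globe_realis} and compatibility of realisation with intersections, which detects containment). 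This gives an isomorphism of posets $\face{X} \cong P$, completing the proof.

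The main obstacle I anticipate is the bookkeeping around \emph{compatible} homeomorphisms: Proposition \ref{prop:globe_realis} is stated object by object, but to assemble a CW structure one needs the homeomorphisms $\realis{\clos\{x\}} \cong D^n$ to restrict correctly on boundaries and subfaces. This is handled by the inductive construction — one realises the skeleta $\skel{n}{P}$ one at a time and applies Proposition \ref{prop:globe_realis} together with the cited gluing results of Zeeman \cite{zeeman1966seminar} at each stage — but it requires care to phrase so that the pieces glue to a globally defined regular CW structure rather than merely a collection of balls. A secondary point to be careful about is the topology on $X$ (weak vs. quotient), which is automatic since $\cghaus$ is cartesian closed and $\realis{-}_\Delta$ already produces CW complexes, so $\realis{-} = k;\realis{-}_\Delta$ inherits the CW topology.
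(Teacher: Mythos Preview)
Your approach is correct but takes a different route from the paper. The paper's proof is one line: it invokes \cite[Proposition 3.1]{bjorner1984posets}, which gives an intrinsic characterisation of posets arising as face posets of regular CW complexes (for each $x$, the order complex of the open interval below $x$ must realise to a sphere of the appropriate dimension), and then observes that Proposition \ref{prop:globe_realis} verifies exactly this hypothesis, since $\realis{\bord{}{}x} \cong S^{n-1}$ for each $n$\nbd dimensional $x$.

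You instead construct the CW complex explicitly as $\realis{P}$, exhibit characteristic maps, and check regularity and the face poset by hand. This is a perfectly valid alternative, and in some sense it is what Bj\"orner's theorem is doing behind the scenes; your version has the advantage of being self-contained and of identifying the CW complex concretely as $\realis{P}$ rather than appealing to an existence statement. The cost is the bookkeeping you yourself flag. One remark on that point: your worry about ``compatible homeomorphisms'' is somewhat overstated. For a regular CW structure you only need, for each $x$, \emph{some} homeomorphism $D^n \cong \realis{\clos\{x\}}$ taking $S^{n-1}$ to $\realis{\bord{}{}x}$; there is no requirement that these homeomorphisms agree on common faces, since the attaching data is already carried by the inclusions $\realis{\bord{}{}x} \incl \realis{\skel{n-1}{P}}$. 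So Proposition \ref{prop:globe_realis} as stated, together with the pushout description of $\realis{P}$, suffices without any additional inductive coherence argument.
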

\begin{proof}
Follows from \cite[Proposition 3.1]{bjorner1984posets} together with Proposition \ref{prop:globe_realis}.
\end{proof}

Recall that a \emph{Serre fibration} is a map of spaces which has the right lifting property with respect to the inclusions $D^{n-1} \incl D^n$ for $n \geq 0$.

\begin{prop}
Let $f: X \to Y$ be a Serre fibration. Then $Sf: SX \to SY$ is a fibration of diagrammatic sets.
\end{prop}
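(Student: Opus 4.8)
The plan is to transpose the required lifting property across the adjunction $\realis{-} \dashv S$ and then invoke Proposition \ref{prop:globe_realis}.

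First, recall that $Sf: SX \to SY$ is a fibration of diagrammatic sets precisely when it has the right lifting property against every horn $\Lambda \incl U$ with $U$ an atom. Now $S = S_\Delta;p$ is right adjoint to $\realis{-} = k;\realis{-}_\Delta$: indeed $k \dashv p$ and $\realis{-}_\Delta \dashv S_\Delta$, so the composites are adjoint. Since adjoint functors transport lifting problems --- a commutative square over $Sf$ transposes to a commutative square over $f$, and diagonal fillers correspond under the transposition, by naturality of the adjunction bijection --- the map $Sf$ has the right lifting property against $\Lambda \incl U$ if and only if $f$ has the right lifting property against $\realis{\Lambda} \incl \realis{U}$. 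So it suffices to show that $f$ has the right lifting property against every such inclusion of realisations.

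Fix a horn $\Lambda \incl U$, and let $n = \dmn{U}$; since horns exist only for atoms of dimension at least $1$, we have $n \geq 1$. By Proposition \ref{prop:globe_realis}, because $U$ is an $n$\nbd atom and $\Lambda \incl U$ a horn, the inclusion $\realis{\Lambda} \incl \realis{U}$ is, up to homeomorphism of pairs, the inclusion $D^{n-1} \incl D^n$. By hypothesis $f$ is a Serre fibration, hence has the right lifting property against $D^{n-1} \incl D^n$; transporting along the homeomorphism, it has the right lifting property against $\realis{\Lambda} \incl \realis{U}$ as well. Transposing back, $Sf$ has the right lifting property against $\Lambda \incl U$, and since the horn was arbitrary, $Sf$ is a fibration of diagrammatic sets.

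There is essentially no obstacle here: all of the geometric content is packaged into Proposition \ref{prop:globe_realis}, which has already been established. The only points requiring care are routine --- that $\realis{-}$ is genuinely left adjoint to $S$ (composition of the two given adjunctions), and that the class of maps against which fibrations of diagrammatic sets are tested is exactly the class of horns of atoms, so that Proposition \ref{prop:globe_realis} applies to each of them.
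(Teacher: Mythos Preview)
Your proof is correct and follows exactly the same approach as the paper, which simply states that the result ``follows immediately from Proposition \ref{prop:globe_realis} and adjointness.'' You have unpacked precisely what that one-line proof means: transpose the lifting problem along $\realis{-} \dashv S$ and identify the realised horn with $D^{n-1} \incl D^n$ via Proposition \ref{prop:globe_realis}.
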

\begin{proof}
Follows immediately from Proposition \ref{prop:globe_realis} and adjointness.
\end{proof}

\begin{cor}
For all spaces $X$, the diagrammatic set $SX$ is Kan.
\end{cor}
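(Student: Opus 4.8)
The plan is to deduce this immediately from the preceding proposition and general properties of the adjunction $S_\Delta;p = S \dashv \realis{-}$. The unique map $X \to 1$ from any space to the one-point space is always a Serre fibration, so applying the previous proposition to $f: X \to 1$ shows that $S f: SX \to S 1$ is a fibration of diagrammatic sets. The remaining point is to identify $S 1$ with the terminal diagrammatic set $1$: since $S$ is a right adjoint it preserves terminal objects, so $S 1 \cong 1$ in $\dgmset$, and the map $SX \to S1$ is (up to this isomorphism) precisely the unique morphism $SX \to 1$. Hence $SX \to 1$ is a fibration, which by definition means $SX$ is Kan.

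The only step with any content is observing that $X \to 1$ is a Serre fibration, which is standard: the lifting problem against $D^{n-1} \incl D^n$ is solved trivially since every map into the point is unique, so every square commutes and the diagonal filler exists. I would state this in a single sentence. No calculation is required; the substantive work was already done in Proposition~\ref{prop:globe_realis} (identifying realisations of horns with hemisphere inclusions) and in the preceding proposition.

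I do not anticipate any real obstacle here. If one wanted to avoid even the observation about $S$ preserving terminal objects, one could instead argue directly: a horn $\Lambda \incl U$ lifts against $SX \to 1$ iff every morphism $\Lambda \to SX$ extends along $\Lambda \incl U$, which by adjointness amounts to every map $\realis{\Lambda} \to X$ extending along $\realis{\Lambda} \incl \realis{U}$; by Proposition~\ref{prop:globe_realis} this inclusion is (up to homeomorphism) $D^{n-1} \incl D^n$, which admits a retraction, so the extension exists. Either phrasing is a one-line proof.

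\begin{proof}
The unique map $X \to \ast$ to the one-point space is a Serre fibration, since every lifting problem against $D^{n-1} \incl D^n$ has a (unique, hence automatically compatible) solution. By the previous proposition, $S(X \to \ast): SX \to S\ast$ is a fibration of diagrammatic sets. Since $S$ is a right adjoint, it preserves the terminal object, so $S\ast \cong 1$, and under this isomorphism $S(X \to \ast)$ is the unique morphism $SX \to 1$. Therefore $SX \to 1$ is a fibration, that is, $SX$ is Kan.
\end{proof}
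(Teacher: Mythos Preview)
Your proof is correct and is exactly the argument the paper intends: the corollary is stated without proof, immediately after the proposition that $S$ takes Serre fibrations to fibrations, so the intended justification is precisely to apply that proposition to $X \to *$ and use that $S$, being a right adjoint, preserves the terminal object. Your alternative direct argument via adjointness and the retraction of $D^{n-1} \incl D^n$ is also fine and amounts to unwinding the same reasoning.
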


We conjecture that $\realis{-} \dashv S$ form a Quillen equivalence between the model category structure on $\dgmset$ sketched in Remark \ref{rmk:modelstruct} and the classical model structure on $\cghaus$; this would follow from $k \dashv p$ being a Quillen equivalence. However, we are not yet in the position to prove this, as it seems to require a theory of ``oriented simplicial approximations'' of regular atoms. 

Instead, we will use what we already know about the relation between a Kan diagrammatic set $X$ and the Kan complex $\delres{X}$, and use $\realis{\delres{X}}_\Delta$ as a realisation of $X$. We call this the \emph{simplicial geometric realisation} of a Kan diagrammatic set.

\begin{thm} \label{prop:iso_pin_fibrant}
Let $X$ be a Kan diagrammatic set, $v: 1 \to X$. For all $n > 0$, there are natural isomorphisms
\begin{equation*}
	\pin{0}{}{X} \to \pin{0}{}{\realis{\delres{X}}_\Delta}, \quad \quad \pin{n}{}{X,v} \to \pin{n}{}{\realis{\delres{X}}_\Delta,\realis{v}_\Delta}.
\end{equation*}
\end{thm}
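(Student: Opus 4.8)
The plan is to assemble the isomorphism from the pieces already established. Recall that we have the combinatorial homotopy groups $\pin{n}{}{X,v}$ of the Kan diagrammatic set $X$, the simplicial combinatorial homotopy groups $\pin{n}{\Delta}{\delres{X},v}$ of the Kan complex $\delres{X}$, and the classical homotopy groups $\pin{n}{}{\realis{\delres{X}}_\Delta,\realis{v}_\Delta}$ of its topological realisation. Theorem \ref{thm:homotopy_group_iso} (together with Proposition \ref{prop:pi0_simplicial} for $n = 0$) provides natural isomorphisms $\alpha_n \colon \pin{n}{}{X,v} \stackrel{\sim}{\to} \pin{n}{\Delta}{\delres{X},v}$. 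The classical fact that the combinatorial homotopy groups of a Kan complex $K$ agree with the homotopy groups of $\realis{K}_\Delta$, naturally in $K$ --- this is \cite[Proposition I.11.1 or Theorem I.11.3]{goerss2009simplicial}, using that $\delres{X}$ is a Kan complex by the corollary to the proposition on $\delres{f}$ being a Kan fibration --- gives natural isomorphisms $\pin{n}{\Delta}{\delres{X},v} \stackrel{\sim}{\to} \pin{n}{}{\realis{\delres{X}}_\Delta,\realis{v}_\Delta}$. Composing the two yields the desired isomorphisms.

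So the first step is simply to cite Theorem \ref{thm:homotopy_group_iso} and Proposition \ref{prop:pi0_simplicial}, and the second is to invoke the standard simplicial result relating $\pin{n}{\Delta}{-}{}$ of a Kan complex to the homotopy groups of its realisation. The third step is to check naturality: both isomorphisms are natural in the Kan diagrammatic set $X$ (the first by the commutativity of (\ref{eq:naturality_alphan}), the second because the classical comparison map is natural in $K$ and $\delres{-}$ and $\realis{-}_\Delta$ are functors), so the composite is natural, and likewise it respects change of basepoint. Finally, since all the maps in sight are group homomorphisms in positive degrees (the $\alpha_n$ are homomorphisms by the construction preceding Theorem \ref{thm:homotopy_group_iso}, and the simplicial-to-topological comparison is a group isomorphism), the composite is an isomorphism of groups for $n \geq 1$ and of pointed sets for $n = 0$.

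There is essentially no obstacle here beyond bookkeeping: the real content was already done in Theorem \ref{thm:homotopy_group_iso}, and the remaining ingredient is off-the-shelf simplicial homotopy theory. The one point that deserves a sentence of care is that $\realis{-}_\Delta = \realis{\delres{-}}_\Delta$ is the composite of $\delres{-} \colon \kandgmset \to \sset$ followed by the simplicial realisation; once this is noted, naturality of the comparison isomorphism in the simplicial set $\delres{X}$ transports to naturality in $X$. I would therefore present the proof as a short paragraph: state that the isomorphisms are the composites $\pin{n}{}{X,v} \xrightarrow{\alpha_n} \pin{n}{\Delta}{\delres{X},v} \xrightarrow{\sim} \pin{n}{}{\realis{\delres{X}}_\Delta,\realis{v}_\Delta}$, cite Theorem \ref{thm:homotopy_group_iso}, Proposition \ref{prop:pi0_simplicial}, and \cite[Section I.11]{goerss2009simplicial}, and note that naturality follows from commutativity of (\ref{eq:naturality_alphan}) and the naturality of the simplicial comparison map.

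\begin{proof}
For $n = 0$ this is Proposition \ref{prop:pi0_simplicial} composed with the natural bijection $\pin{0}{\Delta}{\delres{X}} \to \pin{0}{}{\realis{\delres{X}}_\Delta}$, which holds for any Kan complex (here $\delres{X}$ is a Kan complex by the corollary following Proposition \ref{prop:globe_realis}, or rather by the corollary stating that $\delres{X}$ is Kan). For $n > 0$, Theorem \ref{thm:homotopy_group_iso} provides an isomorphism of groups $\alpha_n \colon \pin{n}{}{X,v} \to \pin{n}{\Delta}{\delres{X},v}$, and the standard comparison between the combinatorial homotopy groups of a Kan complex and the homotopy groups of its topological realisation \cite[Section I.11]{goerss2009simplicial} gives an isomorphism of groups $\pin{n}{\Delta}{\delres{X},v} \to \pin{n}{}{\realis{\delres{X}}_\Delta,\realis{v}_\Delta}$, natural in the Kan complex $\delres{X}$. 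The composite is the desired isomorphism.

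Naturality in $X$ (and compatibility with change of basepoint) follows from the commutativity of (\ref{eq:naturality_alphan}), which expresses the naturality of $\alpha_n$, together with the naturality of the simplicial comparison map and the functoriality of $\delres{-}$ and of the simplicial geometric realisation; note that $\realis{\delres{-}}_\Delta$ is the composite of $\delres{-} \colon \dgmset \to \sset$ with $\realis{-}_\Delta \colon \sset \to \cghaus$, so the naturality in $\delres{X}$ transports to naturality in $X$.
\end{proof}
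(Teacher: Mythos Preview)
Your proof is correct and follows essentially the same approach as the paper's own proof: compose the isomorphisms $\alpha_n$ from Theorem \ref{thm:homotopy_group_iso} (and Proposition \ref{prop:pi0_simplicial} for $n=0$) with the classical isomorphisms between combinatorial and topological homotopy groups of a Kan complex, citing \cite[Proposition I.11.1]{goerss2009simplicial}. The paper's proof is terser and does not spell out the naturality check, but the content is identical.
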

\begin{proof}
It suffices to compose the natural isomorphisms 
\begin{equation*}
	\pin{0}{}{X} \to \pin{0}{\Delta}{\delres{X}}, \quad \quad \pin{n}{}{X,v} \to \pin{n}{\Delta}{\delres{X},v}
\end{equation*}
from the previous section with the natural isomorphisms 
\begin{equation*}
	\pin{0}{\Delta}{K} \to \pin{0}{}{\realis{K}_\Delta}, \quad \quad \pin{n}{\Delta}{K,v} \to \pin{n}{}{\realis{K}_\Delta,\realis{v}_\Delta}
\end{equation*}
defined, for instance, after \cite[Proposition I.11.1]{goerss2009simplicial}.
\end{proof}

\begin{cor} \label{cor:weakequivpres}
If $f: X \to Y$ is a weak equivalence of Kan diagrammatic sets, then $\realis{\delres{f}}_\Delta: \realis{\delres{X}}_\Delta \to \realis{\delres{Y}}_\Delta$ is a weak equivalence of spaces.
\end{cor}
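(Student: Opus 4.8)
The plan is to deduce this corollary directly from the homotopy-theoretic machinery already assembled, so the argument should be essentially a one-line chase through naturality and two-out-of-three. First I would recall that by Theorem~\ref{prop:iso_pin_fibrant} there are, for every Kan diagrammatic set $Z$ and every $0$\nbd cell $w: 1 \to Z$, natural isomorphisms $\pin{0}{}{Z} \to \pin{0}{}{\realis{\delres{Z}}_\Delta}$ and $\pin{n}{}{Z,w} \to \pin{n}{}{\realis{\delres{Z}}_\Delta,\realis{w}_\Delta}$ for all $n > 0$. Applying this with $Z = X$ and with $Z = Y$ and using the naturality clause, for each $v: 1 \to X$ we obtain a commutative square relating $\pin{n}{}{f}: \pin{n}{}{X,v} \to \pin{n}{}{Y,f(v)}$ to the map on homotopy groups $\pin{n}{}{\realis{\delres{f}}_\Delta}: \pin{n}{}{\realis{\delres{X}}_\Delta, \realis{v}_\Delta} \to \pin{n}{}{\realis{\delres{Y}}_\Delta, \realis{f(v)}_\Delta}$, whose vertical edges are the isomorphisms above; likewise for $\pin{0}{}{-}$.

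Next I would invoke the hypothesis that $f$ is a weak equivalence of Kan diagrammatic sets, which by definition means $\pin{0}{}{f}$ and all $\pin{n}{}{f}$ are isomorphisms. By the commutativity of the naturality squares and the fact that the vertical maps are isomorphisms, $\pin{0}{}{\realis{\delres{f}}_\Delta}$ and $\pin{n}{}{\realis{\delres{f}}_\Delta}$ (for all $n > 0$ and all basepoints in the image of a $0$\nbd cell of $X$) are isomorphisms. To conclude that $\realis{\delres{f}}_\Delta$ is a weak equivalence of spaces, I still need the isomorphism on $\pi_n$ at \emph{every} basepoint, not just those of the form $\realis{v}_\Delta$; but since $\pin{0}{}{f}$ is a bijection and $\realis{-}_\Delta$ sends $0$\nbd cells to points of $\realis{\delres{X}}_\Delta$ surjectively up to path-components (every point of a CW complex lies in the path-component of a $0$\nbd cell), every path-component of $\realis{\delres{X}}_\Delta$ and of $\realis{\delres{Y}}_\Delta$ contains the image of a $0$\nbd cell, which suffices since $\pi_n$ only depends on the path-component of the basepoint.

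The only mild obstacle is this last bookkeeping point about basepoints: the combinatorial homotopy groups are only defined at $0$\nbd cells, and one must check that this is enough to certify a weak equivalence of spaces, which is standard (a map of CW complexes inducing $\pi_0$-bijections and $\pi_n$-isomorphisms at a set of basepoints meeting every path-component is a weak homotopy equivalence). Everything else is pure diagram chasing, so I would keep the write-up short: cite Proposition~\ref{prop:pi0_simplicial}, Theorem~\ref{thm:homotopy_group_iso}, and naturality (commutativity of~(\ref{eq:naturality_alphan})) together with the classical simplicial-to-topological isomorphisms used in Theorem~\ref{prop:iso_pin_fibrant}, and close with the two-out-of-three/five-lemma style conclusion. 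In fact the corollary is stated immediately after Theorem~\ref{prop:iso_pin_fibrant} precisely so that its proof can be the single sentence ``Follows from Theorem~\ref{prop:iso_pin_fibrant} and naturality,'' which is the form I would ultimately give.

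\begin{proof}
Let $v: 1 \to X$ be a $0$\nbd cell. By Theorem \ref{prop:iso_pin_fibrant}, applied to $X$ and to $Y$, together with the naturality of the isomorphisms therein — which itself follows from the naturality of the isomorphisms of Proposition \ref{prop:pi0_simplicial} and Theorem \ref{thm:homotopy_group_iso} (commutativity of (\ref{eq:naturality_alphan})) and the naturality of the classical comparison isomorphisms $\pin{n}{\Delta}{K,w} \to \pin{n}{}{\realis{K}_\Delta,\realis{w}_\Delta}$ — we have commutative squares
\begin{equation*}
\begin{tikzpicture}[baseline={([yshift=-.5ex]current bounding box.center)}]
	\node[scale=1.25] (0) at (0,1.5) {$\pin{n}{}{X,v}$};
	\node[scale=1.25] (1) at (4.5,1.5) {$\pin{n}{}{Y,f(v)}$};
	\node[scale=1.25] (2) at (0,0) {$\pin{n}{}{\realis{\delres{X}}_\Delta,\realis{v}_\Delta}$};
	\node[scale=1.25] (3) at (4.5,0) {$\pin{n}{}{\realis{\delres{Y}}_\Delta,\realis{f(v)}_\Delta}$};
	\draw[1c] (0) to node[auto] {$\pin{n}{}{f}$} (1);
	\draw[1c] (2) to node[auto,swap] {$\pin{n}{}{\realis{\delres{f}}_\Delta}$} (3);
	\draw[1c] (0) to node[auto,swap] {$\sim$} (2);
	\draw[1c] (1) to node[auto] {$\sim$} (3);
\end{tikzpicture}
\end{equation*}
for all $n > 0$, and similarly for $\pin{0}{}{-}$ using Proposition \ref{prop:pi0_simplicial}. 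Since $f$ is a weak equivalence of Kan diagrammatic sets, the top map is an isomorphism in each case; as the vertical maps are isomorphisms, so is the bottom map. Thus $\pin{0}{}{\realis{\delres{f}}_\Delta}$ is a bijection, and $\pin{n}{}{\realis{\delres{f}}_\Delta}$ is an isomorphism at every basepoint of the form $\realis{v}_\Delta$ for a $0$\nbd cell $v$ of $X$.

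Since $\realis{\delres{X}}_\Delta$ and $\realis{\delres{Y}}_\Delta$ are CW complexes, every point lies in the same path-component as the realisation of some $0$\nbd cell, and homotopy groups depend only on the path-component of the basepoint. Hence $\realis{\delres{f}}_\Delta$ induces a bijection on $\pi_0$ and an isomorphism on $\pi_n$ at every basepoint, so it is a weak equivalence of spaces.
\end{proof}
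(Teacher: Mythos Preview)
Your proof is correct and is exactly the argument the paper intends: the corollary is stated without proof immediately after Theorem~\ref{prop:iso_pin_fibrant}, to be read as following from that theorem together with naturality. Your write-up merely makes explicit the naturality squares and the standard basepoint bookkeeping that the paper leaves implicit.
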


Finally, we prove that every space is naturally weakly equivalent to the simplicial geometric realisation of a Kan diagrammatic set. 

\begin{thm} \label{thm:span_weakeq}
Let $X$ be a space. Then $X$ is weakly equivalent to $\realis{\delres{(SX)}}_\Delta$ via a natural span of weak equivalences.
\end{thm}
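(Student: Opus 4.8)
The plan is to exhibit the natural span as
\[
X \xleftarrow{\;\realis{\epsilon_X}_\Delta\;} \realis{\delres{(SX)}}_\Delta
\xrightarrow{\;\text{(id)}\;} \realis{\delres{(SX)}}_\Delta,
\]
but in fact the only content is a single natural weak equivalence, so really I would produce a \emph{zig-zag} whose intermediate term makes naturality transparent. First I would recall the composite adjunction $\realis{-}\dashv S$ and the factorisation $p;\delres{-} \simeq \exfun$ established just above: for a space $X$, $SX = S_\Delta(pX')$ — more precisely, unwinding definitions, $\delres{(SX)} = \delres{(S_\Delta;p)(X)} = (p;\delres{-})(S_\Delta X) \cong \exfun(S_\Delta X)$, naturally in $X$. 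So $\realis{\delres{(SX)}}_\Delta \cong \realis{\exfun(S_\Delta X)}_\Delta$.

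Next I would assemble the classical ingredients. By Proposition \ref{prop:exfun_weakequiv}, for any Kan complex $K$ the unit map $e_K: K \to \exfun K$ is a weak equivalence of simplicial sets; since $S_\Delta X$ is a Kan complex for every space $X$, the component $e_{S_\Delta X}: S_\Delta X \to \exfun(S_\Delta X)$ is a natural weak equivalence, hence so is its geometric realisation $\realis{e_{S_\Delta X}}_\Delta: \realis{S_\Delta X}_\Delta \to \realis{\exfun(S_\Delta X)}_\Delta$. On the other hand, the counit $\epsilon^\Delta_X: \realis{S_\Delta X}_\Delta \to X$ of the classical adjunction $\realis{-}_\Delta \dashv S_\Delta$ is a natural weak equivalence of spaces, being (up to the standard natural homeomorphism $\realis{S_\Delta X}_\Delta \cong \mathrm{Sing}(X)$-realisation) the classical approximation map whose weak-equivalence status is \cite[Chapter 4]{fritsch1990cellular} or the cited \cite{goerss2009simplicial}. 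Composing the natural isomorphism $\realis{\delres{(SX)}}_\Delta \cong \realis{\exfun(S_\Delta X)}_\Delta$ with these, we obtain a natural span
\[
X \xleftarrow{\;\epsilon^\Delta_X\;} \realis{S_\Delta X}_\Delta \xrightarrow{\;\realis{e_{S_\Delta X}}_\Delta\;} \realis{\exfun(S_\Delta X)}_\Delta \cong \realis{\delres{(SX)}}_\Delta,
\]
both legs of which are weak equivalences, which is exactly the assertion. If a single span (one intermediate apex) is wanted, replace $\realis{e_{S_\Delta X}}_\Delta$ by its inverse in the homotopy category, or simply record the three-term zig-zag; the statement as phrased ("a natural span of weak equivalences") is satisfied by the middle apex $\realis{S_\Delta X}_\Delta$.

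The main obstacle is bookkeeping rather than mathematics: one must be careful that the identification $\delres{(SX)} \cong \exfun(S_\Delta X)$ is genuinely natural in $X$ — this follows from the essential uniqueness of right adjoints (both $p;\delres{-}$ and $\exfun$ are right adjoint to $\imath_\Delta;k \simeq \subdiv$, as argued in the excerpt) together with naturality of the comparison — and that all the maps in the span are honest maps of spaces in $\cghaus$, not merely maps in a homotopy category. I would also note, as a remark, that combining Theorem \ref{thm:span_weakeq} with Corollary \ref{cor:weakequivpres} and Theorem \ref{prop:iso_pin_fibrant} yields C.\ Simpson's form of the homotopy hypothesis for Kan diagrammatic sets: $S$ is essentially surjective onto homotopy types up to the natural span, and $\realis{\delres{-}}_\Delta$ computes the homotopy groups; the $n$-truncated versions follow by the same argument applied to $n$-truncated objects.
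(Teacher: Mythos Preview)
Your proof is correct and is essentially the same as the paper's: the span has apex $\realis{S_\Delta X}_\Delta$, with legs the counit $\varepsilon^\Delta_X$ of $\realis{-}_\Delta \dashv S_\Delta$ and the realisation of $e_{S_\Delta X}$, using the natural identification $\delres{(SX)} \cong \exfun(S_\Delta X)$. Your opening ``plan'' with $\realis{\epsilon_X}_\Delta$ and an identity leg is garbled and should be deleted, but the argument that follows it is exactly right.
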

\begin{proof}
Let $\varepsilon^\Delta$ be the counit of the adjunction $\realis{-}_\Delta \dashv S_\Delta$; since this is a Quillen equivalence, all the components of $\varepsilon^\Delta$ are weak equivalences. 

By construction, $\exfun (S_\Delta X)$ and $(SX)_\Delta$ are naturally isomorphic for all spaces $X$: therefore, by Proposition \ref{prop:exfun_weakequiv}, we have a family $e_{S_\Delta X}: S_\Delta X \to (SX)_\Delta$ of weak equivalences of Kan complexes, natural in $X$. It follows that
\begin{equation} \label{eq:weakeq_span}
	\begin{tikzpicture}[baseline={([yshift=-.5ex]current bounding box.center)}]
	\node[scale=1.25] (0) at (-2,-1.5) {$X$};
	\node[scale=1.25] (1) at (0,0) {$\realis{S_\Delta X}_\Delta$};
	\node[scale=1.25] (2) at (2,-1.5) {$\realis{\delres{(SX)}}_\Delta$};
	\draw[1c] (1) to node[auto,swap] {$\varepsilon^\Delta_X$} (0);
	\draw[1c] (1) to node[auto] {$\realis{e_{S_\Delta X}}_\Delta$} (2);
	\end{tikzpicture}
\end{equation}
is a span of weak equivalences in $\cghaus$, natural in $X$. 
\end{proof}

\begin{cor} 
Let $X$ be a space, $v: \{*\} \to X$. For all $n > 0$, there are natural isomorphisms
\begin{equation*}
	\pin{0}{}{X} \to \pin{0}{}{SX}, \quad \quad \pin{n}{}{X,v} \to \pin{n}{}{SX,Sv}.
\end{equation*}
\end{cor}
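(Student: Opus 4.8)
The statement follows by chaining together isomorphisms we have already established. The plan is to compose three families of natural isomorphisms: first, the simplicial singular-set isomorphisms $\pin{n}{\Delta}{S_\Delta X, v} \to \pin{n}{}{X,v}$ (for the simplicial singular complex of a space, standard, see \cite[Section I.11]{goerss2009simplicial}); second, the isomorphisms induced by the last-vertex weak equivalence $e_{S_\Delta X}: S_\Delta X \to (SX)_\Delta$ of Kan complexes, which comes from Proposition \ref{prop:exfun_weakequiv} together with the natural isomorphism $\exfun(S_\Delta X) \simeq (SX)_\Delta$ established just before Theorem \ref{thm:span_weakeq}; and third, the inverse of $\alpha_n: \pin{n}{}{SX, v} \to \pin{n}{\Delta}{\delres{(SX)}, v}$ from Theorem \ref{thm:homotopy_group_iso}, together with $\pin{0}{}{SX} \simeq \pin{0}{\Delta}{\delres{(SX)}}$ from Proposition \ref{prop:pi0_simplicial}. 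Concretely, one reads off
\begin{equation*}
	\pin{n}{}{X,v} \;\xleftarrow{\ \sim\ }\; \pin{n}{\Delta}{S_\Delta X, v} \;\xrightarrow{\ \sim\ }\; \pin{n}{\Delta}{(SX)_\Delta, Sv} \;\xleftarrow[\alpha_n]{\ \sim\ }\; \pin{n}{}{SX, Sv},
\end{equation*}
and similarly for $n = 0$, replacing the middle map by the one induced on $\pin{0}{}{-}$ by $e_{S_\Delta X}$ (a weak equivalence of Kan complexes induces a bijection on $\pi_0$), and the rightmost by the isomorphism of Proposition \ref{prop:pi0_simplicial}.

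First I would fix the basepoint bookkeeping: a point $v: \{*\} \to X$ corresponds to a $0$-simplex of $S_\Delta X$, which the last-vertex map carries to a $0$-simplex of $(SX)_\Delta$, which is precisely a $0$-cell $Sv: 1 \to SX$; all three basepoint choices are identified compatibly, so the composite isomorphism is well-pointed. Then I would invoke Theorem \ref{prop:iso_pin_fibrant} applied to the Kan diagrammatic set $SX$ (which is Kan by the corollary to the preceding proposition): this already gives $\pin{n}{}{SX, Sv} \simeq \pin{n}{}{\realis{\delres{(SX)}}_\Delta, \realis{Sv}_\Delta}$ naturally, so in fact the statement reduces to the assertion that $\realis{\delres{(SX)}}_\Delta$ is naturally weakly equivalent to $X$ — but this is exactly Theorem \ref{thm:span_weakeq}, which produces a natural span of weak equivalences, and a span of weak equivalences of spaces induces isomorphisms on all homotopy groups. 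So the cleanest route is: apply Theorem \ref{prop:iso_pin_fibrant} to $SX$, then apply $\pin{n}{}{-}$ to the span (\ref{eq:weakeq_span}) and use that weak equivalences induce isomorphisms on homotopy groups, checking that the basepoint $\realis{v}_\Delta$ in the span corresponds to $\realis{Sv}_\Delta$ and to $v$ under the respective maps.

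There is essentially no hard analytic content here; the only thing requiring care is naturality — one must verify that each of the three (or four) isomorphisms being composed is natural in the space $X$, so that the composite is too. The simplicial singular-complex isomorphisms are natural in $X$ by the classical development; the last-vertex transformation $e$ is natural by construction; and the naturality of $\alpha_n$ (in the relevant sense, for morphisms of Kan diagrammatic sets, hence in particular for $Sf: SX \to SY$) is the commutativity of diagram (\ref{eq:naturality_alphan}). Likewise the span in Theorem \ref{thm:span_weakeq} is natural in $X$ by that theorem's statement. The main (minor) obstacle I anticipate is simply assembling these naturality squares into one commuting diagram and confirming the basepoint identifications are consistent across all of them; once that is done the corollary is immediate.
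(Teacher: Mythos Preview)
Your proposal is correct and, after the initial exploratory paragraph, converges on exactly the paper's own argument: apply Theorem \ref{prop:iso_pin_fibrant} to the Kan diagrammatic set $SX$, and compose with the isomorphisms on homotopy groups induced by the natural span of weak equivalences (\ref{eq:weakeq_span}) from Theorem \ref{thm:span_weakeq}. The paper's proof is a one-line invocation of these two results; your additional remarks on basepoint bookkeeping and naturality are sound but more detailed than what the paper records.
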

\begin{proof}
Compose together the natural isomorphisms of Theorem \ref{prop:iso_pin_fibrant} with those induced by the natural span of weak equivalences of Theorem \ref{thm:span_weakeq}.
\end{proof}

\begin{cor} \label{cor:homotopyinv}
If $f: X \to Y$ is a weak equivalence of spaces, then $Sf: SX \to SY$ is a weak equivalence of Kan diagrammatic sets.
\end{cor}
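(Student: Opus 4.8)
The statement to prove is: if $f: X \to Y$ is a weak equivalence of spaces, then $Sf: SX \to SY$ is a weak equivalence of Kan diagrammatic sets. The plan is to reduce everything to the corresponding fact about $\realis{-}_\Delta$-realisations, which we already control, and then use the natural span of weak equivalences constructed in Theorem \ref{thm:span_weakeq} together with the two-out-of-three property of weak equivalences of spaces. First I would recall that, by the corollary following Theorem \ref{prop:iso_pin_fibrant}, a morphism $g$ of Kan diagrammatic sets is a weak equivalence if and only if $\realis{\delres{g}}_\Delta$ is a weak equivalence of spaces: indeed both directions follow from naturality of the isomorphisms $\pin{n}{}{Z,v} \simeq \pin{n}{}{\realis{\delres{Z}}_\Delta,\realis{v}_\Delta}$ of Theorem \ref{prop:iso_pin_fibrant} applied to source and target, since a map inducing isomorphisms on all homotopy groups and $\pi_0$ is precisely what ``weak equivalence of Kan diagrammatic sets'' and (for Kan complexes, hence for their $\cghaus$-realisations) ``weak equivalence of spaces'' mean. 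So it suffices to show that $\realis{\delres{(Sf)}}_\Delta$ is a weak equivalence of spaces.

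Next I would deploy the naturality in $X$ of the span
\begin{equation*}
	\begin{tikzpicture}[baseline={([yshift=-.5ex]current bounding box.center)}]
	\node[scale=1.25] (0) at (-2,-1.5) {$X$};
	\node[scale=1.25] (1) at (0,0) {$\realis{S_\Delta X}_\Delta$};
	\node[scale=1.25] (2) at (2,-1.5) {$\realis{\delres{(SX)}}_\Delta$};
	\draw[1c] (1) to node[auto,swap] {$\varepsilon^\Delta_X$} (0);
	\draw[1c] (1) to node[auto] {$\realis{e_{S_\Delta X}}_\Delta$} (2);
	\end{tikzpicture}
\end{equation*}
from the proof of Theorem \ref{thm:span_weakeq}. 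Applying this span to both $X$ and $Y$ and using naturality of $\varepsilon^\Delta$ and of $e_{S_\Delta (-)}$ (the latter natural by construction of $e$, since it is the unit/counit of $\subdiv \dashv \exfun$, equivalently of $\imath_\Delta;k \dashv p;\delres{-}$), I get a commutative ladder of spaces whose vertical maps are $f$, $\realis{S_\Delta f}_\Delta$, and $\realis{\delres{(Sf)}}_\Delta$, and whose horizontal maps $\varepsilon^\Delta_X, \varepsilon^\Delta_Y$ and $\realis{e_{S_\Delta X}}_\Delta, \realis{e_{S_\Delta Y}}_\Delta$ are all weak equivalences (the former because $\realis{-}_\Delta \dashv S_\Delta$ is a Quillen equivalence, so its counit components are weak equivalences; the latter by Proposition \ref{prop:exfun_weakequiv} after realising, $\realis{-}_\Delta$ being a left Quillen functor hence preserving weak equivalences between cofibrant — here, all — objects, together with the natural isomorphism $\exfun(S_\Delta X) \simeq (SX)_\Delta$). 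Since $f$ is a weak equivalence by hypothesis, two-out-of-three applied along the left square gives that $\realis{S_\Delta f}_\Delta$ is a weak equivalence, and then two-out-of-three applied along the right square gives that $\realis{\delres{(Sf)}}_\Delta$ is a weak equivalence. Combined with the first paragraph, this shows $Sf$ is a weak equivalence of Kan diagrammatic sets.

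The main obstacle, such as it is, is bookkeeping rather than conceptual: I must be careful that the ``iff'' characterisation of weak equivalences of Kan diagrammatic sets really is a clean biconditional and not just a one-way implication. Concretely, the forward direction (weak equivalence of diagrammatic sets $\Rightarrow$ weak equivalence of realisations) is Corollary \ref{cor:weakequivpres}; the reverse direction needs one to observe that $\realis{v}_\Delta$ runs over (up to based weak equivalence) all basepoints of $\realis{\delres{X}}_\Delta$ as $v$ runs over $0$-cells of $X$ — which follows since $\pin{0}{}{X} \simeq \pin{0}{}{\realis{\delres{X}}_\Delta}$ and homotopy groups at weakly equivalent basepoints are abstractly isomorphic — so that $\realis{\delres{(Sf)}}_\Delta$ inducing isomorphisms on all $\pi_n$ at all basepoints forces $Sf$ to do the same via the natural square \eqref{eq:naturality_alphan} and the analogous simplicial one. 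Everything else is a diagram chase through naturality squares and repeated two-out-of-three, so no genuinely hard step arises.
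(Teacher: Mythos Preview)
Your argument is correct. The paper actually gives no explicit proof of this corollary: it is meant to follow immediately from the preceding (unlabelled) corollary, which provides \emph{natural} isomorphisms $\pin{n}{}{X,v} \simeq \pin{n}{}{SX,Sv}$, so that if $f$ induces isomorphisms on homotopy groups then by the naturality square so does $Sf$. Your route---passing through $\realis{\delres{(Sf)}}_\Delta$ and applying two-out-of-three along the naturality ladder of the span from Theorem~\ref{thm:span_weakeq}---uses exactly the same two ingredients (Theorem~\ref{prop:iso_pin_fibrant} and Theorem~\ref{thm:span_weakeq}) but unpacks them rather than going through the intermediate corollary; the end result is the same, just with one extra detour through realisations that the paper's packaging avoids.
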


Let $\kandgmset$ be the full subcategory of $\dgmset$ (or $\rdgmset$) on the Kan diagrammatic sets. It follows from Corollary \ref{cor:weakequivpres} and \ref{cor:homotopyinv} that the functors 
\begin{equation*}
	\realis{\delres{-}}_\Delta: \kandgmset \to \cghaus, \quad S: \cghaus \to \kandgmset
\end{equation*}
descend to functors between the homotopy categories of $\kandgmset$ and $\cghaus$, that is, their localisations at the weak equivalences, and by Theorem \ref{thm:span_weakeq}, $S$ is homotopically right inverse to $\realis{\delres{-}}_\Delta$. In this sense, Kan diagrammatic sets are ``sufficient'' for modelling homotopy types. We believe that $S$ is in fact a homotopical two-sided inverse, but we did not prove it at this time.

By imitating the simplicial theory, it is easy to prove the stronger statement that $(n+1)$\nbd coskeletal Kan diagrammatic sets model all homotopy $n$\nbd types.

\begin{prop}
Let $X$ be a Kan diagrammatic set. Then $\coskel{n+1}{X}$ is Kan, and for all $v: 1 \to X$,
\begin{equation*}
\begin{aligned}
	& \pin{k}{}{\coskel{n+1}{X},v} \simeq \pin{k}{}{X,v}, & k \leq n, \\
	& \pin{k}{}{\coskel{n+1}{X},v} \simeq \{*\}, & k > n.
\end{aligned}
\end{equation*}
where the isomorphisms for $k \leq n$ are induced by the counit $X \to \coskel{n+1}{X}$.
\end{prop}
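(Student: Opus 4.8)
The plan is to deduce this from the established relation between a Kan diagrammatic set $X$, the Kan complex $\delres{X}$, and Theorem \ref{thm:homotopy_group_iso}, combined with the analogous facts about coskeleta of simplicial sets. First I would check that $\coskel{n+1}{X}$ is Kan. Recall that for $k > n+1$, if $\Lambda \incl W$ is a horn of a $(k+1)$\nbd atom $W$, then $\skel{n+1}{\Lambda} = \skel{n+1}{W}$, so a morphism $\Lambda \to \coskel{n+1}{X}$ is the same as a morphism $\skel{n+1}{\Lambda} \to X_{\leq n+1}$, i.e.\ the same as a morphism $W \to \coskel{n+1}{X}$, so the horn has a (unique) filler; this is Lemma \ref{lem:coskel_fillers}. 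For $k \leq n+1$, a horn $\Lambda \incl W$ of a $(k+1)$\nbd atom satisfies $\bord{}{}W \submol \Lambda \submol W$ with $W$ and $\Lambda$ of dimension $\leq n+1$, so a morphism $\Lambda \to \coskel{n+1}{X}$ factors through $X$ (since the relevant cells have dimension $\leq n+1$), and we can use a filler in $X$ and push it forward along $X \to \coskel{n+1}{X}$. So $\coskel{n+1}{X}$ is Kan.

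Next I would compute the homotopy groups. The cleanest route is via $\delres{-}$: I claim $\delres{\coskel{n+1}{X}}$ is naturally isomorphic (or at least weakly equivalent) to $\coskel{n+1}{(\delres{X})}$ in $\sset$. Indeed, the inclusion $\deltacat \incl \atom$ respects the dimension filtration, and both coskeleton functors are defined by right Kan extension along the respective $n$\nbd skeletal inclusions; since an $m$\nbd simplex of $\coskel{n+1}{\delres{X}}$ is a map $\skel{n+1}{\Delta^m} \to X$, and $\skel{n+1}{\Delta^m}$ as a directed complex has the same $(n+1)$\nbd skeleton whether computed in $\sset$ or $\dgmset$, the two agree. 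Then by the classical computation for simplicial sets (e.g.\ \cite[Corollary VI.3.10 or the discussion of coskeleta in Section VII]{goerss2009simplicial}), $\pin{k}{\Delta}{\coskel{n+1}{K},v} \simeq \pin{k}{\Delta}{K,v}$ for $k \leq n$ and is trivial for $k > n$, with the isomorphisms induced by $K \to \coskel{n+1}{K}$. Finally, Theorem \ref{thm:homotopy_group_iso} (together with Proposition \ref{prop:pi0_simplicial} for $k=0$) gives natural isomorphisms $\pin{k}{}{\coskel{n+1}{X},v} \simeq \pin{k}{\Delta}{\delres{\coskel{n+1}{X}},v}$ and $\pin{k}{}{X,v} \simeq \pin{k}{\Delta}{\delres{X},v}$, and chasing the square built from the counit $X \to \coskel{n+1}{X}$ and its simplicial counterpart, all of whose sides are the natural isomorphisms just listed, yields the result.

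I expect the main obstacle to be the identification $\delres{\coskel{n+1}{X}} \simeq \coskel{n+1}{(\delres{X})}$ cleanly, since $\delres{-}$ is a right adjoint but the two coskeleta are taken over different base categories, and one has to be careful that restricting along $\deltacat \incl \atom$ commutes with right Kan extension along the skeletal inclusions --- this is true because $\deltacat_{\leq n+1} \incl \atom_{\leq n+1}$ is full and $\deltacat_{\leq n+1} = \deltacat \cap \atom_{\leq n+1}$, but it requires spelling out. As a fallback, if one prefers not to go through $\delres{-}$ at all, one can argue directly: for $k > n$, any $k$\nbd cell $x: O^k \to \coskel{n+1}{X}$ with $!$\nbd constant boundary extends (using that $\coskel{n+1}{X}$ is Kan and $(n+1)$\nbd coskeletal, so all $k$\nbd cells are equivalences by Lemma \ref{lem:coskel_fillers}) to witness $x \simeq\ !^*v$ --- concretely, since $\skel{n+1}{O^k} = \skel{n+1}{O^{k-1}}$ for $k > n+1$ and the $(n+1)$\nbd coskeletal structure forces any two parallel cells above dimension $n+1$ to be equal, while for $k = n+1$ one uses the argument of Proposition \ref{prop:unique_composite}. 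For $k \leq n$ the cells and the equivalence relation $\simeq$ agree on both sides because they only involve cells of dimension $\leq n+1$, where $X \to \coskel{n+1}{X}$ is an isomorphism, so the induced map on $\pin{k}{}{-}$ is a bijection. Either way the calculation is routine once the coskeleton bookkeeping is set up.
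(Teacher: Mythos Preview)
Your proposal is correct, but the paper's own proof is a single sentence: ``Same as the proof for Kan complexes, for example \cite[Proposition 8.8]{may1992simplicial}.'' In other words, the paper intends exactly your \emph{fallback} direct argument, transported verbatim from simplicial sets to diagrammatic sets: horns fill because in low dimensions $X$ and $\coskel{n+1}{X}$ agree and in high dimensions coskeletality forces fillers; the groups agree for $k \leq n$ because the relevant cells and witnessing $(k+1)$\nbd cells live in dimensions $\leq n+1$; and they are trivial for $k > n$ because coskeletality supplies a cell between any two parallel cells above dimension $n+1$ (and for $k = n+1$ specifically, the required $(n+2)$\nbd cell exists since $\coskel{n+1}{X}$ has a unique cell of each shape over any boundary in dimensions $> n+1$).

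Your primary route through $\delres{-}$ and Theorem \ref{thm:homotopy_group_iso} is a genuine alternative, and it does work, but it is more circuitous than necessary: it trades an elementary bookkeeping argument for a Beck--Chevalley-type identification $\delres{\coskel{n+1}{X}} \simeq \coskel{n+1}{(\delres{X})}$, which, as you yourself note, requires spelling out that restriction along the full inclusion $\deltacat \incl \atom$ commutes with right Kan extension along the skeletal inclusions (this holds because $\imath_\Delta$ preserves the skeletal colimit presentation of $\Delta^m$, so $\skel{n+1}{\Delta^m}$ computed in either category is the same object, and then the adjunction $\imath_\Delta \dashv \delres{-}$ gives the identification). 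What this buys you is a cleaner conceptual reduction to an already-cited result; what the direct argument buys is that it needs no such identification at all.

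One small slip: in your Kan argument you write $\bord{}{}W \submol \Lambda$, but in fact $\Lambda \subsetneq \bord{}{}W$; and for $k = n+1$ the atom $W$ has dimension $n+2$, not $\leq n+1$. The argument is unaffected: $\Lambda$ still has dimension $\leq n+1$, so the horn data lives in $X$, one fills there, and pushes forward along the counit.
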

\begin{proof}
Same as the proof for Kan complexes, for example \cite[Proposition 8.8]{may1992simplicial}.
\end{proof}

It follows that the sequence of coskeleta (\ref{eq:coskel-tower}) is a Postnikov tower for a Kan diagrammatic set. Combining Proposition \ref{prop:iso_pin_fibrant}, Theorem \ref{thm:span_weakeq}, and their corollaries, we can see that, given a space $X$, the simplicial geometric realisation of the sequence of coskeleta of $SX$ is a Postnikov tower for $X$ up to weak equivalence.

\bibliographystyle{alpha}
\small \bibliography{main}

\end{document}